\numberwithin{equation}{section}
\def\Xint#1{\mathchoice
   {\XXint\displaystyle\textstyle{#1}}%
   {\XXint\textstyle\scriptstyle{#1}}%
   {\XXint\scriptstyle\scriptscriptstyle{#1}}%
   {\XXint\scriptscriptstyle\scriptscriptstyle{#1}}%
   \!\int}
\def\XXint#1#2#3{{\setbox0=\hbox{$#1{#2#3}{\int}$}
     \vcenter{\hbox{$#2#3$}}\kern-.5\wd0}}
\def\aver#1{\Xint-_{#1}}
\newtheorem{theorem}{Theorem}[section]
\newtheorem{lemma}[theorem]{Lemma}
\newtheorem{corollary}[theorem]{Corollary}
\newtheorem{proposition}[theorem]{Proposition}
\newtheorem{definition}[theorem]{Definition}
\theoremstyle{remark}
\begin{document}
\allowdisplaybreaks

\title[The {\rm BMO}-Dirichlet problem and quantitative characterizations of {\rm VMO}]
{The {\rm BMO}-Dirichlet problem for elliptic \\ systems in the upper-half 
space and \\ quantitative characterizations of {\rm VMO}}

\author{Jos\'e Mar{\'\i}a Martell}
\address{Jos\'e Mar{\'\i}a Martell
\\
Instituto de Ciencias Matem\'aticas CSIC-UAM-UC3M-UCM
\\
Consejo Superior de Investigaciones Cient{\'\i}ficas
\\
C/ Nicol\'as Cabrera, 13-15
\\
E-28049 Madrid, Spain} \email{chema.martell@icmat.es}
\address{and}
\address{Department of Mathematics
\\
University of Missouri
\\
Columbia, MO 65211, USA} 
\email{martellj@missouri.edu}

\author{Dorina Mitrea}
\address{Dorina Mitrea
\\
Department of Mathematics
\\
University of Missouri
\\
Columbia, MO 65211, USA} \email{mitread@missouri.edu}

\author{Irina Mitrea}
\address{Irina Mitrea
\\
Department of Mathematics
\\
Temple University\!
\\
1805\,N.\,Broad\,Street
\\
Philadelphia, PA 19122, USA} \email{imitrea@temple.edu}

\author{Marius Mitrea}
\address{Marius Mitrea
\\
Department of Mathematics
\\
University of Missouri
\\
Columbia, MO 65211, USA} \email{mitream@missouri.edu}

\thanks{The first author would like to express his gratitude to the University of Missouri-Columbia (USA), for its support and hospitality while he was visiting this institution. The first author acknowledges financial support from the Spanish Ministry of Economy and Competitiveness, through the ``Severo Ochoa Programme for Centres of Excellence in R\&D'' (SEV-2015-0554). He also acknowledges that
the research leading to these results has received funding from the European Research
Council under the European Union's Seventh Framework Programme (FP7/2007-2013)/ ERC
agreement no. 615112 HAPDEGMT. The second author has been supported 
in part by the Simons Foundation grant $\#\,$426669, the third author has
been supported in part by the Simons Foundation grant $\#\,$318658, while the fourth author has been
supported in part by the Simons Foundation grant $\#\,$281566, and by a University of
Missouri Research Leave grant. This work has been possible thanks to the support and hospitality
of \textit{Temple University} (USA), \textit{University of Missouri} (USA), and
\textit{ICMAT, Consejo Superior de Investigaciones Cient{\'\i}ficas} (Spain).
The authors express their gratitude to these institutions.}

\date{\today}

\subjclass[2010]{Primary: 35B65, 35C15, 35J47, 35J57, 35J67, 42B37. 
Secondary: 35E99, 42B20, 42B30, 42B35.}

\keywords{{\rm BMO}-Dirichlet problem, {\rm VMO}-Dirichlet problem, Carleson measure, 
vanishing Carleson measure, second order elliptic system, Poisson kernel, Lam\'e system, 
nontangential pointwise trace, Fatou type theorem, 
Hardy space, H\"older space, Morrey-Campanato space, square function, 
quantitative characterization of {\rm VMO}, dense subspaces of {\rm VMO}, 
boundedness of Calder\'on-Zygmund operators on {\rm VMO}}

\begin{abstract}
We prove that for any homogeneous, second order, constant complex coefficient elliptic system
$L$ in $\mathbb{R}^{n}$, the Dirichlet problem in $\mathbb{R}^{n}_{+}$ with 
boundary data in $\mathrm{BMO}(\mathbb{R}^{n-1})$ is well-posed in the class of functions $u$ 
for which the Littlewood-Paley measure associated with $u$, namely $d\mu_u(x',t):=|\nabla u(x',t)|^2\,t\,dx'dt$, 
is a Carleson measure in $\mathbb{R}^{n}_{+}$. 

In the process we establish a Fatou type theorem guaranteeing the existence of the pointwise 
nontangential boundary trace for smooth null-solutions $u$ of such systems satisfying the said 
Carleson measure condition. In concert, these results imply that the space $\mathrm{BMO}(\mathbb{R}^{n-1})$ 
can be characterized as the collection of nontangential pointwise traces of smooth 
null-solutions $u$ to the elliptic system $L$ with the property that $\mu_u$ is a Carleson measure 
in $\mathbb{R}^{n}_{+}$. 

We also establish a regularity result for the {\rm BMO}-Dirichlet problem in the upper-half space, 
to the effect that the nontangential pointwise trace on the boundary of $\mathbb{R}^{n}_{+}$ of 
any given smooth null-solutions $u$ of $L$ in $\mathbb{R}^{n}_{+}$ satisfying the above Carleson 
measure condition actually belongs to Sarason's space $\mathrm{VMO}(\mathbb{R}^{n-1})$ if and 
only if $\mu_u(T(Q))/|Q|\to 0$ as $|Q|\to 0$, uniformly with respect to the location of the 
cube $Q\subset{\mathbb{R}}^{n-1}$ (where $T(Q)$ is the Carleson box associated with $Q$, 
and $|Q|$ denotes the Euclidean volume of $Q$). 

Moreover, we are able to establish the well-posedness of the Dirichlet problem in 
$\mathbb{R}^{n}_{+}$ 
for a system $L$ as above in the case when the boundary data are prescribed in Morrey-Campanato
spaces in $\mathbb{R}^{n-1}$. In such a scenario, the solution $u$ is required to satisfy 
a vanishing Carleson measure condition of fractional order. 

By relying on these well-posedness and regularity results we succeed in producing 
characterizations of the space $\mathrm{VMO}$ as the closure in {\rm BMO} of classes of 
smooth functions contained in {\rm BMO} within which uniform continuity may be suitably quantified 
(such as the class of smooth functions satisfying a H\"older or Lipschitz condition). 
This improves on Sarason's classical result describing $\mathrm{VMO}$ as the closure 
in {\rm BMO} of the space of uniformly continuous functions with bounded mean oscillations.
In turn, this allows us to show that any Calder\'on-Zygmund operator $T$ satisfying $T(1)=0$
extends as a linear and bounded mapping from $\mathrm{VMO}$ (modulo constants) into itself. 
In turn, this is used to describe algebras of singular integral operators on $\mathrm{VMO}$,
and to characterize the membership to $\mathrm{VMO}$ via the action of various classes of 
singular integral operators.
\end{abstract}

\maketitle

\allowdisplaybreaks

\tableofcontents
\normalsize

\section{Introduction and statement of main theorems}
\setcounter{equation}{0}
\label{S-1}

In his ground breaking 1971 article \cite{Fe}, C. Fefferman writes 
``{\it The main idea in proving {\rm [}that the dual of the Hardy space $H^1$ 
is the John-Nirenberg space ${\rm BMO}${\rm ]} is to study the Poisson integral 
of a function in ${\rm BMO}$.}'' Implicit in this quote is the fact that 
the Poisson kernel is associated with the Laplace operator, and one of the primary aims 
of the present paper is to advance this line of work by considering more general 
systems of partial differential operators than the Laplacian. 
For example, the key PDE result announced by C. Fefferman in \cite{Fe} states that 
\begin{equation}\label{L-dJHG}
\displaystyle
\parbox{11.10cm}{a measurable function $f$ with
$\displaystyle\int_{{\mathbb{R}}^{n-1}}|f(x')|(1+|x'|)^{-n}\,dx'<+\infty$ belongs to the space
${\rm BMO}({\mathbb{R}}^{n-1})$ if and only if its Poisson integral 
$u:{\mathbb{R}}^n_{+}\to{\mathbb{R}}$, with respect to the Laplace operator
in ${\mathbb{R}}^n$, satisfies $\displaystyle\sup\limits_{x'\in{\mathbb{R}}^{n-1}}\sup\limits_{r>0}
\Big\{r^{1-n}\int\limits_{|x'-y'|<r}\int\limits_0^r|(\nabla u)(y',t)|^2\,t\,dt\,dx'\Big\}<+\infty$,}
\end{equation}
and one of the main goals here is to develop machinery that permits us to replace the
Laplacian in \eqref{L-dJHG} with much more general second order elliptic systems with 
complex coefficients. In order to be more specific, we proceed to elaborate on the 
actual setting adopted in this paper. 

Let $M\in{\mathbb{N}}$ and consider a second-order, homogeneous, $M\times M$ system,
with constant complex coefficients, written (with the usual convention of summation
over repeated indices in place) as
\begin{equation}\label{L-def}
Lu:=\Bigl(\partial_r(a^{\alpha\beta}_{rs}\partial_s u_\beta)\Bigr)_{1\leq\alpha\leq M},
\end{equation}
when acting on a ${\mathscr{C}}^2$ vector-valued function $u=(u_\beta)_{1\leq\beta\leq M}$
defined in an open subset of ${\mathbb{R}}^n$. Assume that $L$ is {\tt strongly}
{\tt elliptic} in the sense that there exists $\kappa_o\in(0,\infty)$ such that
\begin{equation}\label{L-ell.X}
\begin{array}{c}
{\rm Re}\,\bigl[a^{\alpha\beta}_{rs}\xi_r\xi_s\overline{\eta_\alpha}
\eta_\beta\,\bigr]\geq\kappa_o|\xi|^2|\eta|^2\,\,\mbox{ for every}
\\[8pt]
\xi=(\xi_r)_{1\leq r\leq n}\in{\mathbb{R}}^n\,\,\mbox{ and }\,\,
\eta=(\eta_\alpha)_{1\leq\alpha\leq M}\in{\mathbb{C}}^M.
\end{array}
\end{equation}
Examples include scalar operators, such as the Laplacian $\Delta=\sum\limits_{j=1}^n\partial_j^2$ 
or, more generally, operators of the form ${\rm div}A\nabla$ with $A=(a_{rs})_{1\leq r,s\leq n}$ an 
$n\times n$ matrix with complex entries satisfying the ellipticity condition
\begin{equation}\label{YUjhv-753}
\inf_{\xi\in S^{n-1}}{\rm Re}\,\big[a_{rs}\xi_r\xi_s\bigr]>0,
\end{equation}
(where $S^{n-1}$ denotes the unit sphere in ${\mathbb{R}}^n$), as well as complex
versions of the Lam\'e system of elasticity
\begin{equation}\label{TYd-YG-76g}
Lu:=\mu\Delta u+(\lambda+\mu)\nabla{\rm div}\,u,\qquad u=(u_1,...,u_n)\in{\mathscr{C}}^2.
\end{equation}
Above, the constants $\lambda,\mu\in{\mathbb{C}}$ (typically called Lam\'e moduli),
are assumed to satisfy
\begin{equation}\label{Yfhv-8yg}
{\rm Re}\,\mu>0\,\,\mbox{ and }\,\,{\rm Re}\,(2\mu+\lambda)>0,
\end{equation}
a condition equivalent to the demand that the Lam\'e system \eqref{TYd-YG-76g} 
satisfies the Legendre-Hadamard ellipticity condition \eqref{L-ell.X}.
While the Lam\'e system is symmetric, we stress that the results in this paper require no symmetry 
for the systems involved. 

Returning to the general framework, 
with every system $L$ as in \eqref{L-def}-\eqref{L-ell.X} one may associate
a Poisson kernel, $P^L$, which is a ${\mathbb{C}}^{M\times M}$-valued function defined 
in ${\mathbb{R}}^{n-1}$ described in detail in Theorem~\ref{kkjbhV}.
This Poisson kernel has played a pivotal role in the treatment of the $L^p$-Dirichlet
boundary value problem for $L$ in the upper-half space in \cite{K-MMMM}. 

To state our main results, some notation is needed. For a function 
$\phi:{\mathbb{R}}^{n-1}\rightarrow{\mathbb{C}}$ set 
\begin{equation}\label{phisubt}
\phi_t(x'):=t^{1-n}\phi(x'/t),
\qquad\mbox{for every $x'\in\mathbb{R}^{n-1}$ and every $t>0$.}
\end{equation}
In particular, $P^L_t(x')=t^{1-n}P^L(x'/t)$ for every $x'\in{\mathbb{R}}^{n-1}$ and $t>0$.
We agree to identify the boundary of the upper-half space
\begin{equation}\label{RRR-UpHs}
{\mathbb{R}}^{n}_{+}:=\big\{x=(x',x_n)\in
{\mathbb{R}}^{n}={\mathbb{R}}^{n-1}\times{\mathbb{R}}:\,x_n>0\big\}
\end{equation}
with the horizontal hyperplane ${\mathbb{R}}^{n-1}$ via $(x',0)\equiv x'$
for any $x'\in{\mathbb{R}}^{n-1}$. The origin in ${\mathbb{R}}^{n-1}$ is denoted by $0'$.
Having fixed some background parameter $\kappa>0$, at each point $x'\in\partial{\mathbb{R}}^{n}_{+}$ 
we define the conical nontangential approach region with vertex at $x'$ as
\begin{equation}\label{NT-1}
\Gamma_\kappa(x'):=\big\{y=(y',t)\in{\mathbb{R}}^{n}_{+}:\,|x'-y'|<\kappa\,t\big\}.
\end{equation}
Whenever meaningful, the nontangential pointwise trace of a continuous 
vector-valued function $u$ defined in ${\mathbb{R}}^n_+$ is given by
\begin{equation}\label{nkc-EE-2}
\Big(u\big|^{{}^{\rm n.t.}}_{\partial{\mathbb{R}}^{n}_{+}}\Big)(x')
:=\lim_{\Gamma_{\kappa}(x')\ni y\to (x',0)}u(y)
\,\,\mbox{ for }\,\,x'\in\partial{\mathbb{R}}^{n}_{+}\equiv{\mathbb{R}}^{n-1}.
\end{equation}

For each positive integer $k$ denote by ${\mathscr{L}}^k$ the $k$-dimensional Lebesgue
measure in ${\mathbb{R}}^k$. A Borel measure $\mu$ in $\mathbb{R}^{n}_{+}$ is said 
to be a Carleson measure in $\mathbb{R}^{n}_{+}$ provided
\begin{equation}\label{defi-Carleson}
\|\mu\|_{\mathcal{C}(\mathbb{R}_+^{n})}:=\sup_{Q\subset\mathbb{R}^{n-1}} 
\frac1{|Q|}\int_{0}^{\ell(Q)}\int_Q d\mu(x',t)<\infty,
\end{equation}
where the supremum runs over all cubes $Q$ in $\mathbb{R}^{n-1}$. Here and elsewhere in the paper,
by a cube $Q$ in $\mathbb{R}^{n-1}$ we shall understand a cube with sides parallel to the 
coordinate axes, and its side-length will be denoted by $\ell(Q)$. Also, 
the ${\mathscr{L}}^{n-1}$ measure of $Q$ is denoted by $|Q|$ and if $\lambda>0$ 
then $\lambda\,Q$ denotes the cube concentric with $Q$ whose side-length is $\lambda\,\ell(Q)$. 
Call a Borel measure $\mu$ in $\mathbb{R}^{n}_{+}$ a vanishing Carleson measure whenever
$\mu$ is a Carleson measure to begin with and, in addition,  
\begin{equation}\label{defi-CarlesonVan}
\lim_{r\to 0^{+}}\left(\sup_{Q\subset\mathbb{R}^{n-1},\,\ell(Q)\leq r} 
\frac1{|Q|}\int_{0}^{\ell(Q)}\int_Q d\mu(x',t)\right)=0.
\end{equation}

Next, the Littlewood-Paley measure associated with a continuously differentiable function 
$u$ in ${\mathbb{R}}^n_{+}$ is $|\nabla u(x',t)|^2\,t\,dx'dt$, and we set  
\begin{equation}\label{ustarstar}
\|u\|_{**}:=\sup_{Q\subset\mathbb{R}^{n-1}}\left(\frac1{|Q|}\int_{0}^{\ell(Q)}
\int_Q |\nabla u(x',t)|^2\,t\,dx'dt\right)^\frac12.
\end{equation}
In particular, for a continuously differentiable function 
$u$ in ${\mathbb{R}}^n_{+}$ we have 
\begin{equation}\label{ncud}
\|u\|_{**}<\infty\,\,\Longleftrightarrow\,\,
|\nabla u(x',t)|^2\,t\,dx'dt\,\,\text{ is a Carleson measure in }\,\,{\mathbb{R}}^n_{+}.
\end{equation}

We next introduce $\mathrm{BMO}(\mathbb{R}^{n-1},\mathbb{C}^M)$, the John-Nirenberg 
space of vector-valued functions of bounded mean oscillations in ${\mathbb{R}}^{n-1}$,
as the collection of $\mathbb{C}^M$-valued functions $f=(f_\alpha)_{1\le\alpha\le M}$ 
with components in $L^1_{\rm loc}(\mathbb{R}^{n-1})$ satisfying  
\begin{equation}\label{defi-BMO}
\|f\|_{\mathrm{BMO}(\mathbb{R}^{n-1},\mathbb{C}^M)}:=
\sup_{Q\subset\mathbb{R}^{n-1}}\aver{Q}\big|f(x')-f_Q\big|\,dx'<\infty.
\end{equation}
Above, for every cube $Q$ in ${\mathbb{R}}^{n-1}$ and every function 
$h\in L^1_{\rm loc}(\mathbb{R}^{n-1},{\mathbb{C}}^M)$ we have abbreviated 
\begin{equation}\label{nota-aver}
h_Q:=\aver{Q} h(x')\,dx':=\frac1{|Q|}\int_{Q}h(x')\,dx'\in{\mathbb{C}}^M,
\end{equation}
where the last integration is performed componentwise. 
To lighten notation, when $M=1$ we simply write $\mathrm{BMO}(\mathbb{R}^{n-1})$
in place of $\mathrm{BMO}(\mathbb{R}^{n-1},\mathbb{C})$. Clearly, for every 
$f\in L^1_{\rm loc}(\mathbb{R}^{n-1},\mathbb{C}^M)$ we have
\begin{equation}\label{defi-BMO-CCC}
\begin{array}{ll}
\|f\|_{\mathrm{BMO}(\mathbb{R}^{n-1},\mathbb{C}^M)}
=\|f+C\|_{\mathrm{BMO}(\mathbb{R}^{n-1},\mathbb{C}^M)},
& \forall\,C\in{\mathbb{C}}^{M},
\\[6pt]
\|f\|_{\mathrm{BMO}(\mathbb{R}^{n-1},\mathbb{C}^M)}
=\|\tau_{z'}f\|_{\mathrm{BMO}(\mathbb{R}^{n-1},\mathbb{C}^M)}, & \forall\,z'\in{\mathbb{R}}^{n-1},
\\[6pt]
\|f\|_{\mathrm{BMO}(\mathbb{R}^{n-1},\mathbb{C}^M)}
=\|\delta_{\lambda}f\|_{\mathrm{BMO}(\mathbb{R}^{n-1},\mathbb{C}^M)}, & \forall\,\lambda\in(0,\infty),
\end{array}
\end{equation}
where $\tau_{z'}$ is the operator of translation by $z'$, i.e., 
$(\tau_{z'}f)(x'):=f(x'+z')$ for every $x'\in{\mathbb{R}}^{n-1}$, 
and $\delta_\lambda$ is the operator of dilation by $\lambda$, i.e., 
$(\delta_\lambda f)(x'):=f(\lambda x')$ for every $x'\in{\mathbb{R}}^{n-1}$.

We wish to note here that $\|\cdot\|_{\mathrm{BMO}(\mathbb{R}^{n-1},\mathbb{C}^M)}$ 
is only a seminorm, since for every function $f\in L^1_{\rm loc}(\mathbb{R}^{n-1},\mathbb{C}^M)$ we have 
\begin{equation}\label{defi-BMO-nVV}
\|f\|_{\mathrm{BMO}(\mathbb{R}^{n-1},\mathbb{C}^M)}=0\,\Longleftrightarrow\,
\text{$f$ is constant ${\mathscr{L}}^{n-1}$-a.e. in ${\mathbb{R}}^{n-1}$ 
(in ${\mathbb{C}}^M$)}.  
\end{equation}
Occasionally, we find it useful to mod out its null-space, in order to render 
the resulting quotient space Banach. Specifically, for two $\mathbb{C}^M$-valued 
Lebesgue measurable functions $f,g$ defined in $\mathbb{R}^{n-1}$ we say that $f\sim g$ 
provided $f-g$ is constant ${\mathscr{L}}^{n-1}$-a.e. in ${\mathbb{R}}^{n-1}$. 
This is an equivalence relation and we let 
\begin{align}\label{jgsyjw-AASSS}
[f]:=\big\{g:\mathbb{R}^{n-1}\to\mathbb{C}^M:\,\text{$g$ measurable and $f\sim g$}\big\}
\end{align}
denote the equivalence class of any given $\mathbb{C}^M$-valued Lebesgue 
measurable function $f$ defined in $\mathbb{R}^{n-1}$. If for each 
$f\in{\mathrm{BMO}}(\mathbb{R}^{n-1},\mathbb{C}^M)$ we now set
\begin{align}\label{defi-BMO-nbgxcr}
\big\|\,[f]\,\big\|_{\widetilde{\mathrm{BMO}}(\mathbb{R}^{n-1},\mathbb{C}^M)}
:=\|f\|_{\mathrm{BMO}(\mathbb{R}^{n-1},\mathbb{C}^M)},
\end{align}
then $\big\|\,[\cdot]\,\big\|_{\widetilde{\mathrm{BMO}}(\mathbb{R}^{n-1},\mathbb{C}^M)}$
becomes a genuine norm on the quotient space 
\begin{equation}\label{defi-BMO-tilde}
\widetilde{\mathrm{BMO}}(\mathbb{R}^{n-1},\mathbb{C}^M)
:=\big\{[f]:\,f\in{\mathrm{BMO}}(\mathbb{R}^{n-1},\mathbb{C}^M)\big\}.
\end{equation}
In fact, when equipped with the norm \eqref{defi-BMO-nbgxcr}, the space
\eqref{defi-BMO-tilde} is complete (hence Banach).

Moving on, the Sarason space of $\mathbb{C}^M$-valued functions of vanishing 
mean oscillations in ${\mathbb{R}}^{n-1}$ is defined by
\begin{align}\label{defi-VMO}
{\mathrm{VMO}}(\mathbb{R}^{n-1},\mathbb{C}^M)&:=
\Bigg\{ f\in{\mathrm{BMO}}(\mathbb{R}^{n-1},\mathbb{C}^M):
\\[6pt]
&\hskip 1.00in
\lim_{r\to 0^{+}}\left(\sup_{Q\subset\mathbb{R}^{n-1},\,\ell(Q)\leq r}\,\,
\aver{Q}\big|f(x')-f_Q\big|\,dx'\right)=0\Bigg\}.
\nonumber
\end{align}
The space ${\mathrm{VMO}}(\mathbb{R}^{n-1},\mathbb{C}^M)$ turns out to be a closed subspace
of ${\mathrm{BMO}}(\mathbb{R}^{n-1},\mathbb{C}^M)$. In fact, if 
${\mathrm{UC}}({\mathbb{R}}^{n-1},\mathbb{C}^M)$ stands for the space
of $\mathbb{C}^M$-valued uniformly continuous functions in ${\mathbb{R}}^{n-1}$, then 
\begin{align}\label{ku6ffcf-UCUC}
&\hskip -0.20in
{\rm UC}({\mathbb{R}}^{n-1},{\mathbb{C}}^M)
\cap\Big(\bigcup_{1\leq p\leq\infty}L^p({\mathbb{R}}^{n-1},{\mathbb{C}}^M)\Big)
\nonumber\\[6pt]
&\hskip 1.00in
\subset{\mathrm{UC}}(\mathbb{R}^{n-1},\mathbb{C}^M)\cap
{\mathrm{BMO}}(\mathbb{R}^{n-1},\mathbb{C}^M)
\nonumber\\[6pt]
&\hskip 1.00in
\subset{\mathrm{VMO}}(\mathbb{R}^{n-1},\mathbb{C}^M).
\end{align}
To justify the first inclusion, consider 
$f\in{\rm UC}({\mathbb{R}}^{n-1},{\mathbb{C}}^M)\cap L^p({\mathbb{R}}^{n-1},{\mathbb{C}}^M)$
for some $p\in[1,\infty]$. Then there exists $r_0\in(0,\infty)$ with the property that 
$|f(x')-f(y')|\leq 1$ whenever $x',y'\in{\mathbb{R}}^{n-1}$ are such that 
$|x'-y'|\leq r_0\sqrt{n-1}$. 
Suppose now that some arbitrary cube $Q$ in ${\mathbb{R}}^{n-1}$ has been fixed.
If $\ell(Q)\geq r_0$ with the help of H\"older's inequality we estimate 
\begin{align}\label{j8hVGVV.1}
\aver{Q}\big|f-f_Q\big|\,d{\mathscr{L}}^{n-1}
&\leq 2\aver{Q}|f|\,d{\mathscr{L}}^{n-1}\leq
\frac{2\|f\|_{L^p({\mathbb{R}}^{n-1},{\mathbb{C}}^M)}}{|Q|^{1/p}}
\nonumber\\[6pt]
&\leq\frac{2\|f\|_{L^p({\mathbb{R}}^{n-1},{\mathbb{C}}^M)}}{r_0^{(n-1)/p}},
\end{align}
whereas if $\ell(Q)<r_0$ we make use of the uniform continuity of $f$ to estimate 
\begin{equation}\label{j8hVGVV.2}
\aver{Q}\big|f-f_Q\big|\,d{\mathscr{L}}^{n-1}\leq\aver{Q}\aver{Q}|f(x')-f(y')|\,dx'\,dy'\leq 1.
\end{equation}
In turn, from \eqref{j8hVGVV.1}-\eqref{j8hVGVV.2} we then conclude that $f$ belongs to 
${\rm BMO}({\mathbb{R}}^{n-1},{\mathbb{C}}^M)$, which establishes the first inclusion in
\eqref{ku6ffcf-UCUC}. The second inclusion in \eqref{ku6ffcf-UCUC} is clear from \eqref{defi-VMO}.

As regards the second inclusion in \eqref{ku6ffcf-UCUC}, a well-known result of 
Sarason \cite[Theorem~1, p.\,392]{Sa75} implies that, in fact, 
\begin{equation}\label{ku6ffcfc}
\parbox{9.70cm}{a given function $f\in{\mathrm{BMO}}({\mathbb{R}}^{n-1},\mathbb{C}^M)$
actually belongs to the space ${\mathrm{VMO}}({\mathbb{R}}^{n-1},\mathbb{C}^M)$ if and only if 
there exists a sequence 
$\{f_j\}_{j\in{\mathbb{N}}}\subset{\mathrm{UC}}({\mathbb{R}}^{n-1},\mathbb{C}^M)
\cap{\mathrm{BMO}}({\mathbb{R}}^{n-1},\mathbb{C}^M)$ such that 
$\|f-f_j\|_{{\mathrm{BMO}}({\mathbb{R}}^{n-1},\mathbb{C}^M)}\longrightarrow 0$ as $j\to\infty$.}
\end{equation}
We shall refer to this simply by saying that Sarason's {\rm VMO} space is the closure 
of ${\rm UC}\cap{\rm BMO}$ in the space {\rm BMO}. In relation to \eqref{ku6ffcf-UCUC}
we wish to note that continuity without uniformity will not preserve the inclusion 
in \eqref{ku6ffcf-UCUC}. For example, there exist functions in
${\mathscr{C}}^\infty({\mathbb{R}})\cap L^\infty({\mathbb{R}})$ which do not belong to
${\rm VMO}({\mathbb{R}})$. To see this, consider the mutually disjoint intervals
$I_j:=[j,j+2/j]$, for each $j\in{\mathbb{N}}$, $j\geq 3$. Now let
$f:{\mathbb{R}}\to{\mathbb{R}}$ be a function with the property that,
for each $j\in{\mathbb{N}}$, $j\geq 3$, the graph of $f\big|_{I_j}$ is the line 
segment joining the point $(j,-1)$ with $(j+2/j,1)$ and otherwise 
the graph of $f$ is made up of curves joining these line segments smoothly within
the strip ${\mathbb{R}}\times[-2,2]$. By design, 
$f\in{\mathscr{C}}^\infty({\mathbb{R}})\cap L^\infty({\mathbb{R}})$.
In particular, $f\in{\rm BMO}({\mathbb{R}})$. However, for each
$j\in{\mathbb{N}}$, $j\geq 3$ we have $f_{I_j}=0$ and 
\begin{align}\label{jusfewv}
\aver{I_j}\big|f-f_{I_j}\big|\,d{\mathscr{L}}^1
=\aver{I_j}|f|\,d{\mathscr{L}}^1=\frac{1}{2}.
\end{align}
Since $|I_j|=2/j\to 0$ as $j\to\infty$, from \eqref{jusfewv} and \eqref{defi-VMO} 
it is then clear that $f\not\in{\rm VMO}({\mathbb{R}})$.

\vskip 0.10in

Another characterization of ${\mathrm{VMO}}(\mathbb{R}^{n-1},\mathbb{C}^M)$
due to Sarason (cf. \cite[Theorem~1, p.\,392]{Sa75}) is as follows:
\begin{equation}\label{defi-VMO-SSS}
\parbox{7.80cm}{a given function $f\in{\mathrm{BMO}}(\mathbb{R}^{n-1},\mathbb{C}^M)$ 
actually belongs to the space
${\mathrm{VMO}}(\mathbb{R}^{n-1},\mathbb{C}^M)$ if and only if 
$\lim\limits_{{\mathbb{R}}^{n-1}\ni z'\to 0'}
\|\tau_{z'}f-f\|_{{\mathrm{BMO}}(\mathbb{R}^{n-1},\mathbb{C}^M)}=0$.}
\end{equation}

We are now ready to state our first main result. This concerns the well-posedness of the 
$\mathrm{BMO}$-Dirichlet problem in the upper-half space for systems $L$ as in 
\eqref{L-def}-\eqref{L-ell.X}. The existence of a unique solution is established 
in the class of functions $u$ satisfying a Carleson measure condition 
(expressed in terms of the finiteness of \eqref{ustarstar}). The formulation of 
our theorem emphasizes the fact that this contains as a ``sub-problem'' the 
$\mathrm{VMO}$-Dirichlet problem for $L$ in ${\mathbb{R}}^n_{+}$
(in which scenario $u$ satisfies a vanishing Carleson measure condition). 

\begin{theorem}\label{them:BMO-Dir}
Let $L$ be an $M\times M$ elliptic constant complex coefficient system as in 
\eqref{L-def}-\eqref{L-ell.X}. Then the $\mathrm{BMO}$-Dirichlet boundary value 
problem for $L$ in $\mathbb{R}^{n}_+$, namely 
\begin{equation}\label{Dir-BVP-BMO}
\left\{
\begin{array}{l}
u\in{\mathscr{C}}^\infty(\mathbb{R}^{n}_{+},{\mathbb{C}}^M),
\\[4pt]
Lu=0\,\,\mbox{ in }\,\,\mathbb{R}^{n}_{+},
\\[4pt]
\big|\nabla u(x',t)\big|^2\,t\,dx'dt\,\,\mbox{is a Carleson measure in }\mathbb{R}^{n}_+,
\\[6pt]
u\big|^{{}^{\rm n.t.}}_{\partial{\mathbb{R}}^{n}_{+}}=f\,\,
\text{ a.e. in }\,\,{\mathbb{R}}^{n-1},\,\,
f\in\mathrm{BMO}(\mathbb{R}^{n-1},\mathbb{C}^M),
\end{array}
\right.
\end{equation}
has a unique solution. Moreover, this unique solution satisfies the following additional properties:
\begin{list}{$(\theenumi)$}{\usecounter{enumi}\leftmargin=.8cm
\labelwidth=.8cm\itemsep=0.2cm\topsep=.1cm
\renewcommand{\theenumi}{\alph{enumi}}}
\item[(i)] With $P^L$ denoting the Poisson kernel for $L$ in $\mathbb{R}^{n}_+$ from
Theorem~\ref{kkjbhV}, one has the Poisson integral representation formula
\begin{equation}\label{eqn-Dir-BMO:u}
u(x',t)=(P_t^L*f)(x'),\qquad\forall\,(x',t)\in{\mathbb{R}}^n_{+}.
\end{equation}
\item[(ii)] There exists a constant $C=C(n,L)\in(1,\infty)$ with the property that 
the solution $u$ of the Dirichlet problem \eqref{Dir-BVP-BMO} satisfies the two-sided estimate
\begin{equation}\label{Dir-BVP-BMO-Car}
C^{-1}\|f\|_{\mathrm{BMO}(\mathbb{R}^{n-1},\mathbb{C}^M)}\leq
\|u\|_{**}\leq C\,\|f\|_{\mathrm{BMO}(\mathbb{R}^{n-1},\mathbb{C}^M)}.
\end{equation}
That is, the size of the solution is comparable to the size of the boundary datum.
\vskip 0.08in
\item[(iii)] For each $\varepsilon>0$ the function $u(\cdot,\varepsilon)$ belongs to 
${\mathrm{BMO}}(\mathbb{R}^{n-1},\mathbb{C}^M)$ and there exists a constant $C=C(n,L)\in(0,\infty)$ 
independent of $u$ with the property that the following uniform {\rm BMO} estimate holds:
\begin{equation}\label{feps-BTTGB}
\sup_{\varepsilon>0}\|u(\cdot,\varepsilon)\|_{\mathrm{BMO}(\mathbb{R}^{n-1},\mathbb{C}^M)}
\leq C\,\|u\|_{**}.
\end{equation}
Moreover, 
\begin{equation}\label{eqn:conv-Bfed}
{}\hskip 0.30in
\lim_{\varepsilon\to 0^+}\|u(\cdot,\varepsilon)-f\|_{\mathrm{BMO}(\mathbb{R}^{n-1},\mathbb{C}^M)}=0
\Longleftrightarrow
\left\{
\begin{array}{l}
\big|\nabla u(x',t)\big|^2\,t\,dx'dt\,\,\mbox{is a vanishing}
\\[4pt]
\text{Carleson measure in }\,\,\mathbb{R}^{n}_{+}.
\end{array}
\right.
\end{equation}
That is, $u$ satisfies a vanishing Carleson measure condition in $\mathbb{R}^{n}_{+}$ if and only if 
$u$ converges to its boundary datum vertically in $\mathrm{BMO}(\mathbb{R}^{n-1},\mathbb{C}^M)$.
\vskip 0.08in
\item[(iv)] The following regularity results hold:
\begin{align}\label{Dir-BVP-Reg}
f\in{\mathrm{VMO}(\mathbb{R}^{n-1},\mathbb{C}^M)}
& \Longleftrightarrow
\left\{
\begin{array}{l}
\big|\nabla u(x',t)\big|^2\,t\,dx'dt\,\,\mbox{is a vanishing}
\\[4pt]
\text{Carleson measure in }\,\,\mathbb{R}^{n}_+
\end{array}
\right.
\\[6pt]
& \Longleftrightarrow
\lim_{{\mathbb{R}}^n_{+}\ni z\to 0}\|\tau_z u-u\|_{**}=0,
\label{Dir-BVP-Reg.TTT}
\end{align}
where $(\tau_z u)(x):=u(x+z)$ for each $x,z\in{\mathbb{R}}^n_{+}$.
\end{list}

As a consequence, the $\mathrm{VMO}$-Dirichlet boundary value problem for $L$ 
in $\mathbb{R}^{n}_+$, i.e.,
\begin{equation}\label{Dir-BVP-VMO}
\left\{
\begin{array}{l}
u\in{\mathscr{C}}^\infty(\mathbb{R}^{n}_{+},{\mathbb{C}}^M),
\\[4pt]
Lu=0\,\,\mbox{ in }\,\,\mathbb{R}^{n}_{+},
\\[4pt]
\big|\nabla u(x',t)\big|^2\,t\,dx'dt\,\,\mbox{is a vanishing Carleson measure in }\mathbb{R}^{n}_+,
\\[6pt]
u\big|^{{}^{\rm n.t.}}_{\partial{\mathbb{R}}^{n}_{+}}=f\,\,
\text{ a.e. in }\,\,{\mathbb{R}}^{n-1},\,\,
f\in\mathrm{VMO}(\mathbb{R}^{n-1},\mathbb{C}^M),
\end{array}
\right.
\end{equation}
is well-posed. Moreover, its unique solution is given by \eqref{eqn-Dir-BMO:u}, 
satisfies \eqref{Dir-BVP-BMO-Car}-\eqref{feps-BTTGB}, and
\begin{equation}\label{eqn:conv-BfEE}
\lim_{\varepsilon\to 0^+}\|u(\cdot,\varepsilon)-f\|_{\mathrm{BMO}(\mathbb{R}^{n-1},\mathbb{C}^M)}=0.
\end{equation}
\end{theorem}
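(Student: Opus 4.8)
The plan is to decompose the assertion into an existence half and a uniqueness half, and within each to reduce the $\mathrm{BMO}$-level statement to pointwise and Carleson-measure estimates for the Poisson extension. For existence, given $f\in\mathrm{BMO}(\mathbb{R}^{n-1},\mathbb{C}^M)$ I would define $u(x',t):=(P_t^L*f)(x')$ using the Poisson kernel $P^L$ from Theorem~\ref{kkjbhV}; the convolution makes sense because $P^L$ and its derivatives decay like $(1+|x'|)^{-n}$ (and faster for derivatives), which controls the growth of $f$ at infinity since $|f_{2^jQ}-f_Q|\lesssim j\|f\|_{\mathrm{BMO}}$. One then checks $Lu=0$ by differentiating under the integral sign (justified by the decay), and smoothness of $u$ in $\mathbb{R}^n_+$ is automatic. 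The heart of the existence argument is the Carleson bound $\|u\|_{**}\le C\|f\|_{\mathrm{BMO}}$: this is the classical Fefferman--Stein type estimate, proved by splitting $f=f_{2Q}+ (f-f_{2Q})\mathbf{1}_{2Q} + (f-f_{2Q})\mathbf{1}_{\mathbb{R}^{n-1}\setminus 2Q}$ over a fixed Carleson box $T(Q)$; the constant term is annihilated by $\nabla P_t^L*(\cdot)$, the local term is handled by an $L^2$ square-function bound for the (vector-valued) operator $f\mapsto t\nabla_{x',t}(P_t^L*f)$ together with the John--Nirenberg $L^2$ oscillation estimate on $2Q$, and the tail term is estimated pointwise using $|\nabla P_t^L(x')|\lesssim t(t+|x'|)^{-n-1}$ and summation over dyadic annuli. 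Finally, the nontangential trace $u|^{\mathrm{n.t.}}_{\partial\mathbb{R}^n_+}=f$ a.e.\ follows from a Lebesgue-differentiation/approximate-identity argument adapted to the nontangential approach region, again using the normalization $\int_{\mathbb{R}^{n-1}}P^L(x')\,dx'=I_{M\times M}$ recorded in Theorem~\ref{kkjbhV}.

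\textbf{Uniqueness.}
For uniqueness, suppose $u$ solves \eqref{Dir-BVP-BMO} with $f=0$; I must show $u\equiv 0$ (as an honest function, not merely modulo constants, because the nontangential trace is prescribed to vanish a.e., which pins down any additive constant). The strategy is: (a) use the Fatou-type theorem announced in the abstract and proved earlier in the paper — the existence of the nontangential trace for smooth null-solutions of $L$ with $\|u\|_{**}<\infty$ — together with a representation result showing that any such $u$ is the Poisson extension of its own trace, i.e.\ $u(x',t)=(P_t^L*g)(x')$ where $g=u|^{\mathrm{n.t.}}_{\partial\mathbb{R}^n_+}$; (b) conclude that if $g=0$ a.e.\ then $u=P_t^L*0=0$. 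The representation in (a) is really the crux and is where most of the analytic work sits: one fixes a height $\varepsilon>0$, writes $u(\cdot,\varepsilon+t)=P_t^L*u(\cdot,\varepsilon)$ (legitimate once one knows $u(\cdot,\varepsilon)\in\mathrm{BMO}$ with the uniform bound \eqref{feps-BTTGB}, which itself comes from the Carleson condition via a Poincaré/telescoping argument along vertical segments), and then lets $\varepsilon\to 0^+$, using the vertical $\mathrm{BMO}$ (or weak-$*$) convergence $u(\cdot,\varepsilon)\to g$ and continuity of $f\mapsto P_t^L*f$ on $\mathrm{BMO}$ modulo constants. Pinning down the constant uses the a.e.\ pointwise identification of the trace from the Fatou theorem.

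\textbf{The additional properties and the VMO corollary.}
Part~(i) is then immediate from the uniqueness argument (the solution equals the Poisson extension of its trace). Part~(ii): the upper bound is the Carleson estimate established in the existence step; the lower bound $\|f\|_{\mathrm{BMO}}\le C\|u\|_{**}$ is the converse Fefferman--Stein inequality, obtained by testing the Carleson measure against boxes $T(Q)$ and using a square-function lower bound for $P^L$ (here strong ellipticity of $L$ enters, guaranteeing the Poisson kernel is nondegenerate). Part~(iii): the uniform bound \eqref{feps-BTTGB} is the telescoping estimate mentioned above; for \eqref{eqn:conv-Bfed}, the forward implication follows by writing $\|u(\cdot,\varepsilon)-f\|_{\mathrm{BMO}}$ in terms of the tail of the Carleson measure over small boxes (plus the contribution from boxes of size comparable to $\varepsilon$, which vanishes), and the reverse implication by a similar but sharper bookkeeping of \eqref{defi-CarlesonVan}. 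Part~(iv): the first equivalence combines \eqref{eqn:conv-Bfed} with Sarason's characterization \eqref{defi-VMO-SSS} and the observation that $\|\tau_{z'}f-f\|_{\mathrm{BMO}}\to 0$ transfers, via the Poisson extension and \eqref{Dir-BVP-BMO-Car} applied to $\tau_{z'}u-u$ (a solution with boundary datum $\tau_{z'}f-f$), to the statement \eqref{Dir-BVP-Reg.TTT}. Finally, the well-posedness of the $\mathrm{VMO}$ problem \eqref{Dir-BVP-VMO} is a formal consequence: existence by taking the Poisson extension of $f\in\mathrm{VMO}$ (which automatically satisfies the vanishing Carleson condition by the $\Leftarrow$ direction of \eqref{Dir-BVP-Reg}), and uniqueness as a special case of the $\mathrm{BMO}$ uniqueness already proved, with \eqref{eqn:conv-BfEE} being \eqref{eqn:conv-Bfed} read in the VMO setting.

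\textbf{Main obstacle.}
I expect the principal difficulty to be the self-representation step for uniqueness — showing that \emph{every} smooth null-solution with finite Littlewood--Paley Carleson norm is the Poisson integral of its nontangential trace. Controlling $u(\cdot,\varepsilon)$ in $\mathrm{BMO}$ uniformly in $\varepsilon$, justifying the semigroup identity $u(\cdot,\varepsilon+t)=P_t^L*u(\cdot,\varepsilon)$ (which requires a uniqueness statement \emph{within} a single horizontal slab, a bootstrapping issue), and passing to the limit $\varepsilon\to0$ while keeping track of the otherwise-free additive constant, are the delicate points; the Carleson estimates in parts (ii)--(iii), though technical, follow well-trodden Fefferman--Stein lines once the Poisson kernel bounds from Theorem~\ref{kkjbhV} are in hand.
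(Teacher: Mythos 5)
Your overall architecture coincides with the paper's: existence by taking $u=P^L_t\ast f$ and proving the Carleson bound through a constant/local/tail splitting with an $L^2$ square-function estimate (this is Proposition~\ref{prop-Dir-BMO:exis} via Proposition~\ref{prop:SFE}), and uniqueness through the self-representation of any null-solution with $\|u\|_{**}<\infty$ as the Poisson integral of its own trace, obtained by working with the vertical shifts $u(\cdot,\cdot+\varepsilon)$. However, two of the mechanisms you propose for the uniqueness half would not work as stated. First, the uniform bound \eqref{feps-BTTGB} for the slices $u(\cdot,\varepsilon)$ cannot be obtained by a ``Poincar\'e/telescoping argument along vertical segments'': telescoping only yields the Bloch-type estimate $\sup t|\nabla u|\le C\|u\|_{**}$ (Lemma~\ref{lemma:decay-Du:Carl}), which controls logarithmic oscillations between heights but does not bound the $\mathrm{BMO}$ norm of a horizontal slice. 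The actual content of this step is the converse Fefferman--Stein inequality ``Carleson $\Rightarrow$ $\mathrm{BMO}$'' (Proposition~\ref{prop:Car->BMO}), which the paper proves through the Calder\'on-type reproducing formula built from $\Phi=\partial_nK^L(\cdot,1)$ (Lemma~\ref{prop:Pt-Delta}), the tent-space duality of Coifman--Meyer--Stein (Lemma~\ref{lemma:tent}), the $H^1\to L^1$ square-function bound of Lemma~\ref{lemma:SFE-H1}, and the duality $(H^1)^*=\widetilde{\mathrm{BMO}}$; this is precisely one of the places where the paper points out that the earlier treatment in \cite{FJN} has gaps, so it cannot be waved through.

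Second, your passage to the limit $\varepsilon\to0^+$ is circular if done via ``vertical $\mathrm{BMO}$ convergence $u(\cdot,\varepsilon)\to g$'': by part (iii) of the very theorem being proved, that convergence is \emph{equivalent} to the vanishing Carleson condition and fails for general $\mathrm{BMO}$ data. The weak-$*$ alternative you mention parenthetically is the correct one, but it needs an extra idea you do not supply: one cannot pass to the limit in $P^L_t\ast f_\varepsilon$ directly, since $P^L_t(x'-\cdot)$ has nonvanishing integral and hence is not an $H^1$ function. The paper instead shows that $t\,\nabla K^L(\cdot,t)$ are $(L^2,1)$-molecules for $H^1$ (Lemma~\ref{lemma:weak-*}), passes to the limit only at the level of $\nabla(P^L_t\ast f_\varepsilon)$ along a weak-$*$ convergent subsequence furnished by Alaoglu, concludes $u=P^L_t\ast g+C$ for some constant $C\in\mathbb{C}^M$, and only then pins down $C$ through the a.e.\ nontangential trace. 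Finally, the slab identity $u(\cdot,t+\varepsilon)=P^L_t\ast u(\cdot,\varepsilon)$, which you rightly flag as the crux, is proved in the paper by applying the boundary estimates of Proposition~\ref{c1.2} (from \cite{MaMiSh}) to the difference between $u_\varepsilon$ and the Poisson extension of its trace and letting the radius tend to infinity; your proposal records the difficulty but offers no mechanism for it. The existence half and parts (ii)--(iv) are otherwise in line with the paper (for the forward implication in \eqref{eqn:conv-Bfed} the paper argues through Sarason's theorem and the vanishing-Carleson part of Proposition~\ref{prop-Dir-BMO:exis} rather than direct bookkeeping, but that is a minor difference).
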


It is reassuring to remark that replacing the boundary datum $f$ by $f+C$ where 
$C\in{\mathbb{C}}^M$ in \eqref{Dir-BVP-BMO} changes the solution $u$ into $u+C$
(given that convolution with the Poisson kernel reproduces constants from ${\mathbb{C}}^M$; 
cf. \eqref{eq:IG6gy.2PPP}).
As such, the $\widetilde{\rm BMO}$-Dirichlet problem for $L$ in ${\mathbb{R}}^n_{+}$ 
is also well-posed, if uniqueness of the solution is now understood modulo 
constants from ${\mathbb{C}}^M$. 

\medskip 

As regards the right-pointing implication in \eqref{Dir-BVP-Reg}, for suitable dense 
subspaces of {\rm VMO} we are able to precisely quantify the rate at which the Carleson 
measure $\big|\nabla u(x',t)\big|^2\,t\,dx'dt$ vanishes in $\mathbb{R}^{n}_{+}$.
For example, with $\dot{\mathscr{C}}^\eta({\mathbb{R}}^{n-1},{\mathbb{C}}^M)$ denoting 
the homogeneous H\"older space of order $\eta\in(0,1)$ of ${\mathbb{C}}^M$-valued functions defined
in ${\mathbb{R}}^{n-1}$, it follows from \eqref{hsrwWW-AA-uuu} in Proposition~\ref{prop-Dir-BMO:exis}
(cf. also \eqref{jsfd-3-fff}) that  
\begin{align}\label{eq:Car-r-restri-INTRO}
\begin{array}{c}
\text{if $f\in\dot{\mathscr{C}}^\eta(\mathbb{R}^{n-1},\mathbb{C}^M)$ with $\eta\in(0,1)$ and 
$u$ is as in \eqref{eqn-Dir-BMO:u}, then}
\\[6pt]
\displaystyle
\sup\limits_{Q\subset\mathbb{R}^{n-1},\,\ell(Q)\leq r} 
\Big(\int_{0}^{\ell(Q)}\aver{Q} 
|\nabla u(x',t)|^2\,t\,dx'dt\Big)^{\frac12}=O(r^\eta)\,\,\text{ as }\,\,r\to 0^{+},
\end{array}
\end{align}
where the multiplicative constant implicit in the big-{\it O} condition above depends only 
on $n,L$, $\eta$, and $\|f\|_{\dot{\mathscr{C}}^\eta(\mathbb{R}^{n-1},\mathbb{C}^M)}$. 
The relevance of this result stems from the fact that, for each $\eta\in(0,1)$, 
the collection of functions from ${\mathrm{BMO}}({\mathbb{R}}^{n-1},{\mathbb{C}}^M)$ which also 
belong to $\dot{\mathscr{C}}^\eta({\mathbb{R}}^{n-1},{\mathbb{C}}^M)$ 
make up a dense subspace of ${\mathrm{VMO}}({\mathbb{R}}^{n-1},{\mathbb{C}}^M)$. 
The latter density result constitutes one of the main results in this paper, 
and is formally stated in Theorem~\ref{THMVMO.i}, along with a number of 
variants and generalizations. Let us also point out here that the decay rate 
in \eqref{eq:Car-r-restri-INTRO} is in agreement with the format of the 
well-posedness result proved later in Theorem~\ref{them:BMO-Dir-frac}
(in view of \eqref{defi-BMO.2b-SP.ii} and \eqref{ustarstar-222}).

\medskip

The proof of Theorem~\ref{them:BMO-Dir} relies on a quantitative Fatou type 
theorem, which includes a Poisson integral representation formula along with a 
characterization of {\rm BMO} in terms of the traces of solutions 
to elliptic systems. This is stated next as Theorem~\ref{thm:fatou-ADEEDE}. 
Among other things, the said theorem shows
that the conditions stipulated in the first three lines of \eqref{Dir-BVP-BMO} 
imply that the pointwise nontangential limit considered in the fourth 
line of \eqref{Dir-BVP-BMO} is always meaningful, and that the boundary 
datum should necessarily be selected from the space ${\rm BMO}$. It also 
highlights the fact that it is natural to seek a solution of 
the $\mathrm{BMO}$ Dirichlet problem by taking the convolution of the boundary datum 
with the Poisson kernel of $L$ in the upper-half space. Finally, 
Theorem~\ref{thm:fatou-ADEEDE} is the key ingredient in the proof of uniqueness for the 
$\mathrm{BMO}$-Dirichlet boundary value problem formulated in \eqref{Dir-BVP-BMO}.

\begin{theorem}\label{thm:fatou-ADEEDE}
Let $L$ be an $M\times M$ elliptic system with constant complex coefficients as in
\eqref{L-def}-\eqref{L-ell.X} and consider $P^L$, the associated Poisson kernel for 
$L$ in $\mathbb{R}^{n}_+$ from Theorem~\ref{kkjbhV}. Then there exists a constant
$C=C(L,n)\in(1,\infty)$ with the property that 
\begin{eqnarray}\label{Tafva.BMO}
\left.
\begin{array}{r}
u\in{\mathscr{C}}^\infty({\mathbb{R}}^n_{+},{\mathbb{C}}^M)
\\[4pt]
Lu=0\,\mbox{ in }\,{\mathbb{R}}^n_{+}
\\[6pt]
\text{and }\,\,\|u\|_{**}<\infty
\end{array}
\right\}
\Longrightarrow
\left\{
\begin{array}{l}
u\big|^{{}^{\rm n.t.}}_{\partial{\mathbb{R}}^n_{+}}\,\mbox{ exists a.e.~in }\,
{\mathbb{R}}^{n-1},\,\mbox{ lies in }\,\mathrm{BMO}(\mathbb{R}^{n-1},\mathbb{C}^M),
\\[12pt]
u(x',t)=\Big(P^L_t\ast\big(u\big|^{{}^{\rm n.t.}}_{\partial{\mathbb{R}}^n_{+}}\big)\Big)(x')
\,\text{ for all }\,(x',t)\in{\mathbb{R}}^n_{+},
\\[12pt]
\mbox{and }\,C^{-1}\|u\|_{**}\leq
\big\|u\big|^{{}^{\rm n.t.}}_{\partial{\mathbb{R}}^n_{+}}\big\|
_{\mathrm{BMO}(\mathbb{R}^{n-1},\mathbb{C}^M)}\leq C\|u\|_{**}.
\end{array}
\right.
\end{eqnarray}
In fact, the following characterization 
of $\mathrm{BMO}(\mathbb{R}^{n-1},\mathbb{C}^M)$, adapted to the system $L$, 
holds:
\begin{equation}\label{eq:tr-sols}
\mathrm{BMO}(\mathbb{R}^{n-1},\mathbb{C}^M)
=\Big\{u\big|^{{}^{\rm n.t.}}_{\partial{\mathbb{R}}^{n}_{+}}: 
u\in\mathscr{C}^\infty(\mathbb{R}^n_+,{\mathbb{C}}^M),\,\,Lu=0 
\mbox{ in }\,\mathbb{R}^{n}_{+},\,\,
\|u\|_{**}<\infty\Big\}.
\end{equation}

Moreover, 
\begin{equation}\label{eq:tr-OP-SP}
{\mathrm{LMO}}({\mathbb{R}}^n_{+}):=
\Big\{u\in\mathscr{C}^\infty(\mathbb{R}^n_+,{\mathbb{C}}^M):\,
Lu=0\mbox{ in }\mathbb{R}^{n}_{+},\,\,\|u\|_{**}<\infty\Big\}
\end{equation}
is a linear space on which $\|\cdot\|_{**}$ is a seminorm with null-space
${\mathbb{C}}^M$, the quotient space ${\mathrm{LMO}}({\mathbb{R}}^n_{+})\big/{\mathbb{C}}^M$ 
becomes complete {\rm (}hence Banach{\rm )} when equipped with $\|\cdot\|_{**}$, and 
the nontangential pointwise trace operator acting on equivalence classes in the context 
\begin{equation}\label{eq:tr-OP}
{\mathrm{LMO}}({\mathbb{R}}^n_{+})\big/{\mathbb{C}}^M\ni[u]\longmapsto 
\big[u\big|^{{}^{\rm n.t.}}_{\partial{\mathbb{R}}^{n}_{+}}\big]
\in\widetilde{\mathrm{BMO}}(\mathbb{R}^{n-1},\mathbb{C}^M)
\end{equation}
is a well-defined linear isomorphism between Banach spaces, where
$[u]$ in \eqref{eq:tr-OP} denotes the equivalence class of $u$ in 
${\mathrm{LMO}}({\mathbb{R}}^n_{+})\big/{\mathbb{C}}^M$ and 
$\big[u\big|^{{}^{\rm n.t.}}_{\partial{\mathbb{R}}^{n}_{+}}\big]$ is interpreted as
in \eqref{jgsyjw-AASSS}.
\end{theorem}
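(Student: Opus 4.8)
The plan is to build the proof on top of a careful analysis of the Poisson semigroup $\{P^L_t\}_{t>0}$ and the quantitative properties recorded in Theorem~\ref{kkjbhV}, proceeding through the following stages. First, I would reduce everything to a single central estimate: for a smooth null-solution $u$ of $L$ in $\mathbb{R}^n_+$ with $\|u\|_{**}<\infty$, one has, for each fixed $\varepsilon>0$, a uniform bound $\|u(\cdot,\varepsilon)\|_{\mathrm{BMO}(\mathbb{R}^{n-1},\mathbb{C}^M)}\le C\|u\|_{**}$, together with the reproducing identity $u(x',t+\varepsilon)=\bigl(P^L_t\ast u(\cdot,\varepsilon)\bigr)(x')$. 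The latter follows from uniqueness in the $L^p$-Dirichlet problem of \cite{K-MMMM} applied to the shifted solution $u(\cdot,\cdot+\varepsilon)$, once one checks its slice $u(\cdot,\varepsilon)$ lies in a space to which that theory applies; the $\mathrm{BMO}$ bound on the slice is obtained by testing the Carleson condition on Carleson boxes over cubes $Q$, writing $u(x',\varepsilon)-u_{Q}$ via the fundamental theorem of calculus in the $t$-direction plus horizontal increments controlled by interior estimates for $L$, and invoking a Poincaré-type inequality together with the square-function control $\int_0^{\ell(Q)}\!\aver{Q}|\nabla u|^2\,t\,dx'dt\le\|u\|_{**}^2$.

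Second, with the uniform $\mathrm{BMO}$ bound in hand, I would extract the boundary trace. The family $\{u(\cdot,\varepsilon)\}_{\varepsilon>0}$ is bounded in $\widetilde{\mathrm{BMO}}(\mathbb{R}^{n-1},\mathbb{C}^M)$ (after subtracting a suitable constant for each $\varepsilon$, or normalizing $u_{Q_0}$ on a fixed cube), so by the weak-$\ast$ compactness of $\widetilde{\mathrm{BMO}}$ as the dual of a separable space (the Fefferman--Stein realization, $\widetilde{\mathrm{BMO}}\cong(H^1)^\ast$) one obtains a weak-$\ast$ limit point $f\in\mathrm{BMO}(\mathbb{R}^{n-1},\mathbb{C}^M)$ along a sequence $\varepsilon_j\to0$, with $\|f\|_{\mathrm{BMO}}\le C\|u\|_{**}$. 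Passing to the limit in the reproducing identity $u(x',t+\varepsilon_j)=(P^L_{t}\ast u(\cdot,\varepsilon_j))(x')$ — legitimate because, for fixed $(x',t)$, the map $g\mapsto(P^L_t\ast g)(x')$ is given by integration against a kernel lying in the predual $H^1$, thanks to the decay and cancellation of $P^L$ from Theorem~\ref{kkjbhV} — yields the Poisson representation $u(x',t)=(P^L_t\ast f)(x')$ on all of $\mathbb{R}^n_+$. In particular the representation shows $u$ is independent of the chosen subsequence, and standard properties of $P^L$-approximate-identity-type convergence on Lebesgue points of the $\mathrm{BMO}$ function $f$ (again via the kernel bounds in Theorem~\ref{kkjbhV}) give that the nontangential limit $u\big|^{{}^{\rm n.t.}}_{\partial\mathbb{R}^n_+}$ exists a.e.\ and equals $f$.

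Third, the reverse inequality $C^{-1}\|u\|_{**}\le\|u\big|^{{}^{\rm n.t.}}_{\partial\mathbb{R}^n_+}\|_{\mathrm{BMO}}$ — equivalently, that the Poisson extension of a $\mathrm{BMO}$ function satisfies the Carleson measure bound — is the classical Fefferman--Stein square-function estimate, here adapted to the system $L$: one splits $f=(f-f_{2Q})\mathbf{1}_{2Q}+(f-f_{2Q})\mathbf{1}_{(2Q)^c}+f_{2Q}$ on a fixed cube $Q$, uses the $L^2$ boundedness of the square function $g\mapsto\bigl(\int|\nabla(P^L_t\ast g)|^2 t\,dx'dt\bigr)^{1/2}$ (a consequence of Plancherel together with the symbol bounds for $P^L$ from Theorem~\ref{kkjbhV}) on the local piece, the pointwise kernel decay of $\nabla_{x',t}P^L_t$ on the tail piece, and the annihilation of constants $\nabla(P^L_t\ast C)=0$ on the last piece. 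Summing over the Whitney decomposition of the Carleson box $T(Q)$ and using the John--Nirenberg inequality to control $\sum_k 2^{-k}\fint_{2^{k+1}Q}|f-f_{2Q}|$ gives the bound.

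Finally, the structural statements \eqref{eq:tr-sols}–\eqref{eq:tr-OP} follow formally: the two-sided estimate in \eqref{Tafva.BMO} shows the trace map in \eqref{eq:tr-OP} is injective on equivalence classes and has closed range, while surjectivity is the assertion that every $f\in\mathrm{BMO}$ arises as such a trace, which is exactly part (iii) of the already-established consequences (take $u:=P^L_t\ast f$, smooth and null-solution by Theorem~\ref{kkjbhV}, with $\|u\|_{**}\simeq\|f\|_{\mathrm{BMO}}$ by the two inequalities just proved); that $\|\cdot\|_{**}$ is a seminorm with null-space exactly $\mathbb{C}^M$ uses the left inequality in \eqref{Tafva.BMO} plus \eqref{defi-BMO-nVV}, and completeness of the quotient transfers across the isomorphism from that of $\widetilde{\mathrm{BMO}}$. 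I expect the main obstacle to be the rigorous justification of the reproducing formula $u(x',t+\varepsilon)=(P^L_t\ast u(\cdot,\varepsilon))(x')$ for an arbitrary solution merely satisfying the Carleson condition — this requires knowing a priori that the slices $u(\cdot,\varepsilon)$ have controlled growth at infinity (e.g.\ a locally uniform $\mathrm{BMO}$ bound translating into the sublinear growth needed to enter the uniqueness class of \cite{K-MMMM}), and interfacing the present Carleson hypothesis with the well-posedness framework of that paper is where the delicate work lies.
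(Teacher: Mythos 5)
Your overall architecture mirrors the paper's (uniform BMO bound on the horizontal slices, a reproducing formula for the shifted solutions, weak-$\ast$ compactness in $\widetilde{\mathrm{BMO}}=(H^1)^\ast$, the Fefferman--Stein-type converse for the Poisson extension, and the formal functional-analytic endgame), and the converse Carleson estimate plus the final structural statements are handled essentially as in the paper. However, the two load-bearing analytic steps are not justified, and as sketched they fail. First, the central estimate $\|u(\cdot,\varepsilon)\|_{\mathrm{BMO}}\le C\|u\|_{**}$ cannot be obtained by the fundamental theorem of calculus in $t$ plus interior estimates and a Poincar\'e inequality: Cauchy--Schwarz against the Carleson condition produces a factor $\bigl(\log(\ell(Q)/\varepsilon)\bigr)^{1/2}$ when you integrate $\partial_t u$ from height $\varepsilon$ up to height $\ell(Q)$, so this route only yields a bound of order $\log^{1/2}(1/\varepsilon)$, not one uniform in $\varepsilon$. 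In the paper this step is a genuine duality argument (Proposition~\ref{prop:Car->BMO}, applied in Lemma~\ref{lemma:feps-BMO}): one pairs the slice against $H^1_a$, uses the Calder\'on-type reproducing identity of Lemma~\ref{prop:Pt-Delta}, tent-space duality (Lemma~\ref{lemma:tent}), the $H^1\to L^1$ square-function bound of Lemma~\ref{lemma:SFE-H1}, and the duality \eqref{jcgsfSEW}. Second, the reproducing identity $u(\cdot,t+\varepsilon)=P^L_t\ast u(\cdot,\varepsilon)$ cannot be obtained from the $L^p$ uniqueness theory of \cite{K-MMMM}: slices of a Carleson-class solution in general grow at infinity (logarithmically, e.g.\ for data $\log|x'|$) and their nontangential maximal function lies in no $L^p$, so they never enter that uniqueness class. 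You correctly identify this as the delicate point, but the resolution you propose is not viable; the paper instead shows $u(\cdot,\varepsilon)\in\mathscr{C}^{\Upsilon_{\!\#}}$ via the Bloch-type bound of Lemma~\ref{lemma:decay-Du:Carl} and Lemma~\ref{lemma:decay-infty}, and proves the identity (Lemma~\ref{lemma:u-lift:uniq}) by applying the boundary estimate of Proposition~\ref{c1.2} to the difference of the two candidate solutions and letting the radius tend to infinity.

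There is also a flaw in the limiting step: you claim one may pass to the weak-$\ast$ limit in $(P^L_t\ast f_{\varepsilon_j})(x')$ because $P^L_t(x'-\cdot)$ lies in $H^1$ ``thanks to the decay and cancellation of $P^L$''. But $P^L$ has no cancellation --- its integral is $I_{M\times M}$ by \eqref{eq:IG6gy.2} --- so this kernel is not an $H^1$ function, and moreover the $H^1$--$\mathrm{BMO}$ pairing is only defined modulo constants, which convolution with $P^L_t$ reproduces rather than annihilates. The paper's device is to pass to the limit in the \emph{gradient} of the representation, using that the components of $t\nabla K^L(\cdot,t)$ are $(L^2,1)$-molecules for $H^1$ (Lemma~\ref{lemma:weak-*}), which yields $\nabla u(\cdot,t)=\nabla(P^L_t\ast g)$; one then recovers $u=P^L_t\ast g+C$ and absorbs the constant $C$ into the boundary trace $f:=g+C$. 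Without this (or an equivalent) correction, your extraction of the Poisson representation from weak-$\ast$ compactness does not go through.
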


There is a counterpart of the Fatou type result stated as Theorem~\ref{thm:fatou-ADEEDE}
emphasizing the space {\rm VMO} in place of {\rm BMO}. Specifically, we prove the following 
theorem. 

\begin{theorem}\label{thm:fatou-VMO}
Let $L$ be an $M\times M$ elliptic system with constant complex coefficients as in
\eqref{L-def}-\eqref{L-ell.X} and consider $P^L$, the associated Poisson kernel for 
$L$ in $\mathbb{R}^{n}_+$ from Theorem~\ref{kkjbhV}. Then for any 
function 
\begin{equation}\label{Dir-BVP-VMOq1}
\text{$u\in{\mathscr{C}}^\infty({\mathbb{R}}^n_{+},{\mathbb{C}}^M)$ satisfying 
$Lu=0$ in ${\mathbb{R}}^n_{+}$ and $\|u\|_{**}<\infty$}
\end{equation}
one has
\begin{equation}\label{Dir-BVP-VMOq2}
\left.
\begin{array}{l}
\big|\nabla u(x',t)\big|^2\,t\,dx'dt\,\,\mbox{is a vanishing}
\\[4pt]
\text{Carleson measure in }\,\,\mathbb{R}^{n}_+
\end{array}
\right\}
\Longrightarrow\,\,
u\big|^{{}^{\rm n.t.}}_{\partial{\mathbb{R}}^n_{+}}
\in{\mathrm{VMO}(\mathbb{R}^{n-1},\mathbb{C}^M)}.
\end{equation}

Furthermore, the following characterization of the space 
$\mathrm{VMO}(\mathbb{R}^{n-1},\mathbb{C}^M)$, adapted to the system $L$, 
holds:
\begin{align}\label{eq:tr-sols-VMO}
\mathrm{VMO}(\mathbb{R}^{n-1},\mathbb{C}^M)
=\Big\{u\big|^{{}^{\rm n.t.}}_{\partial{\mathbb{R}}^{n}_{+}}:\,\,&
u\in\mathrm{LMO}(\mathbb{R}^n_{+},\mathbb{C}^M)\,\,\text{ and }\,\,
\,\big|\nabla u(x',t)\big|^2\,t\,dx'dt
\nonumber\\[0pt] 
&\text{is a vanishing Carleson measure in }\,\,\mathbb{R}^{n}_+\Big\}.
\end{align}
\end{theorem}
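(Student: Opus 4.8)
The plan is to deduce Theorem~\ref{thm:fatou-VMO} from the already-available Theorem~\ref{thm:fatou-ADEEDE} together with a sharpened, "vanishing" tracking of the estimates that enter its proof. The starting point is that any $u$ as in \eqref{Dir-BVP-VMOq1} automatically falls under the hypotheses of Theorem~\ref{thm:fatou-ADEEDE}, so its nontangential boundary trace $f:=u\big|^{{}^{\rm n.t.}}_{\partial{\mathbb{R}}^n_{+}}$ exists a.e., belongs to $\mathrm{BMO}(\mathbb{R}^{n-1},\mathbb{C}^M)$, and $u(x',t)=(P^L_t\ast f)(x')$ for all $(x',t)\in{\mathbb{R}}^n_{+}$. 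Thus the content of the implication \eqref{Dir-BVP-VMOq2} is purely a statement about the Poisson extension: if the Littlewood-Paley measure of $P^L_t\ast f$ is a \emph{vanishing} Carleson measure, then $f\in\mathrm{VMO}(\mathbb{R}^{n-1},\mathbb{C}^M)$.

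First I would isolate the two quantitative ingredients from the {\rm BMO} theory that need vanishing analogues. On the one hand, the lower bound $C^{-1}\|f\|_{\mathrm{BMO}}\leq\|u\|_{**}$ in \eqref{Tafva.BMO} is proved by testing the oscillation of $f$ over a cube $Q$ against the gradient of $u$ over the Carleson box $T(Q)$; localizing this argument to cubes with $\ell(Q)\leq r$ shows that
\begin{equation*}
\sup_{Q\subset\mathbb{R}^{n-1},\,\ell(Q)\leq r}\aver{Q}\big|f-f_Q\big|\,dx'
\;\leq\;C\sup_{Q\subset\mathbb{R}^{n-1},\,\ell(Q)\leq r}\Big(\int_0^{\ell(Q)}\aver{Q}|\nabla u|^2\,t\,dx'dt\Big)^{1/2},
\end{equation*}
and the right-hand side tends to $0$ as $r\to0^{+}$ precisely because the measure is a \emph{vanishing} Carleson measure. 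That is exactly the defining condition \eqref{defi-VMO} for $f\in\mathrm{VMO}(\mathbb{R}^{n-1},\mathbb{C}^M)$, which proves \eqref{Dir-BVP-VMOq2}. The technical care here is only in checking that the constant $C$ in the localized inequality is the same uniform constant, independent of $r$ and of the position of $Q$ — this follows by re-reading the proof of the lower estimate in Theorem~\ref{thm:fatou-ADEEDE} and observing that every step respects the side-length restriction.

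For the characterization \eqref{eq:tr-sols-VMO}, the inclusion $\supseteq$ is immediate from what was just proved. For $\subseteq$, given $f\in\mathrm{VMO}(\mathbb{R}^{n-1},\mathbb{C}^M)$ I would set $u(x',t):=(P^L_t\ast f)(x')$; since $\mathrm{VMO}\subset\mathrm{BMO}$, Theorem~\ref{thm:fatou-ADEEDE} already gives $u\in\mathrm{LMO}(\mathbb{R}^n_{+},\mathbb{C}^M)$ with $u\big|^{{}^{\rm n.t.}}_{\partial{\mathbb{R}}^n_{+}}=f$, so it remains to verify that $|\nabla u(x',t)|^2\,t\,dx'dt$ is a \emph{vanishing} Carleson measure. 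The natural route is approximation: by Sarason's density result \eqref{ku6ffcfc} pick $f_j\in{\rm UC}\cap{\rm BMO}$ with $\|f-f_j\|_{\mathrm{BMO}}\to0$, and for each $j$ exploit the extra regularity/decay of $f_j$ (uniform continuity, together with the $\mathrm{BMO}$ membership) to show directly that the Poisson extension $u_j:=P^L_\bullet\ast f_j$ has a vanishing Littlewood-Paley measure; then control $\|u-u_j\|_{**}\leq C\|f-f_j\|_{\mathrm{BMO}}$ by the upper bound in \eqref{Tafva.BMO} and conclude that a $\|\cdot\|_{**}$-limit of functions with vanishing Carleson Littlewood-Paley measure again has a vanishing Carleson Littlewood-Paley measure (the class of vanishing Carleson measures is $\|\cdot\|_{\mathcal{C}}$-closed).

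The main obstacle I anticipate is the last step for a single regularized datum: proving that if $g\in{\rm UC}({\mathbb{R}}^{n-1},\mathbb{C}^M)\cap{\rm BMO}({\mathbb{R}}^{n-1},\mathbb{C}^M)$ then $|\nabla(P^L_t\ast g)|^2\,t\,dx'dt$ is genuinely a vanishing Carleson measure. One must split a cube's Carleson box into a part where one uses the oscillation of $g$ at the scale $\ell(Q)$ (small by uniform continuity) and a far part handled by the $\mathrm{BMO}$ norm and the decay of $\nabla P^L$; the kernel estimates for $P^L$ recorded in Theorem~\ref{kkjbhV} are what make the far contribution summable and small when $\ell(Q)$ is small. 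A cleaner alternative, which sidesteps some of this, is to invoke the equivalence \eqref{eqn:conv-Bfed} from Theorem~\ref{them:BMO-Dir}: vertical $\mathrm{BMO}$-convergence $u(\cdot,\varepsilon)\to f$ is equivalent to the vanishing Carleson condition, and for $f\in\mathrm{VMO}$ that vertical convergence can be established directly from \eqref{defi-VMO-SSS} and standard Poisson-kernel approximate-identity arguments. Either way the remaining work is routine kernel bookkeeping rather than a new idea.
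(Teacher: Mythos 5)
The heart of the matter is your first step, and it contains a genuine gap. The implication \eqref{Dir-BVP-VMOq2} is exactly the scale-restricted inequality you display, and you do not prove it: you assert that it ``follows by re-reading the proof of the lower estimate in Theorem~\ref{thm:fatou-ADEEDE} and observing that every step respects the side-length restriction.'' That is not how the lower bound in \eqref{Tafva.BMO} is obtained, nor can it be obtained by the cube-by-cube testing you describe: trying to control $\frac{1}{|Q|}\int_Q\int_0^{\ell(Q)}|\partial_t u(x',t)|\,dt\,dx'$ by the quantity $\big(\int_0^{\ell(Q)}\frac{1}{|Q|}\int_Q|\nabla u|^2\,t\,dx'dt\big)^{1/2}$ via Cauchy--Schwarz produces the divergent factor $\big(\int_0^{\ell(Q)}\frac{dt}{t}\big)^{1/2}$, so no naive local argument of this shape exists. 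In the paper the estimate $\|f\|_{\mathrm{BMO}}\leq C\|u\|_{**}$ is proved globally through Proposition~\ref{prop:Car->BMO}: the vertical shifts $f_\varepsilon=u(\cdot,\varepsilon)$ are paired against arbitrary $H^1$ atoms by means of the Calder\'on-type reproducing formula of Lemma~\ref{prop:Pt-Delta} and the tent-space inequality of Lemma~\ref{lemma:tent}, and one concludes via the duality \eqref{jcgsfSEW} and an Alaoglu weak-$*$ limit which recovers $f$ only globally and modulo constants. None of these steps respects a restriction $\ell(Q)\leq r$ (the atoms live on arbitrarily large cubes and the pairing runs over all of ${\mathbb{R}}^n_{+}$), so your localized inequality does not follow by inspection; it is, in substance, equivalent to the statement being proved. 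The paper obtains the small-scale control by a different mechanism, which is the real work: Lemma~\ref{lemma:VNO-BMO-convergence} shows that the vanishing Carleson condition forces $\|u(\cdot,\varepsilon)-f\|_{\mathrm{BMO}(\mathbb{R}^{n-1},\mathbb{C}^M)}\to0$ (its proof being the three-case estimate of $\|u-u_\varepsilon\|_{**}$ in terms of the truncated norms $\|u\|_{**,r}$), and then the oscillation of $f$ on cubes with $\ell(Q)\leq r$ is bounded by $\|u(\cdot,\varepsilon)-f\|_{\mathrm{BMO}}+C\,r\,\varepsilon^{-1}\|u\|_{**}$ using the Bloch-type bound of Lemma~\ref{lemma:decay-Du:Carl}, after which one sends $r\to0^+$ and then $\varepsilon\to0^+$. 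Either this route, or a genuinely localized duality argument (pairing against atoms adapted to a fixed cube and estimating the tails), has to be supplied; your proposal skips precisely this step.

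The second half of your proposal, namely that every $f\in\mathrm{VMO}(\mathbb{R}^{n-1},\mathbb{C}^M)$ arises as such a trace, is sound but does more work than necessary: the paper simply invokes Proposition~\ref{prop-Dir-BMO:exis} (its part proving \eqref{hsrwWW-VMO}), where the vanishing Carleson property of the Poisson extension is derived directly from ${\rm osc}_1(f;r)\to0$ via the square-function estimates and dominated convergence. Your density route -- approximating by $\mathrm{UC}\cap\mathrm{BMO}$ data, proving the vanishing property for the regularized extensions, and then using $\|u-u_j\|_{**}\leq C\|f-f_j\|_{\mathrm{BMO}}$ together with the fact that the vanishing class is closed under $\|\cdot\|_{**}$-limits -- is correct, and your alternative of citing \eqref{eqn:conv-Bfed} is legitimate as long as one notes that the relevant implication there rests on Lemma~\ref{lemma:VNO-BMO-convergence} and Proposition~\ref{prop-Dir-BMO:exis} rather than on the theorem being proved, so no circularity arises; but neither variant repairs the gap in the first implication.
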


The analogue of Fefferman's theorem, characterizing {\rm BMO} as in \eqref{L-dJHG}, 
in the case of elliptic systems with complex coefficients makes the topic of the 
first item of our next theorem. The second item may be viewed as a characterization 
of {\rm VMO} in the spirit of Fefferman's original result. 

\begin{theorem}\label{thm:FEFF}
Let $L$ be an $M\times M$ elliptic system with constant complex coefficients as in
\eqref{L-def}-\eqref{L-ell.X} and consider $P^L$, the associated Poisson kernel for 
$L$ in $\mathbb{R}^{n}_+$ from Theorem~\ref{kkjbhV}. Assume 
$f:\mathbb{R}^{n-1}\to\mathbb{C}^{M}$ is a Lebesgue measurable function such that
\begin{equation}\label{Di-AK}
\int_{{\mathbb{R}}^{n-1}}\frac{|f(x')|}{1+|x'|^{n}}\,dx'<\infty.
\end{equation}
Let $u$ be the Poisson integral of $f$ with respect to the system $L$, i.e., 
$u:{\mathbb{R}}^n_{+}\to{\mathbb{C}}^M$ is given by $u(x',t):=(P^L_t\ast f)(x')$ 
for each $(x',t)\in{\mathbb{R}}^n_{+}$. Then the following are true:

\begin{enumerate}
\item[(1)] $f$ belongs to the space ${\rm BMO}({\mathbb{R}}^{n-1};\mathbb{C}^{M})$ 
if and only if $\|u\|_{\ast\ast}<\infty$;
\vskip 0.08in
\item[(2)] $f$ belongs to the space ${\rm VMO}({\mathbb{R}}^{n-1};\mathbb{C}^{M})$ 
if and only if $|\nabla u(x',t)|^2\,t\,dx'dt$ is a vanishing Carleson measure in 
${\mathbb{R}}^n_{+}$.
\end{enumerate}
\end{theorem}

In our next result we shall revisit the issue of describing {\rm VMO} as the closure 
within {\rm BMO} of a subspace of functions whose pointwise oscillations vanish as the scale
decreases to zero. One such description is contained in \eqref{ku6ffcfc}. However, for a 
variety of purposes (such as the proof of the result recorded in Theorem~\ref{i87hbBV} below), 
the fact that the condition of uniform continuity is of a purely qualitative 
nature renders the space {\rm UC} difficult to work with. As such, 
it is very desirable to replace it, 
in the context of Sarason's density result recorded in \eqref{ku6ffcfc}, by smaller subspaces 
within which uniform continuity may be suitably quantified. This issue is addressed in 
Theorem~\ref{THMVMO.i} below. As a preamble, we introduce some notation. 
Pick a modulus of continuity, i.e., a function 
\begin{align}\label{UpU}  
\Upsilon:[0,\infty)\to[0,\infty]\,\,\text{ nondecreasing and such that}\,
\lim\limits_{s\to 0^{+}}\Upsilon(s)=0.
\end{align}
Given $m\in{\mathbb{N}}$, consider the space
\begin{align}\label{UpUpUp}
& \hskip -0.50in
{\mathscr{C}}^{\Upsilon}({\mathbb{R}}^{m}):=\big\{
f:{\mathbb{R}}^{m}\to\mathbb{C}:\,\text{ there exists }\,C\in(0,\infty)
\,\text{ such that}
\nonumber\\[4pt]
& \hskip 1.00in
|f(a)-f(b)|\leq C\Upsilon(|a-b|)\,\text{ for all }\,a,b\in{\mathbb{R}}^{m}\big\}
\end{align}
and define $\|f\|_{{\mathscr{C}}^{\Upsilon}({\mathbb{R}}^{m})}$ to be the smallest constant $C$ 
intervening above. In the sequel, the space of ${\mathbb{C}}^M$-valued functions with components in 
${\mathscr{C}}^{\Upsilon}({\mathbb{R}}^{m})$ will be denoted 
by ${\mathscr{C}}^{\Upsilon}({\mathbb{R}}^{m},{\mathbb{C}}^M)$.
Clearly, ${\mathscr{C}}^{\Upsilon}({\mathbb{R}}^{m})\subseteq
{\mathrm{UC}}({\mathbb{R}}^{m})$ and, in fact, 
\begin{equation}\label{i7y554}
{\mathrm{UC}}({\mathbb{R}}^{m})
=\bigcup\limits_{\stackrel{\Upsilon\,\text{modulus of}}{\text{\tiny{continuity}}}}
{\mathscr{C}}^{\Upsilon}({\mathbb{R}}^{m}).
\end{equation}
To see the left-to-right inclusion in \eqref{i7y554}, observe that if 
$f\in{\mathrm{UC}}({\mathbb{R}}^{m})$ is arbitrary and we define
$\Upsilon_f(s):=\sup\{|f(x)-f(y)|:\,x,y\in{\mathbb{R}}^m,\,|x-y|\leq s\}$ for each 
$s\in[0,\infty)$, then $\Upsilon_f$ is a modulus of continuity and 
$|f(a)-f(b)|\leq \Upsilon_f(|a-b|)$ for all $a,b\in{\mathbb{R}}^{m}$, hence
$f\in{\mathscr{C}}^{\Upsilon_f}({\mathbb{R}}^{m})$, as wanted.

Examples of interest are obtained by taking $\eta\in(0,1]$ and defining $\Upsilon_\eta(s):=s^\eta$ 
for every $s\geq 0$. Then the space ${\mathscr{C}}^{\Upsilon_\eta}({\mathbb{R}}^{m})$ becomes 
precisely $\dot{\mathscr{C}}^\eta({\mathbb{R}}^{m})$, the space of functions satisfying 
a homogeneous H\"older condition of order $\eta$ in ${\mathbb{R}}^{m}$ in the case when $\eta\in(0,1)$, 
and becomes ${\mathrm{Lip}}({\mathbb{R}}^{m})$, the space of Lipschitz functions in ${\mathbb{R}}^{m}$,
in the case when $\eta=1$.

Here is the theorem advertised earlier, which may be regarded as a quantitative description of {\rm VMO}, 
improving on Sarason's classical result \eqref{ku6ffcfc}. 

\begin{theorem}\label{THMVMO.i}
Consider the function $\Upsilon_{\!\#}:[0,\infty)\to [0,\infty)$ given 
at each $s\geq 0$ by 
\begin{equation}\label{decay-infty:Ups0}
\Upsilon_{\!\#}(s):=\min\{1,s\}+\max\{0,\ln s\}
=\left\{
\begin{array}{ll}
s, &\,\mbox{if }\,s\leq 1,
\\[6pt]
1+\ln s, &\,\mbox{if }\,s>1.
\end{array}
\right.
\end{equation}
Then for every modulus of continuity $\Upsilon$ with the property that $\Upsilon_{\!\#}\leq C\Upsilon$
on $[0,\infty)$ for some finite constant $C>0$, the following density result holds for each $n\in{\mathbb{N}}$:
\begin{equation}\label{UpUpUp.2}
\parbox{8.60cm}{for every function $f\in{\mathrm{VMO}}({\mathbb{R}}^{n})$ 
there exists a sequence $\{f_j\}_{j\in{\mathbb{N}}}\subset
{\mathscr{C}}^\Upsilon({\mathbb{R}}^{n})\cap{\mathscr{C}}^\infty({\mathbb{R}}^{n})
\cap{\mathrm{BMO}}({\mathbb{R}}^{n})$ 
such that $\|f-f_j\|_{{\mathrm{BMO}}({\mathbb{R}}^{n})}\longrightarrow 0$ as $j\to\infty$.}
\end{equation}
In short, ${\mathscr{C}}^\Upsilon({\mathbb{R}}^{n})\cap{\mathscr{C}}^\infty({\mathbb{R}}^{n})
\cap{\mathrm{BMO}}({\mathbb{R}}^{n})$ is dense in ${\mathrm{VMO}}({\mathbb{R}}^{n})$.
In fact, 
\begin{equation}\label{iy65ffvgH}
\parbox{11.80cm}{the smaller space, consisting of 
$f\in{\mathscr{C}}^\Upsilon({\mathbb{R}}^{n})\cap{\mathscr{C}}^\infty({\mathbb{R}}^{n})
\cap{\mathrm{BMO}}({\mathbb{R}}^{n})$ such that $\partial^{\alpha}f\in{\mathscr{C}}^\Upsilon({\mathbb{R}}^{n})
\cap L^\infty({\mathbb{R}}^{n})$ for every $\alpha\in{\mathbb{N}}_0^{n}$ with 
$|\alpha|\geq 1$, is also dense in ${\mathrm{VMO}}({\mathbb{R}}^{n})$.}
\end{equation}
\end{theorem}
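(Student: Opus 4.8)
\textbf{Proof proposal for Theorem~\ref{THMVMO.i}.}
The plan is to reduce the claim to the machinery developed for the {\rm BMO}- and {\rm VMO}-Dirichlet problems. Fix a modulus of continuity $\Upsilon$ dominating $\Upsilon_{\!\#}$, and take $n\in\mathbb{N}$; we view ${\mathbb{R}}^{n}$ as the boundary $\partial{\mathbb{R}}^{n+1}_{+}$ and work with a fixed elliptic system $L$ on ${\mathbb{R}}^{n+1}$ (the Laplacian $\Delta$ being the simplest choice, for which $P^L$ is the classical Poisson kernel). Given $f\in{\mathrm{VMO}}({\mathbb{R}}^{n})$, set $u(x',t):=(P^L_t\ast f)(x')$. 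By Theorem~\ref{them:BMO-Dir}, $u$ solves the {\rm VMO}-Dirichlet problem \eqref{Dir-BVP-VMO}, $\|u\|_{**}\leq C\|f\|_{\rm BMO}$, and the Littlewood--Paley measure of $u$ is a vanishing Carleson measure; moreover, by part (iii), $u(\cdot,\varepsilon)\to f$ in {\rm BMO} as $\varepsilon\to 0^{+}$. So the natural candidate for the approximating sequence is $f_j:=u(\cdot,\varepsilon_j)$ for a sequence $\varepsilon_j\downarrow 0$. Each $f_j$ is automatically ${\mathscr{C}}^\infty$ on ${\mathbb{R}}^n$ (convolution with the smooth Poisson kernel) and lies in ${\mathrm{BMO}}({\mathbb{R}}^{n})$ by \eqref{feps-BTTGB}. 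What remains is to show $f_j\in{\mathscr{C}}^{\Upsilon}({\mathbb{R}}^{n})$ with a quantitative bound, and likewise for all its derivatives, which is the content of \eqref{iy65ffvgH}.

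The heart of the matter is therefore an estimate of the form
\begin{equation}\label{eqn-ProofProp-A}
\big|\partial^{\alpha}u(x',\varepsilon)-\partial^{\alpha}u(y',\varepsilon)\big|
\leq C_{\alpha}(\varepsilon)\,\|f\|_{{\rm BMO}({\mathbb{R}}^{n})}\,\Upsilon(|x'-y'|)
\qquad\text{for all }x',y'\in{\mathbb{R}}^n,
\end{equation}
valid for every multi-index $\alpha\in{\mathbb{N}}_0^{n}$ (with $|\alpha|\geq 1$ one also gets $\partial^{\alpha}u(\cdot,\varepsilon)\in L^{\infty}$, since then the kernel $\partial^{\alpha}P^L_{\varepsilon}$ has vanishing integral). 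To prove \eqref{eqn-ProofProp-A} I would use the decay and smoothness of the Poisson kernel from Theorem~\ref{kkjbhV}: $\partial^{\alpha}P^L$ and its gradient obey $|\partial^{\alpha}P^L(z')|\lesssim (1+|z'|)^{-(n+|\alpha|)}$ and $|\nabla\partial^{\alpha}P^L(z')|\lesssim(1+|z'|)^{-(n+|\alpha|+1)}$. Writing the difference in \eqref{eqn-ProofProp-A} against $f-f_{Q}$ where $Q$ is a cube of side $\sim\max\{|x'-y'|,\varepsilon\}$ centered appropriately, and splitting the integral into the dilates $2^{k}Q\setminus 2^{k-1}Q$, the standard John--Nirenberg telescoping argument (the mean oscillation over $2^{k}Q$ is $\lesssim k\,\|f\|_{\rm BMO}$) yields two regimes: for $|x'-y'|\leq\varepsilon$ one gains a factor $|x'-y'|/\varepsilon$ from the mean-value theorem applied to the kernel, giving the linear part $\min\{1,|x'-y'|/\varepsilon\}$ of $\Upsilon_{\!\#}$ after rescaling; for $|x'-y'|>\varepsilon$ the sum over the annuli up to scale $|x'-y'|$ produces the logarithmic term $\ln(|x'-y'|/\varepsilon)$. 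In other words the modulus of continuity of $u(\cdot,\varepsilon)$, once the $\varepsilon$-dependent constant is pulled out, is governed precisely by $\Upsilon_{\!\#}$, which is why the hypothesis $\Upsilon_{\!\#}\leq C\Upsilon$ is exactly what is needed; since $\Upsilon$ is nondecreasing, $\Upsilon_{\!\#}(|x'-y'|)\leq C\Upsilon(|x'-y'|)$ for $|x'-y'|$ up to any fixed scale, and for large separations both sides are controlled by $\|f\|_{\rm BMO}$ directly, so \eqref{eqn-ProofProp-A} follows.

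With \eqref{eqn-ProofProp-A} in hand the proof concludes quickly: for each fixed $\varepsilon>0$ we get $u(\cdot,\varepsilon)\in{\mathscr{C}}^{\Upsilon}({\mathbb{R}}^{n})\cap{\mathscr{C}}^\infty({\mathbb{R}}^{n})\cap{\mathrm{BMO}}({\mathbb{R}}^{n})$ with all derivatives in ${\mathscr{C}}^{\Upsilon}({\mathbb{R}}^{n})\cap L^{\infty}({\mathbb{R}}^{n})$ for $|\alpha|\geq 1$, and choosing $\varepsilon=\varepsilon_j\downarrow 0$ together with the {\rm BMO} convergence $u(\cdot,\varepsilon_j)\to f$ supplied by part (iii) of Theorem~\ref{them:BMO-Dir} gives both \eqref{UpUpUp.2} and the stronger \eqref{iy65ffvgH}. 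Note that both the containment $f_j\in{\mathscr{C}}^\Upsilon$ and the convergence $f_j\to f$ in {\rm BMO} rely on $f\in{\mathrm{VMO}}$: the former because only the vanishing Carleson condition forces the $\varepsilon$-dependent constants to interact correctly with $\Upsilon$ (qualitatively, it is $f\in{\rm VMO}$ that makes $u(\cdot,\varepsilon)\to f$), the latter being exactly \eqref{eqn:conv-Bfed}--\eqref{Dir-BVP-Reg}. I expect the main obstacle to be the careful bookkeeping in \eqref{eqn-ProofProp-A} needed to separate the linear and logarithmic regimes and to verify that the resulting modulus is majorized by $\Upsilon_{\!\#}$ uniformly in $\varepsilon$ on bounded scales; once that kernel estimate is nailed down, everything else is a direct appeal to the already-proved well-posedness and regularity statements.
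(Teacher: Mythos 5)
Your proposal is correct and is essentially the paper's own proof: the paper also approximates $f\in{\mathrm{VMO}}$ by the vertical slices $f_\varepsilon:=u(\cdot,\varepsilon)$ of the Poisson extension $u=P^\Delta_t\ast f$, gets the ${\mathrm{BMO}}$ convergence $f_\varepsilon\to f$ from Theorem~\ref{them:BMO-Dir} part (iii) and \eqref{eqn:conv-BfEE}, and gets the membership of $f_\varepsilon$ and all its derivatives in ${\mathscr{C}}^{\Upsilon_{\!\#}}\cap L^\infty$ (hence in ${\mathscr{C}}^{\Upsilon}$, by $\Upsilon_{\!\#}\leq C\Upsilon$ and \eqref{PP-876tf.2hhn}) from Lemma~\ref{lemma:u-lift:props}(d). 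The only deviation is that you re-derive that modulus-of-continuity estimate by a direct kernel computation (telescoping over dyadic annuli against $f-f_Q$), whereas the paper obtains it from the Bloch-type bound of Lemma~\ref{lemma:decay-Du:Carl} combined with the real-variable Lemma~\ref{lemma:decay-infty}; both routes yield the same $\Upsilon_{\!\#}(|x'-y'|/\varepsilon)$ bound. Two small side remarks in your write-up are off but harmless: for large $|x'-y'|$ the difference $|u(x',\varepsilon)-u(y',\varepsilon)|$ is \emph{not} bounded by $C\|f\|_{\mathrm{BMO}}$ (slices of a BMO Poisson extension may be unbounded) — it is the logarithmic growth of $\Upsilon_{\!\#}$ that absorbs it; and the containment $f_\varepsilon\in{\mathscr{C}}^{\Upsilon}$ for fixed $\varepsilon$ does not use $f\in{\mathrm{VMO}}$ at all (it holds for any BMO datum); the VMO hypothesis enters only through the convergence $f_\varepsilon\to f$ in ${\mathrm{BMO}}$.
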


The proof of Theorem~\ref{THMVMO.i} (stated with $n-1$ in place of $n$) relies on the 
fact that, given any $f\in{\mathrm{BMO}}({\mathbb{R}}^{n-1},{\mathbb{C}}^M)$, we have 
(as seen from \eqref{eqn-Dir-BMO:u} and \eqref{eqn:conv-Bfed}-\eqref{Dir-BVP-Reg})
\begin{equation}\label{P-qi7GVV}
P^L_t\ast f\longrightarrow f\,\,\text{ in }\,\,{\mathrm{BMO}}({\mathbb{R}}^{n-1},{\mathbb{C}}^M)
\,\,\text{ as }\,\,t\to 0^{+}\,\Longleftrightarrow\,
f\in{\mathrm{VMO}}({\mathbb{R}}^{n-1},{\mathbb{C}}^M),
\end{equation}
for some (or any) $M\times M$ elliptic system $L$  with constant complex coefficients as in
\eqref{L-def}-\eqref{L-ell.X}. A posteriori, once the density result in Theorem~\ref{THMVMO.i} has been 
established, we can considerably enlarge the class of approximations to the identity 
for which a result as in \eqref{P-qi7GVV} holds, as described below. 

\begin{theorem}\label{ndyRE}
Suppose $\varphi:{\mathbb{R}}^{n}\to{\mathbb{C}}^{M\times M}$ has the property 
that there exist $C\in(0,\infty)$ and $\varepsilon\in(0,1]$ such that
\begin{equation}\label{Bgstwy-2-new}
|\varphi(x)|\leq C(1+|x|)^{-n-\varepsilon}\quad\mbox{for every }\,\,x\in\mathbb{R}^{n}\setminus\{0\},
\end{equation}
and
\begin{equation}\label{Bgstwy-2-new2}
|\varphi(x+h)-\varphi(x)|\leq\frac{C|h|^\varepsilon}{|x|^{n+\varepsilon}}\quad
\mbox{for all }\,\,x\in\mathbb{R}^{n}\setminus\{0\},\,\,h\in{\mathbb{R}}^n,\,\,|h|<|x|/2.
\end{equation}
In addition, assume
\begin{equation}\label{Bgstwy}
\int_{\mathbb{R}^{n}}\varphi(x)\,dx=I_{M\times M}
\end{equation}
{\rm (}the $M\times M$ identity matrix{\rm )}. Then
\begin{equation}\label{Bgstwy-3}
\begin{array}{c}
\text{for each $f\in{\mathrm{VMO}}({\mathbb{R}}^{n},{\mathbb{C}}^M)$ there holds}
\\[6pt]
\varphi_t\ast f\longrightarrow f\,\text{ in }\,
{\mathrm{BMO}}({\mathbb{R}}^{n},{\mathbb{C}}^M),\,\text{ as }\,t\to 0^{+},
\end{array}
\end{equation}
where, in the present context, $\varphi_t(x):=t^{-n}\varphi(x/t)$
for each $x\in{\mathbb{R}}^n$ and each $t>0$.

As a consequence, given $\varphi\in{\mathscr{C}}^1\big({\mathbb{R}}^{n},{\mathbb{C}}^{M\times M}\big)$
such that \eqref{Bgstwy} holds and such that there exists $C\in(0,\infty)$ for which 
\begin{equation}\label{Bgstwy-2aaa}
|\varphi(x)|+|(\nabla\varphi)(x)|
\leq C(1+|x|)^{-n-1}\,\,\mbox{ for every }\,\,x\in\mathbb{R}^{n},
\end{equation}
one has the following real-variable characterization of the membership to ${\rm VMO}$:
\begin{equation}\label{Bgstwy-3TRG}
\begin{array}{c}
\text{for every function $f\in{\mathrm{BMO}}({\mathbb{R}}^{n},{\mathbb{C}}^M)$ there holds}
\\[6pt]
\varphi_t\ast f\rightarrow f\,\text{ in }\,
{\mathrm{BMO}}({\mathbb{R}}^{n},{\mathbb{C}}^M)\,\text{ as }\,t\to 0^{+}
\Longleftrightarrow\,f\in{\mathrm{VMO}}({\mathbb{R}}^{n},{\mathbb{C}}^M).
\end{array}
\end{equation}
\end{theorem}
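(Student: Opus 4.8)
The plan is to reduce, by a standard density-plus-uniform-boundedness argument, the convergence in \eqref{Bgstwy-3} to the case of a convenient dense subspace of $\mathrm{VMO}(\mathbb{R}^n,\mathbb{C}^M)$. Two ingredients are needed: \textbf{(a)} the convolution operators $T_t\colon f\mapsto\varphi_t\ast f$ are bounded on $\mathrm{BMO}(\mathbb{R}^n,\mathbb{C}^M)$ with $\sup_{t>0}\|T_t\|_{\mathrm{BMO}\to\mathrm{BMO}}=:C_0<\infty$; and \textbf{(b)} $T_t g\to g$ in $\mathrm{BMO}$ as $t\to0^+$ for $g$ ranging over $\dot{\mathscr{C}}^\eta(\mathbb{R}^n,\mathbb{C}^M)\cap\mathscr{C}^\infty(\mathbb{R}^n)\cap\mathrm{BMO}(\mathbb{R}^n)$, which by Theorem~\ref{THMVMO.i} (specifically \eqref{UpUpUp.2c}, valid for every $\eta\in(0,1)$ since $\Upsilon_{\!\#}\le C_\eta\Upsilon_\eta$) is dense in $\mathrm{VMO}$. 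Granting (a) and (b): fix $f\in\mathrm{VMO}$ and $\delta>0$, pick such a $g$ with $\|f-g\|_{\mathrm{BMO}}<\delta$, and write
\[
\|T_tf-f\|_{\mathrm{BMO}}\le\|T_t(f-g)\|_{\mathrm{BMO}}+\|T_tg-g\|_{\mathrm{BMO}}+\|g-f\|_{\mathrm{BMO}}\le(C_0+1)\delta+\|T_tg-g\|_{\mathrm{BMO}};
\]
letting $t\to0^+$ and then $\delta\to0^+$ yields \eqref{Bgstwy-3}. (All convolutions are absolutely convergent as genuine functions: a $\mathrm{BMO}$ function grows at most logarithmically, which is dominated by the decay $(1+|x|)^{-n-\varepsilon}$ with $\varepsilon>0$.)

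For \textbf{(a)} I would run the classical splitting argument for Calderón–Zygmund kernels. After rescaling, $\|\varphi_t\|_{L^1}=\|\varphi\|_{L^1}$ and, crucially, $|\varphi_t(x+h)-\varphi_t(x)|\le C|h|^\varepsilon|x|^{-n-\varepsilon}$ for $|h|<|x|/2$ with the \emph{same} constant $C$ as in \eqref{Bgstwy-2-new2}, independent of $t$. Given a cube $Q$ centered at $x_0$, decompose $f=f_{2Q}+(f-f_{2Q})\mathbf{1}_{2Q}+(f-f_{2Q})\mathbf{1}_{(2Q)^c}=:c+f_1+f_2$: one has $\aver{Q}|\varphi_t\ast f_1|\le|Q|^{-1}\|\varphi_t\|_{L^1}\|f_1\|_{L^1}\lesssim\|f\|_{\mathrm{BMO}}$; the regularity of $\varphi_t$ together with the usual geometric summation over dyadic annuli around $Q$ gives $|\varphi_t\ast f_2(x)-\varphi_t\ast f_2(x_0)|\lesssim\|f\|_{\mathrm{BMO}}$ for $x\in Q$; and $\varphi_t\ast c$ is constant. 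Hence the oscillation of $\varphi_t\ast f$ over $Q$ is $\lesssim\|f\|_{\mathrm{BMO}}$ uniformly in $Q$ and $t$, proving (a).

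For \textbf{(b)}, fix $\eta\in(0,\min\{1,\varepsilon\})$ and $g\in\dot{\mathscr{C}}^\eta(\mathbb{R}^n,\mathbb{C}^M)$. Using $\int_{\mathbb{R}^n}\varphi_t=I_{M\times M}$ and the change of variables $y=tz$,
\[
|\varphi_t\ast g(x)-g(x)|=\Bigl|\int_{\mathbb{R}^n}\varphi(z)\bigl(g(x-tz)-g(x)\bigr)\,dz\Bigr|\le\|g\|_{\dot{\mathscr{C}}^\eta(\mathbb{R}^n,\mathbb{C}^M)}\,t^\eta\int_{\mathbb{R}^n}|\varphi(z)|\,|z|^\eta\,dz,
\]
and the last integral is finite precisely because $\eta<\varepsilon$. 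Thus $\|\varphi_t\ast g-g\|_{L^\infty}=O(t^\eta)$, so $\|\varphi_t\ast g-g\|_{\mathrm{BMO}}\le2\|\varphi_t\ast g-g\|_{L^\infty}\to0$, which is (b). Combining (a) and (b) as above completes the proof of \eqref{Bgstwy-3}.

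Finally, for the last assertion of the theorem let $\varphi\in\mathscr{C}^1$ satisfy \eqref{Bgstwy-2aaa} and \eqref{Bgstwy}. The mean value theorem on the segment $[x,x+h]$, which for $|h|<|x|/2$ stays outside $B(0,|x|/2)$, shows that $\varphi$ satisfies \eqref{Bgstwy-2-new}–\eqref{Bgstwy-2-new2} with $\varepsilon=1$; hence the implication ``$f\in\mathrm{VMO}\Rightarrow\varphi_t\ast f\to f$ in $\mathrm{BMO}$'' in \eqref{Bgstwy-3TRG} is already contained in \eqref{Bgstwy-3}. For the converse, observe that for each fixed $t>0$ and $f\in\mathrm{BMO}$ the function $\varphi_t\ast f$ is globally Lipschitz: $\nabla(\varphi_t\ast f)=(\nabla\varphi_t)\ast f$, $\int\nabla\varphi_t=0$, and splitting $\mathbb{R}^n$ into $B(x,t)$ and dyadic annuli yields $\|(\nabla\varphi_t)\ast f\|_{L^\infty}\lesssim_t\|f\|_{\mathrm{BMO}}$. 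Being Lipschitz (hence uniformly continuous) and, by (a), in $\mathrm{BMO}$, the function $\varphi_t\ast f$ lies in $\mathrm{UC}(\mathbb{R}^n,\mathbb{C}^M)\cap\mathrm{BMO}(\mathbb{R}^n,\mathbb{C}^M)\subseteq\mathrm{VMO}(\mathbb{R}^n,\mathbb{C}^M)$; since $\mathrm{VMO}$ is closed in $\mathrm{BMO}$, the hypothesis $\varphi_t\ast f\to f$ in $\mathrm{BMO}$ forces $f\in\mathrm{VMO}$, proving \eqref{Bgstwy-3TRG}. The main obstacle is bookkeeping rather than conceptual: one must verify that the constants in (a) are genuinely independent of $t$ (this rests on the scale invariance of both $\|\varphi_t\|_{L^1}$ and the Hölder constant in \eqref{Bgstwy-2-new2}), and the one place where input beyond classical harmonic analysis is essential is step (b)—without the quantitative density of Hölder functions in $\mathrm{VMO}$ provided by Theorem~\ref{THMVMO.i}, the mere uniform continuity of a generic dense-class element would fail to control the slowly decaying tail of $\varphi$ when $\varepsilon<1$.
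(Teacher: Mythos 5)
Your proposal is correct and follows essentially the same route as the paper: uniform $\mathrm{BMO}$-boundedness of $f\mapsto\varphi_t\ast f$ via the split into a near part (controlled by the $L^1$ bound) and a far part (controlled by the H\"older condition \eqref{Bgstwy-2-new2}), the $O(t^\eta)$ uniform estimate $\|\varphi_t\ast g-g\|_{L^\infty}\leq Ct^\eta\|g\|_{\dot{\mathscr{C}}^\eta}$ for $\eta\in(0,\varepsilon)$, the density \eqref{UpUpUp.2c} from Theorem~\ref{THMVMO.i}, and, for \eqref{Bgstwy-3TRG}, the Mean Value Theorem reduction together with the observation that $\varphi_t\ast f\in{\mathrm{Lip}}\cap{\mathrm{BMO}}\subset{\mathrm{VMO}}$ and ${\mathrm{VMO}}$ is closed. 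The only cosmetic difference is that you excise $2Q$ rather than $2\sqrt{n}\,Q$ (the paper's choice $\lambda=2\sqrt{n}$ in \eqref{Bgstwy-6}), which is needed so that $|x-x_Q|\leq\tfrac12|y-x_Q|$ holds for all $x\in Q$ and $y$ outside the excised cube when invoking \eqref{Bgstwy-2-new2}; this is a trivial adjustment of the dilation factor.
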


\vskip 0.10in

Several density results, of independent interest, are obtained by 
specializing Theorem~\ref{THMVMO.i} to moduli of continuity of the form 
$\Upsilon_\eta(s):=s^\eta$ for $s\geq 0$, with $\eta\in(0,1]$
(simply by observing that there exists some finite constant $C_\eta>0$ with the property that
$\Upsilon_{\!\#}\leq C_\eta\Upsilon_\eta$ on $[0,\infty)$). To state these, recall that the 
inhomogeneous H\"older space of order $\eta\in(0,1)$ in ${\mathbb{R}}^n$ is defined as
\begin{equation}\label{ku7tgg}
{\mathscr{C}}^\eta({\mathbb{R}}^{n}):=\dot{\mathscr{C}}^\eta({\mathbb{R}}^{n})
\cap L^\infty({\mathbb{R}}^{n}).
\end{equation}

\begin{corollary}\label{Cbna-j77h}
For each $\eta\in(0,1)$, 
\begin{equation}\label{iy65ffvgH-111}
\parbox{11.80cm}{the space consisting of 
$f\in\dot{\mathscr{C}}^\eta({\mathbb{R}}^{n})\cap{\mathscr{C}}^\infty({\mathbb{R}}^{n})
\cap{\mathrm{BMO}}({\mathbb{R}}^{n})$ such that 
$\partial^{\alpha}f\in{\mathscr{C}}^\eta({\mathbb{R}}^{n})$ 
for every $\alpha\in{\mathbb{N}}_0^{n}$ with $|\alpha|\geq 1$ is dense in 
${\mathrm{VMO}}({\mathbb{R}}^{n})$.}
\end{equation}
Consequently, for each $\eta\in(0,1)$,
\begin{equation}\label{UpUpUp.2c}
\begin{array}{c}
\dot{\mathscr{C}}^\eta({\mathbb{R}}^{n})\cap{\mathscr{C}}^\infty({\mathbb{R}}^{n})
\cap{\mathrm{BMO}}({\mathbb{R}}^{n})
\\[6pt]
\text{is a dense subspace of }\,{\mathrm{VMO}}({\mathbb{R}}^{n}).
\end{array}
\end{equation}
In particular, for each $\eta\in(0,1)$ the space 
$\dot{\mathscr{C}}^\eta({\mathbb{R}}^{n})\cap{\mathrm{BMO}}({\mathbb{R}}^{n})$ is dense in
${\mathrm{VMO}}({\mathbb{R}}^{n})$. Moreover, 
\begin{equation}\label{iy65ffvgH-122e}
\parbox{12.40cm}{the space consisting of functions
$f\in{\mathrm{Lip}}({\mathbb{R}}^{n})\cap{\mathscr{C}}^\infty({\mathbb{R}}^{n})
\cap{\mathrm{BMO}}({\mathbb{R}}^{n})$ such that 
$\partial^{\alpha}f\in{\mathrm{Lip}}({\mathbb{R}}^{n})\cap L^\infty({\mathbb{R}}^{n})$ 
for every $\alpha\in{\mathbb{N}}_0^{n}$ with $|\alpha|\geq 1$ is dense in 
${\mathrm{VMO}}({\mathbb{R}}^{n})$.}
\end{equation}
In particular, 
\begin{equation}\label{UpUpUp.2b}
\begin{array}{c}
{\mathrm{Lip}}({\mathbb{R}}^{n})\cap{\mathscr{C}}^\infty({\mathbb{R}}^{n})
\cap{\mathrm{BMO}}({\mathbb{R}}^{n})
\\[6pt]
\text{is a dense subspace of }\,{\mathrm{VMO}}({\mathbb{R}}^{n}).
\end{array}
\end{equation}
\end{corollary}

An interesting feature of Theorem~\ref{THMVMO.i} is that even though the conclusions 
are of a purely real-variable nature, its proof makes essential use of the PDE-rooted 
results established earlier (such as the well-posedness of the {\rm BMO}-Dirichlet 
problem for, say, the Laplacian in ${\mathbb{R}}^n_{+}$). See \S\ref{Pf-mainThms} 
for details. Theorem~\ref{THMVMO.i} should be contrasted with the following 
negative result. 

\begin{theorem}\label{THMVMO.CCC}
The space ${\mathrm{UC}}({\mathbb{R}}^{n})\cap L^\infty({\mathbb{R}}^{n})$ 
is not dense in ${\mathrm{VMO}}({\mathbb{R}}^{n})$. 
\end{theorem}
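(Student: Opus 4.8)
\textbf{Proof proposal for Theorem~\ref{THMVMO.CCC}.}
The plan is to exhibit an explicit function $f\in{\mathrm{VMO}}({\mathbb{R}}^{n})$ which cannot be approximated in the ${\mathrm{BMO}}$ seminorm by bounded uniformly continuous functions, thereby showing that $\overline{{\mathrm{UC}}({\mathbb{R}}^{n})\cap L^\infty({\mathbb{R}}^{n})}^{\,{\mathrm{BMO}}}$ is a proper subspace of ${\mathrm{VMO}}({\mathbb{R}}^{n})$. The natural candidate is a (suitably tamed) logarithm, e.g. a smooth function $f$ that behaves like $\ln|x|$ for $|x|$ large, say $f(x)=\ln(1+|x|)$, or more robustly $f(x)=\psi(x)$ where $\psi$ agrees with $\ln|x|$ outside the unit ball and is smooth across the origin. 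One first checks that such $f$ lies in ${\mathrm{VMO}}({\mathbb{R}}^{n})$: it is smooth, its gradient satisfies $|\nabla f(x)|\lesssim (1+|x|)^{-1}$, and a standard computation (Poincar\'e inequality on a cube $Q$ of side $\ell(Q)=r$ centered at $x_0$ gives $\fint_Q|f-f_Q|\lesssim r\,\sup_Q|\nabla f|\lesssim r/(1+|x_0|)\leq r$) shows the mean oscillation over cubes of side $\leq r$ tends to $0$ as $r\to0^+$, uniformly in the location of the cube. At the same time $f\notin L^\infty({\mathbb{R}}^{n})$ and, crucially, $f$ is \emph{not} itself bounded, which is what will create the obstruction.

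The key step is to extract a quantitative invariant of the ${\mathrm{BMO}}$ seminorm that separates $f$ from the closure of ${\mathrm{UC}}\cap L^\infty$. The obstruction I have in mind is the behavior of $f$ \emph{at infinity} rather than at small scales. Concretely, for a bounded function $g$ one has, for every pair of cubes $Q_1,Q_2$ of the \emph{same} side-length $\ell$, the trivial bound $|g_{Q_1}-g_{Q_2}|\leq 2\|g\|_{L^\infty}$, independently of how far apart $Q_1$ and $Q_2$ are; more usefully, uniform continuity forces $|g_{Q_1}-g_{Q_2}|$ to be \emph{small} when $Q_1,Q_2$ are \emph{adjacent} and $\ell$ is small. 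The sharper route is: if $g\in{\mathrm{UC}}({\mathbb{R}}^{n})\cap L^\infty$ then $g$ has a finite global oscillation $\mathrm{osc}(g):=\sup g-\inf g\leq 2\|g\|_{L^\infty}<\infty$, and moreover for adjacent unit cubes $Q,Q'$ the quantity $|g_Q-g_{Q'}|$ is bounded by the modulus of continuity evaluated at a fixed radius, hence bounded by a constant depending only on $g$. For $f=\ln|x|$, however, the averages $f_{Q_R}$ over the unit cube $Q_R$ centered at a point of modulus $R$ grow like $\ln R\to\infty$, while the ${\mathrm{BMO}}$ seminorm controls the increments $|f_{Q_{R}}-f_{Q_{2R}}|$ by a \emph{fixed} constant $C\|f\|_{{\mathrm{BMO}}}$ (a standard telescoping/doubling estimate: averages over comparable cubes differ by $\lesssim\|f\|_{{\mathrm{BMO}}}$). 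Thus if $\|f-g\|_{{\mathrm{BMO}}}<\varepsilon$, the triangle inequality transfers the slow logarithmic growth of chained cube-averages from $f$ to $g$: for a chain of $N$ doubling cubes from scale $1$ to scale $2^N$ one gets $|g_{Q_{2^N}}-g_{Q_1}|\geq |f_{Q_{2^N}}-f_{Q_1}| - \sum_{k}|(f-g)_{Q_{2^{k+1}}}-(f-g)_{Q_{2^k}}| \gtrsim N\ln 2 - C N\varepsilon$, which $\to\infty$ as $N\to\infty$ once $\varepsilon$ is small enough, contradicting $g\in L^\infty$.

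The main obstacle — and the place where care is needed — is making the chaining argument airtight: one must verify the elementary but slightly delicate lemma that for $h\in{\mathrm{BMO}}({\mathbb{R}}^{n})$ and two cubes $Q\subset Q^{*}$ with $\ell(Q^{*})\leq 2\ell(Q)$ one has $|h_Q-h_{Q^{*}}|\leq c_n\|h\|_{{\mathrm{BMO}}}$, and then apply it along a carefully chosen sequence of cubes $Q_0\subset Q_1\subset\cdots\subset Q_N$ with $\ell(Q_k)=2^k$, all sharing a corner at the origin (so consecutive ones are comparable), so that the chained estimate accumulates $N$ increments each $O(\|h\|_{{\mathrm{BMO}}})$. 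One then has to check that, along this same chain, the unbounded function $\ln|x|$ genuinely produces an average-increment bounded \emph{below} by a positive constant at each step (this is where the choice of nested cubes anchored at the origin helps, since $f_{Q_{k+1}}-f_{Q_k}$ is comparable to $\ln 2$). Assembling these pieces yields, for any $g\in{\mathrm{UC}}\cap L^\infty$, the lower bound $\|f-g\|_{{\mathrm{BMO}}({\mathbb{R}}^{n})}\geq c_n>0$ with $c_n$ independent of $g$, so $f$ lies at positive distance from ${\mathrm{UC}}({\mathbb{R}}^{n})\cap L^\infty({\mathbb{R}}^{n})$ in ${\mathrm{BMO}}$. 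Since $f\in{\mathrm{VMO}}({\mathbb{R}}^{n})$, this proves that ${\mathrm{UC}}({\mathbb{R}}^{n})\cap L^\infty({\mathbb{R}}^{n})$ is not dense in ${\mathrm{VMO}}({\mathbb{R}}^{n})$, completing the proof.
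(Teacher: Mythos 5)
Your proposal is correct, and it takes a genuinely different route from the paper's. The paper does not build an explicit example: it quotes Bourdaud's results, namely the existence of a function $f\in{\mathscr{C}}^\infty({\mathbb{R}}^n)$ all of whose derivatives lie in ${\mathrm{BMO}}({\mathbb{R}}^n)$ and which lies at positive ${\mathrm{BMO}}$-distance from $L^\infty({\mathbb{R}}^n)$, together with Bourdaud's lemma that a ${\mathrm{BMO}}$ gradient forces uniform continuity; this places $f$ in ${\mathrm{UC}}\cap{\mathrm{BMO}}\subset{\mathrm{VMO}}$ while keeping it outside the closure of $L^\infty$, hence outside the closure of ${\mathrm{UC}}\cap L^\infty$. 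You instead take the concrete function $f(x)=\ln(1+|x|)$ (Lipschitz, so the small-scale oscillation condition defining ${\mathrm{VMO}}$ is immediate) and prove the positive-distance statement directly by the doubling-cube chaining argument anchored at the origin: for $h\in{\mathrm{BMO}}$ and nested cubes $Q_k=[0,2^k]^n$ one has $|h_{Q_{k+1}}-h_{Q_k}|\leq 2^n\|h\|_{\mathrm{BMO}({\mathbb{R}}^n)}$, while the averages of $f$ gain $\ln 2$ per step up to a telescoping error that stays bounded in $N$ (indeed $f_{Q_N}-f_{Q_0}=N\ln 2+O(1)$), so any $g\in L^\infty$ with $\|f-g\|_{\mathrm{BMO}({\mathbb{R}}^n)}$ below an explicit dimensional constant (of order $2^{-n}\ln 2$) would have cube averages growing linearly in $N$, contradicting boundedness. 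As in the paper, uniform continuity of the approximants is never used—only boundedness—so what you actually prove is the stronger statement that $f$ lies at positive distance from all of $L^\infty$ in ${\mathrm{BMO}}$. Your approach buys an explicit example and an explicit lower bound for the distance; the paper's buys brevity and, via Bourdaud, an example which is smooth with every derivative in ${\mathrm{BMO}}$.

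One small loose end to close: since ${\mathrm{VMO}}$ is by definition a subspace of ${\mathrm{BMO}}$, you must also check that $\ln(1+|x|)$ has bounded mean oscillation over \emph{large} cubes. Your Poincar\'e-type bound $\aver{Q}|f-f_Q|\,dx'\lesssim \ell(Q)\sup_Q|\nabla f|$ gives a uniformly small bound for small cubes and a bounded one for cubes whose side-length is dominated by their distance to the origin, but for a large cube containing (or close to) the origin it only yields a bound of order $\ell(Q)$. This is classical and easily patched: for $\ell(Q)\gtrsim 1+|x_Q|$ compare $f$ on $Q$ with the constant $\ln\bigl(1+\ell(Q)\bigr)$ and use that $\aver{Q}\ln\bigl(\tfrac{1+\ell(Q)}{1+|x|}\bigr)\,dx\lesssim 1$ by scaling, or simply note that $f$ differs from $\ln^{+}|x|$ by a bounded function and the latter is a standard ${\mathrm{BMO}}$ function. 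With that remark added, your argument is complete.
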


An example of an unbounded function belonging to ${\mathrm{VMO}}({\mathbb{R}}^{n})$ is 
\begin{equation}\label{kjdg}
f(x):=\left\{
\begin{array}{ll}
\ln\ln(1/|x|) & \text{if }\,\,|x|\leq 1/e,
\\[4pt]
0& \text{if }\,\,|x|>1/e,
\end{array}
\right.
\qquad\forall\,x\in{\mathbb{R}}^n.
\end{equation}

In the context of the main density result presented in Theorem~\ref{THMVMO.i}, the function 
$\Upsilon_{\!\#}$ defined in \eqref{decay-infty:Ups0} exhibits an optimal behavior both at 
small and large scales, which cannot be improved, in the following precise sense: 
{\it If $\Upsilon$ is a modulus of continuity with the property that} 
\begin{equation}\label{UpUpUp.2cEE}
\text{\it either }\,\,\Upsilon(s)/s=o(1)\,\,\text{\it as }\,\,s\to 0^{+},
\,\,\,\text{\it or }\,\,\Upsilon(s)=O(1)\,\,\text{\it as }\,\,s\to\infty,
\end{equation}
{\it then }
\begin{equation}\label{UpUpUp.2cSGB}
{\mathscr{C}}^\Upsilon({\mathbb{R}}^{n})\cap{\mathrm{BMO}}({\mathbb{R}}^{n})
\,\text{\it is not dense in }\,{\mathrm{VMO}}({\mathbb{R}}^{n}).
\end{equation}
Indeed, \eqref{UpUpUp.2cSGB} is clear when the first eventuality in \eqref{UpUpUp.2cEE}
materializes since the space ${\mathscr{C}}^\Upsilon({\mathbb{R}}^{n})$ reduces to just 
constants in this case. Also, in the scenario when the second possibility in 
\eqref{UpUpUp.2cEE} takes place, ${\mathscr{C}}^\Upsilon({\mathbb{R}}^{n})$ becomes 
a subspace of ${\mathrm{UC}}({\mathbb{R}}^{n})\cap L^\infty({\mathbb{R}}^{n})$, 
in which case the desired conclusion follows from Theorem~\ref{THMVMO.CCC}. 

Among other things, the density result stated in Corollary~\ref{Cbna-j77h} permits us to 
quantify the proximity of a Littlewood-Paley type measure to the class of 
vanishing Carleson measures in the upper-half space. This result, of a purely real variable nature, 
is formally stated in the theorem below. 

\begin{theorem}\label{ndyRE-NNN}
Let $\psi\in{\mathscr{C}}^1\big({\mathbb{R}}^{n}\big)$ be a function with the property 
that there exists $C\in(0,\infty)$ such that
\begin{equation}\label{Bgstwy-2aaa-NNN}
\begin{array}{c}
\displaystyle
|\psi(x)|\leq\frac{C}{(1+|x|)^{n+1}}\,\text{ and }\,
|(\nabla\psi)(x)|\leq\frac{C}{(1+|x|)^{n+2}}\,\mbox{ for every }\,x\in\mathbb{R}^{n},
\\[8pt]
\displaystyle
\text{as well as }\,\,\int_{\mathbb{R}^{n}}\psi(x)\,dx=0.
\end{array}
\end{equation}
For each $x\in{\mathbb{R}}^n$ and $t>0$ set $\psi_t(x):=t^{-n}\psi(x/t)$. 
Then for each function $f\in{\mathrm{BMO}}({\mathbb{R}}^{n})$
\begin{equation}\label{Bgstwy-3TRG-NNN}
\mu_f(x,t):=|(\psi_t\ast f)(x)|^2\frac{dx\,dt}{t}
\end{equation}
is a Carleson measure in ${\mathbb{R}}^{n+1}_{+}$ satisfying 
\begin{align}\label{defi-Carleson-Niii}
\lim_{r\to 0^{+}}\Bigg\{\sup\limits_{\substack{Q\subset\mathbb{R}^{n}\\ \ell(Q)\leq r}}
\frac{1}{|Q|}\int_{0}^{\ell(Q)}\int_Q|(\psi_t\ast f)(x)|^2\frac{dx\,dt}{t}\Bigg\}
\leq C\,{\rm dist}\,\big(f\,,\,{\mathrm{VMO}}({\mathbb{R}}^{n})\big)^2
\end{align}
where ${\rm dist}\,\big(f\,,\,{\mathrm{VMO}}({\mathbb{R}}^{n})\big)
:=\inf\big\{\|f-g\|_{{\mathrm{BMO}}({\mathbb{R}}^{n})}:
\,g\in{\mathrm{VMO}}({\mathbb{R}}^{n})\big\}$.

As a corollary, 
\begin{equation}\label{Bgstwy-2aaa-Njjjj}
\parbox{9.45cm}{if $\psi\in{\mathscr{C}}^1\big({\mathbb{R}}^{n}\big)$ is a function satisfying 
the conditions in \eqref{Bgstwy-2aaa-NNN} and $f\in{\mathrm{VMO}}({\mathbb{R}}^{n})$, it follows 
that $\mu_f(x,t)$, defined as in \eqref{Bgstwy-3TRG-NNN}, is a vanishing Carleson measure 
in ${\mathbb{R}}^{n+1}_{+}$.}
\end{equation}
\end{theorem}

Theorem~\ref{ndyRE-NNN} allows us to establish the result stated below which may be
regarded as a quantified version of the equivalence \eqref{Dir-BVP-Reg} 
in Theorem~\ref{them:BMO-Dir}.

\begin{theorem}\label{jhdwtRD}
Let $L$ be an $M\times M$ elliptic constant complex coefficient system as in 
\eqref{L-def}-\eqref{L-ell.X}. Then there exists a constant $C=C(n,L)\in(0,\infty)$ with the 
property that for any given $f\in{\mathrm{BMO}}({\mathbb{R}}^{n-1},\mathbb{C}^M)$ 
the unique solution $u$ of the $\mathrm{BMO}$-Dirichlet boundary value 
problem \eqref{Dir-BVP-BMO} for $L$ in $\mathbb{R}^{n}_+$ with boundary datum $f$ satisfies
\begin{align}\label{XeeTT}
\lim_{r\to 0^{+}}\left\{\sup_{Q\subset\mathbb{R}^{n-1},\,\ell(Q)\leq r} 
\int_{0}^{\ell(Q)}\aver{Q}|\nabla u(x',t)|^2\,t\,dx'dt\right\}
\leq C\,{\rm dist}\big(f,{\mathrm{VMO}}({\mathbb{R}}^{n-1},\mathbb{C}^M)\big)^2,
\end{align}
where ${\rm dist}\big(f,{\mathrm{VMO}}({\mathbb{R}}^{n-1},\mathbb{C}^M)\big)
:=\inf\limits_{g\in{\mathrm{BMO}}({\mathbb{R}}^{n-1},\mathbb{C}^M)}
\|f-g\|_{{\mathrm{BMO}}({\mathbb{R}}^{n-1},\mathbb{C}^M)}$.
\end{theorem}

\vskip 0.08in

Moving on, if in analogy with \eqref{defi-BMO-tilde} we also define 
\begin{equation}\label{defi-VMO-tilde}
\widetilde{\mathrm{VMO}}(\mathbb{R}^n)
:=\big\{[f]:\,f\in{\mathrm{VMO}}(\mathbb{R}^n)\big\},
\end{equation}
then $\widetilde{\mathrm{VMO}}(\mathbb{R}^n)$ becomes a closed subspace of 
the Banach space $\Big({\mathrm{BMO}}(\mathbb{R}^n)\,,\,
\big\|\,[\cdot]\,\big\|_{\widetilde{\mathrm{BMO}}(\mathbb{R}^n)}\Big)$.
In particular, $\Big(\widetilde{\mathrm{VMO}}(\mathbb{R}^n)\,,\,
\big\|\,[\cdot]\,\big\|_{\widetilde{\mathrm{BMO}}(\mathbb{R}^n)}\Big)$
is itself a Banach space. Likewise, for each $\eta\in(0,1)$ let us introduce the 
quotient space\footnote{Observe that since we are presently dealing with continuous functions, 
$f\sim g$ means that $f-g$ is everywhere equal to a constant} 
\begin{equation}\label{defi-CCC-tilde}
\dot{\mathscr{C}}^\eta(\mathbb{R}^n)\big/_{\sim}
:=\big\{[f]:\,f\in\dot{\mathscr{C}}^\eta(\mathbb{R}^n)\big\}
\end{equation}
and equip it with the norm 
\begin{align}\label{jgsyjw-VCb}
\big\|\,[f]\,\big\|_{\dot{\mathscr{C}}^\eta(\mathbb{R}^n)/_{\sim}}
:=\|f\|_{\dot{\mathscr{C}}^\eta(\mathbb{R}^n)},
\qquad\forall\,[f]\in\dot{\mathscr{C}}^\eta(\mathbb{R}^n)\big/_{\sim}.
\end{align}
Then $\Big(\dot{\mathscr{C}}^\eta(\mathbb{R}^n)\big/_{\sim}\,,\,
\big\|\,[\cdot]\,\big\|_{\dot{\mathscr{C}}^\eta(\mathbb{R}^n)/_{\sim}}\Big)$ 
becomes a Banach space. 

Regarding $\widetilde{\mathrm{VMO}}({\mathbb{R}}^{n})$ as a Banach space in the 
fashion described above, Corollary~\ref{Cbna-j77h} readily implies the following 
density result. 

\begin{corollary}\label{Cbna-j77h-TTT}
For each $\eta\in(0,1)$ the set $\big(\dot{\mathscr{C}}^\eta({\mathbb{R}}^{n})\big/_{\sim}\big)
\cap\widetilde{\mathrm{BMO}}({\mathbb{R}}^{n})$ is dense in 
$\widetilde{\mathrm{VMO}}({\mathbb{R}}^{n})$.
\end{corollary}

The quantitative characterizations of the Sarason space provided in 
Theorem~\ref{THMVMO.i}, Corollary~\ref{Cbna-j77h}, and Corollary~\ref{Cbna-j77h-TTT} 
have important consequences as far as the mapping properties of Calder\'on-Zygmund 
operators on {\rm VMO} are concerned. To elaborate on this aspect, we first recall 
the definition of the latter class of operators. 

\begin{definition}\label{h6f43sSSSA} 
Given $n\in{\mathbb{N}}$, for each $\gamma\in(0,1]$ denote by ${\rm SCZ}(n,\gamma)$ the collection of all 
linear and continuous mappings $T:{\mathscr{S}}({\mathbb{R}}^n)\to{\mathscr{S}}'({\mathbb{R}}^n)$
which extend to a bounded operator on $L^2({\mathbb{R}}^n)$ and have the property that 
there exist $C',C''\in(0,\infty)$ such that the Schwartz kernel $K(\cdot,\cdot)$ of $T$ satisfies 
\begin{align}\label{ytgfff.iiiii}
K\in L^1_{\rm loc}({\mathbb{R}}^n\times{\mathbb{R}}^n\setminus\text{\rm diag})
\end{align}
and, for every $x,y\in{\mathbb{R}}^n$ with $x\not=y$, and each 
$z\in{\mathbb{R}}^n$ with $|x-z|<\tfrac12|x-y|$, 
\begin{align}\label{ytgfff}
|K(x,y)|\leq\frac{C'}{|x-y|^n}\,\,\text{ and }\,\,|K(x,y)-K(z,y)|\leq C''\frac{|x-z|^\gamma}{|x-y|^{n+\gamma}}.
\end{align}
Simply call $T$ a semi-Calder\'on-Zygmund operator in ${\mathbb{R}}^n$ if 
$T\in\bigcup_{0<\gamma\leq 1}{\rm SCZ}(n,\gamma)$.

Also, for each $\gamma\in(0,1]$ introduce 
${\rm CZ}(n,\gamma):=\big\{T\in{\rm SCZ}(n,\gamma):\,T^\top\in{\rm SCZ}(n,\gamma)\big\}$
{\rm (}where $T^\top:{\mathscr{S}}({\mathbb{R}}^n)\to{\mathscr{S}}'({\mathbb{R}}^n)$ is the transposed 
of $T$, with Schwartz kernel $K^\top(x,y):=K(y,x)${\rm )}, and refer to the operators in 
$\bigcup_{0<\gamma\leq 1}{\rm CZ}(n,\gamma)$ as being Calder\'on-Zygmund operators in ${\mathbb{R}}^n$.
\end{definition}

Fix a semi-Calder\'on-Zygmund operator $T$ in ${\mathbb{R}}^n$. A classical result in harmonic analysis 
(see, e.g., the proof of \cite[Theorem~3, p.\,114]{Stein93} which readily adapts to the present setting)
is the fact that $T^\top$ maps the Hardy space $H^1$ boundedly into the space of absolutely integrable 
functions, i.e., 
\begin{align}\label{ytgfff.9ytrdf.455}
T^\top:H^1({\mathbb{R}}^n)\longrightarrow L^1({\mathbb{R}}^n)
\end{align}
is a well-defined, linear, and bounded operator. In particular, this allows us to define $T(1)$ as a
functional in $\widetilde{\rm BMO}({\mathbb{R}}^n)=\big(H^1({\mathbb{R}}^n)\big)^\ast$ acting on 
any $h\in H^1({\mathbb{R}}^n)$ according to 
\begin{align}\label{ytgfff.9ytrdf}
\big\langle T(1),h\rangle:=\int_{{\mathbb{R}}^n}T^\top h\,d{\mathscr{L}}^n.
\end{align}
In particular, with the notion of $H^1$-atom as in \eqref{defi-atom}, 
\begin{align}\label{ytgfff.9ytrdf-tDDD}
T(1)=0\,\,\text{ in }\,\,\widetilde{\rm BMO}({\mathbb{R}}^n)\,\Longleftrightarrow\,
\int_{{\mathbb{R}}^n}T^\top a\,d{\mathscr{L}}^n=0\,\,\text{ for each $H^1$-atom $a$}.
\end{align}
Via interpolation and duality we have
\begin{align}\label{ytgfff.9ytrdf.455-ff}
\parbox{11.00cm}{if $T$ is a semi-Calder\'on-Zygmund operator then $T$ is bounded on 
$L^p({\mathbb{R}}^n)$ for each $p\in(2,\infty)$; as a consequence, if $T$ 
is a Calder\'on-Zygmund operator then $T$ is bounded on $L^p({\mathbb{R}}^n)$ for $p\in(1,\infty)$.}
\end{align}

In this vein, we wish to remark that 
(recall that a function $\Theta:{\mathbb{R}}^{n}\setminus\{0\}\to{\mathbb{C}}$ is said to be 
positive homogeneous of degree $m$ provided $\Theta(\lambda x)=\lambda^{m}\Theta(x)$ for each 
$x\in{\mathbb{R}}^{n}\setminus\{0\}$ and each $\lambda\in(0,\infty)$)
\begin{align}\label{ytgfff.9ytrdf.ygfg}
\parbox{11.50cm}{a principal-value convolution type operator 
$T_\Theta:{\mathscr{S}}({\mathbb{R}}^n)\to{\mathscr{S}}'({\mathbb{R}}^n)$ 
given by $\displaystyle 
T_\Theta f(x):=\lim\limits_{\varepsilon\to 0^{+}}\int_{y\in{\mathbb{R}}^n\setminus B(x,\varepsilon)}
\Theta(x-y)f(y)\,dy$, for $f\in{\mathscr{S}}({\mathbb{R}}^n)$ and $x\in{\mathbb{R}}^n$,  
with a kernel $\Theta\in{\mathscr{C}}^1({\mathbb{R}}^n\setminus\{0\})$ which is positive 
homogenous of degree $-n$ and such that $\displaystyle\int_{S^{n-1}}\Theta(\omega)\,d\omega=0$,
is a Calder\'on-Zygmund operator in ${\mathbb{R}}^n$ (in the sense of Definition~\ref{h6f43sSSSA} with 
$\gamma=1$, $C'=\|\Theta\|_{L^\infty(S^{n-1})}$, and $C''=\|\nabla\Theta\|_{L^\infty(S^{n-1})}$) 
which satisfies $T_\Theta(1)=(T_\Theta)^\top(1)=0$ in $\widetilde{\rm BMO}({\mathbb{R}}^n)$. Moreover, 
if we define $\widetilde{\Theta}(x):=\Theta(-x)$ for each $x\in{\mathbb{R}}^n\setminus\{0\}$, then 
the transposed of $T_\Theta$ acting on $L^p({\mathbb{R}}^n)$ with $1<p<\infty$ 
is the operator $T_{\widetilde{\Theta}}$ acting on $L^{p'}({\mathbb{R}}^n)$ where $1/p+1/p'=1$.}
\end{align}
This is a consequence of the fact that such an operator $T_\Theta$ is a multiplier
(cf., e.g., \cite[Theorem~4.96, pp.\,172-173]{DM}), i.e., it satisfies
$\widehat{T_\Theta\varphi}=m_\Theta\widehat{\varphi}$ for each $\varphi\in{\mathscr{S}}({\mathbb{R}}^n)$, 
where `hat' stands for the Fourier transform. The symbol $m_\Theta$ is the Fourier transform of 
the tempered distribution ${\rm P.V.}\,\Theta$, defined as (cf. \cite[(4.4.2), p.\,136]{DM})
\begin{equation}\label{iu7HBBam}
\big\langle{\rm P.V.}\,\Theta\,,\,\varphi\big\rangle:=\lim_{\varepsilon\to 0^{+}}
\int\limits_{x\in{\mathbb{R}}^n,\,|x|>\varepsilon}\Theta(x)\varphi(x)\,dx,\qquad
\forall\varphi\in{\mathscr{S}}({\mathbb{R}}^n),
\end{equation}
i.e., 
\begin{align}\label{PhBB-1}
m_{\Theta}=\widehat{{\rm P.V.}\,\Theta}\,\,\text{ in }\,{\mathscr{S}}'({\mathbb{R}}^n).
\end{align}   
From \cite[Theorem~4.71, p.\,142]{DM} it is known that 
\begin{align}\label{PhBB-1nv}
m_{\Theta}(\xi) &=-\int_{S^{n-1}}\Theta(\omega)\log\big(i\,\xi\cdot\omega\big)\,d\omega
\nonumber\\[6pt]
&=-\int_{S^{n-1}}\Theta(\omega)\Big(\ln\Big|\tfrac{\xi}{|\xi|}\cdot\omega\Big|
+i\frac{\pi}{2}{\rm sgn}(\xi\cdot\omega)\Big)\,d\omega
\,\,\,\text{ for each }\,\xi\in{\mathbb{R}}^n\setminus\{0\},
\end{align}   
where the last equality uses the vanishing moment property of $\Theta$ (see \cite[(4.5.15), p.\,143]{DM}). 
From this representation it is then apparent (reasoning as in \cite[Step~II, pp.\,349-350]{DM}) that 
\begin{align}\label{PhBB-1gd}
\parbox{11.00cm}{the restriction of $m_\Theta$ to ${\mathbb{R}}^n\setminus\{0\}$ is a function having 
the same order of differentiability as $\Theta$, is positive homogeneous of degree zero
bounded, satisfies $\overline{m_{\widetilde{\Theta}}}=m_{\overline{\Theta}}$ and
$\int_{S^{n-1}}m_{\Theta}(\omega)\,d\omega=0$, as well as $m_{\widetilde{\Theta}}(\xi)=m_\Theta(-\xi)$ 
for each $\xi\in{\mathbb{R}}^n\setminus\{0\}$.}
\end{align}   
Let us also note that, starting with \eqref{PhBB-1nv} and making use of \cite[Proposition~13.46, p.\,439]{DM} 
it is not difficult to see that 
\begin{align}\label{PhBB-1nv-ugg}
\parbox{7.50cm}{for each $p\in(1,\infty]$ there exists $C_{n,p}\in[0,\infty)$ such that 
$\|m_{\Theta}\|_{L^\infty({\mathbb{R}}^n)}\leq C_{n,p}\|\Theta\|_{L^p(S^{n-1})}$.}
\end{align}   

In turn, via Parseval's formula these properties imply that $T_\Theta$ extends to a linear and bounded 
operator on $L^2({\mathbb{R}}^n)$ which satisfies
\begin{align}\label{PhBB-1nv-ugg-hgv}
\widehat{T_\Theta f}=m_\Theta\widehat{f}\,\,\text{ for each }\,\,f\in L^2({\mathbb{R}}^n).
\end{align}
In addition, for each $f,g\in L^2({\mathbb{R}}^n)$, we have
\begin{align}\label{uttGV}
\int_{{\mathbb{R}}^n}(T_\Theta f)(x)g(x)\,dx
&=(2\pi)^{-n}\int_{{\mathbb{R}}^n}\widehat{T_\Theta f}(\xi)\widehat{g}(-\xi)\,d\xi
=(2\pi)^{-n}\int_{{\mathbb{R}}^n}m_{\Theta}(\xi)\widehat{f}(\xi)\widehat{g}(-\xi)\,d\xi
\nonumber\\[6pt]
&=(2\pi)^{-n}\int_{{\mathbb{R}}^n}\widehat{f}(\xi)\widehat{T_{\widetilde{\Theta}}g}(-\xi)\,d\xi
=\int_{{\mathbb{R}}^n}f(x)(T_{\widetilde{\Theta}}g)(x)\,dx,
\end{align}
from which we ultimately conclude that the transposed of $T_\Theta$ is $T_{\widetilde{\Theta}}$.
Moreover, for each given $H^1$-atom $a$, the fact that $T_\Theta a$ belongs to $L^1({\mathbb{R}}^n)$ 
(cf. \eqref{ytgfff.9ytrdf.455}) implies that $\widehat{T_\Theta a}$ is a continuous function satisfying 
$\int_{{\mathbb{R}}^n}T_\Theta a\,d{\mathscr{L}}^n=\widehat{T_\Theta a}(0)=\lim_{\xi\to 0}m_\Theta(\xi)\widehat{a}(\xi)=0$
since $m_\Theta$ is bounded, $\widehat{a}$ is continuous (given that $a\in L^1({\mathbb{R}}^n)$), 
and $\widehat{a}(0)=\int_{{\mathbb{R}}^n}a\,d{\mathscr{L}}^n=0$ thanks to the vanishing moment 
property of the atom. In light of \eqref{ytgfff.9ytrdf-tDDD}, this shows that $T_\Theta(1)=0$. 
Finally, in a similar fashion, $(T_\Theta)^\top(1)=0$.

Natural examples of operators of the sort discussed in \eqref{ytgfff.9ytrdf.ygfg} are 
offered by the Riesz transforms in ${\mathbb{R}}^n$. These are defined as the 
family $(R_j)_{1\leq j\leq n}$ where, for each $j\in\{1,\dots,n\}$  and
each $f\in L^p({\mathbb{R}}^n)$ with $1\leq p<\infty$ we have set
\begin{equation}\label{R-87ygbg}
\begin{array}{c}
\displaystyle
(R_jf)(x):=\lim_{\varepsilon\to 0^{+}}\int_{y\in{\mathbb{R}}^n\setminus B(x,\varepsilon)}
K_j(x-y)f(y)\,dy,\qquad x\in{\mathbb{R}}^n,
\\[14pt]
\displaystyle
\text{with $K_j(z):=\frac{\Gamma\big(\frac{n+1}{2}\big)}{\pi^{\frac{n+1}{2}}}\frac{z_j}{|z|^{n+1}}$
for each $z\in{\mathbb{R}}^n\setminus\{0\}$.}
\end{array}
\end{equation}
These are singular integral operators of convolution type involving odd kernels. 
A prominent example of a singular integral operator of convolution type involving an even kernel
(with vanishing integral on the unit sphere) is offered by the Beurling (or Beurling-Ahlfors) 
transform in the complex plane 
\begin{equation}\label{R-87ygbg.BEBE}
(Sf)(z):=-\lim_{\varepsilon\to 0^{+}}\frac{1}{\pi}
\int\limits_{\substack{\zeta\in{\mathbb{C}}\\ |z-\zeta|>\varepsilon}}
\frac{f(\zeta)}{(z-\zeta)^2}\,d{\mathscr{L}}^2(\zeta),\qquad z\in{\mathbb{C}}.
\end{equation}
This has the basic property that 
\begin{equation}\label{R-87ygbg.BEBE.2tfV}
S(\partial_{\overline{z}}f)=\partial_z f\,\,\text{ for each Schwartz function }\,\,f\in{\mathscr{S}}({\mathbb{C}}),
\end{equation}
where $\partial_{\overline{z}}:=(1/2)(\partial_x-(1/i)\partial_y)$ and
$\partial_z:=(1/2)(\partial_x+(1/i)\partial_y)$ are, respectively, the Cauchy-Riemann operator and 
its complex conjugate.

To state the result pertaining to the boundedness of semi-Calder\'on-Zygmund operators 
on the space of functions of vanishing mean oscillations advertised earlier, recall 
that the quotient space $\widetilde{\mathrm{VMO}}({\mathbb{R}}^n)$ has been defined 
in \eqref{defi-VMO-tilde}.

\begin{theorem}\label{i87hbBV}
Consider a semi-Calder\'on-Zygmund operator $T$ in ${\mathbb{R}}^n$ satisfying $T(1)=0$.
Extend $T$ to a linear and bounded operator $\widetilde{T}$ from 
$\widetilde{\mathrm{BMO}}({\mathbb{R}}^n)$ into itself by setting 
{\rm (}with $\langle\cdot,\cdot\rangle$ denoting the 
$\widetilde{\rm BMO}$-$H^1$ duality pairing; cf. item {\it (iv)} of Proposition~\ref{jhsdgf}{\rm )}
\begin{equation}\label{dkegfs.4XXX}
\begin{array}{c}
\widetilde{T}:\widetilde{\rm BMO}({\mathbb{R}}^n)
\longrightarrow\widetilde{\rm BMO}({\mathbb{R}}^n)
\\[4pt]
\big\langle\widetilde{T}[f],g\big\rangle:=\big\langle[f],T^\top g\big\rangle,
\quad\forall\,[f]\in\widetilde{\rm BMO}({\mathbb{R}}^n),\,\,\forall\,g\in H^1({\mathbb{R}}^n).
\end{array}
\end{equation}

Then $\widetilde{\mathrm{VMO}}({\mathbb{R}}^n)$ is an invariant subspace of $\widetilde{T}$. In particular, 
its restriction to $\widetilde{\rm VMO}({\mathbb{R}}^n)$, 
\begin{equation}\label{dkegfs.4YYY}
\begin{array}{c}
\widetilde{T}\big|_{{}_{\rm VMO}}:\widetilde{\rm VMO}({\mathbb{R}}^n)
\longrightarrow\widetilde{\rm VMO}({\mathbb{R}}^n)
\\[6pt]
\big(\widetilde{T}\big|_{{}_{\rm VMO}}\big)[f]:=\widetilde{T}[f]
\,\,\text{ for each }\,\,[f]\in\widetilde{\rm VMO}({\mathbb{R}}^n),
\end{array}
\end{equation}
is a well-defined, linear and bounded operator. Moreover, 
$\widetilde{T}\big|_{{}_{\rm VMO}}$ is compatible with the action of $T$ on Lebesgue spaces
in the sense that for each $p\in[2,\infty)$ one has
\begin{equation}\label{dkegfs.6.uuu}
\Big(\widetilde{T}\big|_{{}_{\rm VMO}}\Big)[f]=[Tf],\qquad
\forall\,f\in{\rm VMO}({\mathbb{R}}^n)\cap L^p({\mathbb{R}}^n).
\end{equation} 
\end{theorem}

\vskip 0.08in
\noindent{\bf Example~1:} 
In view of \eqref{ytgfff.9ytrdf.ygfg}, Theorem~\ref{i87hbBV} applies directly to the 
Riesz transforms in ${\mathbb{R}}^n$, as well as the Beurling transform in ${\mathbb{C}}$. 
More generally, given any principal-value convolution type operator $T_\Theta$ as in \eqref{ytgfff.9ytrdf.ygfg}, 
its realization as a linear and bounded mapping from the space $\widetilde{\mathrm{BMO}}({\mathbb{R}}^n)$ 
into itself, via the transposition formula
\begin{equation}\label{dkegfs.gG-jgV}
\begin{array}{c}
\widetilde{T}_{\Theta}:\widetilde{\rm BMO}({\mathbb{R}}^n)
\longrightarrow\widetilde{\rm BMO}({\mathbb{R}}^n)
\\[4pt]
\big\langle\widetilde{T}_{\Theta}[f],g\big\rangle:=\big\langle[f],T_{\widetilde{\Theta}}g\big\rangle,
\quad\forall\,[f]\in\widetilde{\rm BMO}({\mathbb{R}}^n),\,\,\forall\,g\in H^1({\mathbb{R}}^n),
\end{array}
\end{equation}
where $\langle\cdot,\cdot\rangle$ stands for the $\widetilde{\rm BMO}$-$H^1$ duality pairing, 
and $\widetilde{\Theta}(x):=\Theta(-x)$ for each $x\in{\mathbb{R}}^n\setminus\{0\}$, induces
a well-defined, linear and bounded operator
\begin{equation}\label{dkegfs.4YYY.gG}
\widetilde{T}_\Theta\big|_{{}_{\rm VMO}}:\widetilde{\rm VMO}({\mathbb{R}}^n)
\longrightarrow\widetilde{\rm VMO}({\mathbb{R}}^n).
\end{equation}

\vskip 0.08in
\noindent{\bf Example~2:}
Recall that, for a given Lipschitz function $A:{\mathbb{R}}\to{\mathbb{C}}$, the Calder\'on commutator 
of order $m\in{\mathbb{N}}_0$ is the principal value singular integral operator $C_m$ on the real line 
whose kernel is given by 
\begin{equation}\label{ttfVV.Ka-1}
K_m(x,y):=\frac{\big(A(x)-A(y)\big)^m}{(x-y)^{m+1}},\qquad x,y\in{\mathbb{R}},\,\,x\not=y.
\end{equation}
It is then a basic fact that each $C_m$ is a Calder\'on-Zygmund operator 
(e.g., $C_0$ is, up to normalization, the Hilbert transform on the real line). 
In particular, they all extend to well-defined and bounded linear operators from 
$L^\infty({\mathbb{R}})$ into ${\rm BMO}({\mathbb{R}})$. Retaining the same notation for the said 
extensions, a well-known trick (based on integration by parts) then yields the following remarkable 
recursive identity (cf. \cite[(2.14), p.\,266]{Meyer}) 
\begin{equation}\label{ttfVV.Ka-2}
C_m(1)=C_{m-1}(A')\,\,\text{ for each }\,\,m\in{\mathbb{N}}.
\end{equation}

In relation to the above family of operators, for each $m\in{\mathbb{N}}$ let us consider 
the principal value singular integral operator $T_m$ on the real line associated with 
the modified kernel
\begin{align}\label{ttfVV.Ka-3}
k_m(x,y) &:=K_{m}(x,y)-K_{m-1}(x,y)A'(y)
\nonumber\\[6pt]
&=\frac{\big(A(x)-A(y)\big)^{m-1}}{(x-y)^{m+1}}\Big\{A(x)-A(y)-(x-y)A'(y)\big\},
\qquad x,y\in{\mathbb{R}},\,\,x\not=y.
\end{align}
Since, generally speaking, the function $A'$ is only essentially bounded, the operator $T_m$ is 
only semi-Calder\'on-Zygmund (as opposed to $C_m$ which is a genuine Calder\'on-Zygmund operator). 
This being said, in contrast with \eqref{ttfVV.Ka-2} we presently have $T_m(1)=C_m(1)-C_{m-1}(A')=0$. 
Granted these, Theorem~\ref{i87hbBV} applies and gives that 
\begin{align}\label{ttfVV.Ka-4}
\parbox{10.35cm}{$T_m$, the modified Calder\'on commutator of order $m\in{\mathbb{N}}$ on the real line, 
associated with the kernel $k_m$ defined in \eqref{ttfVV.Ka-3}, induces a bounded operator from 
the space $\widetilde{\rm VMO}({\mathbb{R}})$ into itself.}
\end{align}

\vskip 0.08in
\noindent{\bf Example~3:}
Consider the principal value Cauchy singular integral operator ${\mathcal{C}}$ on a curve 
$\Sigma\subseteq{\mathbb{C}}$ which is the graph of a Lipschitz function $A:{\mathbb{R}}\to{\mathbb{R}}$. 
That is, $\Sigma:=\{z=x+iA(x):\,x\in{\mathbb{R}}\}$ and ${\mathcal{C}}$ acts on a function 
$f:\Sigma\to{\mathbb{C}}$ according to 
\begin{equation}\label{ugvVVfv-geed-1}
{\mathcal{C}}f(z):=\lim_{\varepsilon\to 0^{+}}\frac{1}{2\pi i}\int\limits_{\zeta\in\Sigma\setminus B(z,\varepsilon)}
\frac{f(\zeta)}{\zeta-z}\,d\zeta,\qquad z\in\Sigma.
\end{equation}
Making the bi-Lipschitz change of variables ${\mathbb{R}}\ni x\mapsto x+iA(x)\in\Sigma$ and 
identifying $f$ with the function $g(x):=f\big(x+iA(x)\big)$ for $x\in{\mathbb{R}}$, this 
becomes (after adjusting the truncation; cf. \cite[Lemma~B.1]{HMT15} in this regard) 
the principal value singular integral operator on the real line 
\begin{equation}\label{ugvVVfv-geed-2}
Tg(x):=\lim_{\varepsilon\to 0^{+}}\frac{1}{2\pi i}
\int\limits_{y\in{\mathbb{R}}\setminus(x-\varepsilon,x+\varepsilon)}
\frac{(1+iA'(y))g(y)}{y-x+i(A(y)-A(x))}\,dy,\qquad x\in{\mathbb{R}}.
\end{equation}
While the above integral kernel is, generally speaking, lacking smoothness in the $y$ variable, 
$T$ is nonetheless a semi-Calder\'on-Zygmund operator on ${\mathbb{R}}$, and we claim that $T(1)=0$.
To justify this claim, pick an arbitrary $H^1$ atom $a$ on the real line and observe that if 
\begin{equation}\label{ugvVVfv-geed-3}
b:\Sigma\to{\mathbb{C}}\,\,\text{ is defined as }\,\,
b\big(x+iA(x)\big):=\frac{a(x)}{1+iA'(x)}\,\,\text{ for }\,\,x\in{\mathbb{R}},
\end{equation}
then $\int_{\Sigma}b(z)\,dz=\int_{\mathbb{R}}a\,d{\mathscr{L}}^1=0$ and
\begin{equation}\label{ugvVVfv-geed-4}
\int_{\mathbb{R}}T^\top a\,d{\mathscr{L}}^1
=-\int_{\Sigma}({\mathcal{C}}b)(z)\,dz
=-\int_{\Sigma}\big((\tfrac{1}{2}I+{\mathcal{C}})b\big)(z)\,dz=0.
\end{equation}
The last equality above relies on Cauchy's vanishing formula (cf. \cite{MMM17}) 
applied to the function defined in the domain $\Omega\subseteq{\mathbb{C}}$ lying above the graph 
$\Sigma$ by $u(z):=\frac{1}{2\pi i}\int_{\Sigma}b(\zeta)(\zeta-z)^{-1}\,d\zeta$ for each $z\in\Omega$, 
which has an integrable nontangential maximal function on $\Sigma=\partial\Omega$ and whose nontangential 
boundary trace is precisely $(\tfrac{1}{2}I+{\mathcal{C}})b$ at a.e. point on $\Sigma=\partial\Omega$. 
In view of \eqref{ytgfff.9ytrdf-tDDD}, we conclude from \eqref{ugvVVfv-geed-4} that, indeed, $T(1)=0$. 

With the knowledge that $T$ is a semi-Calder\'on-Zygmund operator on ${\mathbb{R}}$ satisfying $T(1)=0$, 
Theorem~\ref{i87hbBV} applies and gives that 
\begin{align}\label{ugvVVfv-geed-5}
\parbox{10.35cm}{the principal value Cauchy singular integral operator, 
defined on the real line as in \eqref{ugvVVfv-geed-2}, induces a well-defined, linear and
bounded operator from the space $\widetilde{\rm VMO}({\mathbb{R}})$ into itself.}
\end{align}
This result may be further generalized to higher dimensions by considering the principal value Cauchy-Clifford 
singular integral operator on a Lipschitz surface as in \cite{Mi}.

\vskip 0.08in
\noindent{\bf Example~4:}
Having fixed $n\in{\mathbb{N}}$, recall the principal value harmonic double layer ${\mathcal{K}}$, 
defined on a surface $\Sigma\subseteq{\mathbb{R}}^{n+1}$ which is the graph of a Lipschitz function 
$A:{\mathbb{R}}^{n}\to{\mathbb{R}}$. Specifically, 
$\Sigma:=\{X=(x,A(x))\in{\mathbb{R}}^{n+1}:\,x\in{\mathbb{R}}^n\}$, 
and ${\mathcal{K}}$ maps a function $f:\Sigma\to{\mathbb{C}}$ into 
\begin{equation}\label{ugvVVfv-geed-DDD1}
{\mathcal{K}}f(X):=\lim_{\varepsilon\to 0^{+}}\frac{1}{\omega_{n}}\int\limits_{Y\in\Sigma\setminus B(X,\varepsilon)}
\frac{\langle\nu(Y),Y-X\rangle}{|X-Y|^{n+1}}\,d\sigma(Y),\qquad X\in\Sigma,
\end{equation}
where $\nu$ and $\sigma$, the unit normal and surface measure to $\Sigma$, are given by 
\begin{equation}\label{ugvVVfv-geed-DDD1.bis}
\nu\big(x,A(x)\big)=\frac{\big(\nabla A(x),-1\big)}{\sqrt{1+|\nabla A(x)|^2}},\quad
d\sigma\big(x,A(x)\big)=\sqrt{1+|\nabla A(x)|^2}\,dx,\quad x\in{\mathbb{R}}^n.
\end{equation}
Much as in the case of the Cauchy operator considered earlier, make the bi-Lipschitz 
change of variables ${\mathbb{R}}^n\ni x\mapsto\big(x,A(x)\big)\in\Sigma$ and identify 
$f$ with the function $g(x):=f\big(x,A(x)\big)$ for $x\in{\mathbb{R}}^n$. This permits us to 
identify the harmonic double layer ${\mathcal{K}}$ with the principal value singular integral 
operator in ${\mathbb{R}}^n$ given by 
\begin{equation}\label{ugvVVfv-geed-2DDD}
Tg(x):=\lim_{\varepsilon\to 0^{+}}\frac{1}{\omega_n}\int\limits_{y\in{\mathbb{R}}^n\setminus B(x,\varepsilon)}
\frac{A(x)-A(y)-\langle x-y,\nabla A(y)\rangle}{\big(|x-y|^2+(A(x)-A(y)^2\big)^{\frac{n+1}{2}}}\,g(y)\,dy,
\quad x\in{\mathbb{R}}^n.
\end{equation}
We remark that the integral kernel above does not, generally speaking, possess any smoothness in the $y$ variable.
Nonetheless, $T$ is bounded on $L^2({\mathbb{R}}^n)$ (cf. \cite[Th\'eor\`eme~11, p.\,320]{Meyer}), hence 
$T$ is a semi-Calder\'on-Zygmund operator on ${\mathbb{R}}^n$. We claim that $T(1)=0$.
To see that this is the case, pick an arbitrary $H^1$ atom $a$ in ${\mathbb{R}}^n$ and note that if 
\begin{equation}\label{ugvVVfv-geed-3DDD}
b:\Sigma\to{\mathbb{C}}\,\,\text{ is defined as }\,\,
b\big(x,A(x)\big):=\frac{a(x)}{\sqrt{1+|\nabla A(x)|^2}}\,\,\text{ for }\,\,x\in{\mathbb{R}}^n,
\end{equation}
then $\int_{\Sigma}b\,d\sigma=\int_{{\mathbb{R}}^n}a\,d{\mathscr{L}}^n=0$. Also, if we denote by ${\mathcal{K}}^\top$
the transposed of ${\mathcal{K}}$ on $L^2(\Sigma)$, then
\begin{equation}\label{ugvVVfv-geed-4DDD}
\int_{{\mathbb{R}}^n}T^\top a\,d{\mathscr{L}}^n=\int_{\Sigma}{\mathcal{K}}^\top b\,d\sigma
=\int_{\Sigma}\big(-\tfrac{1}{2}I+{\mathcal{K}}^\top\big)b\,d\sigma=0.
\end{equation}
The last equality above relies on the version of Divergence Formula established in \cite{MMM17}, 
currently used for the vector field defined in the domain $\Omega\subseteq{\mathbb{R}}^{n+1}$ 
lying above the surface $\Sigma$ by 
\begin{equation}\label{ugvVVfv-geed-4DDD.bix}
\vec{F}(X):=\frac{1}{\omega_{n}}\int_{\Sigma}\frac{X-Y}{|X-Y|^{n+1}}\,b(Y)\,d\sigma(Y),
\qquad\forall\,X\in\Omega, 
\end{equation}
which is smooth and divergence-free in $\Omega$, has an integrable nontangential maximal function, 
and whose nontangential boundary trace $\vec{F}\big|^{{}^{\rm n.t.}}_{\partial\Omega}$
satisfies $\nu\cdot\big(\vec{F}\big|^{{}^{\rm n.t.}}_{\partial\Omega}\big)
=\big(-\tfrac{1}{2}I+{\mathcal{K}}^\top\big)b$ at $\sigma$-a.e. point on $\Sigma=\partial\Omega$
(see \cite{MMM17} for more details). Having proved \eqref{ugvVVfv-geed-4DDD} we then 
conclude from \eqref{ytgfff.9ytrdf-tDDD} that $T(1)=0$, as wanted. 
Given that $T$ is a semi-Calder\'on-Zygmund operator in ${\mathbb{R}}^n$ satisfying $T(1)=0$, from 
Theorem~\ref{i87hbBV} we may then conclude that 
\begin{align}\label{ugvVVfv-geed-5DDD}
\parbox{10.35cm}{the principal value harmonic double layer operator, 
defined in ${\mathbb{R}}^n$ as in \eqref{ugvVVfv-geed-2DDD}, induces a well-defined, linear and
bounded operator from the space $\widetilde{\rm VMO}({\mathbb{R}}^n)$ into itself.}
\end{align}
To close, we mention that similar results are valid for the pull-back from a Lipschitz graph
to the Euclidean space of any double layer potential operator associated with a homogeneous 
second order elliptic system.   

\vskip 0.10in

Moving on, we note that he argument which proves Theorem~\ref{i87hbBV} is indicative 
of a more general principle at play here, to the effect that, regardless of its actual format, 
\begin{equation}\label{i87hbBV-jt4}
\parbox{10.00cm}{any linear operator which is bounded both on ${\rm BMO}$ and on a (homogeneous) 
H\"older space is also bounded on ${\mathrm{VMO}}$.}
\end{equation}
In relation to \eqref{i87hbBV-jt4}, it is also worth pointing out that the class of operators
which are simultaneously bounded on ${\rm BMO}$ as well as on some common (homogeneous) H\"older
space is considerably larger than the class of the semi-Calder\'on-Zygmund operators considered in 
Theorem~\ref{i87hbBV} since, as opposed to the latter, the former is stable under composition 
hence, in particular, constitutes an algebra. This being said, by additionally hypothesizing 
a suitable cancellation condition for the transposed, one can identify a (maximal) subfamily of 
Calder\'on-Zygmund operators which do make up an algebra. To facilitate stating such a result, 
for any given Banach space ${\mathcal{X}}$ we agree to denote by ${\mathscr{B}}({\mathcal{X}})$ 
the Banach algebra of linear and bounded operators from ${\mathcal{X}}$ into itself (with respect 
to the ordinary addition and composition of operators, and ordinary operator norm). 

\begin{theorem}\label{i87hbBV-MY}
Fix $n\in{\mathbb{N}}$ arbitrary. Then the family ${\mathscr{A}}^0_{{}_{\widetilde{\rm CZ}}}$ 
consisting of all operators $\widetilde{T}\big|_{{}_{\rm VMO}}$ as in \eqref{dkegfs.4YYY}, 
where $T$ is a Calder\'on-Zygmund operator in ${\mathbb{R}}^n$ satisfying $T(1)=T^\top(1)=0$, 
is a sub-algebra of ${\mathscr{B}}\big(\,\widetilde{\mathrm{VMO}}({\mathbb{R}}^n)\big)$. 
\end{theorem}

The family of principal-value convolution type operators $T_\Theta$ associated as in \eqref{ytgfff.9ytrdf.ygfg} 
with kernels $\Theta$ which are actually of class ${\mathscr{C}}^\infty$ in ${\mathbb{R}}^n\setminus\{0\}$ 
also gives rise to an algebra of linear and bounded operators on $\widetilde{\rm VMO}({\mathbb{R}}^n)$,
of the sort described in our next theorem. 

\begin{theorem}\label{i87hbBV-ALG}
Fix $n\in{\mathbb{N}}$ arbitrary. Associate with each complex-valued function 
\begin{align}\label{ytgfff.9ytrdf.ygfg.222222}
\begin{array}{c}
\text{$\Theta\in{\mathscr{C}}^\infty({\mathbb{R}}^n\setminus\{0\})$, 
positive homogenous of degree $-n$,}
\\[6pt]
\text{and with the cancellation property $\int_{S^{n-1}}\Theta(\omega)\,d\omega=0$},
\end{array}
\end{align}
the principal-value convolution type singular integral operator $T_\Theta$ defined as in 
\eqref{ytgfff.9ytrdf.ygfg}, and denote by $\widetilde{T}_{\Theta}$ its realization as a linear and 
bounded mapping from the space $\widetilde{\mathrm{BMO}}({\mathbb{R}}^n)$ into itself as in 
\eqref{dkegfs.gG-jgV}. Then, with $I$ denoting the identity operator, the following properties hold:
\begin{enumerate}
\item[(a)] The set
\begin{equation}\label{gabbIHka}
{\mathscr{A}}_{{}_{\widetilde{\rm SIO}}}:=\Big\{cI+\widetilde{T}_{\Theta}\big|_{{}_{\rm VMO}}
:\widetilde{\mathrm{VMO}}({\mathbb{R}}^n)\to\widetilde{\mathrm{VMO}}({\mathbb{R}}^n)
:\,c\in{\mathbb{C}},\,\text{ and $\Theta$ as in \eqref{ytgfff.9ytrdf.ygfg.222222}}\Big\}
\end{equation}
is a commutative unital sub-algebra of ${\mathscr{B}}\big(\,\widetilde{\mathrm{VMO}}({\mathbb{R}}^n)\big)$.
In ${\mathscr{A}}_{{}_{\widetilde{\rm SIO}}}$ the following composition law holds: if $c\in{\mathbb{C}}$ 
and the functions $\Theta_1,\dots,\Theta_N,{\Theta'}_{\!\!1},\dots,{\Theta'}_{\!\!N},\Theta$ are as in 
\eqref{ytgfff.9ytrdf.ygfg.222222} and satisfy 
\begin{equation}\label{u7GBB.Za.5.XXX}
\sum_{j=1}^N m_{\widetilde{\Theta'}_{\!\!j}}\,m_{\widetilde{\Theta}_j}=c+m_{\widetilde{\Theta}}
\,\,\text{ in }\,\,{\mathbb{R}}^n\setminus\{0\},
\end{equation}
then
\begin{equation}\label{u7GBB.Za.8.YYY}
\sum_{j=1}^N\big(\widetilde{T}_{{\Theta'}_{\!\!j}}\big|_{{}_{\rm VMO}}\big)
\big(\widetilde{T}_{{\Theta}_j}\big|_{{}_{\rm VMO}}\big)=cI+\widetilde{T}_{{\Theta}}\big|_{{}_{\rm VMO}}
\,\,\text{ in }\,\,{\mathscr{B}}\big(\,\widetilde{\rm VMO}({\mathbb{R}}^n)\big).
\end{equation}

\vskip 0.08in
\item[(b)] With `bar' denoting closure in ${\mathscr{B}}\big(\,\widetilde{\mathrm{VMO}}({\mathbb{R}}^n)\big)$, 
\begin{equation}\label{gab-ii-GGVa}
\overline{{\mathscr{A}}_{{}_{\widetilde{\rm SIO}}}}
=\overline{\text{span}}\,\Big\{\widetilde{R}_j\big|_{{}_{\rm VMO}}\Big\}_{1\leq j\leq n},
\end{equation}
that is, $\overline{{\mathscr{A}}_{{}_{\widetilde{\rm SIO}}}}$ coincides with the smallest closed subalgebra of  
${\mathscr{B}}\big(\,\widetilde{\mathrm{VMO}}({\mathbb{R}}^n)\big)$ containing the Riesz transforms, 
$\widetilde{R}_j\big|_{{}_{\rm VMO}}\in{\mathscr{B}}\big(\,\widetilde{\mathrm{VMO}}({\mathbb{R}}^n)\big)$ 
with $1\leq j\leq n$.

\vskip 0.08in
\item[(c)] Whenever the function $\Theta$ is as in \eqref{ytgfff.9ytrdf.ygfg.222222} and
\begin{equation}\label{gab-ii-GG-yt55}
c\in{\mathbb{C}}\setminus\big\{-m_{\widetilde{\Theta}}(\xi):\,\xi\in{\mathbb{R}}^n\setminus\{0\}\big\}
\end{equation}
it follows that $cI+\widetilde{T}_{\Theta}\big|_{{}_{\rm VMO}}$ has an inverse in 
${\mathscr{A}}_{{}_{\widetilde{\rm SIO}}}$. More specifically, whenever $\Theta$ is as 
in \eqref{ytgfff.9ytrdf.ygfg.222222} and $c$ is as in \eqref{gab-ii-GG-yt55}, the operator 
$cI+\widetilde{T}_{\Theta}\in{\mathscr{B}}\big(\,\widetilde{\mathrm{BMO}}({\mathbb{R}}^n)\big)$
has an inverse in $\widetilde{\mathrm{BMO}}({\mathbb{R}}^n)$ of the form 
$c_0I+\widetilde{T}_{\Theta_0}\in{\mathscr{B}}\big(\,\widetilde{\mathrm{BMO}}({\mathbb{R}}^n)\big)$
for some $c_0\in{\mathbb{C}}$ and $\Theta_0$ as in \eqref{ytgfff.9ytrdf.ygfg.222222}, with the property 
that $c_0I+\widetilde{T}_{\Theta_0}\big|_{{}_{\rm VMO}}$ is the inverse of 
$cI+\widetilde{T}_{\Theta}\big|_{{}_{\rm VMO}}$ in ${\mathscr{A}}_{{}_{\widetilde{\rm SIO}}}$.

\vskip 0.08in
\item[(d)] Suppose $\Theta$ is as in \eqref{ytgfff.9ytrdf.ygfg.222222} and $c$ is as in 
\eqref{gab-ii-GG-yt55}. Then for each $f\in{\mathrm{BMO}}({\mathbb{R}}^n)$ one has
\begin{equation}\label{u7GBB}
f\in{\mathrm{VMO}}({\mathbb{R}}^n)\Longleftrightarrow
(cI+\widetilde{T}_{\Theta})[f]\in\widetilde{\mathrm{VMO}}({\mathbb{R}}^n).
\end{equation}

More generally, let $N\in{\mathbb{N}}$ be a given integer and assume $\Theta_1,\dots,\Theta_N$ is a family 
of functions, each of which as in \eqref{ytgfff.9ytrdf.ygfg.222222}. Also, fix 
\begin{equation}\label{gab-ii-GG-yt55.ZZZ}
(c_1,\dots,c_N)\in{\mathbb{C}}^N\setminus\Big\{\big(-m_{\widetilde{\Theta}_j}(\xi)\big)_{1\leq j\leq N}:
\,\xi\in{\mathbb{R}}^n\setminus\{0\}\Big\}.
\end{equation}
Then for each given function $f\in{\mathrm{BMO}}({\mathbb{R}}^n)$ one has
\begin{equation}\label{u7GBB.ZZZ}
f\in{\mathrm{VMO}}({\mathbb{R}}^n)\Longleftrightarrow
(c_jI+\widetilde{T}_{\Theta_j})[f]\in\widetilde{\mathrm{VMO}}({\mathbb{R}}^n)
\,\,\text{ for each }\,\,j\in\{1,\dots,N\}.
\end{equation}

\item[(e)] Items {\it (a)}, {\it (c)}, and the first part of {\it (d)}, have natural versions in 
the case when the functions involved are vector-valued and the kernels of the singular integral 
operators are matrix-valued. The specifics of this more general setting are as follows. Given a 
finite dimensional complex vector space ${\mathscr{V}}$, consider ${\mathscr{V}}$-valued functions 
whose scalar components {\rm (}with respect to some fixed basis of ${\mathscr{V}}${\rm )} are 
from $\widetilde{\mathrm{VMO}}({\mathbb{R}}^n)$ {\rm (}or $\widetilde{\mathrm{BMO}}({\mathbb{R}}^n)$, 
depending on the context{\rm )}. Also, consider principal-value convolution type operators $T_\Theta$ 
defined as in \eqref{ytgfff.9ytrdf.ygfg}, associated with kernels $\Theta$ as in 
\eqref{ytgfff.9ytrdf.ygfg.222222} taking values in ${\rm Hom}\,({\mathscr{V}},{\mathscr{V}})$. 
In particular, $T_\Theta$ may be viewed as a matrix of ordinary scalar, principal-value, convolution 
type operators, and extending each individual entry in this matrix as in \eqref{dkegfs.gG-jgV} then 
yields a linear and bounded operator $\widetilde{T}_{\Theta}$ from 
$\widetilde{\mathrm{BMO}}({\mathbb{R}}^n)\otimes{\mathscr{V}}$ into itself which leaves 
the subspace $\widetilde{\mathrm{VMO}}({\mathbb{R}}^n)\otimes{\mathscr{V}}$ invariant. 

The version of item {\it (a)} in this setting is that if one now defines ${\mathscr{A}}_{{}_{\widetilde{\rm SIO}}}$ 
as in \eqref{gabbIHka}, but with the intervening singular integral operators as just described above and 
with $cI$ now replaced by $c\in{\rm Hom}\,({\mathscr{V}},{\mathscr{V}})$ arbitrary, 
then ${\mathscr{A}}_{{}_{\widetilde{\rm SIO}}}$ becomes a {\rm (}typically non-commutative{\rm )} 
sub-algebra of ${\mathscr{B}}\big(\,\widetilde{\mathrm{VMO}}({\mathbb{R}}^n)\otimes{\mathscr{V}}\big)$.
Finally, in the case of item {\it (c)} and the first part of item {\it (d)}, condition \eqref{gab-ii-GG-yt55} 
is now replaced by 
\begin{equation}\label{gab-ii-GG-yt55-pj}
c+m_{\widetilde{\Theta}}(\xi)\,\,\text{ is invertible in }\,\,{\rm Hom}\,({\mathscr{V}},{\mathscr{V}})
\,\,\text{ for each }\,\,\xi\in{\mathbb{R}}^n\setminus\{0\}.
\end{equation}
\end{enumerate}
\end{theorem}

Theorem~\ref{i87hbBV-ALG}, whose proof is presented in \S\ref{NEWSSS}, has many consequences of 
independent interest, which we shall now explore. We begin by stating a version of the first claim 
in item {\it (d)} of Theorem~\ref{i87hbBV-ALG} for kernels taking values in a finite dimensional algebra
(again, proved in \S\ref{NEWSSS}).

\begin{corollary}\label{i87hbBV-ALG-CCC}
Let $A=(A,+,\odot,1)$ be a finite dimensional {\rm (}complex{\rm )} unital associative algebra. 
Fix $n\in{\mathbb{N}}$ arbitrary and, associate with each $A$-valued function 
\begin{align}\label{ytgfff.9ytrdf.ygfg.222222-CCC}
\begin{array}{c}
\text{$\Theta:{\mathbb{R}}^n\setminus\{0\}\longrightarrow A$ which is of class ${\mathscr{C}}^\infty$,} 
\\[6pt]
\text{positive homogenous of degree $-n$, and with}
\\[6pt]
\text{the cancellation property $\int_{S^{n-1}}\Theta(\omega)\,d\omega=0$},
\end{array}
\end{align}
consider the principal-value convolution type operator $T_\Theta$ acting on 
$A$-valued Schwartz functions $f\in{\mathscr{S}}({\mathbb{R}}^n)\otimes A$ according to 
$T_\Theta f(x):=\lim\limits_{\varepsilon\to 0^{+}}\int_{y\in{\mathbb{R}}^n\setminus B(x,\varepsilon)}
\Theta(x-y)\odot f(y)\,dy$ for $x\in{\mathbb{R}}^n$.

Denote by $\widetilde{T}_{\Theta}$ the realization of the operator $T_\Theta$ as a linear and bounded 
mapping from the space $\widetilde{\mathrm{BMO}}({\mathbb{R}}^n)\otimes A$ into itself, obtained by extending 
each scalar component of $T_\Theta$ to $\widetilde{\mathrm{BMO}}({\mathbb{R}}^n)$ as in \eqref{dkegfs.gG-jgV}.
Also, fix some 
\begin{equation}\label{gab-ii-GG-yt55-CCC}
\parbox{6.60cm}{$c\in A$ such that $c+m_{\widetilde{\Theta}}(\xi)$ is invertible 
in $A$ from the right for each $\xi\in{\mathbb{R}}^n\setminus\{0\}$.}
\end{equation}
Then, with $I$ denoting the identity operator, for each $f\in{\mathrm{BMO}}({\mathbb{R}}^n)\otimes A$ one has
\begin{equation}\label{u7GBB-CCC}
f\in{\mathrm{VMO}}({\mathbb{R}}^n)\otimes A\Longleftrightarrow
(cI+\widetilde{T}_{\Theta})[f]\in\widetilde{\mathrm{VMO}}({\mathbb{R}}^n)\otimes A.
\end{equation}
\end{corollary}

\vskip 0.10in

Historically, the Riesz transforms have been successfully employed in 
characterizing the regularity of functions in the Euclidean space. 
For example, it is well-known (cf., e.g., \cite[(4.11), p.\,284]{GCRF85}) 
that the Hardy space $H^1({\mathbb{R}}^n)$ may be described as
\begin{equation}\label{u6gGVV.1}
H^1({\mathbb{R}}^n)=\big\{f\in L^1({\mathbb{R}}^n):\,R_jf\in L^1({\mathbb{R}}^n)
\,\,\text{ for }\,\,1\leq j\leq n\big\}.
\end{equation}
Also, if for each $j\in\{1,\dots,n\}$ we denote by $\widetilde{R}_j$ the extension of 
the $j$-th Riesz transform, originally acting on $L^2({\mathbb{R}}^n)$ as in \eqref{R-87ygbg}, 
to a bounded operator on $\widetilde{\mathrm{BMO}}({\mathbb{R}}^n)$ defined as in \eqref{dkegfs.gG-jgV},
then the following characterization of the space $\widetilde{\rm BMO}({\mathbb{R}}^n)$ 
may be deduced from \cite[Theorem~2, p.\,587]{Fe}:
\begin{equation}\label{u6gGVV.2}
\widetilde{\rm BMO}({\mathbb{R}}^n)=\Big\{[g_0]+\sum_{j=1}^n{\widetilde{R}}_j[g_j]:\,
g_0,g_1,\dots,g_n\in L^\infty({\mathbb{R}}^n)\Big\}.
\end{equation}
In a similar vein, a characterization of the space ${\rm VMO}({\mathbb{R}})$ as 
(where $H$ is the Hilbert transform on the real line) 
\begin{equation}\label{u6gGVV.2rrr}
{\rm VMO}({\mathbb{R}})=\Big\{u+Hv:\,u,v\in L^\infty({\mathbb{R}})\cap{\rm UC}({\mathbb{R}})\Big\}
\end{equation}
has been given by Sarason in \cite[Theorem~1, p.\,392]{Sa75}. Let us also mention that regularity 
results of a geometric flavor involving the Riesz transforms have been established in \cite{MMV}. 
Here is a result along this line of work, providing characterizations of the Sarason space 
${\rm VMO}$ in terms of Riesz and Beurling transforms in the complex plane.
\begin{corollary}\label{nhxtrE.2D}
Work in the two dimensional setting ${\mathbb{R}}^2\equiv{\mathbb{C}}$ and 
consider the complex Riesz transform 
\begin{equation}\label{R-87ygbg.RRR}
R_{{}_{\mathbb{C}}}f(z):=\lim_{\varepsilon\to 0^{+}}\frac{1}{2\pi}
\int_{\zeta\in{\mathbb{C}}\setminus B(z,\varepsilon)}\frac{z-\zeta}{|z-\zeta|^{3}}
f(\zeta)\,d{\mathscr{L}}^2(\zeta),\qquad z\in{\mathbb{C}}.
\end{equation}
Denote $\widetilde{R}_{{}_{\mathbb{C}}}$ the extension of the complex Riesz 
transform, originally considered as in \eqref{R-87ygbg.RRR} on $L^2({\mathbb{C}})$ 
{\rm (}cf. \eqref{ytgfff.9ytrdf.ygfg}{\rm )}, to a linear and bounded operator on 
$\widetilde{\mathrm{BMO}}({\mathbb{C}})$ {\rm (}cf. \eqref{dkegfs.gG-jgV}{\rm )}.
Analogously, denote by $\widetilde{S}$ the extension of the Beurling transform defined 
as in \eqref{R-87ygbg.BEBE} on $L^2({\mathbb{C}})$ to a linear and bounded operator on 
$\widetilde{\mathrm{BMO}}({\mathbb{C}})$. Finally, fix an arbitrary number $c\in{\mathbb{C}}$
such that $|c|\not=1$. 

Then for each given function $f\in{\mathrm{BMO}}({\mathbb{C}})$ the following conditions are equivalent:
\begin{enumerate}
\item[(i)] $f$ belongs to the Sarason space ${\mathrm{VMO}}({\mathbb{C}})$;
\item[(ii)] $\big(cI+\widetilde{R}_{{}_{\mathbb{C}}}\big)[f]$ belongs to $\widetilde{\mathrm{VMO}}({\mathbb{C}})$;
\item[(iii)] $(cI+\widetilde{S})[f]$ belongs to $\widetilde{\mathrm{VMO}}({\mathbb{C}})$.
\end{enumerate}
\end{corollary}

The key ingredient in the proof of Corollary~\ref{nhxtrE.2D}, presented in \S\ref{NEWSSS}, is 
Theorem~\ref{i87hbBV}. In turn, the equivalence of {\it (i)}-{\it (iv)} in Corollary~\ref{nhxtrE.2D} 
may be generalized to higher-dimensions using Clifford algebras as a substitute for the field of 
complex numbers. Specifically, given any $n\in{\mathbb{N}}$, denote by $(\mathcal{C}\!\ell_n,+,\odot)$ 
the (complex) Clifford algebra generated by $n$ anti-commuting imaginary units, denoted by 
$(e_j)_{1\leq j\leq n}$. Hence,
\begin{equation}\label{im-e}
e_j\odot e_j=-1\quad\mbox{and}\quad e_j\odot e_k=-e_k\odot e_j
\,\,\mbox{ whenever }\,1\leq j\neq k\leq n.
\end{equation}
The Euclidean ambient ${\mathbb{R}}^n$ embeds canonically into $\mathcal{C}\!\ell_n$ by 
identifying $(e_j)_{1\leq j\leq n}$ with the standard orthonormal basis in ${\mathbb{R}}^n$, 
i.e., 
\begin{equation}\label{im-e-yyy}
{\mathbb{R}}^n\ni x=(x_1,\dots,x_n)\equiv x:=\sum_{j=1}^nx_je_j\in\mathcal{C}\!\ell_n.
\end{equation}
Under this embedding, \eqref{im-e} implies that 
\begin{equation}\label{X-sqr}
x\odot x=-|x|^2\,\,\text{ for each }\,\,x\in{\mathbb{R}}^n\hookrightarrow{\mathcal{C}}\!\ell_{n}.
\end{equation}
More information on this topic may be found in \cite{Mi} and the references therein.
Here is the higher-dimensional version of the portion of Corollary~\ref{nhxtrE.2D} 
dealing with the complex Riesz transform.
\begin{corollary}\label{nhxtrE.ND}
Consider the Clifford-Riesz transform acting on ${\mathcal{C}}\!\ell_{n}$-valued functions $f$ defined in 
${\mathbb{R}}^n$ according to 
\begin{equation}\label{R-87ygbg.RRR.NNN}
R_{{}_{{\mathcal{C}}\!\ell}}f(x):=\lim_{\varepsilon\to 0^{+}}\frac{\Gamma\big(\frac{n+1}{2}\big)}{\pi^{\frac{n+1}{2}}}
\int_{y\in{\mathbb{R}}^n\setminus B(x,\varepsilon)}\frac{x-y}{|x-y|^{n+1}}\odot
f(y)\,dy,\qquad x\in{\mathbb{R}}^{n},
\end{equation}
and denote by $\widetilde{R}_{{}_{{\mathcal{C}}\!\ell}}$ its extension to a bounded operator on 
$\widetilde{\mathrm{BMO}}({\mathbb{R}}^n)\otimes{\mathcal{C}}\!\ell_{n}$. 
Also, consider 
\begin{equation}\label{utgVVavbnU9k}
\parbox{7.90cm}{$c\in{\mathcal{C}}\!\ell_{n}$ such that $c+i\omega$ is invertible  
in ${\mathcal{C}}\!\ell_{n}$ from the right
for each vector $\omega\in S^{n-1}\subseteq{\mathbb{R}}^n\hookrightarrow{\mathcal{C}}\!\ell_{n}$.}
\end{equation}

Then for each given function $f\in{\mathrm{BMO}}({\mathbb{R}}^n)\otimes{\mathcal{C}}\!\ell_{n}$ one has
\begin{equation}\label{ytGVVV}
f\in{\mathrm{VMO}}({\mathbb{R}}^n)\otimes{\mathcal{C}}\!\ell_{n}\Longleftrightarrow
\big(cI+\widetilde{R}_{{}_{{\mathcal{C}}\!\ell}}\big)[f]\in
\widetilde{\mathrm{VMO}}({\mathbb{R}}^n)\otimes{\mathcal{C}}\!\ell_{n}.
\end{equation}
\end{corollary}

As discussed in \S\ref{NEWSSS}, the above result is readily implied by Corollary~\ref{i87hbBV-ALG-CCC}.
We single out another immediate consequence of Theorem~\ref{i87hbBV-ALG} formulated in terms 
of scalar-valued functions.

\begin{corollary}\label{nhxtrE}
For each $j\in\{1,\dots,n\}$ denote by $\widetilde{R}_j$ the extension of 
the $j$-th Riesz transform, originally acting on $L^2({\mathbb{R}}^n)$ as in \eqref{R-87ygbg}, 
to a bounded operator on $\widetilde{\mathrm{BMO}}({\mathbb{R}}^n)$ defined as in \eqref{dkegfs.gG-jgV}. 
Then for each complex-valued function $f\in{\mathrm{BMO}}({\mathbb{R}}^n)$ and each 
$(c_1,\dots,c_n)\in{\mathbb{C}}^n\setminus iS^{n-1}$ one has
\begin{equation}\label{jhxfdw-R.jhghg}
f\in{\mathrm{VMO}}({\mathbb{R}}^n)\Longleftrightarrow
\big(c_jI+\widetilde{R}_j\big)[f]\in\widetilde{\mathrm{VMO}}({\mathbb{R}}^n)
\,\text{ for each }\,j\in\{1,\dots,n\}.
\end{equation}
In particular, corresponding to the special case when $c_1=\cdots=c_n=0$, for each complex-valued function 
$f\in{\mathrm{BMO}}({\mathbb{R}}^n)$ one has\footnote{The first author would like to express his gratitude 
to L. Escauriaza for some conversations pertaining to the one dimensional case of \eqref{jhxfdw-R}.}
\begin{equation}\label{jhxfdw-R}
f\in{\mathrm{VMO}}({\mathbb{R}}^n)\,\,\text{ if and only if }\,\,
\widetilde{R}_j[f]\in\widetilde{\mathrm{VMO}}({\mathbb{R}}^n)
\,\,\text{ for each }\,\,j\in\{1,\dots,n\}.
\end{equation}
\end{corollary}

Finally, we note that it is also possible to extend the characterizations of the membership to 
{\rm VMO} given in the two-dimensional setting in Corollary~\ref{nhxtrE.2D} to higher dimensions 
and differential forms by introducing suitable higher-dimensional versions of the Beurling and 
Riesz transforms acting on differential forms. To describe them, we need a some standard notation 
from differential geometry (see, e.g., \cite[\S2.1]{MMMT16}). For each $\ell\in\{0,1,\dots,n\}$ 
let $\Lambda^\ell$ denote the space of differential forms of degree $\ell$ in ${\mathbb{R}}^n$, and 
set $\Lambda:=\bigoplus_{\ell=0}^n\Lambda^\ell$ for the space of differential forms of arbitrary 
mixed degrees in ${\mathbb{R}}^n$. The exterior derivative operator $d$ and its formal adjoint 
$\delta$ in ${\mathbb{R}}^n$ are defined, respectively, as
\begin{equation}\label{8hhBBBa.AAA}
df:=\sum_{j=1}^n dx_j\wedge(\partial_j f),\quad
\delta f:=-\sum_{j=1}^n dx_j\vee(\partial_j f),\quad\forall\,f\in{\mathcal{D}}'({\mathbb{R}}^n)\otimes\Lambda,
\end{equation}
where $\wedge,\vee$ stand for the exterior and interior product on $\Lambda$, and where the 
partial derivatives are applied to the individual components of the differential form $f$. 
For each $\theta\in{\mathbb{C}}\setminus\{0\}$ consider then the $\theta$-Beurling transform 
in ${\mathbb{R}}^n$ defined (on the frequency side) as
\begin{equation}\label{8hhBBBa.1}
S_{\theta}:=\big(\theta\,d\delta-\theta^{-1}\delta d\big)\Delta^{-1}
:{\mathscr{S}}({\mathbb{R}}^n)\otimes\Lambda\rightarrow{\mathscr{S}}'({\mathbb{R}}^n)\otimes\Lambda.
\end{equation}
In the particular case when $\theta=1$, this operator appears in \cite[(12.71), p.\,326]{IM01}.
The reason for which it is reasonable to think of $S_{\theta}$ above as some kind of generalization 
of the classical Beurling transform defined in the complex plane in \eqref{R-87ygbg.BEBE} is as follows. 
If for each $\theta\in{\mathbb{C}}\setminus\{0\}$ we also introduce the first order differential operators 
\begin{equation}\label{8hhBBBa.2}
D_{\theta}:=i(\theta\,d-\theta^{-1}\delta),
\end{equation}
then $(D_\theta)^2=d\delta+\delta d=-\Delta$, so each $D_\theta$ may be regarded as a 
square-root of the negative Laplacian. Hence, each $D_{\theta}$ is a Dirac type operator, much as 
the Cauchy-Riemann operator $\partial_{\overline{z}}$ and its complex conjugate $\partial_z$
in the complex plane. Moreover, a simple computation (which makes use of the fact that $d^2=0$, 
$\delta^2=0$, and $\Delta=-d\delta-\delta d$) shows that
\begin{equation}\label{8hhBBBa.3}
S_{\theta_1}D_{\theta_2}=i\,D_{i\theta_1\cdot\theta_2}
\,\,\text{ for each }\,\,\theta_1,\theta_2\in{\mathbb{C}}\setminus\{0\},
\end{equation}
which may be viewed as an extension of the classical intertwining properties recorded in 
\eqref{R-87ygbg.BEBE.2tfV}. 

An alternative representation of $S_\theta$ as an operator on $L^2({\mathbb{R}}^n)\otimes\Lambda$, which is visible 
from \eqref{8hhBBBa.AAA}-\eqref{8hhBBBa.1} (upon recalling that the $j$-th Riesz transform on $L^2({\mathbb{R}}^n)$ 
is the multiplier with symbol $-i\xi_j/|\xi|$), is 
\begin{equation}\label{8hhBBBa.UUU}
S_{\theta}f=-\theta\,R\wedge(R\vee f)+\theta^{-1}\,R\vee(R\wedge f),\qquad 
f\in L^2({\mathbb{R}}^n)\otimes\Lambda,
\end{equation}
with the understanding that, in analogy to \eqref{8hhBBBa.AAA}, 
\begin{equation}\label{8hhBBBa.AAA.xxx}
R\wedge f:=\sum_{j=1}^n dx_j\wedge(R_j f),\quad
R\vee f:=-\sum_{j=1}^n dx_j\vee(R_j f),\quad\forall\,f\in L^2({\mathbb{R}}^n)\otimes\Lambda,
\end{equation}
where the Riesz transforms $R_j$'s act on the individual components of the differential form $f$. 
In particular, if for each $f\in\widetilde{\mathrm{BMO}}({\mathbb{R}}^n)\otimes\Lambda$ 
we also define (with similar conventions as above) 
\begin{equation}\label{8hhBBBa.AAA.xxx.2}
\widetilde{R}\wedge[f]:=\sum_{j=1}^n dx_j\wedge(\widetilde{R}_j[f]),\qquad
\widetilde{R}\vee[f]:=-\sum_{j=1}^n dx_j\vee(\widetilde{R}_j[f]),
\end{equation}
then Theorem~\ref{i87hbBV} permits us to extend the $\theta$-Beurling transform, originally 
considered as in \eqref{8hhBBBa.UUU}, to a linear and bounded operator $\widetilde{S}_{\theta}$ 
from $\widetilde{\mathrm{BMO}}({\mathbb{R}}^n)\otimes\Lambda$ into itself given by 
\begin{equation}\label{8hhBBBa.UUU.xxxx}
\widetilde{S}_{\theta}[f]:=-\theta\,\widetilde{R}\wedge(\widetilde{R}\vee[f])
+\theta^{-1}\,\widetilde{R}\vee(\widetilde{R}\wedge[f]),\qquad 
[f]\in\widetilde{\mathrm{BMO}}({\mathbb{R}}^n)\otimes\Lambda.
\end{equation}

In this vein, let us also introduce the $\theta$-Riesz transforms
(once again, on the frequency side) as
\begin{equation}\label{8hhBBBa.2rrrr}
R_{\theta}:=\frac{D_\theta}{\sqrt{-\Delta}}=i\theta\,\frac{d}{\sqrt{-\Delta}}
-i\theta^{-1}\frac{\delta}{\sqrt{-\Delta}},
\qquad\forall\,\theta\in{\mathbb{C}}\setminus\{0\},
\end{equation}
and note that they induce linear and bounded mappings on $L^2({\mathbb{R}}^n)\otimes\Lambda$ according to 
\begin{equation}\label{8hhBBBa.VVV}
R_{\theta}f=-i\big(\theta\,R\wedge f+\theta^{-1}\,R\vee f\big),\qquad 
f\in L^2({\mathbb{R}}^n)\otimes\Lambda.
\end{equation}
Thanks to Theorem~\ref{i87hbBV}, the $\theta$-Riesz transforms above may further be extended to 
linear and bounded operators $\widetilde{R}_{\theta}$ on 
$\widetilde{\mathrm{BMO}}({\mathbb{R}}^n)\otimes\Lambda$ according to 
\begin{equation}\label{8hhBBBa.scV}
\widetilde{R}_{\theta}[f]=-i\big(\theta\,\widetilde{R}\wedge[f]
+\theta^{-1}\,\widetilde{R}\vee[f]\big),\qquad [f]\in\widetilde{\mathrm{BMO}}({\mathbb{R}}^n)\otimes\Lambda.
\end{equation}

In relation to the $\theta$-Beurling transforms in \eqref{8hhBBBa.UUU.xxxx}
and the $\theta$-Riesz transforms in \eqref{8hhBBBa.scV}, we have the following result, 
akin to the characterization of the membership to {\rm VMO} in the two-dimensional case
given in Corollary~\ref{nhxtrE.2D}:

\begin{corollary}\label{nhxtrE-SSS}
For each $j,k\in\{1,\dots,n\}$ introduce 
\begin{equation}\label{utggvVV.T1}
\Theta_{jk}(x):=\frac{-nx_jx_k+\delta_{jk}|x|^2}{|x|^{n+2}},\qquad\forall\,x\in{\mathbb{R}}^n\setminus\{0\},
\end{equation}
and note that 
\begin{equation}\label{utggvVV.T2}
\begin{array}{c}
\displaystyle
\Theta_{jk}\in{\mathscr{C}}^\infty({\mathbb{R}}^n\setminus\{0\}),\,\,\,\Theta_{kj}=\Theta_{jk},\,\,\,
\int_{S^{n-1}}\Theta_{jk}(\omega)\,d\omega=0,\,\,\text{ and}
\\[6pt]
\text{$\Theta_{jk}$ is even and positive homogeneous of degree $-n$ in ${\mathbb{R}}^n\setminus\{0\}$}.
\end{array}
\end{equation}
In particular, these permit introducing the principal-value singular integral operators of 
convolution type $T_{\Theta_{jk}}$ associated with the $\Theta_{jk}$'s as in \eqref{ytgfff.9ytrdf.ygfg}.
Then for each $\theta\in{\mathbb{C}}\setminus\{0\}$ the operator $S_\theta$ is symmetric on 
$L^2({\mathbb{R}}^n)\otimes\Lambda$ and for each $f\in L^2({\mathbb{R}}^n)\otimes\Lambda$ one has 
{\rm (}with $T_{\Theta_{jk}}$ acting on the differential form $f$ componentwise{\rm )}
\begin{align}\label{i7yggg.B4}
S_\theta f &=-\frac{\theta}{\omega_{n-1}}\sum_{j,k=1}^n
dx_j\wedge\big(dx_k\vee\big(T_{\Theta_{jk}}f\big)\big)
+\frac{\theta^{-1}}{\omega_{n-1}}
\sum_{j,k=1}^n dx_j\vee\big(dx_k\wedge\big(T_{\Theta_{jk}}f\big)\big)
\nonumber\\[6pt]
&\quad-\frac{\theta}{n}\sum_{j=1}^n dx_j\wedge(dx_j\vee f)
+\frac{\theta^{-1}}{n}\sum_{j=1}^n dx_j\vee(dx_j\wedge f),
\end{align}
while for each $[f]\in\widetilde{\rm BMO}({\mathbb{R}}^n)\otimes\Lambda$ one has
{\rm (}with similar conventions as above{\rm )}
\begin{align}\label{i7yggg.B5555}
\widetilde{S}_\theta[f] &=-\frac{\theta}{\omega_{n-1}}\sum_{j,k=1}^n
dx_j\wedge\big(dx_k\vee\big(\widetilde{T}_{\Theta_{jk}}[f]\big)\big)
+\frac{\theta^{-1}}{\omega_{n-1}}
\sum_{j,k=1}^n dx_j\vee\big(dx_k\wedge\big(\widetilde{T}_{\Theta_{jk}}[f]\big)\big)
\nonumber\\[6pt]
&\quad-\frac{\theta}{n}\sum_{j=1}^n dx_j\wedge(dx_j\vee[f])
+\frac{\theta^{-1}}{n}\sum_{j=1}^n dx_j\vee(dx_j\wedge[f]).
\end{align}

Moreover, for each given differential form $f\in{\mathrm{BMO}}({\mathbb{R}}^n)\otimes\Lambda$ 
the following three conditions are equivalent:
\begin{enumerate}
\item[(i)] $f$ belongs to the space ${\mathrm{VMO}}({\mathbb{R}}^n)\otimes\Lambda$; 
\item[(ii)] $\big(cI+\widetilde{S}_\theta\big)[f]\in\widetilde{\mathrm{VMO}}({\mathbb{R}}^n)\otimes\Lambda$
for some {\rm (}or every{\rm )} $\theta\in{\mathbb{C}}\setminus\{0\}$ 
and $c\in{\mathbb{C}}\setminus\big\{\theta\,,\,-\theta^{-1}\big\}$;
\item[(iii)] $\big(cI+\widetilde{R}_\theta\big)[f]\in\widetilde{\mathrm{VMO}}({\mathbb{R}}^n)\otimes\Lambda$
for some {\rm (}or every{\rm )} $\theta\in{\mathbb{C}}\setminus\{0\}$
and $c\in{\mathbb{C}}\setminus\{\pm 1\}$.
\end{enumerate}
\end{corollary}

\vskip 0.10in

This paper is part of a larger program aimed at treating Dirichlet boundary value 
problems for $M\times M$ systems with constant complex coefficients as in
\eqref{L-def}-\eqref{L-ell.X} in the upper half-space ${\mathbb{R}}^n_{+}$ 
with boundary datum in various function spaces on ${\mathbb{R}}^{n-1}$.
The space ${\mathrm{BMO}}$, presently considered,  
lies at the cross-roads of several fundamental scales of function spaces in analysis. 
For one thing, ${\mathrm{BMO}}({\mathbb{R}}^{n-1},{\mathbb{C}}^M)$ may be regarded as a 
natural (right-most) end-point of the Lebesgue scale $L^p({\mathbb{R}}^{n-1},{\mathbb{C}}^M)$ 
with $p\in(1,\infty)$. The Dirichlet boundary value problem for elliptic systems $L$ as
in \eqref{L-def}-\eqref{L-ell.X} in the upper-half space with data from the latter scale 
of spaces has been recently treated in \cite{K-MMMM}, where the size of the solution 
$u:{\mathbb{R}}^{n}_{+}\to{\mathbb{C}}^M$ is measured using the nontangential 
maximal operator defined as 
\begin{equation}\label{NT-Fct}
\big({\mathcal{N}}u\big)(x'):=
\big({\mathcal{N}}_\kappa u\big)(x')
:=\sup\big\{|u(y)|:\,y\in\Gamma_\kappa(x')\big\},\qquad\forall\,x'\in{\mathbb{R}}^{n-1}.
\end{equation}
In this endeavor, the crux of the matter is the pointwise inequality (see \eqref{exTGFVC}) 
\begin{equation}\label{NT-FctMMM}
\begin{array}{l}
\big({\mathcal{N}}u\big)(x')\leq C({\mathcal{M}}f)(x')\,\,\text{ at each point }\,\,
x'\in{\mathbb{R}}^{n-1}
\\[4pt]
\text{if }\,\,
u(x',t):=(P^L_t\ast f)(x')\,\,\text{ for every }\,\,(x',t)\in{\mathbb{R}}^n_{+}\,\,
\\[4pt]
\text{and for some function }\,\,f\in L^1\big({\mathbb{R}}^{n-1},\frac{1}{1+|x'|^{n}}\,dx'\big)^M,
\end{array}
\end{equation}
where ${\mathcal{M}}$ is the Hardy-Littlewood maximal operator on ${\mathbb{R}}^{n-1}$
(cf. \eqref{MMax}). 

In fact, estimate \eqref{NT-FctMMM} permitted the treatment in \cite{K-MMMM} of  
a much larger variety of function lattice spaces. Indeed, one of the main results 
established in \cite{K-MMMM} is that the boundedness of the Hardy-Littlewood maximal operator 
on a K\"othe function space ${\mathbb{X}}$ and on its K\"othe dual ${\mathbb{X}}'$ 
(both considered in ${\mathbb{R}}^{n-1}$) is actually equivalent to the well-posedness of 
the $\mathbb{X}$-Dirichlet and $\mathbb{X}'$-Dirichlet problems in $\mathbb{R}^{n}_{+}$ 
in the class of all second-order, homogeneous, elliptic systems, with constant 
complex coefficients. As a consequence, in \cite{K-MMMM} the Dirichlet problem
for such systems has been shown to be well-posed for boundary data in Lebesgue spaces, variable
exponent Lebesgue spaces, Lorentz spaces, Zygmund spaces, as well as their weighted versions
with weights in the Muckenhoupt class.

This being said, the John-Nirenberg space ${\mathrm{BMO}}({\mathbb{R}}^{n-1})$ is not a lattice 
space (in the sense that a nonnegative measurable function with a pointwise majorant
in {\rm BMO} does not necessarily belong to {\rm BMO}), so a fresh 
look at the corresponding Dirichlet problem is warranted. In particular, the nature 
of the space of solutions (which should be suitably tailored to the specific space 
of boundary data) now involves a Carleson measure condition in place of the 
nontangential maximal operator \eqref{NT-Fct} which has been extensively used in \cite{K-MMMM}. 

Another point of view places the John-Nirenberg space 
${\mathrm{BMO}}({\mathbb{R}}^{n-1},{\mathbb{C}}^M)$ as a (left-most) endpoint 
for the scale of homogeneous H\"older spaces 
$\dot{\mathscr{C}}^\eta({\mathbb{R}}^{n-1},{\mathbb{C}}^M)$ with $\eta\in(0,1)$
(for pertinent definitions and basic properties regarding this scale see 
the discussion in the first part of \S\ref{auxiliary}).
Bearing this in mind, it is possible to formulate (a significant portion of) 
Theorem~\ref{them:BMO-Dir} in a manner that reflects the aforementioned feature 
of ${\rm BMO}$. To elaborate on this idea, given $\eta\in[0,1)$ and $p\in[1,\infty)$, for every 
$f\in L^1_{\rm loc}({\mathbb{R}}^{n-1},{\mathbb{C}}^M)$
define 
\begin{equation}\label{defi-BMO.2b}
\|f\|_{\ast}^{(\eta,p)}:=\sup_{Q\subset\mathbb{R}^{n-1}}\ell(Q)^{-\eta}
\Big(\,\,\aver{Q}\big|f(x')-f_Q\big|^p\,dx'\Big)^{\frac{1}{p}},
\end{equation}
and introduce the Morrey-Campanato space (which may be regarded as a fractional {\rm BMO} space, 
$L^p$-based, of order $\eta$) by setting 
\begin{equation}\label{defi-BMO.2b-SP}
{\mathscr{E}}^{\eta,p}({\mathbb{R}}^{n-1},\mathbb{C}^M):=\big\{
f\in L^1_{\rm loc}({\mathbb{R}}^{n-1},{\mathbb{C}}^M):\,
\|f\|_{\ast}^{(\eta,p)}<\infty\big\}.
\end{equation}
By the John-Nirenberg inequality it follows that, corresponding to the 
end-point case $\eta=0$ we have
\begin{equation}\label{defi-BMO.2b-SP.22}
{\mathscr{E}}^{0,p}({\mathbb{R}}^{n-1},\mathbb{C}^M)
={\mathrm{BMO}}({\mathbb{R}}^{n-1},\mathbb{C}^M),
\end{equation}
and it is clear from definitions that, in the regime $\eta>0$, the 
vanishing mean oscillation condition \eqref{defi-VMO} holds 
(this time, at a precisely quantified rate of decay) for every function 
$f\in{\mathscr{E}}^{\eta,p}({\mathbb{R}}^{n-1},\mathbb{C}^M)$.
Going further, for every $u\in{\mathscr{C}}^1({\mathbb{R}}^n_{+},{\mathbb{C}}^M)$ set
\begin{equation}\label{ustarstar-222}
\|u\|^{(\eta,p)}_{**}:=\sup_{Q\subset\mathbb{R}^{n-1}}\ell(Q)^{-\eta}
\left(\aver{Q}\Big(\int_{0}^{\ell(Q)}|\nabla u(x',t)|^2\,t\,dt\Big)^\frac{p}{2}
dx'\right)^\frac{1}{p}.
\end{equation}
The finiteness demand 
$\|u\|^{(\eta,p)}_{**}<\infty$ may be viewed (compare with \eqref{ncud}) as a 
fractional Carleson measure condition ($L^p$-based, of order $\eta$). 
In particular, it implies that the measure $d\mu(x',t):=|\nabla u(x',t)|^2 t\,dtdx'$ 
satisfies the vanishing condition \eqref{defi-CarlesonVan}, with a precisely 
quantified rate of decay.

Here is the statement of the theorem advertised earlier 
which deals with the larger, more inclusive context considered above and which 
complements the end-point case $\eta=0$ corresponding to the portion of 
Theorem~\ref{them:BMO-Dir} pertaining to the well-posedness of the 
${\rm BMO}$-Dirichlet boundary value problem. 

\begin{theorem}\label{them:BMO-Dir-frac}
Let $L$ be an $M\times M$ elliptic constant complex coefficient system as in 
\eqref{L-def}-\eqref{L-ell.X}, and fix $\eta\in(0,1)$ along with $p,q\in[1,\infty)$.
Then the Morrey-Campanato-Dirichlet boundary value 
problem for $L$ in $\mathbb{R}^{n}_+$, formulated as
\begin{equation}\label{Dir-BVP-BMO-frac}
\left\{
\begin{array}{l}
u\in{\mathscr{C}}^\infty(\mathbb{R}^{n}_{+},{\mathbb{C}}^M),
\\[4pt]
Lu=0\,\,\mbox{ in }\,\,\mathbb{R}^{n}_{+},
\\[4pt]
\|u\|^{(\eta,q)}_{**}<\infty,
\\[6pt]
u\big|^{{}^{\rm n.t.}}_{\partial{\mathbb{R}}^{n}_{+}}=f\,\,
\text{ a.e. in }\,\,{\mathbb{R}}^{n-1},\,\,
f\in{\mathscr{E}}^{\eta,p}(\mathbb{R}^{n-1},\mathbb{C}^M),
\end{array}
\right.
\end{equation}
has a unique solution. The solution $u$ of \eqref{Dir-BVP-BMO-frac} is given by
\eqref{eqn-Dir-BMO:u} and there exists a constant $C=C(n,L,\eta,p,q)\in(1,\infty)$ 
with the property that 
\begin{equation}\label{Dir-BVP-BMO-Car-frac}
C^{-1}\|f\|_\ast^{(\eta,p)}\leq\|u\|^{(\eta,q)}_{**}\leq C\,\|f\|_\ast^{(\eta,p)}.
\end{equation}
Moreover, $u$ belongs to $\dot{\mathscr{C}}^\eta({\mathbb{R}}^n_{+},{\mathbb{C}}^M)
=\dot{\mathscr{C}}^\eta(\overline{{\mathbb{R}}^n_{+}},{\mathbb{C}}^M)$ and, 
with $C\in(1,\infty)$ as above, 
\begin{equation}\label{Dir-BVP-BMO-Car-frac22}
C^{-1}\|f\|_\ast^{(\eta,p)}\leq\|u\|_{\dot{\mathscr{C}}^\eta({\mathbb{R}}^n_{+},{\mathbb{C}}^M)}
\leq C\,\|f\|_\ast^{(\eta,p)}.
\end{equation}
\end{theorem}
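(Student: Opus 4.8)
The plan is to deduce Theorem~\ref{them:BMO-Dir-frac} from the already-established properties of the Poisson extension $u(x',t)=(P^L_t\ast f)(x')$ together with classical Morrey--Campanato theory, proceeding in four stages: (i) show that $f\in{\mathscr{E}}^{\eta,p}$ forces $u$ as in \eqref{eqn-Dir-BMO:u} to be well-defined and to satisfy the fractional Carleson bound $\|u\|^{(\eta,q)}_{**}\lesssim\|f\|^{(\eta,p)}_\ast$; (ii) show conversely that this fractional Carleson bound controls $\|u|^{\rm n.t.}_{\partial{\mathbb{R}}^n_+}\|^{(\eta,p)}_\ast$ from below and that the nontangential trace exists; (iii) identify the trace of the Poisson extension with $f$, so that \eqref{eqn-Dir-BMO:u} solves \eqref{Dir-BVP-BMO-frac} and the two-sided estimate \eqref{Dir-BVP-BMO-Car-frac} holds; and (iv) prove uniqueness.

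First I would treat the existence and the upper estimate. Since $\eta>0$ implies ${\mathscr{E}}^{\eta,p}\subset{\mathscr{E}}^{\eta,1}$ and, via a Campanato-type argument, that each $f\in{\mathscr{E}}^{\eta,p}$ has a representative in $\dot{\mathscr{C}}^\eta({\mathbb{R}}^{n-1},{\mathbb{C}}^M)$ (with comparable seminorms — this is the classical Campanato space identification), one has $f\in L^1\big({\mathbb{R}}^{n-1},\tfrac{dx'}{1+|x'|^n}\big)^M$ and the convolution $P^L_t\ast f$ makes sense. To bound $|\nabla u(x',t)|$ I would subtract a constant: fix a cube $Q$ with center $x'_Q$ and side $\ell=\ell(Q)$, write $u=P^L_t\ast(f-f_Q)+f_Q$ (constants are reproduced by $P^L_t$, cf. \eqref{eq:IG6gy.2PPP}), so $\nabla u(x',t)=\big((\nabla P^L)_t\ast(f-f_Q)\big)(x')$ up to the $t$-scaling, and then use the decay and smoothness of $P^L$ (from Theorem~\ref{kkjbhV}) to estimate $|\nabla u(x',t)|\lesssim t^{\eta-1}\|f\|^{(\eta,1)}_\ast$ for $(x',t)\in T(Q)$ by splitting $f-f_Q$ into dyadic annuli around $Q$ and summing the resulting geometric series in $2^{k\eta}2^{-k(1+\text{something})}$. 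Plugging $|\nabla u(x',t)|^2 t\lesssim t^{2\eta-1}(\|f\|^{(\eta,1)}_\ast)^2$ into \eqref{ustarstar-222} and integrating $\int_0^\ell t^{2\eta-1}\,dt\simeq \ell^{2\eta}$ yields $\|u\|^{(\eta,q)}_{**}\lesssim\|f\|^{(\eta,p)}_\ast$ uniformly in $q$. The same kind of kernel estimate gives the Hölder bound: $|u(x)-u(y)|\lesssim\|f\|^{(\eta,1)}_\ast|x-y|^\eta$ for $x,y\in\overline{{\mathbb{R}}^n_+}$, using the gradient estimate in the interior and interpolating across scales, which establishes the upper half of \eqref{Dir-BVP-BMO-Car-frac22} and in particular that $u\in\dot{\mathscr{C}}^\eta(\overline{{\mathbb{R}}^n_+},{\mathbb{C}}^M)$; being Hölder up to the boundary forces the nontangential trace to exist everywhere and to equal the continuous boundary restriction of $u$, which (by an approximate-identity argument, $P^L_t\ast f\to f$ at Lebesgue points, and continuity) is $f$. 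The lower estimates in \eqref{Dir-BVP-BMO-Car-frac} and \eqref{Dir-BVP-BMO-Car-frac22} are obtained by a reverse argument: from $\|u\|^{(\eta,q)}_{**}<\infty$ one recovers oscillation control on $f$ by integrating $\partial_t u$ in $t$ and using a Poincaré/telescoping estimate over the Carleson box — this is the standard square-function-to-$\mathrm{BMO}$ mechanism, now weighted by $\ell(Q)^{-\eta}$, and it is exactly the fractional analogue of the inequality $\|f\|_{\mathrm{BMO}}\lesssim\|u\|_{**}$ already present in Theorem~\ref{thm:fatou-ADEEDE}; alternatively one reduces to that theorem by noting ${\mathscr{E}}^{\eta,p}\subset{\mathrm{BMO}}$ and bootstrapping.

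Uniqueness is where I would be most careful. Suppose $u$ solves \eqref{Dir-BVP-BMO-frac} with $f=0$. Since $\eta>0$, the condition $\|u\|^{(\eta,q)}_{**}<\infty$ implies $\|u\|_{**}<\infty$ (take the supremum over small cubes and note $\ell(Q)^{-\eta}$ only helps), so Theorem~\ref{thm:fatou-ADEEDE} applies and gives $u(x',t)=\big(P^L_t\ast(u|^{\rm n.t.}_{\partial{\mathbb{R}}^n_+})\big)(x')=(P^L_t\ast 0)(x')=0$. Thus uniqueness for the Morrey--Campanato problem is inherited from uniqueness for the ${\mathrm{BMO}}$ problem — no new argument is needed beyond observing the inclusion of function-space conditions. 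I expect the main obstacle to be purely technical rather than conceptual: organizing the dyadic-annuli kernel estimates so that the gradient bound $|\nabla u(x',t)|\lesssim t^{\eta-1}\|f\|^{(\eta,1)}_\ast$ holds with a constant depending only on $n,L,\eta$, and checking that this is compatible with the full range $p,q\in[1,\infty)$ (the $p$-dependence disappears because of the Campanato identification ${\mathscr{E}}^{\eta,p}\cong\dot{\mathscr{C}}^\eta$ for $\eta\in(0,1)$, and the $q$-dependence disappears because the pointwise bound on $|\nabla u|^2t$ already has the sharp $t$-power, making the inner $L^{q/2}(dt)$ norm trivial to compute). Assembling \eqref{Dir-BVP-BMO-Car-frac} and \eqref{Dir-BVP-BMO-Car-frac22} then only requires collecting the two-sided estimates proved along the way.
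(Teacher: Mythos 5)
Your existence/upper-bound argument is essentially the paper's: a pointwise gradient bound $|\nabla u(x',t)|\lesssim t^{\eta-1}\|f\|^{(\eta,1)}_{\ast}$ obtained by subtracting $f_Q$ and summing dyadic annuli, then integrating in $t$, and the identification of the trace via Hölder continuity up to the boundary. The fatal problem is your uniqueness step. The claim that $\|u\|^{(\eta,q)}_{**}<\infty$ implies $\|u\|_{**}<\infty$ is false: the weight $\ell(Q)^{-\eta}$ in \eqref{ustarstar-222} makes the fractional condition \emph{weaker}, not stronger, on large cubes, since it only gives
\begin{equation*}
\Big(\frac{1}{|Q|}\int_{0}^{\ell(Q)}\int_Q|\nabla u|^2\,t\,dx'dt\Big)^{1/2}\leq \ell(Q)^{\eta}\,\|u\|^{(\eta,2)}_{**},
\end{equation*}
which blows up as $\ell(Q)\to\infty$. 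A concrete counterexample is the Poisson extension of $f(x')=|x'|^{\eta}$, the very function the paper uses in \eqref{u76ggv}: it lies in $\dot{\mathscr{C}}^{\eta}\setminus\mathrm{BMO}$, its extension has finite $\|\cdot\|^{(\eta,q)}_{**}$ by your own Step (i), yet $\|u\|_{**}=\infty$ (otherwise Theorem~\ref{thm:fatou-ADEEDE} would force its trace into $\mathrm{BMO}$). For the same reason your parenthetical alternative ``$\mathscr{E}^{\eta,p}\subset\mathrm{BMO}$'' for the lower bound is also false.

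Consequently uniqueness cannot be ``inherited'' from the $\mathrm{BMO}$ problem; this is precisely where the paper must do genuinely new work. Its route is: from $\|u\|^{(\eta,q)}_{**}<\infty$ and interior estimates (Theorem~\ref{ker-sbav}) deduce $\sup_{(x',t)}t^{1-\eta}|\nabla u(x',t)|<\infty$, hence $u\in\dot{\mathscr{C}}^{\eta}(\overline{\mathbb{R}^n_+},\mathbb{C}^M)$, and then prove a separate Fatou/Poisson representation theorem for Hölder-class null-solutions (Step~7 of the paper's proof): one shifts $u$ vertically, checks the shifted solutions and their candidate Poisson extensions lie in $W^{1,2}_{\rm bd}(\mathbb{R}^n_+,\mathbb{C}^M)$, and applies the boundary estimate of Proposition~\ref{c1.2} together with the sublinear growth $|v(x)|\lesssim 1+|x|^{\eta}$, letting the radius tend to infinity to kill the difference. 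With the representation $u(x',t)=(P^L_t\ast(u|_{\partial\mathbb{R}^n_+}))(x')$ in hand, the trace being zero forces $u\equiv 0$. Your proposal is missing this entire mechanism, so as written the uniqueness claim is unproven. The remaining lower-bound sketch (telescoping over Carleson boxes) can be repaired along the paper's Steps~3--5 (interior estimates giving $t^{1-\eta}|\nabla u|\lesssim\|u\|^{(\eta,q)}_{**}$, then integrating to bound $\|u\|_{\dot{\mathscr{C}}^{\eta}}$ and restricting to the boundary), but the false inclusion must be removed from the argument.
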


As a consequence of Theorem~\ref{them:BMO-Dir-frac} and its proof
(cf. also \eqref{eqn:BMO-decay-EEE.8T56.ww})
we obtain that, in fact, 
\begin{equation}\label{defi-BMO.2b-SP.ii}
{\mathscr{E}}^{\eta,p}({\mathbb{R}}^{n-1},\mathbb{C}^M)
=\dot{\mathscr{C}}^\eta({\mathbb{R}}^{n-1},{\mathbb{C}}^M)
\end{equation}
as vector spaces, with equivalent norms (the left-to-right inclusion 
is understood in the sense that if $f\in{\mathscr{E}}^{\eta,p}({\mathbb{R}}^{n-1},\mathbb{C}^M)$
then there exists some $g\in\dot{\mathscr{C}}^\eta({\mathbb{R}}^{n-1},{\mathbb{C}}^M)$
such that $f=g$ a.e. in ${\mathbb{R}}^{n-1}$). This offers a new proof (of a PDE flavor) 
of an old embedding result of N.~Meyers \cite{Mey}. An inspection of the proof of 
Theorem~\ref{them:BMO-Dir-frac} also reveals that there is a Fatou type result 
naturally accompanying the well-posedness result for the boundary value 
problem \eqref{Dir-BVP-BMO-frac}. 

\medskip

We shall now succinctly comment on the literature dealing with 
Dirichlet boundary value problems for elliptic operators in the upper-half space. 
As already noted, the nature of these problems strongly depends on the choice of the 
function space from which the boundary datum $f$ is selected, the specific way in 
which the size of the solution $u$ is measured, and the very manner in which its 
boundary trace is considered. To illustrate these distinctions, recall first that
there is a vast body of work targeting the case when the solution $u$ is sought in 
various Sobolev spaces in ${\mathbb{R}}^n_{+}$, the boundary datum $f$ belongs 
to suitable Besov spaces on ${\mathbb{R}}^{n-1}$, and the boundary trace of $u$ is 
considered in the sense of Sobolev space theory. Classical references in this regard include 
\cite{ADNI}, \cite{ADNII}, \cite{LionsMagenes}, \cite{MazShap}, \cite{Taylor}, and
the reader is also referred to the literature cited therein.      

The scenario in which the size of $u$ is measured in terms of the 
nontangential maximal operator \eqref{NT-Fct} and when the trace of $u$ on 
the boundary of ${\mathbb{R}}^n_{+}$ is taken in a nontangential pointwise sense 
(cf. \eqref{nkc-EE-2}) has been treated in \cite{K-MMMM} for the general class of 
$M\times M$ systems $L$ with constant complex coefficients as in
\eqref{L-def}-\eqref{L-ell.X}. This extends classical work carried out in the particular 
case when $L=\Delta$, the Laplacian in ${\mathbb{R}}^n$, treated 
in a number of monographs, including \cite{ABR}, \cite{GCRF85}, \cite{St70}, \cite{Stein93}, 
and \cite{StWe71}. The corresponding higher-order regularity Dirichlet problem 
in a similar framework has been recently considered in \cite{Esc-MMMM}. See also 
\cite{Sem-MMMM} for related work, emphasizing semigroup techniques. 

There is also a significant amount of work focused on the 
classical Dirichlet problem for the Laplacian in the upper-half space with a 
continuous boundary datum $f$. In such a case, one seeks a harmonic function 
$u\in{\mathscr{C}}^\infty({\mathbb{R}}^n_{+})\cap{\mathscr{C}}^0(\overline{{\mathbb{R}}^n_{+}})$
satisfying $u|_{\partial{\mathbb{R}}^n_{+}}=f$. A remarkable feature
(noted by Helms in \cite[p.\,42 and p.\,158]{He}) is that even in the case when the 
boundary datum $f$ is a bounded continuous function in ${\mathbb{R}}^{n-1}$ 
the solution $u$ of this classical Dirichlet problem is not unique. 
To ensure uniqueness in such a setting one typically specifies the behavior 
of $u(x',t)$ as $t\to\infty$. A case in point is \cite{ST96}, where uniqueness 
is established in the class of harmonic functions 
$u\in{\mathscr{C}}^\infty({\mathbb{R}}^n_{+})\cap{\mathscr{C}}^0(\overline{{\mathbb{R}}^n_{+}})$ 
satisfying $u(x)=o(|x|\sec^\gamma\theta)$ as $|x|\to\infty$ (where $\theta:=\arccos(x_n/|x|)$ and 
$\gamma\in{\mathbb{R}}$ is arbitrary), by proving a Phragmen-Lindel\"of principle under the
latter growth condition. This builds on the work of \cite{Si88}, \cite{Wolf41}, \cite{Yo96}, 
and others. The works just cited crucially rely on positivity and other various highly specialized 
properties of the Laplace operator, so the techniques employed there do not adapt  
to the considerably more general class of elliptic systems considered in the present paper. 

In relation to the context just described above it is instructive to make the following 
observations. First, the collection of uniformly continuous functions belonging to 
${\mathrm{BMO}}({\mathbb{R}}^{n-1},{\mathbb{C}}^M)$ is a dense subspace of 
${\mathrm{VMO}}({\mathbb{R}}^{n-1},{\mathbb{C}}^M)$ (see \eqref{ku6ffcfc}). 
Second, in the last part 
of Theorem~\ref{them:BMO-Dir} we have succeeded in proving the well-posedness
of the ${\mathrm{VMO}}$-Dirichlet problem in the class of null-solutions $u$ 
of a given elliptic system $L$ as in \eqref{L-def}-\eqref{L-ell.X} which satisfy 
a vanishing Carleson measure condition. This is a natural condition from the point 
of view of harmonic analysis which replaces the demand that the solution extends 
continuously on $\overline{{\mathbb{R}}^n_{+}}$, required in the formulation of the classical 
Dirichlet problem with continuous data. 

Apparently, the closest results in the literature to some of the work carried out in this
paper are those of E.~Fabes, R.~Johnson, and U.~Neri from 1976. Indeed, in \cite{FJN}
these authors have dealt with the ${\rm BMO}$-Dirichlet problem for the Laplacian in 
the upper-half space in the class of harmonic functions satisfying a Carleson measure 
condition (this being said, we would like to point out that there are certain gaps 
in some of the key steps of the treatment in \cite{FJN}, such as the proof 
of Lemma~1.3 on pp.\,161--162\footnote{The second equality in the first formula displayed on page 
162 is questionable, given that this involves the global gradient in ${\mathbb{R}}^{n+1}$, which  
includes the transversal variable $t$.}, and the proof of 
estimate (1.5) on page~163\footnote{Here the authors rely on the implication $3(iii)\Rightarrow\,2$ from 
\cite[pp.\,147--148]{FS} which is only established under the additional membership to $L^2({\mathbb{R}}^n)$.}). 
The portion of Theorem~\ref{them:BMO-Dir} dealing with \eqref{Dir-BVP-BMO} is
a significant generalization of their work, which is thereby extended to a much larger 
class of systems. Similar attributes are shared by our 
Theorem~\ref{them:BMO-Dir-frac} in relation to the work in \cite{FJN} dealing with
harmonic functions in the upper-half space with traces in Morrey-Campanato spaces. 
Generalizations of these results appear in \cite{Duong-Yan-Zhang} for the 
Schr\"odinger operator of the form $-\Delta+V$ with $V$ being a non-negative 
potential belonging to some reverse H\"older class (hence $0<V<\infty$ a.e.).

We also wish to mention here the work of 
B.~Dahlberg and C.~Kenig who have treated the ${\rm BMO}$-Dirichlet problem for the Laplacian 
in \cite[Theorem~4.18, p.\,463]{DaKe87} in bounded Lipschitz domains via layer potentials, 
building on the earlier work of E.~Fabes and U.~Neri from \cite{FaNe80} who employed 
harmonic measure techniques. For related work see also \cite{DiKePi}.

The techniques employed in \cite{DaKe87}, \cite{Duong-Yan-Zhang}, \cite{DiKePi}, \cite{FJN}, \cite{FaNe80},  
are largely restricted to scalar equations (as they make essential use of positivity and/or 
maximum principles). Also, the fact that in \cite{DaKe87}, \cite{DiKePi}, \cite{FaNe80},
the underlying domain is bounded makes the task of proving uniqueness considerably 
more manageable. In addition, the consideration 
of PDE's for which the well-posedness of the $L^2$-Dirichlet problem is known in arbitrary 
Lipschitz subdomains allows these authors to successfully employ a variety of localizations 
arguments. By way of contrast, most of these key features cease to be effective in 
the geometric/analytic context considered in this paper. In proving the
solvability of the $\mathrm{BMO}$-Dirichlet boundary value 
problem for an elliptic system $L$ in $\mathbb{R}^{n}_+$ as formulated in \eqref{Dir-BVP-BMO}, 
our approach makes essential use of the existence and properties of the Poisson kernel 
associated with $L$ from the work of \cite{ADNI}-\cite{ADNII}. Uniqueness
is derived with the help of the Fatou type result recorded in Theorem~\ref{thm:fatou-ADEEDE}.
A considerable amount of effort then goes into establishing the latter theorem, 
with square-function estimates (cf. Proposition~\ref{prop:SFE}), elements of 
tent-space theory (Lemma~\ref{lemma:tent}), interior estimates (cf. Theorem~\ref{ker-sbav}), 
and certain estimates near the boundary from \cite{MaMiSh} for null-solutions of $L$
vanishing on the boundary (cf. Proposition~\ref{c1.2}), among the
tools playing a key role in this regard.

We conclude with a brief overview of the contents of the sections of this paper.
Useful background material and auxiliary results are collected in \S\ref{auxiliary}.
The proofs of the existence statements in Theorem~\ref{them:BMO-Dir}, both for the 
{\rm BMO}-Dirichlet problem and the {\rm VMO}-Dirichlet problem, are carried out in 
\S\ref{section:exist-BMO}. Next, \S\ref{section:Fatou-BMO} is reserved for establishing 
a Fatou result for smooth null-solutions of $L$ satisfying a Carleson measure condition,
as well as uniqueness in the {\rm BMO}-Dirichlet problem, in the upper-half space. 
Finally, the proofs of Theorems~\ref{them:BMO-Dir}-\ref{ndyRE} as well as 
Theorems~\ref{THMVMO.CCC}-\ref{jhdwtRD} are given in \S\ref{Pf-mainThms}, 
the proof of Theorem~\ref{them:BMO-Dir-frac} is contained 
in \S\ref{S-4}, while the proofs of Theorems~\ref{i87hbBV}-\ref{i87hbBV-ALG} and 
Corollaries~\ref{i87hbBV-ALG-CCC}-\ref{nhxtrE-SSS}
are presented in \S\ref{NEWSSS}.

\section{Background material and preliminary results}
\setcounter{equation}{0}
\label{auxiliary}

In this section we collect a number of preliminary results that are useful in the sequel.
Throughout, we let ${\mathbb{N}}$ stand for the collection of all
positive integers, and set ${\mathbb{N}}_0:={\mathbb{N}}\cup\{0\}$. In this way 
$\mathbb{N}_0^k$ stands for the set of multi-indices $\alpha=(\alpha_1,\dots,\alpha_k)$ 
with $\alpha_j\in\mathbb{N}_0$ for $1\le j\le k$. Also, fix $n\in{\mathbb{N}}$ with $n\geq 2$.
For an arbitrary multi-index $\alpha=(\alpha_1,\dots,\alpha_n)\in{\mathbb{N}}_0^n$ we use the 
standard notation $\partial^\alpha:=\partial^{\alpha_1}_{x_1}\dots\partial^{\alpha_n}_{x_n}$
and we occasionally abbreviate $\partial_{x_j}$ by simply $\partial_j$ for $j\in\{1,\dots,n\}$.
The length of the multi-index $\alpha=(\alpha_1,\dots,\alpha_n)$ is defined as
$|\alpha|:=\alpha_1+\cdots+\alpha_n$. We agree to let $\{e_j\}_{1\leq j\leq n}$ stand for the 
standard orthonormal basis in ${\mathbb{R}}^n$. Occasionally, we canonically identify $e_j$ with 
a multi-index in ${\mathbb{N}}_0$ (of length $1$). Given an arbitrary set $E\subseteq{\mathbb{R}}^{n-1}$ 
we denote by ${\mathbf 1}_E$ the characteristic function of $E$.

Generally speaking, given a metric space $(X,d)$, 
corresponding to each subset $E$ of $X$ (of cardinality $\geq 2$) and number $\eta>0$,
we associate the homogeneous H\"older space or order $\eta$, 
denoted by $\dot{\mathscr{C}}^\eta(E,{\mathbb{C}}^M)$, 
as the collection of functions $w:E\to{\mathbb{C}}^M$ satisfying 
\begin{equation}\label{eqn:BMO-decay-EEE.8T56}
\|w\|_{\dot{\mathscr{C}}^\eta(E,{\mathbb{C}}^M)}:=
\sup_{\stackrel{x,y\in X}{x\not=y}}\frac{|w(x)-w(y)|}{d(x,y)^\eta}<\infty.
\end{equation}
Whenever $E\subseteq F\subseteq X$ (with $E$ having cardinality $\geq 2$) we then have
\begin{equation}\label{eqn:BMO-decay-EEE.8T56.ww}
\begin{array}{c}
\dot{\mathscr{C}}^\eta(E,{\mathbb{C}}^M)=\dot{\mathscr{C}}^\eta(\overline{E},{\mathbb{C}}^M)
\,\,\text{ isometrically, and}
\\[4pt]
\dot{\mathscr{C}}^\eta(F,{\mathbb{C}}^M)\ni w\mapsto w\big|_{E}\in
\dot{\mathscr{C}}^\eta(E,{\mathbb{C}}^M)\,\,\text{ continuously}.
\end{array}
\end{equation}
Also, 
\begin{equation}\label{eqn:BMO-decay-EEE.8T56.ww3}
\dot{\mathscr{C}}^\eta(E,{\mathbb{C}}^M)\subseteq{\mathrm{UC}}(E,{\mathbb{C}}^M),
\end{equation}
where the latter denotes the space of ${\mathbb{C}}^M$-valued functions which are 
uniformly continuous on the set $E$. Finally, we agree to drop the dependence on the range
when $M=1$, and denote by ${\mathrm{Lip}}(E)$ the homogeneous H\"older space on $E$ of order
$\eta=1$.

Moving on, we denote by $\mathcal{M}$ the Hardy-Littlewood maximal operator 
on $\mathbb{R}^{n-1}$ which acts on vector-valued functions with components 
in $L^1_{\rm loc}({\mathbb{R}}^{n-1})$ according to
\begin{equation}\label{MMax}
\big(\mathcal{M}f\big)(x'):=\sup_{Q\ni x'}\aver{Q}|f(y')|\,dy',\qquad 
\forall\,x'\in\mathbb{R}^{n-1},
\end{equation}
where the supremum runs over all cubes $Q$ in $\mathbb{R}^{n-1}$ containing $x'$. 

We will often work with the weighted Lebesgue space of the form 
\begin{align}\label{jdgswd}
& L^1\Big({\mathbb{R}}^{n-1}\,,\,\frac{dx'}{1+|x'|^a}\Big)
\\[6pt]
&\hskip 0.50in
:=\left\{f:{\mathbb{R}}^{n-1}\to{\mathbb{C}}\,\,\text{ Lebesgue measurable}:\,
\int_{{\mathbb{R}}^{n-1}}\frac{|f(x')|}{1+|x'|^a}\,dx'<\infty\right\},
\nonumber
\end{align}
where $a\in(0,\infty)$, and we shall denote by 
$L^1\Big({\mathbb{R}}^{n-1}\,,\,\frac{dx'}{1+|x'|^a}\Big)^M$ the
space of ${\mathbb{C}}^M$-valued functions with components in \eqref{jdgswd}. 
Clearly,
\begin{equation}\label{eq:aaAabgr}
L^1\Big({\mathbb{R}}^{n-1}\,,\,\frac{dx'}{1+|x'|^a}\Big)^M
\subset L^1_{\rm loc}({\mathbb{R}}^{n-1},{\mathbb{C}}^M),\qquad\forall\,a>0.
\end{equation}

Next, we record several useful properties of mean oscillations
(recall the piece of notation introduced in \eqref{nota-aver}).
First we note that if $Q$ and $Q'$ are cubes in ${\mathbb{R}}^{n-1}$
with the property that $Q'\subseteq Q$, then for any 
$f\in L^1_{\rm loc}({\mathbb{R}}^{n-1},{\mathbb{C}}^M)$ and any $p\in[1,\infty)$
we have 
\begin{equation}\label{aver-fq}
\Big(\aver{Q'}|f(y')-f_{Q'}|^p\,dy'\Big)^{\frac{1}{p}}
\leq 2\Big(\frac{\ell(Q)}{\ell(Q')}\Big)^{\frac{n-1}{p}}
\Big(\aver{Q}|f(y')-f_{Q}|^p\,dy'\Big)^{\frac{1}{p}}
\end{equation}
and  
\begin{equation}\label{aver-fq-BBBB}
\Big(\aver{Q}|f(y')-f_{Q'}|^p\,dy'\Big)^{\frac{1}{p}}
\leq\Big[1+\Big(\frac{\ell(Q)}{\ell(Q')}\Big)^{\frac{n-1}{p}}\Big]
\Big(\aver{Q}|f(y')-f_{Q}|^p\,dy'\Big)^{\frac{1}{p}}.
\end{equation}
Also, 
\begin{equation}\label{aver-fq-Cava}
\frac{1}{2}\Big(\aver{Q}|f(y')-f_{Q}|^p\,dy'\Big)^{\frac{1}{p}}
\leq\inf_{c\in{\mathbb{C}}^M}\Big(\aver{Q}|f(y')-c|^p\,dy'\Big)^{\frac{1}{p}}
\leq\Big(\aver{Q}|f(y')-f_{Q}|^p\,dy'\Big)^{\frac{1}{p}}.
\end{equation}

Second, we recall the John-Nirenberg inequality asserting that there exist 
two dimensional constants $C_1,C_2\in(0,\infty)$ with the following significance. 
Consider an arbitrary cube $Q\subset{\mathbb{R}}^{n-1}$ along with a function 
$f\in L^1(Q)$ with the property that 
\begin{equation}\label{JY6GTFF}
N_Q(f):=\sup_{Q'\subseteq Q}\aver{Q'}|f(y')-f_{Q'}|\,dy'<\infty,
\end{equation}
where the above supremum involves cubes $Q'\subset{\mathbb{R}}^{n-1}$ contained in $Q$.
Then there holds (see, e.g., \cite[Corollary~2, p.\,154]{Stein93})
\begin{equation}\label{k765rffc}
{\mathscr{L}}^{n-1}\big(\{y'\in Q:\,|f(y')-f_Q|>\lambda\}\big)\leq C_1\,
e^{-\big(\frac{C_2}{N_Q(f)}\big)\lambda}\,|Q|,\qquad\forall\,\lambda>0.
\end{equation}
Third, as a corollary of the John-Nirenberg inequality, we obtain that for every 
$p\in(0,\infty)$ there exists a constant $C_{n,p}\in(0,\infty)$ with the property that
for every cube $Q\subset{\mathbb{R}}^{n-1}$ and every function $f\in L^1(Q,{\mathbb{C}}^M)$ 
we have 
\begin{equation}\label{JY6GTFF.2}
\Big(\aver{Q}|f(y')-f_{Q}|^p\,dy'\Big)^{\frac{1}{p}}\leq C_{n,p}
\sup_{Q'\subseteq Q}\aver{Q'}|f(y')-f_{Q'}|\,dy'.
\end{equation}
 
To proceed, for each $p\in[1,\infty)$, $r\in(0,\infty)$, and 
$f\in L^1_{\rm loc}({\mathbb{R}}^{n-1},{\mathbb{C}}^M)$
define the $L^p$-based mean oscillations of $f$ at a given scale 
$r\in(0,\infty)$ as
\begin{equation}\label{osc-1}
{\rm osc}_p(f;r):=\sup_{Q\subset\mathbb{R}^{n-1},\,\ell(Q)\leq r}
\Big(\aver{Q}|f(x')-f_Q|^pdx'\Big)^{\frac{1}{p}}\in[0,\infty].
\end{equation}
Some of the main properties of this function are summarized next.

\begin{lemma}\label{jsfsQAXT}
For each $f\in L^1_{\rm loc}({\mathbb{R}}^{n-1},{\mathbb{C}}^M)$ the following 
properties hold. 
\begin{list}{$(\theenumi)$}{\usecounter{enumi}\leftmargin=.8cm
\labelwidth=.8cm\itemsep=0.2cm\topsep=.1cm
\renewcommand{\theenumi}{\alph{enumi}}}
\item[(a)] Fix $p\in[1,\infty)$. Then, as a function of $r$, the quantity 
${\rm osc}_p(f;r)$ is nondecreasing in $r$. 
\item[(b)] For every $p,q\in[1,\infty)$ there exists a constant $C=C(p,q,n)\in(1,\infty)$, 
independent of $f$, with the property that 
\begin{equation}\label{jsfd-1a}
C^{-1}{\rm osc}_p(f;r)\leq{\rm osc}_q(f;r)
\leq C\,{\rm osc}_p(f;r)\qquad\text{ for every }\,\,r\in(0,\infty).
\end{equation}
\item[(c)] The function $f$ actually belongs to ${\rm BMO}({\mathbb{R}}^{n-1},{\mathbb{C}}^M)$ 
if and only if ${\rm osc}_p(f;r)$ as a function in $r$ is bounded on $(0,\infty)$ 
for some, or any, $p\in[1,\infty)$. Moreover, for each $p\in[1,\infty)$ there exists a 
constant $C=C(n,p)\in(1,\infty)$, independent of $f$, with the property that 
\begin{equation}\label{jsfd-1}
C^{-1}\|f\|_{{\rm BMO}({\mathbb{R}}^{n-1},{\mathbb{C}}^M)}
\leq\sup_{r>0}\,{\rm osc}_p(f;r)
=\lim_{r\to \infty}{\rm osc}_p(f;r)
\leq C\|f\|_{{\rm BMO}({\mathbb{R}}^{n-1},{\mathbb{C}}^M)}.
\end{equation}
\item[(d)] The function $f$ actually belongs to 
${\rm VMO}({\mathbb{R}}^{n-1},{\mathbb{C}}^M)$ 
if and only if for some, or any exponent $p\in[1,\infty)$ one has
\begin{equation}\label{itr54ffd}
\lim\limits_{r\to 0^{+}}{\rm osc}_p(f;r)=0\,\,\text{ and }\,\,
\lim\limits_{r\to \infty}{\rm osc}_p(f;r)<\infty.
\end{equation}
\item[(e)] For every $\eta\in[0,1)$ and $p\in[1,\infty)$ we have 
{\rm (}recall \eqref{defi-BMO.2b}{\rm )}
\begin{equation}\label{jsfd-3vcgfd}
{\rm osc}_p(f;r)\leq r^\eta\|f\|_{\ast}^{(\eta,p)},\qquad\forall\,r\in(0,\infty).
\end{equation}
\item[(f)] If actually $f$ belongs to 
${\mathscr{C}}^{\Upsilon}({\mathbb{R}}^{n-1},{\mathbb{C}}^M)$ for some 
modulus of continuity $\Upsilon$ {\rm (}recall \eqref{UpU}-\eqref{UpUpUp}{\rm )}, then 
for each $p\in[1,\infty)$ one has
\begin{equation}\label{jsfd-3}
{\rm osc}_p(f;r)\leq\|f\|_{{\mathscr{C}}^{\Upsilon}({\mathbb{R}}^{n-1},{\mathbb{C}}^M)}
\Upsilon(\sqrt{n}\,r),\qquad\forall\,r\in(0,\infty).
\end{equation}
In particular, for each $p\in[1,\infty)$ and $\eta\in(0,1)$ there exists $C\in(0,\infty)$ 
such that for every function $f\in\dot{\mathscr{C}}^\eta({\mathbb{R}}^{n-1},{\mathbb{C}}^M)$ 
one has
\begin{equation}\label{jsfd-3-fff}
{\rm osc}_p(f;r)\leq Cr^\eta\|f\|_{\dot{\mathscr{C}}^\eta({\mathbb{R}}^{n-1},{\mathbb{C}}^M)},
\qquad\forall\,r\in(0,\infty).
\end{equation}
\end{list}
\end{lemma}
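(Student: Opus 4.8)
The plan is to establish each of the six items (a)--(f) of Lemma~\ref{jsfsQAXT} by reducing them to the classical facts about $\mathrm{BMO}$ and $\mathrm{VMO}$ already recorded in the excerpt, together with the elementary properties of mean oscillations in \eqref{aver-fq}--\eqref{JY6GTFF.2}. I would carry out the items roughly in the stated order, since later parts lean on earlier ones.

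First I would dispose of (a): monotonicity of ${\rm osc}_p(f;\cdot)$ is immediate from the definition \eqref{osc-1}, because enlarging $r$ only enlarges the family of admissible cubes over which the supremum is taken. Next, for (b) the key is the John--Nirenberg self-improvement \eqref{JY6GTFF.2}: given a cube $Q$ with $\ell(Q)\le r$, one has $\big(\aver{Q}|f-f_Q|^q\big)^{1/q}\le C_{n,q}\sup_{Q'\subseteq Q}\aver{Q'}|f-f_{Q'}|\le C_{n,q}\,{\rm osc}_1(f;r)$ (the last inequality because every $Q'\subseteq Q$ has $\ell(Q')\le r$), and by H\"older ${\rm osc}_1(f;r)\le{\rm osc}_p(f;r)$ for any $p\ge 1$; taking the supremum over such $Q$ and swapping the roles of $p,q$ gives the two-sided bound \eqref{jsfd-1a} with a constant depending only on $p,q,n$.

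Then (c) follows by combining (a), (b), and the definition \eqref{defi-BMO} of the $\mathrm{BMO}$ seminorm: the quantity $\sup_{r>0}{\rm osc}_p(f;r)$ equals $\lim_{r\to\infty}{\rm osc}_p(f;r)$ by monotonicity, and for $p=1$ it is literally $\|f\|_{\mathrm{BMO}(\mathbb{R}^{n-1},\mathbb{C}^M)}$ by \eqref{defi-BMO}; the general $p$ case then follows from \eqref{jsfd-1a}, and $f\in L^1_{\rm loc}$ with this quantity finite is exactly the definition of membership in $\mathrm{BMO}$, so the equivalence of conditions holds. For (d), the characterization of $\mathrm{VMO}$: the condition $\lim_{r\to\infty}{\rm osc}_p(f;r)<\infty$ is membership in $\mathrm{BMO}$ by (c), and then $\lim_{r\to 0^+}{\rm osc}_p(f;r)=0$ is exactly the defining vanishing-oscillation condition in \eqref{defi-VMO} (for $p=1$), with the passage to general $p$ again handled by \eqref{jsfd-1a}; I would note that \eqref{jsfd-1a} preserves limits as $r\to 0^+$ since the constant $C$ is independent of $r$. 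Finally, (e) is immediate from \eqref{defi-BMO.2b}: for any cube $Q$ with $\ell(Q)\le r$ one has $\ell(Q)^{-\eta}\big(\aver{Q}|f-f_Q|^p\big)^{1/p}\le\|f\|_\ast^{(\eta,p)}$, hence $\big(\aver{Q}|f-f_Q|^p\big)^{1/p}\le\ell(Q)^\eta\|f\|_\ast^{(\eta,p)}\le r^\eta\|f\|_\ast^{(\eta,p)}$; and (f) follows because if $f\in{\mathscr{C}}^\Upsilon$ then on any cube $Q$ with $\ell(Q)\le r$ one has $|f(x')-f_Q|\le\aver{Q}|f(x')-f(y')|\,dy'\le\|f\|_{{\mathscr{C}}^\Upsilon}\Upsilon(\mathrm{diam}\,Q)\le\|f\|_{{\mathscr{C}}^\Upsilon}\Upsilon(\sqrt{n}\,\ell(Q))$ using monotonicity of $\Upsilon$ and $\mathrm{diam}\,Q=\sqrt{n-1}\,\ell(Q)\le\sqrt{n}\,r$, whence \eqref{jsfd-3}; specializing $\Upsilon=\Upsilon_\eta$ gives \eqref{jsfd-3-fff}.

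The routine parts here are genuinely routine; the only place demanding a little care is the quantitative equivalence (b) and its consequences, where one must be sure that the John--Nirenberg-type constant $C_{n,p}$ in \eqref{JY6GTFF.2} can be applied uniformly over all cubes $Q$ with $\ell(Q)\le r$ (it can, since the constant is dimensional, independent of $Q$ and of $r$), and that this uniformity is precisely what lets the $p=1$ and general-$p$ versions of the $r\to 0^+$ and $r\to\infty$ limits coincide. I expect no real obstacle, but if pressed I would say the subtlest point is checking in (d) that $f\in L^1_{\rm loc}$ together with the two conditions in \eqref{itr54ffd} does recover $f\in\mathrm{VMO}$ as defined in \eqref{defi-VMO}, which requires first knowing $f\in\mathrm{BMO}$ (from the finiteness of the $r\to\infty$ limit via (c)) before the small-scale vanishing condition even makes sense.
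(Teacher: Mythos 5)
Your proposal is correct and follows essentially the same route as the paper's own (quite terse) proof: (a) directly from the definition, (b) via H\"older plus the John--Nirenberg estimate \eqref{JY6GTFF.2}, (c)--(d) by combining these with the definitions \eqref{defi-BMO} and \eqref{defi-VMO}, (e) from \eqref{defi-BMO.2b}, and (f) by averaging the modulus-of-continuity bound over the cube and using monotonicity of $\Upsilon$. The only cosmetic difference is in (f), where you use the pointwise bound $|f(x')-f_Q|\leq\aver{Q}|f(x')-f(y')|\,dy'$ before taking the $L^p$ average, while the paper applies H\"older to the double average; these are equivalent.
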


\begin{proof}
The claim made in part {\it (a)} follows  directly from \eqref{osc-1}. 
The claim in part {\it (b)} is a direct consequence 
of H\"older's inequality and John-Nirenberg's inequality (see \eqref{JY6GTFF.2}). 
The latter also implies the claims made in part {\it (c)}. The claim in part {\it (d)} 
is a consequence of {\it (a)}-{\it (c)} and \eqref{defi-VMO}.
Estimate \eqref{jsfd-3vcgfd} is immediate from \eqref{osc-1} and \eqref{defi-BMO.2b}.
Finally, if actually $f\in{\mathscr{C}}^{\Upsilon}({\mathbb{R}}^{n-1},{\mathbb{C}}^M)$ 
then for each $p\in[1,\infty)$ and each cube 
$Q$ in ${\mathbb{R}}^{n-1}$ H\"older's inequality gives
\begin{align}\label{jsfd-3b}
\Big(\aver{Q}|f(x')-f_Q|^pdx'\Big)^{\frac{1}{p}}
&\leq\Big(\aver{Q}\aver{Q}|f(x')-f(y')|^pdy'\,dx'\Big)^{\frac{1}{p}}
\nonumber\\[4pt]
&\leq \|f\|_{{\mathscr{C}}^{\Upsilon}({\mathbb{R}}^{n-1},{\mathbb{C}}^M)}
\Upsilon\big(\sqrt{n}\,\ell(Q)\big).
\end{align}
Then \eqref{jsfd-3} follows from \eqref{jsfd-3b} given that $\Upsilon$ is nondecreasing.
\end{proof}

Next, we discuss the manner in which global integrability properties of a given 
function are related to the behavior at infinity of its mean oscillation function. 

\begin{lemma}\label{lemma:BMO-decay}
Fix $\varepsilon>0$ arbitrary. Then there exists a constant $C_{n,\varepsilon}\in(0,\infty)$ 
such that for each function $f\in L^1_{\rm loc}(\mathbb{R}^{n-1},\mathbb{C}^M)$ and 
each cube $Q\subset{\mathbb{R}}^{n-1}$, with center $x'_Q\in{\mathbb{R}}^{n-1}$, there holds
\begin{align}\label{eqn:BMO-decay.88}
\int_{\mathbb{R}^{n-1}}\frac{|f(y')-f_{Q}|}{\big[\ell(Q)+|x'_Q-y'|\big]^{n-1+\varepsilon}}\,dy'
&\leq\frac{C_{n,\varepsilon}}{\ell(Q)^{\varepsilon}}
\int_{1}^\infty\Big(\aver{\lambda Q}|f(y')-f_{\lambda Q}|\,dy'\Big)
\frac{d\lambda}{\lambda^{1+\varepsilon}}
\nonumber\\[6pt]
&\leq\frac{C_{n,\varepsilon}}{\ell(Q)^{\varepsilon}}
\int_{1}^\infty{\rm osc}_1\big(f;\lambda\ell(Q)\big)\,\frac{d\lambda}{\lambda^{1+\varepsilon}}.
\end{align}

As a consequence, for each $f\in L^1_{\rm loc}(\mathbb{R}^{n-1},\mathbb{C}^M)$ one has
\begin{equation}\label{jGVVCc-1jmn}
\int_{1}^\infty{\rm osc}_1(f;\lambda)\,
\frac{d\lambda}{\lambda^{1+\varepsilon}}<\infty\,\Longrightarrow\,
f\in L^1\Big({\mathbb{R}^{n-1}}\,,\,\frac{dx'}{1+|x'|^{n-1+\varepsilon}}\Big)^M
\end{equation}
and there exists a constant $C_{n,\varepsilon}\in(0,\infty)$ with the property that
\begin{equation}\label{eq:aaAabgr-33}
\int_{{\mathbb{R}}^{n-1}}\frac{|f(x')|}{1+|x'|^{n-1+\varepsilon}}\,dx'
\leq C_{n,\varepsilon}\,\int_{1}^\infty{\rm osc}_1(f;\lambda)\,
\frac{d\lambda}{\lambda^{1+\varepsilon}}+C_{n,\varepsilon}\aver{Q_0}|f(x')|\,dx'
\end{equation}
where $Q_0:=(-1/2,1/2)^{n-1}$ is the cube centered at the origin $0'$ 
of $\mathbb{R}^{n-1}$ with side-length $1$. 

In particular, we have
\begin{equation}\label{eq:aaAabgr-22}
{\rm BMO}\big({\mathbb{R}}^{n-1},{\mathbb{C}}^M\big)\subset
L^1\Big({\mathbb{R}}^{n-1}\,,\,\frac{dx'}{1+|x'|^{n-1+\varepsilon}}\Big)^M,
\qquad\forall\,\varepsilon>0,
\end{equation}
and for each $p\in[1,\infty)$ {\rm (}recall \eqref{defi-BMO.2b-SP}{\rm )}
\begin{equation}\label{eq:aaAabgr-22BB}
{\mathscr{E}}^{\eta,p}({\mathbb{R}}^{n-1},\mathbb{C}^M)\subset
L^1\Big({\mathbb{R}}^{n-1}\,,\,\frac{dx'}{1+|x'|^{n-1+\varepsilon}}\Big)^M,
\qquad\forall\,\varepsilon>0,\,\,\forall\eta\in[0,\varepsilon),
\end{equation}
while in view of \eqref{jsfd-3-fff} and \eqref{jGVVCc-1jmn} we obtain
\begin{equation}\label{Gsyus}
\dot{\mathscr{C}}^\eta({\mathbb{R}}^{n-1},\mathbb{C}^M)\subset
L^1\Big({\mathbb{R}}^{n-1}\,,\,\frac{dx'}{1+|x'|^{n-1+\varepsilon}}\Big)^M,
\qquad\forall\eta\in(0,\varepsilon).
\end{equation}
\end{lemma}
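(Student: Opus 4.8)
The plan is to prove the master estimate \eqref{eqn:BMO-decay.88} by a dyadic-annuli decomposition of $\mathbb{R}^{n-1}$ around the cube $Q$, and then harvest all the consequences \eqref{jGVVCc-1jmn}--\eqref{Gsyus} from it by routine bookkeeping. First I would fix a cube $Q$ with center $x'_Q$ and side-length $\ell(Q)$, and decompose $\mathbb{R}^{n-1}=\bigcup_{k\geq 0} A_k$, where $A_0:=2Q$ and $A_k:=2^{k+1}Q\setminus 2^{k}Q$ for $k\geq 1$. On $A_k$ (for $k\geq 1$) one has $|x'_Q-y'|\gtrsim 2^{k}\ell(Q)$, hence the weight satisfies $\big[\ell(Q)+|x'_Q-y'|\big]^{-(n-1+\varepsilon)}\lesssim \big(2^{k}\ell(Q)\big)^{-(n-1+\varepsilon)}$, while $\mathscr{L}^{n-1}(A_k)\leq |2^{k+1}Q|=2^{(k+1)(n-1)}|Q|$. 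Therefore
\begin{align}\label{proof-aux-decay1}
\int_{A_k}\frac{|f(y')-f_Q|}{\big[\ell(Q)+|x'_Q-y'|\big]^{n-1+\varepsilon}}\,dy'
&\lesssim \frac{2^{(k+1)(n-1)}|Q|}{\big(2^{k}\ell(Q)\big)^{n-1+\varepsilon}}\,
\aver{2^{k+1}Q}|f(y')-f_Q|\,dy' \nonumber\\[4pt]
&\lesssim \frac{1}{\ell(Q)^{\varepsilon}}\,2^{-k\varepsilon}\,
\aver{2^{k+1}Q}|f(y')-f_Q|\,dy',
\end{align}
and the term $k=0$ is handled directly (the weight is comparable to $\ell(Q)^{-(n-1+\varepsilon)}$ on $2Q$). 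To replace $f_Q$ by $f_{2^{k+1}Q}$ in the average, I would telescope: $|f_Q-f_{2^{k+1}Q}|\leq\sum_{j=0}^{k}|f_{2^{j}Q}-f_{2^{j+1}Q}|$ and $|f_{2^{j}Q}-f_{2^{j+1}Q}|\leq 2^{n-1}\aver{2^{j+1}Q}|f(y')-f_{2^{j+1}Q}|\,dy'$, each of which is controlled by $\mathrm{osc}_1\big(f;2^{j+1}\ell(Q)\big)$; summing a geometric-type series against the $2^{-k\varepsilon}$ decay keeps everything finite. Summing \eqref{proof-aux-decay1} over $k\geq 0$ and comparing the resulting sum $\sum_k 2^{-k\varepsilon}\,\aver{2^{k+1}Q}|f-f_{2^{k+1}Q}|$ with the integral $\int_1^\infty\big(\aver{\lambda Q}|f-f_{\lambda Q}|\big)\,\lambda^{-1-\varepsilon}\,d\lambda$ (standard: on $\lambda\in[2^{k},2^{k+1}]$ monotonicity of $\mathrm{osc}_1$ and \eqref{aver-fq-BBBB}-type comparisons bound the dyadic term by a constant times the integral over that interval) yields the first inequality in \eqref{eqn:BMO-decay.88}; the second inequality is immediate from the definition \eqref{osc-1} since $\aver{\lambda Q}|f-f_{\lambda Q}|\leq \mathrm{osc}_1(f;\lambda\ell(Q))$.

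With \eqref{eqn:BMO-decay.88} in hand, the consequences follow quickly. For \eqref{jGVVCc-1jmn} and \eqref{eq:aaAabgr-33} I would apply \eqref{eqn:BMO-decay.88} with the specific cube $Q=Q_0=(-1/2,1/2)^{n-1}$, so $\ell(Q_0)=1$ and $\ell(Q_0)+|x'_{Q_0}-y'|\asymp 1+|y'|$; this bounds $\int_{\mathbb{R}^{n-1}}|f(y')-f_{Q_0}|\,(1+|y'|)^{-(n-1+\varepsilon)}\,dy'$ by $C_{n,\varepsilon}\int_1^\infty \mathrm{osc}_1(f;\lambda)\,\lambda^{-1-\varepsilon}\,d\lambda$, and then the elementary estimate $\int_{\mathbb{R}^{n-1}}|f_{Q_0}|\,(1+|y'|)^{-(n-1+\varepsilon)}\,dy'\leq C_{n,\varepsilon}\,|f_{Q_0}|\leq C_{n,\varepsilon}\aver{Q_0}|f|\,dx'$ (using absolute convergence of $\int_{\mathbb{R}^{n-1}}(1+|y'|)^{-(n-1+\varepsilon)}dy'$ for $\varepsilon>0$) gives \eqref{eq:aaAabgr-33}, whence \eqref{jGVVCc-1jmn}. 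Finally, \eqref{eq:aaAabgr-22} follows from \eqref{jGVVCc-1jmn} together with part (c) of Lemma~\ref{jsfsQAXT}, which gives $\mathrm{osc}_1(f;\lambda)\leq C\|f\|_{\mathrm{BMO}}$ for all $\lambda$, so the integral $\int_1^\infty \mathrm{osc}_1(f;\lambda)\,\lambda^{-1-\varepsilon}\,d\lambda\leq C\|f\|_{\mathrm{BMO}}\int_1^\infty\lambda^{-1-\varepsilon}\,d\lambda<\infty$ converges; \eqref{eq:aaAabgr-22BB} uses \eqref{jsfd-3vcgfd} to get $\mathrm{osc}_1(f;\lambda)\leq \lambda^{\eta}\|f\|_\ast^{(\eta,p)}$, so for $\eta<\varepsilon$ the integral $\int_1^\infty \lambda^{\eta-1-\varepsilon}\,d\lambda$ converges; and \eqref{Gsyus} is the special case handled via \eqref{jsfd-3-fff}, exactly as already indicated in the statement.

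The one genuinely delicate point — the "main obstacle" such as it is — is the bookkeeping in the telescoping step: one must verify that after replacing $f_Q$ by $f_{2^{k+1}Q}$ the extra double sum $\sum_{k\geq 0} 2^{-k\varepsilon}\sum_{j=0}^{k}\mathrm{osc}_1(f;2^{j+1}\ell(Q))$ still converges and is dominated by the claimed integral. This is handled by switching the order of summation, $\sum_j \mathrm{osc}_1(f;2^{j+1}\ell(Q))\sum_{k\geq j}2^{-k\varepsilon}\lesssim \sum_j 2^{-j\varepsilon}\mathrm{osc}_1(f;2^{j+1}\ell(Q))$, which is again comparable to $\ell(Q)^{-\varepsilon}\int_1^\infty\mathrm{osc}_1(f;\lambda\ell(Q))\lambda^{-1-\varepsilon}\,d\lambda$ after the change of variables $\mu=2^{j}$. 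Everything else is a routine exercise in dyadic decomposition and comparing dyadic sums with integrals, so I would keep that portion brief.
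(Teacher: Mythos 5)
Your proposal is correct and follows essentially the same route as the paper's proof: dyadic annuli around $Q$, telescoping to pass from $f_Q$ to $f_{2^{k+1}Q}$, interchanging the order of summation, comparing the resulting dyadic sum with the integral in $\lambda$, and then specializing to $Q_0=(-1/2,1/2)^{n-1}$ and invoking parts (c) and (e) of Lemma~\ref{jsfsQAXT} for the consequences. The only cosmetic point is that the dyadic-to-integral comparison is cleanest after reindexing the sum so the cube is $2^mQ$ with $\lambda\in[2^m,2^{m+1}]$, using \eqref{aver-fq} (rather than \eqref{aver-fq-BBBB}) exactly as in \eqref{aver-fq-5tgb}--\eqref{aver-fq-5tgb.22} of the paper.
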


\begin{proof}
Given $f\in L^1_{\rm loc}(\mathbb{R}^{n-1},\mathbb{C}^M)$ and 
a cube $Q\subset{\mathbb{R}}^{n-1}$ with center $x'_Q\in{\mathbb{R}}^{n-1}$, 
breaking up the domain of integration allows us to estimate 
\begin{align}\label{eqn-BMO-decay:1-EEE.88}
&\int_{\mathbb{R}^{n-1}}\frac{|f(y')-f_{Q}|}{\big[\ell(Q)+|x'_Q-y'|\big]^{n-1+\varepsilon}}\,dy'
\nonumber\\[4pt]
&\hskip 0.50in
\leq\ell(Q)^{-n+1-\varepsilon}\int_{Q}|f(y')-f_{Q}|\,dy'
+\sum_{k=0}^\infty\int_{2^{k+1}Q\setminus 2^kQ}
\frac{|f(y')-f_{Q}|}{|x'_Q-y'|^{n-1+\varepsilon}}\,dy'
\nonumber\\[4pt]
&\hskip 0.50in
\leq\ell(Q)^{-\varepsilon}\aver{Q}|f(y')-f_{Q}|\,dy'
\nonumber\\[4pt]
&\hskip 0.50in
\quad +2^{2(n-1)+\varepsilon}
\ell(Q)^{-\varepsilon}\sum_{k=0}^\infty 2^{-k\varepsilon}\,\aver{2^{k+1}Q}|f(y')-f_{Q}|\,dy'.
\end{align}
Next, for each $k\in{\mathbb{N}}_0$ we have
\begin{align}\label{eqn-BMO-decay:2jj.88}
\aver{2^{k+1}Q}|f(y')-f_{Q}|\,dy' 
&\leq\aver{2^{k+1}Q}|f(y')-f_{2^{k+1}Q}|\,dy'
+\sum_{j=0}^k|f_{2^j Q}-f_{2^{j+1}Q}|
\nonumber\\[4pt]
&\leq\aver{2^{k+1}Q}|f(y')-f_{2^{k+1}Q}|\,dy'
\nonumber\\[4pt]
&\quad
+2^{n-1}\,\sum_{j=0}^k\aver{2^{j+1}Q}|f(y')-f_{2^{j+1}Q}|\,dy',
\end{align}
hence,  
\begin{align}\label{eqn-BMO-deRRF.1}
\sum_{k=0}^\infty 2^{-k\varepsilon}\,\aver{2^{k+1}Q}|f(y')-f_{Q}|\,dy'
& \leq\sum_{k=0}^\infty 2^{-k\varepsilon}\,\aver{2^{k+1}Q}|f(y')-f_{2^{k+1}Q}|\,dy'
\nonumber\\[4pt]
&\quad
+2^{n-1}\,\sum_{k=0}^\infty 2^{-k\varepsilon}\Big\{
\sum_{j=0}^k\aver{2^{j+1}Q}|f(y')-f_{2^{j+1}Q}|\,dy'\Big\}
\nonumber\\[4pt]
& =\sum_{k=0}^\infty 2^{-k\varepsilon}\,\aver{2^{k+1}Q}|f(y')-f_{2^{k+1}Q}|\,dy'
\nonumber\\[4pt]
&\quad
+\frac{2^{n-1}}{1-2^{-\varepsilon}}\,\sum_{j=0}^\infty
2^{-j\varepsilon}\aver{2^{j+1}Q}|f(y')-f_{2^{j+1}Q}|\,dy'
\nonumber\\[4pt]
&=\Big(1+\frac{2^{n-1}}{1-2^{-\varepsilon}}\Big)\sum_{k=0}^\infty
2^{-k\varepsilon}\aver{2^{k+1}Q}|f(y')-f_{2^{k+1}Q}|\,dy',
\end{align}
where the first equality has been obtained by interchanging the sums in $k$ and $j$. 
Collectively, \eqref{eqn-BMO-decay:1-EEE.88} and \eqref{eqn-BMO-deRRF.1} permit us to conclude 
that
\begin{align}\label{eqn-BMO-dePHGR}
&\int_{\mathbb{R}^{n-1}}\frac{|f(y')-f_{Q}|}{\big[\ell(Q)+|x'_Q-y'|\big]^{n-1+\varepsilon}}\,dy'
\nonumber\\[4pt]
&\hskip 0.50in
\leq 4^{n-1+\varepsilon}\Big(1+\frac{2^{n-1}}{1-2^{-\varepsilon}}\Big)
\ell(Q)^{-\varepsilon}\sum_{k=0}^\infty
2^{-k\varepsilon}\aver{2^{k}Q}|f(y')-f_{2^{k}Q}|\,dy'.
\end{align}
To proceed, observe that \eqref{aver-fq} yields 
\begin{equation}\label{aver-fq-5tgb}
\begin{array}{c}
\displaystyle
\aver{2^kQ}|f(y')-f_{2^kQ}|\,dy'\leq 2^n\aver{\lambda Q}|f(y')-f_{\lambda Q}|\,dy',
\\[10pt]
\text{for each $k\in{\mathbb{N}}_0$ and each $\lambda\in[2^k,2^{k+1}]$}.
\end{array} 
\end{equation}
This, in turn, implies that for each $k\in{\mathbb{N}}_0$ we have
\begin{equation}\label{aver-fq-5tgb.22}
2^{-k\varepsilon}\aver{2^kQ}|f(y')-f_{2^kQ}|\,dy'
\leq \frac{2^n\varepsilon}{1-2^{-\varepsilon}}\int_{2^k}^{2^{k+1}}
\Big(\aver{\lambda Q}|f(y')-f_{\lambda Q}|\,dy'\Big)\,\frac{d\lambda}{\lambda^{1+\varepsilon}}.
\end{equation}
Availing ourselves of this estimate back into \eqref{eqn-BMO-dePHGR} 
then establishes the first inequality in \eqref{eqn:BMO-decay.88} for the choice 
\begin{equation}\label{aver-fq-5tgb.23}
C_{n,\varepsilon}:=2^n\,4^{n-1+\varepsilon}\Big(1+\frac{2^{n-1}}{1-2^{-\varepsilon}}\Big)
\cdot\frac{\varepsilon}{1-2^{-\varepsilon}}.
\end{equation}
The second inequality in \eqref{eqn:BMO-decay.88} is a direct 
consequence of this and \eqref{osc-1}. Going further, \eqref{jGVVCc-1jmn}-\eqref{eq:aaAabgr-33} 
follow from the second inequality in \eqref{eqn:BMO-decay.88}
with $Q:=(-1/2,1/2)^{n-1}$. 
In turn, \eqref{eq:aaAabgr-33} together with part {\it (c)} in Lemma~\ref{jsfsQAXT} 
give \eqref{eq:aaAabgr-22}, while 
\eqref{eq:aaAabgr-33} together with part {\it (e)} in Lemma~\ref{jsfsQAXT} 
give \eqref{eq:aaAabgr-22BB}.
\end{proof}

Poisson kernels for elliptic operators in a half-space have a long history 
(see, e.g., \cite{ADNI}, \cite{ADNII}, \cite{Sol1}, \cite{Sol2}).
Here we record the following useful existence and uniqueness result.
In its statement (as well as elsewhere in the paper), we make the convention 
that the convolution between two functions, which are matrix-valued and 
vector-valued, respectively, takes into account the algebraic 
multiplication between a matrix and a vector in a natural fashion.

\begin{theorem}\label{kkjbhV}
Let $L$ be an $M\times M$ elliptic system with constant complex coefficients as in
\eqref{L-def}-\eqref{L-ell.X}. Then there exists a matrix-valued function
$P^L=\big(P^L_{\alpha\beta}\big)_{1\leq\alpha,\beta\leq M}:
\mathbb{R}^{n-1}\to\mathbb{C}^{M\times M}$
{\rm (}called the Poisson kernel for $L$ in $\mathbb{R}^{n}_+${\rm )}
satisfying the following properties:
\begin{list}{$(\theenumi)$}{\usecounter{enumi}\leftmargin=.8cm
\labelwidth=.8cm\itemsep=0.2cm\topsep=.1cm
\renewcommand{\theenumi}{\alph{enumi}}}
\item[(1)] There exists $C\in(0,\infty)$ such that
\begin{equation}\label{eq:IG6gy}
|P^L(x')|\leq\frac{C}{(1+|x'|^2)^{\frac{n}2}}\quad\mbox{for each }\,\,x'\in\mathbb{R}^{n-1}.
\end{equation}
\item[(2)] The function $P^L$ is Lebesgue measurable and
\begin{equation}\label{eq:IG6gy.2}
\int_{\mathbb{R}^{n-1}}P^L(x')\,dx'=I_{M\times M},
\end{equation}
where $I_{M\times M}$ denotes the $M\times M$ identity matrix. In particular, for every 
constant vector $C=\big(C_{\alpha}\big)_{1\leq\alpha\leq M}\in\mathbb{C}^M$
one has
\begin{equation}\label{eq:IG6gy.2PPP}
\int_{\mathbb{R}^{n-1}}\sum_{1\leq\beta\leq M}
\big(P^L_{\alpha\beta}\big)_t(x'-y')C_\beta\,dy'
=C_{\alpha},\qquad\forall\,(x',t)\in{\mathbb{R}}^n_{+}.
\end{equation}
\item[(3)] If one sets
\begin{equation}\label{eq:Gvav7g5}
K^L(x',t):=P^L_t(x')=t^{1-n}P^L(x'/t)\quad\mbox{for each }\,\,x'\in\mathbb{R}^{n-1}
\,\,\,\mbox{ and }\,\,t>0,
\end{equation}
then the function $K^L=\big(K^L_{\alpha\beta}\big)_{1\leq\alpha,\beta\leq M}$
satisfies {\rm (}in the sense of distributions{\rm )}
\begin{equation}\label{uahgab-UBVCX}
LK^L_{\cdot\beta}=0\,\,\mbox{ in }\,\,\mathbb{R}^{n}_{+}
\,\,\mbox{ for each }\,\,\beta\in\{1,\dots,M\},
\end{equation}
where $K^L_{\cdot\beta}:=\big(K^L_{\alpha\beta}\big)_{1\leq\alpha\leq M}$
is the $\beta$-th column in $K^L$.
\end{list}

Moreover, $P^L$ is unique in the class of $\mathbb{C}^{M\times M}$-valued
functions defined in ${\mathbb{R}}^{n-1}$ and satisfying (1)-(3) above, 
and has the following additional properties:

\begin{list}{$(\theenumi)$}{\usecounter{enumi}\leftmargin=.8cm
\labelwidth=.8cm\itemsep=0.2cm\topsep=.1cm
\renewcommand{\theenumi}{\alph{enumi}}}
\item[(4)] One has $P^L\in{\mathscr{C}}^\infty(\mathbb{R}^{n-1})$ and
$K^L\in{\mathscr{C}}^\infty\big(\overline{{\mathbb{R}}^n_{+}}\setminus B(0,\varepsilon)\big)$
for every $\varepsilon>0$. Consequently, \eqref{uahgab-UBVCX}
holds in a pointwise sense.
\item[(5)] There holds $K^L(\lambda x)=\lambda^{1-n}K^L(x)$ for all $x\in{\mathbb{R}}^n_{+}$
and $\lambda>0$. In particular, for each multi-index $\alpha\in{\mathbb{N}}_0^n$
there exists $C_\alpha\in(0,\infty)$ with the property that
\begin{equation}\label{eq:Kest}
\big|(\partial^\alpha K^L)(x)\big|\leq C_\alpha\,|x|^{1-n-|\alpha|},\qquad
\forall\,x\in{\overline{{\mathbb{R}}^n_{+}}}\setminus\{0\}.
\end{equation}
\item[(6)] For each $\kappa>0$ there exists a finite constant $C_\kappa>0$ 
with the property that for each $x'\in\mathbb{R}^{n-1}$,
\begin{equation}\label{exTGFVC}
\sup_{|x'-y'|<\kappa t}\big|(P^L_t\ast f)(y')\big|\leq C_\kappa\,\mathcal{M}f(x'),
\qquad\forall\,f\in L^1\Big({\mathbb{R}}^{n-1}\,,\,\frac{1}{1+|x'|^n}\,dx'\Big)^M.
\end{equation}
\item[(7)] Fix an arbitrary $\kappa>0$ and a function
\begin{equation}\label{eq:aaAa}
f=(f_\beta)_{1\leq\beta\leq M}\in L^1\Big({\mathbb{R}}^{n-1}\,,\,\frac{1}{1+|x'|^n}\,dx'\Big)^M.
\end{equation}
Then the function $u(x',t):=(P^L_t\ast f)(x')$ for each $(x',t)\in{\mathbb{R}}^n_{+}$, 
is meaningfully defined, via an absolutely convergent integral, satisfies
\begin{equation}\label{exist:u2****}
u\in\mathscr{C}^\infty(\mathbb{R}^n_{+},{\mathbb{C}}^M),\qquad
Lu=0\,\,\mbox{ in }\,\,\mathbb{R}^{n}_{+},
\end{equation}
and, at every Lebesgue point $x'_0\in{\mathbb{R}}^{n-1}$ of $f$,
\begin{equation}\label{exTGFVC.2s}
\Big(u\big|^{{}^{\rm n.t.}}_{\partial{\mathbb{R}}^{n}_{+}}\Big)(x'_0):=
\lim_{\substack{(x',\,t)\to(x'_0,0)\\ |x'-x'_0|<\kappa t}}(P^L_t\ast f)(x')=f(x'_0).
\end{equation}
\item[(8)] The function $P^L$ satisfies the semigroup property
\begin{equation}\label{eq:re4fd}
P^L_{t_1}\ast P^L_{t_2}=P^L_{t_1+t_2}\,\,\,\mbox{ for every }\,\,t_1,t_2>0.
\end{equation}
\end{list}
\end{theorem}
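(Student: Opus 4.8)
The plan is to establish Theorem~\ref{kkjbhV} by invoking the deep regularity theory of Agmon-Douglis-Nirenberg, suitably organized to produce a kernel with the stated normalization and then deriving the remaining properties as consequences. First I would recall from \cite{ADNI}-\cite{ADNII} that, for a strongly elliptic system $L$ as in \eqref{L-def}-\eqref{L-ell.X}, one may construct a fundamental solution and, via the method of images adapted to the half-space (equivalently, by inverting the Dirichlet-to-Neumann symbol on the boundary), obtain a Poisson kernel $K^L(x',t)$ that is smooth in $\overline{{\mathbb{R}}^n_+}\setminus\{0\}$, positively homogeneous of degree $1-n$, and annihilated columnwise by $L$ in ${\mathbb{R}}^n_+$. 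Setting $P^L(x'):=K^L(x',1)$, the homogeneity forces $K^L(x',t)=t^{1-n}P^L(x'/t)$, which is exactly \eqref{eq:Gvav7g5}, and the degree of homogeneity together with smoothness away from the origin yields the decay bound \eqref{eq:IG6gy} and the derivative estimates \eqref{eq:Kest}. This handles properties (1), (3), (4), (5), and the homogeneity content of the theorem.

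Next I would address the normalization \eqref{eq:IG6gy.2}, namely $\int_{{\mathbb{R}}^{n-1}}P^L\,dx'=I_{M\times M}$. The natural route is to test against constants: one shows that for any constant vector $C\in{\mathbb{C}}^M$ the function $v(x',t):=(P^L_t\ast C)(x')$ is a bounded smooth null-solution of $L$ in ${\mathbb{R}}^n_+$ which is independent of $t$ by the homogeneity/scaling of $K^L$, hence equals a fixed vector $AC$ for a matrix $A=\int_{{\mathbb{R}}^{n-1}}P^L\,dx'$; an independent uniqueness argument for bounded null-solutions with the correct boundary behavior (or a direct computation using the construction of $K^L$ via the inverse Fourier symbol evaluated at $\xi'=0$) forces $A=I_{M\times M}$. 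Identity \eqref{eq:IG6gy.2PPP} is then immediate from \eqref{eq:IG6gy.2} after the change of variables $y'\mapsto x'-ty'$. For the nontangential maximal estimate \eqref{exTGFVC} and the pointwise trace \eqref{exTGFVC.2s}, I would combine the pointwise bound \eqref{eq:IG6gy} with the standard approximate-identity machinery: \eqref{eq:IG6gy} gives $|P^L_t(x'-y')|\lesssim t/(t^2+|x'-y'|^2)^{n/2}$ uniformly for $|x'-y'|<\kappa t$, whence the majorization by the Hardy-Littlewood maximal function \eqref{MMax} follows by the usual dyadic annular decomposition, and the fact that $\{P^L_t\}_{t>0}$ is an approximate identity (using \eqref{eq:IG6gy.2}) yields convergence to $f(x_0')$ at Lebesgue points. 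That $u=P^L_t\ast f$ is smooth and solves $Lu=0$ in \eqref{exist:u2****} follows by differentiating under the integral sign, justified by \eqref{eq:Kest} and the weighted integrability hypothesis on $f$, together with \eqref{uahgab-UBVCX}.

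Uniqueness of $P^L$ within the class satisfying (1)-(3) is the step I expect to require the most care. The idea is that if $P_1,P_2$ are two such kernels, then $w(x',t):=\big((P_1)_t-(P_2)_t\big)\ast f$ is, for suitable $f$, a null-solution of $L$ in ${\mathbb{R}}^n_+$ whose nontangential trace vanishes and which satisfies size bounds forcing $w\equiv 0$; taking $f$ to range over a dense class (say, $C_c^\infty$) and using the decay \eqref{eq:IG6gy} one concludes $(P_1)_t=(P_2)_t$ for all $t$, hence $P_1=P_2$. The delicate point is supplying the correct uniqueness statement for null-solutions of a general (non-symmetric, non-scalar) elliptic system in the half-space — there is no maximum principle available — so one must instead appeal to the $L^p$-well-posedness results from \cite{K-MMMM} (or the interior and boundary regularity estimates that underlie them) to rule out nonzero solutions with vanishing trace and controlled growth. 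Finally, the semigroup property \eqref{eq:re4fd} follows formally from uniqueness: both $P^L_{t_1}\ast P^L_{t_2}$ and $P^L_{t_1+t_2}$ are, as functions of the remaining variable, kernels reproducing the same boundary operator (each represents the solution operator evaluated at height $t_1+t_2$ applied to boundary data, by associativity of convolution and the fact that $(x',s)\mapsto(P^L_s\ast g)(x')$ solves $L$ with trace $g$), so \eqref{exTGFVC.2s} and uniqueness of the solution identify them; alternatively one checks it directly on the Fourier-transform side, where $\widehat{P^L_t}(\xi')=e^{-t\,Q(\xi')}$ for the appropriate matrix symbol $Q$, making \eqref{eq:re4fd} the obvious exponential identity.
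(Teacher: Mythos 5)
Your proposal matches the paper's treatment of this statement: the paper does not reprove Theorem~\ref{kkjbhV} but attributes the existence of $P^L$ with properties (1)--(5) to the classical work of Agmon--Douglis--Nirenberg \cite{ADNII} and the uniqueness together with \eqref{exTGFVC}, \eqref{exist:u2****}, \eqref{exTGFVC.2s} and the semigroup property \eqref{eq:re4fd} to \cite{K-MMMM}, which is exactly the division of labor you propose. Your supplementary sketches (homogeneity giving \eqref{eq:IG6gy} and \eqref{eq:Kest}, approximate-identity arguments for \eqref{exTGFVC}--\eqref{exTGFVC.2s}, and uniqueness/semigroup via the well-posedness machinery of \cite{K-MMMM}) are consistent with how those cited results are established.
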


Concerning Theorem~\ref{kkjbhV}, we note that the existence part follows from
the classical work of S.\,Agmon, A.\,Douglis, and L.\,Nirenberg in \cite{ADNII}.
The uniqueness property has been recently proved in \cite{K-MMMM},
where \eqref{exTGFVC}, \eqref{exist:u2****}, \eqref{exTGFVC.2s}, 
as well as the semigroup property \eqref{eq:re4fd} have also been established.
\vskip 0.08in

Next, we record the following versatile version of interior estimates for
higher-order elliptic systems. A proof may be found in \cite[Theorem~11.9, p.\,364]{DM}.

\begin{theorem}\label{ker-sbav}
Assume the system $L$ is as in \eqref{L-def}-\eqref{L-ell.X}.
Then for each null-solution $u$ of $L$ in a ball $B(x,R)$
{\rm (}where $x\in{\mathbb{R}}^n$ and $R>0${\rm )}, $p\in(0,\infty)$, $\lambda\in(0,1)$,
$\ell\in{\mathbb{N}}_0$, and $r\in(0,R)$, one has
\begin{equation}\label{detraz}
\sup_{z\in B(x,\lambda r)}|\nabla^\ell u(z)|
\leq\frac{C}{r^\ell}\left(\aver{B(x,r)}|u|^p\,d{\mathscr{L}}^n\right)^{\frac{1}{p}},
\end{equation}
where $C=C(L,p,\ell,\lambda,n)>0$ is a finite constant.
\end{theorem}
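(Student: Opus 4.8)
The plan is to prove the interior estimate \eqref{detraz} by reducing the general statement (arbitrary $p\in(0,\infty)$, arbitrary derivative order $\ell$, and arbitrary $\lambda\in(0,1)$) to the classical case $p=2$, $\ell=0$, and then bootstrapping. By translating and rescaling we may assume $x=0$ and $r=1$, so it suffices to bound $\sup_{B(0,\lambda)}|\nabla^\ell u|$ by $\bigl(\aver{B(0,1)}|u|^p\,d{\mathscr{L}}^n\bigr)^{1/p}$; the factor $r^{-\ell}$ and the general center are then restored by the natural dilation $u(z)\mapsto u(x+rz)$, which is again a null-solution of the same constant-coefficient system (ellipticity is scale- and translation-invariant).

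First I would treat the baseline case. Since $L$ is a second-order homogeneous constant-coefficient strongly elliptic system, its null-solutions in an open set are real-analytic there (hypoellipticity of elliptic operators with smooth — here constant — coefficients). In particular $u\in{\mathscr{C}}^\infty(B(0,1),{\mathbb{C}}^M)$. The standard Caccioppoli inequality together with Sobolev embedding gives, for every multi-index $\alpha$,
\begin{equation*}
\sup_{B(0,1/2)}|\partial^\alpha u|\leq C_\alpha\Bigl(\int_{B(0,3/4)}|u|^2\,d{\mathscr{L}}^n\Bigr)^{1/2}.
\end{equation*}
A clean way to see this: writing $u$ componentwise and differentiating the equation, $\partial^\alpha u$ is again a null-solution of $L$; iterating Caccioppoli on a nested family of balls controls all $L^2$ norms of derivatives of $u$ on $B(0,1/2)$ by $\|u\|_{L^2(B(0,3/4))}$, and Sobolev embedding converts enough derivative-control into the sup bound. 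This disposes of \eqref{detraz} when $p=2$ and $\lambda=1/2$ (with $3/4$ in place of $1$ on the right), and by a further covering/chaining argument — covering $\overline{B(0,\lambda)}$ by finitely many balls of radius comparable to $1-\lambda$ and applying the half-radius estimate inside $B(0,1)$ — one upgrades $\lambda=1/2$ to an arbitrary $\lambda\in(0,1)$, at the cost of a constant depending on $\lambda$ and $n$.

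The remaining task is to replace the $L^2$ average on the right by an $L^p$ average for arbitrary $p\in(0,\infty)$, including $p<1$. The key is a self-improving argument. Fix $\ell$ and $\lambda$; for $0<\rho<\sigma\le 1$ set $\Phi(\rho):=\sup_{B(0,\rho\lambda')}|\nabla^\ell u|$ for a suitable fixed $\lambda'$, or more simply $\Phi(\rho):=\rho^{\ell}\sup_{B(0,\rho)}|\nabla^\ell u|$ after rescaling so that balls of radius $\rho$ are in play. From the case already proved (applied on the ball $B(z,\tfrac{\sigma-\rho}{2})$ for each $z\in B(0,\rho)$, then taking the sup) one obtains an estimate of the form
\begin{equation*}
\Phi(\rho)\leq\frac{C}{(\sigma-\rho)^{N}}\Bigl(\int_{B(0,\sigma)}|u|^{2}\,d{\mathscr{L}}^n\Bigr)^{1/2}
\end{equation*}
for some exponent $N=N(n,\ell)$. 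Now interpolate: for $p<2$ write $\int|u|^2=\int|u|^{2-p}|u|^{p}\le\bigl(\sup_{B(0,\sigma)}|u|\bigr)^{2-p}\int_{B(0,\sigma)}|u|^p$, apply the previous display with $\ell=0$ to bound $\sup_{B(0,\sigma)}|u|$ by $C(\sigma'-\sigma)^{-N'}\|u\|_{L^2(B(0,\sigma'))}$, absorb the resulting power of an $L^2$-type norm into the left side using Young's inequality, and close the loop with the standard iteration lemma on monotone functions (the one that deduces $\Phi(\tfrac12)\le C\,(\text{data})$ from $\Phi(\rho)\le\theta\Phi(\sigma)+A(\sigma-\rho)^{-N}$ with $\theta<1$ after a covering step, or from the multiplicative version). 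This yields $\sup_{B(0,1/2)}|\nabla^\ell u|\le C\bigl(\int_{B(0,1)}|u|^p\bigr)^{1/p}$; the passage to arbitrary $\lambda\in(0,1)$ and the restoration of scaling are exactly as above.

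I expect the main obstacle to be the $p<1$ range, where $\|\cdot\|_{L^p}$ is only a quasi-norm and the usual "absorb into the left-hand side" trick must be executed carefully — one cannot simply invoke Minkowski/Hölder, and the interpolation step $|u|^p\cdot|u|^{2-p}$ together with an iteration lemma adapted to quasi-norms is the crucial device. Everything else (analyticity/smoothness of null-solutions, Caccioppoli, Sobolev embedding, covering arguments, the dilation that produces the $r^{-\ell}$ factor) is routine for constant-coefficient elliptic systems; since the excerpt explicitly cites \cite[Theorem~11.9, p.\,364]{DM} for the full statement, I would at the relevant point simply record that the general-$p$, general-$\ell$ version follows from this reduction scheme, referring to that source for the detailed bookkeeping.
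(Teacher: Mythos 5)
The paper does not actually prove this statement: Theorem~\ref{ker-sbav} is quoted as known, with the proof delegated to \cite[Theorem~11.9, p.\,364]{DM}, so there is no in-paper argument to compare against. Your sketch is, however, a correct rendition of the standard proof and would serve as a self-contained substitute: the reduction by translation/dilation to $x=0$, $r=1$ (which is what produces the factor $r^{-\ell}$), smoothness of null-solutions, Caccioppoli plus Sobolev embedding and differentiation of the equation for the $p=2$ baseline, a covering/chaining step for general $\lambda\in(0,1)$, and the self-improvement from $L^2$ to $L^p$ averages, $0<p<2$, via the splitting $|u|^2\le(\sup|u|)^{2-p}|u|^p$, Young's inequality, and the iteration lemma for $\Phi(\rho)\le\theta\,\Phi(\sigma)+A(\sigma-\rho)^{-N}$.

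Two small points worth making explicit if you write this out. First, the ellipticity hypothesis \eqref{L-ell.X} is of Legendre--Hadamard type (real $\xi$, complex $\eta$), not an integral coercivity assumption; for \emph{constant} coefficients this still yields the G\aa rding inequality for compactly supported functions via Plancherel, which is what the Caccioppoli argument needs, so your appeal to ``standard Caccioppoli'' is legitimate but deserves that one-line justification. Second, your worry about the quasi-norm regime $p<1$ is not really an obstacle: the absorption is performed on the scalar quantity $\Phi(\rho)=\sup_{B(0,\rho)}|u|$ through the iteration lemma (with $\rho,\sigma$ confined to a compact subinterval of $(0,1)$ so that $\Phi$ is finite), and no triangle inequality for $\|\cdot\|_{L^p}$ is ever invoked, so the same argument covers all $p\in(0,2)$ uniformly; the case $p\ge 2$ is immediate from $p=2$ by H\"older. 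With these remarks your scheme is complete and matches what the cited source does in spirit.
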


To proceed we need to introduce some additional terminology. Let
\begin{align}\label{w12bd}
W^{1,2}_{\rm bd}({\mathbb{R}}^n_{+}):=\big\{ &
w\text{ Lebesgue measurable in }{\mathbb{R}}^n_{+}:\,
\nonumber\\[4pt]
&\qquad\quad
w,\,\nabla w\in L^2(\mathbb{R}^n_+\cap B(x,r)),\,\,\,
\forall\,x\in\mathbb{R}_+^{n},\,\,\forall\,r\in(0,\infty)
\big\}
\end{align}
In the sequel, the space of ${\mathbb{C}}^M$-valued functions with components in 
$W^{1,2}_{\rm bd}({\mathbb{R}}^n_{+})$ will be denoted 
by $W^{1,2}_{\rm bd}({\mathbb{R}}^n_{+},{\mathbb{C}}^M)$.
Also, (whenever meaningful) the Sobolev trace ${\rm Tr}$ 
is defined as:
\begin{equation}\label{Veri-S2TG.3}
\big({\rm Tr}\,w\big)(x'):=
\lim\limits_{r\to 0^{+}}\aver{B((x',0),r)\cap{\mathbb{R}}^n_{+}}
w\,d{\mathscr{L}}^n,\qquad x'\in\mathbb{R}^{n-1}.
\end{equation}

The following result can be found in \cite[Corollary~2.4]{MaMiSh}, and it is a
consequence of the {\it a priori} regularity estimates obtained in \cite{ADNII}
and Sobolev embeddings.

\begin{proposition}\label{c1.2}
Let $L$ be an $M\times M$ elliptic system as in \eqref{L-def}-\eqref{L-ell.X} and consider 
a vector-valued function $w\in W^{1,2}_{\rm bd}({\mathbb{R}}^n_{+},{\mathbb{C}}^M)$ such that
\begin{equation}\label{eJB-iY}
\left\{
\begin{array}{ll}
Lw=0 & \mbox{ in }\,\,\mathbb{R}^n_+,
\\[6pt]
{\rm Tr}\,w=0 & \mbox{ ${\mathscr{L}}^{n-1}$-a.e. on }\,\,\mathbb{R}^{n-1}.
\end{array}
\right.
\end{equation}

Then $w\in{\mathscr{C}}^\infty({\mathbb{R}}^n_{+},{\mathbb{C}}^M)$, and
for each $z\in\overline{\mathbb{R}^n_+}$ and $\rho>0$ one has
\begin{equation}\label{eq1.18}
\sup_{\mathbb{R}^n_+\cap B(z,\rho)}|\nabla w|
\leq C\,\rho^{-1}\sup_{\mathbb{R}^n_+\cap B(z,2\rho)}|w|
\end{equation}
where $C\in(0,\infty)$ is a constant independent of the scale $\rho$, the point $z$, 
and the function $w$.
\end{proposition}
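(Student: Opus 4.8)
The plan is to establish Proposition~\ref{c1.2} by combining the interior regularity theory for $L$ encoded in Theorem~\ref{ker-sbav} with boundary regularity estimates for null-solutions vanishing on a flat portion of the boundary, and then using a scaling/covering argument to obtain the gradient bound \eqref{eq1.18} with a constant independent of $z$, $\rho$, and $w$. Since the statement is quoted verbatim from \cite[Corollary~2.4]{MaMiSh}, the proof I would give is really a guided reconstruction: first I would invoke the local $L^2$-theory of Agmon--Douglis--Nirenberg \cite{ADNII} for the boundary value problem $Lw=0$ in a half-ball with $\mathrm{Tr}\,w=0$ on the flat face, which gives that $w$ is smooth up to the flat boundary and furnishes {\it a priori} estimates of all higher-order Sobolev norms of $w$ on a smaller half-ball in terms of the $L^2$-norm of $w$ on a larger half-ball. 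Combining these Sobolev estimates with the Sobolev embedding $W^{k,2}\hookrightarrow {\mathscr C}^1$ for $k$ large enough (depending only on $n$) then yields, after a standard dilation to normalize the radius, an estimate of the form $\sup|\nabla w|\lesssim \rho^{-1}\bigl(\aver{}|w|^2\bigr)^{1/2}$ on the smaller half-ball, where the implicit constant depends only on $L$ and $n$.

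The next step is to upgrade this $L^2$-averaged right-hand side to the pointwise $\sup$-norm appearing in \eqref{eq1.18}. This is immediate: one simply bounds the average $\bigl(\aver{\mathbb{R}^n_+\cap B(z,2\rho)}|w|^2\bigr)^{1/2}$ by $\sup_{\mathbb{R}^n_+\cap B(z,2\rho)}|w|$. To handle an arbitrary center $z\in\overline{\mathbb{R}^n_+}$ uniformly, I would split into two cases according to whether the ball $B(z,\rho)$ is ``deep'' inside $\mathbb{R}^n_+$ (i.e.\ $\mathrm{dist}(z,\partial\mathbb{R}^n_+)\geq c\rho$ for a fixed small $c$) or ``close'' to the boundary. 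In the deep case, \eqref{eq1.18} follows directly from the interior estimate \eqref{detraz} in Theorem~\ref{ker-sbav} with $\ell=1$ and $p=2$, applied on a ball of radius comparable to $\rho$ contained in $\mathbb{R}^n_+$. In the boundary case, one uses the boundary estimate from the previous paragraph, centered at the nearest boundary point $(z',0)$, on half-balls of radius comparable to $\rho$; since $Lw=0$ in $\mathbb{R}^n_+$ and $\mathrm{Tr}\,w=0$ on all of $\mathbb{R}^{n-1}$, the hypotheses of the half-ball estimate hold on every such half-ball, and a finite covering of $\mathbb{R}^n_+\cap B(z,\rho)$ by such half-balls and interior balls, all of controlled overlap, gives the claim with a uniform constant.

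I would then record that the smoothness assertion $w\in{\mathscr C}^\infty(\mathbb{R}^n_+,\mathbb{C}^M)$ is part of the same {\it a priori} regularity package: interior ellipticity already gives $w\in{\mathscr C}^\infty(\mathbb{R}^n_+)$ and the ADN boundary estimates give smoothness up to the flat boundary, though only the interior smoothness is needed for the stated conclusion. The only genuinely technical ingredient is the {\it a priori} boundary estimate for the Dirichlet problem for $L$ in a half-ball with homogeneous data on the flat face, but this is exactly what \cite{ADNII} provides for systems satisfying the strong (Legendre--Hadamard) ellipticity condition \eqref{L-ell.X}; assuming it, the rest is bookkeeping with scaling and coverings.

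The main obstacle I anticipate is not conceptual but a matter of care in the covering/scaling argument near the boundary: one must ensure that the half-balls used to cover $\mathbb{R}^n_+\cap B(z,\rho)$ can always be enlarged to half-balls of radius comparable to $\rho$ that still lie within $\mathbb{R}^n_+\cap B(z,2\rho)$ (so that the right-hand side of \eqref{eq1.18} controls all of them), and that the number of balls in the cover and their overlap constant depend only on $n$. Keeping the dilation bookkeeping consistent — tracking exactly one power of $\rho^{-1}$ from differentiating once, and no spurious powers from rescaling the $L^2$-average to $L^\infty$ — is the only place where an error could creep in; the PDE input itself is entirely off-the-shelf from \cite{ADNII} and \cite{MaMiSh}.
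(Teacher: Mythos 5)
Your reconstruction is sound and follows precisely the route the paper itself indicates: the paper offers no proof of Proposition~\ref{c1.2}, simply quoting it from \cite[Corollary~2.4]{MaMiSh} with the remark that it is a consequence of the {\it a priori} regularity estimates of \cite{ADNII} together with Sobolev embeddings, which is exactly the ADN half-ball estimate plus embedding plus scaling/covering argument you outline. The only points requiring care, which you already flag, are that the Lebesgue-point trace in \eqref{Veri-S2TG.3} coincides with the Sobolev trace for $w\in W^{1,2}_{\rm bd}$ (so the ADN boundary estimates apply on every half-ball) and the bookkeeping of the single power of $\rho^{-1}$ under dilation; neither presents a genuine gap.
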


We will also need an $L^p$-Fatou type result obtained in \cite[Corollary 6.3]{K-MMMM}. 
To state it, the reader is invited to recall the nontangential maximal operator 
from \eqref{NT-Fct}.

\begin{corollary}\label{tuFatou.Lp}
Assume $L$ is an elliptic $M\times M$ system as in \eqref{L-def}-\eqref{L-ell.X}. 
Then for each $p\in[1,\infty)$,
\begin{eqnarray}\label{Tafva.Lp}
\left.
\begin{array}{r}
u\in{\mathscr{C}}^\infty({\mathbb{R}}^n_{+},{\mathbb{C}}^M)
\\[4pt]
Lu=0\,\mbox{ in }\,{\mathbb{R}}^n_{+}
\\[6pt]
{\mathcal{N}}u\in L^p({\mathbb{R}}^{n-1})
\end{array}
\right\}
\Longrightarrow
\left\{
\begin{array}{l}
u\big|^{{}^{\rm n.t.}}_{\partial{\mathbb{R}}^n_{+}}\,\mbox{ exists a.e.~in }\,
{\mathbb{R}}^{n-1},\,\mbox{ belongs to $L^p({\mathbb{R}}^{n-1},{\mathbb{C}}^M)$,}
\\[12pt]
\mbox{and }\,u(x',t)=\Big(P^L_t\ast\big(u\big|^{{}^{\rm n.t.}}_{\partial{\mathbb{R}}^n_{+}}\big)\Big)(x'),
\quad\forall\,(x',t)\in{\mathbb{R}}^n_{+},
\end{array}
\right.
\end{eqnarray}
where $P^L$ is the Poisson kernel for $L$ in ${\mathbb{R}}^n_{+}$ 
from Theorem~\ref{kkjbhV}.
\end{corollary}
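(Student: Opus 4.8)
The final statement to be proved is Corollary~\ref{tuFatou.Lp}, the $L^p$-Fatou type result. Here is a proof plan.

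\medskip

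\textbf{Overall strategy.} The plan is to reduce the claim to the two classical ingredients: (i) the pointwise bound on the nontangential maximal function of a Poisson extension in terms of the Hardy--Littlewood maximal operator (item (6) of Theorem~\ref{kkjbhV}, i.e.\ \eqref{exTGFVC}), together with (ii) the existence of nontangential pointwise limits for Poisson extensions at Lebesgue points (item (7), i.e.\ \eqref{exTGFVC.2s}), and (iii) a uniqueness statement ensuring that \emph{any} null-solution $u$ of $L$ in ${\mathbb{R}}^n_{+}$ with ${\mathcal{N}}u\in L^p$ must in fact coincide with the Poisson extension of some boundary function. The real content is (iii): producing the boundary trace of $u$ and showing $u$ is recovered by convolution with $P^L_t$.

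\medskip

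\textbf{Step 1: extract a boundary trace.} First I would fix $p\in[1,\infty)$ and a function $u$ as in the left-hand side of \eqref{Tafva.Lp}. Since ${\mathcal{N}}u\in L^p({\mathbb{R}}^{n-1})$, for a.e.\ $x'\in{\mathbb{R}}^{n-1}$ the function $u$ is bounded on the cone $\Gamma_\kappa(x')$. The goal of this step is to show that $u\big|^{{}^{\rm n.t.}}_{\partial{\mathbb{R}}^n_{+}}$ exists a.e. The standard mechanism is to work on the slices $u(\cdot,\varepsilon)$ and to show that, thanks to interior estimates (Theorem~\ref{ker-sbav}) applied in balls of radius comparable to $t$, one has $|\nabla u(x',t)|\le C t^{-1}\,{\mathcal{M}}\big[({\mathcal{N}}u)\big](x')$ or a similar control, so that the vertical oscillation $\int_0^1|\partial_t u(x',t)|\,dt$ is dominated, after a suitable regularization and a maximal-function/good-$\lambda$ argument, by something finite a.e. Combined with control of the horizontal oscillation inside the cone, this yields Cauchy-ness of $u$ along $\Gamma_\kappa(x')$ as $y\to(x',0)$ for a.e.\ $x'$, hence existence of the nontangential limit; call it $f(x')$. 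A Fatou-type lemma plus ${\mathcal{N}}u\in L^p$ then gives $f\in L^p({\mathbb{R}}^{n-1},{\mathbb{C}}^M)$ with $\|f\|_{L^p}\le\|{\mathcal{N}}u\|_{L^p}$.

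\medskip

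\textbf{Step 2: identify $u$ with the Poisson extension of $f$.} Set $v(x',t):=(P^L_t\ast f)(x')$; by Theorem~\ref{kkjbhV}(7) this is well-defined (as $f\in L^p\subset L^1({\mathbb{R}}^{n-1},(1+|x'|^n)^{-1}dx')^M$), is a null-solution of $L$ in ${\mathbb{R}}^n_{+}$, satisfies ${\mathcal{N}}v\le C{\mathcal{M}}f\in L^p$ by \eqref{exTGFVC}, and has $v\big|^{{}^{\rm n.t.}}_{\partial{\mathbb{R}}^n_{+}}=f$ a.e.\ by \eqref{exTGFVC.2s}. Then $w:=u-v$ is a null-solution of $L$ in ${\mathbb{R}}^n_{+}$ with ${\mathcal{N}}w\in L^p$ and $w\big|^{{}^{\rm n.t.}}_{\partial{\mathbb{R}}^n_{+}}=0$ a.e. One must show $w\equiv 0$. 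The cleanest route is: for each $\varepsilon>0$ the shifted function $w_\varepsilon(x',t):=w(x',t+\varepsilon)$ is smooth up to the boundary, satisfies $Lw_\varepsilon=0$, has nontangential maximal function dominated by ${\mathcal{N}}w\in L^p$, and one shows $w_\varepsilon(\cdot,t)\to w(\cdot,\varepsilon)$ pointwise and the slice $w(\cdot,\varepsilon)$ tends to $0$ appropriately. More precisely, invoking the $L^p$ boundary uniqueness for null-solutions with vanishing trace and $L^p$-controlled nontangential maximal function (this is exactly where the hard analytic work of \cite{K-MMMM} enters, via Poisson-kernel reproducing formulas and square-function estimates), one concludes $w=0$, hence $u=v=P^L_t\ast f$, which is the desired representation formula, and then $u\big|^{{}^{\rm n.t.}}_{\partial{\mathbb{R}}^n_{+}}=f\in L^p$ as asserted.

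\medskip

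\textbf{Main obstacle.} The crux is Step 2, the uniqueness/representation part: showing that a null-solution whose nontangential maximal function lies in $L^p$ and whose nontangential trace vanishes a.e.\ must be identically zero. This cannot be done by a bare maximum principle (the coefficients are complex and the system may lack symmetry); it requires the machinery attached to the Poisson kernel $P^L$ — in particular the semigroup property \eqref{eq:re4fd}, the reproducing identity $P^L_\varepsilon\ast(u(\cdot,\delta))=u(\cdot,\varepsilon+\delta)$ for slices, and an $L^p$ convergence argument letting the inner parameter tend to $0$ using that the trace is $0$. I expect this to be the technically delicate step, and in the paper it is precisely the content imported from \cite[Corollary~6.3]{K-MMMM}; the remaining steps are soft consequences of Theorem~\ref{kkjbhV} and Theorem~\ref{ker-sbav}.
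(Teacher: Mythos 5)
This corollary is not proved in the paper: it is quoted directly as \cite[Corollary~6.3]{K-MMMM}, immediately after its statement. So there is no internal proof to compare against, and your proposal is best judged as a self-contained reconstruction. Read that way, it is directionally reasonable but has two substantive gaps.

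First, Step~1 does not close as written. The interior estimate (Theorem~\ref{ker-sbav}) gives $|\nabla u(x',t)|\leq Ct^{-1}\,\mathcal{N}u(x')$, and since $\int_0^1 t^{-1}\,dt=\infty$ the finiteness of $\int_0^1|\partial_t u(x',t)|\,dt$ does \emph{not} follow from this bound; the phrase ``after a suitable regularization and a maximal-function/good-$\lambda$ argument'' is doing all of the actual work and is left unsubstantiated. The more robust order is the reverse of yours, and is the one this paper itself uses in the analogous BMO setting (Lemma~\ref{lemma:u-lift:uniq}): first establish the Poisson reproducing identity $u(\cdot,t+\varepsilon)=P^L_t\ast u(\cdot,\varepsilon)$ for the vertical shifts via Proposition~\ref{c1.2}, checking that $u_\varepsilon-P^L_t\ast u(\cdot,\varepsilon)$ lies in $W^{1,2}_{\rm bd}(\mathbb{R}^n_+,\mathbb{C}^M)$, has vanishing Sobolev trace, and has sublinear growth (for $p<\infty$, $\mathcal{N}u\in L^p$ together with interior estimates forces $u$ to be $o(1)$ at infinity, which is more than enough); then extract a boundary function $f\in L^p$ as a limit of the slices $u(\cdot,\varepsilon)$ (dominated convergence using $|u(\cdot,\varepsilon)|\leq\mathcal{N}u$, or weak compactness if $p=1$); then send $\varepsilon\to0^+$ to obtain $u=P^L_t\ast f$; and only at the very end invoke item (7) of Theorem~\ref{kkjbhV} to conclude that $u\big|^{{}^{\rm n.t.}}_{\partial\mathbb{R}^n_+}$ exists a.e.\ and equals $f$.

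Second, Step~2 is circular as stated: you appeal to ``the $L^p$ boundary uniqueness for null-solutions with vanishing trace,'' which you yourself identify as the hard content of \cite[Corollary~6.3]{K-MMMM} --- but that is precisely the statement under proof. To turn the proposal into a proof, that uniqueness must be derived from first principles by the shifted-solution argument described above, rather than quoted.
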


Our last auxiliary result, of a purely real-variable nature, 
can be found in \cite[Lemma~3.3]{K-MMMM}.

\begin{lemma}\label{lennii}
Fix $M\in{\mathbb{N}}$ and let $P=\big(P_{\alpha\beta}\big)_{1\leq\alpha,\beta\leq M}:
\mathbb{R}^{n-1}\to\mathbb{C}^{M\times M}$ be a Lebesgue measurable
function satisfying, for some $c\in(0,\infty)$,
\begin{equation}\label{vznv.ADF}
|P(x')|\leq\frac{c}{(1+|x'|^2)^{\frac{n}2}}
\,\,\,\mbox{ for each }\,\,\,x'\in\mathbb{R}^{n-1}.
\end{equation}
Recall that $P_t(x')=t^{1-n}P(x'/t)$ for each $x'\in\mathbb{R}^{n-1}$ and $t\in(0,\infty)$.

Then, for each $t\in(0,\infty)$ fixed, the operator
\begin{equation}\label{eq:Eda}
L^1\Big({\mathbb{R}}^{n-1}\,,\,\frac{1}{1+|x'|^n}\,dx'\Big)^M\ni f\mapsto P_t\ast f\in
L^1\Big({\mathbb{R}}^{n-1}\,,\,\frac{1}{1+|x'|^n}\,dx'\Big)^M
\end{equation}
is well-defined, linear and bounded, with operator norm controlled by $C(t+1)$. Moreover,
for every $\kappa>0$ there exists a finite constant $C_\kappa>0$ with the property that
for each $x'\in\mathbb{R}^{n-1}$,
\begin{equation}\label{exTGFVC:app}
\sup_{|x'-y'|<\kappa t}\big|(P_t\ast f)(y')\big|\leq C_\kappa\,\mathcal{M} f(x'),
\qquad\forall\,f\in L^1\Big({\mathbb{R}}^{n-1}\,,\,\frac{1}{1+|x'|^n}\,dx'\Big)^M.
\end{equation}
Finally, given any function
\begin{equation}\label{eq:aaAa:app}
f=(f_\beta)_{1\leq\beta\leq M}\in L^1\Big({\mathbb{R}}^{n-1}\,,\,\frac{1}{1+|x'|^n}dx'\Big)^M
\subset L^1_{\rm loc}({\mathbb{R}}^{n-1},{\mathbb{C}}^M),
\end{equation}
at every Lebesgue point $x'_0\in{\mathbb{R}}^{n-1}$ of $f$ there holds
\begin{equation}\label{exTGFVC.2s:app}
\lim_{\substack{(x',\,t)\to(x'_0,0)\\ |x'-x'_0|<\kappa t}}(P_t\ast f)(x')
=\left(\int_{\mathbb{R}^{n-1}}P(x')\,dx'\right)f(x'_0),
\end{equation}
and the function
\begin{equation}\label{exTefef}
{\mathbb{R}}^n_{+}\ni(x',t)\mapsto(P_t\ast f)(x')\in{\mathbb{C}}^M
\,\,\,\mbox{ is locally integrable in }\,\,{\mathbb{R}}^n_{+}.
\end{equation}
\end{lemma}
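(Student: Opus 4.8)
The plan is to derive all four conclusions of Lemma~\ref{lennii} from one elementary pointwise bound: since $P_t(x')=t^{1-n}P(x'/t)$, the decay hypothesis \eqref{vznv.ADF} gives
\[
|P_t(x')|\leq \Phi_t(x'),\qquad\text{where }\ \Phi(x'):=\frac{c'}{(1+|x'|)^{n}},
\]
for a suitable constant $c'=c'(n,c)\in(0,\infty)$, and $\Phi$ is radially nonincreasing and lies in $L^{1}(\mathbb{R}^{n-1})$ (its decay exponent $n$ exceeding the dimension $n-1$), with $\|\Phi_t\|_{L^{1}}=\|\Phi\|_{L^{1}}$ independent of $t$. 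Everything else is organized around this majorant.

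For the boundedness of the operator in \eqref{eq:Eda}, I would write $(P_t\ast f)(x')$ as an absolutely convergent integral and use Tonelli to reduce matters to the kernel estimate
\[
K_t(y'):=\int_{\mathbb{R}^{n-1}}\frac{|P_t(x'-y')|}{1+|x'|^{n}}\,dx'
\leq\frac{C\,(t+1)}{1+|y'|^{n}}\,,\qquad\forall\,y'\in\mathbb{R}^{n-1},
\]
since then $\int\frac{|(P_t\ast f)(x')|}{1+|x'|^n}\,dx'\leq\int|f(y')|\,K_t(y')\,dy'$. To prove this I split the $x'$-integral into the region $|x'|\geq|y'|/2$, where $1+|x'|^{n}\gtrsim 1+|y'|^{n}$ and the uniform bound $\|\Phi_t\|_{L^1}<\infty$ yields a contribution $\lesssim(1+|y'|^{n})^{-1}$, and the region $|x'|<|y'|/2$, where $|x'-y'|\geq|y'|/2$ forces $|P_t(x'-y')|\leq C\,t\,|y'|^{-n}$ for $|y'|$ large, and integrating the finite-mass weight $(1+|x'|^{n})^{-1}$ produces a contribution $\lesssim t\,(1+|y'|^{n})^{-1}$ (the regime of bounded $|y'|$ being handled directly by $\|\Phi_t\|_{L^1}$). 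Summing the two pieces gives the stated operator-norm control $C(t+1)$; the $t$-growth is genuine and comes precisely from this tail region.

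The nontangential maximal estimate \eqref{exTGFVC:app} follows from the classical comparison: if $|x'-y'|<\kappa t$ then $t+|x'-z'|\leq(1+\kappa)(t+|y'-z'|)$ for every $z'$, hence $\Phi_t(y'-z')\leq(1+\kappa)^{n}\Phi_t(x'-z')$, so $|(P_t\ast f)(y')|\leq(1+\kappa)^{n}(\Phi_t\ast|f|)(x')$; the standard fact that convolution with a radially nonincreasing $L^{1}$ kernel is dominated pointwise by $\|\Phi\|_{L^1}\,\mathcal{M}f$ (recall \eqref{MMax}) then yields \eqref{exTGFVC:app} with $C_\kappa=(1+\kappa)^{n}\|\Phi\|_{L^1}$. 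For the Lebesgue-point convergence \eqref{exTGFVC.2s:app}, scaling gives $\int_{\mathbb{R}^{n-1}}P_t=\int_{\mathbb{R}^{n-1}}P$ for all $t$, so at a Lebesgue point $x'_0$ of $f$ I rewrite $(P_t\ast f)(x')-\bigl(\int P\bigr)f(x'_0)=\int P_t(x'-z')\bigl(f(z')-f(x'_0)\bigr)\,dz'$ and split at $|z'-x'_0|<\delta$ versus $|z'-x'_0|\geq\delta$. The near part is $\leq C\|\Phi\|_{L^1}\sup_{0<r\leq\delta}\aver{B(x'_0,r)}|f-f(x'_0)|$ via the radial majorant and a dyadic-annuli decomposition, hence can be made small by choosing $\delta$ small; the far part is $O(t)$ as $t\to0^{+}$, since there $|x'-z'|\gtrsim\delta$ (using $|x'-x'_0|<\kappa t\to0$), the majorant carries an explicit factor $t$, and the residual integral is finite because $f$ lies in the weighted space \eqref{eq:aaAa:app}. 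Letting $t\to0^+$ and then $\delta\to0^+$ proves the claim.

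Finally, the local integrability \eqref{exTefef} is immediate from the boundedness already established: any compact subset of $\mathbb{R}^{n}_{+}$ sits in some slab $\{|x'|\leq R\}\times[a,b]$ with $0<a\leq b<\infty$, and there
\[
\int_{a}^{b}\!\int_{|x'|\leq R}|(P_t\ast f)(x')|\,dx'\,dt
\leq(1+R^{n})\int_{a}^{b}\Bigl(\int_{\mathbb{R}^{n-1}}\frac{|(P_t\ast f)(x')|}{1+|x'|^{n}}\,dx'\Bigr)\,dt
\leq C(1+R^{n})\int_{a}^{b}(t+1)\,dt<\infty.
\]
I expect the only step requiring genuine care to be the weighted bound on $K_t$ — specifically, arranging the region decomposition so that exactly the $(t+1)$ growth (and nothing worse) emerges; the remaining parts are routine applications of the radially nonincreasing majorant and of the standard theory of approximate identities.
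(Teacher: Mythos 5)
Your proposal is correct: the majorant $|P_t(x')|\le c'\,t/(t+|x'|)^n$ does drive all four conclusions, your weighted kernel bound $K_t(y')\le C(t+1)(1+|y'|^n)^{-1}$ (split at $|x'|\gtrless|y'|/2$) gives \eqref{eq:Eda} with the stated norm growth, the $(1+\kappa)^n$ recentering plus the radially-decreasing-majorant maximal bound gives \eqref{exTGFVC:app}, the Lebesgue-point splitting gives \eqref{exTGFVC.2s:app}, and \eqref{exTefef} follows as you say; note that the paper itself offers no proof of this lemma, quoting it from \cite[Lemma~3.3]{K-MMMM}, so there is nothing in-paper to compare against, and your argument is the standard one such a proof would run. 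One small point to make explicit: in the near part of the Lebesgue-point argument the kernel is centered at $x'$, not $x'_0$, so before the dyadic-annuli/maximal-function step you should invoke the same comparison $\Phi_t(x'-z')\le(1+\kappa)^n\Phi_t(x'_0-z')$ (valid since $|x'-x'_0|<\kappa t$), which is why the constant there depends on $\kappa$.
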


\section{Proof of the existence statements in Theorem~\ref{them:BMO-Dir}}
\setcounter{equation}{0}
\label{section:exist-BMO}

This section is devoted to proving Proposition~\ref{prop-Dir-BMO:exis}, 
dealing with with the issue of existence for the $\mathrm{BMO}$-Dirichlet 
boundary value problem \eqref{Dir-BVP-BMO}, the upper estimate in \eqref{Dir-BVP-BMO-Car}, 
and the issue of existence for the $\mathrm{VMO}$-Dirichlet boundary value 
problem \eqref{Dir-BVP-VMO}.

In this regard, we find it useful to adopt a more general point of view, by going beyond the 
class ${\rm BMO}$ through the consideration of convolutions of the Poisson kernel with functions 
$f$ from the weighted Lebesgue space $L^1\Big({\mathbb{R}}^{n-1}\,,\,\frac{dx'}{1+|x'|^n}\Big)^M$
(recall the inclusion in \eqref{eq:aaAabgr-22}). The aforementioned convolutions are then shown 
to satisfy a variety of Carleson measure-like conditions, which only require 
(recall \eqref{osc-1})
\begin{equation}\label{hsrwWW-FF45-uuu222}
\int_{1}^{\infty}{\rm osc}_1\big(f;\lambda\big)\,
\frac{d\lambda}{\lambda^{2}}<\infty.
\end{equation}
Note that this permits the oscillations ${\rm osc}_1\big(f;\lambda\big)$ of the given function $f$ 
to grow with the scale $\lambda$. In particular, this allows us to simultaneously treat
several scales of spaces of interest, including H\"older spaces 
$\dot{\mathscr{C}}^\eta({\mathbb{R}}^{n-1},{\mathbb{C}}^M)$ with $\eta\in(0,1)$, 
the Morrey-Campanato space ${\mathscr{E}}^{\eta,p}({\mathbb{R}}^{n-1},\mathbb{C}^M)$ 
with $\eta\in(0,1)$ and $p\in[1,\infty)$, as well as the John-Nirenberg space 
${\rm BMO}({\mathbb{R}}^{n-1},{\mathbb{C}}^M)$. 

An example of a function $f\in\dot{\mathscr{C}}^\eta({\mathbb{R}}^{n-1},{\mathbb{C}}^M)$ 
with $\eta\in(0,1)$ which does not belong to 
${\rm BMO}({\mathbb{R}}^{n-1},{\mathbb{C}}^M)$ is offered by 
\begin{equation}\label{u76ggv}
f(x'):=|x'|^\eta,\qquad\forall\,x'\in{\mathbb{R}}^{n-1}.
\end{equation}
Indeed, $\|f\|_{\dot{\mathscr{C}}^\eta({\mathbb{R}}^{n-1},{\mathbb{C}}^M)}\leq 1$ and
since $\delta_\lambda f=\lambda^\eta f$, it follows from the last line in \eqref{defi-BMO-CCC}
that necessarily $\|f\|_{{\rm BMO}({\mathbb{R}}^{n-1},{\mathbb{C}}^M)}=\infty$. 
Incidentally, for $f$ as in \eqref{u76ggv}, we have 
${\rm osc}_1\big(f;\lambda\big)=O(\lambda^\eta)$ as $\lambda\to\infty$, 
hence \eqref{hsrwWW-FF45-uuu222} holds in this case.

Here is the formal statement of the result just advertised above. 

\begin{proposition}\label{prop-Dir-BMO:exis}
Let $L$ be an $M\times M$ elliptic system with constant complex coefficients as in
\eqref{L-def}-\eqref{L-ell.X} and let $P^L$ be the Poisson kernel for $L$ in 
$\mathbb{R}^{n}_+$ from Theorem~\ref{kkjbhV}. Select
$f\in L^1\Big({\mathbb{R}}^{n-1}\,,\,\frac{dx'}{1+|x'|^n}\Big)^M$ and set
\begin{equation}\label{eqn-Dir-BMO:u:prop}
u(x',t):=(P_t^L*f)(x'),\qquad\forall\,(x',t)\in{\mathbb{R}}^n_{+}.
\end{equation}

Then $u$ is meaningfully defined via an absolutely convergent integral and satisfies:
\begin{equation}\label{exist:u2}
u\in\mathscr{C}^\infty(\mathbb{R}^n_{+},{\mathbb{C}}^M),\quad
Lu=0\,\,\mbox{ in }\,\,\mathbb{R}^{n}_{+},\,\text{ and }\,
u\big|_{\partial\mathbb{R}^{n}_{+}}^{{}^{\rm n.t.}}
=f\,\,\mbox{ a.e.~in }\,\,\mathbb{R}^{n-1}.
\end{equation}
In addition, $u$ enjoys the following properties:
\begin{list}{(\theenumi)}{\usecounter{enumi}\leftmargin=.8cm
\labelwidth=.8cm\itemsep=0.2cm\topsep=.1cm
\renewcommand{\theenumi}{\alph{enumi}}}
\item For each integer $\ell\geq 1$ there exists a constant $C\in(0,\infty)$ with the property that 
the following pointwise estimate holds for every $(x',t)\in{\mathbb{R}}^n_{+}$:
\begin{align}\label{eqn:B-knB-yTFV}
\big|(\nabla^\ell u)(x',t)\big|\leq\frac{C}{t^\ell}
\int_{1}^\infty{\rm osc}_1\big(f;\lambda\,t\big)\,\frac{d\lambda}{\lambda^{1+\ell}}.
\end{align}
In particular, there exists $C\in(0,\infty)$ such that
\begin{align}\label{eqn:B-knB}
\big|(\nabla u)(x',t)\big|\leq\frac{C}{t}
\int_{1}^\infty{\rm osc}_1\big(f;\lambda\,t\big)\,\frac{d\lambda}{\lambda^{2}},
\qquad\forall\,(x',t)\in{\mathbb{R}}^n_{+}.
\end{align}
\item There exists a constant $C\in(0,\infty)$ such that for every cube $Q$ in ${\mathbb{R}}^{n-1}$ 
the following ``cube-by-cube'' Carleson measure estimates hold:
\begin{align}\label{hsrwWW-AA-Vdewe-uuu}
\left(\int_{0}^{\ell(Q)}\aver{Q}|(\nabla u)(x',t)|^2\,t\,dx'dt\right)^{\frac12}
&\leq C\int_{1}^\infty\Big(\aver{\lambda Q}|f(y')-f_{\lambda Q}|\,dy'\Big)
\frac{d\lambda}{\lambda^{2}}
\nonumber\\[6pt]
& \quad 
+C\,\sup_{Q'\subseteq 4Q}\aver{Q'}|f(y')-f_{Q'}|\,dy'
\end{align}
and
\begin{align}\label{hsr-u54367yhg-uuu}
\left(\int_{0}^{\ell(Q)}\aver{Q}|(\nabla u)(x',t)|^2\,t\,dx'dt\right)^{\frac12}
\leq C\int_{1}^\infty{\rm osc}_1\big(f;\lambda\ell(Q)\big)
\frac{d\lambda}{\lambda^{2}}.
\end{align}
\item There exists $C\in(0,\infty)$ such that 
the following local Carleson measure estimate holds for every scale $r\in(0,\infty)$:
\begin{equation}\label{hsrwWW-AA-uuu}
\sup_{Q\subset\mathbb{R}^{n-1},\,\ell(Q)\leq r}
\left(\int_{0}^{\ell(Q)}\aver{Q}|(\nabla u)(x',t)|^2\,t\,dx'dt\right)^{\frac12}
\leq C\int_{1}^{\infty}{\rm osc}_1\big(f;r\lambda\big)\,
\frac{d\lambda}{\lambda^{2}}.
\end{equation}
\item Whenever $f$ satisfies
\begin{equation}\label{hsrwWW-FF45-uuu}
\int_{1}^{\infty}{\rm osc}_1\big(f;\lambda\big)\,
\frac{d\lambda}{\lambda^{2}}<\infty,
\end{equation}
the global weighted Carleson measure estimate 
\begin{equation}\label{hsrwWW-FF46-uuu}
\sup_{Q\subset\mathbb{R}^{n-1}}\left\{
\left(\int_{1}^{\infty}{\rm osc}_1\big(f;\lambda\ell(Q)\big)\,
\frac{d\lambda}{\lambda^{2}}\right)^{-1}
\left(\int_{0}^{\ell(Q)}\aver{Q}|(\nabla u)(x',t)|^2\,t\,dx'dt\right)^{\frac12}\right\}
\leq C
\end{equation}
holds for some $C\in(0,\infty)$ independent of $f$. 
\item There exists a constant $C\in(0,\infty)$ such that the following 
global Carleson measure estimate holds:
\begin{equation}\label{exist:u2-carleso}
\|u\|_{**}=\sup_{Q\subset\mathbb{R}^{n-1}}
\left(\int_{0}^{\ell(Q)}\aver{Q}|(\nabla u)(x',t)|^2\,t\,dx'dt\right)^{\frac12}
\leq C\|f\|_{\mathrm{BMO}(\mathbb{R}^{n-1},{\mathbb{C}}^M)}.
\end{equation}
In particular, thanks to \eqref{eq:aaAabgr-22}, estimate \eqref{exist:u2-carleso}
holds for every $f\in\mathrm{BMO}(\mathbb{R}^{n-1},\mathbb{C}^M)$.
\item Whenever $f$ satisfies
\begin{equation}\label{hsrwWW-FF45.bb-uuu}
\int_{1}^{\infty}{\rm osc}_1(f;\lambda)\,
\frac{d\lambda}{\lambda^{2}}<\infty\,\,\text{ and }\,\,
\lim_{r\to 0^{+}}{\rm osc}_1(f;r)=0,
\end{equation}
the following vanishing Carleson measure condition holds:
\begin{equation}\label{hsrwWW-VMO-uuu}
\lim_{r\to 0^{+}}\left\{\sup_{Q\subset\mathbb{R}^{n-1},\,\ell(Q)\leq r}
\left(\int_{0}^{\ell(Q)}\aver{Q}|(\nabla u)(x',t)|^2\,t\,dx'dt\right)^{\frac12}
\right\}=0.
\end{equation}
In particular, in the case when actually $f\in\mathrm{VMO}(\mathbb{R}^{n-1},\mathbb{C}^M)$ 
to begin with, $u$ has the additional property that 
\begin{equation}\label{Dkjnbvb}
\big|\nabla u(x',t)\big|^2\,t\,dx'dt\,\,
\mbox{ is a vanishing Carleson measure in }\mathbb{R}^{n}_{+}.
\end{equation}
\end{list}
\end{proposition}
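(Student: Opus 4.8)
The plan is to handle first the qualitative claims in \eqref{exist:u2}, then to prove the pointwise bound in part (a), and finally to extract all the Carleson-type conclusions (b)--(f) from it; the only non-elementary extra ingredient is an $L^2$ square-function estimate for the Poisson extension. The membership $u\in\mathscr{C}^\infty(\mathbb{R}^n_+,\mathbb{C}^M)$, the equation $Lu=0$ in $\mathbb{R}^n_+$, the absolute convergence of the defining integral, and the identification of the nontangential trace with $f$ a.e. are immediate from parts (6)--(8) of Theorem~\ref{kkjbhV} (equivalently, Lemma~\ref{lennii}), since a.e. point of $\mathbb{R}^{n-1}$ is a Lebesgue point of $f$. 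For part (a) I would write $u(x',t)=\int_{\mathbb{R}^{n-1}}K^L(x'-y',t)f(y')\,dy'$ with $K^L$ as in \eqref{eq:Gvav7g5}; the derivative bounds \eqref{eq:Kest} together with $f\in L^1(\mathbb{R}^{n-1},dx'/(1+|x'|^n))^M$ justify differentiation under the integral sign, and since $\int_{\mathbb{R}^{n-1}}K^L(x'-y',t)\,dy'=\int_{\mathbb{R}^{n-1}}P^L_t(z')\,dz'=I_{M\times M}$ does not depend on $(x',t)$, every $\nabla^\ell$ with $\ell\geq 1$ annihilates it, so that
\[
(\nabla^\ell u)(x',t)=\int_{\mathbb{R}^{n-1}}(\nabla^\ell K^L)(x'-y',t)\,\bigl(f(y')-c\bigr)\,dy'\qquad\text{for every }c\in\mathbb{C}^M.
\]
Taking $c=f_Q$ with $Q$ the cube centred at $x'$ of side-length $t$, using $|(\nabla^\ell K^L)(x'-y',t)|\leq C(t+|x'-y'|)^{1-n-\ell}$ (from \eqref{eq:Kest} and $|(x'-y',t)|\approx t+|x'-y'|$), and then invoking \eqref{eqn:BMO-decay.88} with $\varepsilon=\ell$ will yield \eqref{eqn:B-knB-yTFV} directly, \eqref{eqn:B-knB} being the case $\ell=1$.

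For part (b), fixing a cube $Q$ I may assume that both terms on the right of \eqref{hsrwWW-AA-Vdewe-uuu} are finite (otherwise there is nothing to prove), which via \eqref{JY6GTFF.2} forces $f\in L^2(4Q,\mathbb{C}^M)$. I would split $f=f_{4Q}+g+h$ with $g:=(f-f_{4Q})\mathbf{1}_{4Q}$, $h:=(f-f_{4Q})\mathbf{1}_{\mathbb{R}^{n-1}\setminus 4Q}$, set $u_g:=P^L_t\ast g$ and $u_h:=P^L_t\ast h$, and note $\nabla u=\nabla u_g+\nabla u_h$ because $P^L_t\ast(f_{4Q})\equiv f_{4Q}$ by \eqref{eq:IG6gy.2PPP}. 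For the local piece, an $L^2$ square-function estimate of the form $\int_{\mathbb{R}^n_+}|\nabla(P^L_t\ast g)|^2\,t\,dx'dt\leq C\|g\|^2_{L^2(\mathbb{R}^{n-1})}$ (Proposition~\ref{prop:SFE}) together with \eqref{JY6GTFF.2} gives
\[
\frac{1}{|Q|}\int_{0}^{\ell(Q)}\!\!\int_Q|\nabla u_g(x',t)|^2\,t\,dx'dt
\leq\frac{C}{|Q|}\,\|g\|_{L^2(\mathbb{R}^{n-1})}^2
=C\aver{4Q}|f(y')-f_{4Q}|^2\,dy'
\leq C\Bigl(\sup_{Q'\subseteq 4Q}\aver{Q'}|f(y')-f_{Q'}|\,dy'\Bigr)^{2}.
\]
For the nonlocal piece, the computation of part (a), now with $\nabla K^L$ convolved against $(f-f_{4Q})\mathbf{1}_{\mathbb{R}^{n-1}\setminus 4Q}$ and with $t+|x'-y'|\gtrsim\ell(Q)+|x'_Q-y'|$ whenever $x'\in Q$, $y'\notin 4Q$, combined with \eqref{eqn:BMO-decay.88} (applied to the cube $4Q$, $\varepsilon=1$), gives uniformly in $x'\in Q$
\[
|\nabla u_h(x',t)|\leq\frac{C}{\ell(Q)}\int_{1}^{\infty}\Bigl(\aver{\lambda Q}|f(y')-f_{\lambda Q}|\,dy'\Bigr)\frac{d\lambda}{\lambda^2};
\]
multiplying by $t$, integrating over $Q\times(0,\ell(Q))$, dividing by $|Q|$ and taking a square root produces the first right-hand term of \eqref{hsrwWW-AA-Vdewe-uuu}, and combining the two pieces establishes \eqref{hsrwWW-AA-Vdewe-uuu}. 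Then \eqref{hsr-u54367yhg-uuu} follows by bounding $\aver{\lambda Q}|f(y')-f_{\lambda Q}|\,dy'\leq{\rm osc}_1(f;\lambda\ell(Q))$ for $\lambda\geq1$ and $\sup_{Q'\subseteq 4Q}\aver{Q'}|f(y')-f_{Q'}|\,dy'\leq{\rm osc}_1(f;4\ell(Q))$, and using the monotonicity of ${\rm osc}_1(f;\cdot)$ (Lemma~\ref{jsfsQAXT}(a)) to absorb the latter into $\int_1^\infty{\rm osc}_1(f;\lambda\ell(Q))\,d\lambda/\lambda^2$.

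The remaining parts are then short. Part (c): take the supremum over cubes $Q$ with $\ell(Q)\leq r$ in \eqref{hsr-u54367yhg-uuu}, using that $r'\mapsto\int_1^\infty{\rm osc}_1(f;\lambda r')\,d\lambda/\lambda^2$ is nondecreasing (Lemma~\ref{jsfsQAXT}(a)). Part (d): under \eqref{hsrwWW-FF45-uuu} the integral $\int_1^\infty{\rm osc}_1(f;\lambda\ell(Q))\,d\lambda/\lambda^2$ is finite for each $Q$, so \eqref{hsrwWW-FF46-uuu} is just \eqref{hsr-u54367yhg-uuu} rearranged. Part (e): Lemma~\ref{jsfsQAXT}(c) gives ${\rm osc}_1(f;s)\leq C\|f\|_{{\rm BMO}(\mathbb{R}^{n-1},\mathbb{C}^M)}$ for all $s>0$, whence $\int_1^\infty{\rm osc}_1(f;\lambda\ell(Q))\,d\lambda/\lambda^2\leq C\|f\|_{{\rm BMO}(\mathbb{R}^{n-1},\mathbb{C}^M)}$ and \eqref{exist:u2-carleso} follows from \eqref{hsr-u54367yhg-uuu}, the last assertion using \eqref{eq:aaAabgr-22}. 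Part (f): by part (c) it suffices that $\int_1^\infty{\rm osc}_1(f;r\lambda)\,d\lambda/\lambda^2\to0$ as $r\to0^+$, which I would obtain by dominated convergence — for $0<r\leq1$ the integrand is dominated by ${\rm osc}_1(f;\lambda)/\lambda^2\in L^1(1,\infty)$ by the first condition in \eqref{hsrwWW-FF45.bb-uuu}, while ${\rm osc}_1(f;r\lambda)\to0$ pointwise by the second; finally, when $f\in{\rm VMO}(\mathbb{R}^{n-1},\mathbb{C}^M)$, Lemma~\ref{jsfsQAXT}(c),(d) guarantee \eqref{hsrwWW-FF45.bb-uuu}, so \eqref{hsrwWW-VMO-uuu} applies and, together with part (e), gives \eqref{Dkjnbvb}.

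The single genuinely substantial step in this programme is the estimate for the local piece $u_g$: near $t=0$ the pointwise bound of part (a) is useless because $|\nabla u_g(x',t)|$ there is essentially a maximal function of $f$ and need not decay, so the $L^2$ square-function estimate is indispensable. Everything else is routine bookkeeping — differentiation under the integral sign, the elementary geometry underlying the nonlocal bound, using John--Nirenberg to pass between $L^1$- and $L^2$-averages, and interchanging limits and suprema with the oscillation function ${\rm osc}_1$.
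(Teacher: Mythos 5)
Your proposal is correct and follows essentially the same route as the paper: part (a) via the cancellation $\int\nabla^\ell[P^L_t(x'-\cdot)]=0$, the kernel bounds \eqref{eq:Kest}, and Lemma~\ref{lemma:BMO-decay}; parts (b)--(f) via the local/non-local splitting around $4Q$, with the local piece controlled by the $L^2$ square-function bound plus John--Nirenberg and the non-local piece by kernel decay plus \eqref{eqn:BMO-decay.88}, followed by the same routine manipulations of ${\rm osc}_1$. The only cosmetic difference is that you run this decomposition directly on $u=P^L_t\ast f$ (subtracting $f_{4Q}$ rather than $f_Q$), whereas the paper packages it as the abstract Proposition~\ref{prop:SFE} applied to the operators with kernels $t\,\partial_jK^L(x'-y',t)$, whose $L^2$ boundedness \eqref{y6bTVV} is supplied by Proposition~\ref{prop:SFE-early} (the result you should cite for the square-function estimate, rather than Proposition~\ref{prop:SFE} itself, which assumes it).
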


Ultimately, the proof of Proposition~\ref{prop-Dir-BMO:exis} relies on square-function 
estimates. For now, assuming a suitable $L^2$ bound (implicit in 
\eqref{y6bTVV} below) we may establish some versatile Carleson measure 
estimates (of local and global nature), as well as vanishing Carleson measure 
properties for integral operators (modeled upon the gradient of the convolution 
with the Poisson kernel) acting on function spaces larger than the standard {\rm BMO}. 
This is made precise in the following proposition.

\begin{proposition}\label{prop:SFE}
Let $\theta:\mathbb{R}^{n}_+\times\mathbb{R}^{n-1}\longrightarrow{\mathbb{C}}^{M\times M}$ 
be a matrix-valued Lebesgue measurable function, with the property that there exist 
$\varepsilon\in(0,\infty)$ and $C\in(0,\infty)$ such that
\begin{equation}\label{SFE-est-theta}
|\theta(x',t;y')|\leq\frac{C\,t^\varepsilon}{|(x'-y',t)|^{n-1+\varepsilon}},\quad
\forall\,(x',t)\in\mathbb{R}^{n}_{+},\quad\forall\,y'\in\mathbb{R}^{n-1},
\end{equation}
and the following cancellation condition holds:
\begin{equation}\label{SFE-vanish-theta}
\int_{\mathbb{R}^{n-1}}\theta(x',t;y')\,dy'=0\in{\mathbb{C}}^{M\times M},
\qquad\forall\,(x',t)\in\mathbb{R}^{n}_{+}.
\end{equation}
In relation to the kernel $\theta$, one may then consider the integral operator 
$\Theta$ acting on arbitrary functions 
$f\in L^1\Big({\mathbb{R}}^{n-1}\,,\,\frac{dx'}{1+|x'|^{n-1+\varepsilon}}\Big)^M$ 
according to {\rm (}the absolutely convergent integral{\rm )}
\begin{equation}\label{defi:Theta}
(\Theta f)(x',t):=\int_{\mathbb{R}^{n-1}}\theta(x',t;y')\,f(y')\,dy', 
\qquad\forall\,(x',t)\in\mathbb{R}^{n}_+.
\end{equation}

Then, under the assumption that the operator 
\begin{equation}\label{y6bTVV}
\Theta:L^2({\mathbb{R}}^{n-1},{\mathbb{C}}^M)\longrightarrow L^2\Big({\mathbb{R}}^n_{+}\,,\,
\frac{dx'dt}{t}\Big)^M\,\,\text{ is bounded},
\end{equation}
the following properties hold:
\begin{list}{(\theenumi)}{\usecounter{enumi}\leftmargin=.8cm
\labelwidth=.8cm\itemsep=0.2cm\topsep=.1cm
\renewcommand{\theenumi}{\alph{enumi}}}
\item There exists a constant $C\in(0,\infty)$ such that for every
$f\in L^1\Big({\mathbb{R}}^{n-1}\,,\,\frac{dx'}{1+|x'|^{n-1+\varepsilon}}\Big)^M$ 
and every cube $Q$ in ${\mathbb{R}}^{n-1}$ 
the following ``cube-by-cube'' Carleson measure estimates hold:
\begin{align}\label{hsrwWW-AA-Vdewe}
\left(\int_{0}^{\ell(Q)}\aver{Q}|(\Theta f)(x',t)|^2\,\frac{dx'dt}{t}\right)^{\frac12}
&\leq C\int_{1}^\infty\Big(\aver{\lambda Q}|f(y')-f_{\lambda Q}|\,dy'\Big)
\frac{d\lambda}{\lambda^{1+\varepsilon}}
\nonumber\\[6pt]
& \quad 
+C\,\sup_{Q'\subseteq 4Q}\aver{Q'}|f(y')-f_{Q'}|\,dy'
\end{align}
and
\begin{align}\label{hsr-u54367yhg}
\left(\int_{0}^{\ell(Q)}\aver{Q}|(\Theta f)(x',t)|^2\,\frac{dx'dt}{t}\right)^{\frac12}
\leq C\int_{1}^\infty{\rm osc}_1\big(f;\lambda\ell(Q)\big)
\frac{d\lambda}{\lambda^{1+\varepsilon}}.
\end{align}
\item There exists $C\in(0,\infty)$ such that for every function
$f\in L^1\Big({\mathbb{R}}^{n-1}\,,\,\frac{dx'}{1+|x'|^{n-1+\varepsilon}}\Big)^M$ 
the following local Carleson measure estimate holds for every scale $r\in(0,\infty)$:
\begin{equation}\label{hsrwWW-AA}
\sup_{Q\subset\mathbb{R}^{n-1},\,\ell(Q)\leq r}
\left(\int_{0}^{\ell(Q)}\aver{Q}|(\Theta f)(x',t)|^2\,\frac{dx'dt}{t}\right)^{\frac12}
\leq C\int_{1}^{\infty}{\rm osc}_1\big(f;r\lambda\big)\,
\frac{d\lambda}{\lambda^{1+\varepsilon}}.
\end{equation}
\item There exists $C\in(0,\infty)$ such that for any given function
$f\in L^1_{\rm loc}({\mathbb{R}}^{n-1},{\mathbb{C}}^M)$ with the property that
\begin{equation}\label{hsrwWW-FF45}
\int_{1}^{\infty}{\rm osc}_1\big(f;\lambda\big)\,
\frac{d\lambda}{\lambda^{1+\varepsilon}}<\infty
\end{equation}
{\rm (}which necessarily places $f$ into the space 
$L^1\Big({\mathbb{R}}^{n-1}\,,\,\frac{dx'}{1+|x'|^{n-1+\varepsilon}}\Big)^M$
by \eqref{eq:aaAabgr-33}{\rm )} the following global weighted Carleson measure 
estimate holds:
\begin{equation}\label{hsrwWW-FF46}
\sup_{Q\subset\mathbb{R}^{n-1}}\left\{
\left(\int_{1}^{\infty}{\rm osc}_1\big(f;\lambda\ell(Q)\big)\,
\frac{d\lambda}{\lambda^{1+\varepsilon}}\right)^{-1}
\left(\int_{0}^{\ell(Q)}\aver{Q}|(\Theta f)(x',t)|^2\,\frac{dx'dt}{t}\right)^{\frac12}\right\}
\leq C.
\end{equation}
\item There exists a constant $C\in(0,\infty)$ such that for every
$f\in\mathrm{BMO}(\mathbb{R}^{n-1},{\mathbb{C}}^M)$ the following 
global Carleson measure estimate holds:
\begin{equation}\label{hsrwWW}
\sup_{Q\subset\mathbb{R}^{n-1}}
\left(\int_{0}^{\ell(Q)}\aver{Q}|(\Theta f)(x',t)|^2\,\frac{dx'dt}{t}\right)^{\frac12}
\leq C\|f\|_{\mathrm{BMO}(\mathbb{R}^{n-1},{\mathbb{C}}^M)}.
\end{equation}
\item Whenever $f\in L^1_{\rm loc}({\mathbb{R}}^{n-1},{\mathbb{C}}^M)$ 
is such that
\begin{equation}\label{hsrwWW-FF45.bb}
\int_{1}^{\infty}{\rm osc}_1(f;\lambda)\,
\frac{d\lambda}{\lambda^{1+\varepsilon}}<\infty\,\,\text{ and }\,\,
\lim_{r\to 0^{+}}{\rm osc}_1(f;r)=0,
\end{equation}
then $f\in L^1\Big({\mathbb{R}}^{n-1}\,,\,\frac{dx'}{1+|x'|^{n-1+\varepsilon}}\Big)^M$
and the following vanishing Carleson measure condition holds:
\begin{equation}\label{hsrwWW-VMO}
\lim_{r\to 0^{+}}\left\{\sup_{Q\subset\mathbb{R}^{n-1},\,\ell(Q)\leq r}
\left(\int_{0}^{\ell(Q)}\aver{Q}|(\Theta f)(x',t)|^2\,\frac{dx'dt}{t}\right)^{\frac12}
\right\}=0.
\end{equation}
In particular, \eqref{hsrwWW-VMO} holds for every function 
$f\in\mathrm{VMO}(\mathbb{R}^{n-1},{\mathbb{C}}^M)$.
\end{list}
\end{proposition}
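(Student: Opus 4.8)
The plan is to deduce all of (a)--(e) from the two ``cube-by-cube'' estimates \eqref{hsrwWW-AA-Vdewe}--\eqref{hsr-u54367yhg} of part (a): the remaining bounds follow from \eqref{hsr-u54367yhg} by elementary manipulations, using only monotonicity of $r\mapsto{\rm osc}_1(f;r)$ (Lemma~\ref{jsfsQAXT}(a)), the boundedness of ${\rm osc}_1(f;\cdot)$ for $f\in{\rm BMO}$ (Lemma~\ref{jsfsQAXT}(c)), the characterization of ${\rm VMO}$ in Lemma~\ref{jsfsQAXT}(d), and one application of the Dominated Convergence Theorem. Thus the crux is part (a), where the sole analytic input is the $L^2$-boundedness hypothesis \eqref{y6bTVV}; everything else is of a real-variable nature.

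To prove (a), fix a cube $Q\subset{\mathbb{R}}^{n-1}$ with center $x'_Q$ and put $\ell:=\ell(Q)$. We may assume $N:=\sup_{Q'\subseteq 4Q}\aver{Q'}|f-f_{Q'}|\,dy'<\infty$, for otherwise the right-hand sides of \eqref{hsrwWW-AA-Vdewe}--\eqref{hsr-u54367yhg} are infinite and there is nothing to prove; under this assumption the John--Nirenberg inequality \eqref{JY6GTFF.2} forces $f\in L^2(4Q,{\mathbb{C}}^M)$. Decompose $f=f_Q+g+h$, where $g:=(f-f_Q){\mathbf 1}_{4Q}$ and $h:=(f-f_Q){\mathbf 1}_{{\mathbb{R}}^{n-1}\setminus 4Q}$; the cancellation condition \eqref{SFE-vanish-theta} gives $\Theta(f_Q)\equiv 0$, hence $\Theta f=\Theta g+\Theta h$ on ${\mathbb{R}}^n_+$. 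For the local term, \eqref{y6bTVV} yields
\[
\int_0^\ell\!\!\int_Q|(\Theta g)(x',t)|^2\,\frac{dx'dt}{t}\leq C\,\|g\|_{L^2({\mathbb{R}}^{n-1})}^2=C\int_{4Q}|f-f_Q|^2\,dx',
\]
so dividing by $|Q|$ and using $\bigl(\aver{4Q}|f-f_Q|^2\,dx'\bigr)^{1/2}\leq C\bigl(\aver{4Q}|f-f_{4Q}|^2\,dx'\bigr)^{1/2}\leq CN$ (the first inequality because $|f_Q-f_{4Q}|\leq 4^{n-1}\aver{4Q}|f-f_{4Q}|\,dy'$, the second by \eqref{JY6GTFF.2} with $p=2$ on $4Q$) produces the contribution $C\sup_{Q'\subseteq 4Q}\aver{Q'}|f-f_{Q'}|\,dy'$ of \eqref{hsrwWW-AA-Vdewe}.

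For the tail term, the elementary geometric observation that $|x'-y'|\geq c_n(\ell+|x'_Q-y'|)$ whenever $x'\in Q$ and $y'\notin 4Q$, combined with the kernel bound \eqref{SFE-est-theta}, gives for each $x'\in Q$ and $t>0$
\[
|(\Theta h)(x',t)|\leq C\,t^\varepsilon\int_{{\mathbb{R}}^{n-1}}\frac{|f(y')-f_Q|}{(\ell+|x'_Q-y'|)^{n-1+\varepsilon}}\,dy',
\]
a bound independent of $x'\in Q$. Integrating $t^{2\varepsilon}\,dt/t$ over $(0,\ell)$ contributes a factor $\ell^{2\varepsilon}$, and Lemma~\ref{lemma:BMO-decay} (precisely \eqref{eqn:BMO-decay.88}) bounds $\ell^{\varepsilon}\int_{{\mathbb{R}}^{n-1}}\frac{|f(y')-f_Q|}{(\ell+|x'_Q-y'|)^{n-1+\varepsilon}}\,dy'$ by $C\int_1^\infty\bigl(\aver{\lambda Q}|f-f_{\lambda Q}|\,dy'\bigr)\lambda^{-1-\varepsilon}\,d\lambda$. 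Adding the local and tail contributions yields \eqref{hsrwWW-AA-Vdewe}, and then bounding $\aver{\lambda Q}|f-f_{\lambda Q}|\,dy'\leq{\rm osc}_1(f;\lambda\ell)$ together with $\sup_{Q'\subseteq 4Q}\aver{Q'}|f-f_{Q'}|\,dy'\leq{\rm osc}_1(f;4\ell)\leq C\int_1^\infty{\rm osc}_1(f;\lambda\ell)\,\lambda^{-1-\varepsilon}\,d\lambda$ (monotonicity of ${\rm osc}_1$) gives \eqref{hsr-u54367yhg}.

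Finally: (b) follows from \eqref{hsr-u54367yhg} by replacing ${\rm osc}_1(f;\lambda\ell(Q))$ with the larger ${\rm osc}_1(f;\lambda r)$ when $\ell(Q)\leq r$ and taking the supremum; (c) is \eqref{hsr-u54367yhg} divided by $\int_1^\infty{\rm osc}_1(f;\lambda\ell(Q))\,\lambda^{-1-\varepsilon}\,d\lambda$, which is finite for every $Q$ once \eqref{hsrwWW-FF45} holds (monotonicity, after a change of variables); (d) follows from \eqref{hsr-u54367yhg} since $f\in{\rm BMO}$ makes ${\rm osc}_1(f;\cdot)\leq C\|f\|_{{\rm BMO}}$ (Lemma~\ref{jsfsQAXT}(c)), whence the integral in \eqref{hsr-u54367yhg} is at most $C\varepsilon^{-1}\|f\|_{{\rm BMO}}$; and (e) is obtained from the local estimate \eqref{hsrwWW-AA} of part (b) upon letting $r\to 0^+$ (here the first condition in \eqref{hsrwWW-FF45.bb} places $f$ in the weighted space $L^1\big({\mathbb{R}}^{n-1},dx'/(1+|x'|^{n-1+\varepsilon})\big)^M$ via \eqref{jGVVCc-1jmn}, so $\Theta f$ is defined), the hypotheses \eqref{hsrwWW-FF45.bb} allowing the Dominated Convergence Theorem on $[1,\infty)$ with dominating function ${\rm osc}_1(f;\lambda)\,\lambda^{-1-\varepsilon}$ (integrable by the first condition) and integrand ${\rm osc}_1(f;\lambda r)\,\lambda^{-1-\varepsilon}\to 0$ for each fixed $\lambda$ (by the second), so that \eqref{hsrwWW-VMO} holds; since $f\in{\rm VMO}$ forces \eqref{hsrwWW-FF45.bb} via Lemma~\ref{jsfsQAXT}(d), \eqref{hsrwWW-VMO} holds in that case too. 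I expect the main obstacle to be part (a): one must arrange the splitting of $f$ so that the single analytic ingredient \eqref{y6bTVV} is applied to a compactly supported piece while the whole tail is handled by kernel size alone, and then translate the resulting weighted $L^1$-average of $f-f_Q$ into the oscillation integral through Lemma~\ref{lemma:BMO-decay}; with (a) in hand, (b)--(e) are essentially bookkeeping.
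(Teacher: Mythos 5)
Your proof is correct and follows essentially the same route as the paper: the identical splitting $f=f_Q+(f-f_Q)\mathbf{1}_{4Q}+(f-f_Q)\mathbf{1}_{{\mathbb{R}}^{n-1}\setminus 4Q}$, with the $L^2$ hypothesis \eqref{y6bTVV} plus John--Nirenberg handling the local piece, the kernel decay \eqref{SFE-est-theta} plus Lemma~\ref{lemma:BMO-decay} handling the tail, and then (b)--(e) deduced from \eqref{hsr-u54367yhg} by monotonicity of ${\rm osc}_1$, Lemma~\ref{jsfsQAXT}, and dominated convergence. Your preliminary reduction to the case $\sup_{Q'\subseteq 4Q}\aver{Q'}|f-f_{Q'}|\,dy'<\infty$ is a harmless extra precaution not present in the paper's argument.
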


\begin{proof}
Start by fixing an arbitrary cube $Q$ in $\mathbb{R}^{n-1}$ 
and denote by $x'_Q$ its center. Given a function 
$f\in L^1\Big({\mathbb{R}}^{n-1}\,,\,\frac{dx'}{1+|x'|^{n-1+\varepsilon}}\Big)^M$, 
use \eqref{SFE-vanish-theta} in order to write
\begin{align}\label{est-u**}
\left(\int_{0}^{\ell(Q)}\aver{Q}|(\Theta f)(x',t)|^2\,\frac{dx'dt}{t}\right)^\frac12
&=\left(\int_{0}^{\ell(Q)}\aver{Q}|(\Theta(f-f_Q))(x',t)|^2\,\frac{dx'dt}{t}
\right)^\frac12
\nonumber\\[4pt]
&\leq I+II,
\end{align}
where
\begin{equation}\label{Twazv-5-BB}
I:=\left(\int_{0}^{\ell(Q)}\aver{Q}
\big|\Theta\big((f-f_Q)\,{\mathbf 1}_{4Q}\big)(x',t)\big|^2\,\frac{dx'dt}{t}\right)^\frac12
\end{equation}
and 
\begin{equation}\label{Twazv-5-KK}
II:=\left(\int_{0}^{\ell(Q)}\aver{Q} 
|\Theta\big((f-f_Q)\,{\mathbf 1}_{{\mathbb{R}}^{n-1}\setminus 4Q}\big)(x',t)|^2\,
\frac{dx'dt}{t}\right)^\frac12.
\end{equation}
To estimate $I$, invoke \eqref{y6bTVV}, \eqref{aver-fq-BBBB} with $p=2$, and
\eqref{JY6GTFF.2} to estimate 
\begin{align}\label{estbxchd}
I & \leq\frac1{|Q|^{\frac12}}
\left(\int_{\mathbb{R}_+^n}|\Theta\big((f-f_Q)\,{\mathbf 1}_{4Q}\big)(x',t)|^2\,\frac{dx'dt}{t}
\right)^\frac12
\nonumber\\[4pt]
& \leq C\,\left(\aver{4Q}|f(y')-f_{Q}|^2\,dy'\right)^{\frac12}
\leq C\,\left(\aver{4Q}|f(y')-f_{4Q}|^2\,dy'\right)^{\frac12}
\nonumber\\[4pt]
&\leq C\sup_{Q'\subseteq 4Q}\aver{Q'}|f(y')-f_{Q'}|\,dy',
\end{align}
where $C\in(0,\infty)$ is independent of $f$ and $Q$. 
To proceed, observe that there exists a purely dimensional constant $c_n\in(0,\infty)$ 
(e.g., the choice $c_n:=3/(6+2\sqrt{n-1}\,)$ will do) with the property that
\begin{equation}\label{utrf-532fv}
|x'-y'|\geq c_n\big(\ell(Q)+|x'_Q-y'|\big)
\,\,\text{ for each }\,x'\in Q,\,\,y'\in{\mathbb{R}}^{n-1}\setminus 4Q. 
\end{equation}
Based on this, \eqref{defi:Theta}, and \eqref{SFE-est-theta}, we may then estimate 
\begin{equation}\label{Twazv-5-KK.44}
\begin{array}{c}
\displaystyle
|\Theta\big((f-f_Q)\,{\mathbf 1}_{{\mathbb{R}}^{n-1}\setminus 4Q}\big)(x',t)|
\leq Ct^\varepsilon\int_{{\mathbb{R}}^{n-1}}
\frac{|f(y')-f_Q|}{\big[\ell(Q)+|x'_Q-y'|\big]^{n-1+\varepsilon}}\,dy',
\\[18pt]
\text{for every point $x'\in Q$ and every number $t>0$},
\end{array}
\end{equation}
for some $C\in(0,\infty)$ depending only on $n$ and the constant appearing in 
\eqref{SFE-est-theta}. In turn, from \eqref{Twazv-5-KK.44} and \eqref{eqn:BMO-decay.88} 
we conclude that
\begin{align}\label{eqn:BhFva}
II &\leq C\left(\int_{0}^{\ell(Q)}\Big(\frac{t}{\ell(Q)}\Big)^{2\varepsilon}
\,\frac{dt}{t}\right)^{\frac12}
\cdot\int_{1}^\infty\Big(\aver{\lambda Q}|f(y')-f_{\lambda Q}|\,dy'\Big)
\frac{d\lambda}{\lambda^{1+\varepsilon}}
\nonumber\\[6pt]
&=C\int_{1}^\infty\Big(\aver{\lambda Q}|f(y')-f_{\lambda Q}|\,dy'\Big)
\frac{d\lambda}{\lambda^{1+\varepsilon}}.
\end{align}
At this stage, \eqref{est-u**}, \eqref{estbxchd}, and \eqref{eqn:BhFva} 
combine to give \eqref{hsrwWW-AA-Vdewe}. In turn, \eqref{hsr-u54367yhg} 
readily follows from \eqref{hsrwWW-AA-Vdewe} and part {\it (a)} in 
Lemma~\ref{jsfsQAXT} which allows us to estimate 
\begin{align}\label{li8hbb}
{\rm osc}_1\big(f;4\ell(Q)\big) &\leq\varepsilon\big(4^{-\varepsilon}-5^{-\varepsilon}\big)^{-1}
\int_{4}^5{\rm osc}_1\big(f;\lambda\ell(Q)\big)\frac{d\lambda}{\lambda^{1+\varepsilon}}
\nonumber\\[4pt]
&\leq C_\varepsilon
\int_{1}^\infty{\rm osc}_1\big(f;\lambda\ell(Q)\big)\frac{d\lambda}{\lambda^{1+\varepsilon}}.
\end{align}
In concert with part {\it (a)} in Lemma~\ref{jsfsQAXT}, estimate \eqref{hsr-u54367yhg}
immediately gives \eqref{hsrwWW-AA}. Estimate \eqref{hsr-u54367yhg} also implies
the global weighted Carleson measure estimate formulated in \eqref{hsrwWW-FF46}.
From \eqref{hsrwWW-AA} and part {\it (c)} in Lemma~\ref{jsfsQAXT}, 
the global Carleson measure estimate stated in \eqref{hsrwWW} follows. 

Going further, assume the function $f\in L^1_{\rm loc}({\mathbb{R}}^{n-1},{\mathbb{C}}^M)$ 
satisfies the properties listed in \eqref{hsrwWW-FF45.bb}. Then
$f\in L^1\Big({\mathbb{R}}^{n-1}\,,\,\frac{dx'}{1+|x'|^{n-1+\varepsilon}}\Big)^M$
by \eqref{jGVVCc-1jmn}. Also, thanks to \eqref{hsrwWW-FF45.bb} and 
part {\it (a)} in Lemma~\ref{jsfsQAXT}, Lebesgue's Dominated Convergence Theorem 
applies and yields
\begin{equation}\label{hsrwWW-FF45.ss}
\lim_{r\to 0^{+}}\int_{1}^{\infty}{\rm osc}_1(f;r\lambda)\,
\frac{d\lambda}{\lambda^{1+\varepsilon}}=0.
\end{equation}
Together with \eqref{hsrwWW-AA}, this ultimately proves the 
vanishing Carleson measure condition stated in \eqref{hsrwWW-VMO}.
Finally, that any function $f\in\mathrm{VMO}(\mathbb{R}^{n-1},{\mathbb{C}}^M)$
actually satisfies the properties listed in \eqref{hsrwWW-FF45.bb} is clear 
from \eqref{defi-VMO}, \eqref{osc-1}, and part {\it (c)} in Lemma~\ref{jsfsQAXT}.
This completes the proof of Proposition~\ref{prop:SFE}.
\end{proof}

Next the goal is to identify a class of integral kernels $\theta$ 
satisfying \eqref{SFE-est-theta}-\eqref{SFE-vanish-theta}
with the property that the operator $\Theta$ associated with $\theta$ 
as in \eqref{defi:Theta} enjoys the $L^2$-boundedness condition formulated 
in \eqref{y6bTVV}. We adopt a broader point of view by considering a larger 
variety of spaces, which turns out to be useful later. 
To set the stage, let us recall the definition of the Hardy space 
$H^1(\mathbb{R}^{n-1})$ using $(1,\infty)$-atoms. 
Specifically, a Lebesgue measurable function $a:\mathbb{R}^{n-1}\rightarrow\mathbb{C}$ 
is said to be a $(1,\infty)$-atom provided there exists a cube 
$Q\subset\mathbb{R}^{n-1}$ such that the following localization, normalization, and
cancellation properties hold:
\begin{equation}\label{defi-atom}
\mathrm{supp}\,a\subseteq Q,\qquad\|a\|_{L^\infty(\mathbb{R}^{n-1})}
\leq |Q|^{-1},\qquad\int_{\mathbb{R}^{n-1}}a(y')\,dy'=0.
\end{equation}
The space $H^1(\mathbb{R}^{n-1})$ is then defined as the collection of all Lebesgue measurable 
functions $f$ defined in ${\mathbb{R}}^{n-1}$ such that
\begin{equation}\label{eq:f-H1-atoms}
f=\sum_{j=1}^\infty\lambda_j\,a_j\,\,\mbox{ a.e. in }\,\mathbb{R}^{n-1},
\end{equation}
with the $a_j$'s being $(1,\infty)$-atoms, and where the sequence 
$\{\lambda_j\}_{j\in{\mathbb{N}}}\subset\mathbb{C}$ satisfies $\sum\limits_{j=1}^\infty|\lambda_j|<\infty$. 
The norm in $H^1(\mathbb{R}^{n-1})$ is defined as
\begin{equation}\label{eq:H1-norm}
\|f\|_{H^1(\mathbb{R}^{n-1})}:=\inf\sum_{j=1}^\infty|\lambda_j|,
\end{equation}
where the infimum runs over all the atomic decompositions of $f$ as in \eqref{eq:f-H1-atoms}. 
In particular, the series in \eqref{eq:f-H1-atoms} converges in $H^1(\mathbb{R}^{n-1})$.
Let us also write $H^1(\mathbb{R}^{n-1},\mathbb{C}^M)$ for the collection of 
all $\mathbb{C}^M$-valued functions $f=(f_\alpha)_{1\leq\alpha\leq M}$ with components 
in $H^1(\mathbb{R}^{n-1})$. In such a scenario, we set
\begin{equation}\label{eq:H1-norm-vv}
\|f\|_{H^1(\mathbb{R}^{n-1},\mathbb{C}^M)}
:=\sum_{\alpha=1}^M\|f_\alpha\|_{H^1(\mathbb{R}^{n-1})}.
\end{equation}

Here are the square-function estimates alluded to earlier. For more background
and relevant references the reader is referred to the recent exposition in \cite{HMMM}.

\begin{proposition}\label{prop:SFE-early}
Let $\theta$ and $\Theta$ be as in \eqref{SFE-est-theta}-\eqref{defi:Theta}
with $\varepsilon=1$ and $M=1$. In addition, assume $\theta$ is of class ${\mathscr{C}}^1$ 
in the variable $y'\in{\mathbb{R}}^{n-1}$ and suppose there exists some $C\in(0,\infty)$ 
with the property that
\begin{equation}\label{SFE-est-theta-bis}
|\nabla_{y'}\theta(x',t;y')|\leq\frac{C\,t}{|(x'-y',t)|^{n+1}},
\qquad\forall\,(x',t)\in\mathbb{R}^{n}_{+},\quad\forall\,y'\in\mathbb{R}^{n-1}.
\end{equation}
Fix a background parameter $\kappa>0$ and, with the nontangential cone 
$\Gamma_\kappa(x')$ as in \eqref{NT-1} for each $x'\in{\mathbb{R}}^{n-1}$, 
define the square function operator $S_\Theta$ by setting
\begin{equation}\label{bhxdhyswt}
(S_{\Theta}f)(x'):=\Big(\int_{\Gamma_\kappa(x')}
|(\Theta f)(y',t)|^2\,\frac{dy'\,dt}{t^n}\Big)^{\frac12},
\qquad\forall\,x'\in\mathbb{R}^{n-1}.
\end{equation}

Then the following are well-defined, linear, and bounded operators:
\begin{align}\label{hdgswf}
& \Theta:L^2({\mathbb{R}}^{n-1})\longrightarrow 
L^2\Big({\mathbb{R}}^n_{+}\,,\,\frac{dx'\,dt}{t}\Big),
\\[6pt]
& S_{\Theta}:L^p(\mathbb{R}^{n-1})\longrightarrow L^p(\mathbb{R}^{n-1}),
\quad\forall\,p\in(1,\infty),
\label{hdgswf.DD}
\\[6pt]
& S_{\Theta}:L^1(\mathbb{R}^{n-1})\longrightarrow L^{1,\infty}(\mathbb{R}^{n-1}),
\label{hdgswf.DD2}
\\[6pt]
& S_{\Theta}:H^1(\mathbb{R}^{n-1})\longrightarrow L^1(\mathbb{R}^{n-1}).
\label{hdgswf.DD3}
\end{align}
\end{proposition}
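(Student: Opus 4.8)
The plan is to establish the four mapping properties in a specific order, deriving each from the previous ones together with Calderón--Zygmund-type arguments. The backbone is the $L^2$ bound \eqref{hdgswf}, which is a classical square-function estimate for operators with a $t$-smooth kernel of the stated decay; one way to obtain it is to verify that the associated quadratic form $\int_0^\infty \|\Theta_t f\|_{L^2}^2\, dt/t$ satisfies the hypotheses of a vector-valued Schur test, using \eqref{SFE-est-theta} with $\varepsilon=1$ together with the cancellation \eqref{SFE-vanish-theta} and the gradient estimate \eqref{SFE-est-theta-bis}. Concretely, writing $\theta_t(x',y') := \theta(x',t;y')$, one splits $\Theta_{s}^{*}\Theta_{t}$ and estimates its kernel using: the $L^1$-in-$y'$ bound from \eqref{SFE-est-theta} when $t\approx s$, and the cancellation \eqref{SFE-vanish-theta} paired with \eqref{SFE-est-theta-bis} (an ``almost-orthogonality'' gain of $\min(t/s,s/t)^{1/2}$ or better) when $t$ and $s$ are far apart; then Cotlar--Stein or Schur yields \eqref{hdgswf}. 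This is standard and the details can be quoted from \cite{HMMM}.

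Next I would treat the square function $S_\Theta$ itself. For \eqref{hdgswf.DD} with $p=2$: by Fubini, $\|S_\Theta f\|_{L^2(\mathbb{R}^{n-1})}^2 = c_{n,\kappa}\int_{\mathbb{R}^n_+}|(\Theta f)(y',t)|^2\, \frac{dy'\,dt}{t}$, so the $L^2$ bound for $S_\Theta$ is literally equivalent to \eqref{hdgswf}. To pass to the weak-type $(1,1)$ bound \eqref{hdgswf.DD2}, I would run the Calderón--Zygmund machinery: $S_\Theta$ is (the norm of) a vector-valued singular integral with values in the Hilbert space $L^2\big(\Gamma_\kappa(0'),\frac{dy'\,dt}{t^n}\big)$ after the usual change of variables centering the cone at the base point. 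The kernel of this vector-valued operator, namely $y'\mapsto \big(\theta(x'+z', t; y')\big)_{(z',t)\in\Gamma_\kappa(0')}$, satisfies the Hörmander regularity condition: its $L^2(\Gamma)$-norm in the $z',t$ variables obeys a size bound $\lesssim |x'-y'|^{-(n-1)}$ from \eqref{SFE-est-theta}, and a Hörmander smoothness estimate $\int_{|x'-y'|>2|x'-x''|}\|K(x',y')-K(x'',y')\|_{L^2(\Gamma)}\,dy' \lesssim 1$ follows by exploiting \eqref{SFE-est-theta-bis} together with the decay of $\theta$ (differencing in the first argument, which by the structure $\theta(x',t;y')$ depending on $x'-y'$ can be converted to a difference in $y'$, controlled by \eqref{SFE-est-theta-bis}). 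Given the $L^2$ boundedness from the previous paragraph and this Hörmander condition, the Calderón--Zygmund theorem for Hilbert-space-valued singular integrals gives \eqref{hdgswf.DD2}, and then Marcinkiewicz interpolation between \eqref{hdgswf.DD2} and the $p=2$ case gives \eqref{hdgswf.DD} for all $p\in(1,2]$.

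Finally, for the $H^1\to L^1$ bound \eqref{hdgswf.DD3}, I would argue by atoms: it suffices to show $\|S_\Theta a\|_{L^1(\mathbb{R}^{n-1})}\le C$ uniformly over all $(1,\infty)$-atoms $a$ supported in a cube $Q$, since $S_\Theta$ is sublinear and $L^1$-continuity of the sum will follow from the $L^1$ convergence of the atomic decomposition together with the weak-$(1,1)$ bound already in hand. For a fixed atom $a$ with cube $Q$ of center $x_Q'$ and side $\ell(Q)$, one splits $\int_{\mathbb{R}^{n-1}}(S_\Theta a)(x')\,dx' = \int_{4Q} + \int_{\mathbb{R}^{n-1}\setminus 4Q}$. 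On $4Q$, Cauchy--Schwarz plus the $L^2$ bound \eqref{hdgswf.DD} with $p=2$ gives $\int_{4Q}(S_\Theta a) \le |4Q|^{1/2}\|S_\Theta a\|_{L^2}\lesssim |Q|^{1/2}\|a\|_{L^2}\lesssim |Q|^{1/2}\cdot|Q|^{1/2}\cdot|Q|^{-1}=1$, using $\|a\|_{L^2}\le |Q|^{-1/2}$. On the complement $\mathbb{R}^{n-1}\setminus 4Q$, one uses the vanishing moment of $a$ to write $(\Theta a)(y',t)=\int_Q \big(\theta(y',t;w')-\theta(y',t;x_Q')\big)a(w')\,dw'$ and invokes \eqref{SFE-est-theta-bis} to obtain pointwise decay $|(\Theta a)(y',t)|\lesssim \ell(Q)\,t\,\int_Q |(y'-w',t)|^{-(n+1)}|a(w')|\,dw'$; inserting this into the conical integral defining $S_\Theta$ and integrating $x'$ over the far region produces a convergent geometric-type sum bounded by an absolute constant, exactly as in the standard proof that Littlewood--Paley square functions map $H^1$ to $L^1$. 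The main obstacle I anticipate is bookkeeping the Hörmander-type kernel estimate for the $L^2(\Gamma)$-valued operator in the weak-$(1,1)$ step — converting the decay and gradient hypotheses \eqref{SFE-est-theta}--\eqref{SFE-est-theta-bis} into a genuine Hörmander integral condition for the vector-valued kernel, uniformly over the cone aperture — but this is a routine (if slightly tedious) computation, with no conceptual difficulty, and everything else reduces to known Calderón--Zygmund theory and atomic estimates.
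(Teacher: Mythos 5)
Your proof takes essentially the same road as the paper's --- $L^2$ boundedness via an almost-orthogonality/T(1)-type criterion, weak $(1,1)$ via a Calder\'on--Zygmund decomposition, Marcinkiewicz interpolation for $1<p<2$, and an atomic argument for $H^1\to L^1$ --- and the atomic step is in fact word-for-word what the paper does. The one place that does not work as written is the Hörmander condition you invoke for the weak-$(1,1)$ step. You write
\begin{equation*}
\int_{|x'-y'|>2|x'-x''|}\big\|K(x',y')-K(x'',y')\big\|_{L^2(\Gamma)}\,dy'\lesssim 1,
\end{equation*}
i.e.\ you difference the kernel in its \emph{first} argument and integrate in the second. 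This is the condition suited to $T^*$; the Calder\'on--Zygmund decomposition that yields weak $(1,1)$ for $T$ exploits the vanishing mean of the bad pieces $b_j$ in the $y'$ variable, so the condition you actually need differences in the \emph{second} argument:
\begin{equation*}
\sup_{y',y''}\int_{|x'-y''|>2|y'-y''|}\big\|K(x',y')-K(x',y'')\big\|_{L^2(\Gamma)}\,dx'\lesssim 1.
\end{equation*}
To repair your version you then appeal to ``the structure $\theta(x',t;y')$ depending on $x'-y'$,'' but that convolution structure is not among the hypotheses \eqref{SFE-est-theta}--\eqref{SFE-est-theta-bis}: the proposition is formulated for a general kernel $\theta(x',t;y')$, with smoothness assumed only in the $y'$ slot. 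So as written there is a genuine gap. The repair is immediate, because the second-variable Hörmander condition is exactly what \eqref{SFE-est-theta-bis} controls: $\big\|K(x',y')-K(x',y'')\big\|_{L^2(\Gamma)}\lesssim |y'-y''|\,|x'-y'|^{-n}$ for $|x'-y'|\gtrsim |y'-y''|$, by Minkowski's inequality and the pointwise bound on $\nabla_{y'}\theta$ (this is precisely the content of the paper's estimates \eqref{est-kernel-con} and \eqref{est-atom-h}). With that corrected, your vector-valued CZ packaging and the paper's hands-on CZ decomposition are two presentations of the same argument, and the rest of your proposal (including using the already-established weak $(1,1)$ bound to justify passing from the atomic decomposition to a pointwise a.e.\ series inequality) matches the paper.
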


\begin{proof}
We are going to use \cite[Theorem~20, p.\,69]{Ch90} (see also \cite{ChJ}).
First observe that \eqref{SFE-est-theta} with $\varepsilon=1$ presently implies
\begin{equation}\label{jdgsw}
|\theta(x',t; y')|\leq C\,\frac{t}{(t+|x'-y'|)^{n}}
\qquad\forall\,x',y'\in{\mathbb{R}}^{n-1},\,\,\forall\,t>0.
\end{equation}
Second, if $x'$, $y'$, $z'\in{\mathbb{R}}^{n-1}$ and $t>0$ are such that 
$|y'-z'|\leq(t+|x'-y'|)/2$, the Mean Value Theorem and \eqref{SFE-est-theta-bis} imply
(here and elsewhere, $[a,b]$ denotes the line segment with end-points 
$a,b\in{\mathbb{R}}^{n-1}$)
\begin{align}\label{SFE-Lip}
|\theta(x',t; y')-\theta(x',t; z')|
& \leq |y'-z'|\,\sup_{w'\in[y',z']}|\nabla_{y'}\theta(x',t;w')|
\nonumber\\[4pt]
& \leq C|y'-z'|\,\sup_{w'\in[y',z']}\frac{t}{(t+|x'-w'|)^{n+1}}
\nonumber\\[4pt]
& \leq C\,\frac{|y'-z'|\,t}{(t+|x'-y'|)^{n+1}}.
\end{align}
This proves that the family of kernels $\{\theta(x',t;y')\}_{t\in (0,\infty)}$ 
is a standard family in $\mathbb{R}^{n-1}$ as in \cite[Definition~19,\,p.69]{Ch90}.
Third, \eqref{SFE-vanish-theta} implies that $\Theta 1(x',t)=0$ for every
$(x',t)\in{\mathbb{R}}^n_{+}$.  
We can therefore apply \cite[Theorem~20, p.\,69]{Ch90} to conclude that 
the operator in \eqref{hdgswf} is well-defined, linear and bounded. In particular, 
the boundedness of the operator in \eqref{hdgswf} implies that there exists a constant 
$C\in(0,\infty)$ such that for every function $f\in L^2(\mathbb{R}^{n-1})$ 
there holds
\begin{equation}\label{SFE-L2}
\|S_{\Theta}f\|_{L^2(\mathbb{R}^{n-1})}^2
=C_{n,\kappa}\int_{\mathbb{R}^n_+}|(\Theta f)(y',t)|^2\,\frac{dy'\,dt}{t}
\leq C\,\int_{\mathbb{R}^{n-1}}|f(x')|^2\,dx'.
\end{equation}
Proving the boundedness of the operator in \eqref{hdgswf.DD2} 
comes down to establishing the weak-type $(1,1)$ estimate for $S_\Theta$.
In a first stage, we claim that there exists some constant $C\in(0,\infty)$ with the
property that for any cube $Q$ in $\mathbb{R}^{n-1}$ and any function $h$ satisfying 
\begin{equation}\label{fdsnn}
h\in L^1(\mathbb{R}^{n-1}),\quad
\mathrm{supp}\,h\subseteq Q,\quad\mbox{and}\quad
\int_{\mathbb{R}^{n-1}}h(y')\,dy'=0
\end{equation} 
we have
\begin{equation}\label{est-atom-h}
|(S_\Theta h)(x')|\leq C\,\|h\|_{L^1(\mathbb{R}^{n-1})}\,\frac{\ell(Q)}{|x'-x'_Q|^n}
\,\,\,\,\text{ for every }\,x'\in{\mathbb{R}}^{n-1}\setminus 2Q,
\end{equation}
where $x'_Q$ is the center of the cube $Q$.
To justify the claim, given $x',z'\in\mathbb{R}^{n-1}$ with $x'\neq z'$ we first use 
\eqref{SFE-est-theta-bis} combined with natural changes of variables to write
\begin{align}\label{wswaufLL}
&\int_{\Gamma_\kappa(x')}|\nabla_{y'}\theta(y',t;z')|^2\,\frac{dy'\,dt}{t^n}
\nonumber\\[6pt]
&\qquad\qquad
\leq C\,\int_{|y'-x'|<\kappa t}\frac{t^2}{|(y'-z',t)|^{2\,(n+1)}}\,\frac{dy'\,dt}{t^n}
\nonumber\\[6pt]
&\qquad\qquad
=C\,\int_0^\infty\int_{|y'|<\kappa} 
\frac{t^{2}}{|(y't+(x'-z'),t)|^{2\,(n+1)}}\,\frac{dy'\,dt}{t}
\nonumber\\[6pt]
&\qquad\qquad
=C\,\int_0^\infty\int_{|y'|<\kappa} 
\frac{t^{-2n}}{|(y'+(x'-z')/t,1)|^{2\,(n+1)}}\,\frac{dy'\,dt}{t}
\nonumber\\[6pt]
&\qquad\qquad
\leq C\,|x'-z'|^{-2\,n}\,
\int_0^\infty\int_{|y'|<\kappa}
\frac{t^{-2n}}{\big|\big(y'+(x'-z')/(t|x'-z'|),1\big)\big|^{2\,(n+1)}}\,\frac{dy'\,dt}{t}
\nonumber\\[6pt]
&\qquad\qquad\leq
C\,|x'-z'|^{-2\,n}\,\sup_{|v'|=1}
\int_0^\infty\int_{|y'|<\kappa}\frac{t^{-2n}}{|y'+v'/t|^{2\,(n+1)}+1}\,\frac{dy'\,dt}{t}.
\end{align}
Next, fix $v'\in\mathbb{R}^{n-1}$ with $|v'|=1$. 
If $t<1/(2\kappa)$ and $|y'|<\kappa$ we have
\begin{equation}\label{Gew}
t^{-1}=|v'|/t\leq|y'+v'/t|+|y'|<|y'+v'/t|+1/(2\,t)
\end{equation}
and therefore $|y'+v'/t|>1/(2\,t)$. Thus,
\begin{align}\label{gsew}
\int_0^{1/(2\kappa)}\int_{|y'|<\kappa} 
\frac{t^{-2n}}{|y'+v'/t|^{2\,(n+1)}+1}\,\frac{dy'\,dt}{t}
\leq C\,\int_0^{1/(2\kappa)}\int_{|y'|<\kappa} t\,dy'\,dt\leq C.
\end{align}
Also, it is immediate that
\begin{equation}\label{yrwee}
\int_{1/(2\kappa)}^\infty\int_{|y'|<\kappa}
\frac{t^{-2n}}{|y'+v'/t|^{2\,(n+1)}+1}\,\frac{dy'\,dt}{t}
\leq C\,\int_{1/(2\kappa)}^\infty\int_{|y'|<\kappa}t^{-2\,n}\,\frac{dy'\,dt}{t}\leq C.
\end{equation}
Combining \eqref{wswaufLL}, \eqref{gsew}, and \eqref{yrwee}
we may conclude that
\begin{equation}\label{est-kernel-con}
\Big(\int_{\Gamma_\kappa(x')} |\nabla_{y'}\theta(y',t;z')|^2\,\frac{dy'\,dt}{t^n}\Big)^{\frac12}
\leq\frac{C}{|x'-z'|^{n}},\quad\text{ if }\,\, x'\neq z'.
\end{equation}

At this point we return to the proof of \eqref{est-atom-h}. 
Fix $x'\in{\mathbb{R}}^{n-1}\setminus 2\,Q$ and consider $h$ as in \eqref{fdsnn}. 
Making use of the last property of $h$ recorded in \eqref{fdsnn}, the Fundamental 
Theorem of Calculus, Minkowski's inequality, and \eqref{est-kernel-con}
we may compute
\begin{align}\label{ysre}
(S_{\Theta} h)(x')=&\Big(\int_{\Gamma_\kappa(x')} 
|(\Theta h)(y',t)|^2\,\frac{dy'\,dt}{t^n}\Big)^{\frac12}
\nonumber\\[6pt]
&\leq\Big(\int_{\Gamma_\kappa(x')}\Big(\int_{\mathbb{R}^{n-1}} 
|\theta(y',t;z')-\theta(y',t;x_Q')|\,|h(z')|\,dz'\Big)^2\,\frac{dy'\,dt}{t^n}\Big)^{\frac12}
\nonumber\\[6pt]
&\leq\int_{0}^1\int_{\mathbb{R}^{n-1}}\Big(\int_{\Gamma_\kappa(x')} 
|\nabla_{z'}\theta(y',t;x_Q'+s\,(z'-x_Q'))|^2
\frac{dy'\,dt}{t^n}\Big)^{\frac12}\,|h(z')|\,|z'-x_Q'|\,dz'\,ds
\nonumber\\[6pt]
&\leq C\,\int_{0}^1\int_{Q}
\frac{1}{\big|x'-(x_Q'+s\,(z'-x_Q'))\big|^{n}}\,|h(z')|\,|z'-x_Q'|\,dz'\,ds
\nonumber\\[6pt]
&\leq C\,\|h\|_{L^1(\mathbb{R}^{n-1})}\frac{\ell(Q)}{|x'-x'_Q|^n}.
\end{align}
For the last inequality in \eqref{ysre} we used the observation that for every 
$s\in(0,1)$ and every $z'\in Q$ one has (keeping in mind that 
$x'\in{\mathbb{R}}^{n-1}\setminus 2\,Q$ and $x_Q'+s\,(z'-x_Q')\in Q$)
\begin{align}\label{Twazv}
|x'-x_Q'|&\leq\big|x'-(x_Q'+s\,(z'-x_Q'))\big|+\frac{\sqrt{n-1}\,\ell(Q)}{2}
\nonumber\\[4pt]
& \leq\big|x'-(x_Q'+s\,(z'-x_Q'))\big|+\sqrt{n-1}\,\big|x'-(x_Q'+s\,(z'-x_Q'))\big|
\nonumber\\[4pt]
& =(1+\sqrt{n-1})\big|x'-(x_Q'+s\,(z'-x_Q'))\big|.
\end{align}
This finishes the proof of \eqref{est-atom-h}.

Let us momentarily digress to show that 
\begin{equation}\label{Mdgds}
\int_{{\mathbb{R}}^{n-1}\setminus Q}\frac{\ell(Q)}{|x'-x_Q'|^n}\,dx'
\leq\sum_{k=0}^\infty 
\int_{2^{k+1}Q\setminus 2^kQ}\,\frac{\ell(Q)}{(\ell(2^kQ)/2)^n}\,dx'
\leq 2^{2n-1}\sum_{k=0}^\infty 2^{-k}=4^n.
\end{equation}

We are now ready to show that $S_\Theta$ maps $L^1(\mathbb{R}^{n-1})$ continuously 
into $L^{1,\infty}(\mathbb{R}^{n-1})$. Following \cite[p.\,140]{GCRF85}, given a function 
$f\in L^1(\mathbb{R}^{n-1})$ and fixed $\lambda>0$, let $\{Q_j\}_j\subset{\mathbb{R}}^{n-1}$ 
be the non-overlapping cubes of the Calder\'on-Zygmund decomposition of $|f|$ at height $\lambda$. 
That is, the $Q_j$'s are the maximal dyadic cubes for which $|Q_j|^{-1}\int_{Q_j}|f(y')|\,dy'>\lambda$. Set 
\begin{equation}\label{Twazv-2}
\Omega_\lambda=\bigcup_{j=1}^\infty Q_j 
\end{equation}
and observe that we have the following properties
\begin{equation}\label{iu66tg}
{\mathscr{L}}^{n-1}(\Omega_\lambda)\leq \lambda^{-1}\|f\|_{L^1({\mathbb{R}}^{n-1})},
\end{equation}
\begin{equation}\label{Twazv-2hgff}
\lambda<\aver{Q_j}|f(y')|\,dy'\leq 2^{n-1}\,\lambda,\qquad\forall\,j\in{\mathbb{N}},
\end{equation}
and
\begin{equation}\label{Twazv-515dede}
|f(x')|\le \lambda,
\qquad \text{for ${\mathscr{L}}^{n-1}$-a.e. $x'\in\mathbb{R}^{n-1}\setminus\Omega_\lambda$}.
\end{equation}
Finally, split $f=g+b$ where (cf. \cite[p.\,198]{GCRF85})
\begin{equation}\label{Twazv-3}
\begin{array}{c}
g:=f\,{\mathbf 1}_{\mathbb{R}^{n-1}\setminus\Omega_\lambda}
+\sum\limits_{j=1}^\infty f_{Q_j}\,{\mathbf 1}_{Q_j},\qquad b=\sum\limits_{j=1}^\infty b_j,
\\[8pt]
\text{with }\,\,b_j:=\big(f-f_{Q_j})\,{\mathbf 1}_{Q_j}\,\,\text{ for each }\,\,j\in{\mathbb{N}}.
\end{array}
\end{equation}
In particular, \eqref{Twazv-2}-\eqref{Twazv-3} imply (cf. \cite[p.\,198]{GCRF85}) 
that for some constant $C\in(0,\infty)$ 
independent of $f$ and $\lambda$ we have
\begin{equation}\label{Twazv-3ttyy}
\|g\|_{L^2({\mathbb{R}}^{n-1})}^2\leq 2^{n-1}\lambda\|f\|_{L^1({\mathbb{R}}^{n-1})}.
\end{equation}

Making use of \eqref{Twazv-3}, \eqref{iu66tg}, 
\eqref{Twazv-3ttyy}, \eqref{SFE-L2}-\eqref{est-atom-h}, \eqref{Mdgds}
(used with $Q_j$ in place of $Q$),
and bearing in mind that for each $j\in{\mathbb{N}}$ we have ${\rm supp}\,b_j\subset Q_j$ 
and $\int_{\mathbb{R}^{n-1}}b_j(y')\,dy'=0$, we may then estimate
\begin{align}\label{Twazv-4}
&\hskip -0.10in
\lambda\,{\mathscr{L}}^{n-1}\big(\{x'\in\mathbb{R}^{n-1}:\,(S_\Theta f)(x')>\lambda\}\big)
\nonumber\\[6pt]
&\hskip 0.40in
\leq\lambda\,{\mathscr{L}}^{n-1}\big(\{x'\in\mathbb{R}^{n-1}:\,(S_\Theta g)(x')>\lambda/2\}\big)
\nonumber\\[6pt]
&\hskip 0.60in
+\lambda\,{\mathscr{L}}^{n-1}\big(\{x'\in\mathbb{R}^{n-1}:\,(S_\Theta b)(x')>\lambda/2\}\big)
\nonumber\\[6pt]
&\hskip 0.40in
\leq\frac{4}{\lambda}\,\int_{\mathbb{R}^{n-1}}|(S_\Theta g)(x')|^2\,dx'
+\lambda\,{\mathscr{L}}^{n-1}\Big(\bigcup_{j=1}^\infty 2\,Q_j\Big)
\nonumber\\[6pt]
&\hskip 0.60in
+2\,\sum_{j=1}^\infty\int_{\mathbb{R}^{n-1}\setminus 2Q_j}\,|(S_\Theta b_j)(x')|\,dx'
\nonumber\\[6pt]
&\hskip 0.40in
\leq\frac{C}{\lambda}\int_{\mathbb{R}^{n-1}}|g(x')|^2\,dx'
+C\|f\|_{L^1(\mathbb{R}^{n-1})}
\nonumber\\[6pt]
&\hskip 0.60in
+C\,\sum_{j=1}^\infty\|b_j\|_{L^1(\mathbb{R}^{n-1})}
\int_{\mathbb{R}^{n-1}\setminus 2Q_j}\,\frac{\ell(Q_j)}{|x'-x'_{Q_j}|^n}\,dx'
\nonumber\\[6pt]
&\hskip 0.40in
\leq C\,\|f\|_{L^1(\mathbb{R}^{n-1})}
+C\,\sum_{j=1}^\infty\|b_j\|_{L^1(\mathbb{R}^{n-1})}
\nonumber\\[6pt]
&\hskip 0.40in
\leq C\,\|f\|_{L^1(\mathbb{R}^{n-1})}
+C\,\Big(\sum_{j=1}^\infty\int_{Q_j}|f(y')|\,dy'\Big)
\nonumber\\[6pt]
&\hskip 0.40in
\leq C\,\|f\|_{L^1(\mathbb{R}^{n-1})}.
\end{align}
This proves that the operator in \eqref{hdgswf.DD2} is 
well-defined, linear and bounded. The latter and Marcinkewicz's Interpolation Theorem 
imply the boundedness of the operator in \eqref{hdgswf.DD} when $1<p\leq 2$. 
We may handle the full range $1<p<\infty$ by invoking \cite[Theorem~1.1, p.\,6]{HMMM}, 
applied with ${\mathscr{X}}:=\overline{{\mathbb{R}}^{n}_{+}}$ equipped with the standard 
Euclidean distance and Lebesgue measure, $E:={\mathbb{R}}^{n-1}\times\{0\}$, 
$m=n$, $d=n-1$, $\upsilon=1$, $\alpha=1$, $\sigma:={\mathscr{L}}^{n-1}$, 
and the integral operator with kernel $t^{-1}\theta(x',t;y')$. The fact that 
\eqref{hdgswf} holds implies that \cite[(1.25), p.\,6]{HMMM} is satisfied. 
As such, \cite[Theorem~1.1, p.\,6]{HMMM} guarantees the validity of \cite[(1.34), p.\,7]{HMMM} 
which, in our current notation, implies that the operator in \eqref{hdgswf.DD} 
is bounded for every $p\in(1,\infty)$.

Next we consider $S_\Theta$ in the context of \eqref{hdgswf.DD3}.
In this regard, we shall first show that there exists some constant 
$C\in(0,\infty)$ such that for every $(1,\infty)$-atom $a$ one has
\begin{equation}\label{SFE-H1}
\|S_\Theta\,a\|_{L^1(\mathbb{R}^{n-1})}\leq C.
\end{equation}
To justify \eqref{SFE-H1} fix an arbitrary function $a$ satisfying the conditions 
listed in \eqref{defi-atom}. On the one hand, based on H\"older's inequality, \eqref{SFE-L2},
and the first two properties in \eqref{defi-atom} we may write
\begin{equation}\label{SFE-H1-local}
\int_{2Q}|(S_\Theta\,a)(x')|\,dx'
\leq C\,|Q|^{\frac12}\,\|S_{\Theta}\,a\|_{L^2(\mathbb{R}^{n-1})}
\leq C\,|Q|^{\frac12}\,\|a\|_{L^2(\mathbb{R}^{n-1})}\leq C,
\end{equation}
for some finite constant $C>0$ independent of $a$. On the other hand, \eqref{defi-atom} 
allows us to make use of \eqref{est-atom-h}
(with $a$ in place of $h$), which we combine with the second property in 
\eqref{defi-atom} and \eqref{Mdgds} to obtain
\begin{equation}\label{SFE-H1-global}
\int_{\mathbb{R}^{n-1}\setminus 2Q}|(S_\Theta\,a)(x')|\,dx'
\leq C\,\|a\|_{L^1(\mathbb{R}^{n-1})}\,\int_{\mathbb{R}^{n-1}\setminus 2Q}
\frac{\ell(Q)}{|x'-x'_Q|^n}\,dx'\leq C,
\end{equation}
with $C\in(0,\infty)$ independent of the atom $a$. 
Combining \eqref{SFE-H1-local} and \eqref{SFE-H1-global} 
then proves that \eqref{SFE-H1} holds.

Here is the end-game in the proof of the fact 
that $S_\Theta$ maps $H^1(\mathbb{R}^{n-1})$ boundedly into 
$L^1(\mathbb{R}^{n-1})$. Let $f\in H^1(\mathbb{R}^{n-1})$ be arbitrary and consider
an atomic decomposition $f=\sum\limits_{j=1}^\infty\lambda_j\,a_j$ convergent in $H^1(\mathbb{R}^{n-1})$, 
where the $a_j$'s are $(1,\infty)$-atoms, which is quasi-optimal in the sense that 
$\sum\limits_{j=1}^\infty|\lambda_j|\approx\|f\|_{H^1(\mathbb{R}^{n-1})}$,
where the proportionality constants do not depend on $f$. 
In particular, this forces $f=\sum\limits_{j=1}^\infty\lambda_j\,a_j$ in $L^1(\mathbb{R}^{n-1})$ 
and the weak-type $(1,1)$ estimate for $S_\Theta$ then implies
$S_\Theta f=\sum\limits_{j=1}^\infty\lambda_j\,S_\Theta\,a_j$ in $L^{1,\infty}(\mathbb{R}^{n-1})$. 
Then the sequence of partial sums associated with the latter series has a sub-sequence 
which converges a.e. to $S_\Theta f$. In turn, this allows us to conclude that
\begin{equation}\label{Twazv-a}
|(S_\Theta f)(x')|\leq\sum_{j=1}^\infty|\lambda_j|\,|(S_\Theta\,a_j)(x')|
\quad\mbox{ for a.e. }\,x'\in\mathbb{R}^{n-1}.
\end{equation}
In concert, \eqref{Twazv-a} and \eqref{SFE-H1} then imply
\begin{equation}\label{Twazv-b}
\|S_\Theta\,f\|_{L^1(\mathbb{R}^{n-1})}\leq\sum_{j=1}^\infty|\lambda_j|\,
\|S_\Theta\,a_j\|_{L^1(\mathbb{R}^{n-1})}\leq C\,\sum_{j=1}^\infty|\lambda_j|
\leq C\,\|f\|_{H^1(\mathbb{R}^{n-1})},
\end{equation}
as desired, for some constant $C\in(0,\infty)$ independent of $f$.
\end{proof}

We now have all the ingredients to proceed with the proof of 
Proposition~\ref{prop-Dir-BMO:exis}.

\vskip 0.08in
\begin{proof}[Proof of Proposition~\ref{prop-Dir-BMO:exis}]
Fix an arbitrary $f\in L^1\Big({\mathbb{R}}^{n-1}\,,\,\frac{dx'}{1+|x'|^n}\Big)^M$ and
define $u$ as in \eqref{eqn-Dir-BMO:u:prop}. Part {\it (7)} in Theorem~\ref{kkjbhV} then 
ensures that this function satisfies all properties listed in \eqref{exist:u2}. 
 
As in \eqref{eq:Gvav7g5}, write $K^L(x',t)=P_t^L(x')$ for each $(x',t)\in\mathbb{R}^n_+$. 
To proceed, fix an arbitrary point $(x',t)\in{\mathbb{R}}^n_{+}$ and
denote by $Q_{x',t}$ the cube in $\mathbb{R}^{n-1}$ centered at $x'$ with side-length $t$.
Making use of \eqref{eq:IG6gy.2PPP} we obtain
\begin{equation}\label{XXdgsr}
\int_{{\mathbb{R}}^{n-1}}\nabla^\ell\big[ P^L_t(x'-y')\big]\,dy'=0,
\qquad\forall\,x'\in{\mathbb{R}}^{n-1},\,\,\forall\,t>0,\,\,\,\forall\,\ell\in{\mathbb{N}}.
\end{equation}
Based on this, for each $\ell\in{\mathbb{N}}$ we may then write 
\begin{align}\label{eqn:BMO-decay-EEE.3}
(\nabla^\ell u)(x',t) &=\int_{{\mathbb{R}}^{n-1}}\nabla^\ell\big[P^L_t(x'-y')\big]f(y')\,dy'
\nonumber\\[4pt]
&=\int_{{\mathbb{R}}^{n-1}}\nabla^\ell\big[P^L_t(x'-y')\big]\big[f(y')-f_{Q_{x',t}}\big]\,dy'
\nonumber\\[4pt]
&=\int_{{\mathbb{R}}^{n-1}}(\nabla^\ell K^L)(x'-y',t)\big[f(y')-f_{Q_{x',t}}\big]\,dy'.
\end{align}
Combining \eqref{eqn:BMO-decay-EEE.3}, \eqref{eq:Kest}, and \eqref{eqn:BMO-decay.88} 
(with $\varepsilon=\ell$), we may now estimate 
\begin{align}\label{eqn:BMO-decay-EEE.AB}
\big|(\nabla^\ell u)(x',t)\big|\leq
C\int_{\mathbb{R}^{n-1}}\frac{|f(y')-f_{Q_{x',t}}|}{\big|(x'-y',t)\big|^{n-1+\ell}}\,dy'
\leq \frac{C}{t^\ell}\int_{1}^\infty{\rm osc}_1\big(f;\lambda\,t\big)\,\frac{d\lambda}{\lambda^{1+\ell}},
\end{align}
from which the claims in part {\it (a)} of the statement follow. 

Moving on, fix an arbitrary $j\in\{1,\dots,n\}$ and, for each $\alpha,\beta\in\{1,\dots,M\}$, set 
\begin{equation}\label{est-theJKBB}
\theta_{\alpha\beta}^j(x',t;y'):=t\,\partial_j K_{\alpha\beta}^L(x'-y',t),\qquad
\forall\,x',y'\in\mathbb{R}^{n-1}_{+},\quad\forall\,t>0. 
\end{equation}
In this regard, first observe that \eqref{eq:Kest} in Theorem~\ref{kkjbhV} implies
\begin{equation}\label{est-theta-K}
|\theta_{\alpha\beta}^j(x',t;y')|
=t\,|\partial_j K_{\alpha\beta}^L(x'-y',t)|
\leq C\,t\,|(x'-y',t)|^{-n}
\end{equation}
and
\begin{equation}\label{est-theta-K-nabla}
|\nabla_{y'}\theta_{\alpha\beta}^j(x',t;y')|
\leq t\,|\nabla^2 K_{\alpha\beta}^L(x'-y',t)|
\leq C\,t|(x'-y',t)|^{-n-1}.
\end{equation}
Hence, \eqref{SFE-est-theta} (with $\varepsilon=1$) and \eqref{SFE-est-theta-bis}
hold for $\theta_{\alpha\beta}^j$. Moreover, 
\begin{align}\label{est-theta-vanish}
\int_{\mathbb{R}^{n-1}}\theta_{\alpha\beta}^j(x',t;y')\,dy' 
=\int_{\mathbb{R}^{n-1}}t\,\partial_j K_{\alpha\beta}^L(x'-y',t)\,dy'
=t\,\partial_j\int_{\mathbb{R}^{n-1}}K_{\alpha\beta}^L(y',t)\,dy'=0
\end{align}
since $\partial_j\int_{\mathbb{R}^{n-1}}(P_{\alpha\beta}^L)_t(y')\,dy'=0$ 
by \eqref{XXdgsr}. Writing $\Theta_{\alpha\beta}^j$ 
for the operator associated with the kernel $\theta_{\alpha\beta}^j$ 
(in place of $\theta$) as in \eqref{defi:Theta},
it follows from \eqref{est-theta-K}, \eqref{est-theta-K-nabla}, \eqref{est-theta-vanish},
and Proposition~\ref{prop:SFE-early} that each matrix integral operator 
$\Theta^j:=\big(\Theta_{\alpha\beta}^j\big)_{1\leq\alpha,\beta\leq M}$
satisfies all hypotheses in Proposition~\ref{prop:SFE} (including \eqref{y6bTVV}).
In addition, 
\begin{align}\label{Twazv-c}
\left(\int_{0}^{\ell(Q)}\aver{Q}|\nabla u(x',t)|^2\,t\,dx'dt\right)^\frac12
&=\left(\int_{0}^{\ell(Q)}\aver{Q}\big|t\nabla\big(P_t^L* f\big)(x')\big|^2 
\frac{dx'dt}{t}\right)^{\frac12}
\nonumber\\[6pt]
& \leq\sum_{j=1}^n\left(\int_{0}^{\ell(Q)}\aver{Q}
\Big|t\,\big(\partial_j K^L(\cdot,t)*f\big)(x')\Big|^2\frac{dx'dt}{t}\right)^{\frac12}
\nonumber\\[6pt]
&=\sum_{j=1}^n\left(\int_{0}^{\ell(Q)}
\aver{Q}\big|(\Theta^j f)(x',t)\big|^2\frac{dx'dt}{t}\right)^{\frac12}.
\end{align}
Granted this, all remaining conclusions in parts {\it (b)}-{\it (f)} of the statement become 
direct consequences of Proposition~\ref{prop:SFE}. 
\end{proof}

\section{A Fatou result and uniqueness in the {\rm BMO}-Dirichlet problem}
\setcounter{equation}{0}
\label{section:Fatou-BMO}

The main result in this section is the following Poisson representation formula and
Fatou theorem.

\begin{proposition}\label{prop-Dir-BMO:uniq-A}
Let $L$ be an $M\times M$ elliptic system with constant complex coefficients as in
\eqref{L-def}-\eqref{L-ell.X}. Assume that
\begin{equation}\label{uniq:u2-A}
u\in\mathscr{C}^\infty(\mathbb{R}^n_{+},{\mathbb{C}}^M),\quad
Lu=0\,\,\mbox{ in }\,\,\mathbb{R}^{n}_{+},\,\,\text{ and }\,\,\|u\|_{**}<\infty.
\end{equation}
Then there exists a unique function 
$f\in L^1\Big({\mathbb{R}}^{n-1},\frac{1}{1+|x'|^n}\,dx'\Big)^M$ such that 
\begin{equation}\label{eqn-Dir-BMO:u-A}
u(x',t)=(P_t^L*f)(x'),\qquad\forall\,(x',t)\in{\mathbb{R}}^n_{+},
\end{equation}
where $P^L$ is the Poisson kernel for $L$ in $\mathbb{R}^{n}_+$ from
Theorem~\ref{kkjbhV}.

In fact, $u\big|_{\partial\mathbb{R}^{n}_{+}}^{{}^{\rm n.t.}}$ exists 
at a.e. point in ${\mathbb{R}}^{n-1}$, belongs to 
$\mathrm{BMO}(\mathbb{R}^{n-1},\mathbb{C}^M)$, and 
$f=u\big|_{\partial\mathbb{R}^{n}_{+}}^{{}^{\rm n.t.}}$. Moreover, there exists
a constant $C=C(n,L)\in(1,\infty)$ such that 
\begin{equation}\label{jxdgsvgf}
C^{-1}\|f\|_{\mathrm{BMO}(\mathbb{R}^{n-1},\mathbb{C}^M)}
\leq\|u\|_{**}\leq C\|f\|_{\mathrm{BMO}(\mathbb{R}^{n-1},\mathbb{C}^M)}.
\end{equation}
\end{proposition}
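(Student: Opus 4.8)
The plan is to first dispose of uniqueness of $f$ and of the right-hand inequality in \eqref{jxdgsvgf}, and then to devote the bulk of the work to producing $f$ together with the bound $\|f\|_{\mathrm{BMO}(\mathbb{R}^{n-1},\mathbb{C}^M)}\le C\|u\|_{**}$ and to identifying it as the nontangential trace of $u$. Uniqueness is immediate: if $P_t^L\ast f_1=P_t^L\ast f_2$ for every $t>0$ then, since $\mathrm{BMO}(\mathbb{R}^{n-1},\mathbb{C}^M)\subset L^1\big(\mathbb{R}^{n-1},\tfrac{dx'}{1+|x'|^n}\big)^M$ by \eqref{eq:aaAabgr-22}, part~(7) of Theorem~\ref{kkjbhV} forces $f_1=f_2$ at every common Lebesgue point, hence a.e. Moreover, once the representation \eqref{eqn-Dir-BMO:u-A} is established, the estimate $\|u\|_{**}\le C\|f\|_{\mathrm{BMO}}$ is precisely \eqref{exist:u2-carleso} in Proposition~\ref{prop-Dir-BMO:exis}. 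So everything reduces to exhibiting $f\in\mathrm{BMO}(\mathbb{R}^{n-1},\mathbb{C}^M)$ with $\|f\|_{\mathrm{BMO}}\le C\|u\|_{**}$ and $u(x',t)=(P_t^L\ast f)(x')$, and to checking $u\big|^{{}^{\rm n.t.}}_{\partial\mathbb{R}^n_+}=f$.

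First I would couple the Carleson condition $\|u\|_{**}<\infty$ with the interior estimate of Theorem~\ref{ker-sbav}, applied to $u$ and (since $L$ has constant coefficients) to its derivatives, obtaining $|(\nabla^\ell u)(x',t)|\le C_\ell\|u\|_{**}\,t^{-\ell}$ on $\mathbb{R}^n_+$ for every $\ell\in\mathbb{N}$. Integrating $|\nabla u|$ along a path that first rises to height $\sim(1+|x'|)$ and then runs horizontally shows that each slice $u(\cdot,\varepsilon)$ is smooth with at most logarithmic growth, so it lies in $L^1\big(\mathbb{R}^{n-1},\tfrac{dx'}{1+|x'|^n}\big)^M$. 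Next I would prove the raised-slice formula
\[
u(x',t+\varepsilon)=\big(P_t^L\ast u(\cdot,\varepsilon)\big)(x'),\qquad(x',t)\in\mathbb{R}^n_+,\quad\varepsilon>0.
\]
Set $w:=u(\cdot,\cdot+\varepsilon)-P^L_{(\cdot)}\ast u(\cdot,\varepsilon)$. By the interior estimates and Theorem~\ref{kkjbhV}\,(7), both terms belong to $W^{1,2}_{\rm bd}(\mathbb{R}^n_+,\mathbb{C}^M)$, are null-solutions of $L$, and have Sobolev trace $u(\cdot,\varepsilon)$; hence $Lw=0$ and $\mathrm{Tr}\,w=0$. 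Proposition~\ref{c1.2} then gives $|\nabla w(z_0)|\le C\rho^{-1}\sup_{B(z_0,2\rho)\cap\mathbb{R}^n_+}|w|$ for all $\rho>0$, and since $w$ is continuous up to $\partial\mathbb{R}^n_+$, vanishes there, and grows at most logarithmically at infinity, letting $\rho\to\infty$ forces $\nabla w\equiv0$; then $w$ is constant, so $w\equiv0$.

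The heart of the matter is the bound $\|f\|_{\mathrm{BMO}}\le C\|u\|_{**}$, which I would obtain by duality. With $v_g:=P^L_{(\cdot)}\ast g$, consider
\[
\Lambda(g):=\iint_{\mathbb{R}^n_+}(\nabla u)(x',t)\cdot\overline{(\nabla v_g)(x',t)}\;t\,dx'dt,\qquad g\in H^1(\mathbb{R}^{n-1},\mathbb{C}^M).
\]
The integral converges absolutely and $|\Lambda(g)|\le C\|u\|_{**}\|g\|_{H^1(\mathbb{R}^{n-1},\mathbb{C}^M)}$ by the tent-space Cauchy--Schwarz inequality (cf.\ Lemma~\ref{lemma:tent}) applied to $F:=t\nabla u$, whose $T^\infty_2$-norm equals $\|u\|_{**}$, and $G:=t\nabla v_g$, whose $T^1_2$-norm is controlled by $\|S_{\Theta}g\|_{L^1(\mathbb{R}^{n-1})}\le C\|g\|_{H^1(\mathbb{R}^{n-1},\mathbb{C}^M)}$ via Proposition~\ref{prop:SFE-early} with $\Theta:=t\nabla(P_t^L\ast\,\cdot\,)$ (whose kernel is the one used in the proof of Proposition~\ref{prop-Dir-BMO:exis} and meets the hypotheses of Proposition~\ref{prop:SFE-early}). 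By the $H^1$-$\mathrm{BMO}$ duality of Fefferman there is $f\in\mathrm{BMO}(\mathbb{R}^{n-1},\mathbb{C}^M)$ with $\|f\|_{\mathrm{BMO}}\le C\|\Lambda\|_{(H^1)^*}\le C'\|u\|_{**}$ and $\Lambda(g)=\langle f,g\rangle$ for all $g$. Using the raised-slice formula to rewrite $u(\cdot,\varepsilon)$ as a Poisson extension, together with a Calder\'on reproducing identity for $\nabla(P^L_{(\cdot)}\ast\,\cdot\,)$ inside the integral, one checks $\langle u(\cdot,\varepsilon),g\rangle\to\Lambda(g)=\langle f,g\rangle$ as $\varepsilon\to0^+$; testing the raised-slice identity against the $H^1$-molecules $y'\mapsto P_t^L(x'-y')-P_t^L(x''-y')$ then shows $u(x',t)-(P_t^L\ast f)(x')$ is independent of $x'$, while $|\partial_t[u(x',t)-(P^L_t\ast f)(x')]|\le C\|u\|_{**}/t$ and the invertibility of $(a^{\alpha\beta}_{nn})_{\alpha\beta}$ (a consequence of \eqref{L-ell.X}) force it to be independent of $t$ as well. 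Absorbing this constant into $f$, which leaves $\|f\|_{\mathrm{BMO}}$ unchanged, yields \eqref{eqn-Dir-BMO:u-A}; and by Theorem~\ref{kkjbhV}\,(7) the nontangential limit of $P_t^L\ast f$ equals $f$ at every Lebesgue point, so $u\big|^{{}^{\rm n.t.}}_{\partial\mathbb{R}^n_+}$ exists a.e.\ and coincides with $f$.

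I expect the main obstacle to be precisely the estimate $\|f\|_{\mathrm{BMO}}\le C\|u\|_{**}$: any naive telescoping of $u(x',\varepsilon)-u(x',\ell(Q))$ in the vertical variable only produces a bound on the mean oscillation over $Q$ carrying a spurious factor $\big(\log(\ell(Q)/\varepsilon)\big)^{1/2}$, and it is exactly the square-function bounds of Proposition~\ref{prop:SFE-early} (in particular $S_\Theta:H^1\to L^1$) together with tent-space duality that eliminate it. A secondary technical point is the rigorous verification of the Calder\'on reproducing identity and of the convergence $\langle u(\cdot,\varepsilon),g\rangle\to\Lambda(g)$, where the raised-slice formula of the second step, the interior estimates, and a dominated-convergence argument in tent spaces do the work.
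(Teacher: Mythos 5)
Much of your outline does coincide with the paper's route: the raised-slice formula $u(\cdot,\cdot+\varepsilon)=P^L_t\ast u(\cdot,\varepsilon)$ proved through membership in $W^{1,2}_{\rm bd}({\mathbb{R}}^n_{+},{\mathbb{C}}^M)$, vanishing Sobolev trace and Proposition~\ref{c1.2} is exactly Lemma~\ref{lemma:u-lift:uniq}, and the tent-space inequality of Lemma~\ref{lemma:tent}, the bound $S_\Theta:H^1\to L^1$ from Proposition~\ref{prop:SFE-early}, and the duality \eqref{jcgsfSEW} are precisely the tools used in \S\ref{section:Fatou-BMO}. The genuine gap is your identification step. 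The convergence $\langle u(\cdot,\varepsilon),g\rangle\to\Lambda(g)$, i.e.\ the ``Calder\'on reproducing identity'' for $t\nabla(P^L_t\ast\cdot)$ that you invoke, is not available in this generality. By the semigroup property \eqref{eq:re4fd} one has $\widehat{P^L_t}(\xi)=e^{-tB(\xi)}$ for a matrix $B(\xi)$ homogeneous of degree one, and the $t$-integral produced by your pairing, namely $m(\xi):=\int_0^\infty\big(t^2|\xi|^2\,\overline{e^{-tB(\xi)}}^{\,\top}e^{-tB(\xi)}+\overline{tB(\xi)e^{-tB(\xi)}}^{\,\top}\,tB(\xi)e^{-tB(\xi)}\big)\,\frac{dt}{t}$, is a matrix-valued symbol homogeneous of degree zero which for a non-symmetric complex system is not a scalar multiple of $I_{M\times M}$ (already for $L=\Delta$ it equals $\tfrac12 I$, so even there $\Lambda(g)\neq\langle f_0,g\rangle$). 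Consequently the $f$ you extract from $H^1$--$\mathrm{BMO}$ duality is, morally, a nontrivial matrix Fourier multiplier applied to the true boundary datum of $u$, and the subsequent molecule test cannot yield $u=P^L_t\ast f+{\rm const}$. The complex conjugation in $\overline{\nabla v_g}$ compounds the problem: $\overline{v_g}$ is a null-solution of $\bar L$, not of $L$, so no Green-type identity with weight $t$ (the mechanism behind the harmonic case) is available, and the discrepancy cannot be absorbed into a normalizing constant.

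For comparison, the paper circumvents exactly this obstruction by never using the full gradient in the reproducing step: it works with the vertical derivative only, $\Phi_t\ast f=t\partial_t(P^L_t\ast f)$ with $\Phi$ as in \eqref{Jhxg}, for which the semigroup property gives the exact telescoping identity \eqref{Phi-CRF} of Lemma~\ref{prop:Pt-Delta}, and it pairs bilinearly with the transposed kernel $\widetilde{\Phi}(x')=\Phi^\top(-x')$ rather than with a conjugate. This yields Proposition~\ref{prop:Car->BMO}, which is then applied to each slice $f_\varepsilon=u(\cdot,\varepsilon)$ (Lemma~\ref{lemma:feps-BMO}) to get $\|f_\varepsilon\|_{\mathrm{BMO}(\mathbb{R}^{n-1},\mathbb{C}^M)}\leq C\|u\|_{**}$ uniformly in $\varepsilon$; Alaoglu's theorem produces a weak-$*$ limit $g$, and since the components of $t\nabla K^L(\cdot,t)$ are $H^1$-molecules (Lemma~\ref{lemma:weak-*}) one may pass to the limit in $\nabla u_\varepsilon=\nabla(P^L_t\ast f_\varepsilon)$ to conclude $\nabla u=\nabla(P^L_t\ast g)$, hence $u=P^L_t\ast g+C$ and $f:=g+C$, after which Theorem~\ref{kkjbhV}\,(7) identifies $f$ with the nontangential trace. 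If you wish to salvage your direct duality definition of $f$, you must replace your full-gradient, conjugated pairing by this $t\partial_t$-based pairing against the transposed kernel (or by a conormal pairing involving the coefficient tensor and the Poisson extension for $L^\top$), for which the reproducing identity is an exact consequence of the semigroup property; the rest of your argument (uniform $H^1$ bounds, molecule testing, and the final affine-in-$t$ elimination using the invertibility of $(a^{\alpha\beta}_{nn})_{\alpha\beta}$) can then be carried through.
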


Also, as a corollary of Proposition~\ref{prop-Dir-BMO:uniq-A} we have the following 
result which, in view of \eqref{ncud}, implies the uniqueness statements for the
{\rm BMO}-Dirichlet problem and the {\rm VMO}-Dirichlet problem from 
Theorem~\ref{them:BMO-Dir}.

\begin{proposition}\label{prop-Dir-BMO:uniq}
Let $L$ be an $M\times M$ elliptic system with constant complex coefficients as in
\eqref{L-def}-\eqref{L-ell.X}. Assume that
\begin{equation}\label{uniq:u2}
\begin{array}{c}
u\in\mathscr{C}^\infty(\mathbb{R}^n_{+},{\mathbb{C}}^M),\quad
Lu=0\,\,\mbox{ in }\,\,\mathbb{R}^{n}_{+},\quad\|u\|_{**}<\infty,
\\[6pt]
u\big|_{\partial\mathbb{R}^{n}_{+}}^{{}^{\rm n.t.}}\,\,\text{ exists and vanishes at a.e.
point in }\,\,\mathbb{R}^{n-1}.
\end{array}
\end{equation}
Then necessarily $u\equiv 0$ in $\mathbb{R}^n_+$.
\end{proposition}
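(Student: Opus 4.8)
The plan is to deduce Proposition~\ref{prop-Dir-BMO:uniq} directly from Proposition~\ref{prop-Dir-BMO:uniq-A}. Assume $u$ satisfies the hypotheses in \eqref{uniq:u2}. Since in particular $u\in{\mathscr{C}}^\infty({\mathbb{R}}^n_{+},{\mathbb{C}}^M)$ is a null-solution of $L$ with $\|u\|_{**}<\infty$, Proposition~\ref{prop-Dir-BMO:uniq-A} applies and produces a function $f\in{\mathrm{BMO}}({\mathbb{R}}^{n-1},{\mathbb{C}}^M)$ with $u(x',t)=(P^L_t\ast f)(x')$ for all $(x',t)\in{\mathbb{R}}^n_{+}$, and moreover the Poisson representation result identifies $f$ as $u\big|^{{}^{\rm n.t.}}_{\partial{\mathbb{R}}^n_{+}}$, which by \eqref{uniq:u2} exists and vanishes at ${\mathscr{L}}^{n-1}$-a.e.\ point of ${\mathbb{R}}^{n-1}$. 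Hence $f=0$ a.e.\ in ${\mathbb{R}}^{n-1}$.

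Once $f=0$ a.e., the representation formula \eqref{eqn-Dir-BMO:u-A} forces $u(x',t)=(P^L_t\ast 0)(x')=0$ for every $(x',t)\in{\mathbb{R}}^n_{+}$, since convolution with $P^L_t$ is a genuine (absolutely convergent) integral against a function that is zero a.e. Therefore $u\equiv 0$ in ${\mathbb{R}}^n_{+}$, which is exactly the desired conclusion. One could alternatively phrase the last step through the two-sided estimate \eqref{jxdgsvgf}: from $f=0$ one gets $\|u\|_{**}\leq C\|f\|_{{\mathrm{BMO}}({\mathbb{R}}^{n-1},{\mathbb{C}}^M)}=0$, so $\nabla u\equiv 0$ in ${\mathbb{R}}^n_{+}$, whence $u$ is a constant vector; that constant must be $0$ because $u\big|^{{}^{\rm n.t.}}_{\partial{\mathbb{R}}^n_{+}}=0$ at almost every boundary point. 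Either route is essentially immediate.

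Given this structure, there is no real obstacle internal to the proof of Proposition~\ref{prop-Dir-BMO:uniq} itself: the entire difficulty has been pushed into Proposition~\ref{prop-Dir-BMO:uniq-A}, whose proof (relying on the square-function estimates of Proposition~\ref{prop:SFE-early}, tent-space theory, interior estimates from Theorem~\ref{ker-sbav}, and the boundary estimates of Proposition~\ref{c1.2}) is where the substantive work lies. The only point that merits a line of care is making sure the nontangential trace appearing in \eqref{uniq:u2} is the same object as the one furnished by Proposition~\ref{prop-Dir-BMO:uniq-A} — but this is guaranteed because that proposition asserts $u\big|^{{}^{\rm n.t.}}_{\partial{\mathbb{R}}^n_{+}}$ exists a.e.\ and equals $f$, and the nontangential limit, when it exists, is unique independently of the aperture parameter $\kappa$ (cf.\ the discussion around \eqref{nkc-EE-2} and \eqref{exTGFVC.2s}). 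Thus the proof is a short deduction once Proposition~\ref{prop-Dir-BMO:uniq-A} is in hand.
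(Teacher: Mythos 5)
Your proposal is correct and matches the paper's intent exactly: the paper itself introduces Proposition~\ref{prop-Dir-BMO:uniq} ``as a corollary of Proposition~\ref{prop-Dir-BMO:uniq-A}'' and leaves the (immediate) deduction implicit, which is precisely the short argument you spell out — applying Proposition~\ref{prop-Dir-BMO:uniq-A} to identify $f=u\big|^{{}^{\rm n.t.}}_{\partial{\mathbb{R}}^n_{+}}=0$ a.e., and then invoking the Poisson representation \eqref{eqn-Dir-BMO:u-A} (or, equivalently, \eqref{jxdgsvgf}) to conclude $u\equiv 0$.
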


The proof of Proposition~\ref{prop-Dir-BMO:uniq-A} occupies the bulk of this section.
To set the stage, we first prove some auxiliary lemmas. The first such lemma contains 
Bloch-like estimates for smooth null-solutions of $L$ satisfying a Carleson measure 
condition in the upper-half space. To place things in perspective, recall that a holomorphic function 
$u$ in the upper-half plane is said to satisfy a Bloch estimate provided
\begin{equation}\label{i7h6tg}
\sup_{x\in{\mathbb{R}},\,y>0}\,\big(y|u'(x+iy)|\big)<\infty.
\end{equation}

\begin{lemma}\label{lemma:decay-Du:Carl}
Let $L$ be an $M\times M$ elliptic system with constant complex coefficients as in
\eqref{L-def}-\eqref{L-ell.X}. Then for every multi-index $\alpha\in{\mathbb{N}}_0^n$ with $|\alpha|\geq 1$ 
there exists a finite constant $C=C(n,L,\alpha)>0$ with the property that for every 
function $u\in\mathscr{C}^\infty(\mathbb{R}^n_+,{\mathbb{C}}^M)$ satisfying 
$Lu=0$ in $\mathbb{R}_+^n$ and $\|u\|_{**}<\infty$ one has
\begin{equation}\label{decay-Du:Carl-alpha}
\sup_{(x',t)\in\mathbb{R}^n_+}\Big\{t^{|\alpha|}\,\big|(\partial^\alpha u)(x',t)\big|\Big\}\leq C\,\|u\|_{**}.
\end{equation}
In particular, there exists a finite constant $C=C(n,L)>0$ with the property that for every 
function $u\in\mathscr{C}^\infty(\mathbb{R}^n_+,{\mathbb{C}}^M)$ such that 
$Lu=0$ in $\mathbb{R}_+^n$ and $\|u\|_{**}<\infty$ one has
\begin{equation}\label{decay-Du:Carl}
\sup_{(x',t)\in\mathbb{R}^n_+} t\,|\nabla u(x',t)|\leq C\,\|u\|_{**}.
\end{equation}
\end{lemma}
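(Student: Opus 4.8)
The plan is to establish the gradient bound \eqref{decay-Du:Carl} first — viewing it as the case $|\alpha|=1$ of \eqref{decay-Du:Carl-alpha} — and then to bootstrap to arbitrary multi-indices using the interior estimates in Theorem~\ref{ker-sbav}. Throughout, fix a point $(x_0',t_0)\in{\mathbb{R}}^n_{+}$. The natural object to exploit is the Carleson box sitting above the cube $Q$ centered at $x_0'$ with side-length $t_0$, or rather a suitably dilated version of it. The key observation is that $v:=\nabla u$ is itself a (${\mathbb{C}}^{nM}$-valued) null-solution of $L$ in ${\mathbb{R}}^n_{+}$, and the quantity we wish to control, $t_0|\nabla u(x_0',t_0)|$, can be recovered from an $L^2$-average of $|\nabla u|$ over a ball of radius comparable to $t_0$ centered at $(x_0',t_0)$, by the interior estimate \eqref{detraz} applied with $p=2$, $\ell=0$, $u$ there replaced by $\nabla u$, on the ball $B\big((x_0',t_0),3t_0/4\big)\subset{\mathbb{R}}^n_{+}$ (note this ball stays in the upper-half space since $t_0-3t_0/4=t_0/4>0$, and the radii can be chosen so that the enlarged ball also stays inside). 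This yields
\begin{equation*}
t_0\,|\nabla u(x_0',t_0)|\leq C\,t_0\Big(\aver{B((x_0',t_0),3t_0/4)}|\nabla u|^2\,d{\mathscr{L}}^n\Big)^{1/2}.
\end{equation*}

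Next I would bound the right-hand side by the Carleson seminorm. The ball $B\big((x_0',t_0),3t_0/4\big)$ is contained in the Carleson box $\{(x',t):\,x'\in c\,Q,\ 0<t<\ell(c\,Q)\}$ over a fixed dilate $c\,Q$ of $Q$ (for a purely dimensional constant $c$, e.g. $c=4$ suffices, since points in the ball have $t$-coordinate between $t_0/4$ and $7t_0/4$ and first coordinate within $3t_0/4$ of $x_0'$). On that box the Lebesgue measure $d{\mathscr{L}}^n=dx'\,dt$ and the Littlewood–Paley measure $t\,dx'\,dt$ are comparable, because $t$ there is comparable to $t_0$ (bounded below by $t_0/4$). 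Concretely,
\begin{equation*}
\int_{B((x_0',t_0),3t_0/4)}|\nabla u|^2\,d{\mathscr{L}}^n
\leq \frac{4}{t_0}\int_{B((x_0',t_0),3t_0/4)}|\nabla u(x',t)|^2\,t\,dx'\,dt
\leq \frac{4}{t_0}\int_0^{\ell(cQ)}\!\!\int_{cQ}|\nabla u(x',t)|^2\,t\,dx'\,dt
\leq \frac{4}{t_0}\,|cQ|\,\|u\|_{**}^2.
\end{equation*}
Since $|B((x_0',t_0),3t_0/4)|\approx t_0^{\,n}$ and $|cQ|=(c\,t_0)^{n-1}\approx t_0^{\,n-1}$, dividing by $|B((x_0',t_0),3t_0/4)|$ gives
\begin{equation*}
\Big(\aver{B((x_0',t_0),3t_0/4)}|\nabla u|^2\,d{\mathscr{L}}^n\Big)^{1/2}\leq \frac{C}{t_0}\,\|u\|_{**},
\end{equation*}
and multiplying by $t_0$ yields \eqref{decay-Du:Carl} with a constant depending only on $n$ and $L$. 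Taking the supremum over $(x_0',t_0)\in{\mathbb{R}}^n_{+}$ completes this step.

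For the general bound \eqref{decay-Du:Carl-alpha} with $|\alpha|=m\geq 1$, I would iterate: write $\partial^\alpha u=\partial^{\alpha'}(\nabla u)$ (or a component thereof) for some multi-index $\alpha'$ with $|\alpha'|=m-1$, and apply the higher-order interior estimate \eqref{detraz} with $\ell=m-1$ to the null-solution $\nabla u$ on the ball $B\big((x_0',t_0),3t_0/4\big)$ with inner radius $t_0/2$, obtaining
\begin{equation*}
|(\partial^\alpha u)(x_0',t_0)|\leq \frac{C}{t_0^{\,m-1}}\Big(\aver{B((x_0',t_0),3t_0/4)}|\nabla u|^2\,d{\mathscr{L}}^n\Big)^{1/2}\leq \frac{C}{t_0^{\,m-1}}\cdot\frac{\|u\|_{**}}{t_0}=\frac{C\,\|u\|_{**}}{t_0^{\,m}},
\end{equation*}
where in the middle inequality I reuse the average estimate established above. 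Multiplying by $t_0^{\,m}=t_0^{\,|\alpha|}$ and taking the supremum gives \eqref{decay-Du:Carl-alpha}.

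I do not anticipate a serious obstacle here; the only point requiring mild care is the bookkeeping of radii and dilation constants so that all balls and Carleson boxes invoked genuinely lie inside ${\mathbb{R}}^n_{+}$ and so that the comparability $t\approx t_0$ on the relevant region is valid — but these are purely geometric and dimensional, and the constants produced depend only on $n$, $L$, and (for the higher-order case) $|\alpha|$, as asserted. The conceptual heart is simply the pairing of the interior regularity estimate \eqref{detraz} with the defining property of the Carleson measure seminorm $\|\cdot\|_{**}$.
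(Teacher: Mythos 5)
Your proof is correct and follows essentially the same strategy as the paper: apply the interior estimate of Theorem~\ref{ker-sbav} (with $p=2$ and $\ell=|\alpha|-1$) to the null-solution $\partial_j u$, producing an $L^2$-average of $|\nabla u|$ over a region of diameter comparable to $t_0$, and then bound that average by $t_0^{-1}\|u\|_{**}$ using the comparability $t\approx t_0$ there. The paper does the general case in one shot over a cube $R_x$ of side $t$ instead of a ball, and does not separate out the $|\alpha|=1$ case first, but these are cosmetic differences.
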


\begin{proof}
Given a multi-index $\alpha\in{\mathbb{N}}_0^n$ with $|\alpha|\geq 1$, select 
$j\in\{1,\dots,n\}$ and $\beta\in{\mathbb{N}}_0^n$ such that $\alpha=\beta+e_j$. 
Fix $x=(x',t)\in\mathbb{R}^n_+$ and write 
$R_x$ for the cube in $\mathbb{R}^{n}$ centered at $x$ with side-length $t$. 
Also, let $Q_{x'}$ be the cube in $\mathbb{R}^{n-1}$ centered at $x'$ with side-length $t$. 
Since the function $\partial_j u$ is a null-solution of the system $L$, we may invoke 
Theorem~\ref{ker-sbav} (with $\partial_j u$ in place of $u$ and $p=2$) in order to conclude
\begin{align}\label{Twaz-GFk.1}
\big|\partial^\beta(\partial_j u)(x',t)\big|
\leq\,\frac{C_\beta}{t^{|\beta|}}\,\Big(\aver{R_x}|\partial_j u|^2\,d\mathscr{L}^n\Big)^\frac12.
\end{align}
Hence, 
\begin{align}\label{Twazvee}
t^{|\alpha|}\,\big|(\partial^\alpha u)(x',t)\big|
&\leq C\,t\,\Big(\aver{R_x}|\nabla u|^2\,d\mathscr{L}^n\Big)^\frac12
\nonumber\\[4pt]
&\leq C\,\Big(\int_{t/2}^{3\,t/2}\,\aver{Q_{x'}} 
|\nabla u(y',s)|^2\,s\,dy'ds\Big)^\frac12
\nonumber\\[4pt]
&\leq C\,\|u\|_{**},
\end{align}
proving \eqref{decay-Du:Carl-alpha}. Estimate \eqref{decay-Du:Carl} is a particular case of 
\eqref{decay-Du:Carl-alpha}. 
\end{proof}

We continue by discussing two purely real-variable results.
To state the first one, recall the function $\Upsilon_{\!\#}:[0,\infty)\to [0,\infty)$ 
from \eqref{decay-infty:Ups0}. In relation to this, we make two observations. First, 
\begin{equation}\label{PP-876tf.2hhn}
\begin{array}{c}
\text{for each $\varepsilon\in(0,\infty)$ there exists $C_\varepsilon\in(1,\infty)$ such that}
\\[6pt]
\text{$C_\varepsilon^{-1}\Upsilon_{\!\#}(s)\leq\Upsilon_{\!\#}(s/\varepsilon)\leq C_\varepsilon\Upsilon_{\!\#}(s)$ 
for every $s\in[0,\infty)$.}
\end{array}
\end{equation}
Second, since for every $\eta\in(0,1]$ 
there exists a constant $C=C_\eta\in(0,\infty)$ with the property that 
\begin{equation}\label{PP-876tf}
\Upsilon_{\!\#}(s)\leq Cs^\eta,\qquad\forall\,s\geq 0,
\end{equation}
we have (cf. \eqref{UpUpUp})
\begin{equation}\label{PP-876tf.2}
\mathscr{C}^{\Upsilon_{\!\#}}({\mathbb{R}}^{n-1},{\mathbb{C}}^M)\subset
{\mathrm{Lip}}({\mathbb{R}}^{n-1},{\mathbb{C}}^M)\cap\Big(
\bigcap_{0<\eta<1}\dot{\mathscr{C}}^\eta({\mathbb{R}}^{n-1},{\mathbb{C}}^M)\Big).
\end{equation}
This is going to be relevant later on, in the proof of Lemma~\ref{lemma:u-lift:props}. 
For now, here is the first real-variable result advertised above. 

\begin{lemma}\label{lemma:decay-infty}
Recall $\Upsilon_{\!\#}$ from \eqref{decay-infty:Ups0} and
let $u\in\mathscr{C}^1(\mathbb{R}_+^n,{\mathbb{C}}^M)$ be such that
\begin{equation}\label{decay-Du}
C_u:=\sup_{(x',t)\in\mathbb{R}^n_+} t\,|\nabla u(x',t)|<\infty.
\end{equation}
Then, for every $(x',t)$ and $(y',t)$ in $\mathbb{R}^n_+$ one has
\begin{equation}\label{decay-infty}
|u(x',t)-u(y',t)|\leq 2\,C_u\,\Upsilon_{\!\#}\Big(\frac{|x'-y'|}{t}\Big).
\end{equation}
\end{lemma}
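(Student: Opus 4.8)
\textbf{Proof plan for Lemma~\ref{lemma:decay-infty}.}
The plan is to integrate the gradient bound \eqref{decay-Du} along a suitable path joining $(x',t)$ to $(y',t)$ in $\mathbb{R}^n_+$, choosing the path so that the resulting integral reproduces the piecewise shape of $\Upsilon_{\!\#}$. Write $r:=|x'-y'|$. The case $r\leq t$ will be handled by the straight horizontal segment $\gamma(s):=((1-s)x'+sy',t)$, $s\in[0,1]$; along it the vertical coordinate never drops below $t$, so $|\nabla u(\gamma(s))|\leq C_u/t$ and
\[
|u(x',t)-u(y',t)|\leq\int_0^1|\nabla u(\gamma(s))|\,|x'-y'|\,ds\leq\frac{C_u\,r}{t}=C_u\,\Upsilon_{\!\#}(r/t),
\]
which is even better than the claimed bound (with constant $1$ instead of $2$).

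For the case $r>t$ I would not travel horizontally at height $t$ (that would cost $C_u r/t$, far too much), but instead first rise vertically from $(x',t)$ to $(x',r)$, then move horizontally at the safe height $r$ from $(x',r)$ to $(y',r)$, then descend vertically from $(y',r)$ to $(y',t)$. On the first (vertical) leg, parametrized as $(x',s)$ with $s$ from $t$ to $r$, one has $|\partial_s u|\leq C_u/s$, giving a contribution $\leq C_u\int_t^r ds/s=C_u\ln(r/t)$. The third leg contributes the same amount by symmetry. On the middle (horizontal) leg the height is constantly $r$ and the horizontal displacement is $\leq r$, so its contribution is $\leq C_u r/r=C_u$. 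Adding the three pieces yields
\[
|u(x',t)-u(y',t)|\leq C_u\bigl(1+\ln(r/t)\bigr)+C_u\ln(r/t),
\]
and here I should be slightly more careful: if I instead rise only to height $r$ and note $1+\ln(r/t)\leq 2(1+\ln(r/t))$ is wasteful, the clean route is to observe $\int_t^r ds/s=\ln(r/t)$ on each vertical leg and bound the horizontal leg by $C_u$, for a total of $C_u\bigl(2\ln(r/t)+1\bigr)$. Since $r/t>1$ gives $\Upsilon_{\!\#}(r/t)=1+\ln(r/t)\geq 1$ and also $\Upsilon_{\!\#}(r/t)\geq\ln(r/t)$, we get $2\ln(r/t)+1\leq 2\bigl(1+\ln(r/t)\bigr)=2\,\Upsilon_{\!\#}(r/t)$, which is exactly the asserted inequality.

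There is essentially no serious obstacle here; the only point requiring a little attention is bookkeeping the constant so that it comes out as the stated $2$ rather than something larger, and making sure the two regimes $r\leq t$ and $r>t$ are stitched together at $r=t$ consistently with the definition \eqref{decay-infty:Ups0} of $\Upsilon_{\!\#}$ (at $r=t$ both formulas give the value $1$ up to the constant, and $\Upsilon_{\!\#}$ is continuous there). One should also remark that $\mathscr{C}^1$ regularity of $u$ on $\mathbb{R}^n_+$ is all that is used, so the line integrals are legitimate; no appeal to $Lu=0$ is needed in this purely real-variable statement, which is why it is phrased for general $u\in\mathscr{C}^1(\mathbb{R}^n_+,{\mathbb{C}}^M)$.
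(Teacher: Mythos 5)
Your proof is correct and follows essentially the same route as the paper's: the Mean Value Theorem bound $|u(x',t)-u(y',t)|\leq C_u|x'-y'|/t$ for the near regime, and for $|x'-y'|>t$ the same three-leg decomposition — rise vertically to height $r=|x'-y'|$, traverse horizontally at that safe height, then descend — with the Fundamental Theorem of Calculus and the $1/\lambda$ decay on each vertical leg. The bookkeeping matches too (the paper groups the pieces into $C_u + 2C_u\ln(r/t)\leq 2C_u(1+\ln(r/t))$ exactly as you do).
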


\begin{proof}
The proof follows the argument in \cite{FJN}. Fix $(x',t)$ and $(y',t)$ in $\mathbb{R}^n_+$.
Based on the Mean Value Theorem and \eqref{decay-Du} we may estimate
\begin{equation}\label{hdtsr}
|u(x',t)-u(y',t)|\leq\sup_{\xi\in[x',y']}|\nabla u(\xi,t)|\,|x'-y'|
\leq C_u\,\frac{|x'-y'|}{t}.
\end{equation}
Suppose now that $|x'-y'|>t$ and set $r:=|x'-y'|$. Applying the 
Fundamental Theorem of Calculus, \eqref{hdtsr}, and \eqref{decay-Du} we obtain
\begin{align}\label{eqn:u-3}
|u(x',t)-u(y',t)|
&\leq|u(x',t)-u(x',r)|+|u(x',r)-u(y',r)|+|u(y',r)-u(y',t)|
\nonumber\\[4pt]
&\leq\int_t^r |\partial_n u(x',\lambda)|\,d\lambda
+C_u+\int_t^r |\partial_n u(y',\lambda)|\,d\lambda
\nonumber\\[4pt]
&\leq C_u+2\,C_u\,\int_t^r\frac1{\lambda}\,d\lambda
\nonumber\\[4pt]
&\leq 2\,C_u\,\Big(1+\ln\frac{|x'-y'|}{t}\Big).
\end{align}
With this in hand, \eqref{decay-infty} follows from \eqref{eqn:u-3} (which is valid for $|x'-y'|>t$)
and \eqref{hdtsr} used for $|x'-y'|\leq t$.
\end{proof}

The second real-variable result mentioned earlier reads as follows.  

\begin{lemma}\label{lemma:est-int-Psi0}
Let $\Upsilon_{\!\#}$ be the function defined in \eqref{decay-infty:Ups0}. 
Then for every $a>0$ one has
\begin{equation}\label{F0(a)}
\Psi(a):=\int_0^\infty\frac{s^{n-1}}{(a+s)^n}\,\Upsilon_{\!\#}(s)\,\frac{ds}{s}
\leq 3\,\left\{
\begin{array}{ll}
1+\ln\frac{1}{a}, &\,\mbox{ if }\,a\leq 1,
\\[6pt]
\frac{1+\ln a}{a}, &\,\mbox{if }\,\,a>1.
\end{array}
\right.
\end{equation}
In particular, $\Psi(a)\leq 3\,\Big(1+\log^+\frac1a\Big)$, where $\log^+ s:=\max\{\ln s,0\}$
for every $s\in(0,\infty)$.
\end{lemma}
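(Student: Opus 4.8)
\textbf{Proof plan for Lemma~\ref{lemma:est-int-Psi0}.}
The plan is to split the integral defining $\Psi(a)$ at $s=1$, exploiting the explicit piecewise formula for $\Upsilon_{\!\#}$ in \eqref{decay-infty:Ups0}, namely $\Upsilon_{\!\#}(s)=s$ for $s\le 1$ and $\Upsilon_{\!\#}(s)=1+\ln s$ for $s>1$. Write $\Psi(a)=\Psi_1(a)+\Psi_2(a)$, where
\begin{equation}\label{plan-split}
\Psi_1(a):=\int_0^1\frac{s^{n-1}}{(a+s)^n}\,\Upsilon_{\!\#}(s)\,\frac{ds}{s}
=\int_0^1\frac{s^{n-1}}{(a+s)^n}\,ds,
\qquad
\Psi_2(a):=\int_1^\infty\frac{s^{n-1}}{(a+s)^n}\,(1+\ln s)\,\frac{ds}{s}.
\end{equation}
For $\Psi_1(a)$ I would bound $s^{n-1}\le (a+s)^{n-1}$, so $\Psi_1(a)\le\int_0^1 (a+s)^{-1}\,ds=\ln\frac{1+a}{a}\le\ln\big(\frac{2}{a}\big)\le 1+\ln\frac1a$ when $a\le1$; and $\Psi_1(a)\le\int_0^1 a^{-1}\,ds=1/a$ when $a>1$ (using $a+s\ge a$). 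So $\Psi_1$ already obeys the claimed bound (up to the constant $3$), in both regimes.

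For $\Psi_2(a)$ the key observation is that on the range $s\ge1$ one has $s^{n-1}\le(a+s)^{n-1}$, hence
\begin{equation}\label{plan-psi2bound}
\Psi_2(a)\le\int_1^\infty\frac{1+\ln s}{(a+s)\,s}\,ds.
\end{equation}
When $a\le1$ I would just drop $a$ from the denominator to get $\Psi_2(a)\le\int_1^\infty\frac{1+\ln s}{s^2}\,ds$, which is a finite absolute constant (equal to $2$), hence $\le 1+\ln\frac1a$ times a harmless constant since $1+\ln\frac1a\ge1$. When $a>1$, I would use the partial-fraction identity $\frac{1}{(a+s)s}=\frac1a\big(\frac1s-\frac1{a+s}\big)$, so that
\begin{equation}\label{plan-psi2a}
\Psi_2(a)\le\frac1a\int_1^\infty(1+\ln s)\Big(\frac1s-\frac1{a+s}\Big)\,ds,
\end{equation}
and the remaining integral $\int_1^\infty(1+\ln s)\big(\frac1s-\frac1{a+s}\big)\,ds$ is $O(\ln a)$ as $a\to\infty$ and $O(1)$ for $a$ bounded; concretely one can evaluate $\int_1^\infty\big(\frac1s-\frac1{a+s}\big)\,ds=\ln(1+a)$ and bound the $\ln s$ contribution by an integration-by-parts or by splitting at $s=a$, yielding a bound of the form $C(1+\ln a)$. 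Combining with the $1/a$ prefactor gives $\Psi_2(a)\le C\frac{1+\ln a}{a}$.

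Putting the two pieces together and tracking the constants carefully is the only real work; the main (mild) obstacle is verifying that the absolute constants emerging from these elementary integrals can all be absorbed into the factor $3$ claimed in \eqref{F0(a)} — this requires being a little careful with the estimate $\int_1^\infty\frac{1+\ln s}{s^2}\,ds$ and with the $\ln s$ term in \eqref{plan-psi2a}, but involves no conceptual difficulty. Finally, the concluding assertion $\Psi(a)\le 3\big(1+\log^+\frac1a\big)$ follows immediately by noting that for $a\le1$ the stated bound is exactly $3(1+\ln\frac1a)=3(1+\log^+\frac1a)$, while for $a>1$ one has $\frac{1+\ln a}{a}\le 1\le 1+\log^+\frac1a$, so $\Psi(a)\le3\le3(1+\log^+\frac1a)$ in that case as well.
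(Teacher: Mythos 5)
Your treatment of the case $a\le1$ is fine, but the case $a>1$ has a genuine gap. After the reduction $s^{n-1}\le(a+s)^{n-1}$ you arrive at $\Psi_2(a)\le\frac1a\int_1^\infty(1+\ln s)\big(\tfrac1s-\tfrac1{a+s}\big)\,ds$, and you assert that the inner integral is $O(\ln a)$ as $a\to\infty$. This is false: the $\ln s$ contribution is of order $(\ln a)^2$. Indeed, integrating by parts with $v=\ln\frac{s}{a+s}$ and then substituting $t=a/s$ gives
\begin{equation*}
\int_1^\infty\ln s\Big(\frac1s-\frac1{a+s}\Big)\,ds
=\int_0^a\frac{\ln(1+t)}{t}\,dt
=\tfrac12(\ln a)^2+O(1)\quad\text{as }a\to\infty;
\end{equation*}
more crudely, on $s\in[1,a]$ one has $\frac{a}{a+s}\ge\tfrac12$, so already
$\int_1^a\ln s\big(\tfrac1s-\tfrac1{a+s}\big)\,ds
\ge\tfrac12\int_1^a\frac{\ln s}{s}\,ds=\tfrac14(\ln a)^2$.
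Your bound for $\Psi_2(a)$ is therefore of order $\frac{(\ln a)^2}{a}$, which carries an extra factor of $\ln a$ and cannot be absorbed into the target $3\frac{1+\ln a}{a}$.

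The loss is caused precisely by the step $s^{n-1}\le(a+s)^{n-1}$: it is essentially sharp for $s\gtrsim a$, but on $1\le s\ll a$ it discards a decaying factor of size roughly $(s/a)^{n-1}$, and this decay (present since $n\ge 2$) is what prevents the second logarithm. The paper's fix is to split the integral at $s=a$ rather than at $s=1$. On $(0,a)$ one bounds $(a+s)^{-n}\le a^{-n}$ and uses that $s^{n-2}\Upsilon_\#(s)$ is nondecreasing for $n\ge2$, so
\begin{equation*}
\int_0^a\frac{s^{n-1}\Upsilon_\#(s)}{(a+s)^n}\,\frac{ds}{s}
\le a^{-n}\int_0^a s^{n-2}\Upsilon_\#(s)\,ds
\le a^{-n}\cdot a\cdot a^{n-2}\Upsilon_\#(a)=\frac{1+\ln a}{a}.
\end{equation*}
On $(a,\infty)$ one uses $(a+s)^{-n}\le s^{-n}$ and $\Upsilon_\#(s)=1+\ln s$ (valid since $s>a>1$), yielding
$\int_a^\infty\frac{1+\ln s}{s^2}\,ds=\frac{2+\ln a}{a}$.
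Adding the two pieces gives $\Psi(a)\le\frac{3+2\ln a}{a}\le 3\,\frac{1+\ln a}{a}$. With this modification in the regime $a>1$, together with your (correct) argument for $a\le1$, the proof closes.
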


\begin{proof}
If $a\geq 1$, we use that $s^{n-2}\,\Upsilon_{\!\#}(s)$ is increasing and elementary 
calculus to obtain
\begin{align}\label{Twazv-dd}
\Psi(a) & \leq a^{-n}\int_0^a s^{n-2}\,\Upsilon_{\!\#}(s)\,ds
+\int_a^\infty\frac{1+\ln s}{s^2}\,ds
\nonumber\\[4pt]
&\leq a^{-n}\,a^{n-2}\,\Upsilon_{\!\#}(a)\,a
+a^{-1}+\Big[\frac{-1-\ln s}{s}\Big]_{s=a}^{s=\infty}
\leq 3\,\frac{1+\ln a}{a}.
\end{align}
On the other hand, if $a< 1$ then
\begin{align}\label{Twazv-ff}
\Psi(a) & \leq\int_0^{a}\frac{s^{n-1}}{a^n}\,s\,\frac{ds}{s}
+\int_{a}^1\frac{s^{n-1}}{s^n}\,s\,\frac{ds}{s}
+\int_1^\infty\frac{s^{n-1}}{s^n}\,(1+\ln s)\,\frac{ds}{s}
\nonumber\\[4pt]
& =\frac{1}{n}+\ln\frac{1}{a}+2\leq 3\Big(1+\ln\frac{1}{a}\Big).
\end{align}
Collectively, \eqref{Twazv-dd}-\eqref{Twazv-ff} prove the lemma.
\end{proof}

Having dealt with Lemmas~\ref{lemma:decay-infty}-\ref{lemma:est-int-Psi0}, in our next two lemmas 
we study the boundary behavior of the vertical shifts of a smooth null-solution of $L$ 
which satisfies a Carleson measure condition in the upper-half space. 

\begin{lemma}\label{lemma:u-lift:props}
Let $L$ be an $M\times M$ elliptic system with constant complex coefficients as in
\eqref{L-def}-\eqref{L-ell.X} and consider $P^L$, the associated Poisson kernel for 
$L$ in $\mathbb{R}^{n}_+$ from Theorem~\ref{kkjbhV}. 
Suppose $u\in\mathscr{C}^\infty(\mathbb{R}^n_+,{\mathbb{C}}^M)$ satisfies $Lu=0$ in 
$\mathbb{R}_+^n$ and $\|u\|_{**}<\infty$. For each $\varepsilon>0$
define $u_\varepsilon(x',t):=u(x', t+\varepsilon)$, for every $(x',t)\in\mathbb{R}^n_+$ 
and $f_\varepsilon(x'):=u(x',\varepsilon)$ for every $x'\in{\mathbb{R}}^{n-1}$. 
Then there exists a constant $C\in(0,\infty)$ such that for every 
$\varepsilon>0$ the following properties are valid:
\begin{list}{(\theenumi)}{\usecounter{enumi}\leftmargin=.8cm
\labelwidth=.8cm\itemsep=0.2cm\topsep=.1cm
\renewcommand{\theenumi}{\alph{enumi}}}
\item The function $u_\varepsilon$ belongs to 
$\mathscr{C}^\infty(\overline{\mathbb{R}^n_+},{\mathbb{C}}^M)$ and 
$Lu_\varepsilon=0$ in $\mathbb{R}_+^n$.
\item One has $\|u_\varepsilon\|_{**}\leq C\,\|u\|_{**}$. In fact, for every multi-index 
$\alpha\in{\mathbb{N}}_0^n$ there exists a constant $C_{\alpha}\in(0,\infty)$, independent of $u$ and $\varepsilon$,
with the property that $\|\partial^\alpha u_\varepsilon\|_{**}\leq C_{\alpha}\varepsilon^{-|\alpha|}\|u\|_{**}$.
\item For every multi-index $\alpha\in{\mathbb{N}}_0^n$ with $|\alpha|\geq 1$ there exists a
constant $C_\alpha\in(0,\infty)$, independent of $u$, with the property that
$\|\partial^\alpha u_{\varepsilon}\|_{L^\infty(\mathbb{R}^{n}_{+})}
\leq C_\alpha\,\varepsilon^{-|\alpha|}\,\|u\|_{**}$.
\item The function $f_\varepsilon$ belongs to 
$\mathscr{C}^\infty({\mathbb{R}}^{n-1},{\mathbb{C}}^M)\cap
\mathscr{C}^{\Upsilon_{\!\#}}({\mathbb{R}}^{n-1},{\mathbb{C}}^M)$ where 
$\Upsilon_{\!\#}$ is as in \eqref{decay-infty:Ups0}. In particular, 
\begin{equation}\label{PP-876tf.2RED}
f_\varepsilon\in{\mathrm{Lip}}({\mathbb{R}}^{n-1},{\mathbb{C}}^M)\cap\Big(
\bigcap_{0<\eta<1}\dot{\mathscr{C}}^\eta({\mathbb{R}}^{n-1},{\mathbb{C}}^M)\Big)
\end{equation}
hence also $f_\varepsilon\in{\mathrm{UC}}({\mathbb{R}}^{n-1},{\mathbb{C}}^M)$ and
\begin{equation}\label{i86gg}
f_\varepsilon\in L^1\Big({\mathbb{R}}^{n-1},\frac{1}{1+|x'|^n}\,dx'\Big)^M.
\end{equation}
Moreover, 
\begin{equation}\label{i86gg-EXP}
\begin{array}{c}
\text{for every }\,\,\alpha'\in{\mathbb{N}}_0^{n-1}\,\,\text{ with }\,\,|\alpha'|\geq 1\,\,
\text{ one has}
\\[6pt]
\partial^{\alpha'}f_\varepsilon\in L^\infty({\mathbb{R}}^{n-1},{\mathbb{C}}^M)\cap
\mathscr{C}^{\Upsilon_{\!\#}}({\mathbb{R}}^{n-1},{\mathbb{C}}^M).
\end{array}
\end{equation}
\item The function $v_\varepsilon(x',t):=\big(P_t^L*f_\varepsilon\big)(x')$ is 
well-defined for all $(x',t)\in\mathbb{R}_+^n$ via an absolutely convergent integral and
\begin{equation}\label{props-lifting}
v_\varepsilon\in\mathscr{C}^\infty({\mathbb{R}}^n_{+},{\mathbb{C}}^M),\quad
Lv_\varepsilon=0\,\,\mbox{ in }\,\,\mathbb{R}^{n}_{+},\quad
v_\varepsilon\big|_{\partial\mathbb{R}^{n}_{+}}^{{}^{\rm n.t.}}
=f_\varepsilon\,\,\mbox{ everywhere in }\,\,\mathbb{R}^{n-1}.
\end{equation}
\item For every $(y',t)\in\mathbb{R}_+^n$ one has
\begin{equation}\label{estimates-veps}
|v_\varepsilon(y',t)-f_\varepsilon(y')|+t\,|\nabla v_\varepsilon(y',t)|
\leq C\,\|u\|_{**}\,(t/\varepsilon)\,\big(1+\log^+ (\varepsilon/t)\big).
\end{equation}
\end{list}
\end{lemma}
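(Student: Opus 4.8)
The plan is to prove \eqref{estimates-veps} by combining the Bloch-type estimate from Lemma~\ref{lemma:decay-Du:Carl} (applied to $v_\varepsilon$, once we know it is controlled) with the pointwise size bounds for the Poisson kernel and its gradient from Theorem~\ref{kkjbhV}, together with the quantitative continuity of $f_\varepsilon$ recorded in part~(d) and quantified in Lemma~\ref{lemma:decay-infty}. The key structural observation is that $f_\varepsilon = u(\cdot,\varepsilon)$ satisfies $|f_\varepsilon(x')-f_\varepsilon(y')|\le 2C\,\|u\|_{**}\,\varepsilon^{-1}\,\Upsilon_{\!\#}(|x'-y'|)$; indeed, by Lemma~\ref{lemma:decay-Du:Carl} one has $\sup_{\mathbb R^n_+}t|\nabla u(x',t)|\le C\|u\|_{**}$, so applying Lemma~\ref{lemma:decay-infty} on the horizontal slice at height $\varepsilon$ gives $|f_\varepsilon(x')-f_\varepsilon(y')|\le 2C\|u\|_{**}\,\Upsilon_{\!\#}(|x'-y'|/\varepsilon)$, and then \eqref{PP-876tf.2hhn} absorbs the $\varepsilon$ into the constant up to a factor $\varepsilon^{-1}$ for the range relevant below. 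This is the mechanism that produces the $(t/\varepsilon)$ prefactor in \eqref{estimates-veps}.

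First I would handle the term $t\,|\nabla v_\varepsilon(y',t)|$. Since $v_\varepsilon$ is a smooth null-solution of $L$ in $\mathbb R^n_+$ (part~(e)), and using that $\int_{\mathbb R^{n-1}}\nabla_{x'}[P^L_t(x'-y')]\,dy'=0$ (as in \eqref{XXdgsr}), I would write
\begin{equation}\label{plan-grad-veps}
(\nabla v_\varepsilon)(y',t)=\int_{\mathbb R^{n-1}}(\nabla K^L)(y'-z',t)\big[f_\varepsilon(z')-f_\varepsilon(y')\big]\,dz',
\end{equation}
then invoke $|(\nabla K^L)(x)|\le C|x|^{-n}$ from \eqref{eq:Kest} and the continuity estimate for $f_\varepsilon$ to bound $t|\nabla v_\varepsilon(y',t)|$ by $C\|u\|_{**}\,\varepsilon^{-1}\,t\int_{\mathbb R^{n-1}}|(y'-z',t)|^{-n}\,\Upsilon_{\!\#}(|y'-z'|)\,dz'$. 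Passing to polar coordinates in $z'$ and using that for fixed $t$ the radial integral $\int_0^\infty \frac{s^{n-1}}{(t+s)^n}\Upsilon_{\!\#}(s)\,\frac{ds}{s}$ can be reduced to $\Psi(t)$ after a scaling argument (or handled directly), Lemma~\ref{lemma:est-int-Psi0} yields the bound $C\|u\|_{**}\,(t/\varepsilon)\,\big(1+\log^+(1/t)\big)$; one still needs to upgrade $\log^+(1/t)$ to $\log^+(\varepsilon/t)$, which requires redoing the radial integral with $\Upsilon_{\!\#}(s/\varepsilon)$ in place of $\Upsilon_{\!\#}(s)$ and tracking the scaling — this is where some care is needed, but it is the same computation as Lemma~\ref{lemma:est-int-Psi0} with $a=t/\varepsilon$ after substituting $s\mapsto \varepsilon s$.

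Next I would treat $|v_\varepsilon(y',t)-f_\varepsilon(y')|$. Using \eqref{eq:IG6gy.2} to write $f_\varepsilon(y')=\int_{\mathbb R^{n-1}}P^L_t(y'-z')f_\varepsilon(y')\,dz'$, one gets $v_\varepsilon(y',t)-f_\varepsilon(y')=\int_{\mathbb R^{n-1}}K^L(y'-z',t)\big[f_\varepsilon(z')-f_\varepsilon(y')\big]\,dz'$; now apply $|K^L(x)|\le C|x|^{1-n}$ (or, more sharply, the decay $|P^L_t(x')|\le Ct/(t^2+|x'|^2)^{n/2}$ from \eqref{eq:IG6gy}) and again the continuity of $f_\varepsilon$, reducing matters to the very same radial integral $\int_0^\infty \frac{s^{n-1}}{(t+s)^n}\Upsilon_{\!\#}(s/\varepsilon)\,\frac{ds}{s}$, which is estimated by Lemma~\ref{lemma:est-int-Psi0} after scaling. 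Adding the two contributions gives \eqref{estimates-veps}. The main obstacle I anticipate is bookkeeping the $\varepsilon$-dependence correctly: the naive use of Lemma~\ref{lemma:decay-infty} gives $\Upsilon_{\!\#}(|x'-y'|/\varepsilon)$, not $\varepsilon^{-1}\Upsilon_{\!\#}(|x'-y'|)$, and these differ at large scales — so one must keep the argument $/\varepsilon$ inside $\Upsilon_{\!\#}$ throughout the kernel integrals and only invoke the scaling identity at the very end, applying Lemma~\ref{lemma:est-int-Psi0} with $a=t/\varepsilon$; that is precisely what produces both the factor $t/\varepsilon$ and the $\log^+(\varepsilon/t)$ in the stated bound. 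Everything else is a routine estimate of Poisson-type integrals against a modulus of continuity.
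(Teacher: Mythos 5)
For the estimate \eqref{estimates-veps}, your final plan is in substance the paper's own argument: bound $|f_\varepsilon(x')-f_\varepsilon(y')|\le C\|u\|_{**}\,\Upsilon_{\!\#}\big(|x'-y'|/\varepsilon\big)$ via Lemma~\ref{lemma:decay-Du:Carl} and Lemma~\ref{lemma:decay-infty}, subtract $f_\varepsilon(y')$ inside the Poisson integral (reproduction of constants, resp.\ \eqref{XXdgsr} for the gradient), use the kernel decay from Theorem~\ref{kkjbhV}, and reduce the radial integral by the substitution $r=\varepsilon s$ to $\Psi(t/\varepsilon)$, estimated by Lemma~\ref{lemma:est-int-Psi0}; this is exactly how the paper produces the factor $(t/\varepsilon)\big(1+\log^{+}(\varepsilon/t)\big)$. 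Two cautions on the details you yourself flagged: the ``shortcut'' $|f_\varepsilon(x')-f_\varepsilon(y')|\le C\|u\|_{**}\varepsilon^{-1}\Upsilon_{\!\#}(|x'-y'|)$ is not what \eqref{PP-876tf.2hhn} gives (that inequality only yields an $\varepsilon$-dependent constant, and the pointwise comparison even fails for $\varepsilon>1$), and following it would leave you with $\log^{+}(1/t)$ rather than $\log^{+}(\varepsilon/t)$, so the version you settle on at the end (keep $\Upsilon_{\!\#}(\cdot/\varepsilon)$ inside and apply Lemma~\ref{lemma:est-int-Psi0} with $a=t/\varepsilon$) is the one to run with. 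Likewise, for $|v_\varepsilon-f_\varepsilon|$ the crude bound $|K^L(x)|\le C|x|^{1-n}$ is not integrable against $\Upsilon_{\!\#}$ at infinity; you must use the full Poisson decay $|P^L_t(x')|\le Ct/(t+|x'|)^{n}$ coming from \eqref{eq:IG6gy}, which you mention only parenthetically.

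The genuine gap is that the lemma has six parts and your proposal proves only part (f), while explicitly leaning on (d) and (e) as inputs. You do derive, in passing, the $\mathscr{C}^{\Upsilon_{\!\#}}$-continuity of $f_\varepsilon$ that is the core of (d), but the remaining claims are not free: the membership \eqref{i86gg} in the weighted $L^1$ space (needed even to make $v_\varepsilon$ an absolutely convergent integral and to invoke part (7) of Theorem~\ref{kkjbhV} for \eqref{props-lifting}) requires the embedding \eqref{Gsyus}; the statement \eqref{i86gg-EXP} requires running the same argument on $\partial^{(\alpha',0)}u_\varepsilon$, which in turn uses parts (b)--(c); part (c) follows from the Bloch estimate \eqref{decay-Du:Carl-alpha}; and part (b), the uniform bound $\|u_\varepsilon\|_{**}\le C\|u\|_{**}$, needs its own argument --- in the paper a case analysis on $\ell(Q)\ge\varepsilon$ versus $\ell(Q)<\varepsilon$, the first case by translating the Carleson box and the second again by Lemma~\ref{lemma:decay-Du:Carl}. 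None of these is deep, but they are used elsewhere in the paper (e.g.\ (b) drives Lemma~\ref{lemma:feps-BMO} and part (iii) of Theorem~\ref{them:BMO-Dir}), so a complete proof of the lemma must include them rather than treat them as given.
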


\begin{proof}
The claim in part {\it (a)} is clear from definitions. To prove the estimate in part {\it (b)},
fix a cube $Q\subset\mathbb{R}^{n-1}$. Consider first the case $\ell(Q)\geq\varepsilon$ 
in which scenario a change of variables yields
\begin{align}\label{ueps-u:**:1}
\frac1{|Q|}\int_{0}^{\ell(Q)}\int_Q |\nabla u(x',t+\varepsilon)|^2\,t\,dx'dt
&\leq\frac1{|Q|}\int_{\varepsilon}^{\ell(Q)+\varepsilon}\int_Q|\nabla u(x',t)|^2\,t\,dx'dt
\nonumber\\[4pt]
&\leq 2^{n-1}\frac{1}{|2Q|}\int_{0}^{2\,\ell(Q)}\int_{2Q}|\nabla u(x',t)|^2\,t\,dx'dt
\nonumber\\[4pt]
&\leq 2^{n-1}\,\|u\|_{**}^2.
\end{align}
In the case $\ell(Q)<\varepsilon$, use Lemma~\ref{lemma:decay-Du:Carl} to conclude that
\begin{align}\label{ueps-u:**:2}
\frac1{|Q|}\int_{0}^{\ell(Q)}\int_Q|\nabla u(x',t+\varepsilon)|^2\,t\,dx'dt
& \leq C^2\,\|u\|_{**}^2\,
\frac1{|Q|}\int_{0}^{\ell(Q)}\int_Q\frac{1}{(t+\varepsilon)^2}\,t\,dx'dt
\nonumber\\[4pt]
&\leq C^2\,\|u\|_{**}^2\,\varepsilon^{-2}\,\int_0^{\ell(Q)}t\,dt
\leq\frac{C^2}{2}\|u\|_{**}^2,
\end{align}
for some $C\in(0,\infty)$ independent of $u$ and $\varepsilon$.
Combining \eqref{ueps-u:**:1} and \eqref{ueps-u:**:2} and taking the supremum 
over all cubes $Q$ then proves the first estimate in part {\it (b)} for some $C\in(0,\infty)$ 
independent of $u$ and $\varepsilon$.

To justify the second estimate in part {\it (b)}, it suffices to consider the case when 
the multi-index $\alpha\in{\mathbb{N}}_0^n$ has length $|\alpha|\geq 1$. Assume that this 
is the case and pick an arbitrary cube $Q\subset\mathbb{R}^{n-1}$. 
Making use of \eqref{decay-Du:Carl-alpha} and bearing in mind that $|\alpha|\geq 1$ we may then estimate 
\begin{align}\label{ue-h654rff}
\frac1{|Q|}\int_{0}^{\ell(Q)}\int_Q\big|\nabla\big[(\partial^\alpha u_\varepsilon)(x',t)\big]\big|^2\,t\,dx'dt
&=\frac1{|Q|}\int_{0}^{\ell(Q)}\int_Q\big|\nabla\big[(\partial^\alpha u)(x',t+\varepsilon)\big]\big|^2\,t\,dx'dt
\nonumber\\[4pt]
&\leq\frac{C_\alpha\|u\|_{**}^2}{|Q|}\int_{0}^{\ell(Q)}\int_Q\frac{1}{(t+\varepsilon)^{2|\alpha|+1}}\,dx'dt
\nonumber\\[4pt]
&\leq C_\alpha\|u\|_{**}^2\int_{0}^{\infty}\frac{1}{(t+\varepsilon)^{2|\alpha|+1}}\,dt
\nonumber\\[4pt]
&\leq C_\alpha\|u\|_{**}^2\,\varepsilon^{-2|\alpha|},
\end{align}
from which the desired conclusion readily follows. 

Consider next the claim in part {\it (c)}. Given a multi-index $\alpha\in{\mathbb{N}}_0^n$ with 
$|\alpha|\geq 1$ we may invoke Lemma~\ref{lemma:decay-Du:Carl} (keeping in mind the conclusions in part {\it (a)})
in order to conclude that there exists a constant $C_\alpha\in(0,\infty)$, independent of $u$, such that
\begin{align}\label{Twazv-gzsre}
\sup_{(x',t)\in\mathbb{R}_+^n}|(\partial^\alpha u_\varepsilon)(x',t)|
&\leq\sup_{(x',t)\in\mathbb{R}_+^n}\Big[(t+\varepsilon)^{|\alpha|}\big|(\partial^\alpha u)(x',t+\varepsilon)\big|\Big]
\cdot\sup_{(x',t)\in\mathbb{R}_+^n}(t+\varepsilon)^{-|\alpha|}
\nonumber\\[4pt]
&\leq C_\alpha\,\|u\|_{**}\,\varepsilon^{-|\alpha|}.
\end{align}

We now turn to proving the claims in part {\it (d)}. 
First, since $f_\varepsilon(y')=u(y',\varepsilon)$ for every $y'\in{\mathbb{R}}^{n-1}$ 
we have $f_\varepsilon\in{\mathscr{C}}^\infty({\mathbb{R}}^{n-1},{\mathbb{C}}^M)$.
Second, by using \eqref{decay-infty} (with $t=\varepsilon$), \eqref{decay-Du:Carl}, 
and \eqref{PP-876tf.2hhn}, for each $x',y'\in{\mathbb{R}}^{n-1}$ we may estimate 
\begin{equation}\label{PP-876-g56y}
|f_\varepsilon(x')-f_\varepsilon(y')|\leq C\|u\|_{**}\Upsilon_{\!\#}\Big(\frac{|x'-y'|}{\varepsilon}\Big)
\leq C_{n,L,u,\varepsilon}\,\Upsilon_{\!\#}(|x'-y'|).
\end{equation}
This places $f_\varepsilon$ in $\mathscr{C}^{\Upsilon_{\!\#}}({\mathbb{R}}^{n-1},{\mathbb{C}}^M)$.
With this in hand, the conclusions in \eqref{PP-876tf.2RED} follow with the help of 
\eqref{PP-876tf.2}. As a H\"older function, $f_\varepsilon$ also belongs to 
$L^1\Big({\mathbb{R}}^{n-1},\frac{1}{1+|x'|^n}\,dx'\Big)^M$ (see \eqref{Gsyus}), 
proving \eqref{i86gg}.

Next, fix a multi-index $\alpha'\in{\mathbb{N}}_0^{n-1}$ of length $|\alpha'|\geq 1$.
Then $\partial^{(\alpha',0)}u_{\varepsilon}\in{\mathscr{C}}^\infty
(\overline{{\mathbb{R}}^n_{+}},{\mathbb{C}}^M)$
is a null-solution of $L$ in ${\mathbb{R}}^n_{+}$ and $\partial^{\alpha'}f_{\varepsilon}
=\big(\partial^{(\alpha',0)}u_{\varepsilon}\big)\big|_{\partial{\mathbb{R}}^{n}_{+}}$. 
Now the claim in \eqref{i86gg-EXP} is a consequence of parts {\it (c)} and {\it (b)}, 
bearing in mind that $\big\|\partial^{(\alpha',0)}u_{\varepsilon}\big\|_{**}<\infty$ 
(hence, the same type of argument that 
placed $f_\varepsilon$ in $\mathscr{C}^{\Upsilon_{\!\#}}({\mathbb{R}}^{n-1},{\mathbb{C}}^M)$ 
now ensures the membership of $\partial^{\alpha'}f_{\varepsilon}$ to the latter space). 

Moving on, the claim made in part {\it (e)} is a consequence of the current part {\it (d)} 
together with part {\it (7)} in Theorem~\ref{kkjbhV} and the fact 
that since $f_\varepsilon\in{\mathscr{C}}^\infty({\mathbb{R}}^{n-1},{\mathbb{C}}^M)$
all points in ${\mathbb{R}}^{n-1}$ are Lebesgue points for $f_\varepsilon$.

Finally, consider the claim in part {\it (f)}. Fix $(y',t)\in\mathbb{R}_+^{n}$. 
Then the properties of the Poisson kernel recalled in Theorem~\ref{kkjbhV},
together with Lemmas~\ref{lemma:decay-Du:Carl}, \ref{lemma:decay-infty}, 
and \ref{lemma:est-int-Psi0} permit us to estimate (bearing in mind that 
$f_\varepsilon=u(\cdot,\varepsilon)$)
\begin{align}\label{est-veps-1}
|v_\varepsilon(y',t)-f_\varepsilon(y')|
&\leq\int_{\mathbb{R}^{n-1}}
|P^L(z')|\,|f_\varepsilon(y'-tz')-f_\varepsilon(y')|\,dz'
\nonumber\\[6pt]
& \leq C\,\|u\|_{**}
\int_{\mathbb{R}^{n-1}}\frac1{(1+|z'|)^n}\,\Upsilon_{\!\#}\Big(\frac{t\,|z'|}{\varepsilon}\Big)\,dz'
\nonumber\\[6pt]
&\leq C\,\|u\|_{**}\,\int_0^\infty\frac{r^{n-1}}{(1+r)^n}\, 
\Upsilon_{\!\#}\Big(\frac{t\,r}{\varepsilon}\Big)\,\frac{dr}{r}
\nonumber\\[6pt]
&\leq C\,\|u\|_{**}\,(t/\varepsilon)\,\int_0^\infty 
\frac{s^{n-1}}{(t/\varepsilon+s)^n}\,\Upsilon_{\!\#}(s)\,\frac{ds}{s}
\nonumber\\[6pt]
&=C\,\|u\|_{**}\,(t/\varepsilon)\,\Psi\big(t/\varepsilon)
\nonumber\\[6pt]
& \leq C\,\|u\|_{**}\,(t/\varepsilon)\,\big(1+\log^+ (\varepsilon/t)\big).
\end{align}
This suits our current purposes.

Consider next the task of estimating $\nabla v_\varepsilon$. 
Using the properties of the Poisson kernel, Theorem~\ref{kkjbhV} 
(recall \eqref{eq:Gvav7g5} and \eqref{eq:Kest})
and Lemmas~\ref{lemma:decay-Du:Carl}, \ref{lemma:decay-infty}, \ref{lemma:est-int-Psi0},
we write
\begin{align}\label{est-veps-2}
|\nabla v_\varepsilon(x',t)|
&=\big|\nabla\big(P_t^L*\big(f_\varepsilon(\cdot)-f_\varepsilon(x')\big)\big)(x')\big|
\nonumber\\[6pt]
&\leq\int_{\mathbb{R}^{n-1}}|\nabla K^L(x'-y',t)|\,|f_\varepsilon(y')-f_\varepsilon(x')|\,dy'
\nonumber\\[6pt]
&\leq C\,\|u\|_{**}\int_{\mathbb{R}^{n-1}}
\frac1{(t+|x'-y'|)^n}\,\Upsilon_{\!\#}\Big(\frac{|x'-y'|}{\varepsilon}\Big)\,dy'
\nonumber\\[6pt]
&\leq C\,\|u\|_{**}\,\varepsilon^{-1}\int_0^\infty 
\frac{s^{n-1}}{(s+t/\varepsilon)^n}\,\Upsilon_{\!\#}(s)\,\frac{ds}{s}
\nonumber\\[6pt]
&=C\,\|u\|_{**}\,\varepsilon^{-1}\,\Psi\big(t/\varepsilon)
\nonumber\\[6pt]
& \leq C\,\|u\|_{**}\,\varepsilon^{-1}\,\big(1+\log^+ (\varepsilon/t)\big).
\end{align}
Collectively, \eqref{est-veps-1} and \eqref{est-veps-2} prove \eqref{estimates-veps}.
\end{proof}

We are now ready to prove that each vertical shift of a smooth null-solution of $L$ 
which satisfies a Carleson measure condition in the upper-half space has a Poisson 
integral representation formula. 

\begin{lemma}\label{lemma:u-lift:uniq}
Let $L$ be an $M\times M$ elliptic system with constant complex coefficients as in
\eqref{L-def}-\eqref{L-ell.X} and consider $P^L$, the associated Poisson kernel for 
$L$ in $\mathbb{R}^{n}_+$ from Theorem~\ref{kkjbhV}. 
Let $u\in\mathscr{C}^\infty(\mathbb{R}^n_+,{\mathbb{C}}^M)$ satisfy 
$Lu=0$ in $\mathbb{R}_+^n$ and $\|u\|_{**}<\infty$. For each given $\varepsilon>0$, 
define $u_\varepsilon(x',t):=u(x',t+\varepsilon)$ for every $(x',t)\in\mathbb{R}^n_+$. 

Then for every $\varepsilon>0$ one has 
$u_\varepsilon\in{\mathscr{C}}^\infty(\overline{\mathbb{R}_+^n},{\mathbb{C}}^M)$, 
the restriction $u_\varepsilon\bigl|_{\partial\mathbb{R}^{n}_{+}}$ belongs to the space
$L^1\Big({\mathbb{R}}^{n-1},\frac{1}{1+|x'|^n}\,dx'\Big)^M$, 
and the following Poisson integral representation formula holds:
\begin{equation}\label{Poisson:u-lift}
u_\varepsilon(x',t)
=\Big(P_t^L*\big(u_\varepsilon\bigl|_{\partial\mathbb{R}^{n}_{+}}\big)\Big)(x'),
\qquad\forall\,(x',t)\in\mathbb{R}^n_+.
\end{equation}
\end{lemma}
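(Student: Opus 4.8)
The plan is to establish the Poisson representation formula \eqref{Poisson:u-lift} by reducing it to the $L^p$-Fatou type result already available as Corollary~\ref{tuFatou.Lp}, applied to the shifted function $u_\varepsilon$. First I would record that $u_\varepsilon\in{\mathscr{C}}^\infty(\overline{\mathbb{R}^n_+},{\mathbb{C}}^M)$ and $Lu_\varepsilon=0$ in $\mathbb{R}^n_+$, which is immediate from the fact that $u$ is smooth and a null-solution in $\mathbb{R}^n_+$ together with $\varepsilon>0$ (this is part \emph{(a)} of Lemma~\ref{lemma:u-lift:props}). Next, by part \emph{(d)} of Lemma~\ref{lemma:u-lift:props}, the boundary restriction $u_\varepsilon\bigl|_{\partial\mathbb{R}^{n}_{+}}=f_\varepsilon$ lies in $\mathscr{C}^{\Upsilon_{\!\#}}({\mathbb{R}}^{n-1},{\mathbb{C}}^M)$ and in particular, via \eqref{i86gg}, in $L^1\big({\mathbb{R}}^{n-1},\frac{1}{1+|x'|^n}\,dx'\big)^M$. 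Thus both the membership claims in the statement follow directly from the preceding lemma; the only substantive content is the integral representation itself.

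To prove \eqref{Poisson:u-lift} I would proceed as follows. Set $v_\varepsilon(x',t):=\big(P_t^L*f_\varepsilon\big)(x')$, which by part \emph{(e)} of Lemma~\ref{lemma:u-lift:props} is a well-defined smooth null-solution of $L$ in $\mathbb{R}^n_+$ with $v_\varepsilon\bigl|_{\partial\mathbb{R}^{n}_{+}}^{{}^{\rm n.t.}}=f_\varepsilon$. The goal is then to show $u_\varepsilon\equiv v_\varepsilon$ in $\mathbb{R}^n_+$. Consider the difference $w:=u_\varepsilon-v_\varepsilon$. This is a smooth null-solution of $L$ in $\mathbb{R}^n_+$ whose nontangential boundary trace vanishes a.e.\ (indeed everywhere) on $\mathbb{R}^{n-1}$, since $u_\varepsilon$ extends continuously to $\overline{\mathbb{R}^n_+}$ with boundary values $f_\varepsilon$ and $v_\varepsilon$ has nontangential trace $f_\varepsilon$. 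To invoke Corollary~\ref{tuFatou.Lp} I need ${\mathcal{N}}w\in L^p(\mathbb{R}^{n-1})$ for some $p$; in general this need not hold globally, so the cleaner route is to show directly that $w\equiv 0$. For this I would argue as follows: by the estimates \eqref{estimates-veps} in part \emph{(f)} of Lemma~\ref{lemma:u-lift:props} together with Lemma~\ref{lemma:decay-Du:Carl} applied to $u_\varepsilon$, one gets a controlled bound on $|w(y',t)|$ showing that $w$ and $\nabla w$ have at most logarithmic-type growth; combined with the vanishing boundary trace, an application of the uniqueness machinery (which in this paper will ultimately be Proposition~\ref{prop-Dir-BMO:uniq}, but at this stage one can argue more directly) forces $w\equiv 0$.

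Since Proposition~\ref{prop-Dir-BMO:uniq} is stated \emph{after} this lemma and in fact relies on it, I must avoid circularity. The honest approach is therefore: observe that $u_\varepsilon\in{\mathscr{C}}^\infty(\overline{\mathbb{R}^n_+},{\mathbb{C}}^M)$ means that for each fixed $T>0$ the function $u_\varepsilon$ is bounded on the slab $\{0\le t\le T\}$ intersected with any vertical cylinder, and more importantly, by Lemma~\ref{lemma:decay-Du:Carl} (applied to $u$, evaluating at $t+\varepsilon\ge\varepsilon$) one has $|\nabla u_\varepsilon(x',t)|\le C\|u\|_{**}(t+\varepsilon)^{-1}\le C\|u\|_{**}\varepsilon^{-1}$, so $u_\varepsilon$ is globally Lipschitz in $x'$ uniformly in $t$, with at most logarithmic growth in $t$ by Lemma~\ref{lemma:decay-infty}. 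The same bounds hold for $v_\varepsilon$ by \eqref{estimates-veps}. Consequently $w=u_\varepsilon-v_\varepsilon$ is a null-solution with vanishing boundary trace and sub-polynomial growth, and one can apply a Liouville/Phragmén–Lindelöf type argument for elliptic systems in the slab, or more efficiently truncate and use the interior estimates of Theorem~\ref{ker-sbav} together with the known $L^p$-Poisson representation of Corollary~\ref{tuFatou.Lp} applied to suitable localizations, to conclude $w\equiv 0$.

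The main obstacle I anticipate is exactly this last step: establishing the vanishing of $w$ without invoking the not-yet-proved general BMO-uniqueness. The key realization that makes it work is that $f_\varepsilon$ is far more regular than a generic BMO function — it lies in $\mathscr{C}^{\Upsilon_{\!\#}}\subset\bigcap_{0<\eta<1}\dot{\mathscr{C}}^\eta$ and all its derivatives are bounded and $\mathscr{C}^{\Upsilon_{\!\#}}$ (part \emph{(d)}, \eqref{i86gg-EXP}) — so $v_\varepsilon$ enjoys excellent estimates, and $u_\varepsilon$ extends smoothly up to the boundary. In that favorable setting, the difference $w$ has bounded gradient (uniformly in $t$) and vanishing boundary trace, so $w(x',t)=\int_0^t\partial_n w(x',s)\,ds$ with the integrand controlled, and one shows by combining interior estimates with the decay of $\nabla w$ as $t\to\infty$ forced by $\|w\|_{**}\le\|u_\varepsilon\|_{**}+\|v_\varepsilon\|_{**}<\infty$ and Lemma~\ref{lemma:decay-Du:Carl} that $w$ must be constant, hence zero by the boundary condition. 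This yields \eqref{Poisson:u-lift}.
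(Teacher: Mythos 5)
Your overall skeleton is the same as the paper's: the membership claims follow from Lemma~\ref{lemma:u-lift:props}, one sets $v_\varepsilon(x',t):=(P^L_t\ast f_\varepsilon)(x')$ and must show $w_\varepsilon:=u_\varepsilon-v_\varepsilon\equiv 0$. But the step where all the content lies is not carried out, and the one quantitative ingredient you lean on is circular. You claim $\|w_\varepsilon\|_{**}\le\|u_\varepsilon\|_{**}+\|v_\varepsilon\|_{**}<\infty$ and then want to use Lemma~\ref{lemma:decay-Du:Carl} to force decay of $\nabla w_\varepsilon$ as $t\to\infty$. Nothing proved up to this point gives $\|v_\varepsilon\|_{**}<\infty$: from part {\it (d)} of Lemma~\ref{lemma:u-lift:props} one only knows ${\rm osc}_1(f_\varepsilon;r)\leq C\|u\|_{**}\Upsilon_{\!\#}(r/\varepsilon)$, so the cube-by-cube bound \eqref{hsr-u54367yhg-uuu} grows like $1+\ln\big(\ell(Q)/\varepsilon\big)$ and is not uniform in $Q$; likewise \eqref{estimates-veps} gives only $|\nabla v_\varepsilon|\leq C\varepsilon^{-1}\big(1+\log^{+}(\varepsilon/t)\big)$, whose Carleson content over a cube $Q$ grows like $\ell(Q)/\varepsilon$. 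The two ways to obtain a genuine Carleson bound for $v_\varepsilon$ are $f_\varepsilon\in{\mathrm{BMO}}$ (Lemma~\ref{lemma:feps-BMO}, whose proof uses the representation \eqref{Poisson:u-lift} you are trying to prove) or the identity $v_\varepsilon=u_\varepsilon$ itself; both are circular. (Also, $\nabla w_\varepsilon$ is not bounded uniformly in $t$: it blows up like $\log(\varepsilon/t)$ near the boundary.)

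Even granting growth control, your concluding device is vague exactly where a precise tool is needed. A ``Liouville/Phragm\'en--Lindel\"of argument in the slab'' is not available for general elliptic systems (no reflection, no maximum principle), and Corollary~\ref{tuFatou.Lp} cannot be applied ``to suitable localizations'': ${\mathcal{N}}w_\varepsilon$ is not in any $L^p(\mathbb{R}^{n-1})$, and truncating destroys the equation. The paper's actual mechanism is Proposition~\ref{c1.2}: one first checks $w_\varepsilon\in W^{1,2}_{\rm bd}(\mathbb{R}^n_{+},\mathbb{C}^M)$ (for $v_\varepsilon$ this uses \eqref{estimates-veps} as in \eqref{v-ep:L2-bd}--\eqref{grad-v-ep:L2-bd}) and ${\rm Tr}\,w_\varepsilon=0$ in the averaged sense \eqref{Veri-S2TG.3} (for $v_\varepsilon$ again via \eqref{estimates-veps}), and then invokes the boundary estimate $\sup_{\mathbb{R}^n_+\cap B(z,\rho)}|\nabla w_\varepsilon|\leq C\rho^{-1}\sup_{\mathbb{R}^n_+\cap B(z,2\rho)}|w_\varepsilon|$. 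The right-hand side is controlled by comparing each of $u_\varepsilon$, $v_\varepsilon$ to $f_\varepsilon$: $|u_\varepsilon(y',t)-f_\varepsilon(y')|\leq C\|u\|_{**}\ln\frac{t+\varepsilon}{\varepsilon}$ and $|v_\varepsilon(y',t)-f_\varepsilon(y')|\leq C\|u\|_{**}\big(1+\ln(t/\varepsilon)\big)$ for $t>\varepsilon$ (these need only Lemmas~\ref{lemma:decay-Du:Carl}, \ref{lemma:decay-infty}, \ref{lemma:est-int-Psi0}, no Carleson bound on $v_\varepsilon$), so the bound is $O(\rho^{-1}\ln\rho)\to 0$ as $\rho\to\infty$, forcing $\nabla w_\varepsilon\equiv 0$ and then $w_\varepsilon\equiv 0$ from the vanishing trace. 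Your proposal never invokes this boundary estimate nor verifies its hypotheses, and without it (or a legitimate substitute) the proof is incomplete.
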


\begin{proof}
For each $\varepsilon>0$ set 
$f_\varepsilon:=u_\varepsilon\bigl|_{\partial\mathbb{R}^{n}_{+}}$ and 
note that by part {\it (d)} in Lemma~\ref{lemma:u-lift:props} we have that
$f_\varepsilon$ belongs to $L^1\Big({\mathbb{R}}^{n-1},\frac{1}{1+|x'|^n}\,dx'\Big)^M\cap
\mathscr{C}^\infty({\mathbb{R}}^{n-1},{\mathbb{C}}^M)$. Next, for each $\varepsilon>0$
define $v_\varepsilon(x',t):=\big(P_t^L*f_\varepsilon\big)(x')$
for every $(x',t)\in\mathbb{R}_+^{n}$.
The goal is to show that $w_\varepsilon:=v_\varepsilon-u_\varepsilon\equiv 0$ in 
$\mathbb{R}^{n}_+$. A key ingredient in this regard is Proposition~\ref{c1.2}.  

Notice first that $w_\varepsilon\in\mathscr{C}^\infty(\mathbb{R}^n_+,{\mathbb{C}}^M)$ 
and $Lw_\varepsilon=0$ in $\mathbb{R}_+^{n}$ by parts {\it (a)} and {\it (e)} in
Lemma~\ref{lemma:u-lift:props}. Next, we propose to show that ${\rm Tr}\,w_\varepsilon=0$,
where ${\rm Tr}$ is as introduced in \eqref{Veri-S2TG.3}.
Since by part {\it (a)} in Lemma~\ref{lemma:u-lift:props} we have 
${\rm Tr}\,u_\varepsilon=f_\varepsilon$, there remains to prove
${\rm Tr}\,v_\varepsilon=f_\varepsilon$ in ${\mathbb{R}}^{n-1}$. To this end,
given $x'\in\mathbb{R}^{n-1}$, we use part {\it (f)} in Lemma~\ref{lemma:u-lift:props}, 
the fact that $f_\varepsilon(x')=u(x',\varepsilon)$, 
Lemma~\ref{lemma:decay-Du:Carl} and Lemma~\ref{lemma:decay-infty} 
(recall that $\Upsilon_{\!\#}$ is defined in \eqref{decay-infty:Ups0}) to write
\begin{align}\label{Tegfsh}
&\Big|\aver{B((x',0),r)\cap{\mathbb{R}}^n_{+}}
v_\varepsilon\,d{\mathscr{L}}^n- f_\varepsilon(x')\Big|
\leq\aver{B((x',0),r)\cap{\mathbb{R}}^n_{+}} |v_\varepsilon(y',t)-f_{\varepsilon}(x')|\,dy'\,dt
\nonumber\\[4pt]
&\qquad\leq\aver{B((x',0),r)\cap{\mathbb{R}}^n_{+}}
|v_\varepsilon(y',t)-f_\varepsilon(y')|\,dy'\,dt
+\aver{B((x',0),r)\cap{\mathbb{R}}^n_{+}} |f_\varepsilon(y')-f_\varepsilon(x')|\,dy'\,dt
\nonumber\\[4pt]
&\qquad\leq C\,\|u\|_{**}\,\aver{B((x',0),r)\cap{\mathbb{R}}^n_{+}}
(t/\varepsilon)\,\big(1+\log^+ (\varepsilon/t)\big)\,dy'\,dt
\nonumber\\[4pt]
&\qquad\qquad\qquad
+C\,\|u\|_{**}\,\aver{B((x',0),r)\cap{\mathbb{R}}^n_{+}}\Upsilon_{\!\#}(|x'-y'|/\varepsilon)\,dy'\,dt
\nonumber\\[4pt]
&\qquad\leq C\,\|u\|_{**}\,\frac{r}{\varepsilon}\,\big(1+\log^+ (\varepsilon/r)\big)
+C\,\|u\|_{**}\,\Upsilon_{\!\#}(r/\varepsilon)\longrightarrow 0,\qquad\mbox{as }r\to 0^+.
\end{align}
Thus we conclude that ${\rm Tr}\,v_\varepsilon(x')=f_\varepsilon(x')$ 
for every $x'\in\mathbb{R}^{n-1}$ as desired.

Next we claim that $w_\varepsilon\in W^{1,2}_{\rm bd}({\mathbb{R}}^n_{+},{\mathbb{C}}^M)$
(recall the latter space from \eqref{w12bd}). 
By parts {\it (a)} and {\it (c)} in Lemma~\ref{lemma:u-lift:props} we have that 
$u_\varepsilon\in W^{1,2}_{\rm bd}({\mathbb{R}}^n_{+},{\mathbb{C}}^M)$. 
For $v_\varepsilon$, fix $R>0$ arbitrary and rely on \eqref{estimates-veps} to estimate
\begin{align}\label{v-ep:L2-bd}
\|v_\varepsilon\|_{L^2(B(0,R)\cap\mathbb{R}_+^n)}
&\leq\Big(\int_0^R\int_{|x'|\leq R} 
|v_\varepsilon(x',t)-f_\varepsilon(x')|^2\,dx'\,dt\Big)^{\frac12}
\nonumber\\[4pt]
&\hskip 1.00in
+\Big(\int_0^R\int_{|x'|\le R} |f_\varepsilon(x')|^2\,dx'\,dt\Big)^{\frac12}
\nonumber\\[4pt]
&\leq C\,\|u\|_{**}\,(R/\varepsilon)\,
\big(1+\log^+(\varepsilon/R)\big)\,R^{\frac{n}{2}}
\nonumber\\[4pt]
&\hskip 1.00in
+R^\frac12\,\|f_\varepsilon\|_{L^2(B_{n-1}(0',R))}<\infty,
\end{align}
since $f_\varepsilon\in C^\infty(\mathbb{R}^{n-1},{\mathbb{C}}^M)$.
Above and elsewhere in the paper we make the convention that
\begin{equation}\label{balnminus1}
\text{$B_{n-1}(x',R)$ denotes the ball in ${\mathbb{R}}^{n-1}$ 
centered at $x'\in{\mathbb{R}}^{n-1}$ and of radius $R$.}
\end{equation}
As regards $\nabla v_\varepsilon$, we use \eqref{estimates-veps} to write
\begin{align}\label{grad-v-ep:L2-bd}
\|\nabla v_\varepsilon\|_{L^2(B(0,R)\cap\mathbb{R}_+^n)}
&\leq C\,R^{\frac{n-1}2}\,\|u\|_{**}\,\varepsilon^{-1}\,
\Big(\int_0^R\big(1+\log^+(\varepsilon/t)\big)^2\,dt\Big)^{\frac12}
\nonumber\\[4pt]
&=C\,R^{\frac{n-1}2}\,\|u\|_{**}\,\varepsilon^{-1}\,
\Big(\int_0^\varepsilon\big(1+\ln(\varepsilon/t)\big)^2\,dt
+\int_\varepsilon^R\,dt\Big)^{\frac12}
\nonumber\\[4pt]
&\leq C\,R^{\frac{n-1}2}\,\|u\|_{**}\,\varepsilon^{-1}\,
\Big(\varepsilon\,\int_0^1\big(1+\ln(1/s)\big)^2\,ds+R\Big)^\frac12
\nonumber\\[4pt]
&\leq C\,R^{\frac{n-1}2}\,\|u\|_{**}\,\varepsilon^{-1}\,
(\varepsilon+R)^\frac12<\infty.
\end{align}
From \eqref{v-ep:L2-bd} and \eqref{grad-v-ep:L2-bd} we conclude that $v_\varepsilon$ 
and, therefore, $w_\varepsilon$ belongs to $W^{1,2}_{\rm bd}({\mathbb{R}}^n_{+},{\mathbb{C}}^M)$.

Having established these, we may apply Proposition~\ref{c1.2} and obtain that 
for every $z\in\overline{\mathbb{R}_+^n}$ and $\rho>0$
\begin{align}\label{est-sups}
\sup_{\mathbb{R}^n_+\cap B(z,\rho)}|\nabla w_\varepsilon|
&\leq C\,\rho^{-1}\sup_{\mathbb{R}^n_+\cap B(z,2\rho)}|w_\varepsilon|
=C\rho^{-1}\sup_{\mathbb{R}^n_+\cap B(z,2\rho)}|u_\varepsilon-v_\varepsilon|
\nonumber\\[4pt]
&\leq C\rho^{-1}\sup_{(y',t)\in\mathbb{R}^n_+\cap B(z,2\rho)}
|u_\varepsilon(y',t)-f_\varepsilon(y')|
\nonumber\\[4pt]
&\qquad
+C\rho^{-1}\sup_{(y',t)\in\mathbb{R}^n_+\cap B(z,2\rho)}
|v_\varepsilon(y',t)-f_\varepsilon(y')|.
\end{align}
Let $(y',t)\in\mathbb{R}^n_+\cap B(z,2\rho)$ and note that
 Lemma~\ref{lemma:decay-Du:Carl} implies
\begin{align}\label{est-sups:1}
|u_\varepsilon(y',t)-f_\varepsilon(y')|
&=|u(y',t+\varepsilon)-u(y',\varepsilon)|
\leq\int_\varepsilon^{t+\varepsilon} |\partial_n u(y',\lambda)|\,d\lambda
\nonumber\\[4pt]
&\leq C\,\|u\|_{**}\int_\varepsilon^{t+\varepsilon}\frac1\lambda\,d\lambda
=C\,\|u\|_{**}\ln\frac{t+\varepsilon}{\varepsilon}.
\end{align}
Proceeding as in \eqref{est-veps-1}, Lemma~\ref{lemma:est-int-Psi0} implies 
that for every $t>\varepsilon$ we have
\begin{equation}\label{est-sups:2}
|v_\varepsilon(y',t)-f_\varepsilon(y')|
\leq C\,\|u\|_{**}\,(t/\varepsilon)\,\Psi(t/\varepsilon)
\leq C\,\|u\|_{**}\,\big(1+\ln(t/\varepsilon)\big).
\end{equation}
Returning with \eqref{est-sups:1}, \eqref{est-sups:2} and \eqref{estimates-veps} 
back to \eqref{est-sups} we obtain that for every $z\in\partial{\mathbb{R}}^n_{+}$ 
and every $\rho>\varepsilon$
\begin{align}\label{hdfts-2}
\sup_{\mathbb{R}^n_+\cap B(z,\rho)}|\nabla w_\varepsilon|
&\leq C\,\|u\|_{**}
\Big(\rho^{-1}\sup_{0<t<2\rho}\ln\frac{t+\varepsilon}{\varepsilon}
+\rho^{-1}\sup_{0<t<\varepsilon}(t/\varepsilon)\,\big(1+\log^+(\varepsilon/t)\big)
\nonumber\\[4pt]
&\hskip6cm+\rho^{-1}\sup_{\varepsilon<t<2\rho} 
\big(1+\ln(t/\varepsilon)\big)\Big)
\nonumber\\[4pt]
&\leq C\,\|u\|_{**}
\Big(\rho^{-1}\ln\frac{2\rho+\varepsilon}{\varepsilon}
+\rho^{-1}+\rho^{-1}\big(1+\ln(2\,\rho/\varepsilon)\big)\Big).
\end{align}
Since the last expression converges to $0$ as $\rho\to\infty$ we obtain that 
$\nabla w_\varepsilon\equiv 0$ in $\mathbb{R}^n_+$. As we have already shown that 
$w_\varepsilon\in\mathscr{C}^\infty(\mathbb{R}_+^n,{\mathbb{C}}^M)$ this forces
$w_\varepsilon$ to be constant in $\mathbb{R}_+^n$. In concert with the fact that 
${\rm Tr}\,w_\varepsilon=0$ this ultimately implies 
$w_\varepsilon\equiv 0$ in $\mathbb{R}_+^n$ as desired.
\end{proof}

Moving on, in Lemmas~\ref{lemma:phit-g}-\ref{lemma:SFE-H1} below we develop tools which
are essential in the proof of Proposition~\ref{prop:Car->BMO}, where we prove a partial 
converse to part {\it (e)} in Proposition~\ref{prop-Dir-BMO:exis}. Concretely, there we show that 
if $f\in L^1\Big({\mathbb{R}}^{n-1}\,,\,\frac{1}{1+|x'|^n}\,dx'\Big)^M$ has the property that
the Littlewood-Paley measure $|\nabla u(x',t)|^2\,t\,dx'dt$ 
associated with the function $u$ defined as in \eqref{eqn-Dir-BMO:u:prop} is a Carleson measure 
in ${\mathbb{R}}^n_{+}$ then necessarily $f$ belongs to ${\mathrm{BMO}}(\mathbb{R}^{n-1},\mathbb{C}^M)$.

We begin by introducing some notation. Specifically, consider 
\begin{equation}\label{Car-g-dual}
H^1_a(\mathbb{R}^{n-1}):=
\Big\{g\in L^\infty_{\rm comp}(\mathbb{R}^{n-1}): 
\,\int_{\mathbb{R}^{n-1}}g\,d{\mathscr{L}}^{n-1}=0\Big\}.
\end{equation}
where $L^\infty_{\rm comp}({\mathbb{R}}^{n-1})$ stands for the space of essentially 
bounded functions with compact support in ${\mathbb{R}}^{n-1}$. In particular, 
since any $g\in H^1_a(\mathbb{R}^{n-1})$ is a scalar multiple of a $(1,\infty)$-atom
(recall \eqref{defi-atom}), it follows that 
\begin{equation}\label{h1adense}
\text{$H^1_a(\mathbb{R}^{n-1})$ is a dense subspace of $H^1(\mathbb{R}^{n-1})$.}
\end{equation} 

In the lemma below we prove a pointwise decay estimate for the vertical maximal operator  
acting on functions from $H^1_a(\mathbb{R}^{n-1})$. Recall the definition from \eqref{phisubt}.

\begin{lemma}\label{lemma:phit-g}
Let $\phi=\big(\phi_{\alpha\beta}\big)_{1\leq\alpha,\beta\leq M}:
\mathbb{R}^{n-1}\to\mathbb{C}^{M\times M}$ be a matrix-valued function with 
differentiable entries satisfying the property that there exists $C\in(0,\infty)$ such that
\begin{equation}\label{phi-Poisson}
|\phi(x')|+|\nabla\phi(x')|\leq\frac{C}{1+|x'|^n},
\qquad\mbox{for every }x'\in\mathbb{R}^{n-1}.
\end{equation}
Pick a function $g=(g_\alpha)_{1\leq\alpha\leq M}$ with components in 
$H^1_a(\mathbb{R}^{n-1})$. Then there exists a constant $C_g\in(0,\infty)$, 
depending on $g$, such that
\begin{equation}\label{phit*g}
\sup_{t>0}\Big|\big(\phi_t*g\big)(x')\Big|\leq\frac{C_g}{1+|x'|^n}
\quad\mbox{ for every }\,x'\in\mathbb{R}^{n-1}.
\end{equation}
\end{lemma}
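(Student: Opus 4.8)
The plan is to estimate $\big(\phi_t\ast g\big)(x')$ by exploiting the cancellation $\int_{\mathbb{R}^{n-1}}g\,d{\mathscr{L}}^{n-1}=0$ together with the decay and Lipschitz bounds on $\phi$ encoded in \eqref{phi-Poisson}, splitting into the regimes where $|x'|$ is comparable to the size of the support of $g$ and where $|x'|$ is much larger. Write $g=\sum_{\alpha}g_\alpha e_\alpha$ with each $g_\alpha\in H^1_a(\mathbb{R}^{n-1})$; since all bounds in sight are componentwise it suffices to treat a single scalar $g\in H^1_a(\mathbb{R}^{n-1})$, say with $\mathrm{supp}\,g\subseteq B_{n-1}(0',\rho)$ after a harmless translation absorbed into $C_g$ (more carefully, let $Q$ be a cube containing $\mathrm{supp}\,g$, set $\rho:=\mathrm{diam}\,Q$ and let $z'_0$ be its center; the constant $C_g$ is permitted to depend on $g$, hence on $\rho$, $z'_0$ and $\|g\|_{L^1}$).

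First I would record the trivial bound valid for all $x'$: since $|\phi_t(y')|=t^{1-n}|\phi(y'/t)|\leq C\,t\,(t+|y'|)^{-n}\leq C\,t^{1-n}$ is not quite enough by itself, instead observe that $|\phi_t(y')|\le C\,t\,(t+|y'|)^{-n}$, so
\begin{equation}\label{triv-bd-phig}
\big|(\phi_t\ast g)(x')\big|\leq\int_{\mathbb{R}^{n-1}}|\phi_t(x'-y')|\,|g(y')|\,dy'
\leq C\,\|g\|_{L^1(\mathbb{R}^{n-1})}\,\sup_{y'\in\mathrm{supp}\,g}\frac{t}{(t+|x'-y'|)^n}.
\end{equation}
When $|x'|\le 2\rho+2|z'_0|$ this already gives $\sup_{t>0}|(\phi_t\ast g)(x')|\le C_g$, which is $\le C_g(1+|x'|^n)^{-1}\cdot(1+|x'|^n)\le C'_g(1+|x'|^n)^{-1}$ after enlarging the constant, so the claimed estimate holds on that bounded region. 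The substance is the regime $|x'-z'_0|\ge 2\rho$.

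For $|x'-z'_0|\ge 2\rho$ I would use the vanishing moment: writing $c:=\phi_t(x'-z'_0)$ one has $(\phi_t\ast g)(x')=\int_{\mathbb{R}^{n-1}}\big(\phi_t(x'-y')-\phi_t(x'-z'_0)\big)g(y')\,dy'$, and since $|y'-z'_0|\le\rho\le\frac12|x'-z'_0|\le|x'-w'|$ for $w'$ on the segment from $y'$ to $z'_0$, the Mean Value Theorem with $|\nabla\phi_t(v')|=t^{-n}|\nabla\phi(v'/t)|\le C\,t^{1-n}\,t/(t+|v'|)^{n}\cdot t^{-1}\cdot$ — more cleanly, $|\nabla\phi_t(v')|\le C\,t\,(t+|v'|)^{-n-1}\cdot t^{-1}\cdot t = C\,t\,(t+|v'|)^{-n-1}$; wait, the correct scaling is $|\nabla\phi_t(v')| = t^{-n}|(\nabla\phi)(v'/t)|\le C\,t^{-n}(1+|v'|/t)^{-n} = C\,t\,(t+|v'|)^{-n}\cdot t^{-1}$, i.e. $|\nabla\phi_t(v')|\le C\,(t+|v'|)^{-n}\le C\,(t+|v'|)^{-n}$ — so one gets the bound $|\phi_t(x'-y')-\phi_t(x'-z'_0)|\le C\rho\,(t+|x'-z'_0|)^{-n}$, whence $|(\phi_t\ast g)(x')|\le C\rho\,\|g\|_{L^1}\,(t+|x'-z'_0|)^{-n}\le C_g\,|x'-z'_0|^{-n}$, and taking the supremum over $t>0$ and absorbing $\rho,z'_0$-dependence into $C_g$ yields $\sup_{t>0}|(\phi_t\ast g)(x')|\le C_g(1+|x'|^n)^{-1}$ on $|x'-z'_0|\ge 2\rho$. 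Combining the two regimes gives \eqref{phit*g}. The main obstacle — and the only place demanding care — is tracking the $t$-scaling of the derivative bound $|\nabla\phi_t|$ correctly so that the estimate is uniform in $t>0$; once the correct homogeneity $|\nabla\phi_t(v')|\le C\,t\,(t+|v'|)^{-n-1}$ is in hand (note $\phi_t(x')=t^{1-n}\phi(x'/t)$ forces $\nabla\phi_t(x')=t^{-n}(\nabla\phi)(x'/t)$, and then $|\nabla\phi_t(x')|\le C\,t^{-n}(1+|x'|/t)^{-n}=C\,t\,(t+|x'|)^{-n}/t\cdot\ldots$ — the clean statement being $|\nabla\phi_t(x')|\le C\,(t+|x'|)^{-n}$ from $(1+|x'|)^{-n}\ge(1+|x'|)^{-n}$ in the original), everything else is the routine split above.
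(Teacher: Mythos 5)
Your far-region argument (for $|x'-z_0'|\ge 2\rho$) is essentially the paper's: use $\int g=0$ to subtract a constant, apply the Mean Value Theorem, and exploit the scaling of $\nabla\phi_t$. Note, however, that the bound you flirt with twice, $|\nabla\phi_t(v')|\le C\,t\,(t+|v'|)^{-n-1}$, does \emph{not} follow from \eqref{phi-Poisson}: since $\nabla\phi_t(v')=t^{-n}(\nabla\phi)(v'/t)$, the hypothesis only gives $|\nabla\phi_t(v')|\le C\,t^{-n}\big(1+|v'/t|^n\big)^{-1}=C\,(t^n+|v'|^n)^{-1}\le C\,(t+|v'|)^{-n}$ (up to a dimensional constant), which is the ``clean statement'' you eventually use; the stronger $-n-1$ decay would require a gradient bound of order $n+1$ that is not assumed. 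With the correct bound, your estimate $|\phi_t(x'-y')-\phi_t(x'-z_0')|\le C\rho\,(t+|x'-z_0'|)^{-n}$ and the conclusion on the far region are fine and uniform in $t$.

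The genuine gap is in the near region. From \eqref{triv-bd-phig} you claim that $|x'|\le 2\rho+2|z_0'|$ ``already gives'' $\sup_{t>0}|(\phi_t\ast g)(x')|\le C_g$, but the right-hand side of \eqref{triv-bd-phig} is
\begin{equation*}
C\,\|g\|_{L^1(\mathbb{R}^{n-1})}\,\sup_{y'\in\mathrm{supp}\,g}\frac{t}{(t+|x'-y'|)^n},
\end{equation*}
and for $x'$ belonging to (or at small distance from) $\mathrm{supp}\,g$ the inner supremum is $\ge t\,(t+d)^{-n}$ with $d=\mathrm{dist}(x',\mathrm{supp}\,g)$, whose supremum over $t>0$ is $+\infty$ when $d=0$ (it behaves like $t^{1-n}$ as $t\to 0^{+}$, and $n\ge 2$ here). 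So the pairing ``$\|g\|_{L^1}$ times the sup of the kernel'' is simply not uniform in $t$, and the near-region conclusion does not follow as written. The fix is to pair the other way, which is exactly what the paper does: since $g\in H^1_a(\mathbb{R}^{n-1})\subset L^\infty_c(\mathbb{R}^{n-1})$, one has
\begin{equation*}
\big|(\phi_t\ast g)(x')\big|\le\|g\|_{L^\infty(\mathbb{R}^{n-1})}\,\|\phi_t\|_{L^1(\mathbb{R}^{n-1})}
=\|g\|_{L^\infty(\mathbb{R}^{n-1})}\,\|\phi\|_{L^1(\mathbb{R}^{n-1})},
\end{equation*}
a bound independent of $t$, which on the bounded region $|x'|\le 2\rho+2|z_0'|$ can be absorbed into $C_g(1+|x'|^n)^{-1}$ as you intended. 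With that one-line replacement your proof matches the paper's.
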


\begin{proof}
Take $R=R_g\ge 1$ sufficiently large so that (recall \eqref{balnminus1})
$\mathrm{supp\,} g\subset B_{n-1}(0',R)=:B$. 
In the case when $x'\in 2B$, for each $t>0$ we have
\begin{align}\label{hdfts-3}
\Big|\big(\phi_t*g\big)(x')\Big|
&\leq\|g\|_{L^\infty(\mathbb{R}^{n-1})}\,\|\phi_t\|_{L^1(\mathbb{R}^{n-1})}
=\|g\|_{L^\infty(\mathbb{R}^{n-1})}\,\|\phi\|_{L^1(\mathbb{R}^{n-1})}
\nonumber\\[6pt]
&\leq\frac{\|g\|_{L^\infty(\mathbb{R}^{n-1})}\,\|\phi\|_{L^1(\mathbb{R}^{n-1})}
(1+(2R)^n)}{1+|x'|^n}.
\end{align}
Corresponding to $x'\notin 2\,B$, first we use that $g$ has vanishing integral
and its support condition to write
\begin{align}\label{eqn-phi-g-off}
\Big|\big(\phi_t*g\big)(x')\Big|
&\leq\int_{\mathbb{R}^{n-1}}\big|\phi_t(x'-y')-\phi_t(x')\big|\,|g(y')|\,dy'
\nonumber\\[4pt]
&\leq\int_{B}\big|\phi_t(x'-y')-\phi_t(x')\big|\,|g(y')|\,dy'.
\end{align}
Next, we estimate the integrand in the right hand-side. By recalling \eqref{phisubt},
an application of the Mean Value Theorem combined with \eqref{phi-Poisson}, 
for each $x'\notin 2\,B$, $y'\in B$, and $t>0$, allows us to write 
\begin{align}\label{eqn-phi-g-aa}
\big|\phi_t(x'-y')-\phi_t(x')\big|
&\leq t^{1-n}\,\frac{|y'|}{t}\,\sup_{\theta\in[0,1]}
\big|\nabla\phi\big((x'-\theta\,y')/t\big)\big|
\nonumber\\[4pt]
&\leq C|y'|\,\sup_{\theta\in[0,1]}\frac{1}{|x'-\theta\,y'|^n}.
\end{align}
Moreover, whenever $x'\notin 2\,B$ and $y'\in B$, for each $\theta\in[0,1]$ we have
\begin{equation}\label{hdfts-5}
|x'|\leq |x'-\theta\,y'|+\theta\,|y'|
\leq|x'-\theta\,y'|+R
\leq|x'-\theta\,y'|+\frac12\,|x'|,
\end{equation}
which implies $|x'-\theta\,y'|\geq(1/2)|x'|\geq(1/3)(1+|x'|)$. 
The latter when used in \eqref{eqn-phi-g-aa} in combination with \eqref{eqn-phi-g-off}
implies
\begin{equation}\label{hdfts-56}
\big|(\phi_t*g)(x')\big|\leq R\,\frac{C\|g\|_{L^1(\mathbb{R}^{n-1})}}{1+|x'|^n},
\qquad\forall\,x'\in{\mathbb{R}}^{n-1}\setminus(2B),\,\,\forall\,t>0.
\end{equation}
Now the desired conclusion follows from \eqref{hdfts-3} and \eqref{hdfts-56}
by taking 
\begin{equation}\label{GFwwf}
C_g:=\max\big\{\|g\|_{L^\infty(\mathbb{R}^{n-1})}\,\|\phi\|_{L^1(\mathbb{R}^{n-1})}
(1+(2R)^n)\,,\,CR\|g\|_{L^1(\mathbb{R}^{n-1})}\big\}.
\end{equation}
The proof of the lemma is therefore complete. 
\end{proof}

Our next preparatory lemma is needed in the proof of Proposition~\ref{prop:Car->BMO}.

\begin{lemma}\label{prop:Pt-Delta}
Let $L$ be an $M\times M$ elliptic system with constant complex coefficients as in
\eqref{L-def}-\eqref{L-ell.X} and consider $P^L$, the associated Poisson kernel 
for $L$ in $\mathbb{R}^{n}_+$ from Theorem~\ref{kkjbhV}, 
as well as $K^L$, defined in \eqref{eq:Gvav7g5}. Write 
\begin{equation}\label{Jhxg}
\Phi(x'):=(\partial_n K^L)(x',1)\,\,\text{ for every $x'\in{\mathbb{R}}^{n-1}$},
\end{equation}
and, whenever $0<a<b<\infty$, also set
\begin{equation}\label{def:Psi-a-b}
\Psi_{a,b}(x'):=4\,\int_a^b(\Phi_t*\Phi_t)(x')\,\frac{dt}{t},\qquad
\forall\,x'\in\mathbb{R}^{n-1}.
\end{equation}
Then, whenever $0<a<b<\infty$ there holds
\begin{equation}\label{Phi-CRF}
\Psi_{a,b}(x')=\Phi_{2b}(x')-P_{2b}^L(x')-\Phi_{2a}(x')+P_{2a}^L(x'),
\qquad\forall\,x'\in\mathbb{R}^{n-1}.
\end{equation}
\end{lemma}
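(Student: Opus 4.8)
The plan is to verify \eqref{Phi-CRF} by exploiting the semigroup property \eqref{eq:re4fd} of the Poisson kernel together with the fundamental observation that $\Phi_t$ is, up to normalization, the $t$-derivative of $P^L_t$. First I would record the key identity: since $K^L(x',t) = P^L_t(x') = t^{1-n}P^L(x'/t)$ and $\Phi(x') = (\partial_n K^L)(x',1)$, the homogeneity relation in part (5) of Theorem~\ref{kkjbhV} (namely $K^L(\lambda x) = \lambda^{1-n}K^L(x)$) yields by differentiation that $(\partial_n K^L)(x',t) = t^{-n}\Phi(x'/t) = \Phi_t(x')$ for all $(x',t)\in{\mathbb{R}}^n_+$. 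In other words, $\partial_t\big(P^L_t(x')\big) = \Phi_t(x')$. This is the crucial bridge between the kernel $\Phi$ appearing in \eqref{Jhxg} and the Poisson kernel itself.

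Next I would compute $\partial_t\big(P^L_t * P^L_t\big)$. By the semigroup property \eqref{eq:re4fd}, $P^L_t * P^L_t = P^L_{2t}$, so $\frac{d}{dt}\big(P^L_t * P^L_t\big)(x') = 2\,(\partial_s P^L_s)(x')\big|_{s=2t} = 2\,\Phi_{2t}(x')$. On the other hand, differentiating the convolution directly via the product rule gives $\frac{d}{dt}\big(P^L_t * P^L_t\big) = (\partial_t P^L_t) * P^L_t + P^L_t * (\partial_t P^L_t) = \Phi_t * P^L_t + P^L_t * \Phi_t = 2\,\Phi_t * P^L_t$ (the convolution being commutative). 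Comparing the two expressions we get $\Phi_t * P^L_t = \Phi_{2t}$. I would then differentiate once more, or alternatively differentiate the expression $\Phi_t * P^L_t$: $\frac{d}{dt}\big(\Phi_t * P^L_t\big) = (\partial_t\Phi_t)*P^L_t + \Phi_t*(\partial_t P^L_t) = (\partial_t\Phi_t)*P^L_t + \Phi_t*\Phi_t$, while from $\Phi_t * P^L_t = \Phi_{2t}$ this equals $2(\partial_s\Phi_s)|_{s=2t}$. This still leaves a $(\partial_t\Phi_t)*P^L_t$ term, so instead the cleaner route is: from $\Phi_t = \partial_t P^L_t$ we have $4\,\Phi_t * \Phi_t = 4\,(\partial_t P^L_t) * \Phi_t = 4\,\partial_t\big(P^L_t * \Phi_t\big)$ (moving the $t$-derivative outside, legitimate since $\Phi_t$ also depends on $t$ — so actually $\partial_t(P^L_t * \Phi_t) = \Phi_t*\Phi_t + P^L_t*(\partial_t\Phi_t)$, which is not quite it either). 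The correct manipulation: write $4\,\Phi_t*\Phi_t = 2\,\Phi_t * (2\partial_t P^L_t) = 2\,\Phi_t * \partial_t P^L_t$... let me instead use $4\Phi_t*\Phi_t = 2\partial_t\big(\Phi_t * P^L_t\big) + 2\big(\partial_t\Phi_t\big)*P^L_t$; combining with $\Phi_t*P^L_t=\Phi_{2t}$ so $\partial_t(\Phi_t*P^L_t) = 2\Phi'_{2t}\cdot 2$... This is getting delicate, so the honest plan is: establish $\Phi_t * P^L_t = \Phi_{2t}$ as above, then note $4\,\Phi_t*\Phi_t = 4\,(\partial_t P^L_t)*\Phi_t$. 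Now $(\partial_t P^L_t)*\Phi_t = \partial_t\big(P^L_t * \Phi_t\big) - P^L_t*(\partial_t \Phi_t)$. Using $P^L_t*\Phi_t = \Phi_{2t}$ and the scaling identity $\Phi_t(x') = t^{-n}\Phi(x'/t)$ which gives a clean formula for $\partial_t\Phi_t$, one reduces everything to a single exact $t$-derivative. The most transparent approach, which I would ultimately adopt, is to directly verify that $4\int_a^b (\Phi_t*\Phi_t)(x')\frac{dt}{t}$ is the integral of an exact derivative: specifically that $\frac{d}{dt}\Big[\Phi_{2t}(x') - P^L_{2t}(x')\Big] = \frac{2}{t}\cdot 2(\Phi_t*\Phi_t)(x')$, i.e. that $t\,\partial_t\big(\Phi_{2t} - P^L_{2t}\big) = 4\,\Phi_t*\Phi_t$, and then the Fundamental Theorem of Calculus gives \eqref{Phi-CRF} immediately upon evaluating at the endpoints $a$ and $b$.

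To prove that derivative identity, I would use $\partial_t\big(P^L_{2t}\big) = 2\Phi_{2t}$ (from $\Phi_t = \partial_t P^L_t$) and compute $\partial_t\big(\Phi_{2t}\big)$ using the explicit homogeneity $\Phi_t(x') = t^{-n}\Phi(x'/t)$, which gives $t\,\partial_t\Phi_t = -n\,\Phi_t - (\nabla\Phi)_t\cdot(\text{Euler vector field})$, a quantity I can relate back to a convolution via the identity $\Phi_t * P^L_t = \Phi_{2t}$ differentiated in $t$. The main obstacle I anticipate is precisely this bookkeeping: juggling the two sources of $t$-dependence (the subscript scaling and the convolution semigroup) without making sign or factor-of-2 errors, and justifying the differentiation under the convolution integral — but the decay estimates \eqref{eq:Kest} (giving $|\Phi(x')| \le C(1+|x'|)^{-n}$ and similar for $\nabla\Phi$) make all the relevant integrals absolutely convergent and permit differentiation under the integral sign by dominated convergence. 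Once the exact-derivative identity $t\,\partial_t\big(\Phi_{2t}(x') - P^L_{2t}(x')\big) = 4(\Phi_t*\Phi_t)(x')$ is in hand, integrating $\frac{dt}{t}$ from $a$ to $b$ and reading off the definition \eqref{def:Psi-a-b} produces \eqref{Phi-CRF} with no further work.
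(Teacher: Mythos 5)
Your overall strategy — differentiate the semigroup property $P^L_{t_1}*P^L_{t_2}=P^L_{t_1+t_2}$ twice to express $\Phi_t*\Phi_t$ in terms of $\partial_s^2 P^L_s|_{s=2t}$, then recognize the integrand of $\Psi_{a,b}$ as an exact $t$-derivative and apply the Fundamental Theorem of Calculus — is sound and genuinely different from the paper's. The paper instead convolves against a test function $h\in\mathscr{C}_0^\infty$, sets $u=P^L_t*h$, verifies via interior estimates and the $L^p$-Fatou theorem (Corollary~\ref{tuFatou.Lp}) that $v_s:=\partial_n u(\cdot,\cdot+s)$ admits a Poisson representation, thereby proves $(\partial_n^2 u)(x',2t)=\int\partial_n K^L(z',t)\,(\partial_n u)(x'-z',t)\,dz'$, applies FTC, and finishes by a density/continuity argument to pass from $\Psi_{a,b}*h$ back to $\Psi_{a,b}$ pointwise. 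Your route is more elementary in the sense that it makes no use of Fatou theory or nontangential maximal bounds: it only needs the semigroup property, the decay estimates \eqref{eq:Kest}, and differentiation under the integral sign (which those decay estimates do justify via dominated convergence for $t$ in compact subsets of $(0,\infty)$).

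There is, however, a concrete normalization error that undercuts the execution. You assert $(\partial_n K^L)(x',t)=t^{-n}\Phi(x'/t)=\Phi_t(x')$ and therefore $\partial_t\big(P^L_t(x')\big)=\Phi_t(x')$. The first equality is right, but the convention \eqref{phisubt} gives $\Phi_t(x')=t^{1-n}\Phi(x'/t)$, not $t^{-n}\Phi(x'/t)$; hence $(\partial_n K^L)(x',t)=t^{-1}\Phi_t(x')$, i.e.\ the correct relation (the paper's \eqref{jsgag-bb}) is $\Phi_t=t\,\partial_t P^L_t$, \emph{not} $\Phi_t=\partial_t P^L_t$. This missing factor of $t$ propagates: your derived identity $\Phi_t*P^L_t=\Phi_{2t}$ should read $\Phi_t*P^L_t=\tfrac12\Phi_{2t}$, and the quantity you call $\Phi_t*\Phi_t$ differs from the one appearing in the definition \eqref{def:Psi-a-b} by a factor of $t^2$. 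Consequently the chain of reasoning you outline would not actually deliver the exact-derivative identity you aim for, and the intermediate bookkeeping you yourself flag as ``delicate'' is where the discrepancy surfaces. (Curiously, the final identity you write down, namely $t\,\partial_t\big(\Phi_{2t}-P^L_{2t}\big)=4\,\Phi_t*\Phi_t$, \emph{is} correct when $\Phi_t$ is understood with the paper's convention — you arrived at the right target by a route that would not, as written, get you there.) To make the proposal rigorous, start from $\Phi_t=t\,\partial_t P^L_t$, differentiate the semigroup identity in $t_1$ and then in $t_2$ to obtain $(\partial_{t_1}P^L_{t_1})*(\partial_{t_2}P^L_{t_2})=(\partial_s^2 P^L_s)\big|_{s=t_1+t_2}$, set $t_1=t_2=t$ to get $\Phi_t*\Phi_t=t^2(\partial_s^2 P^L_s)\big|_{s=2t}$, and then a direct computation shows $\partial_t\big(\Phi_{2t}-P^L_{2t}\big)=4t(\partial_s^2 P^L_s)\big|_{s=2t}=\tfrac{4}{t}\Phi_t*\Phi_t$; integrating $dt$ from $a$ to $b$ yields \eqref{Phi-CRF}.
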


\begin{proof}
Since $\nabla K^L$ is homogeneous of order $-n$ (recall item {\it (5)} 
in Theorem~\ref{kkjbhV}), for every $(x',t)\in{\mathbb{R}}^n_{+}$ we may write 
\begin{equation}\label{jsgag-bbA}
\Phi_t(x')=t^{1-n}\Phi(x'/t)=t^{1-n}(\partial_n K^L)(x'/t,1)
=t(\partial_n K^L)(x',t)=t\partial_t K^L(x',t).
\end{equation}
Consequently, in view of definition \eqref{eq:Gvav7g5}, in the current notation we have
\begin{equation}\label{jsgag-bb}
\Phi_t(x')=t\partial_t\big[P_t^L(x')],\qquad
\forall\,(x',t)\in{\mathbb{R}}^n_{+}. 
\end{equation}
Fix $h\in\mathscr{C}^\infty_0({\mathbb{R}}^{n-1},{\mathbb{C}}^M)$. 
Observe that
\begin{equation}\label{hsfdae}
\big(\Psi_{a,b}*h\big)(x')=4\,\int_a^b\int_{\mathbb{R}^{n-1}}\int_{\mathbb{R}^{n-1}}
\Phi_t(x'-z'-y')\,\Phi_t(z')\,h(y')\,\,dz'\,dy'\,\frac{dt}{t}
\end{equation}
since the triple integral is absolutely convergent in view of the assumptions 
made on $h$ and \eqref{eq:Kest}. Set $u(x',t):=\big(P_t^L*h\big)(x')$ for each 
$(x',t)\in\mathbb{R}^n_{+}$ and in light of \eqref{jsgag-bb}
further write \eqref{hsfdae} in the form
\begin{align}\label{eqn:Pt-delta}
\big(\Psi_{a,b}*h\big)(x')
&=4\,\int_a^b\int_{\mathbb{R}^{n-1}}\int_{\mathbb{R}^{n-1}}
\partial_n K^L(x'-z'-y',t)\,\partial_n K^L(z',t)\,h(y')\,\,dz'\,dy'\,t\,dt
\nonumber\\[4pt]
&=4\,\int_a^b\int_{\mathbb{R}^{n-1}}\int_{\mathbb{R}^{n-1}}
\partial_n K^L(z',t)\,\partial_n K^L(x'-z'-y',t)\,h(y')\,\,dy'\,dz'\,t\,dt
\nonumber\\[4pt]
&=4\,\int_a^b\int_{\mathbb{R}^{n-1}}\partial_n K^L(z',t)\,\partial_n u(x'-z',t)\,dz'\,t\,dt.
\end{align}
Next, for every $(x',t)\in\mathbb{R}_+^n$, define $v(x',t):=(\partial_n u)(x',t)$. 
By part {\it (7)} in Theorem~\ref{kkjbhV} we have that  
$u\in\mathscr{C}^\infty(\mathbb{R}_+^n,{\mathbb{C}}^M)$ and $Lu=0$ in ${\mathbb{R}}^n_{+}$. 
In turn, these imply $v\in\mathscr{C}^\infty(\mathbb{R}_+^n,{\mathbb{C}}^M)$ 
and $Lv=0$ in ${\mathbb{R}}^n_{+}$.

Moving on, for each $s>0$ set $v_s(x',t):=v(x',t+s)$ for every $(x',t)\in\mathbb{R}^n_+$. 
Then we have $v_s\in\mathscr{C}^\infty(\overline{\mathbb{R}_+^n},{\mathbb{C}}^M)$ and $Lv_s=0$
in ${\mathbb{R}}^n_{+}$. Now recall \eqref{NT-Fct}. 
For $\kappa\in(0,\infty)$ arbitrary, if $x'\in\mathbb{R}^{n-1}$ is fixed, 
Theorem~\ref{ker-sbav} allows us to estimate
\begin{align}\label{nduist}
|v_s(y',t)|
&=\big|(\partial_n u)(y',t+s)\big|
\leq\frac{C}{s}\,\aver{B((y',t+s),\kappa s/\sqrt{1+\kappa^2})} |u|\,d{\mathscr{L}}^n
\nonumber\\[4pt]
&\leq\frac{C}{s}\,{\mathcal{N}}u(x'),\qquad\forall\,(y',t)\in\Gamma_\kappa(x'),
\end{align}
where for the last inequality we have used that 
$B\big((y',t+s),\kappa s/\sqrt{1+\kappa^2}\big)\subset\Gamma_\kappa(x')$. Hence, 
\eqref{nduist} combined with \eqref{exTGFVC} yields
\begin{equation}\label{nduist-2}
(\mathcal{N}v_s)(x')\leq\frac{C}{s}\,(\mathcal{N}u)(x')
\leq\frac{C}{s}\,(\mathcal{M}h)(x'),\qquad\forall\,x'\in{\mathbb{R}}^{n-1}.
\end{equation}
Upon recalling that $h\in\mathscr{C}^\infty_0({\mathbb{R}}^{n-1},{\mathbb{C}}^M)$ and that 
the Hardy-Littlewood maximal operator is bounded on $L^p(\mathbb{R}^{n-1})$
for $p\in(1,\infty)$, from \eqref{nduist-2} we may infer that 
$\mathcal{N}v_s\in L^p(\mathbb{R}^{n-1})$ for every $p\in(1,\infty)$. 
In view of all these, we may apply Corollary~\ref{tuFatou.Lp} to $v_s$ and obtain that 
for each $s\in(0,\infty)$
\begin{align}\label{nduist-3}
v_s(x',t)
&=\big(P^L_t\ast(v_s\bigl|_{\partial\mathbb{R}^{n}_{+}})\big)(x')
=\int_{\mathbb{R}^{n-1}} P_t^L(z')\,v_s(x'-z',0)\,dz'
\nonumber\\[4pt]
&=\int_{\mathbb{R}^{n-1}} K^L(z',t)\,v_s(x'-z',0)\,dz'
\nonumber\\[4pt]
&=\int_{\mathbb{R}^{n-1}} K^L(z',t)\,(\partial_n u)(x'-z',s)\,dz',
\qquad\forall\,(x',t)\in\mathbb{R}_+^n.
\end{align}
Thus, for every $(x',t)\in\mathbb{R}_+^n$ and every $s>0$ we have
\begin{equation}\label{nduist-4}
(\partial_n^2 u)(x',t+s)=\partial_n v_s(x',t)
=\int_{\mathbb{R}^{n-1}}\partial_n K^L(z',t)\,(\partial_n u)(x'-z',s)\,dz'.
\end{equation}
Applying \eqref{nduist-4} with $s=t$, substituting the resulting equality into 
\eqref{eqn:Pt-delta}, and making use of \eqref{jsgag-bb} we obtain
\begin{align}\label{eqn:Pt-delta-inte}
\big(\Psi_{a,b}*h\big)(x')
&=4\,\int_a^b(\partial_n^2 u)(x',2t)\,t\,dt
=4\,\Big[\frac{t}2 (\partial_n u)(x',2t)-\frac14 u(x',2t)\Big]_{t=a}^{t=b}
\nonumber\\[4pt]
&=\Big[\Phi_{2t}*h(x')-P_{2t}^L*h(x')\Big]_{t=a}^{t=b}.
\end{align}
This readily yields 
\begin{equation}\label{nduist-6}
\big(\Psi_{a,b}*h\big)(x')=\big(\Phi_{2b}*h\big)(x')
-\big(P_{2b}^L*h\big)(x')-\big(\Phi_{2a}*h\big)(x')+\big(P_{2a}^L*h\big)(x')
\end{equation}
for every $x'\in\mathbb{R}^{n-1}$. Note that \eqref{nduist-6} holds for every 
$h\in{\mathscr{C}}^\infty_0({\mathbb{R}}^{n-1},{\mathbb{C}}^M)$ 
and therefore \eqref{Phi-CRF} holds for a.e. $x'\in\mathbb{R}^{n-1}$.
In addition, by Theorem~\ref{kkjbhV} and the fact that $0<a<b<\infty$ we 
see that both sides of \eqref{Phi-CRF} are continuous functions in $\mathbb{R}^{n-1}$. 
Consequently, the desired equality holds everywhere. The proof of the lemma is complete.
\end{proof}

Given a Lebesgue measurable function 
$F:\mathbb{R}^n_+\rightarrow\mathbb{C}$, for every $x'\in\mathbb{R}^{n-1}$ introduce
the Lusin area-function 
\begin{equation}\label{eq:def:AF}
(\mathcal{A}F)(x'):=\Big(\int_{\Gamma_\kappa(x')} 
|F(y',t)|^2\,\frac{dy'\,dt}{t^n}\Big)^{\frac12}
\end{equation}
and the Carleson operator
\begin{equation}\label{eq:def:CF}
(\mathcal{C}F)(x'):=\sup_{Q\ni x'}\Big(
\int_0^{\ell(Q)}\aver{Q}|F(y',t)|^2\,\frac{dy'\,dt}{t}\Big)^{\frac12}.
\end{equation}
%


In relation to these operators we recall a result from \cite[Theorem~1, p.\,313]{CMS}.

\begin{lemma}\label{lemma:tent}
There exists some constant $C\in(0,\infty)$, which depends only on $n$ and $\kappa$, 
with the property that for any Lebesgue measurable functions 
$F,G:\mathbb{R}^n_+\rightarrow\mathbb{C}$ there holds
\begin{equation}\label{eqn:CMS}
\int_{{\mathbb{R}}^n_{+}}|F(x',t)\,G(x',t)|\,\frac{dx'dt}{t}
\leq C\,\int_{\mathbb{R}^{n-1}}\mathcal{C}F(x')\,\mathcal{A}G(x')\,dx'.
\end{equation}
\end{lemma}

Strictly speaking, the statement in \cite{CMS} contains as assumptions the additional requirements 
$\mathcal{C}F\in L^\infty({\mathbb{R}}^{n-1})$ and $\mathcal{A}G\in L^1({\mathbb{R}}^{n-1})$. However, these extra assumptions may be eliminated a posteriori via a suitable limiting argument. Specifically, for each $N\in{\mathbb{N}}$ introduce
\begin{equation}\label{tgvRTF}
D_N:=\big\{(x',t)\in{\mathbb{R}}^n_{+}:\,|(x',t)|<N,\,\,t>1/N\big\}
\end{equation}
and for a generic function $f:\mathbb{R}^n_{+}\rightarrow\mathbb{C}$ define 
$f_N:\mathbb{R}^n_+\rightarrow\mathbb{C}$ by setting $f_N(x):=f(x)$ if $x\in D_N$ and
$|f(x)|\leq N$ and $f_N(x):=0$ if either $x\in{\mathbb{R}}^n\setminus D_N$ or $|f(x)|>N$, 
for each $x\in{\mathbb{R}}^n_{+}$. Then, given $F,G:\mathbb{R}^n_+\rightarrow\mathbb{C}$ 
arbitrary Lebesgue measurable functions, for each $N\in{\mathbb{N}}$ the functions 
$F_N$, $G_N$ are Lebesgue measurable and bounded. It is also immediate from definitions 
that $\mathcal{C}F_N\in L^\infty({\mathbb{R}}^{n-1})$ and 
$\mathcal{A}G_N\in L^\infty_{\rm comp}({\mathbb{R}}^{n-1})\subset L^1({\mathbb{R}}^{n-1})$.
Based on \cite[Theorem~1, p.\,313]{CMS} and the monotonicity of the operators $\mathcal{C}$ 
and $\mathcal{A}$ (with respect to the absolute value of the function to which they are applied) 
we may write
\begin{align}\label{eqnhc-S}
\int_{{\mathbb{R}}^n_{+}}|F_N(x',t)\,G_N(x',t)|\,\frac{dx'dt}{t}
&\leq C\,\int_{\mathbb{R}^{n-1}}\mathcal{C}F_N(x')\,\mathcal{A}G_N(x')\,dx'
\nonumber\\[4pt]
&\leq C\,\int_{\mathbb{R}^{n-1}}\mathcal{C}F(x')\,\mathcal{A}G(x')\,dx'.
\end{align}
Now \eqref{eqn:CMS} follows by taking the limit as $N\to\infty$ of the inequality 
resulting from \eqref{eqnhc-S} and applying Lebesgue's Monotone Convergence Theorem.

For further reference we also prove the following companion to Lemma~\ref{lemma:tent}.

\begin{lemma}\label{lexbdg}
There exists some constant $C\in(0,\infty)$ {\rm (}depending only on $n$ and $\kappa${\rm )} 
such that for any two Lebesgue measurable functions $F,G:\mathbb{R}^n_+\rightarrow\mathbb{C}$ 
one has
\begin{equation}\label{ebchd}
\int_{{\mathbb{R}}^n_{+}}|F(x',t)\,G(x',t)|\,\frac{dx'dt}{t}
\leq C\int_{{\mathbb{R}}^{n-1}}\mathcal{A}F(x')\mathcal{A}G(x')\,dx'.
\end{equation}
\end{lemma}

\begin{proof}
The idea is to estimate the expression 
\begin{align}\label{kcgds-treer}
I:=\int_{{\mathbb{R}}^{n-1}}\Big(\int_{\Gamma_{\kappa}(x')}
|F(y',t)\,G(y',t)|\,\frac{dy'dt}{t^n}\Big)\,dx'
\end{align}
in two ways. On the one hand, using Fubini's Theorem we may write
\begin{align}\label{kcgds}
I &=\int_{{\mathbb{R}}^n_{+}}|F(y',t)||G(y',t)|
\Big(\int_{{\mathbb{R}}^{n-1}}{\mathbf{1}}_{\Gamma_{\kappa}(x')}(y',t)\,dx'\Big)\frac{dy'\,dt}{t^n}
\nonumber\\[6pt]
&=C_{\kappa,n}\int_{{\mathbb{R}}^n_{+}}|F(y',t)||G(y',t)|\frac{dy'\,dt}{t}.
\end{align}
On the other hand, based on Cauchy-Schwarz' inequality we may estimate
\begin{align}\label{kcgds-treer.2}
I &\leq\int_{{\mathbb{R}}^{n-1}}\Big(\int_{\Gamma_{\kappa}(x')}
|F(y',t)|^2\,\frac{dy'dt}{t^n}\Big)^{1/2}
\Big(\int_{\Gamma_{\kappa}(x')}|G(y',t)|^2\,\frac{dy'dt}{t^n}\Big)^{1/2}\,dx'
\nonumber\\[6pt]
&=\int_{{\mathbb{R}}^{n-1}}\mathcal{A}F(x')\mathcal{A}G(x')\,dx'.
\end{align}
Now, \eqref{ebchd} follows from \eqref{kcgds} and \eqref{kcgds-treer.2}.
\end{proof}

To state the final preparatory lemma required in the proof of Proposition~\ref{prop:Car->BMO},
one more piece of notation is needed. In the sequel, $A^\top$ denotes the transpose of a given matrix $A$.

\begin{lemma}\label{lemma:SFE-H1}
Let $L$ be an $M\times M$ elliptic system with constant complex coefficients as in
\eqref{L-def}-\eqref{L-ell.X} and consider $P^L$, the associated Poisson kernel 
for $L$ in $\mathbb{R}^{n}_+$ from Theorem~\ref{kkjbhV},
as well as $K^L$ as in \eqref{eq:Gvav7g5}. Recall $\Phi$ from \eqref{Jhxg} and 
for each $x'\in\mathbb{R}^{n-1}$ set $\widetilde{\Phi}(x'):=\Phi^{\top}(-x')$.
Furthermore fix $\kappa\in(0,\infty)$ arbitrary and, given a function 
$f=(f_\beta)_{1\leq\beta\leq M}:\mathbb{R}^{n-1}\rightarrow \mathbb{C}^M$
with Lebesgue measurable entries, define for each $x'\in{\mathbb{R}}^{n-1}$
\begin{align}\label{hxewq}
(S_{\widetilde{\Phi}}f)(x')
& :=\Bigg(\int_{\Gamma_\kappa(x')}\big|(\widetilde{\Phi}_t*f)(y')\big|^2
\,\frac{dy'\,dt}{t^n}\Bigg)^{\frac12}
\nonumber\\[4pt]
&=\left(\Bigg(\int_{\Gamma_\kappa(x')} 
\sum_{\beta=1}^M\big|
\big((\widetilde{\Phi}_t)_{\alpha\beta}*f_\beta\big)(y')\big|^2
\,\frac{dy'\,dt}{t^n}\Bigg)^{\frac12}\right)_{1\leq\alpha\leq M}.
\end{align}

Then $S_{\widetilde{\Phi}}$ is a bounded operator from $H^1(\mathbb{R}^{n-1},\mathbb{C}^M)$ 
into $L^1(\mathbb{R}^{n-1})$.
\end{lemma}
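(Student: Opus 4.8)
The plan is to reduce the boundedness of $S_{\widetilde{\Phi}}$ on $H^1$ to the square-function estimates already proved in Proposition~\ref{prop:SFE-early}, applied componentwise to the kernel $\widetilde{\Phi}$. First I would record that, by \eqref{Jhxg} and \eqref{jsgag-bbA}, the matrix kernel $\Phi(x')=(\partial_n K^L)(x',1)$ gives rise, via $\theta(x',t;y'):=(\widetilde{\Phi}_t)(x'-y')=t^{1-n}\widetilde{\Phi}((x'-y')/t)$, to an operator $\Theta=\widetilde{\Phi}_t\ast(\cdot)$ whose scalar entries $\theta_{\alpha\beta}$ satisfy the hypotheses of Proposition~\ref{prop:SFE-early} with $\varepsilon=1$. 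Indeed, $\widetilde{\Phi}(x')=\Phi^{\top}(-x')$ inherits from \eqref{eq:Kest} the bounds $|\widetilde{\Phi}(x')|\leq C(1+|x'|)^{-n}$ and $|\nabla\widetilde{\Phi}(x')|\leq C(1+|x'|)^{-n-1}$, so the size and Lipschitz conditions \eqref{SFE-est-theta} and \eqref{SFE-est-theta-bis} hold for $\theta_{\alpha\beta}$ after the usual scaling computation; and the cancellation condition \eqref{SFE-vanish-theta}, namely $\int_{\mathbb{R}^{n-1}}\widetilde{\Phi}_t(x'-y')\,dy'=0$, follows because $\int_{\mathbb{R}^{n-1}}(\partial_n K^L)(y',t)\,dy'=\partial_n\!\int_{\mathbb{R}^{n-1}}P^L_t(y')\,dy'=\partial_n I_{M\times M}=0$ by \eqref{eq:IG6gy.2} (equivalently \eqref{XXdgsr}). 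The $\mathscr{C}^1$-in-$y'$ requirement is clear from part (4) of Theorem~\ref{kkjbhV}.

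Next I would invoke Proposition~\ref{prop:SFE-early} for each pair of indices $\alpha,\beta$: applying it with $\theta=\theta_{\alpha\beta}$ (and $M=1$), the scalar square function $f_\beta\mapsto\big(\int_{\Gamma_\kappa(x')}|((\widetilde{\Phi}_t)_{\alpha\beta}\ast f_\beta)(y')|^2\,dy'\,dt/t^n\big)^{1/2}$ is bounded from $H^1(\mathbb{R}^{n-1})$ into $L^1(\mathbb{R}^{n-1})$ by \eqref{hdgswf.DD3}. Denote this scalar operator $S_{\alpha\beta}$. Then, pointwise in $x'$, each component of $(S_{\widetilde{\Phi}}f)(x')$ is $\big(\int_{\Gamma_\kappa(x')}\sum_{\beta}|((\widetilde{\Phi}_t)_{\alpha\beta}\ast f_\beta)(y')|^2\,dy'\,dt/t^n\big)^{1/2}\leq\sum_{\beta=1}^M (S_{\alpha\beta}f_\beta)(x')$, using subadditivity of $\big(\sum_\beta a_\beta^2\big)^{1/2}\leq\sum_\beta a_\beta$. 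Summing over $\alpha$ and integrating, $\|S_{\widetilde{\Phi}}f\|_{L^1(\mathbb{R}^{n-1})}\leq\sum_{\alpha,\beta}\|S_{\alpha\beta}f_\beta\|_{L^1(\mathbb{R}^{n-1})}\leq C\sum_{\beta}\|f_\beta\|_{H^1(\mathbb{R}^{n-1})}=C\|f\|_{H^1(\mathbb{R}^{n-1},\mathbb{C}^M)}$ by \eqref{eq:H1-norm-vv}, which is exactly the asserted bound.

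The only genuinely technical point, which I would write out carefully rather than the trivial algebra above, is the verification of \eqref{SFE-est-theta} and \eqref{SFE-est-theta-bis} for $\theta_{\alpha\beta}(x',t;y')=t^{1-n}\widetilde{\Phi}_{\alpha\beta}((x'-y')/t)$. This is a routine homogeneity scaling argument: writing $z'=(x'-y')/t$, one has $|\theta_{\alpha\beta}(x',t;y')|=t^{1-n}|\widetilde{\Phi}_{\alpha\beta}(z')|\leq Ct^{1-n}(1+|z'|)^{-n}=Ct\,(t+|x'-y'|)^{-n}$, which is $\leq C\,t/|(x'-y',t)|^{n}$ up to dimensional constants — matching \eqref{SFE-est-theta} with $\varepsilon=1$ since $|(x'-y',t)|\asymp t+|x'-y'|$. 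Similarly $|\nabla_{y'}\theta_{\alpha\beta}(x',t;y')|=t^{-n}|\nabla\widetilde{\Phi}_{\alpha\beta}(z')|\leq Ct^{-n}(1+|z'|)^{-n-1}=Ct/(t+|x'-y'|)^{n+1}$, giving \eqref{SFE-est-theta-bis}. I expect no real obstacle here; the argument is short precisely because all the analytic work — the $T1$-type square-function bound, the weak-$(1,1)$ estimate, and the $H^1\to L^1$ boundedness via atoms — has already been carried out in Proposition~\ref{prop:SFE-early}, and the present lemma is just the vector-valued packaging together with the bookkeeping that $\widetilde{\Phi}$'s entries fall under its hypotheses.
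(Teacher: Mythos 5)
Your proposal is correct and follows essentially the same route as the paper: both reduce the vector-valued square function to the scalar $H^1\to L^1$ bound \eqref{hdgswf.DD3} of Proposition~\ref{prop:SFE-early}, applied to the entrywise kernels $t\,\partial_n K^L_{\beta\alpha}(\cdot,t)$ (whose size, Lipschitz, and cancellation properties the paper cites from \eqref{est-theta-K}--\eqref{est-theta-vanish}, which is the same verification you carry out from \eqref{eq:Kest} and \eqref{XXdgsr}), and then sum over $\alpha,\beta$ exactly as in \eqref{hsrewTT}.
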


\begin{proof}
For each $\alpha,\beta\in\{1,\dots,M\}$, write 
$\theta_{\alpha\beta}(x',t;y'):=t\,\partial_n K_{\beta\alpha}^L(y'-x',t)$ for every
$x',y'\in{\mathbb{R}}^{n-1}$ and $t>0$, and denote by  
$\Theta_{\alpha\beta}$ the integral operator as in \eqref{defi:Theta}
corresponding to $\theta_{\alpha\beta}$ in place of $\theta$. 
Notice that \eqref{est-theta-K}, \eqref{est-theta-K-nabla} and \eqref{est-theta-vanish} 
(with $j=n$ and the roles of $\alpha$ and $\beta$ reversed) 
allow us to apply Proposition~\ref{prop:SFE-early} and write
\begin{align}\label{hsrewTT}
\|S_{\widetilde{\Phi}}f\|_{L^1(\mathbb{R}^{n-1})}
&\leq\sum_{1\leq\alpha,\beta\le M}\|S_{\Theta_{\alpha\beta}} f_\beta\|_{L^1(\mathbb{R}^{n-1})}
\nonumber\\[4pt]
&\leq C\sum_{1\le\beta\le M}\|f_\beta\|_{H^1(\mathbb{R}^{n-1})}
=C\|f\|_{H^1(\mathbb{R}^{n-1},\mathbb{C}^M)}.
\end{align}
The desired conclusion now follows from \eqref{hsrewTT}.
\end{proof}

We have seen in Proposition~\ref{prop-Dir-BMO:exis} part {\it (e)} that if 
$f\in\mathrm{BMO}(\mathbb{R}^{n-1},\mathbb{C}^M)$ then the Littlewood-Paley measure 
$|\nabla u(x',t)|^2\,t\,dx'dt$ associated with the function $u$ defined as in 
\eqref{eqn-Dir-BMO:u:prop} is a Carleson measure in ${\mathbb{R}}^n_{+}$ 
(cf. \eqref{defi-Carleson}). In the proposition below we shall establish the converse 
implication along with the estimate which naturally accompanies this statement. 
In the proof, Lemmas~\ref{lemma:phit-g}-\ref{lemma:SFE-H1} as well as the fundamental 
duality result from \cite{FS} asserting that
\begin{equation}\label{jcgsfSEW}
\left(H^1(\mathbb{R}^{n-1},\mathbb{C}^M)\right)^*
=\widetilde{\rm BMO}(\mathbb{R}^{n-1},\mathbb{C}^M)
\end{equation}
are going to play a key role. 

\begin{proposition}\label{prop:Car->BMO}
Let $L$ be an $M\times M$ elliptic system with constant complex coefficients as in
\eqref{L-def}-\eqref{L-ell.X} and consider $P^L$, the associated Poisson kernel 
for $L$ in $\mathbb{R}^{n}_+$ from Theorem~\ref{kkjbhV}, together with $K^L$ 
as in \eqref{eq:Gvav7g5}. Recall $\Phi$ from \eqref{Jhxg}.
Let $f\in L^1\Big({\mathbb{R}}^{n-1}\,,\,\frac{1}{1+|x'|^n}\,dx'\Big)^M$
and consider the measure in $\mathbb{R}_+^{n}$ defined by
\begin{equation}\label{Car-BMO:mu}
d\mu(x',t):=\big|\big(\Phi_t*f\big)(x')\big|^2\,\frac{dx'\,dt}{t}.
\end{equation}
Then whenever $\mu$ is a Carleson measure, that is,
\begin{equation}\label{Car-BMO:Car}
\|\mu\|_{\mathcal{C}(\mathbb{R}_+^{n})}
=\sup_{Q\subset\mathbb{R}^{n-1}}\int_{0}^{\ell(Q)}
\aver{Q}\big|\big(\Phi_t*f\big)(x')\big|^2\,\frac{dx'\,dt}{t}<\infty,
\end{equation}
one necessarily has $f\in{\mathrm{BMO}}(\mathbb{R}^{n-1},\mathbb{C}^M)$ and
\begin{equation}\label{Car-BMO:BMO}
\|f\|_{\mathrm{BMO}(\mathbb{R}^{n-1},\mathbb{C}^M)}^2
\leq C\,\|\mu\|_{\mathcal{C}(\mathbb{R}_+^{n})}
\end{equation}
for some constant $C\in(0,\infty)$ independent of $f$. 
\end{proposition}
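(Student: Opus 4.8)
\textbf{Proof proposal for Proposition~\ref{prop:Car->BMO}.}
The plan is to pass to the dual characterization of $\mathrm{BMO}$ via \eqref{jcgsfSEW}, testing $f$ against functions in the dense subspace $H^1_a(\mathbb{R}^{n-1},\mathbb{C}^M)$ from \eqref{Car-g-dual}. So fix $g=(g_\alpha)_{1\le\alpha\le M}$ with components in $H^1_a(\mathbb{R}^{n-1})$; it suffices to bound $\big|\langle f,g\rangle\big|$ by $C\,\|\mu\|_{\mathcal{C}(\mathbb{R}_+^n)}^{1/2}\,\|g\|_{H^1(\mathbb{R}^{n-1},\mathbb{C}^M)}$. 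The first step is a Calder\'on reproducing formula: using Lemma~\ref{prop:Pt-Delta}, the function $\Psi_{a,b}=4\int_a^b(\Phi_t*\Phi_t)(\cdot)\,\tfrac{dt}{t}$ equals $\Phi_{2b}-P^L_{2b}-\Phi_{2a}+P^L_{2a}$. Convolving with $g$ and letting $a\to 0^+$, $b\to\infty$, the terms $\Phi_{2a}*g$ and $\Phi_{2b}*g$ tend to $0$ (the former because $\Phi$ has mean zero — by \eqref{est-theta-vanish} with $j=n$ — and $g\in L^1$, hence $\Phi_{2a}*g\to(\int\Phi)\,g=0$ in a suitable sense; the latter by the decay of $\Phi$ and compact support of $g$), while $P^L_{2a}*g\to g$ and $P^L_{2b}*g\to 0$. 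This yields, at least in a weak/pointwise-a.e.\ sense adequate for pairing with $f$,
\begin{equation*}
g(x')=4\int_0^\infty\big(\Phi_t*\Phi_t*g\big)(x')\,\frac{dt}{t}.
\end{equation*}
Here Lemma~\ref{lemma:phit-g} (applicable since $\Phi$, being $(\partial_n K^L)(\cdot,1)$, satisfies \eqref{phi-Poisson} by \eqref{eq:Kest}) guarantees that $\sup_{t>0}|\Phi_t*g(x')|\le C_g(1+|x'|^n)^{-1}$, so all the integrals against $f\in L^1(\cdots,\frac{dx'}{1+|x'|^n})^M$ converge absolutely and Fubini is justified.

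Next, insert this reproducing formula into $\langle f,g\rangle$. Using the definition of $\Phi_t$ and moving one copy of $\Phi_t$ onto $f$ via $\int f(y')\,(\widetilde\Phi_t*h)(y')\,dy'=\int(\Phi_t*f)(x')\,h(x')\,dx'$ (with $\widetilde\Phi(x'):=\Phi^\top(-x')$ as in Lemma~\ref{lemma:SFE-H1}), one obtains
\begin{equation*}
\langle f,g\rangle=4\int_{\mathbb{R}^n_+}\big(\Phi_t*f\big)(x')\cdot\overline{\big(\widetilde\Phi_t*\overline{g}\big)(x')}\,\frac{dx'\,dt}{t}
\end{equation*}
up to conjugation bookkeeping (the precise pairing of matrix- and vector-valued factors follows the paper's convention). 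Now apply the tent-space duality inequality of Lemma~\ref{lemma:tent} with $F(x',t):=(\Phi_t*f)(x')$ and $G(x',t):=(\widetilde\Phi_t*g)(x')$:
\begin{equation*}
\big|\langle f,g\rangle\big|\le C\int_{\mathbb{R}^{n-1}}(\mathcal{C}F)(x')\,(\mathcal{A}G)(x')\,dx'.
\end{equation*}
By \eqref{eq:def:CF} and the Carleson hypothesis \eqref{Car-BMO:Car}, $\|\mathcal{C}F\|_{L^\infty(\mathbb{R}^{n-1})}\le C\,\|\mu\|_{\mathcal{C}(\mathbb{R}_+^n)}^{1/2}$ — one should check the routine point that the $\sup$ over cubes containing $x'$ in \eqref{eq:def:CF} is controlled by the $\sup$ over all cubes in \eqref{Car-BMO:Car}. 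Meanwhile $\mathcal{A}G=S_{\widetilde\Phi}g$ in the notation of Lemma~\ref{lemma:SFE-H1}, so that lemma gives $\|\mathcal{A}G\|_{L^1(\mathbb{R}^{n-1})}\le C\,\|g\|_{H^1(\mathbb{R}^{n-1},\mathbb{C}^M)}$. Combining, $|\langle f,g\rangle|\le C\,\|\mu\|_{\mathcal{C}(\mathbb{R}_+^n)}^{1/2}\,\|g\|_{H^1(\mathbb{R}^{n-1},\mathbb{C}^M)}$ for all $g$ in the dense subspace $H^1_a$, hence for all of $H^1$; by \eqref{jcgsfSEW} this means $[f]\in\widetilde{\mathrm{BMO}}$ with $\|f\|_{\mathrm{BMO}(\mathbb{R}^{n-1},\mathbb{C}^M)}\le C\,\|\mu\|_{\mathcal{C}(\mathbb{R}_+^n)}^{1/2}$, which is \eqref{Car-BMO:BMO} (note that a priori $f\in L^1_{\mathrm{loc}}$ and finiteness of the pairing against all of $H^1$ forces genuine membership in $\mathrm{BMO}$, not merely a distributional statement).

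The main obstacle I anticipate is making the Calder\'on reproducing formula rigorous in this generality: justifying the limits $a\to0^+$, $b\to\infty$ in $\Psi_{a,b}*g\to g$, and in particular verifying $\Phi_{2a}*g\to 0$, requires care because $\Phi$ is only $O(|x'|^{-n})$ at infinity (so $\Phi\notin L^1$ a priori — though $\Phi$ does have the mean-zero and decay properties needed for $\Phi_{2a}*g\to 0$ when $g$ is an atom with compact support and vanishing integral). The cleanest route is to prove the identity first tested against $h\in\mathscr{C}^\infty_0$ (as in the proof of Lemma~\ref{prop:Pt-Delta}), establish convergence of $\Psi_{a,b}*h$ in an appropriate topology, and only then pair with $f$; the absolute convergence supplied by Lemma~\ref{lemma:phit-g} is exactly what licenses interchanging the $t$-integral with the $f$-pairing. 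A secondary technical point is the removal of the a priori hypotheses $\mathcal{C}F\in L^\infty$, $\mathcal{A}G\in L^1$ in Lemma~\ref{lemma:tent}; but as the paper already notes, in our situation both are automatic — $\mathcal{C}F\in L^\infty$ is the Carleson hypothesis itself, and $\mathcal{A}G=S_{\widetilde\Phi}g\in L^1$ by Lemma~\ref{lemma:SFE-H1} since $g\in H^1$.
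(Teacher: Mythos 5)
Your proposal is correct and follows essentially the same route as the paper's own proof: the truncated Calder\'on-type reproducing formula from Lemma~\ref{prop:Pt-Delta}, pairing with $f$ before passing to the limit (justified by the domination supplied by Lemma~\ref{lemma:phit-g}), the tent-space duality of Lemma~\ref{lemma:tent} with the Carleson hypothesis controlling $\mathcal{C}F$ and Lemma~\ref{lemma:SFE-H1} controlling $\|\mathcal{A}G\|_{L^1}$, and finally Fefferman--Stein duality \eqref{jcgsfSEW}. The one point you gloss --- that the resulting bound on the functional yields $f$ itself in $\mathrm{BMO}(\mathbb{R}^{n-1},\mathbb{C}^M)$ rather than merely a $\mathrm{BMO}$ representative agreeing with $f$ under the $H^1$ pairing --- is settled in the paper by testing against differences of normalized indicators to show that $f$ minus that representative is a.e.\ constant, which is exactly the routine argument your parenthetical presupposes.
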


\begin{proof}
Fix a function $f$ as in the hypotheses of the proposition and suppose $\mu$ 
satisfies \eqref{Car-BMO:Car}. Let $g\in H^1_a(\mathbb{R}^{n-1})$ (see \eqref{Car-g-dual}) 
and for some arbitrary $\alpha_0\in\{1,\dots,M\}$ define 
\begin{equation}\label{jxgsrr}
h:=(g\,\delta_{\alpha\alpha_0})_{1\leq\alpha\leq M}\in\big[H^1_a(\mathbb{R}^{n-1})\big]^M
\subset H^1({\mathbb{R}}^{n-1},{\mathbb{C}}^M),
\end{equation}
where $\delta_{\alpha\alpha_0}$ denotes the standard Kronecker symbol.

Next, recall the expression of the classical harmonic 
Poisson kernel (that is, the Poisson kernel associated with the Laplacian $\Delta$)
\begin{equation}\label{Uah-TTT}
P^{\Delta}(x'):=\frac{2}{\omega_{n-1}}\frac{1}{\big(1+|x'|^2\big)^{\frac{n}{2}}},
\qquad\forall\,x'\in{\mathbb{R}}^{n-1},
\end{equation}
where $\omega_{n-1}$ stands for the area of the unit sphere in ${\mathbb{R}}^n$. 
Then the definition of $\Phi$, \eqref{eq:Kest} in Theorem~\ref{kkjbhV}, 
and \eqref{Uah-TTT} imply
\begin{equation}\label{bdgsfs}
\big|\Phi_t(x')\big|\leq C P_t^\Delta(x'),
\quad\forall\,x'\in{\mathbb{R}}^{n-1},\,\,\forall\,t\in(0,\infty).
\end{equation}
Also, by the semigroup property 
(cf., e.g., \cite[(vi), p.\,62]{St70}, or part {\it (8)} in Theorem~\ref{kkjbhV}), 
for every $\varepsilon\in(0,1)$ and every $t\in(\varepsilon,\varepsilon^{-1})$ we have
\begin{equation}\label{nduist-7}
P^\Delta_t*P^\Delta_t=P^\Delta_{2t}\leq C_\varepsilon\,P^\Delta.
\end{equation}
Combining \eqref{bdgsfs} and \eqref{nduist-7}, for each $\varepsilon\in(0,1)$ we may write
\begin{align}\label{adedewd}
&\int_\varepsilon^{\varepsilon^{-1}}\!\!
\int_{\mathbb{R}^{n-1}}\int_{\mathbb{R}^{n-1}}\int_{\mathbb{R}^{n-1}}
|\Phi_t(x'-y'-z')|\,|\Phi_t(z')|\,|f(y')|\,|h(x')|\,dz'\,dy'\,dx'\,\frac{dt}{t}
\nonumber\\[4pt]
&\qquad\leq C\,\int_\varepsilon^{\varepsilon^{-1}}\!\!
\int_{\mathbb{R}^{n-1}}\int_{\mathbb{R}^{n-1}}\int_{\mathbb{R}^{n-1}}
P^\Delta_t(x'-y'-z')\,P^\Delta_t(z')\,|f(y')|\,|g(x')|\,dz'\,dy'\,dx'\,\frac{dt}{t}
\nonumber\\[4pt]
&\qquad\leq C_{\varepsilon}\,
\int_{\mathbb{R}^{n-1}}\int_{\mathbb{R}^{n-1}}P^\Delta(x'-y')\,|f(y')|\,|g(x')|\,dy'\,dx'
\nonumber\\[4pt]
&\qquad\leq C_\varepsilon\,\Big(\int_{\mathbb{R}^{n-1}} (1+|x'|^n)\,|g(x')|\,dx'\Big)\, 
\Big(\int_{\mathbb{R}^{n-1}}\frac{|f(y')|}{1+|y'|^n}\,dx'\Big)<\infty,
\end{align}
where for the last inequality we have used the fact that 
 $1+|y'|\leq  (1+|x'|)\,(1+|x'-y'|)$ for every $x',y'\in{\mathbb{R}}^{n-1}$,
while the finiteness of the rightmost term in \eqref{adedewd} follows from
our assumptions on $f$ and $g$. Thus, recalling the definition of 
$\Psi_{\varepsilon,\varepsilon^{-1}}$ from \eqref{def:Psi-a-b} we have that
\begin{align}\label{hsrewss}
&\int_{\mathbb{R}^{n-1}}\big\langle\big(\Psi_{\varepsilon,\varepsilon^{-1}}*f\big)(x'),
h(x')\big\rangle\,dx'
\nonumber\\[4pt]
&\hskip 0.50in
=\int_\varepsilon^{\varepsilon^{-1}}
\int_{\mathbb{R}^{n-1}}\int_{\mathbb{R}^{n-1}}\int_{\mathbb{R}^{n-1}}
\big\langle\Phi_t(x'-y'-z')\,\Phi_t(z')\,f(y'),h(x')\big\rangle 
\,dz'\,dy'\,dx'\,\frac{dt}{t}
\end{align}
is an absolutely convergent integral. Here and elsewhere we use the notation
\begin{equation}\label{hsreccxs}
\langle\lambda,\lambda'\rangle:=\sum_{\alpha=1}^M\lambda_\alpha\,\lambda_\alpha',
\qquad
\lambda=(\lambda_\alpha)_{1\leq\alpha\le M},\,\,
\lambda'=(\lambda_\alpha')_{1\leq\alpha\leq M}\in\mathbb{C}^M.
\end{equation}
To continue, we introduce the (matrix valued) functions 
\begin{equation}\label{i7g5f}
\widetilde{\Phi}(x'):=\Phi^{\top}(-x'),\quad 
\widetilde{\Psi}_{\varepsilon,\varepsilon^{-1}}(x')
:=\Psi_{\varepsilon,\varepsilon^{-1}}^\top(-x'),\,\,\text{ and }\,\, 
\widetilde{P}^L(x'):=(P^L)^{\top}(-x'), 
\end{equation}
defined for every $x'\in\mathbb{R}^{n-1}$. Then, for every $\varepsilon>0$, we may write
\begin{align}\label{Phi-Tent}
\left|\int_{\mathbb{R}^{n-1}}\big\langle f(x'),
\big(\widetilde{\Psi}_{\varepsilon,\varepsilon^{-1}}*h\big)(x')\big\rangle\,dx'\right|
&=\left|\int_{\mathbb{R}^{n-1}} 
\big\langle\big(\Psi_{\varepsilon,\varepsilon^{-1}}*f\big)(x'),h(x')\big\rangle\,dx'\right|
\nonumber\\[4pt]
&=\left|\int_{\varepsilon}^{\varepsilon^{-1}}\!\!\int_{\mathbb{R}^{n-1}} 
\big\langle\big(\Phi_t*\Phi_t* f\big)(x'),h(x')\big\rangle\,dx'\,\frac{dt}{t}\right|
\nonumber\\[4pt]
&=\left|\int_{\varepsilon}^{\varepsilon^{-1}}\!\!\int_{\mathbb{R}^{n-1}} 
\big\langle\big(\Phi_t* f\big)(x'),\big(\widetilde{\Phi}_t*h\big)(x')\big\rangle
\,dx'\,\frac{dt}{t}\right|
\nonumber\\[4pt]
&=\left|\int_{\varepsilon}^{\varepsilon^{-1}}\!\!\int_{\mathbb{R}^{n-1}} 
\langle F(x',t),H(x',t)\rangle\,dx'\,\frac{dt}{t}\right|
\nonumber\\[4pt]
&\leq\int_{\mathbb{R}_+^{n}}  
\big|\langle F(x',t),H(x',t)\rangle\big|\,dx'\,\frac{dt}{t}
\end{align}
where $F(x',t):=(\Phi_t*f)(x')$ and $H(x',t):=(\widetilde{\Phi}_t*h)(x')$
for every $(x',t)\in{\mathbb{R}}^n_{+}$. 
Denote by $(F_\alpha)_{1\leq\alpha\leq M}$ and $(H_\alpha)_{1\leq\alpha\leq M}$ the 
scalar components of $F$ and $H$, respectively. Note that \eqref{eq:def:CF},
the definition of $F$, and \eqref{Car-BMO:Car} imply
\begin{equation}\label{mhsf}
\|\mathcal{C}F_\alpha\|_{L^\infty({\mathbb{R}}^{n-1})}
\leq\|\mu\|_{\mathcal{C}(\mathbb{R}_+^{n})}^{\frac{1}{2}}<\infty,
\qquad\forall\,\alpha\in\{1,\dots,M\}.
\end{equation}
Also, \eqref{jxgsrr}, \eqref{eq:def:AF}, and Lemma~\ref{lemma:SFE-H1} permit us to write 
\begin{align}\label{jdfrae}
\|\mathcal{A}H_\alpha\|_{L^1({\mathbb{R}}^{n-1})}
&\leq\|S_{\widetilde{\Phi}}h\|_{L^1(\mathbb{R}^{n-1})}
\leq C\|h\|_{H^1(\mathbb{R}^{n-1},\mathbb{C}^M)}
\nonumber\\[4pt]
&=C\|g\|_{H^1(\mathbb{R}^{n-1})},
\qquad\forall\,\alpha\in\{1,\dots,M\}.
\end{align}
Consequently, Lemma~\ref{lemma:tent}, \eqref{mhsf}, and \eqref{jdfrae} allow us to estimate
\begin{align}\label{RWWQD-3}
\int_{\mathbb{R}_+^{n}}\big|\langle F(x',t),H(x',t)\rangle\big|\,dx'\,\frac{dt}{t}
&\leq\sum_{\alpha=1}^M\int_{\mathbb{R}_+^{n}}  
\big|F_\alpha(x',t)\,H_\alpha(x',t)\big|\,dx'\,\frac{dt}{t}
\nonumber\\[4pt]
&\leq C\,\sum_{\alpha=1}^M
\int_{\mathbb{R}^{n-1}}\mathcal{C}F_\alpha(x')\,\mathcal{A}H_\alpha(x')\,dx'
\nonumber\\[4pt]
&\leq C\,\sum_{\alpha=1}^M\|\mathcal{C}F_\alpha\|_{L^\infty(\mathbb{R}^{n-1})}\,
\|\mathcal{A}H_\alpha\|_{L^1(\mathbb{R}^{n-1})}
\nonumber\\[4pt]
&=C\,\|\mu\|_{\mathcal{C}(\mathbb{R}_+^{n})}^\frac12\,
\|g\|_{H^1(\mathbb{R}^{n-1})}.
\end{align}

At this point we make the claim that 
\begin{equation}\label{fg-lim}
\lim_{\varepsilon\to 0^+}\int_{\mathbb{R}^{n-1}}\big\langle f(x'),
(\widetilde{\Psi}_{\varepsilon,\varepsilon^{-1}}\ast h)(x')\big\rangle\,dx'
=\int_{\mathbb{R}^{n-1}}\big\langle f(x'),h(x')\big\rangle\,dx'.
\end{equation}
The idea is to show that Lebesgue's Dominated Convergence Theorem applies in our setting.
With this goal in mind, first observe that by part {\it (5)} in Theorem~\ref{kkjbhV}, 
for every multi-index $\alpha\in {\mathbb{N}}_0^{n-1}$, we have
\begin{equation}\label{derv-Phi}
|\partial^\alpha\Phi(x')|=|\partial^\alpha\partial_n K^L(x',1)|
\leq C_\alpha|(x',1)|^{-n-|\alpha|},
\end{equation}
hence $\widetilde{\Phi}$ satisfies the hypotheses of Lemma~\ref{lemma:phit-g}. 
Moreover, by parts {\it (1)} and {\it (5)} in Theorem~\ref{kkjbhV} we also have that
$\widetilde{P}^L$ satisfies the hypotheses of Lemma~\ref{lemma:phit-g}. 
Hence, Lemma~\ref{lemma:phit-g} and \eqref{jxgsrr} give 
\begin{equation}\label{phihcdyds}
\sup_{t>0}\Big|\big(\widetilde{\Phi}_t*h\big)(x')\Big|
+\sup_{t>0}\Big|\big(\widetilde{P}^L_t*h\big)(x')\Big|
\leq\frac{C_h}{1+|x'|^n},\qquad\mbox{for every }x'\in\mathbb{R}^{n-1}.
\end{equation}
In light of \eqref{Phi-CRF}, the latter yields
\begin{equation}\label{hysewQ}
\sup_{0<\varepsilon<1}\big|\big(\widetilde{\Psi}_{\varepsilon,\varepsilon^{-1}}*h\big)(x')\big|
\leq\frac{C_h}{1+|x'|^n},\qquad\mbox{for every }x'\in\mathbb{R}^{n-1}.
\end{equation}
From this and the fact that 
$f\in L^1\Big({\mathbb{R}}^{n-1}\,,\,\frac{1}{1+|x'|^n}\,dx'\Big)^M$
we arrive at the conclusion that
\begin{equation}\label{hysewQ-bgsf}
\sup_{0<\varepsilon<1}\Big|
\big\langle f,\widetilde{\Psi}_{\varepsilon,\varepsilon^{-1}}\ast h\big\rangle\Big|
\in L^1({\mathbb{R}}^{n-1}).
\end{equation}
Next, we focus on the pointwise convergence of the functions under the integral
in the left hand-side of \eqref{fg-lim}. 
First, by \eqref{eq:IG6gy}, \eqref{exTGFVC.2s:app} in Lemma~\ref{lennii},
and \eqref{eq:IG6gy.2} in Theorem~\ref{kkjbhV} we obtain
\begin{equation}\label{eqn:Pt:t0}
\lim_{s\to 0^+}\big(\widetilde{P}_s^L*h\big)(x')
=\Big(\int_{\mathbb{R}^{n-1}}\widetilde{P}^L(y')\,dy'\Big)\,h(x')
=h(x'),\qquad\mbox{for a.e. } x'\in\mathbb{R}^{n-1}.
\end{equation}
Second, using a suitable change of variables, the properties of $h$, 
and Lebesgue's Dominated Convergence Theorem we have
\begin{equation}\label{eqn:Pt:tinfty}
\lim_{s\to\infty}\big(\widetilde{P}_s^L*h\big)(x')
=\lim_{s\to\infty}\int_{\mathbb{R}^{n-1}}\widetilde{P}^L(y')\,h(x'-sy')\,dy'=0.
\end{equation}
Third, by \eqref{XXdgsr} for every $t>0$ we have
\begin{equation}\label{RWWQD}
\int_{\mathbb{R}^{n-1}} (\partial_n K)(x',t)\,dx'=0,
\quad\forall\,t>0.
\end{equation}
which when specialized to $t=1$ yields 
\begin{equation}\label{Phi-vanish-int}
\int_{\mathbb{R}^{n-1}}\widetilde{\Phi}(x')\,dx'
=\Big(\int_{\mathbb{R}^{n-1}}\Phi(-x')\,dx'\Big)^{\top}
=\Big(\int_{\mathbb{R}^{n-1}}\Phi(x')\,dx'\Big)^{\top}=0.
\end{equation}
This, \eqref{derv-Phi}, and Lemma~\ref{lennii} applied to $\widetilde{\Phi}$ then give that
\begin{equation}\label{eqn:Phit:t0}
\lim_{s\to 0^+}\big(\widetilde{\Phi}_s*h\big)(x')
=\Big(\int_{\mathbb{R}^{n-1}}\widetilde{\Phi}(y')\,dy'\Big)\,h(x')=0
\quad\mbox{for a.e. }\,x'\in\mathbb{R}^{n-1}.
\end{equation}
Fourth, a suitable change of variables, the properties of $h$, 
and Lebesgue's Dominated Convergence Theorem also yield
\begin{equation}\label{eqn:Phit:tinfty}
\lim_{s\to\infty}\big(\widetilde{\Phi}_s*h\big)(x')
=\lim_{s\to\infty}\int_{\mathbb{R}^{n-1}}\widetilde{\Phi}(y')\,h(x'-sy')\,dy'=0.
\end{equation}
In concert, \eqref{eqn:Pt:t0}, \eqref{eqn:Pt:tinfty}, \eqref{eqn:Phit:t0}, 
\eqref{eqn:Phit:tinfty}, and \eqref{Phi-CRF} imply the pointwise convergence
\begin{equation}\label{RWWQD-2}
\lim_{\varepsilon\to 0^+}\big(\widetilde{\Psi}_{\varepsilon,\varepsilon^{-1}}*h\big)(x')
=h(x')\quad\mbox{for a.e. }\,x'\in\mathbb{R}^{n-1}.
\end{equation}
Having dispensed of \eqref{hysewQ-bgsf} and \eqref{RWWQD-2}, we may
apply Lebesgue's Dominated Theorem to write
\begin{align}\label{TCD-Psi}
\lim_{\varepsilon\to 0^+}\int_{\mathbb{R}^{n-1}} 
\big\langle f(x'),\big(\widetilde{\Psi}_{\varepsilon,\varepsilon^{-1}}*h\big)(x')\big\rangle\,dx'
&=\int_{\mathbb{R}^{n-1}}\Big\langle f(x'),\lim_{\varepsilon\to 0^+}
\big(\widetilde{\Psi}_{\varepsilon,\varepsilon^{-1}}\ast h\big)(x')\Big\rangle\,dx'
\nonumber\\[4pt]
&=\int_{\mathbb{R}^{n-1}}\langle f(x'),h(x')\rangle\,dx',
\end{align}
finishing the proof of the claim in \eqref{fg-lim}.

From the definition of $h$, \eqref{fg-lim}, \eqref{Phi-Tent}, and \eqref{RWWQD-3}
we may conclude that
\begin{equation}\label{RWWQD-345}
\Big|\int_{\mathbb{R}^{n-1}} f_{\alpha_0}(x')\,g(x')\,dx'\Big|
\leq C\,\|\mu\|_{\mathcal{C}(\mathbb{R}_+^{n})}^\frac12\,
\|g\|_{H^1(\mathbb{R}^{n-1})}.
\end{equation}
In particular, if we define the functional 
$\Lambda_f^{\alpha_0}:H^1_a(\mathbb{R}^{n-1})\to{\mathbb{C}}$ by setting 
\begin{equation}\label{RWWQD-3SD}
\Lambda_f^{\alpha_0}(g):=\int_{\mathbb{R}^{n-1}}f_{\alpha_0}g\,d{\mathscr{L}}^{n-1}
\,\,\text{ for every }\,\,g\in H^1_a(\mathbb{R}^{n-1}), 
\end{equation}
then \eqref{RWWQD-345} implies $\Lambda_f^{\alpha_0}\in\big(H^1(\mathbb{R}^{n-1})\big)^*$
(here we also used \eqref{h1adense}). Recalling \eqref{jcgsfSEW}, 
it follows that 
\begin{equation}\label{RWWQD-4}
\begin{array}{c}
\text{there exists $b_{\alpha_0}\in{\mathrm{BMO}}(\mathbb{R}^{n-1})$ such that
$\|b_{\alpha_0}\|_{\mathrm{BMO}(\mathbb{R}^{n-1})}\leq
C\,\|\mu\|_{\mathcal{C}(\mathbb{R}_+^{n})}^\frac12$}
\\[8pt]
\text{and }\,\,\displaystyle
\Lambda_f^{\alpha_0}(g)=
\int_{\mathbb{R}^{n-1}}b_{\alpha_0}\,g\,d{\mathscr{L}}^{n-1}\,\, 
\text{ for every function }\,\,g\in H^1_a(\mathbb{R}^{n-1}).
\end{array}
\end{equation}
Thus, 
\begin{equation}\label{RWWQD-5}
\int_{\mathbb{R}^{n-1}}\big(b_{\alpha_0}-f_{\alpha_0}\big)g\,d{\mathscr{L}}^{n-1}=0,
\quad\forall\,g\in H^1_a(\mathbb{R}^{n-1}).
\end{equation}
Hence, if we set $v_{\alpha_0}:=b_{\alpha_0}-f_{\alpha_0}$, then \eqref{RWWQD-5}
implies that 
\begin{equation}\label{RWWQD-5hhb}
\text{$v_{\alpha_0}$ is constant almost everywhere in ${\mathbb{R}}^{n-1}$}. 
\end{equation}
Indeed, if the latter were not true, one could find two bounded Lebesgue measurable sets 
$E^+$, $E^-$ in ${\mathbb{R}}^{n-1}$ such that $0<|E^\pm|<\infty$ and 
$v_{\alpha_0}(x')\leq a<b\le v_{\alpha_0}(y')$ 
for all $x'\in E^-$, $y'\in E^+$. Then the function
\begin{equation}\label{RWWQD-7}
g:=\frac{{\mathbf 1}_{E^+}}{|E_+|}-\frac{{\mathbf 1}_{E^-}}{|E_-|}\,\,\text{ belongs to }\,\, 
H^1_a(\mathbb{R}^{n-1})
\end{equation}
and, when used in \eqref{RWWQD-5}, forces
\begin{equation}\label{RWWQD-8.a}
0=\int_{\mathbb{R}^{n-1}}v_{\alpha_0}g\,d{\mathscr{L}}^{n-1}\geq b-a>0.
\end{equation}
This contradiction proves \eqref{RWWQD-5hhb}. In summary, we have shown that 
$b_{\alpha_0}-f_{\alpha_0}$ is constant almost everywhere in ${\mathbb{R}}^{n-1}$. 
Upon recalling the first line in \eqref{RWWQD-4}, it follows that 
$f_{\alpha_0}\in{\mathrm{BMO}}(\mathbb{R}^{n-1})$ with
\begin{equation}\label{RWWQD-8}
\|f_{\alpha_0}\|_{\mathrm{BMO}(\mathbb{R}^{n-1})}\le
C\,\|\mu\|_{\mathcal{C}(\mathbb{R}_+^{n})}^\frac12.
\end{equation}
Since $\alpha_0\in\{1,\dots,M\}$ is arbitrary, we may further conclude that 
$f\in{\mathrm{BMO}}(\mathbb{R}^{n-1},\mathbb{C}^M)$ 
and satisfies \eqref{Car-BMO:BMO}, as wanted.
\end{proof}

In turn, Proposition~\ref{prop:Car->BMO} is one of the main ingredients in the 
proof of the fact that the boundary traces of vertical shifts of a smooth null-solution
of $L$ satisfying a Carleson measure condition in the upper-half space belong to 
{\rm BMO}, uniformly with respect to the shift. 

\begin{lemma}\label{lemma:feps-BMO}
Let $L$ be an $M\times M$ elliptic system with constant complex coefficients as in
\eqref{L-def}-\eqref{L-ell.X} and consider $P^L$, the associated Poisson kernel 
for $L$ in $\mathbb{R}^{n}_+$ from Theorem~\ref{kkjbhV}. 
Suppose $u\in\mathscr{C}^\infty(\mathbb{R}^n_+,\mathbb{C}^M)$ 
satisfies $Lu=0$ in $\mathbb{R}_+^n$ and $\|u\|_{**}<\infty$. 
For each $\varepsilon>0$, set $u_\varepsilon(x',t):=u(x', t+\varepsilon)$, for every 
$(x',t)\in\mathbb{R}^n_+$ and $f_\varepsilon:=u_\varepsilon\bigl|_{\partial\mathbb{R}^{n}_{+}}$. 
Then for each $\varepsilon>0$ we have 
$f_\varepsilon\in{\mathrm{BMO}}(\mathbb{R}^{n-1},\mathbb{C}^M)$ and
\begin{equation}\label{feps-BMO}
\|f_\varepsilon\|_{\mathrm{BMO}(\mathbb{R}^{n-1},\mathbb{C}^M)}\leq C\,\|u\|_{**},
\end{equation}
for some $C\in(0,\infty)$ independent of $\varepsilon$.
\end{lemma}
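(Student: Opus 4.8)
\textbf{Proof plan for Lemma~\ref{lemma:feps-BMO}.}
The strategy is to reduce the claim to Proposition~\ref{prop:Car->BMO} applied to the function $f_\varepsilon$, by verifying that the Littlewood–Paley-type measure $d\mu_\varepsilon(x',t):=|(\Phi_t\ast f_\varepsilon)(x')|^2\,\frac{dx'\,dt}{t}$ is a Carleson measure in $\mathbb{R}^n_+$ with Carleson constant controlled by $C\|u\|_{**}^2$, uniformly in $\varepsilon>0$. First I would record that by part~(d) of Lemma~\ref{lemma:u-lift:props} we have $f_\varepsilon\in L^1\big({\mathbb{R}}^{n-1},\frac{1}{1+|x'|^n}\,dx'\big)^M$, so that Proposition~\ref{prop:Car->BMO} is applicable to $f_\varepsilon$ once the Carleson condition \eqref{Car-BMO:Car} is checked. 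The point of contact with $u$ is Lemma~\ref{lemma:u-lift:uniq}: for each $\varepsilon>0$ the vertical shift $u_\varepsilon$ admits the Poisson representation $u_\varepsilon(x',t)=(P^L_t\ast f_\varepsilon)(x')$ for all $(x',t)\in\mathbb{R}^n_+$. Differentiating this identity in the $t$ variable and using \eqref{jsgag-bb} from Lemma~\ref{prop:Pt-Delta}, namely $\Phi_t(x')=t\,\partial_t[P^L_t(x')]$, we obtain
\begin{equation}\label{plan:key-identity}
(\Phi_t\ast f_\varepsilon)(x')=t\,(\partial_t u_\varepsilon)(x',t)=t\,(\partial_n u)(x',t+\varepsilon),
\qquad\forall\,(x',t)\in\mathbb{R}^n_+.
\end{equation}

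With \eqref{plan:key-identity} in hand, for any cube $Q\subset\mathbb{R}^{n-1}$ I would estimate
\begin{equation}\label{plan:carleson-bound}
\int_0^{\ell(Q)}\aver{Q}\big|(\Phi_t\ast f_\varepsilon)(x')\big|^2\,\frac{dx'\,dt}{t}
=\int_0^{\ell(Q)}\aver{Q}t\,\big|(\partial_n u)(x',t+\varepsilon)\big|^2\,dx'\,dt
\leq\int_0^{\ell(Q)}\aver{Q}(t+\varepsilon)\,\big|(\nabla u_\varepsilon)(x',t)\big|^2\,dx'\,dt,
\end{equation}
and then I would bound the right-hand side by $C\|u\|_{**}^2$ uniformly in $\varepsilon$ using part~(b) of Lemma~\ref{lemma:u-lift:props}, which gives $\|u_\varepsilon\|_{**}\le C\|u\|_{**}$. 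The only subtlety is that \eqref{plan:carleson-bound} involves the weight $(t+\varepsilon)$ rather than $t$; to handle the extra $\varepsilon$ I would split into the two regimes $\ell(Q)\ge\varepsilon$ and $\ell(Q)<\varepsilon$, exactly as in the proof of part~(b) of Lemma~\ref{lemma:u-lift:props}. When $\ell(Q)\ge\varepsilon$ one has $t+\varepsilon\le 2\max\{t,\varepsilon\}\lesssim$ the side-length of a doubled cube, so a change of variables reduces the integral to a genuine Littlewood–Paley integral over $2Q$ and yields the bound $C\|u_\varepsilon\|_{**}^2$; when $\ell(Q)<\varepsilon$, one instead invokes the Bloch-type estimate \eqref{decay-Du:Carl} from Lemma~\ref{lemma:decay-Du:Carl} applied to $u$ (so $|\nabla u(x',t+\varepsilon)|\le C\|u\|_{**}/(t+\varepsilon)\le C\|u\|_{**}/\varepsilon$), giving $\int_0^{\ell(Q)}\aver{Q}(t+\varepsilon)|\nabla u_\varepsilon|^2\,dx'dt\lesssim\|u\|_{**}^2\varepsilon^{-2}\int_0^{\ell(Q)}(t+\varepsilon)\,dt\lesssim\|u\|_{**}^2$.

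Taking the supremum over all cubes $Q$ then shows that the measure $\mu_\varepsilon$ defined in \eqref{Car-BMO:mu} (with $f$ replaced by $f_\varepsilon$) satisfies $\|\mu_\varepsilon\|_{\mathcal{C}(\mathbb{R}_+^n)}\le C\|u\|_{**}^2$ with $C$ independent of $\varepsilon$. Proposition~\ref{prop:Car->BMO} applied to $f_\varepsilon$ then immediately yields $f_\varepsilon\in{\mathrm{BMO}}(\mathbb{R}^{n-1},\mathbb{C}^M)$ together with the estimate $\|f_\varepsilon\|_{\mathrm{BMO}(\mathbb{R}^{n-1},\mathbb{C}^M)}^2\le C\|\mu_\varepsilon\|_{\mathcal{C}(\mathbb{R}_+^n)}\le C\|u\|_{**}^2$, which is precisely \eqref{feps-BMO}. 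I expect the main obstacle to be purely bookkeeping rather than conceptual: one must be careful that the manipulation in \eqref{plan:carleson-bound} is legitimate (which rests on Lemma~\ref{lemma:u-lift:uniq} being available, whose proof already used Proposition~\ref{c1.2} and the properties of $f_\varepsilon$ from Lemma~\ref{lemma:u-lift:props}), and that the two-regime splitting in the Carleson estimate is carried out with constants genuinely independent of $\varepsilon$. Everything needed for those points has already been established in Lemmas~\ref{lemma:decay-Du:Carl}, \ref{lemma:u-lift:props}, \ref{lemma:u-lift:uniq}, and \ref{prop:Pt-Delta}, so the proof is a short assembly of these ingredients.
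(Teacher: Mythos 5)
Your plan is correct and follows essentially the same route as the paper: show that the measure $\mu_\varepsilon$ from \eqref{Car-BMO:mu}, with $f$ replaced by $f_\varepsilon$, is a Carleson measure with constant at most $C\|u\|_{**}^2$ uniformly in $\varepsilon$, via the Poisson representation from Lemma~\ref{lemma:u-lift:uniq} and the identity $\Phi_t(x')=t\,\partial_t\big[P^L_t(x')\big]$ from \eqref{jsgag-bb}, and then invoke Proposition~\ref{prop:Car->BMO}. The one point worth flagging is a self-inflicted detour in \eqref{plan:carleson-bound}: once you have $(\Phi_t\ast f_\varepsilon)(x') = t\,(\partial_t u_\varepsilon)(x',t)$, you should simply bound $|(\partial_t u_\varepsilon)(x',t)|\le|(\nabla u_\varepsilon)(x',t)|$ and keep the natural weight $t$, which gives at once
\[
\int_0^{\ell(Q)}\aver{Q}|(\Phi_t\ast f_\varepsilon)(x')|^2\,\frac{dx'\,dt}{t}
=\int_0^{\ell(Q)}\aver{Q}|(\partial_t u_\varepsilon)(x',t)|^2\,t\,dx'\,dt
\le\|u_\varepsilon\|_{**}^2\le C\,\|u\|_{**}^2
\]
by part~(b) of Lemma~\ref{lemma:u-lift:props}. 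Replacing $t$ by $t+\varepsilon$ in the weight is unnecessary, and the two-regime analysis you then carry out (splitting on whether $\ell(Q)\ge\varepsilon$ or not) merely re-derives what part~(b) of Lemma~\ref{lemma:u-lift:props} has already packaged for you. This does not affect correctness, only economy; everything else matches the paper's argument.
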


\begin{proof}
We are going to apply Proposition~\ref{prop:Car->BMO} to $f_\varepsilon$. Note first that 
by part {\it (d)} in Lemma~\ref{lemma:u-lift:props} we have
$f_\varepsilon\in L^1\Big({\mathbb{R}}^{n-1},\frac{1}{1+|x'|^n}\,dx'\Big)^M\cap
\mathscr{C}^\infty({\mathbb{R}}^{n-1},{\mathbb{C}}^M)$. 
Hence we may define $\mu_\varepsilon$ as in \eqref{Car-BMO:mu} 
associated with $f_\varepsilon$, where we recall that 
$\Phi(x')=\partial_n K^L(x',1)$ for every $x'\in{\mathbb{R}}^{n-1}$ and 
$K^L(x',t)=t^{1-n}P^L(x'/t)$ for every $x'\in{\mathbb{R}}^n_{+}$.
Also, Lemma~\ref{lemma:u-lift:uniq} and \eqref{jsgag-bb} imply
\begin{equation}\label{RWWQD-9}
t\,\partial_t u_\varepsilon(x',t)
=t\,\partial_t (P_t^L * f_\varepsilon)(x')
=\big(\Phi_t* f_\varepsilon\big)(x'),
\quad\forall\,(x',t)\in{\mathbb{R}}^n_{+}.
\end{equation}
Thus part {\it (b)} in Lemma~\ref{lemma:u-lift:props} yields
\begin{align}\label{RWWQD-aa}
\|\mu_\varepsilon\|_{\mathcal{C}(\mathbb{R}_+^{n})}
&=\sup_{Q\subset\mathbb{R}^{n-1}}\frac{1}{|Q|}\int_{0}^{\ell(Q)}
\int_Q |\Phi_t*f_\varepsilon(x')|^2\,\frac{dx'\,dt}{t}
\nonumber\\[4pt]
&=\sup_{Q\subset\mathbb{R}^{n-1}}\frac{1}{|Q|}\int_{0}^{\ell(Q)}
\int_Q |\partial_t u_\varepsilon(x',t)|^2\,t\,dx'\,dt
\nonumber\\[4pt]
&\leq\|u_\varepsilon\|_{**}^2
\leq C\,\|u\|_{**}^2<\infty.
\end{align}
Consequently, we may invoke Proposition~\ref{prop:Car->BMO} to conclude that 
$f_\varepsilon\in{\mathrm{BMO}}(\mathbb{R}^{n-1},\mathbb{C}^M)$ and
\begin{equation}\label{bxfdea}
\|f_\varepsilon\|_{\mathrm{BMO}(\mathbb{R}^{n-1},\mathbb{C}^M)}
\leq C\,\|\mu_\varepsilon\|_{\mathcal{C}(\mathbb{R}_+^{n})}^\frac12
\leq C\,\|u\|_{**},
\end{equation}
as wanted.
\end{proof}

The aim in Lemma~\ref{lemma:weak-*} below is to show that derivatives of the kernel function 
$K^L$ are multiples of molecules in the sense of Hardy space theory. To make this precise, let us recall the 
definition of $L^2(\mathbb{R}^{n-1})$-molecules for the Hardy space $H^1(\mathbb{R}^{n-1})$. 
Specifically, given $\varepsilon>0$ and a ball $B\subset\mathbb{R}^{n-1}$, 
a function $m\in L^1(\mathbb{R}^{n-1})$ is said to be an 
$(L^2(\mathbb{R}^{n-1}),\varepsilon)$-molecule relative to $B$ provided
\begin{equation}\label{RWWQD-bb}
\int_{\mathbb{R}^{n-1}} m(x')\,dx'=0
\end{equation}
and
\begin{equation}\label{RWWQD-cc}
\|m\|_{L^2(B)}\leq |B|^{-\frac12},
\qquad\|m\|_{L^2(2^{k}\,B\setminus 2^{k-1}\,B)}
\leq |2^{k}\,B|^{-\frac12}2^{-k\,\varepsilon},\quad\forall\,k\in{\mathbb{N}}.
\end{equation}

\begin{lemma}\label{lemma:weak-*}
Let $L$ be an $M\times M$ elliptic system with constant complex coefficients as in
\eqref{L-def}-\eqref{L-ell.X} and consider $P^L$, the associated Poisson kernel for 
$L$ in $\mathbb{R}^{n}_+$ from Theorem~\ref{kkjbhV}. Then there exists 
a constant $C\in(0,\infty)$ such that for any fixed $t>0$, the components of
$C\,t\nabla K^L(\cdot,t)$ are $(L^2(\mathbb{R}^{n-1}),1)$-molecules
relative to $B_{n-1}(0',t)$. In particular,
\begin{equation}\label{grefr}
\sup_{t>0}\big\|t\nabla K^L(\cdot,t)\big\|_{H^1(\mathbb{R}^{n-1})}<\infty.
\end{equation}
Consequently, if $f\in\mathrm{BMO}(\mathbb{R}^{n-1},\mathbb{C}^M)$ and 
the sequence $\{f_k\}_{k\in{\mathbb{N}}}\subset\mathrm{BMO}(\mathbb{R}^{n-1},\mathbb{C}^M)$ 
is such that $[f_k]\to [f]$ in the weak-* topology on
$\widetilde{\mathrm{BMO}}(\mathbb{R}^{n-1},\mathbb{C}^M)$ as $k\to\infty$, i.e.,
\begin{equation}\label{Hfdzw}
\lim_{k\to\infty}\int_{{\mathbb{R}}^{n-1}}f_kg\,d{\mathscr{L}}^{n-1}
=\int_{{\mathbb{R}}^{n-1}}fg\,d{\mathscr{L}}^{n-1}
\qquad\forall\,g\in H^1(\mathbb{R}^{n-1},\mathbb{C}^M),
\end{equation}
then for every $(x',t)\in\mathbb{R}_+^n$ fixed one has
\begin{equation}\label{mole-weak*}
\lim_{k\to\infty}\int_{\mathbb{R}^{n-1}}t\,\nabla K^L(x'-y',t)\,f_k(y')\,dy'
=\int_{\mathbb{R}^{n-1}}t\,\nabla K^L(x'-y',t)\,f(y')\,dy'.
\end{equation}
\end{lemma}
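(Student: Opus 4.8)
\textbf{Proof strategy for Lemma~\ref{lemma:weak-*}.}
The plan is to split the lemma into two independent parts: first the molecular estimate for $t\nabla K^L(\cdot,t)$ (which yields \eqref{grefr}), and then the weak-$*$ convergence statement \eqref{mole-weak*}, which will follow from \eqref{grefr} together with a dilation/translation normalization. For the molecular part, I would fix $t>0$ and exploit the homogeneity recorded in item~{\it (5)} of Theorem~\ref{kkjbhV}, namely $K^L(\lambda x)=\lambda^{1-n}K^L(x)$, which gives $t\,\nabla K^L(x',t)=t^{1-n}(\nabla K^L)(x'/t,1)$; thus $t\,\nabla K^L(\cdot,t)=\big(t^{1-n}(\nabla K^L)(\cdot/t,1)\big)$ is simply the $L^1$-normalized dilate (in the sense of \eqref{phisubt}) of the fixed function $\Psi^\ast:=(\nabla K^L)(\cdot,1)$. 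Since the $(L^2,1)$-molecule conditions \eqref{RWWQD-bb}-\eqref{RWWQD-cc} relative to $B_{n-1}(0',t)$ are invariant (up to a fixed dimensional constant) under this $L^1$-normalized dilation, it suffices to check that $C\Psi^\ast$ is an $(L^2,1)$-molecule relative to $B_{n-1}(0',1)$ for a suitable absolute constant $C$. The vanishing moment \eqref{RWWQD-bb} for $\Psi^\ast$ follows from $\int_{\mathbb{R}^{n-1}}\nabla K^L(x',t)\,dx'=0$ for every $t>0$, which is \eqref{XXdgsr} (equivalently, differentiating \eqref{eq:IG6gy.2}), specialized to $t=1$; and the size estimates in \eqref{RWWQD-cc} follow from the decay $|(\partial^\alpha K^L)(x)|\leq C_\alpha|x|^{1-n-|\alpha|}$ in \eqref{eq:Kest} with $|\alpha|=1$, since on the annulus $2^kB\setminus 2^{k-1}B$ one has $|\nabla K^L(x',1)|\le |(x',1)|^{-n}\lesssim 2^{-kn}$ when $k\ge1$ (here we use $|(x',1)|\ge|x'|\ge 2^{k-1}$), and integrating the square over that annulus produces the factor $|2^kB|^{-1/2}2^{-k}$ after taking square roots; the central estimate $\|\Psi^\ast\|_{L^2(B_{n-1}(0',1))}\lesssim 1$ is immediate from boundedness of $\nabla K^L$ near, but bounded away from, the origin (item~{\it (4)} of Theorem~\ref{kkjbhV}) — actually one uses $|(x',1)|\ge 1$ so $|\nabla K^L(x',1)|\le C$ on all of $\mathbb{R}^{n-1}$. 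Standard molecular theory then gives a uniform bound $\|m\|_{H^1(\mathbb{R}^{n-1})}\le C$ for all $(L^2,1)$-molecules $m$, whence \eqref{grefr}.

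For the convergence statement \eqref{mole-weak*}, fix $(x',t)\in\mathbb{R}^n_+$ and define $g_{x',t}(y'):=t\,\nabla K^L(x'-y',t)^{\top}$, a $\mathbb{C}^{M\times M}$-valued function on $\mathbb{R}^{n-1}$ whose columns are, by what was just proved and the translation-invariance of $H^1$, fixed scalar multiples of $(L^2,1)$-molecules relative to $B_{n-1}(x',t)$; in particular each component of $g_{x',t}$ lies in $H^1(\mathbb{R}^{n-1})$. Then
\begin{equation}\label{eqn:molecule-pairing}
\int_{\mathbb{R}^{n-1}}t\,\nabla K^L(x'-y',t)\,f_k(y')\,dy'
=\int_{\mathbb{R}^{n-1}}g_{x',t}(y')^{\top}f_k(y')\,dy',
\end{equation}
which is exactly the dual pairing of the $H^1(\mathbb{R}^{n-1},\mathbb{C}^M)$ function (built from) $g_{x',t}$ with $f_k\in\mathrm{BMO}(\mathbb{R}^{n-1},\mathbb{C}^M)$, interpreted componentwise via \eqref{jcgsfSEW}. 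Applying the weak-$*$ convergence hypothesis \eqref{Hfdzw} with each column of $g_{x',t}$ in place of $g$ and summing over components yields \eqref{mole-weak*} directly. One small point to address: the integrals in \eqref{mole-weak*} are genuine Lebesgue integrals, absolutely convergent because $f_k\in L^1\big(\mathbb{R}^{n-1},(1+|y'|^{n-1+\varepsilon})^{-1}dy'\big)$ by \eqref{eq:aaAabgr-22} while $|t\,\nabla K^L(x'-y',t)|\lesssim_{x',t}(1+|y'|)^{-n}$ via \eqref{eq:Kest}; this legitimizes rewriting them as the $H^1$-$\mathrm{BMO}$ pairing.

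The main obstacle I anticipate is purely bookkeeping: verifying the molecular size bounds \eqref{RWWQD-cc} with the correct normalization exponents, keeping careful track of how the $L^1$-normalized dilation in \eqref{phisubt} (with its $t^{1-n}$ factor over $\mathbb{R}^{n-1}$) interacts with the $L^2$-normalization $|B|^{-1/2}$ in the molecule definition — these must be reconciled so that the dilate of a molecule relative to the unit ball is again a molecule (times an absolute constant) relative to $B_{n-1}(0',t)$. Once that scaling is pinned down, every other step is a direct citation: \eqref{XXdgsr} for the cancellation, \eqref{eq:Kest} for the decay, the molecular-$H^1$ equivalence for \eqref{grefr}, and the duality \eqref{jcgsfSEW} together with the hypothesis \eqref{Hfdzw} for \eqref{mole-weak*}.
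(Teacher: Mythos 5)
Your proposal is correct and follows essentially the same route as the paper: verify the cancellation via \eqref{XXdgsr} and the size bounds via the decay \eqref{eq:Kest} to show $t\nabla K^L(\cdot,t)$ is, up to a fixed constant, an $(L^2,1)$-molecule, invoke the molecular characterization of $H^1$ for \eqref{grefr}, and then obtain \eqref{mole-weak*} by translating/reflecting the molecule and pairing with the weak-$*$ hypothesis through the $H^1$--$\mathrm{BMO}$ duality. The only (harmless) difference is that you reduce to $t=1$ by dilation invariance of the molecule conditions, whereas the paper runs the same estimates directly for general $t$.
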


\begin{proof}
Fix $t>0$, set $B_t:=B_{n-1}(0',t)$, and write $m(x')=t\nabla K^L(x',t)$ 
for every $x'\in{\mathbb{R}}^{n-1}$. We have already shown in 
\eqref{est-theta-vanish} that
\begin{equation}\label{RWWQD-cc.2}
\int_{\mathbb{R}^{n-1}} m(x')\,dx'=0.
\end{equation}
Also, by part {\it (5)} in Theorem~\ref{kkjbhV} we have
\begin{align}\label{RWWQD-dd}
\int_{B_t} |m(x')|^2\,dx'
&\leq C\,\int_{|x'|<t}\frac{t^2}{(t+|x'|)^{2\,n}}\,dx'
\leq C\,\int_{|x'|<t}\frac{t^2}{t^{2n}}\,dx'
\nonumber\\[4pt]
&=C\,t^{1-n}\leq C_0^2\,|B_t|^{-1},
\end{align}
and, for every $k\ge 1$,
\begin{align}\label{RWWQD-ee}
\int_{2^k\,B_t\setminus 2^{k-1}\,B_t}|m(x')|^2\,dx'
&\leq C\,\int_{2^{k-1}\,t<|x'|<2^k\,t}\frac{t^2}{(t+|x'|)^{2\,n}}\,dx'
\nonumber\\[4pt]
&\leq C\,\int_{2^k\,B_t}\frac{t^2}{(2^k\,t)^{2n}}\,dx'
\nonumber\\[4pt]
&=C\,2^{-2\,k}(2^k\,t)^{1-n}\leq C_0^2\,2^{-2\,k}\,|2^k\,B_t|^{-1},
\end{align}
for some constant $C_0\in(0,\infty)$ independent of $k$, $x'$, and $t$.
All these give that $C_0^{-1}m$ is an $(L^2(\mathbb{R}^{n-1}),1)$-molecule relative 
to $B_t$ and \eqref{grefr} follows from the molecular characterization of 
$H^1(\mathbb{R}^{n-1})$, see \cite{AM}.

In addition, for each $x'\in\mathbb{R}^{n-1}$ fixed, the function 
$C_0^{-1}m(x'-\cdot)$ is an $(L^2(\mathbb{R}^{n-1}),1)$-molecule relative to 
$B_{n-1}(x',t)$ and therefore belongs to $H^1(\mathbb{R}^{n-1})$. Hence, \eqref{mole-weak*} 
follows from the definition of the weak-* convergence.
\end{proof}

We now have all the ingredients to prove Proposition~\ref{prop-Dir-BMO:uniq-A}:

\vskip 0.08in
\begin{proof}[Proof of Proposition~\ref{prop-Dir-BMO:uniq-A}]
Under the notation of Lemma~\ref{lemma:feps-BMO}, from \eqref{feps-BMO} we know that 
the sequence $\big\{\,[f_\varepsilon]\,\big\}_{0<\varepsilon<1}$ is bounded in 
the Banach space $\widetilde{\mathrm{BMO}}(\mathbb{R}^{n-1},\mathbb{C}^M)$. 
By eventually passing to a subsequence, Alaoglu's theorem guarantees that there 
is no loss of generality in assuming that
$\big\{\,[f_{\varepsilon}]\,\big\}_{0<\varepsilon<1}$ converges weakly
in $\widetilde{\mathrm{BMO}}(\mathbb{R}^{n-1},\mathbb{C}^M)$ to some 
$[g]\in\widetilde{\mathrm{BMO}}(\mathbb{R}^{n-1},\mathbb{C}^M)$ (where 
$g\in{\mathrm{BMO}}(\mathbb{R}^{n-1},\mathbb{C}^M)$) satisfying
\begin{equation}\label{bsew}
\big\|\,[g]\,\big\|_{\widetilde{\mathrm{BMO}}(\mathbb{R}^{n-1},\mathbb{C}^M)}
\leq C\,\|u\|_{**},
\end{equation}
for some finite constant $C>0$ which does not depend on $u$.
Applying Lemma~\ref{lemma:weak-*}, for every $(x',t)\in\mathbb{R}_+^{n}$ fixed
we may conclude with the help of \eqref{jcgsfSEW} that
\begin{align}\label{adexce}
\lim_{\varepsilon\to 0^{+}}
\nabla\big[\big(P_t^L*f_{\varepsilon}\big)(x')\big]
& =\lim_{\varepsilon\to 0^{+}}
\int_{{\mathbb{R}}^{n-1}}(\nabla K^L)(x'-y',t)f_{\varepsilon}(y')\,dy'
\nonumber\\[4pt]
& =\int_{{\mathbb{R}}^{n-1}}(\nabla K^L)(x'-y',t)g(y')\,dy'
\nonumber\\[4pt]
& =\nabla\big[\big(P_t^L*g\big)(x')\big].
\end{align}
On the other hand, by Lemma~\ref{lemma:u-lift:uniq} we have
\begin{align}\label{gtgt}
\nabla u(x',t+\varepsilon)=\nabla u_{\varepsilon}(x',t)
=\nabla\big[\big(P_t^L*f_{\varepsilon}\big)(x')\big],
\quad\forall\,(x',t)\in{\mathbb{R}}^n_{+}.
\end{align}
Together, \eqref{adexce} and \eqref{gtgt} give
(keeping in mind part {\it (a)} in Lemma~\ref{lemma:u-lift:props})
\begin{align}\label{RWWQD-vd}
\nabla u(x',t)=\lim_{\varepsilon\to 0^{+}}\nabla u(x', t+\varepsilon)
=\nabla\big[\big(P_t^L*g\big)(x')\big],
\quad\forall\,(x',t)\in{\mathbb{R}}^n_{+}.
\end{align}
Consequently, there exists $C\in{\mathbb{C}}^M$ with the property that
\begin{equation}\label{hdfs}
u(x',t)=(P_t^L*g)(x')+C,\qquad\forall\,(x',t)\in{\mathbb{R}}^n_{+}.
\end{equation}
Then $f:=g+C\in{\mathrm{BMO}}(\mathbb{R}^{n-1},\mathbb{C}^M)$ satisfies
(thanks to \eqref{bsew} and \eqref{defi-BMO-nbgxcr})
\begin{equation}\label{bsew-vxts}
\|f\|_{\mathrm{BMO}(\mathbb{R}^{n-1},\mathbb{C}^M)}
=\|g\|_{\mathrm{BMO}(\mathbb{R}^{n-1},\mathbb{C}^M)}
=\big\|\,[g]\,\big\|_{\widetilde{\mathrm{BMO}}(\mathbb{R}^{n-1},\mathbb{C}^M)}
\leq C\,\|u\|_{**},
\end{equation}
where $C>0$ is a finite constant which does not depend on $u$.
Moreover, \eqref{eq:aaAabgr-22} ensures that 
$f\in L^1\Big({\mathbb{R}}^{n-1},\frac{1}{1+|x'|^n}\,dx'\Big)^M$, 
while formula \eqref{hdfs} becomes, in light of \eqref{eq:IG6gy.2PPP},
precisely \eqref{eqn-Dir-BMO:u-A}. Granted this, the first conclusion in 
Proposition~\ref{prop-Dir-BMO:exis} implies that $f$ is the only function
in $L^1\Big({\mathbb{R}}^{n-1},\frac{1}{1+|x'|^n}\,dx'\Big)^M$ for which 
the representation formula \eqref{eqn-Dir-BMO:u-A} holds, 
$u\big|_{\partial\mathbb{R}^{n}_{+}}^{{}^{\rm n.t.}}$ exists at
a.e. point in ${\mathbb{R}}^{n-1}$, and
$f=u\big|_{\partial\mathbb{R}^{n}_{+}}^{{}^{\rm n.t.}}$.
To conclude the proof of Proposition~\ref{prop-Dir-BMO:uniq-A} there remains to observe
that \eqref{jxdgsvgf} is a consequence of \eqref{bsew-vxts}, \eqref{eqn-Dir-BMO:u-A},
and \eqref{exist:u2-carleso}.
\end{proof}

\section{Proofs of Theorems~\ref{them:BMO-Dir}, \ref{thm:fatou-ADEEDE}, 
\ref{thm:fatou-VMO}, \ref{thm:FEFF}, \ref{THMVMO.i}, \ref{ndyRE}, \ref{THMVMO.CCC}, 
\ref{ndyRE-NNN}, and \ref{jhdwtRD}}
\setcounter{equation}{0}
\label{Pf-mainThms}

We begin by presenting the proof of the Fatou type result stated in Theorem~\ref{thm:fatou-ADEEDE}. 
The main ingredients involved are Proposition~\ref{prop-Dir-BMO:exis}, 
Proposition~\ref{prop-Dir-BMO:uniq-A}, and Proposition~\ref{prop-Dir-BMO:uniq}.

\vskip 0.08in
\begin{proof}[Proof of Theorem~\ref{thm:fatou-ADEEDE}]
The implication in \eqref{Tafva.BMO} is seen directly from Proposition~\ref{prop-Dir-BMO:uniq-A}.
Proposition~\ref{prop-Dir-BMO:uniq-A} also guarantees the right-to-left inclusion in \eqref{eq:tr-sols}. 
The left-to-right inclusion in \eqref{eq:tr-sols} is a consequence of Proposition~\ref{prop-Dir-BMO:exis}.
Going further, it is clear from definitions that ${\mathrm{LMO}}({\mathbb{R}}^n_{+})$ is a linear space 
on which $\|\cdot\|_{**}$ is a seminorm with null-space ${\mathbb{C}}^M$.
The linear mapping in \eqref{eq:tr-OP} is bounded (by the estimate in \eqref{Tafva.BMO}), 
injective (by Proposition~\ref{prop-Dir-BMO:uniq}), and surjective (by Proposition~\ref{prop-Dir-BMO:exis}). 
Moreover, another reference to the estimate in \eqref{Tafva.BMO} shows that the inverse
of the mapping \eqref{eq:tr-OP} is also bounded. Given that 
$\widetilde{\mathrm{BMO}}({\mathbb{R}}^n_{+})$ is complete, it follows that
the quotient space ${\mathrm{LMO}}({\mathbb{R}}^n_{+})\big/{\mathbb{C}}^M$ 
is also complete when equipped with $\|\cdot\|_{**}$.
\end{proof}

Anticipating the proof of Theorem~\ref{thm:fatou-VMO}, below we isolate a key result
to the effect that any smooth null-solution of $L$ satisfying a vanishing Carleson measure condition 
in the upper-half space converges vertically to its nontangential boundary trace in {\rm BMO}. 

\begin{lemma}\label{lemma:VNO-BMO-convergence}
Let $L$ be an $M\times M$ elliptic system with constant complex coefficients as in
\eqref{L-def}-\eqref{L-ell.X} and consider $P^L$, the associated Poisson kernel 
for $L$ in $\mathbb{R}^{n}_+$ from Theorem~\ref{kkjbhV}. 
Suppose $u\in\mathscr{C}^\infty(\mathbb{R}^n_+,\mathbb{C}^M)$ 
satisfies $Lu=0$ in $\mathbb{R}_+^n$ and $\|u\|_{**}<\infty$ and use 
Theorem~\ref{thm:fatou-ADEEDE} to write 
\begin{equation}\label{ky6gg}
f:=u\big|^{{}^{\rm n.t.}}_{\partial{\mathbb{R}}^n_{+}}\in 
{\mathrm{BMO}}(\mathbb{R}^{n-1},\mathbb{C}^M).
\end{equation}
For each number $\varepsilon>0$, define $u_\varepsilon(x',t):=u(x',t+\varepsilon)$ for every 
$(x',t)\in\mathbb{R}^n_+$ and consider 
$f_\varepsilon:=u_\varepsilon\big|_{\partial\mathbb{R}^{n}_{+}}
\in{\mathrm{BMO}}(\mathbb{R}^{n-1},\mathbb{C}^M)$ {\rm (}see Lemma~\ref{lemma:feps-BMO}\,{\rm )}. 
Then 
\begin{equation}\label{eqn:conv-BMO}
\left.
\begin{array}{l}
\big|\nabla u(x',t)\big|^2\,t\,dx'dt\,\,\mbox{is a vanishing}
\\[4pt]
\text{Carleson measure in }\,\,\mathbb{R}^{n}_+
\end{array}
\right\}
\Longrightarrow
\lim_{\varepsilon\to 0^+}\|f_\varepsilon-f\|_{\mathrm{BMO}(\mathbb{R}^{n-1},\mathbb{C}^M)}=0.
\end{equation}
\end{lemma}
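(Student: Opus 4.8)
The goal is to show that if $u$ is a smooth null-solution of $L$ in $\mathbb{R}^n_+$ whose Littlewood--Paley measure is a \emph{vanishing} Carleson measure, then the vertical shifts $f_\varepsilon=u(\cdot,\varepsilon)$ converge to $f=u\big|^{{}^{\rm n.t.}}_{\partial{\mathbb{R}}^n_{+}}$ in the $\mathrm{BMO}$ seminorm as $\varepsilon\to 0^+$. The natural strategy is to quantify the $\mathrm{BMO}$ distance $\|f_\varepsilon-f\|_{\mathrm{BMO}(\mathbb{R}^{n-1},\mathbb{C}^M)}$ by the vanishing Carleson character of $d\mu_u=|\nabla u|^2 t\,dx'dt$ at small scales. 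First I would record, via Theorem~\ref{thm:fatou-ADEEDE} applied to $u_\varepsilon$ (which lies in $\mathscr{C}^\infty(\overline{\mathbb{R}^n_+},\mathbb{C}^M)$ and has $\|u_\varepsilon\|_{**}\le C\|u\|_{**}$ by Lemma~\ref{lemma:u-lift:props}(b)), that
\begin{equation*}
u(x',t+\varepsilon)=u_\varepsilon(x',t)=\big(P^L_t*f_\varepsilon\big)(x'),\qquad
u(x',t)=\big(P^L_t*f\big)(x')\quad\text{on }\mathbb{R}^n_+,
\end{equation*}
the second identity being precisely \eqref{Tafva.BMO}. Subtracting, the function $w:=f_\varepsilon-f\in\mathrm{BMO}(\mathbb{R}^{n-1},\mathbb{C}^M)$ has Poisson extension $(P^L_t*w)(x')=u(x',t+\varepsilon)-u(x',t)$, hence
\begin{equation*}
t\,\partial_t\big(P^L_t*w\big)(x')=\big(\Phi_t*w\big)(x')
=t\,(\partial_t u)(x',t+\varepsilon)-t\,(\partial_t u)(x',t),
\end{equation*}
where $\Phi$ is as in \eqref{Jhxg} and I have used \eqref{jsgag-bb}.

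\textbf{Key steps.} The heart of the matter is to apply Proposition~\ref{prop:Car->BMO} to $w=f_\varepsilon-f$: it yields
\begin{equation*}
\|f_\varepsilon-f\|^2_{\mathrm{BMO}(\mathbb{R}^{n-1},\mathbb{C}^M)}
\le C\sup_{Q\subset\mathbb{R}^{n-1}}\frac1{|Q|}\int_0^{\ell(Q)}\int_Q
\big|\big(\Phi_t*(f_\varepsilon-f)\big)(x')\big|^2\,\frac{dx'\,dt}{t},
\end{equation*}
provided $f_\varepsilon-f\in L^1\big(\mathbb{R}^{n-1},(1+|x'|^n)^{-1}dx'\big)^M$, which holds by \eqref{eq:aaAabgr-22}. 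Thus it suffices to prove that the right-hand side tends to $0$ as $\varepsilon\to0^+$. Using the identity above, the integrand is
\begin{equation*}
\big|t\,(\partial_t u)(x',t+\varepsilon)-t\,(\partial_t u)(x',t)\big|^2
\le 2\,t^2|(\nabla u)(x',t+\varepsilon)|^2+2\,t^2|(\nabla u)(x',t)|^2,
\end{equation*}
so the Carleson box integral is controlled by $\int_0^{\ell(Q)}\!\!\int_Q t|\nabla u(x',t+\varepsilon)|^2+t|\nabla u(x',t)|^2$. I would split each Carleson box according to whether $\ell(Q)\le\delta$ or $\ell(Q)>\delta$ for a small parameter $\delta>0$. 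For $\ell(Q)\le\delta$: the term with $(\nabla u)(x',t)$ is directly bounded by the vanishing-Carleson quantity $\sup_{\ell(Q)\le\delta}\mu_u(T(Q))/|Q|$, which is small by hypothesis; the shifted term, after the change of variable $s=t+\varepsilon$, is a Carleson-box integral for $\mu_u$ over the region $\varepsilon<s<\ell(Q)+\varepsilon\le 2\delta$ provided $\varepsilon\le\delta$, hence also dominated by $\sup_{\ell(Q')\le 2\delta}\mu_u(T(Q'))/|Q'|$ up to a dimensional constant (covering $2Q$ by finitely many cubes of side $\le2\delta$). For $\ell(Q)>\delta$: here I exploit the pointwise Bloch-type bound $t|\nabla u(x',t)|\le C\|u\|_{**}$ from Lemma~\ref{lemma:decay-Du:Carl}, which gives $t^2|\nabla u(x',t\!+\!\varepsilon)-\nabla u(x',t)|^2\le C t^2\int_t^{t+\varepsilon}|\nabla^2 u(x',\lambda)|\,d\lambda\cdot|\nabla u|\lesssim$ something $O(\varepsilon/t)$ after one more application of interior estimates to $\nabla u$; more cleanly, one estimates $|t\partial_t u(x',t+\varepsilon)-t\partial_t u(x',t)|\le\int_t^{t+\varepsilon}|\partial_\lambda(\lambda\partial_\lambda u)(x',\lambda)|\frac{d\lambda}{\lambda}\cdot t\lesssim \|u\|_{**}\,\varepsilon/t$ using $\sup_\lambda \lambda^k|\nabla^k u(x',\lambda)|\le C\|u\|_{**}$, so that $\frac1{|Q|}\int_0^{\ell(Q)}\!\!\int_Q|\Phi_t*w|^2\frac{dx'dt}{t}\lesssim \|u\|_{**}^2\,\varepsilon^2\int_0^{\ell(Q)}t^{-3}dt$, which is \emph{not} uniformly small — so on large cubes I instead bound the integral over $0<t<\delta$ by the vanishing-Carleson term as before, and over $\delta<t<\ell(Q)$ by $\|u\|_{**}^2\varepsilon^2\delta^{-2}$, which $\to0$ once $\delta$ is fixed and $\varepsilon\to0$. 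Choosing $\delta$ first to make the vanishing-Carleson contribution $<\eta/2$ and then $\varepsilon$ small completes the argument.

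\textbf{Main obstacle.} The delicate point is the large-scale regime $\ell(Q)>\delta$: there the vanishing Carleson hypothesis gives no help (it only controls small cubes), so one must genuinely use that the \emph{difference} $t\partial_t u(\cdot,t+\varepsilon)-t\partial_t u(\cdot,t)$ is of order $\varepsilon$ relative to the relevant scale, via the higher-order Bloch estimates of Lemma~\ref{lemma:decay-Du:Carl} (applied to $\nabla u$, which is again a null-solution of $L$), and one must be careful that the resulting $\varepsilon^2\delta^{-2}$-type bound is harmless because $\delta$ has already been frozen. Making the two-parameter $(\delta,\varepsilon)$ argument airtight — ensuring the dimensional constants from covering $2Q$ by side-$\le2\delta$ cubes do not interact badly with the choice of $\delta$ — is the only real bookkeeping challenge; everything else is a direct assembly of Proposition~\ref{prop:Car->BMO}, the Poisson representation from Theorem~\ref{thm:fatou-ADEEDE}, and the interior/Bloch estimates already in hand.
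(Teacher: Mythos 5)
Your overall strategy parallels the paper's: subtract the vertical shift from $u$, reduce to a Carleson-type estimate for the difference, and run a two-parameter argument, freezing a small scale $\delta$ first and then sending $\varepsilon\to 0^+$. You route through Proposition~\ref{prop:Car->BMO} applied to $w=f_\varepsilon-f$ while the paper first applies the estimate in \eqref{Tafva.BMO} to $v_\varepsilon=u_\varepsilon-u$ to get $\|f_\varepsilon-f\|_{\mathrm{BMO}}\le C\|u_\varepsilon-u\|_{**}$ and then estimates $\|u_\varepsilon-u\|_{**}$; these are essentially equivalent reductions. Your treatment of the large-cube regime $\ell(Q)>\delta$ is also sound: the second-order Bloch bound $|\nabla^2u(x',\lambda)|\le C\|u\|_{**}\lambda^{-2}$ gives $|t\partial_t u(x',t+\varepsilon)-t\partial_t u(x',t)|\lesssim\|u\|_{**}\,\varepsilon/t$, and the split at $t=\delta$ yields the harmless $\varepsilon^2\delta^{-2}$ contribution for $t>\delta$, while for $t<\delta$ one partitions $Q$ into subcubes of side comparable to $\delta$ and invokes the vanishing Carleson hypothesis.

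However, there is a genuine gap in your small-cube case, specifically when $\ell(Q)\le\delta$ but $\varepsilon>\ell(Q)$. After the change of variable $s=t+\varepsilon$, the shifted term becomes
$$
\frac1{|Q|}\int_\varepsilon^{\ell(Q)+\varepsilon}\int_Q s\,|\nabla u(x',s)|^2\,dx'\,ds .
$$
You propose to dominate this by ``covering $2Q$ by finitely many cubes of side $\le 2\delta$,'' but when $\varepsilon>\ell(Q)$ the interval $(\varepsilon,\ell(Q)+\varepsilon)$ reaches above the height $2\ell(Q)$, so $\int_0^{\ell(Q)+\varepsilon}\int_{2Q}$ is not a Carleson box for $2Q$, and any cube $\widetilde Q\supseteq Q$ of side $\ell(Q)+\varepsilon$ introduces the unbounded normalization factor $|\widetilde Q|/|Q|=\big((\ell(Q)+\varepsilon)/\ell(Q)\big)^{n-1}$, which blows up as $\ell(Q)\to 0^+$ with $\varepsilon$ fixed. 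The paper closes exactly this hole with the \emph{scale-local} Bloch bound (stated as \eqref{adweadfwe-aa} in the proof): $t|\nabla u(x',t)|\le C\|u\|_{**,2t}$, where $\|u\|_{**,r}$ is the restricted seminorm over cubes of side $\le r$. With this, one has
$$
\int_\varepsilon^{2\varepsilon}\aver{Q}|\nabla u(x',s)|^2\,s\,dx'\,ds\le C\int_\varepsilon^{2\varepsilon}\frac{\|u\|_{**,2s}^2}{s}\,ds\le C\,\|u\|_{**,4\varepsilon}^2 ,
$$
which is small precisely because the vanishing Carleson hypothesis controls $\|u\|_{**,4\varepsilon}$. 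Your proof only uses the global bound $\sup_{(x',t)}t|\nabla u(x',t)|\le C\|u\|_{**}$ from Lemma~\ref{lemma:decay-Du:Carl}, which yields only an $O(\|u\|_{**}^2)$ bound here, not $o(1)$; this is the missing ingredient that must be supplied.
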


\begin{proof}
By Theorem~\ref{thm:fatou-ADEEDE} we have $u(x',t)=(P_t^L*f)(x')$ for every 
$ (x',t)\in{\mathbb{R}}^n_{+}$. Also, Lemma~\ref{lemma:u-lift:uniq} implies 
$u_{\varepsilon}(x',t)=(P_t^L*f_\varepsilon)(x')$ for every $ (x',t)\in{\mathbb{R}}^n_{+}$
and each $\varepsilon>0$. To proceed, for every $ (x',t)\in{\mathbb{R}}^n_{+}$ set
\begin{align}\label{eq:fcaewe}
v_\varepsilon(x',t)
: &=(P_t^L*(f_\varepsilon-f))(x',t)
=(P_t^L* f)(x',t)- (P_t^L*f_\varepsilon)(x',t)
\nonumber\\[4pt]
&=u_\varepsilon(x',t)-u(x',t).
\end{align}
Given that for each parameter $\varepsilon>0$ the function $v_\varepsilon$ 
satisfies the hypotheses of Theorem~\ref{thm:fatou-ADEEDE} and almost everywhere
$v_\varepsilon\big|^{{}^{\rm n.t.}}_{\partial{\mathbb{R}}^n_{+}} =f_\varepsilon-f\in{\mathrm{BMO}}(\mathbb{R}^{n-1})$ 
it follows that
\begin{equation}\label{eq:ythed}
\|f_\varepsilon-f\|_{\mathrm{BMO}(\mathbb{R}^{n-1},\mathbb{C}^M)}
\leq C\,\|v_\varepsilon\|_{**}
=C\,\|u_\varepsilon-u\|_{**}
\end{equation}
for every $\varepsilon>0$. Hence, to complete the proof of \eqref{eqn:conv-BMO} it suffices to show that 
\begin{equation}\label{JGctasw}
\lim_{\varepsilon\to 0^+}\|u_\varepsilon-u\|_{**}=0.
\end{equation}
To this end, for each $r>0$ we introduce
\begin{align}\label{eq:Car-r-restri.a}
\|u\|_{**,r}:=\sup_{Q\subset\mathbb{R}^{n-1},\,\ell(Q)\leq r} 
\left(\int_{0}^{\ell(Q)}\aver{Q}|\nabla u(x',t)|^2\,t\,dx'dt\right)^{\frac12}.
\end{align}
Note that 
\begin{align}\label{kdhfrew-1}
\|u\|_{**,r}\leq\|u\|_{**,s}\leq\|u\|_{**}\quad\text{ if }\,\,r\leq s,
\end{align}
and the fact that $\big|\nabla u(x',t)\big|^2\,t\,dx'dt$ is a vanishing Carleson measure 
in $\mathbb{R}^{n}_+$ (recall \eqref{defi-CarlesonVan}) may be rephrased as 
\begin{equation}\label{kdhfrew-3}
\|u\|_{**,r}\to 0\quad\text{as}\quad r\to 0^+.
\end{equation}

We now make the claim that there exists a constant $C=C(n,L)\in(0,\infty)$ such that
\begin{align}\label{eqn:frefer}
\|u-u_{\varepsilon}\|_{**}
\leq C\,\big(\|u\|_{**,4\,\max\{r,\varepsilon\}}+\|u\|_{**}\,\min\{\varepsilon/r,1\}\big),
\qquad\forall\,r,\varepsilon\in(0,\infty).
\end{align}
Assume the claim for now and based on \eqref{eqn:frefer}, for every $0<r<\infty$, we may write
\begin{align}\label{ndysfr}
0 &\leq\limsup_{\varepsilon\to 0^+} 
\|u-u_{\varepsilon}\|_{**}
\nonumber\\[4pt]
&\leq C\limsup_{\varepsilon\to 0^+}  \|u\|_{**,4\,\max\{r,\varepsilon\}}
+C\|u\|_{**}\,\limsup_{\varepsilon\to 0^+}\big[\min\{\varepsilon/r,1\}\big)\big]
\nonumber\\[4pt]
&=C\|u\|_{**,4\,r}.
\end{align}
Recalling now \eqref{kdhfrew-3}, we may further take the limit as $r\to 0^{+}$ 
in the resulting inequality in \eqref{ndysfr} and conclude that 
$\limsup_{\varepsilon\to 0^+}\|u-u_{\varepsilon}\|_{**}=0$. 
This proves \eqref{JGctasw} as wanted. 

To finish the proof of the lemma we are left with showing the claim. To do so, we first note that
in light of the notation in \eqref{eq:Car-r-restri.a}, the reasoning in \eqref{Twazvee} 
(corresponding to $|\alpha|=1$) yields
\begin{align}\label{adweadfwe-aa}
t\,|\nabla u(x',t)|
&\leq C\,\Big(\frac1{|Q_{x'}|}\int_{t/2}^{3\,t/2}\,\int_{Q_{x'}} 
|\nabla u(y',s)|^2\,s\,dy'ds\Big)^\frac12
\nonumber\\[4pt]
&\leq C\,\|u\|_{**, 2t}
\end{align}
for each $(x',t)\in{\mathbb{R}}^n_{+}$, where $Q_{x'}$ denotes the cube in 
$\mathbb{R}^{n-1}$ centered at $x'$ with side-length $t$. 

Next, fix a cube $Q\subset\mathbb{R}^{n-1}$ and numbers $r,\varepsilon\in(0,\infty)$
and proceed by analyzing the following possible three cases.

\medskip

\noindent\textbf{Case 1: $\ell(Q)\leq\varepsilon.$} Under this assumption, recalling also 
\eqref{adweadfwe-aa} and \eqref{kdhfrew-1}, we obtain
\begin{align}\label{trADE}
& \left(\int_{0}^{\ell(Q)}\aver{Q} 
|\nabla u_\varepsilon(x',t)-\nabla u(x',t)|^2\,t\,dx'dt\right)^{\frac12}
\nonumber\\[4pt]
&\hskip 0.50in
\leq \left(\int_{\varepsilon}^{\ell(Q)+\varepsilon}\aver{Q} 
|\nabla u(x',t)|^2\,t\,dx'dt\right)^{\frac12}
+\left(\int_0^{\ell(Q)}\aver{Q} 
|\nabla u(x',t)|^2\,t\,dx'dt\right)^{\frac12}
\nonumber\\[4pt]
&\hskip 0.50in
\leq\left(\int_{\varepsilon}^{2\,\varepsilon}\aver{Q} 
|\nabla u(x',t)|^2\,t\,dx'dt\right)^{\frac12}+\|u\|_{**,\varepsilon}
\nonumber\\[4pt]
&\hskip 0.50in
\leq C\,\left(\int_{\varepsilon}^{2\,\varepsilon}\frac{\|u\|_{**, 2t}^2}{t}\,dt\right)^{\frac12}
+\|u\|_{**,4\varepsilon}
\leq C\,\|u\|_{**,4\varepsilon}
\nonumber\\[4pt]
&\hskip 0.50in
\leq C\,\|u\|_{**,4\max\{r,\varepsilon\}},
\end{align}
for some constant $C=C(n,L)\in(0,\infty)$ independent of $u$, $\varepsilon$, and $r$.
\medskip

\noindent\textbf{Case 2: $\varepsilon<\ell(Q)\leq r$.} 
Note that in this case $r=\max\{r,\varepsilon\}$ and using again 
\eqref{adweadfwe-aa} and \eqref{kdhfrew-1} we have
\begin{align}\label{deahy}
&\left(\int_{0}^{\ell(Q)}\aver{Q} 
|\nabla u_\varepsilon(x',t)-\nabla u(x',t)|^2\,t\,dx'dt\right)^{\frac12}
\nonumber\\[4pt]
&\qquad\qquad\leq
\left(\int_{\varepsilon}^{\ell(Q)+\varepsilon}\aver{Q} 
|\nabla u(x',t)|^2\,t\,dx'dt\right)^{\frac12}
+\|u\|_{**,r}
\nonumber\\[4pt]
&\qquad\qquad\leq
\left(\int_{0}^{2\,\ell(Q)}\aver{Q} 
|\nabla u(x',t)|^2\,t\,dx'dt\right)^{\frac12}
+\|u\|_{**,\max\{r,\varepsilon\}}
\nonumber\\[4pt]
&\qquad\qquad\le
C\,\|u\|_{**,4\max\{r,\varepsilon\}},
\end{align}
for some constant $C=C(n,L)\in(0,\infty)$ independent of $u$, $\varepsilon$, and $r$.

\medskip

\noindent\textbf{Case 3: $\ell(Q)>\max\{r,\varepsilon\}$.} In this case, set
$\eta:=\max\{r,\varepsilon\}$ and write
\begin{align}\label{gertg}
&\left(\frac1{|Q|}\int_{0}^{\ell(Q)}\int_Q 
|\nabla u_\varepsilon(x',t)-\nabla u(x',t)|^2\,t\,dx'dt\right)^{\frac12}
\nonumber\\[4pt]
&\quad\qquad
\leq\left(\frac1{|Q|}\int_{0}^{\eta}\int_Q|\nabla u_\varepsilon(x',t)-\nabla u(x',t)|^2\,t\,dx'dt\right)^{\frac12}
\nonumber\\[4pt]
&\qquad\qquad
+\left(\frac1{|Q|}\int_{\eta}^{\ell(Q)}
\int_Q|\nabla u_\varepsilon(x',t)-\nabla u(x',t)|^2\,t\,dx'dt\right)^{\frac12}
=:I+II.
\end{align}
To analyze $I$, let $k_0$ be the nonnegative integer such that 
$2^{k_0}\,\eta<\ell(Q)\leq 2^{k_0+1}\,\eta$. Also, consider $\{Q_j\}_{j\in{\mathbb{N}}}$, the collection 
of subcubes of $Q$ with pairwise disjoint interiors, satisfying $\ell(Q_j)=2^{-k_0}\,\ell(Q)$ for each 
$j\in{\mathbb{N}}$ and $\bigcup\limits_{j\in{\mathbb{N}}}Q_j=Q$. Then $\ell(Q_j)\in(\eta,2\eta]$ for every 
$j\in{\mathbb{N}}$. Bearing this in mind and using the fact that $\varepsilon\leq\eta$, we may then estimate
\begin{align}\label{trtrtr}
I\leq
&\left(\frac1{|Q|}\int_{\varepsilon}^{\eta+\varepsilon}\int_Q 
|\nabla u(x',t)|^2\,t\,dx'dt\right)^{\frac12}
+\left(\frac1{|Q|}\int_{0}^{\eta} 
\int_Q|\nabla u(x',t)|^2\,t\,dx'dt\right)^{\frac12}
\nonumber\\[4pt]
& \leq 2\left(\frac1{|Q|}\int_{0}^{2\,\eta} 
\int_Q|\nabla u(x',t)|^2\,t\,dx'dt\right)^{\frac12}
\nonumber\\[4pt]
& \leq 2\left(\frac1{|Q|}\sum_{j\in{\mathbb{N}}}\int_{0}^{2\ell(Q_j)} 
\int_{Q_j}|\nabla u(x',t)|^2\,t\,dx'dt\right)^{\frac12}
\nonumber\\[4pt]
& \leq 2\left(\frac1{|Q|}\sum_{j\in{\mathbb{N}}}\|u\|_{**,2\ell(Q_j)}^2\,|2Q_j|\right)^{\frac12}
\leq 2^{\frac{n+1}{2}}\,\|u\|_{**,4\eta}
\nonumber\\[4pt]
&=2^{\frac{n+1}{2}}\,\|u\|_{**,4\max\{r,\varepsilon\}}.
\end{align}

Up to this point our treatment involved estimating $u_\varepsilon$ and $u$ separately, 
without exploiting any potential cancellations generated by the fact that we are dealing 
with their difference. However, in the task of estimating $II$, having the difference 
$u_\varepsilon-u$ plays a crucial role, as seen next. Given $(x',t)\in\mathbb{R}^n_+$,
from Lemma~\ref{lemma:decay-Du:Carl} we conclude that
\begin{align}\label{thdydh}
|\nabla^2 u(x',t)| \leq\,C\,\|u\|_{**}\,t^{-2}.
\end{align}
In light of this, the Mean Value Theorem implies that for some $\theta\in (0,1)$ there holds 
\begin{align}\label{XXdtse}
|\nabla u_\varepsilon(x',t)-\nabla u(x',t)|
&=|\nabla u(x',t+\varepsilon)-\nabla u(x',t)|
\leq\varepsilon\,|\nabla^2 u(x',t+\theta\,\varepsilon)|
\nonumber\\[4pt]
&\leq C\,\varepsilon\,\|u\|_{**}\,(t+\theta\,\varepsilon)^{-2}
\nonumber\\[4pt]
&\leq C\,\varepsilon\,\|u\|_{**}\,t^{-2}.
\end{align}
Having established \eqref{XXdtse}, we may turn to estimating $II$ as follows
\begin{align}\label{bdgsg-1}
II &=\left(\frac{1}{|Q|}\int_{\eta}^{\ell(Q)}\int_Q 
|\nabla u_\varepsilon(x',t)-\nabla u(x',t)|^2\,t\,dx'dt\right)^{\frac12}
\nonumber\\[4pt]
&\leq C\,\varepsilon\,\|u\|_{**}\,\left(\int_{\eta}^{\ell(Q)} t^{-3}\,dt\right)^{\frac12}
\leq C\,\varepsilon\,\|u\|_{**}\,\eta^{-1}
\nonumber\\[4pt]
&=C\,\varepsilon\,\|u\|_{**}\,\big(\max\{r,\varepsilon\}\big)^{-1}
=C\,\|u\|_{**}\,\min\{\varepsilon/r,1\}.
\end{align}
In concert, \eqref{gertg}, \eqref{trtrtr}, and \eqref{bdgsg-1}, allow us to conclude that, 
under the current assumption $\ell(Q)>\max\{r,\varepsilon\}$, we have
\begin{align}\label{farfr}
&\left(\int_{0}^{\ell(Q)}\aver{Q} 
|\nabla u_\varepsilon(x',t)-\nabla u(x',t)|^2\,t\,dx'dt\right)^{\frac12}
\nonumber\\[4pt]
& \hskip3cm
\leq C\,\big(\|u\|_{**,4\,\max\{r,\varepsilon\}}+\|u\|_{**}\,\min\{\varepsilon/r,1\}\big).
\end{align}

\medskip

Combining \eqref{trADE}, \eqref{deahy}, and \eqref{farfr}, we obtain that the estimate in \eqref{farfr}
actually holds for arbitrary cubes $Q$ in ${\mathbb{R}}^{n-1}$. In turn, the latter yields \eqref{eqn:frefer}
after taking the supremum over all cubes $Q$ in ${\mathbb{R}}^{n-1}$.
With this the proof of the lemma is completed. 
\end{proof}

Having proved the convergence result in Lemma~\ref{lemma:VNO-BMO-convergence}, we are now prepared 
to present the proof of Theorem~\ref{thm:fatou-VMO}. 

\vskip 0.08in
\begin{proof}[Proof of Theorem~\ref{thm:fatou-VMO}]
We start by noticing that since $u$ satisfies the conditions in \eqref{Dir-BVP-VMOq1},
the conclusions in \eqref{Tafva.BMO} hold. Hence if we set 
$f:=u\big|^{{}^{\rm n.t.}}_{\partial{\mathbb{R}}^n_{+}}$, we have that
$f$ exists almost everywhere in ${\mathbb{R}}^{n-1}$ and belongs to 
${\mathrm{BMO}}(\mathbb{R}^{n-1},\mathbb{C}^M)$. To proceed in showing that
$f\in{\mathrm{VMO}(\mathbb{R}^{n-1},\mathbb{C}^M)}$, 
for each $\varepsilon>0$ define $u_\varepsilon(x',t):=u(x',t+\varepsilon)$
for every $(x',t)\in\mathbb{R}^n_+$, and $f_\varepsilon(x'):=u(x',\varepsilon)$ 
for every $x'\in{\mathbb{R}}^{n-1}$. Then from Lemma~\ref{lemma:feps-BMO} and 
part {\it (d)} in Lemma~\ref{lemma:u-lift:props} we obtain
$f_\varepsilon\in\mathrm{BMO}(\mathbb{R}^{n-1},\mathbb{C}^M)
\cap\mathscr{C}^\infty({\mathbb{R}}^{n-1},{\mathbb{C}}^M)$. In addition,
for every $\varepsilon>0$, based on Lemma~\ref{lemma:decay-Du:Carl} we obtain
\begin{equation}\label{eq:avtrdrht}
\sup_{x'\in\mathbb{R}^{n-1}}|\nabla_{x'} f_{\varepsilon}(x')|
=\sup_{x'\in\mathbb{R}^{n-1}}|\nabla_{x'} u(x',\varepsilon)|\le
C\,\varepsilon^{-1}\,\|u\|_{**}<\infty.
\end{equation}
Fix $r\in(0,\infty)$ and let $Q\subset\mathbb{R}^{n-1}$ be a cube in ${\mathbb{R}}^n_{+}$
with $\ell(Q)\le r$. Then using \eqref{eq:avtrdrht} we may estimate
\begin{align}\label{bdfat}
\aver{Q}\big|f(x')-f_Q\big|\,dx'
&\leq\aver{Q}\big|(f-f_\varepsilon)(x')- (f-f_\varepsilon)_Q\big|\,dx'
+\aver{Q}\big|f_\varepsilon(x')- (f_\varepsilon)_Q\big|\,dx'
\\[4pt]
&\leq\|f_\varepsilon-f\|_{\mathrm{BMO}(\mathbb{R}^{n-1},\mathbb{C}^M)}
+\sup_{x'\in\mathbb{R}^{n-1}}|\nabla_{x'} f_{\varepsilon}(x')|\,\sqrt{n-1}\,\ell(Q)
\\[4pt]
&\leq\|f_\varepsilon-f\|_{\mathrm{BMO}(\mathbb{R}^{n-1},\mathbb{C}^M)}
+C\,r\,\varepsilon^{-1}\,\|u\|_{**}.
\end{align}
Hence,
\begin{align}\label{hseqew}
\sup_{Q\subset\mathbb{R}^{n-1},\,\ell(Q)\leq r}\,\,\aver{Q}\big|f(x')-f_Q\big|\,dx'
\leq\|f_\varepsilon-f\|_{\mathrm{BMO}(\mathbb{R}^{n-1},\mathbb{C}^M)}
+C\,r\,\varepsilon^{-1}\,\|u\|_{**}.
\end{align}
Letting $r\to 0^{+}$ first, then sending $\varepsilon\to 0^+$ in \eqref{hseqew} 
and recalling that since  
$\big|\nabla u(x',t)\big|^2\,t\,dx'dt$ is a vanishing Carleson measure 
in $\mathbb{R}^{n}_+$ implication \eqref{eqn:conv-BMO} holds, 
we conclude that 
\begin{align}\label{hseqew-2}
\lim_{r\to 0^+}\sup_{Q\subset\mathbb{R}^{n-1},\,\ell(Q)\leq r}
\aver{Q}\big|f(x')-f_Q\big|\,dx'=0.
\end{align}
Hence, $f\in{\mathrm{VMO}(\mathbb{R}^{n-1},\mathbb{C}^M)}$, as wanted. 

To complete the proof, there remains to establish \eqref{eq:tr-sols-VMO}. 
However, the right-to-left inclusion follows from \eqref{Dir-BVP-VMOq2}, 
while the opposite inclusion is a consequence of 
Proposition~\ref{prop-Dir-BMO:exis}.
\end{proof}

We continue by presenting the proof of Theorem~\ref{thm:FEFF}.

\vskip 0.08in
\begin{proof}[Proof of Theorem~\ref{thm:FEFF}]
Consider first the equivalence in item {\it (1)} of Theorem~\ref{thm:FEFF}. 
The fact that $f\in{\rm BMO}({\mathbb{R}}^{n-1};\mathbb{C}^{M})$ implies $\|u\|_{\ast\ast}<\infty$
is part {\it (e)} of Proposition~\ref{prop-Dir-BMO:exis} and \eqref{eq:aaAabgr-22}, whereas the 
converse follows from Proposition~\ref{prop-Dir-BMO:uniq-A}. Regarding the equivalence in item 
{\it (2)} of Theorem~\ref{thm:FEFF}, to see that $f\in{\rm VMO}({\mathbb{R}}^{n-1};\mathbb{C}^{M})$ 
implies $|\nabla u(x',t)|^2\,t\,dx'dt$ is a vanishing Carleson measure in ${\mathbb{R}}^n_{+}$ 
we use what we just proved in item {\it (1)} (bearing in mind \eqref{defi-VMO}) combined with part 
{\it (f)} of Proposition~\ref{prop-Dir-BMO:exis}. The converse follows from \eqref{Dir-BVP-VMOq2}.
\end{proof}

Having dealt with the Fatou type results from 
Theorem~\ref{thm:fatou-ADEEDE} and Theorem~\ref{thm:fatou-VMO}, 
we turn our attention to the proof of Theorem~\ref{them:BMO-Dir}.  

\vskip 0.08in
\begin{proof}[Proof of Theorem~\ref{them:BMO-Dir}]
The fact that the function $u$ defined in \eqref{eqn-Dir-BMO:u} solves 
the $\mathrm{BMO}$-Dirichlet boundary value problem \eqref{Dir-BVP-BMO}
follows from Proposition~\ref{prop-Dir-BMO:exis}. By Proposition~\ref{prop-Dir-BMO:uniq}, 
this is the only solution of \eqref{Dir-BVP-BMO}. 
Next, the double estimate in \eqref{Dir-BVP-BMO-Car} is a direct consequence 
of \eqref{eqn-Dir-BMO:u} and \eqref{jxdgsvgf}. The uniform {\rm BMO} estimate 
in \eqref{feps-BTTGB} is seen straight from Lemma~\ref{lemma:feps-BMO}.

Moving on, the left-pointing implication in \eqref{eqn:conv-Bfed}
follows from Lemma~\ref{lemma:VNO-BMO-convergence}. For the opposite
implication, invoke part {\it (d)} in Lemma~\ref{lemma:u-lift:props}
together with \eqref{feps-BTTGB} to conclude that $f$ is the limit in 
${\mathrm{BMO}}({\mathbb{R}}^{n-1},{\mathbb{C}}^M)$ of the sequence 
$\{u(\cdot,\varepsilon)\}_{\varepsilon>0}\subset{\mathrm{UC}}({\mathbb{R}}^{n-1},{\mathbb{C}}^M)
\cap{\mathrm{BMO}}({\mathbb{R}}^{n-1},{\mathbb{C}}^M)$. This places $f$ in 
${\mathrm{VMO}}({\mathbb{R}}^{n-1},{\mathbb{C}}^M)$ (cf. \eqref{ku6ffcfc}).
Having established this, part {\it (f)} in Proposition~\ref{prop-Dir-BMO:exis} 
gives that $\big|\nabla u(x',t)\big|^2\,t\,dx'dt$ is a vanishing Carleson measure 
in $\mathbb{R}^{n}_{+}$. Going further, the equivalence in \eqref{Dir-BVP-Reg} 
is seen from \eqref{Dir-BVP-VMOq2} and the last part in Proposition~\ref{prop-Dir-BMO:exis}. 

As regards the equivalence in \eqref{Dir-BVP-Reg.TTT}, let us first observe that, as is
apparent from its definition in \eqref{ustarstar}, the seminorm $\|\cdot\|_{**}$ 
is invariant to horizontal translations. That is, for every 
$u\in{\mathscr{C}}^1({\mathbb{R}}^n_{+},{\mathbb{C}}^M)$ we have
\begin{equation}\label{8h65rf.UJ}
\big\|\tau_{(z',0)}u\big\|_{**}=\|u\|_{**}\,\,\text{ for every }\,\,z'\in{\mathbb{R}}^{n-1}. 
\end{equation}
Given $f\in{\mathrm{VMO}}({\mathbb{R}}^{n-1},{\mathbb{C}}^M)$, the right-pointing 
implication in \eqref{Dir-BVP-Reg} ensures that 
\begin{equation}\label{nyGVyh}
\big|\nabla u(x',t)\big|^2\,t\,dx'dt\,\,\mbox{is a vanishing Carleson measure in }\,\,\mathbb{R}^{n}_{+}.
\end{equation}
For each $z=(z',s)\in{\mathbb{R}}^n_{+}$ we may write, thanks to \eqref{8h65rf.UJ} 
and the estimate in \eqref{Tafva.BMO},  
\begin{align}\label{jytgfg-j64}
\|\tau_z u-u\|_{**} &\leq\big\|\tau_z u-\tau_{(z',0)}u\big\|_{**}
+\big\|\tau_{(z',0)}u-u\big\|_{**}
\nonumber\\[4pt]
&=\big\|\tau_{(0,s)}u-u\big\|_{**}+\big\|\tau_{(z',0)}u-u\big\|_{**}
\nonumber\\[4pt]
&\leq C\|u(\cdot,s)-f\|_{{\mathrm{BMO}}({\mathbb{R}}^{n-1},{\mathbb{C}}^M)}
+C\|\tau_{z'}f-f\|_{{\mathrm{BMO}}({\mathbb{R}}^{n-1},{\mathbb{C}}^M)},
\end{align}
for some constant $C=C(n,L)\in(0,\infty)$. In light of \eqref{nyGVyh}, 
the left-pointing implication in \eqref{eqn:conv-Bfed}, and \eqref{defi-VMO-SSS}, 
we then conclude from \eqref{jytgfg-j64} that
\begin{align}\label{jytgfg-j64.ii}
\lim_{{\mathbb{R}}^n_{+}\ni z\to 0}\|\tau_z u-u\|_{**}=0,
\end{align}
as wanted. Conversely, suppose now that \eqref{jytgfg-j64.ii} holds. 
Specializing this to the case when $z:=(0',\varepsilon)$ with $\varepsilon>0$ then yields,
on account of the estimate in \eqref{Tafva.BMO}, 
\begin{align}\label{jytgfg-j64.ii-pR}
\|u(\cdot,\varepsilon)-f\|_{{\mathrm{BMO}}({\mathbb{R}}^{n-1},{\mathbb{C}}^M)}\leq C
\big\|\tau_{(0',\varepsilon)}u-u\big\|_{**}\to 0\,\,\text{ as }\,\,\varepsilon\to 0^{+}.
\end{align}
Hence, $\|u(\cdot,\varepsilon)-f\|_{{\mathrm{BMO}}({\mathbb{R}}^{n-1},{\mathbb{C}}^M)}\to 0$ 
as $\varepsilon\to 0^{+}$ which, by virtue of \eqref{eqn:conv-Bfed}-\eqref{Dir-BVP-Reg}, 
implies that $f\in{\mathrm{VMO}}({\mathbb{R}}^{n-1},{\mathbb{C}}^M)$. This finishes the 
proofs of the equivalences in part {\it (iv)} of the statement.

Finally, all claims about the $\mathrm{VMO}$-Dirichlet boundary value problem 
\eqref{Dir-BVP-VMO} are direct consequences of what we have proved up to this point.
\end{proof}

Going further, we present the proof of the quantitative characterization of {\rm VMO} 
from Theorem~\ref{THMVMO.i}. 

\vskip 0.08in
\begin{proof}[Proof of Theorem~\ref{THMVMO.i}]
We shall establish all claims stated with $n-1$ in place on $n$. 
Fix a modulus of continuity $\Upsilon$ satisfying $\Upsilon_{\!\#}\leq C\Upsilon$
on $[0,\infty)$ for some finite constant $C>0$. This implies that
\begin{equation}\label{UpU-8i8i-a}
{\mathscr{C}}^{\Upsilon_{\!\#}}({\mathbb{R}}^{n-1})
\subseteq{\mathscr{C}}^\Upsilon({\mathbb{R}}^{n-1}).
\end{equation}
Consider next an arbitrary function $f\in{\mathrm{VMO}(\mathbb{R}^{n-1})}$ and 
define $u\in{\mathscr{C}}^\infty({\mathbb{R}}^n_{+})$ by setting
$u(x',t):=(P^\Delta_t\ast f)(x')$ for $(x',t)\in{\mathbb{R}}^n_{+}$.
Then from item {\it (d)} in Lemma~\ref{lemma:u-lift:props}, Theorem \ref{them:BMO-Dir} part {\it (iii)},
and \eqref{eqn:conv-BfEE} we conclude that 
the sequence of functions $\{f_\varepsilon\}_{\varepsilon>0}$ defined 
for every $\varepsilon>0$ by $f_\varepsilon:=u(\cdot,\varepsilon)$ in ${\mathbb{R}}^{n-1}$ 
satisfies, for each $\varepsilon>0$, 
\begin{equation}\label{UpU-8i8i}
\begin{array}{c}
f_\varepsilon\in{\mathscr{C}}^\Upsilon({\mathbb{R}}^{n-1})\cap{\mathscr{C}}^\infty({\mathbb{R}}^{n-1})
\cap{\mathrm{BMO}}({\mathbb{R}}^{n-1})\,\,\text{ and}
\\[6pt]
\partial^{\alpha'}f\in {\mathscr{C}}^\Upsilon({\mathbb{R}}^{n-1})\cap
L^\infty({\mathbb{R}}^{n-1})\,\text{ for every }\,\alpha'\in{\mathbb{N}}_0^{n-1}
\,\text{ with }\,|\alpha'|\geq 1,
\end{array}
\end{equation}
as well as 
\begin{equation}\label{UpU-8i8i.2}
\|f-f_\varepsilon\|_{{\mathrm{BMO}}({\mathbb{R}}^{n-1})}\longrightarrow 0
\,\,\text{ as }\,\,\varepsilon\to 0^{+}.
\end{equation}
This establishes \eqref{UpUpUp.2}, as well as the stronger claim made in \eqref{iy65ffvgH}.
\end{proof}

Going further, we provide the proof of Theorem~\ref{ndyRE}.

\vskip 0.10in
\begin{proof}[Proof of Theorem~\ref{ndyRE}]
First note that condition \eqref{Bgstwy-2-new2} implies that 
$\varphi$ is continuous on ${\mathbb{R}}^{n}\setminus\{0\}$. 
As such, $\varphi$ is a Lebesgue measurable function $\mathbb{R}^{n}$ which, in turn, 
ensures that condition \eqref{Bgstwy} is meaningful.  

To proceed, observe that if $f\in L^1\Big({\mathbb{R}}^{n}\,,\,\frac{dx}{1+|x|^{n+\varepsilon}}\Big)^M$
then for each $x\in{\mathbb{R}}^n$ we have
\begin{align}\label{Bgstwy-4}
\int_{{\mathbb{R}}^n}|f(y)||\varphi(x-y)|\,dy
& \leq C\int_{{\mathbb{R}}^n}\frac{|f(y)|}{(1+|y|)^{n+\varepsilon}}\cdot
\frac{(1+|y|)^{n+\varepsilon}}{(1+|x-y|)^{n+\varepsilon}}\,dy
\nonumber\\[4pt]
& \leq C(1+|x|)^{n+\varepsilon}\int_{{\mathbb{R}}^n}\frac{|f(y)|}{(1+|y|)^{n+\varepsilon}}\,dy<\infty.
\end{align}
In light of \eqref{eq:aaAabgr-22} (used here with $n+1$ in place of $n$), this implies 
that for every $t>0$ the convolution $\varphi_t\ast f$ is well-defined via an absolutely convergent 
integral whenever the function $f$ belongs to ${\mathrm{BMO}}\big({\mathbb{R}}^{n},{\mathbb{C}}^M\big)$.
In particular, this is the case whenever $f\in{\mathrm{VMO}}\big({\mathbb{R}}^{n},{\mathbb{C}}^M\big)$.

Next, fix $t>0$ and define 
\begin{equation}\label{u6gvV-km}
T_t f:=\varphi_t\ast f\,\,\text{ for every }\,
f\in{\mathrm{BMO}}\big({\mathbb{R}}^{n},{\mathbb{C}}^M\big).
\end{equation}
We first claim that there exists some constant $C\in(0,\infty)$ independent of $t$ such that
\begin{equation}\label{Bgstwy-5}
\|T_tf\|_{{\mathrm{BMO}}({\mathbb{R}}^{n},{\mathbb{C}}^M)}
\leq C\|f\|_{{\mathrm{BMO}}({\mathbb{R}}^{n},{\mathbb{C}}^M)}
\quad\text{ for all }\,\,f\in{\mathrm{BMO}}\big({\mathbb{R}}^{n},{\mathbb{C}}^M\big).
\end{equation}
To prove this claim, fix $f\in{\mathrm{BMO}}\big({\mathbb{R}}^{n},{\mathbb{C}}^M\big)$
and an arbitrary cube $Q$ in ${\mathbb{R}}^n$ with center $x_Q$, then decompose
\begin{equation}\label{Bgstwy-6}
f=(f-f_Q){\mathbf{1}}_{\lambda Q}+(f-f_Q){\mathbf{1}}_{{\mathbb{R}}^n\setminus\lambda Q}+f_Q,
\,\,\text{ where }\,\,\lambda:=2\sqrt{n}.
\end{equation}
Thus, using \eqref{Bgstwy} we have
\begin{equation}\label{Bgstwy-7}
(T_t f)(x)=T_t[(f-f_Q){\mathbf{1}}_{\lambda Q}](x)
+T_t[(f-f_Q){\mathbf{1}}_{{\mathbb{R}}^n\setminus(\lambda Q)}](x)+f_Q
\quad\forall\,x\in{\mathbb{R}}^n,
\end{equation}
and if we set
\begin{equation}\label{Bgstwy-8}
c_Q:=T_t[(f-f_Q){\mathbf{1}}_{{\mathbb{R}}^n\setminus(\lambda Q)}](x_Q)+f_Q
\in{\mathbb{C}}^M
\end{equation}
it follows that
\begin{align}\label{Bgstwy-9}
\aver{Q}\big|(T_t f)(x)-c_Q\big|\,dx
& \leq \aver{Q}\big|T_t[(f-f_Q){\mathbf{1}}_{\lambda Q}](x)\big|\,dx
\nonumber\\[4pt]
&\quad+\aver{Q}\big|T_t[(f-f_Q){\mathbf{1}}_{{\mathbb{R}}^n\setminus\lambda Q}](x)
-T_t[(f-f_Q){\mathbf{1}}_{{\mathbb{R}}^n\setminus\lambda Q}](x_Q)\big|\,dx
\nonumber\\[4pt]
&=:I+II.
\end{align}
Since $f\in{\mathrm{BMO}}\big({\mathbb{R}}^{n},{\mathbb{C}}^M\big)$ we have
$(f-f_Q){\mathbf{1}}_{\lambda Q}\in L^1\big({\mathbb{R}}^{n},{\mathbb{C}}^M\big)$. 
On the other hand, assumption \eqref{Bgstwy-2-new} implies that $T_t$ is bounded
in $L^1\big({\mathbb{R}}^{n},{\mathbb{C}}^M\big)$ uniformly in $t$. 
In concert with \eqref{aver-fq-BBBB}, this permits us to estimate 
\begin{align}\label{Bgstwy-10}
I &=\frac{1}{|Q|}\big\|T_t[(f-f_Q){\mathbf{1}}_{\lambda Q}]
\big\|_{L^1({\mathbb{R}}^{n},{\mathbb{C}}^M)}
\nonumber\\[4pt]
&\leq\frac{C}{|Q|}\big\|(f-f_Q){\mathbf{1}}_{\lambda Q}\big\|_{L^1({\mathbb{R}}^{n},{\mathbb{C}}^M)}
\leq C\|f\|_{{\mathrm{BMO}}({\mathbb{R}}^{n},{\mathbb{C}}^M)},
\end{align}
for some $C\in(0,\infty)$ independent of $f$, $Q$, and $t$.
To treat $II$, first we derive a pointwise estimate. For each $x\in Q$ we have
\begin{align}\label{Bgstwy-11}
&\big|T_t[(f-f_Q){\mathbf{1}}_{{\mathbb{R}}^n\setminus\lambda Q}](x)
-T_t[(f-f_Q){\mathbf{1}}_{{\mathbb{R}}^n\setminus\lambda Q}](x_Q)\big|
\nonumber\\[4pt]
&\hskip 0.50in
\leq t^{-n}\int_{{\mathbb{R}}^{n}\setminus\lambda Q}
|f(y)-f_Q|\Big|\varphi\Big(\frac{x-y}{t}\Big)-\varphi\Big(\frac{x_Q-y}{t}\Big)\Big|\,dy.
\end{align}
Next, pick some arbitrary $x\in Q$ and $y\in{\mathbb{R}}^{n}\setminus\lambda Q$, 
then consider $z:=(x_Q-y)/t\in{\mathbb{R}}^n\setminus\{0\}$ and $h:=(x-x_Q)/t\in{\mathbb{R}}^n$. 
Since in view of the choice of $\lambda$ in \eqref{Bgstwy-6} we have
\begin{align}\label{Bgstwy-12bEE}
|h|\leq\frac{\sqrt{n}\ell(Q)}{2t}=\frac{\lambda\ell(Q)}{4t}\leq\frac{|z|}{2},
\end{align}
it follows from \eqref{Bgstwy-2-new2} that 
\begin{align}\label{Bgstwy-12}
\Big|\varphi\Big(\frac{x-y}{t}\Big)
-\varphi\Big(\frac{x_Q-y}{t}\Big)\Big|
&=|\varphi(z+h)-\varphi(z)|
\leq\frac{C|h|^\varepsilon}{|z|^{n+\varepsilon}}
\nonumber\\[4pt]
&\leq\frac{C\ell(Q)^\varepsilon t^n}{|y-x_Q|^{n+\varepsilon}}
\leq\frac{C\ell(Q)^\varepsilon t^n}{(\ell(Q)+|y-x_Q|)^{n+\varepsilon}}.
\end{align}
Combining \eqref{Bgstwy-11}-\eqref{Bgstwy-12} with \eqref{eqn:BMO-decay.88}
(used here with $n+1$ in place of $n$) and part {\it (c)} in Lemma~\ref{jsfsQAXT} it follows that 
\begin{align}\label{Bgstwy-13}
&\big|T_t[(f-f_Q){\mathbf{1}}_{{\mathbb{R}}^n\setminus\lambda Q}](x)
-T_t[(f-f_Q){\mathbf{1}}_{{\mathbb{R}}^n\setminus\lambda Q}](x_Q)\big|
\nonumber\\[4pt]
&\hskip 1.00in
\leq C\ell(Q)^\varepsilon\int_{{\mathbb{R}}^{n}}
\frac{|f(y)-f_Q|}{(\ell(Q)+|y-x_Q|)^{n+\varepsilon}}\,dy
\nonumber\\[4pt]
&\hskip 1.00in
\leq C\int_{1}^\infty{\rm osc}_1\big(f;\lambda\ell(Q)\big)\,\frac{d\lambda}{\lambda^{1+\varepsilon}}
\nonumber\\[4pt]
&\hskip 1.00in
\leq C\|f\|_{{\mathrm{BMO}}({\mathbb{R}}^{n},{\mathbb{C}}^M)},
\qquad\forall\,x\in Q,
\end{align}
where $C\in(0,\infty)$ is independent of $f,Q$ and $t$. 
From \eqref{Bgstwy-13} and \eqref{Bgstwy-9} we obtain
\begin{align}\label{Bgstwy-10B}
II\leq C\|f||_{{\mathrm{BMO}}({\mathbb{R}}^{n},{\mathbb{C}}^M)}
\end{align}
for some $C\in(0,\infty)$ independent of $f$, $Q$, and $t$.
In concert, \eqref{Bgstwy-9}, \eqref{Bgstwy-10}, and \eqref{Bgstwy-10B} yield
\begin{align}\label{Bgstwy-9BB}
\aver{Q}\big|(T_t f)(x)-c_Q\big|\,dx
\leq C\|f||_{{\mathrm{BMO}}({\mathbb{R}}^{n},{\mathbb{C}}^M)}
\end{align}
with $c_Q\in{\mathbb{C}}^M$ as in \eqref{Bgstwy-8}. In view of \eqref{aver-fq-Cava}, this 
ultimately implies the claim in \eqref{Bgstwy-5}.

The second claim we make is that there exists some constant $C\in(0,\infty)$ 
with the property that for every $t>0$ and every $\eta\in(0,\varepsilon)$ there holds
\begin{equation}\label{Bgstwy-5BC}
\|T_t g-g\|_{L^\infty({\mathbb{R}}^{n},{\mathbb{C}}^M)}
\leq Ct^\eta\|g\|_{\dot{\mathscr{C}}^\eta({\mathbb{R}}^{n},{\mathbb{C}}^M)}
\qquad\text{ for all }\,\,g\in\dot{\mathscr{C}}^\eta\big({\mathbb{R}}^{n},{\mathbb{C}}^M\big).
\end{equation}
To prove this claim, fix $t>0$, $\eta\in(0,\varepsilon)$, 
$g\in\dot{\mathscr{C}}^\eta\big({\mathbb{R}}^{n},{\mathbb{C}}^M\big)$, and 
for $x\in{\mathbb{R}}^n$ arbitrary estimate 
\begin{align}\label{Jdrwe}
\big|(T_t g)(x)-g(x)\big|
& \leq\int_{{\mathbb{R}}^n}|g(x-y)-g(x)|\big|\varphi_t(y)\big|\,dy
\nonumber\\[4pt]
&\leq t^\eta\|g\|_{\dot{\mathscr{C}}^\eta({\mathbb{R}}^{n},{\mathbb{C}}^M)}
\int_{{\mathbb{R}}^n}\frac{|y|^\eta}{t^\eta}\,\big|\varphi_t(y)\big|\,dy
\nonumber\\[4pt]
&\leq t^\eta\|g\|_{\dot{\mathscr{C}}^\eta({\mathbb{R}}^{n},{\mathbb{C}}^M)}
\int_{{\mathbb{R}}^n}|z|^\eta\,\big(1+|z|)^{-n-\varepsilon}\,dz
\nonumber\\[4pt]
&\leq Ct^\eta\|g\|_{\dot{\mathscr{C}}^\eta({\mathbb{R}}^{n},{\mathbb{C}}^M)},
\end{align}
for some constant $C=C(\varepsilon,\eta,n,\varphi)\in(0,\infty)$ independent of $t$ and $g$. 
The first inequality in \eqref{Jdrwe} relies on \eqref{Bgstwy}, for the
third one we have used \eqref{Bgstwy-2-new} and the change of variables $z=y/t$,
while the last one is a consequence of having $\eta\in(0,\varepsilon)$.

Here is the argument involved in the endgame of the proof of Theorem~\ref{ndyRE}. Fix $\eta\in(0,\varepsilon)$
and given $f\in{\mathrm{VMO}}\big({\mathbb{R}}^{n},{\mathbb{C}}^M\big)$ pick
$g\in\dot{\mathscr{C}}^\eta\big({\mathbb{R}}^{n},{\mathbb{C}}^M\big)\cap
{\mathrm{BMO}}({\mathbb{R}}^{n},{\mathbb{C}}^M)$. Then
for each $t>0$, we use \eqref{Bgstwy-5} and \eqref{Bgstwy-5BC} to estimate
\begin{align}\label{Jdrwe-2}
\|T_t f-f\|_{{\mathrm{BMO}}({\mathbb{R}}^{n},{\mathbb{C}}^M)}
&\leq \|T_t(f-g)\|_{{\mathrm{BMO}}({\mathbb{R}}^{n},{\mathbb{C}}^M)}
+\|T_t g-g\|_{{\mathrm{BMO}}({\mathbb{R}}^{n},{\mathbb{C}}^M)}
\nonumber\\[4pt]
& \quad
+\|g-f\|_{{\mathrm{BMO}}({\mathbb{R}}^{n},{\mathbb{C}}^M)}
\nonumber\\[4pt]
& \leq C\|g-f\|_{{\mathrm{BMO}}({\mathbb{R}}^{n},{\mathbb{C}}^M)}
+2\|T_t g-g\|_{L^\infty({\mathbb{R}}^{n},{\mathbb{C}}^M)}
\nonumber\\[4pt]
& \leq C\|g-f\|_{{\mathrm{BMO}}({\mathbb{R}}^{n},{\mathbb{C}}^M)}
+Ct^\eta\|g\|_{\dot{\mathscr{C}}^\eta({\mathbb{R}}^{n},{\mathbb{C}}^M)}.
\end{align}
Thus,
\begin{align}\label{Jdrwe-3}
\limsup_{t\to 0^{+}}
\|T_t f-f\|_{{\mathrm{BMO}}({\mathbb{R}}^{n},{\mathbb{C}}^M)}
\leq C\|g-f\|_{{\mathrm{BMO}}({\mathbb{R}}^{n},{\mathbb{C}}^M)}.
\end{align}
Now \eqref{Bgstwy-3} follows from \eqref{Jdrwe-3} in light of the density result
recorded in \eqref{UpUpUp.2c}.

To prove the very last claim in the statement of Theorem~\ref{ndyRE}, let
$\varphi\in{\mathscr{C}}^1\big({\mathbb{R}}^{n},{\mathbb{C}}^{M\times M}\big)$ be a
function satisfying \eqref{Bgstwy-2aaa}. Then for each $x\in\mathbb{R}^{n}\setminus\{0\}$ 
and $h\in{\mathbb{R}}^n$ with $|h|<|x|/2$ the Mean Value Theorem permits us to estimate 
\begin{align}\label{Bgstwy-aaBB}
|\varphi(x+h)-\varphi(x)| &\leq|h|\sup_{\xi\in[x,x+h]}\big|(\nabla\varphi)(\xi)\big| 
\nonumber\\[4pt]
& \leq C|h|\sup_{\xi\in[x,x+h]}(1+|\xi|)^{-n-1} 
\leq\frac{C|h|}{|x|^{n+1}}.
\end{align}
Hence, both \eqref{Bgstwy-2-new} and \eqref{Bgstwy-2-new2} hold with $\varepsilon=1$ in this case, 
so the left-pointing implication in \eqref{Bgstwy-3TRG} is a consequence of \eqref{Bgstwy-3}.

As regards the right-pointing implication in \eqref{Bgstwy-3TRG}, let us first 
observe that from \eqref{Bgstwy} and \eqref{Bgstwy-2aaa} we have 
\begin{align}\label{i7ggf-AA.2}
\int_{{\mathbb{R}}^n}(\partial_j\varphi)\big((x-y)/t\big)\,dy=0,
\qquad\forall\,x\in{\mathbb{R}}^n,\,\,\forall\,j\in\{1,\dots,n\}.
\end{align}
Next, given a function $f\in{\mathrm{BMO}}({\mathbb{R}}^{n},{\mathbb{C}}^M)$, 
fix $x\in{\mathbb{R}}^n$ and $t>0$ arbitrary and denote by $Q_{x,t}$ the cube in 
${\mathbb{R}}^n$ centered at $x$ and of sidelength $t$. As usual, abbreviate
$f_{Q_{x,t}}:=\aver{Q_{x,t}}f(y)\,dy$. On account of \eqref{i7ggf-AA.2}, \eqref{Bgstwy-2aaa}, 
\eqref{eqn:BMO-decay.88} (used here with $\varepsilon=1$ and $n$ in place of $n-1$), 
and \eqref{jsfd-1} (used with $p=1$ and $n$ in place of $n-1$), for each $j\in\{1,\dots,n\}$ 
we may then estimate 
\begin{align}\label{i7ggf-AA.3}
\big|\partial_j(\varphi_t\ast f)(x)\big| 
&=t^{-n-1}\left|\int_{{\mathbb{R}}^n}(\partial_j\varphi)\Big(\frac{x-y}{t}\Big)f(y)\,dy\right|
\nonumber\\[6pt]
&=t^{-n-1}\left|\int_{{\mathbb{R}}^n}(\partial_j\varphi)\Big(\frac{x-y}{t}\Big)\Big[f(y)-f_{Q_{x,t}}\Big]\,dy\right|
\nonumber\\[6pt]
&\leq C\int_{{\mathbb{R}}^n}\frac{\big|f(y)-f_{Q_{x,t}}\big|}{\big[t+|x-y|\big]^{n+1}}\,dy
\leq Ct^{-1}\|f\|_{{\mathrm{BMO}}({\mathbb{R}}^n,{\mathbb{C}}^M)},
\end{align}
for some constant $C\in(0,\infty)$ independent of $f,x,t$. 
In concert with \eqref{Bgstwy-5}, this proves that
\begin{equation}\label{u6lNBBa}
\varphi_t\ast f\in{\mathrm{BMO}}\big({\mathbb{R}}^{n},{\mathbb{C}}^M\big)
\cap{\mathrm{Lip}}\big({\mathbb{R}}^{n},{\mathbb{C}}^M\big)\,\,\text{ for each }\,\,t>0.
\end{equation}
With this in hand, the right-pointing implication in \eqref{Bgstwy-3TRG} readily follows 
(compare with \eqref{UpUpUp.2b}), finishing the proof of Theorem~\ref{ndyRE}.
\end{proof}

The proof of the negative result stated in Theorem~\ref{THMVMO.CCC} is discussed next.

\vskip 0.08in
\begin{proof}[Proof of Theorem~\ref{THMVMO.CCC}]
From \cite[Proposition~9,\,p.\,1208]{Bour} we know that there exists 
$f\in{\mathscr{C}}^\infty({\mathbb{R}}^{n})$ such that 
\begin{equation}\label{8uy-AA-6tg.1}
\partial^\alpha f\in{\mathrm{BMO}}({\mathbb{R}}^{n}),\qquad\forall\,\alpha\in{\mathbb{N}}_0^n,
\end{equation}
and
\begin{equation}\label{8uy-AA-6tg.2}
\inf\big\{\|f-g\|_{{\mathrm{BMO}}({\mathbb{R}}^{n})}:\,g\in L^\infty({\mathbb{R}}^{n})\big\}>0.
\end{equation}
In concert with \cite[Lemme~6,\,p.\,1211]{Bour}, property \eqref{8uy-AA-6tg.1} (used for multi-indices
$\alpha\in{\mathbb{N}}^n_0$ with $|\alpha|=1$) entails $f\in{\mathrm{UC}}({\mathbb{R}}^{n})$. 
By once again using \eqref{8uy-AA-6tg.1} (with $|\alpha|=0$), this proves that
$f\in{\mathrm{UC}}({\mathbb{R}}^{n})\cap{\mathrm{BMO}}({\mathbb{R}}^{n})$, hence 
$f\in{\mathrm{VMO}}({\mathbb{R}}^{n})$. On the other hand, \eqref{8uy-AA-6tg.2} implies
that $f$ does not belong to the closure of $L^\infty({\mathbb{R}}^{n})$ in 
${\mathrm{BMO}}({\mathbb{R}}^{n})$, hence also $f$ does not belong to the closure of 
${\mathrm{UC}}({\mathbb{R}}^{n})\cap L^\infty({\mathbb{R}}^{n})$ in 
${\mathrm{BMO}}({\mathbb{R}}^{n})$. Ultimately, this proves that the space 
${\mathrm{UC}}({\mathbb{R}}^{n})\cap L^\infty({\mathbb{R}}^{n})$ is not dense in 
${\mathrm{VMO}}({\mathbb{R}}^{n})$. 
\end{proof}

The second to the last proof in this section is that of Theorem~\ref{ndyRE-NNN}.

\vskip 0.08in
\begin{proof}[Proof of Theorem~\ref{ndyRE-NNN}]
That for each $f\in{\mathrm{BMO}}({\mathbb{R}}^{n})$ the measure $\mu_f$ associated with $f$ as in 
\eqref{Bgstwy-3TRG-NNN} satisfies Carleson's condition 
\begin{equation}\label{defi-Carleson-NNN}
\|\mu_f\|_{\mathcal{C}(\mathbb{R}_{+}^{n+1})}=\sup_{Q\subset\mathbb{R}^{n}} 
\frac{1}{|Q|}\int_{0}^{\ell(Q)}\int_Q|(\psi_t\ast f)(x)|^2\frac{dx\,dt}{t}
\leq C\|f\|_{{\mathrm{BMO}}({\mathbb{R}}^{n})}^2
\end{equation}
for some constant $C\in(0,\infty)$ which depends only on the dimension $n$ and the constant in 
\eqref{Bgstwy-2aaa-NNN}, is fairly standard. Specifically, having fixed an arbitrary cube 
$Q\subset\mathbb{R}^{n}$, decompose $f=f_0+f_\infty+f_{2Q}$ where $f_0:=(f-f_{2Q}){\mathbf{1}}_{2Q}$ 
and $f_\infty:=(f-f_{2Q}){\mathbf{1}}_{\mathbb{R}^{n}\setminus 2Q}$. On account of the 
cancellation property of $\psi$, we may write $\psi_t\ast f=\psi_t\ast f_0+\psi_t\ast f_\infty$. 
Then, on the one hand, 
\begin{align}\label{defi-Carleson-NNN.2}
\frac{1}{|Q|}\int_{0}^{\ell(Q)}\int_Q|(\psi_t\ast f_0)(x)|^2\frac{dx\,dt}{t}
&\leq\frac{1}{|Q|}\int_{\mathbb{R}^{n+1}_{+}}|(\psi_t\ast f_0)(x)|^2\frac{dx\,dt}{t}
\nonumber\\[6pt]
&\leq C|Q|^{-1}\|f_0\|_{L^2({\mathbb{R}}^{n})}^2\leq C\|f\|_{{\mathrm{BMO}}({\mathbb{R}}^{n})}^2,
\end{align}
thanks to the square-function estimate \eqref{hdgswf} in Proposition~\ref{prop:SFE-early} 
(used with $n$ replaced by $n+1$ and the kernel $\theta(x,t;y):=\psi_t(x-y)$ for each $x,y\in{\mathbb{R}}^n$, $t>0$), 
and \eqref{jsfd-1}. On the other hand, for each $x\in Q$ and $t\in(0,\ell(Q))$ we may estimate 
\begin{align}\label{defi-Carleson-NNN.3}
|(\psi_t\ast f_\infty)(x)| &\leq\int_{{\mathbb{R}}^{n}\setminus 2Q}t^{-n}
\Big|\psi\Big(\frac{x-y}{t}\Big)\Big||f(y)-f_{2Q}|\,dy
\nonumber\\[6pt]
&\leq Ct\int_{{\mathbb{R}}^{n}\setminus 2Q}\frac{|f(y)-f_{2Q}|}{\big[t+|x-y|\big]^{n+1}}\,dy
\leq Ct\int_{{\mathbb{R}}^{n}\setminus 2Q}\frac{|f(y)-f_{2Q}|}{|x_Q-y|^{n+1}}\,dy
\nonumber\\[6pt]
&\leq Ct\int_{{\mathbb{R}}^{n}}\frac{|f(y)-f_{2Q}|}{\big[\ell(Q)+|x_Q-y|\big]^{n+1}}\,dy
\leq\frac{Ct}{\ell(Q)}\|f\|_{{\mathrm{BMO}}({\mathbb{R}}^{n})},
\end{align}
by virtue of \eqref{eqn:BMO-decay.88} in Lemma~\ref{lemma:BMO-decay} 
(used with $n$ replaced by $n+1$ and $\varepsilon=1$). 
Combining \eqref{defi-Carleson-NNN.2} with \eqref{defi-Carleson-NNN.3} 
then readily yields \eqref{defi-Carleson-NNN}.

Let us next observe that if $g\in\dot{\mathscr{C}}^\eta({\mathbb{R}}^n)$ for some 
$\eta\in(0,1)$ then for each $x\in{\mathbb{R}}^n$ and $t>0$ we may estimate, 
on account of \eqref{Bgstwy-2aaa-NNN},
\begin{align}\label{defi-Carleson-NNN.4}
|(\psi_t\ast g)(x)| &=\Big|\int_{{\mathbb{R}}^{n}}\psi_t(y)\big(g(x-y)-g(x)\big)\,dy\Big|
\nonumber\\[6pt]
&\leq\|g\|_{\dot{\mathscr{C}}^\eta({\mathbb{R}}^n)}\int_{{\mathbb{R}}^{n}}|\psi_t(y)||y|^\eta\,dy
\nonumber\\[6pt]
&\leq Ct^\eta\|g\|_{\dot{\mathscr{C}}^\eta({\mathbb{R}}^n)}
\int_{{\mathbb{R}}^{n}}\frac{|y|^\eta}{(1+|y|)^{n+1}}\,dy
=Ct^\eta\|g\|_{\dot{\mathscr{C}}^\eta({\mathbb{R}}^n)}.
\end{align}

Assume now that some function $f\in{\mathrm{BMO}}({\mathbb{R}}^{n})$ has been fixed. Pick $\eta\in(0,1)$ 
and choose $g\in\dot{\mathscr{C}}^\eta({\mathbb{R}}^n)\cap{\mathrm{BMO}}({\mathbb{R}}^{n})$ arbitrary. 
Then, making use of \eqref{defi-Carleson-NNN} and \eqref{defi-Carleson-NNN.4}, 
for each cube $Q\subseteq{\mathbb{R}}^n$ we may bound
\begin{align}\label{defi-Carleson-NNN.5}
&\hskip -0.20in
\frac{1}{|Q|}\int_{0}^{\ell(Q)}\int_Q|(\psi_t\ast f)(x)|^2\frac{dx\,dt}{t}
\nonumber\\[6pt]
&\hskip 1.00in
\leq\frac{2}{|Q|}\int_{0}^{\ell(Q)}\int_Q\big|\big(\psi_t\ast(f-g)\big)(x)\big|^2\frac{dx\,dt}{t}
\nonumber\\[6pt]
&\hskip 1.00in
\quad+\frac{2}{|Q|}\int_{0}^{\ell(Q)}\int_Q|(\psi_t\ast g)(x)|^2\frac{dx\,dt}{t}
\nonumber\\[6pt]
&\hskip 1.00in
\leq C\|f-g\|_{{\mathrm{BMO}}({\mathbb{R}}^{n})}^2+C\ell(Q)^{2\eta}\|g\|_{\dot{\mathscr{C}}^\eta({\mathbb{R}}^n)}^2.
\end{align}
In turn, \eqref{defi-Carleson-NNN.5} allows us to conclude that
\begin{align}\label{defi-Carleson-NNN.6}
\lim_{r\to 0^{+}}\Bigg\{\sup\limits_{\substack{Q\subset\mathbb{R}^{n}\\ \ell(Q)\leq r}}
\frac{1}{|Q|}\int_{0}^{\ell(Q)}\int_Q|(\psi_t\ast f)(x)|^2\frac{dx\,dt}{t}\Bigg\}
\leq C\|f-g\|_{{\mathrm{BMO}}({\mathbb{R}}^{n})}^2
\end{align}
which, after taking the infimum over all 
$g\in\dot{\mathscr{C}}^\eta({\mathbb{R}}^n)\cap{\mathrm{BMO}}({\mathbb{R}}^{n})$ and bearing in mind
the density result in \eqref{UpUpUp.2c}, yields \eqref{defi-Carleson-Niii}.
\end{proof}

We conclude this section by giving the proof of Theorem~\ref{jhdwtRD}.

\begin{proof}[Proof of Theorem~\ref{jhdwtRD}]
Fix $f\in{\mathrm{BMO}}({\mathbb{R}}^{n},\mathbb{C}^M)$ and let $u$ be the 
unique solution $u$ of the $\mathrm{BMO}$-Dirichlet boundary value 
problem \eqref{Dir-BVP-BMO} for $L$ in $\mathbb{R}^{n}_+$ with boundary datum $f$. 
By \eqref{eqn-Dir-BMO:u} in Theorem~\ref{them:BMO-Dir}, we have 
(with $P^L$ denoting the Poisson kernel for $L$ in $\mathbb{R}^{n}_+$ from
Theorem~\ref{kkjbhV})
\begin{equation}\label{RsEW}
u(x',t)=(P_t^L*f)(x')=\int_{{\mathbb{R}}^{n-1}_{+}}K^L(x'-y',t)f(y')\,dy',
\,\,\text{ for }\,\,(x',t)\in{\mathbb{R}}^n_{+},
\end{equation}
where $K^L$ is as in \eqref{eq:Gvav7g5}. Consider now 
\begin{equation}\label{RsEW-2}
\psi(z'):=(\psi_1,\dots,\psi_n):=
\Big((\partial_j K^L)(z',1)\Big)_{1\leq j\leq n}
\,\,\text{ for each }\,\,z'\in{\mathbb{R}}^{n-1}.
\end{equation}
Then, from item {\it (4)} and \eqref{eq:Kest} in Theorem~\ref{kkjbhV} we deduce that 
$\psi_j\in{\mathscr{C}}^\infty\big({\mathbb{R}}^{n-1},\mathbb{C}^{M\times M}\big)$ 
for each $j\in\{1,\dots,n\}$ and there exists some constant $C\in(0,\infty)$ such that
\begin{equation}\label{RsEW-3}
|\psi(z')|\leq \frac{C}{(1+|z'|)^n}\,\,\text{ and }\,\,
|\nabla\psi(z')|\leq \frac{C}{(1+|z'|)^{n+1}}
\,\,\text{ for each }\,\,z'\in{\mathbb{R}}^{n-1}.
\end{equation}
We also claim that 
\begin{equation}\label{RsEW-4}
\int_{{\mathbb{R}}^{n-1}}\psi_j(z')\,dz'=0\in\mathbb{C}^{M\times M}
\,\,\text{ for each }\,\,j\in\{1,\dots,n\}.
\end{equation}
To see why \eqref{RsEW-4} is true note that based on \eqref{RsEW-2} 
and \eqref{eq:Gvav7g5} we have 
\begin{equation}\label{RsEW-5}
\psi_j(z')=\partial_jP^L(z')\,\,
\text{ for all }\,\,z'\in{\mathbb{R}}^{n-1}\,\,\text{ and each }\,\,j\in\{1,\dots,n-1\},
\end{equation}
while
\begin{equation}\label{RsEW-6}
\psi_n(z')=(1-n)P^L(z')-z'\cdot \nabla P^L(z')\,\,\text{ for all }\,\,z'\in{\mathbb{R}}^{n-1}.
\end{equation}
Now \eqref{RsEW-4} follows from \eqref{RsEW-5}-\eqref{RsEW-6} and \eqref{eq:IG6gy.2}
via integration by parts.

Next, for each $x'\in{\mathbb{R}}^{n-1}$ and $t>0$ set $\psi_t(x'):=t^{1-n}\psi(x'/t)$. 
Then from item {\it (5)} in Theorem~\ref{kkjbhV} it follows that 
$\nabla K^L$ is homogeneous of order $-n$, thus
\begin{equation}\label{RsEW-7}
\psi_t(x')=t^{1-n}(\nabla K^L)(x'/t,1)=t(\nabla K^L)(x',t)\,\,
\text{ for each }\,\,(x',t)\in{\mathbb{R}}^{n-1}_{+}. 
\end{equation}
Combining \eqref{RsEW} and \eqref{RsEW-7} yields
\begin{equation}\label{RsEW-8}
t(\nabla u)(x',t)=\int_{{\mathbb{R}}^{n-1}}t(\nabla K^L)(x'-y',t)f(y')\,dy'
=(\psi_t\ast f)(x') 
\end{equation}
for each $x'\in{\mathbb{R}}^{n-1}$ and each $t>0$. Consequently,
\begin{equation}\label{RsEW-9}
|(\psi_t\ast f)(x')|^2\frac{dx'\,dt}{t}
=t|(\nabla u)(x',t)|^2\,dx'\,dt.
\end{equation}
In light of \eqref{RsEW-3}-\eqref{RsEW-4} we see that Theorem~\ref{ndyRE-NNN} applies
component-wise in the current setting (with $n$ replaced by $n-1$) and yields
a constant $C$ for which \eqref{defi-Carleson-Niii} holds. The latter becomes \eqref{XeeTT}
by invoking \eqref{RsEW-9} and finishes the proof of the theorem.
\end{proof}

\section{Proof of the well-posedness of the Morrey-Campanato-Dirichlet problem}
\setcounter{equation}{0}
\label{S-4}

This section is devoted to presenting the proof of Theorem~\ref{them:BMO-Dir-frac}.
Throughout fix $p,q\in[1,\infty)$.
We divide the proof into several steps, the starting point being the following claim:

\vskip 0.08in
\noindent{\tt Step~1.} {\it There exists a constant $C=C(n,L,\eta)\in(0,\infty)$ such 
that if $f\in\mathscr{E}^{\eta,p}(\mathbb{R}^{n-1},\mathbb{C}^M)$ 
then the function $u$ given at every point $(x',t)\in{\mathbb{R}}^n_{+}$ 
by $u(x',t):=(P^L_t\ast f)(x')$ is well-defined {\rm (}via an absolutely convergent 
integral{\rm )} and satisfies $u\in{\mathscr{C}}^\infty({\mathbb{R}}^n_{+},{\mathbb{C}}^M)$, 
$Lu=0$ in ${\mathbb{R}}^n_{+}$, $u\big|^{{}^{\rm n.t.}}_{\partial{\mathbb{R}}^n_{+}}=f$ 
a.e. in ${\mathbb{R}}^{n-1}$, as well as}
\begin{equation}\label{eqn:BMO-decay-EEE.2}
\sup_{(x',t)\in{\mathbb{R}}^n_{+}}\Big[
t^{1-\eta}\big|(\nabla u)(x',t)\big|\Big]\leq C\|f\|^{(\eta,p)}_{*}.
\end{equation}

The fact that $u$ is well-defined and is a smooth null-solution of $L$ in the 
upper-half space whose nontangential trace matches $f$ a.e. in ${\mathbb{R}}^{n-1}$ 
follows from \eqref{eq:aaAabgr-22BB} with $\varepsilon=1$ and 
item {\it (7)} in Theorem~\ref{kkjbhV}. To proceed, fix an arbitrary point 
$(x',t)\in{\mathbb{R}}^n_{+}$ and, making use of \eqref{eqn:B-knB}
and \eqref{jsfd-3vcgfd}, estimate 
\begin{align}\label{eqn:B-knB.35f}
\big|(\nabla u)(x',t)\big|\leq\frac{C}{t}
\int_{1}^\infty{\rm osc}_1\big(f;\lambda\,t\big)\,\frac{d\lambda}{\lambda^{2}}
\leq\frac{C}{t^{1-\eta}}\|f\|_{\ast}^{(\eta,p)},
\end{align}
from which \eqref{eqn:BMO-decay-EEE.2} readily follows. 

\vskip 0.08in
\noindent{\tt Step~2.} {\it For every function 
$u\in{\mathscr{C}}^1({\mathbb{R}}^n_{+},{\mathbb{C}}^M)$ there holds}
\begin{equation}\label{eqn:BMO-decay-EEE.8T}
\|u\|^{(\eta,q)}_{**}\leq (2\eta)^{-1/2}
\sup_{(x',t)\in{\mathbb{R}}^n_{+}}\Big[t^{1-\eta}\big|(\nabla u)(x',t)\big|\Big].
\end{equation}

This is readily seen from \eqref{ustarstar-222}. 

\vskip 0.08in
\noindent{\tt Step~3.} {\it There exists a constant $C=C(n,L,\eta,q)\in(0,\infty)$ 
with the property that for every function 
$u\in{\mathscr{C}}^\infty({\mathbb{R}}^n_{+},{\mathbb{C}}^M)$ satisfying $Lu=0$ 
in ${\mathbb{R}}^n_{+}$ there holds}
\begin{equation}\label{eqn:BMO-decay-EEE.8T2}
\sup_{(x',t)\in{\mathbb{R}}^n_{+}}\Big[t^{1-\eta}\big|(\nabla u)(x',t)\big|\Big]
\leq C\|u\|^{(\eta,q)}_{**}.
\end{equation}

For each fixed point $(x',t)\in{\mathbb{R}}^n_{+}$ use Theorem~\ref{ker-sbav}
and repeated applications of H\"older's inequality in order to estimate  
(recall that $Q_{x',t}$ is the cube in ${\mathbb{R}}^{n-1}$ centered at 
$x'$ and of side-length $t$)
\begin{align}\label{eqn:BMO-decay-EEE.3Si}
\big|(\nabla u)(x',t)\big| &\leq C\aver{Q_{x',t}\times(t/2,3t/2)}|(\nabla u)(y',s)|\,dy'ds
\nonumber\\[4pt]
&=C\aver{Q_{x',t}}\Big(\aver{(t/2,3t/2)}|(\nabla u)(y',s)|\,ds\Big)dy'
\nonumber\\[4pt]
&\leq C\Big(\,\aver{Q_{x',t}}\Big(\aver{(t/2,3t/2)}|(\nabla u)(y',s)|^2\,ds\Big)^{\frac{q}{2}}dy'
\Big)^{\frac{1}{q}}
\nonumber\\[4pt]
&\leq Ct^{-1/2}\Big(\,\aver{Q_{x',t}}
\Big(\aver{(t/2,3t/2)}|(\nabla u)(y',s)|^2 s\,ds\Big)^{\frac{q}{2}}dy'\Big)^{\frac{1}{q}}
\nonumber\\[4pt]
&\leq Ct^{-1}\Big(\,\aver{Q_{x',t}}
\Big(\int_{0}^{3t/2}|(\nabla u)(y',s)|^2 s\,ds\Big)^{\frac{q}{2}}dy'\Big)^{\frac{1}{q}}
\nonumber\\[4pt]
&\leq Ct^{-1}\Big(\frac{1}{|(3/2)Q_{x',t}|}\int_{(3/2)Q_{x',t}}
\Big(\int_{0}^{\ell((3/2)Q_{x',t})}
|(\nabla u)(y',s)|^2 s\,ds\Big)^{\frac{q}{2}}dy'\Big)^{\frac{1}{q}}
\nonumber\\[4pt]
&\leq Ct^{\eta-1}\|u\|^{(\eta,q)}_{**},
\end{align}
where the last inequality is a consequence of \eqref{ustarstar-222}. 
With this in hand, \eqref{eqn:BMO-decay-EEE.8T2} follows. 

\vskip 0.08in
\noindent{\tt Step~4.} {\it For every function 
$u\in{\mathscr{C}}^1({\mathbb{R}}^n_{+},{\mathbb{C}}^M)$ one has}
\begin{eqnarray}\label{tavav-y3.BAk}
\sup_{\stackrel{x,y\in{\mathbb{R}}^n_{+}}{x\not=y}}\frac{|u(x)-u(y)|}{|x-y|^\eta}
\leq\big(1+2/\eta\big)\sup_{(x',t)\in{\mathbb{R}}^n_{+}}\Big[
t^{1-\eta}\big|(\nabla u)(x',t)\big|\Big].
\end{eqnarray}

{\it In fact, the opposite inequality holds for smooth null-solutions of $L$ in the upper-half space.
Specifically, there exists a constant $C=C(n,L,\eta)\in(0,\infty)$ 
with the property that for every function 
$u\in{\mathscr{C}}^\infty({\mathbb{R}}^n_{+},{\mathbb{C}}^M)$ satisfying $Lu=0$ 
in ${\mathbb{R}}^n_{+}$ there holds}
\begin{equation}\label{eqn:BMO-decay-EEE.8T56-vv}
\sup_{(x',t)\in{\mathbb{R}}^n_{+}}\Big[t^{1-\eta}\big|(\nabla u)(x',t)\big|\Big]
\leq C\sup_{\stackrel{x,y\in{\mathbb{R}}^n_{+}}{x\not=y}}\frac{|u(x)-u(y)|}{|x-y|^\eta}.
\end{equation}

To justify \eqref{tavav-y3.BAk}, abbreviate
\begin{eqnarray}\label{tavav-y3.B}
C_{u,\eta}:=\sup_{(x',t)\in{\mathbb{R}}^n_{+}}\Big[
t^{1-\eta}\big|(\nabla u)(x',t)\big|\Big].
\end{eqnarray}
Pick two arbitrary distinct points $x=(x',t)\in{\mathbb{R}}^n_{+}$, 
$y=(y',s)\in{\mathbb{R}}^n_{+}$, and set $r:=|x-y|>0$. Then 
\begin{eqnarray}\label{tavav-y3.A.1}
r^{-\eta}|u(x)-u(y)|\leq I+II+III,
\end{eqnarray}
where 
\begin{eqnarray}\label{tavav-y3.A.2}
\begin{array}{l}
I:=r^{-\eta}\big|u(x',t)-u(x',t+r)\big|,
\\[12pt]
II:=r^{-\eta}\big|u(x',t+r)-u(y',s+r)\big|,
\\[12pt]
III:=r^{-\eta}\big|u(y',s+r)-u(y',s)\big|.
\end{array}
\end{eqnarray}
Then by the Fundamental Theorem of Calculus and the assumption on $u$, 
\begin{align}\label{tavav-y3.A.3}
I= &\,r^{-\eta}\big|u(x',t)-u(x',t+r)\big|
=r^{-\eta}\Big|\int_0^r(\partial_n u)(x',t+\xi)\,d\xi\Big|
\nonumber\\[4pt]
&\,\leq C_{u,\eta}r^{-\eta}\int_0^r (t+\xi)^{\eta-1}\,d\xi
\leq C_{u,\eta}r^{-\eta}\int_0^r\xi^{\eta-1}\,d\xi
\nonumber\\[4pt]
&\,=C_{u,\eta}r^{-\eta}\eta^{-1}r^{\eta}=C_{u,\eta}/\eta.
\end{align}
Moreover, $III$ may be estimated in a similar manner 
(with the same bound $C_{u,\eta}/\eta$), while 
\begin{align}\label{tavav-y3.A.4}
II &=r^{-\eta}\big|u(x',t+r)-u(y',s+r)\big|
\nonumber\\[4pt]
&=r^{-\eta}\Big|\int_0^1\frac{d}{d\theta}
\big[u\big(\theta(x',t+r)+(1-\theta)(y',s+r)\big)\big]\,d\theta\Big|
\nonumber\\[4pt]
&=r^{-\eta}\Big|\int_0^1(x'-y',t-s)\cdot
(\nabla u)\big(\theta(x',t+r)+(1-\theta)(y',s+r)\big)\,d\theta\Big|
\nonumber\\[4pt]
&\leq C_{u,\eta}r^{-\eta}|x-y|\int_0^1
\big[{\rm dist}\,\big(\theta(x',t+r)+(1-\theta)(y',s+r),
{\partial{\mathbb{R}}^n_{+}}\big)\big]^{\eta-1}\,d\theta
\nonumber\\[4pt]
&\,\leq C_{u,\eta}r^{-\eta}r\int_0^1\big[(1-\theta)s+\theta t+r\big]^{\eta-1}\,d\theta
\leq C_{u,\eta}r^{-\eta}\,r\,r^{\eta-1}=C_{u,\eta}.
\end{align}
Now \eqref{tavav-y3.BAk} follows from \eqref{tavav-y3.A.1}-\eqref{tavav-y3.A.4}.

Consider next \eqref{eqn:BMO-decay-EEE.8T56-vv}. 
Recall \eqref{eqn:BMO-decay-EEE.8T56}.
Fix a point $x=(x',t)\in{\mathbb{R}}^n_{+}$ and write $R_x$ for the cube in $\mathbb{R}^{n}$  
centered at $x$ with side-length $t/2$. Using that the function $u(\cdot)-u(x)$ 
is a null-solution of the system $L$, we may apply Theorem~\ref{ker-sbav} 
(with $\ell=1$ and $p=1$) to write 
\begin{align}\label{Twazvee-tfF}
t\big|(\nabla u)(x',t)\big| &\leq C\,\aver{R_x}|u(y)-u(x)|\,dy
\nonumber\\[4pt]
&\leq C\|u\|_{\dot{\mathscr{C}}^\eta(\mathbb{R}^{n}_+,{\mathbb{C}}^M)}
\,\aver{R_x}|x-y|^\eta\,dy\leq C\|u\|_{\dot{\mathscr{C}}^\eta(\mathbb{R}^{n}_+,{\mathbb{C}}^M)}\,t^{\eta}.
\end{align}
This readily implies \eqref{eqn:BMO-decay-EEE.8T56-vv}. 

\vskip 0.08in
\noindent{\tt Step~5.} {\it There exists a constant $C=C(n,\eta)\in(0,\infty)$
such that for every continuous function $f:{\mathbb{R}}^{n-1}\to{\mathbb{C}}^M$ one has}
\begin{eqnarray}\label{tavUgbvv-y5}
\|f\|^{(\eta,p)}_{*}\leq C
\sup_{\stackrel{x',y'\in{\mathbb{R}}^{n-1}}{x'\not=y'}}\frac{|f(x')-f(y')|}{|x'-y'|^\eta}.
\end{eqnarray}
{\it In particular, the inclusion}
\begin{eqnarray}\label{tavUgbvv-y5.PPP}
\dot{\mathscr{C}}^\eta({\mathbb{R}}^{n-1},{\mathbb{C}}^M)\hookrightarrow
{\mathscr{E}}^{\eta,p}({\mathbb{R}}^{n-1},{\mathbb{C}}^M)
\quad\text{is continuous}.
\end{eqnarray}

This is a direct consequence of \eqref{defi-BMO.2b}. 

\vskip 0.08in
\noindent{\tt Step~6.} {\it Given $f\in{\mathscr{E}}^{\eta,p}(\mathbb{R}^{n-1},\mathbb{C}^M)$, 
the function $u$ defined as in \eqref{eqn-Dir-BMO:u} solves the 
Dirichlet boundary value problem \eqref{Dir-BVP-BMO-frac} and obeys the estimates
in \eqref{Dir-BVP-BMO-Car-frac}. Moreover, 
$u\in\dot{\mathscr{C}}^\eta(\overline{{\mathbb{R}}^n_{+}},{\mathbb{C}}^M)$ and
\eqref{Dir-BVP-BMO-Car-frac22} holds as well.}

\vskip 0.08in
Fix an arbitrary function $f\in{\mathscr{E}}^{\eta,p}(\mathbb{R}^{n-1},\mathbb{C}^M)$.
From Step~1 we know that $u$ given as in \eqref{eqn-Dir-BMO:u} is well-defined, 
$u\in\mathscr{C}^\infty(\mathbb{R}^n_{+},{\mathbb{C}}^M)$, 
$Lu=0$ in ${\mathbb{R}}^n_{+}$, $f=u\big|^{{}^{\rm n.t.}}_{\partial{\mathbb{R}}^n_{+}}$ 
a.e. in ${\mathbb{R}}^n$, and satisfies \eqref{eqn:BMO-decay-EEE.2}. 
To proceed, observe that when used in concert, 
\eqref{eqn:BMO-decay-EEE.2} and \eqref{eqn:BMO-decay-EEE.8T} imply that
\begin{equation}\label{eqn:BMO-decay-EEE.8Tiii}
\|u\|^{(\eta,q)}_{**}\leq C\|f\|^{(\eta,p)}_{*}.
\end{equation}
Hence, $\|u\|^{(\eta,q)}_{**}<\infty$. On the other hand, 
combining the results proved in Step~3 and Step~4 establishes the membership of $u$ to 
$\dot{\mathscr{C}}^\eta(\mathbb{R}^n_{+},{\mathbb{C}}^M)
=\dot{\mathscr{C}}^\eta(\overline{\mathbb{R}^n_{+}},{\mathbb{C}}^M)$ 
(cf. \eqref{eqn:BMO-decay-EEE.8T56.ww}) along with the estimate
\begin{equation}\label{eqn:BMO-decay-EEE.8Ti44-aaa}
\|u\|_{\dot{\mathscr{C}}^\eta(\mathbb{R}^n_{+},{\mathbb{C}}^M)}\leq C\|u\|^{(\eta,q)}_{**}.
\end{equation}
Thanks to \eqref{eqn:BMO-decay-EEE.8Tiii}-\eqref{eqn:BMO-decay-EEE.8Ti44-aaa}
and \eqref{eqn:BMO-decay-EEE.8T56.ww}, we therefore have
$u\in\dot{\mathscr{C}}^\eta(\overline{\mathbb{R}^n_{+}},{\mathbb{C}}^M)$ and
\begin{align}\label{tavUgbvv-y5iaa}
\|f\|_{\dot{\mathscr{C}}^\eta(\mathbb{R}^{n-1},{\mathbb{C}}^M)}
&=\big\|u\big|^{{}^{\rm n.t.}}_{\partial{\mathbb{R}}^n_{+}}\big\|
_{\dot{\mathscr{C}}^\eta(\mathbb{R}^{n-1},{\mathbb{C}}^M)}
=\big\|u\big|_{\partial{\mathbb{R}}^n_{+}}\big\|
_{\dot{\mathscr{C}}^\eta(\mathbb{R}^{n-1},{\mathbb{C}}^M)}
\nonumber\\[4pt]
&\leq\|u\|_{\dot{\mathscr{C}}^\eta(\overline{\mathbb{R}^n_{+}},{\mathbb{C}}^M)}
=\|u\|_{\dot{\mathscr{C}}^\eta(\mathbb{R}^n_{+},{\mathbb{C}}^M)}
\leq C\|u\|^{(\eta,q)}_{**}
\nonumber\\[4pt]
&\leq C\|f\|^{(\eta,p)}_{*}.
\end{align}
Using \eqref{tavUgbvv-y5} and recycling part of the above estimate then yields
\begin{eqnarray}\label{tavUgbvv-y5iii}
\|f\|^{(\eta,p)}_{*}\leq C\|f\|_{\dot{\mathscr{C}}^\eta(\mathbb{R}^{n-1},{\mathbb{C}}^M)}
\leq C\|u\|^{(\eta,q)}_{**}.
\end{eqnarray}
At this stage, all desired properties of $u$ have been established.

\vskip 0.08in
\noindent{\tt Step~7.} {\it Assume that
$u\in{\mathscr{C}}^\infty({\mathbb{R}}^n_{+},{\mathbb{C}}^M)\cap
\dot{\mathscr{C}}^\eta({\mathbb{R}}^n_{+},{\mathbb{C}}^M)$ for some $\eta\in(0,1)$ satisfies 
$Lu=0$ in ${\mathbb{R}}^n_{+}$. Then}
\begin{eqnarray}\label{ayaf-tDCC.15aaa}
u\in\dot{\mathscr{C}}^\eta\big(\overline{{\mathbb{R}}^n_{+}},{\mathbb{C}}^M\big),\qquad
u\big|_{\partial{\mathbb{R}}^n_{+}}\in\dot{\mathscr{C}}^\eta\big({\mathbb{R}}^{n-1},{\mathbb{C}}^M\big)
\subset L^1\Big({\mathbb{R}}^{n-1}\,,\,\frac{1}{1+|x'|^n}\,dx'\Big)^M,
\end{eqnarray}
{\it and}
\begin{eqnarray}\label{ayaf-tDCC.15}
u(x',t)=\Big(P^L_t\ast\big(u\big|_{\partial{\mathbb{R}}^n_{+}}\big)\Big)(x'),\qquad
\forall\,(x',t)\in{\mathbb{R}}^n_{+}.
\end{eqnarray}

To justify this, observe that the two memberships listed in \eqref{ayaf-tDCC.15aaa}
are direct consequences of \eqref{eqn:BMO-decay-EEE.8T56.ww} while the 
inclusion in \eqref{ayaf-tDCC.15aaa} has been proved earlier (see \eqref{Gsyus}).

%
For each fixed $\varepsilon>0$ consider now the function 
\begin{eqnarray}\label{ayaf-tDCC.2bis}
u_\varepsilon(\cdot):=u(\cdot+\varepsilon e_n)\,\,\mbox{ in }\,\,{\mathbb{R}}^n_{+},
\end{eqnarray}
which satisfies 
\begin{eqnarray}\label{ayaf-tDCC.2}
\begin{array}{c}
u_\varepsilon\in{\mathscr{C}}^\infty\big(\overline{{\mathbb{R}}^n_{+}},{\mathbb{C}}^M\big),\qquad
Lu_\varepsilon=0\,\,\text{ in }\,\,{\mathbb{R}}^n_{+},\,\,\text{ and} 
\\[6pt]
u_\varepsilon\in\dot{\mathscr{C}}^\eta(\overline{{\mathbb{R}}^n_{+}},{\mathbb{C}}^M)
\,\,\text{ with }\,\,
\|u_\varepsilon\|_{\dot{\mathscr{C}}^\eta(\overline{{\mathbb{R}}^n_{+}},{\mathbb{C}}^M)}
\leq\|u\|_{\dot{\mathscr{C}}^\eta({\mathbb{R}}^n_{+},{\mathbb{C}}^M)}.
\end{array}
\end{eqnarray}
%
%
%
These and \eqref{eqn:BMO-decay-EEE.8T56-vv} yield
\begin{eqnarray}\label{ayaf-tDCC.3}
\sup_{x\in{\mathbb{R}}^n_{+}}|(\nabla u_\varepsilon)(x)|
\leq C(L,\eta,\varepsilon)
\|u\|_{\dot{\mathscr{C}}^\eta({\mathbb{R}}^n_{+},{\mathbb{C}}^M)}.
\end{eqnarray}
In light of \eqref{ayaf-tDCC.2} (which implies that $u_\varepsilon$ is bounded on bounded 
subsets of $\overline{{\mathbb{R}}^n_{+}}$\,), \eqref{ayaf-tDCC.3} allows us to conclude that
\begin{eqnarray}\label{ayaf-tDCC.4}
u_\varepsilon\in W^{1,2}_{\rm bd}({\mathbb{R}}^n_{+},{\mathbb{C}}^M).
\end{eqnarray}
Going further, set $f_\varepsilon(x'):=u(x',\varepsilon)$ for 
each $x'\in{\mathbb{R}}^{n-1}$. Then, on the one hand, 
\begin{eqnarray}\label{ayaf-tDCC.6}
|f_\varepsilon(x')-f_\varepsilon(y')|=|u(x',\varepsilon)-u(y',\varepsilon)|\leq
\|u\|_{\dot{\mathscr{C}}^\eta({\mathbb{R}}^n_{+},{\mathbb{C}}^M)}|x'-y'|^\eta,
\qquad\forall\,x',y'\in{\mathbb{R}}^{n-1}.
\end{eqnarray}
On the other hand, for all $x',y'\in{\mathbb{R}}^{n-1}$ we have
(with $\nabla'$ denoting the gradient in the first $n-1$ variables 
in ${\mathbb{R}}^{n-1}$)
\begin{align}\label{ayaf-tDCC.7}
|f_\varepsilon(x')-f_\varepsilon(y')|=&\,|u(x',\varepsilon)-u(y',\varepsilon)|
\leq|x'-y'|\sup_{z'\in[x',y']}|(\nabla'u)(z',\varepsilon)|
\nonumber\\[4pt]
=&\,|x'-y'|\sup_{z'\in[x',y']}|(\nabla'u_{\varepsilon/2})(z',\varepsilon/2)|
\nonumber\\[4pt]
\leq &\,|x'-y'|\,C(L,\eta,\varepsilon/2)
\|u\|_{\dot{\mathscr{C}}^\eta({\mathbb{R}}^n_{+},{\mathbb{C}}^M)},
\end{align}
where the last inequality uses \eqref{ayaf-tDCC.3}
(written for $u_{\varepsilon/2}$ and for $x=(z',\varepsilon/2)$). 
A logarithmically convex combination of \eqref{ayaf-tDCC.6}-\eqref{ayaf-tDCC.7} 
then proves that for every $\theta\in[\eta,1]$ there exists a finite constant 
$C(\theta,L,\varepsilon,u)>0$ such that
\begin{eqnarray}\label{ayaf-tDCC.8}
|f_\varepsilon(x')-f_\varepsilon(y')|\leq C(\theta,L,\varepsilon,u)|x'-y'|^\theta,
\qquad\forall\,x',y'\in{\mathbb{R}}^{n-1}.
\end{eqnarray}
Hence,
\begin{eqnarray}\label{ayaf-tDCC.9}
f_\varepsilon\in\bigcap_{\eta\leq\theta<1}\dot{\mathscr{C}}^\theta({\mathbb{R}}^{n-1},{\mathbb{C}}^M).
\end{eqnarray}
Combining \eqref{ayaf-tDCC.9}, \eqref{tavUgbvv-y5.PPP}, and Step~6 then shows that
the function 
\begin{eqnarray}\label{ayaf-tDCC.10}
w_\varepsilon(x',t):=(P^L_t\ast f_\varepsilon)(x'),\qquad
\forall\,(x',t)\in{\mathbb{R}}^n_{+}
\end{eqnarray}
satisfies 
\begin{eqnarray}\label{ayaf-tDCC.9ww}
w_\varepsilon\in{\mathscr{C}}^\infty({\mathbb{R}}^n_{+},{\mathbb{C}}^M),\qquad
Lw_\varepsilon=0\,\,\text{ in }\,\,{\mathbb{R}}^n_{+},\qquad
w_\varepsilon\in\bigcap_{\eta\leq\theta<1}
\dot{\mathscr{C}}^\theta\big(\overline{{\mathbb{R}}^n_{+}},{\mathbb{C}}^M\big).
\end{eqnarray}
In addition, from \eqref{ayaf-tDCC.8}-\eqref{ayaf-tDCC.10}, Step~5, and Step~1,
we see that $w_\varepsilon$ has the property that for each $\theta\in[\eta,1)$ 
there exists a finite constant $C(\theta,L,\varepsilon,u)>0$ such that
\begin{eqnarray}\label{ayaf-tDCC.11}
\Big[{\rm dist}\,\big(x,\partial{\mathbb{R}}^n_{+}\big)\Big]^{1-\theta}
\big|(\nabla w_\varepsilon)(x)\big|\leq C(\theta,L,\varepsilon,u),\qquad
\forall\,x\in{\mathbb{R}}^n_{+}.
\end{eqnarray}
In particular, choosing $\theta\in(\max\{\eta,1/2\},1)$, the latter property 
allows us to estimate for every $R>0$ 
\begin{align}\label{ayaf-tDCC.12}
\int_{B(0,R)\cap{\mathbb{R}}^n_{+}}|(\nabla w_\varepsilon)(x)|^2\,dx
\leq & C(\theta,L,\varepsilon,u)\int_{B(0,R)\cap{\mathbb{R}}^n_{+}}
\Big[{\rm dist}\,\big(x,\partial{\mathbb{R}}^n_{+}\big)\Big]^{2(\theta-1)}\,dx
\nonumber\\[4pt]
= &\,C(\theta,L,\varepsilon,R,u)<+\infty.
\end{align}
In concert with the last property in \eqref{ayaf-tDCC.9ww} (which goes to show that 
$w_\varepsilon$ is bounded on bounded subsets of $\overline{{\mathbb{R}}^n_{+}}$\,), 
this permits us to conclude that 
\begin{eqnarray}\label{ayaf-tDCC.11bis}
w_\varepsilon\in W^{1,2}_{\rm bd}({\mathbb{R}}^n_{+},{\mathbb{C}}^M).
\end{eqnarray}
From \eqref{ayaf-tDCC.2}, \eqref{ayaf-tDCC.4}, \eqref{ayaf-tDCC.10}, 
\eqref{ayaf-tDCC.9ww}, \eqref{ayaf-tDCC.11bis}, we then conclude that the function 
$v_\varepsilon:=u_\varepsilon-w_\varepsilon$ belongs to 
${\mathscr{C}}^\infty({\mathbb{R}}^n_{+},{\mathbb{C}}^M)$ and satisfies
\begin{eqnarray}\label{ayaf-tDCC.11bis.2}
v_\varepsilon\in W^{1,2}_{\rm bd}({\mathbb{R}}^n_{+},{\mathbb{C}}^M)
\cap\dot{\mathscr{C}}^\eta\big(\overline{{\mathbb{R}}^n_{+}},{\mathbb{C}}^M\big),\quad 
L v_\varepsilon=0\,\,\mbox{ in }\,\,{\mathbb{R}}^n_{+},\quad
v_\varepsilon\Big|_{\partial{\mathbb{R}}^n_{+}}=0.
\end{eqnarray}
Moreover, the H\"older property entails the growth estimate 
\begin{eqnarray}\label{ayaf-tDCC.15bbb-ii}
|v_\varepsilon(x)|\leq C(1+|x|^\eta),\qquad\forall\,x\in{\mathbb{R}}^n_{+},
\end{eqnarray}
where $C:=\max\big\{\|v_\varepsilon\|_{\dot{\mathscr{C}}^\eta({\mathbb{R}}^n_{+},{\mathbb{C}}^M)}\,,\,
|v_\varepsilon(0)|\big\}\in(0,\infty)$. 

The estimates near the boundary from Proposition~\ref{c1.2} then imply 
(by sending $\rho\to\infty$) that $v_\varepsilon\equiv 0$. This ultimately translates into saying that
for each $\varepsilon>0$ we have
\begin{eqnarray}\label{ayaf-tDCC.13}
u(x',t+\varepsilon)=(P_t^L\ast f_\varepsilon)(x'),\qquad
\forall\,(x',t)\in{\mathbb{R}}^n_{+}.
\end{eqnarray}
Let us also note that for each $\varepsilon>0$,
\begin{eqnarray}\label{ayaf-tDCC.14}
\sup_{y'\in{\mathbb{R}}^{n-1}}|f_\varepsilon(y')-u(y',0)|
=\sup_{y'\in{\mathbb{R}}^{n-1}}|u(y',\varepsilon)-u(y',0)|\leq
\varepsilon^\eta\|u\|_{\dot{\mathscr{C}}^\eta(\overline{{\mathbb{R}}^n_{+}},{\mathbb{C}}^M)}.
\end{eqnarray}
Hence, $f_\varepsilon\to u\big|_{\partial{\mathbb{R}}^n_{+}}$ as $\varepsilon\to 0^{+}$,
uniformly in ${\mathbb{R}}^{n-1}$. Since $P_t^L$ is absolutely integrable 
in ${\mathbb{R}}^{n-1}$, formula \eqref{ayaf-tDCC.15} then readily follows by passing
to limit $\varepsilon\to 0^{+}$ in \eqref{ayaf-tDCC.13}.

\vskip 0.08in
\noindent{\tt Step~8.} {\it Assume that}
\begin{equation}\label{Dir-BVP-BMO-frTT}
u\in{\mathscr{C}}^\infty(\mathbb{R}^{n}_{+},{\mathbb{C}}^M),\quad
Lu=0\,\,\mbox{ in }\,\,\,\,\mathbb{R}^{n}_{+},\quad\,
\|u\|^{(\eta,q)}_{**}<\infty,\quad\,
u\big|^{{}^{\rm n.t.}}_{\partial{\mathbb{R}}^{n}_{+}}=0.
\end{equation}
{\it Then necessarily $u\equiv 0$ in ${\mathbb{R}}^n_{+}$.} 

\vskip 0.08in
This is a consequence of Steps~3, 4, and 7. 

\vskip 0.08in
\noindent{\tt Step~9.} {\it The end-game in the proof of Theorem~\ref{them:BMO-Dir-frac}.}

\vskip 0.08in
Existence for the Dirichlet boundary value problem \eqref{Dir-BVP-BMO-frac} 
follows from Step~6. Uniqueness of the Dirichlet boundary value 
problem \eqref{Dir-BVP-BMO-frac} is seen from Step~8.

\section{Calder\'on-Zygmund operators on {\rm VMO}}
\setcounter{equation}{0}
\label{NEWSSS}

The main goal of this section is to develop the machinery which eventually permits us to 
prove Theorem~\ref{i87hbBV}. 

We begin by recalling (cf., e.g., \cite[Theorem~1, p.\,91]{Stein93}) that for each 
$q\in(0,\infty)$, the Hardy space $H^q({\mathbb{R}}^n)$ consists of tempered distributions 
$g$ in ${\mathbb{R}}^n$ with the property that their radial maximal function, defined as 
$({\mathcal{M}}_{\rm rad}\,g)(x):=\sup_{t>0}|(\Phi_t\ast g)(x)|$ for each 
$x\in{\mathbb{R}}^n$ (where $\Phi$ is a fixed background Schwartz function in 
${\mathbb{R}}^n$ with $\int_{{\mathbb{R}}^n}\Phi\,d{\mathscr{L}}^n\not=0$
and $\Phi_t(x):=t^{-n}\Phi(x/t)$ for each $t>0$ and $x\in{\mathbb{R}}^n$), 
satisfies
\begin{equation}\label{iygFFF}
\|g\|_{H^q({\mathbb{R}}^n)}:=\|{\mathcal{M}}_{\rm rad}\,g\|_{L^q({\mathbb{R}}^n)}<+\infty.
\end{equation}
It is then well-known that 
\begin{equation}\label{iygFFF.teee}
H^q({\mathbb{R}}^n)=L^q({\mathbb{R}}^n)\,\,\text{ if }\,\,1<q<\infty.
\end{equation}
Another classical result in harmonic analysis 
(cf., e.g., \cite[Theorem~2, p.\,107]{Stein93}, or \cite[Theorem~4.10, p.\,283]{GCRF85}) 
is the fact that distributions belonging to $H^q({\mathbb{R}}^n)$ with $q\in(0,1]$ 
admit atomic decompositions. To elaborate on this aspect, having fixed $r\in[1,\infty]$, 
call a Lebesgue measurable function $a:\mathbb{R}^{n}\rightarrow\mathbb{C}$ a $(q,r)$-atom 
provided there exists a cube $Q\subset\mathbb{R}^{n}$ such that the following localization, normalization, and cancellation properties hold:
\begin{equation}\label{defi-atom-NEW}
\mathrm{supp}\,a\subseteq Q,\qquad\|a\|_{L^r(\mathbb{R}^{n})}
\leq |Q|^{(1/r)-(1/q)},\quad\text{ and }\,\,\int_{\mathbb{R}^{n}}x^\alpha a(x)\,dx=0,
\end{equation}
for every multi-index $\alpha\in{\mathbb{N}}_0^n$ with $|\alpha|\leq n\big(\frac{1}{q}-1\big)$.
Then, given $q\in(0,1]$ and $r\in[1,\infty]$ with $q<r$, any $g\in H^q({\mathbb{R}}^n)$
may be written as $g=\sum_{j\in{\mathbb{N}}}\lambda_j a_j$ in $H^q({\mathbb{R}}^n)$
for a numerical sequence $\{\lambda_j\}_{j\in{\mathbb{N}}}$ satisfying 
$\big(\sum_{j\in{\mathbb{N}}}|\lambda_j|^q\big)^{1/q}\approx\|g\|_{H^q({\mathbb{R}}^n)}$ 
and with each $a_j$ a $(q,r)$-atom. 
In particular, this implies that if for each $q\in(0,1]$ and $r\in[1,\infty]$ 
with $q<r$ we let $H^{q,r}_{\rm fin}({\mathbb{R}}^n)$ stand for the vector space consisting 
of all finite linear combinations of $(q,r)$-atoms, then 
\begin{equation}\label{dkegfs.7}
\begin{array}{c}
H^{q,r}_{\rm fin}({\mathbb{R}}^n)=\Big\{f\in L^r_{\rm comp}({\mathbb{R}}^n):\,
\int_{\mathbb{R}^{n}}x^\alpha f(x)\,dx=0\,\,\text{ if }\,\,|\alpha|\leq n\big(\frac{1}{q}-1\big)
\Big\},
\\[10pt]
\text{$H^{q,r}_{\rm fin}({\mathbb{R}}^n)\subset H^q({\mathbb{R}}^n)$ densely,
and $H^{s,r}_{\rm fin}({\mathbb{R}}^n)\subseteq H^{q,r}_{\rm fin}({\mathbb{R}}^n)$
if $0<s\leq q$.}
\end{array}
\end{equation}
It turns out that if a given distribution $g\in H^q({\mathbb{R}}^n)$ with $0<q\leq 1$ 
additionally belongs to a Lebesgue space, or another Hardy space, then one may perform an
atomic decomposition which converges to $g$ simultaneously in all the said spaces. 
This is made precise in the lemma below. 

\begin{lemma}\label{LL-Hhna}
Suppose $0<p<\infty$, $0<q\leq 1$, $r\in(1,\infty)$ with $r\geq p$, 
and $0<s\leq\min\{p,q\}$ are given. 
Then for any $g\in H^q({\mathbb{R}}^n)\cap H^p({\mathbb{R}}^n)$ one can find 
a sequence $\{g_N\}_{N\in{\mathbb{N}}}\subset H^{s,r}_{\rm fin}({\mathbb{R}}^n)$ 
which converges to $g$ both in $H^q({\mathbb{R}}^n)$ and in $H^p({\mathbb{R}}^n)$.
\end{lemma}

\begin{proof}
Following the suggestion on \cite[p.\,948]{PiVe92} 
(where the treatment in the case $p=2$ and $q=1$ is outlined), we revisit the technology used 
to perform atomic decompositions of distributions in $H^q({\mathbb{R}}^n)$ presented 
in \cite[pp.\,345-348]{Torchinsky}, from which we borrow notation and results
(cf. also the proof of \cite[Theorem~4.6, pp.\,278-282]{GCRF85}). 
The starting point is the consideration of a function $\psi$ as in 
\cite[Lemma~1.7, p.\,345]{Torchinsky}. Among other things, 
\begin{equation}\label{7h7ggg.333}
\psi\in{\mathscr{C}}^\infty_0({\mathbb{R}}^n),\quad
\int_{{\mathbb{R}}^n}x^\alpha\psi(x)\,dx=0\,\,\text{ if }\,\,
|\alpha|\leq n\big(\tfrac{1}{s}-1\big),\,\,\text{ and $\psi$ is radial}.
\end{equation}
The latter condition implies that $\widehat{\psi}$, the Fourier transform 
of $\psi$ (normalized as in \cite{DM}), is also radial. Hence, there exists a
a real-valued function $\widetilde{\psi}$ defined on $[0,\infty)$ such that 
$\widehat{\psi}(x)=\widetilde{\psi}(|x|)$ for each $x\in{\mathbb{R}}^n$. 
Note that $\widetilde{\psi}$ necessarily satisfies
\begin{equation}\label{7h7ggg.333.bbb}
\widetilde{\psi}\in{\mathscr{C}}^\infty\big([0,\infty)\big),\quad
\widetilde{\psi}(0)=0,\,\,\text{ and $\widetilde{\psi}$ has rapid decay at infinity}.
\end{equation}
For each $t>0$ define $\psi_t(x):=t^{-n}\psi(x/t)$ for every $x\in{\mathbb{R}}^n$.

Fix now an arbitrary distribution $g\in H^q({\mathbb{R}}^n)$.
From \cite[Proposition~1.9, p.\,346]{Torchinsky} and the formula at the bottom 
of page 347 in \cite{Torchinsky} we know that there exists 
\begin{equation}\label{86YGG}
\parbox{8.30cm}{a partition $\{T_{j,k}\}_{j,k}$
of ${\mathbb{R}}^{n+1}_{+}$ consisting of pairwise disjoint measurable sets 
which depend only on $g$}
\end{equation}
such that, if $P^\Delta$ is the Poisson kernel for the Laplacian in ${\mathbb{R}}^{n+1}$
(cf. \eqref{Uah-TTT} with $n$ replaced by $n+1$) 
and $P^{\Delta}_t(x):=t^{-n}P^{\Delta}(x/t)$ for each $x\in{\mathbb{R}}^n$ and $t>0$, 
then the following properties hold:
\begin{enumerate}
\item[(a)] For each $j,k$, the function
\begin{equation}\label{7h7ggg.444}
a_{j,k}(x):=\int_{T_{j,k}}\partial_t\big(P^{\Delta}_t\ast g)(y)\psi_t(y-x)\,dy\,dt,
\qquad x\in{\mathbb{R}}^n,
\end{equation}
is a multiple of an $(s,r)$-atom. 

\item[(b)] Moreover, each $a_{j,k}$ is also a multiple of an $(q,r)$-atom, and if we write 
\begin{equation}\label{u76gggfWW}
\text{$a_{j,k}=\lambda_{j,k}\widetilde{a}_{j,k}$ for some $\lambda_{j,k}\in{\mathbb{C}}$ 
and $\widetilde{a}_{j,k}$ a genuine $(q,r)$-atom,}
\end{equation}
then there exists a constant $C>0$, independent of $g$, with the property that 
\begin{equation}\label{7h7ggg.5tEE}
\Big(\sum_{j,k}|\lambda_{j,k}|^q\Big)^{1/q}\leq C\|g\|_{H^q({\mathbb{R}}^n)}.
\end{equation}

\item[(c)] One has
\begin{equation}\label{7h7ggg.555}
g=\sum_{j,k}a_{j,k}\,\,\text{ in }\,\,H^q({\mathbb{R}}^n).
\end{equation}
\end{enumerate}

If we now set 
\begin{equation}\label{7h7ggg.555.hhbb}
g_N:=\sum_{j+k\leq N}a_{j,k}\,\,\text{ for each }\,\,N\in{\mathbb{N}},
\end{equation}
then each $g_N$ belongs to 
$H^{s,r}_{\rm fin}({\mathbb{R}}^n)\subset H^{q,r}_{\rm fin}({\mathbb{R}}^n)$, 
and \eqref{7h7ggg.555} implies 
\begin{equation}\label{7h7ggg.555.hhbb.2211}
\lim_{N\to\infty}g_N=g\,\,\text{ in }\,\,H^q({\mathbb{R}}^n). 
\end{equation}

Next, if $0<p\leq 1$ and $g\in H^q({\mathbb{R}}^n)\cap H^p({\mathbb{R}}^n)$,
then running the same argument as in  \eqref{86YGG}-\eqref{7h7ggg.555.hhbb.2211}
(in which we now view $g$ as a distribution in $H^p({\mathbb{R}}^n)$) leads to 
the conclusion that the sequence 
$\{g_N\}_{N\in{\mathbb{N}}}\subset H^{s,r}_{\rm fin}({\mathbb{R}}^n)$ constructed earlier 
in \eqref{7h7ggg.555.hhbb} also satisfies
\begin{equation}\label{7h7ggg.555.hhbb.2211.CCC}
\lim_{N\to\infty}g_N=g\,\,\text{ in }\,\,H^p({\mathbb{R}}^n). 
\end{equation}
The lemma is therefore established in the case when $p\in(0,1]$.

Henceforth, consider the case when $1<p<\infty$, i.e., assume 
$g\in H^q({\mathbb{R}}^n)\cap L^p({\mathbb{R}}^n)$ (cf. \eqref{iygFFF.teee}). 
The goal is to show that, with $g_N$ as in \eqref{7h7ggg.555.hhbb}, we also have
\begin{equation}\label{55.hhbb.2211-GGG}
\lim_{N\to\infty}g_N=g\,\,\text{ in }\,\,L^p({\mathbb{R}}^n). 
\end{equation}
This requires some preparation. Since the radial maximal function of $g$ is 
pointwise dominated by a multiple of the Hardy-Littlewood maximal function 
of $g$ (cf., e.g., \cite[(16), p.\,57]{Stein93}), it follows that 
${\mathcal{M}}_{\rm rad}\,g\in L^p({\mathbb{R}}^n)\cap L^q({\mathbb{R}}^n)$.
Given that in the current case $q\leq 1<p$, 
this forces ${\mathcal{M}}_{\rm rad}\,g\in L^1({\mathbb{R}}^n)$, hence 
$g\in H^1({\mathbb{R}}^n)$. With this in hand, the same reasoning that has led to 
\eqref{7h7ggg.555.hhbb.2211} now gives $\lim_{N\to\infty}g_N=g$ in $H^1({\mathbb{R}}^n)$.
This further implies $\lim_{N\to\infty}g_N=g$ in $L^1({\mathbb{R}}^n)$, hence also 
(by eventually restricting the index $N$ to a subsequence of ${\mathbb{N}}$)
\begin{equation}\label{Yrsf}
\lim_{N\to\infty}g_N(x)=g(x)\quad\text{ for a.e. $x\in{\mathbb{R}}^n$.}
\end{equation}
Consequently, if we set 
\begin{equation}\label{7h7ggg.555.hhbb.2}
D_N:=\bigcup_{j+k\leq N}T_{j,k}\,\,\text{ for each }\,\,N\in{\mathbb{N}},
\end{equation}
then for each $M,N\in{\mathbb{N}}$ with $N<M$ we have
\begin{align}\label{7h7ggg.666-resxC}
g_M(x)-g_N(x)=\int_{D_M\setminus D_N}\partial_t\big(P^{\Delta}_t\ast g)(y)\psi_t(y-x)\,dy\,dt,
\quad x\in{\mathbb{R}}^n.
\end{align}
Hence, if $p'$ is such that $1/p+1/p'=1$, for each function $h\in L^{p'}({\mathbb{R}}^n)$ 
and $M,N\in{\mathbb{N}}$ such that $N<M$ we may write
\begin{align}\label{7h7ggg.666-XXX}
\int_{{\mathbb{R}}^n}(g_M-g_N)(x)h(x)\,dx
=\int_{D_M\setminus D_N}\partial_t\big(P^{\Delta}_t\ast g)(y)(\psi_t\ast h)(y)\,dy\,dt.
\end{align}
Next, define 
\begin{align}\label{7h7g6-tf-Af.1}
\begin{array}{c}
G(y,t):=t\,\partial_t\big(P^{\Delta}_t\ast g)(y),\quad
F(y,t):=(\psi_t\ast h)(y),
\\[4pt]
\text{and }\,\,G_N(y,t):={\mathbf{1}}_{D_N}(y,t)\cdot G(y,t),
\end{array}
\end{align}
for each $(y,t)\in{\mathbb{R}}^{n+1}_{+}$ and $N\in{\mathbb{N}}$. 
With the Lusin area-function ${\mathcal{A}}$ defined as in \eqref{eq:def:AF}
(with $n$ replaced by $n+1$), from \eqref{7h7ggg.666-XXX}, Lemma~\ref{lexbdg} 
(used with $n$ replaced by $n+1$), and H\"older's inequality we see that
\begin{align}\label{eqn:CMS-XXX}
\Big|\int_{{\mathbb{R}}^n}(g_M-g_N)(x)h(x)\,dx\Big|
\leq C\|\mathcal{A}F\|_{L^{p'}({\mathbb{R}}^n)}\|\mathcal{A}(G_M-G_N)\|_{L^{p}({\mathbb{R}}^n)}.
\end{align}
We claim that there exists a finite constant $C>0$, independent of $h$, such that 
\begin{align}\label{eqn:CMS-XXX.a}
\|\mathcal{A}F\|_{L^{p'}({\mathbb{R}}^n)}\leq C\|h\|_{L^{p'}({\mathbb{R}}^n)},
\end{align}
and that 
\begin{align}\label{eqn:CMS-XXX.b}
\mathcal{A}(G-G_N)\to 0\,\,\text{ in }\,\,L^{p}({\mathbb{R}}^n)\,\,\text{ as }\,\,N\to\infty.
\end{align}
Granted these, we may then conclude from \eqref{eqn:CMS-XXX} that 
\begin{align}\label{7h7ggg.666.eee.ddd}
\|g_M-g_N\|_{L^p({\mathbb{R}}^n)} &=
\sup_{h\in L^{p'}({\mathbb{R}}^n),\,\|h\|_{L^{p'}({\mathbb{R}}^n)}\leq 1}
\Big|\int_{{\mathbb{R}}^n}(g_M-g_N)(x)h(x)\,dx\Big|
\nonumber\\[4pt]
&\leq C\|\mathcal{A}(G_M-G_N)\|_{L^{p}({\mathbb{R}}^n)}
\\[4pt]
&\leq C\|\mathcal{A}(G_M-G)\|_{L^{p}({\mathbb{R}}^n)}+C\|\mathcal{A}(G_N-G)\|_{L^{p}({\mathbb{R}}^n)}
\to 0\,\,\text{ as }\,\,M,N\to\infty,
\nonumber
\end{align}
thus, $\{g_N\}_{N\in{\mathbb{N}}}$ is Cauchy in $L^{p}({\mathbb{R}}^n)$. The latter combined with
\eqref{Yrsf} yields \eqref{55.hhbb.2211-GGG}. 

Turning our attention to \eqref{eqn:CMS-XXX.a} we first observe that 
\begin{align}\label{eqn:CMS-XXX.a.2}
\|\mathcal{A}F\|_{L^{p'}({\mathbb{R}}^n)}
=C\|S_\Theta h\|_{L^{p'}({\mathbb{R}}^n)}
\end{align}
where $S_\Theta$ is as in \eqref{bhxdhyswt} (with $n$ replaced by $n+1$) corresponding to
\begin{align}\label{eqn:CMS-XXX.a.3}
(\Theta h)(y,t):=\int_{{\mathbb{R}}^n}\psi_t(y-z)h(z)\,dz,
\qquad\forall\,(y,t)\in{\mathbb{R}}^{n+1}_{+}.
\end{align}
Since the kernel $\theta(y,t;z):=\psi_t(y-z)$ of the operator $\Theta$ 
satisfies (with $\varepsilon=1$ and $n$ replaced by $n+1$) 
\eqref{SFE-est-theta}, \eqref{SFE-vanish-theta}, and \eqref{SFE-est-theta-bis},
the hypotheses of Proposition~\ref{prop:SFE-early} are satisfied, and 
\eqref{hdgswf.DD} gives that 
$\|S_\Theta h\|_{L^{p'}({\mathbb{R}}^n)}\leq C\|h\|_{L^{p'}({\mathbb{R}}^n)}$. 
The estimate claimed in \eqref{eqn:CMS-XXX.a} now follows from this and 
\eqref{eqn:CMS-XXX.a.2}. 

Finally, consider the claim made in \eqref{eqn:CMS-XXX.b}. For starters, observe that
\begin{align}\label{eqn:CMS-XXX.b.1}
0\leq\mathcal{A}G_N\leq\mathcal{A}G\,\,\text{ in }\,\,{\mathbb{R}}^n,
\,\,\text{ for each }\,\,N\in{\mathbb{N}}.
\end{align}
Also, 
\begin{align}\label{eqn:CMS-XXX.a.2-TT}
\|\mathcal{A}G\|_{L^{p}({\mathbb{R}}^n)}=\|S_\Theta g\|_{L^{p}({\mathbb{R}}^n)}
\end{align}
where now the operator $\Theta$ is taken to be  
\begin{align}\label{eqn:CMS-XXX.a.3-TT}
(\Theta g)(y,t):=\int_{{\mathbb{R}}^n}t\partial_t\big(P^\Delta_t(y-z)\big)g(z)\,dz,
\qquad\forall\,(y,t)\in{\mathbb{R}}^{n+1}_{+}.
\end{align}
Since its kernel, $\theta(y,t;z):=t\partial_t\big(P^\Delta_t(y-z)\big)$ once again 
satisfies (with $\varepsilon=1$ and $n$ replaced by $n+1$) \eqref{SFE-est-theta}, 
\eqref{SFE-vanish-theta}, and \eqref{SFE-est-theta-bis}, Proposition~\ref{prop:SFE-early} 
applies and \eqref{hdgswf.DD} guarantees that  
$\|S_\Theta g\|_{L^{p}({\mathbb{R}}^n)}\leq C\|g\|_{L^{p}({\mathbb{R}}^n)}$. 
Together with \eqref{eqn:CMS-XXX.a.2-TT}, this shows that 
\begin{align}\label{eqn:CMS-XXX.a.2-TT.YY}
\mathcal{A}G\in L^{p}({\mathbb{R}}^n).
\end{align}
In particular, there exists a Lebesgue measurable set $E\subseteq{\mathbb{R}}^n$ 
satisfying 
\begin{align}\label{eqn:CMS-XXX.a.2-TT.YY.2}
\text{${\mathscr{L}}^n(E)=0$ and $(\mathcal{A}G)(x)<+\infty$
for each $x\in{\mathbb{R}}^n\setminus E$}.
\end{align}
For each fixed $x\in{\mathbb{R}}^n\setminus E$, we have
\begin{align}\label{eqn:CMS-XXX.a.3-TT.jj}
\big(\mathcal{A}(G-G_N)\big)(x)=\Big(\int_{\Gamma_\kappa(x)}
{\mathbf{1}}_{{\mathbb{R}}^{n+1}_{+}\setminus D_N}(y,t)|G(y,t)|^2\frac{dy\,dt}{t^{n+1}}\Big)^{1/2},
\end{align}
and the fact that $(\mathcal{A}G)(x)<+\infty$ implies that 
\begin{align}\label{eqn:CMS-XXX.a.3-TT.jj.2}
0\leq{\mathbf{1}}_{{\mathbb{R}}^{n+1}_{+}\setminus D_N}|G|
\leq|G|\in L^2\Big(\Gamma_\kappa(x)\,,\,\frac{dy\,dt}{t^{n+1}}\Big)
\end{align}
Since, clearly, ${\mathbf{1}}_{{\mathbb{R}}^{n+1}_{+}\setminus D_N}|G|$ converges pointwise 
to zero as $N\to\infty$, Lebesgue's Dominated Convergence Theorem applies and gives that 
$\big(\mathcal{A}(G-G_N)\big)(x)\to 0$ as $N\to\infty$. With this in hand, one more application 
of Lebesgue's Dominated Convergence Theorem (bearing in mind \eqref{eqn:CMS-XXX.a.2-TT.YY},
\eqref{eqn:CMS-XXX.b.1}, and the fact that ${\mathscr{L}}^n(E)=0$) proves \eqref{eqn:CMS-XXX.b}.
This completes the proof of Lemma~\ref{LL-Hhna}.
\end{proof}

Having disposed of Lemma~\ref{LL-Hhna}, we now proceed to show that 
the $\widetilde{\rm BMO}$-$H^1$ duality pairing is compatible with integral pairing 
for dual Lebesgue spaces, as made precise in the next lemma.
As a preamble, we recall the specific nature of the duality pairing 
$\langle\cdot,\cdot\rangle$ between $\widetilde{\rm BMO}({\mathbb{R}}^n)$ and 
$H^1({\mathbb{R}}^n)$. Concretely, \cite[Theorem~1, p.\,142]{Stein93} gives that 
for each $r\in(1,\infty]$ 
\begin{equation}\label{dk-TTFF}
\langle[f],g\rangle=\int_{{\mathbb{R}}^n}fg\,d{\mathscr{L}}^n,\qquad
\forall\,f\in{\rm BMO}({\mathbb{R}}^n),\,\,\forall\,g\in H^{1,r}_{\rm fin}({\mathbb{R}}^n),
\end{equation}
which further implies that whenever $f\in{\rm BMO}({\mathbb{R}}^n)$, 
$g\in H^1({\mathbb{R}}^n)$, and $\{g_N\}_{N\in{\mathbb{N}}}\subseteq 
H^{1,r}_{\rm fin}({\mathbb{R}}^n)$ is such that $\lim_{N\to\infty}g_N=g$ in 
$H^1({\mathbb{R}}^n)$, then 
\begin{equation}\label{dk-TTFF.2}
\lim_{N\to\infty}\int_{{\mathbb{R}}^n}fg_N\,d{\mathscr{L}}^n
\,\,\text{ exists and equals }\,\,\langle[f],g\rangle.
\end{equation}
As a consequence, whenever $f\in{\rm BMO}({\mathbb{R}}^n)$, and
$g\in H^1({\mathbb{R}}^n)$ may be written as $g=\sum_{j\in{\mathbb{N}}}\lambda_j a_j$ 
in $H^1({\mathbb{R}}^n)$ for a numerical sequence $\{\lambda_j\}_{j\in{\mathbb{N}}}$ satisfying 
$\sum_{j\in{\mathbb{N}}}|\lambda_j|<\infty$ and with each $a_j$ a $(1,r)$-atom, we may write
\begin{equation}\label{dk-TTFF.2-pj}
\langle[f],g\rangle=\sum_{j=1}^\infty\lambda_j\int_{{\mathbb{R}}^n}f a_j\,d{\mathscr{L}}^n.
\end{equation}

\begin{lemma}\label{U-gGg}
Consider $f\in{\rm BMO}({\mathbb{R}}^n)\cap L^{p'}({\mathbb{R}}^n)$ and 
$g\in H^1({\mathbb{R}}^n)\cap L^p({\mathbb{R}}^n)$ where 
$p,p'\in(1,\infty)$ are such that $1/p+1/p'=1$. Then, with $\langle\cdot,\cdot\rangle$ 
denoting the $\widetilde{\rm BMO}$-$H^1$ duality bracket, one has
\begin{equation}\label{dkegfs.8.96544332}
\langle[f],g\rangle=\int_{{\mathbb{R}}^n}fg\,d{\mathscr{L}}^n.
\end{equation}
\end{lemma}

\begin{proof}
Having picked $r\in[p,\infty)$, Lemma~\ref{LL-Hhna} guarantees the existence of a 
sequence $\{g_N\}_{N\in{\mathbb{N}}}\subseteq H^{1,r}_{\rm fin}({\mathbb{R}}^n)$
such that $\lim_{N\to\infty}g_N=g$ both in $H^1({\mathbb{R}}^n)$ and in 
$L^p({\mathbb{R}}^n)$. Then, thanks to \eqref{dk-TTFF.2} and the $L^p$-$L^{p'}$ 
duality, we have
\begin{equation}\label{dkegfs.8.96544332.AAA}
\langle[f],g\rangle=\lim_{N\to\infty}\int_{{\mathbb{R}}^n}fg_N\,d{\mathscr{L}}^n
=\int_{{\mathbb{R}}^n}fg\,d{\mathscr{L}}^n,
\end{equation}
which establishes \eqref{dkegfs.8.96544332}. 
\end{proof}

Recall from \cite[Theorem~5.30, p.\,307]{GCRF85} that 
\begin{equation}\label{WSWS-8htggg.1}
\big(H^q({\mathbb{R}}^n)\big)^\ast=\dot{\mathscr{C}}^\eta({\mathbb{R}}^n)\big/_{\sim},\quad
\frac{n}{n+1}<q<1,\quad\eta=n\Big(\frac{1}{q}-1\Big)\in(0,1).
\end{equation}
The manner in which the H\"older-Hardy duality is understood in \eqref{WSWS-8htggg.1} is 
similar to \eqref{dk-TTFF}-\eqref{dk-TTFF.2}. Specifically, with $(\cdot,\cdot)$ 
denoting the said H\"older-Hardy duality bracket, $q,\eta$ as in \eqref{WSWS-8htggg.1},
and with $r\in(1,\infty]$ fixed, we have
\begin{equation}\label{dk-TTFF-HHH}
([f],g)=\int_{{\mathbb{R}}^n}fg\,d{\mathscr{L}}^n,\qquad
\forall\,f\in\dot{\mathscr{C}}^\eta({\mathbb{R}}^n),\,\,
\forall\,g\in H^{q,r}_{\rm fin}({\mathbb{R}}^n).
\end{equation}
This further implies that whenever $f\in\dot{\mathscr{C}}^\eta({\mathbb{R}}^n)$, 
$g\in H^q({\mathbb{R}}^n)$, and $\{g_N\}_{N\in{\mathbb{N}}}\subseteq 
H^{q,r}_{\rm fin}({\mathbb{R}}^n)$ is such that $\lim_{N\to\infty}g_N=g$ in 
$H^q({\mathbb{R}}^n)$, then 
\begin{equation}\label{dk-TTFF.2-HHH}
\lim_{N\to\infty}\int_{{\mathbb{R}}^n}fg_N\,d{\mathscr{L}}^n
\,\,\text{ exists and equals }\,\,([f],g).
\end{equation}
In particular, whenever $f\in\dot{\mathscr{C}}^\eta({\mathbb{R}}^n)$, and
$g\in H^q({\mathbb{R}}^n)$ may be written as $g=\sum_{j\in{\mathbb{N}}}\lambda_j a_j$ 
in $H^q({\mathbb{R}}^n)$ for a numerical sequence $\{\lambda_j\}_{j\in{\mathbb{N}}}$ satisfying 
$\sum_{j\in{\mathbb{N}}}|\lambda_j|^q<\infty$ and with each $a_j$ a $(q,r)$-atom, we have
\begin{equation}\label{dk-TTFF.2-pj-uu}
([f],g)=\sum_{j=1}^\infty\lambda_j\int_{{\mathbb{R}}^n}f a_j\,d{\mathscr{L}}^n.
\end{equation}

In a parallel fashion to Lemma~\ref{U-gGg} we have the following compatibility result.

\begin{lemma}\label{U-gGg-HHH}
Suppose $f\in\dot{\mathscr{C}}^\eta({\mathbb{R}}^n)\cap L^{p'}({\mathbb{R}}^n)$ and 
$g\in H^q({\mathbb{R}}^n)\cap L^p({\mathbb{R}}^n)$ where 
$p,p'\in(1,\infty)$ are such that $1/p+1/p'=1$, while
$q\in\big(\tfrac{n}{n+1}\,,\,1\big)$ and $\eta=n\big(\tfrac{1}{q}-1\big)\in(0,1)$.
Then, with $(\cdot,\cdot)$ denoting the $\dot{\mathscr{C}}^\eta\big/_{\sim}$-$H^q$ 
duality bracket, there holds
\begin{equation}\label{dkegfs.8.96544332-HHH}
([f],g)=\int_{{\mathbb{R}}^n}fg\,d{\mathscr{L}}^n.
\end{equation}
\end{lemma}

\begin{proof}
Choose some $r\in[p,\infty)$. From Lemma~\ref{LL-Hhna} we then know that there exists 
a sequence $\{g_N\}_{N\in{\mathbb{N}}}\subseteq H^{q,r}_{\rm fin}({\mathbb{R}}^n)$
such that $\lim_{N\to\infty}g_N=g$ both in $H^q({\mathbb{R}}^n)$ and in 
$L^p({\mathbb{R}}^n)$. By virtue of \eqref{dk-TTFF.2-HHH} and the $L^p$-$L^{p'}$ 
duality we may then write
\begin{equation}\label{dkegfs.8.96544332.AAA-fg}
([f],g)=\lim_{N\to\infty}\int_{{\mathbb{R}}^n}fg_N\,d{\mathscr{L}}^n
=\int_{{\mathbb{R}}^n}fg\,d{\mathscr{L}}^n,
\end{equation}
which proves \eqref{dkegfs.8.96544332-HHH}.
\end{proof}

There is another compatibility result, discussed in the next lemma, 
which is going to be relevant for us shortly. 

\begin{lemma}\label{U-gGg-HHH.2}
Suppose $f\in\dot{\mathscr{C}}^\eta({\mathbb{R}}^n)\cap{\rm BMO}({\mathbb{R}}^n)$ 
and $g\in H^q({\mathbb{R}}^n)\cap H^1({\mathbb{R}}^n)$ where 
$q\in\big(\tfrac{n}{n+1}\,,\,1\big)$ and $\eta\in(0,1)$ are related via
$\eta=n\big(\tfrac{1}{q}-1\big)$. Then, with $(\cdot,\cdot)$ and 
$\langle\cdot,\cdot\rangle$ denoting, respectively,
the $\dot{\mathscr{C}}^\eta\big/_{\sim}$-$H^q$ and $\widetilde{\rm BMO}$-$H^1$ 
duality brackets, there holds
\begin{equation}\label{dkegfs.8.96544332-HHH.2}
([f],g)=\langle[f],g\rangle.
\end{equation}
\end{lemma}

\begin{proof}
Fix some $r\in(1,\infty)$ and once again invoke Lemma~\ref{LL-Hhna} to produce a sequence 
$\{g_N\}_{N\in{\mathbb{N}}}\subseteq H^{q,r}_{\rm fin}({\mathbb{R}}^n)$
such that $\lim_{N\to\infty}g_N=g$ both in $H^1({\mathbb{R}}^n)$ and in 
$H^q({\mathbb{R}}^n)$. Then
\begin{align}\label{dkegfs.8.96544332-HHH.2-XXX}
([f],g)=\lim_{N\to\infty}\int_{{\mathbb{R}}^n}fg_N\,d{\mathscr{L}}^n
=\langle[f],g\rangle,
\end{align}
where the first equality is provided by \eqref{dk-TTFF.2-HHH} and 
the second equality is given by \eqref{dk-TTFF.2}.
\end{proof}

Finally, we record a compatibility result for the H\"older-Hardy duality bracket 
considered for two choices of the parameters involved in the definitions of these spaces. 

\begin{lemma}\label{U-gGg-HHH.2-grf}
Assume $f\in\dot{\mathscr{C}}^{\eta_1}({\mathbb{R}}^n)\cap
\dot{\mathscr{C}}^{\eta_2}({\mathbb{R}}^n)$ and 
$g\in H^{q_1}({\mathbb{R}}^n)\cap H^{q_2}({\mathbb{R}}^n)$ where 
$q_j\in\big(\tfrac{n}{n+1}\,,\,1\big)$ and $\eta_j\in(0,1)$ are related via
$\eta_j=n\big(\tfrac{1}{q_j}-1\big)$ for $j=1,2$. Then, if for each $j=1,2$ one denotes by 
$(\cdot,\cdot)_j$ the $\dot{\mathscr{C}}^{\eta_j}\big/_{\sim}$-$H^{q_j}$ duality bracket, 
there holds
\begin{equation}\label{dkegfs.8.96544332-HHH.2-ttt}
([f],g)_1=([f],g)_2.
\end{equation}
\end{lemma}

\begin{proof}
Pick some $r\in(1,\infty)$ and introduce $q:=\min\{q_1,q_2\}$. By once more 
invoking Lemma~\ref{LL-Hhna}, we can find a sequence 
$\{g_N\}_{N\in{\mathbb{N}}}\subseteq H^{q,r}_{\rm fin}({\mathbb{R}}^n)$
such that $\lim_{N\to\infty}g_N=g$ both in $H^{q_1}({\mathbb{R}}^n)$ and in 
$H^{q_2}({\mathbb{R}}^n)$. Bearing in mind that each $g_N$ belongs to both 
$H^{q_1,r}_{\rm fin}({\mathbb{R}}^n)$ and $H^{q_2,r}_{\rm fin}({\mathbb{R}}^n)$
(cf. \eqref{dkegfs.7}), we may write 
\begin{align}\label{dkegfs.8.96544332-HHH.2-XXX-uu}
([f],g)_1=\lim_{N\to\infty}\int_{{\mathbb{R}}^n}fg_N\,d{\mathscr{L}}^n=([f],g)_2,
\end{align}
where both equalities are implied by \eqref{dk-TTFF-HHH}.
\end{proof}

In the proposition below we elaborate on a standard duality procedure according 
to which one associates a certain bounded mapping on ${\rm BMO}$ with any given 
Calder\'on-Zygmund operator which annihilates constants; cf., e.g.,
\cite[Corollaire, p.\,239]{Meyer}, \cite[p.\,156]{Stein93}, \cite[Corollary~2, p.\,151]{FS}.
The goal is to prove that the mappings induced by such a Calder\'on-Zygmund operator on a 
variety of spaces (Lebesgue, Hardy, {\rm BMO}, H\"older) are all compatible with one another, 
and to provide norm estimates in cases of interest. To state this result in precise terms, 
recall that the class ${\rm SCZ}(n,\gamma)$ has been introduced in Definition~\ref{h6f43sSSSA}.

\begin{proposition}\label{jhsdgf}
Fix $n\in{\mathbb{N}}$, $\gamma\in(0,1]$, and let $T\in{\rm SCZ}(n,\gamma)$ satisfy $T(1)=0$. 
Then the following statements are true.
\begin{enumerate}
\item[(i)] For each $p\in[2,\infty)$ the operator $T$, originally considered 
on $L^p({\mathbb{R}}^n)\cap L^2({\mathbb{R}}^n)$, extends uniquely to a linear 
and bounded mapping 
\begin{equation}\label{dkegfs.2}
T:L^p({\mathbb{R}}^n)\longrightarrow L^p({\mathbb{R}}^n).
\end{equation}
Moreover, the operators defined as above for any two arbitrary 
choices of $p$ in $[2,\infty)$ act in a compatible fashion with one another.

\vskip 0.08in
\item[(ii)] For each $p'\in(1,2]$ the operator $T^\top$, originally considered 
on $L^{p'}({\mathbb{R}}^n)\cap L^2({\mathbb{R}}^n)$, extends uniquely to a linear and bounded mapping 
\begin{equation}\label{dkegfs.2b}
T^\top:L^{p'}({\mathbb{R}}^n)\longrightarrow L^{p'}({\mathbb{R}}^n).
\end{equation}
Moreover, the operators defined as above for any two arbitrary 
choices of $p'$ in $(1,2]$ act in a compatible fashion with one another, and 
whenever $p\in[2,\infty)$ and $p'\in(1,2]$ are such that $1/p+1/p'=1$
the transposed of \eqref{dkegfs.2} is precisely \eqref{dkegfs.2b}.

\vskip 0.08in
\item[(iii)] The operator \eqref{dkegfs.2b} further extends uniquely to a well-defined, 
linear and bounded mapping in the context of Hardy spaces. Specifically, 
whenever $\frac{n}{n+\gamma}<q\leq 1$ there exists a unique linear and bounded operator 
\begin{equation}\label{dkegfs.3}
T^\top:H^q({\mathbb{R}}^n)\longrightarrow H^q({\mathbb{R}}^n).
\end{equation}
which acts in a compatible fashion with \eqref{dkegfs.2b}. Moreover, 
the operators in \eqref{dkegfs.3}, considered for two arbitrary choices of $q$, 
are compatible with one another. Also, for each $p\in[2,\infty)$ there exist 
$\theta\in(0,1)$ and $c\in(0,\infty)$ depending only on $n,\gamma,q,p$ such that,
with $C''$ as in \eqref{ytgfff}, 
\begin{equation}\label{887hytg-ff}
\big\|T^\top\big\|_{{\mathscr{B}}(H^q({\mathbb{R}}^n))}
\leq c\|T\|_{{\mathscr{B}}(L^{p}({\mathbb{R}}^n))}^{1-\theta}
\big(C''+\|T\|_{{\mathscr{B}}(L^{p}({\mathbb{R}}^n))}\big)^\theta.
\end{equation}

\vskip 0.08in
\item[(iv)] The operator 
\begin{equation}\label{dkegfs.4}
\widetilde{T}:\widetilde{\rm BMO}({\mathbb{R}}^n)
\longrightarrow\widetilde{\rm BMO}({\mathbb{R}}^n)
\end{equation}
defined by setting {\rm (}with $\langle\cdot,\cdot\rangle$ denoting the 
$\widetilde{\rm BMO}$-$H^1$ duality pairing{\rm )}
\begin{equation}\label{dkegfs.5}
\big\langle\widetilde{T}[f],g\big\rangle:=\big\langle[f],T^\top g\big\rangle,
\quad\forall\,[f]\in\widetilde{\rm BMO}({\mathbb{R}}^n),\,\,\forall\,g\in H^1({\mathbb{R}}^n),
\end{equation}
is well-defined, linear, bounded, and compatible with \eqref{dkegfs.2} in the sense that 
for each $p\in[2,\infty)$ one has
\begin{equation}\label{dkegfs.6}
\widetilde{T}[f]=[Tf],\qquad\forall\,f\in{\rm BMO}({\mathbb{R}}^n)\cap L^p({\mathbb{R}}^n).
\end{equation}
Moreover, for each $p\in[2,\infty)$ there exist $\theta\in(0,1)$ and $c\in(0,\infty)$ 
depending only on $n,\gamma,p$ such that, with $C''$ as in \eqref{ytgfff}, 
\begin{equation}\label{dkegfs.5-AAA}
\big\|\widetilde{T}\big\|_{{\mathscr{B}}(\,\widetilde{\rm BMO}({\mathbb{R}}^n))}
\leq c\|T\|_{{\mathscr{B}}(L^{p}({\mathbb{R}}^n))}^{1-\theta}
\big(C''+\|T\|_{{\mathscr{B}}(L^{p}({\mathbb{R}}^n))}\big)^\theta.
\end{equation} 

\item[(v)] Given any $\eta\in(0,\gamma)$, the operator 
\begin{equation}\label{dkegfs.4NNN}
\widehat{T}:\dot{\mathscr{C}}^\eta({\mathbb{R}}^n)\big/_{\sim}\,\,
\longrightarrow\dot{\mathscr{C}}^\eta({\mathbb{R}}^n)\big/_{\sim}
\end{equation}
defined by setting, with $q:=n/(n+\eta)\in\big(\tfrac{n}{n+\gamma}\,,\,1\big)$
and $(\cdot,\cdot)$ denoting the $\dot{\mathscr{C}}^\eta/_{\sim}$-$H^q$ duality pairing, 
\begin{equation}\label{dkegfs.5NNN}
\big(\widehat{T}[f],g\big):=\big([f],T^\top g\big),
\quad\forall\,[f]\in\dot{\mathscr{C}}^\eta({\mathbb{R}}^n)\big/_{\sim},\,\,
\forall\,g\in H^q({\mathbb{R}}^n),
\end{equation}
is well-defined, linear, bounded, and compatible with \eqref{dkegfs.2} 
and \eqref{dkegfs.4}, in the sense that for each $p\in[2,\infty)$ one has
\begin{align}\label{dkegfs.6-NNN.1}
& \widehat{T}[f]=[Tf],\qquad\forall\,f\in\dot{\mathscr{C}}^\eta({\mathbb{R}}^n)
\cap L^p({\mathbb{R}}^n),
\\[4pt]
& \widehat{T}[f]=\widetilde{T}[f],\qquad
\forall\,f\in\dot{\mathscr{C}}^\eta({\mathbb{R}}^n)\cap{\rm BMO}({\mathbb{R}}^n).
\label{dkegfs.6-NNN.2}
\end{align}
In addition, the operators in \eqref{dkegfs.4NNN}, considered for two arbitrary 
choices of $\eta$, are also compatible with one another. 
\end{enumerate}
\end{proposition}

Of course, if actually $T\in{\rm CZ}(n,\gamma)$ then we may take $p,p'\in(1,\infty)$ arbitrary
(retaining condition $1/p+1/p'=1$ in the second part of item {\it (ii)} though) throughout the 
statement of Proposition~\ref{jhsdgf}.

\vskip 0.08in
\begin{proof}[Proof of Proposition~\ref{jhsdgf}]
Working with $T^\top$ which, by design, is a bounded operator on $L^2({\mathbb{R}}^n)$ and
whose kernel $K^\top\in L^1_{\rm loc}({\mathbb{R}}^n\times{\mathbb{R}}^n\setminus\text{\rm diag})$ 
has the property that there exist $C'_{K},C''_{K}\in(0,\infty)$ such that, for every $x,y\in{\mathbb{R}}^n$ 
with $x\not=y$ and each $z\in{\mathbb{R}}^n$ with $|x-z|<\tfrac12|x-y|$, 
\begin{align}\label{ytgfff-123456}
|K^\top(x,y)|\leq\frac{C'_{K}}{|x-y|^n}\,\,\text{ and }\,\,
|K^\top(y,x)-K^\top(y,z)|\leq C''_{K}\frac{|x-z|^\gamma}{|x-y|^{n+\gamma}},
\end{align}
and relying on the Calder\'on-Zygmund Lemma in the usual fashion, it follows that 
$T^\top$ induces a well-defined linear and bounded mapping
\begin{align}\label{ytgfff-123456.BBB}
T^\top:L^{1}({\mathbb{R}}^n)\longrightarrow L^{1,\infty}({\mathbb{R}}^n).
\end{align}
Hence, via Marcinkiewicz's interpolation theorem, we conclude that 
$T^\top:L^2({\mathbb{R}}^n)\to L^2({\mathbb{R}}^n)$ has a unique extension 
to linear and bounded operator from $L^{p'}({\mathbb{R}}^n)$ into itself
for each $p'\in(1,2]$. From \cite[Theorem~1.17, p.15]{Rudin} it follows that 
\begin{equation}\label{i76tGFG}
\parbox{10.00cm}{given $p_1,p_2\in(1,\infty)$ and 
$f\in L^{p_1}({\mathbb{R}}^n)\cap L^{p_2}({\mathbb{R}}^n)$,
there exists a sequence $\{s_j\}_{j\in{\mathbb{N}}}$ of simple functions in ${\mathbb{R}}^n$ 
which converges to $f$ simultaneously in $L^{p_1}({\mathbb{R}}^n)$ 
and in $L^{p_2}({\mathbb{R}}^n)$.}
\end{equation}
In turn, this readily implies that the operators in \eqref{dkegfs.2b}, considered 
for any two arbitrary choices of $p'$ in $(1,2]$, act in a compatible fashion 
with one another. Consider next $p\in[2,\infty)$ such that $1/p+1/p'=1$. 
Since for each $f\in L^p({\mathbb{R}}^n)\cap L^2({\mathbb{R}}^n)$ and 
$g\in L^{p'}({\mathbb{R}}^n)\cap L^2({\mathbb{R}}^n)$ we may estimate 
\begin{align}\label{i76tGFG.2-hhihb}
\Big|\int_{{\mathbb{R}}^n}(Tf)g\,d{\mathscr{L}}^n\Big|
&=\Big|\int_{{\mathbb{R}}^n}f(T^\top g)\,d{\mathscr{L}}^n\Big|
\leq\|f\|_{L^{p}({\mathbb{R}}^n)}\|T^\top g\|_{L^{p'}({\mathbb{R}}^n)}
\nonumber\\[6pt]
&\leq C\|f\|_{L^{p}({\mathbb{R}}^n)}\|g\|_{L^{p'}({\mathbb{R}}^n)},
\end{align}
and since, generally speaking, 
\begin{equation}\label{i76tGFG-ugg}
\text{if }\,\,h\in L^2({\mathbb{R}}^n)\,\,\text{ then }\,\,\|h\|_{L^p({\mathbb{R}}^n)}
=\sup\limits_{\substack{g\in L^{p'}({\mathbb{R}}^n)\cap L^2({\mathbb{R}}^n)\\ \|g\|_{L^{p'}({\mathbb{R}}^n)}\leq 1}}
\Big|\int_{{\mathbb{R}}^n}hg\,d{\mathscr{L}}^n\Big|,
\end{equation}
we conclude that there exists $C\in(0,\infty)$ such that 
$\|Tf\|_{L^p({\mathbb{R}}^n)}\leq C\|f\|_{L^p({\mathbb{R}}^n)}$
for every function $f\in L^p({\mathbb{R}}^n)\cap L^2({\mathbb{R}}^n)$. By density
it follows that $T$, originally considered on $L^p({\mathbb{R}}^n)\cap L^2({\mathbb{R}}^n)$, 
extends uniquely to a linear and bounded mapping as in \eqref{dkegfs.2}. By once again appealing to
\eqref{i76tGFG} we see that the operators in \eqref{dkegfs.2}, considered 
for any two arbitrary choices of $p$ in $[2,\infty)$, act in a compatible fashion 
with one another. Finally, granted the continuity properties established above, the identity 
\begin{align}\label{i76tGFG.2-hhi-iu7tt}
\int_{{\mathbb{R}}^n}(Tf)g\,d{\mathscr{L}}^n=\int_{{\mathbb{R}}^n}f(T^\top g)\,d{\mathscr{L}}^n,\quad
f\in L^p({\mathbb{R}}^n)\cap L^2({\mathbb{R}}^n),\,\,
g\in L^{p'}({\mathbb{R}}^n)\cap L^2({\mathbb{R}}^n),
\end{align}
further extends by density to 
\begin{equation}\label{i76tGFG.2}
\int_{{\mathbb{R}}^n}(Tf)g\,d{\mathscr{L}}^n
=\int_{{\mathbb{R}}^n}f(T^\top g)\,d{\mathscr{L}}^n,\qquad
\forall\,f\in L^p({\mathbb{R}}^n),\,\,\forall\,g\in L^{p'}({\mathbb{R}}^n),
\end{equation}
where $T$ is as in \eqref{dkegfs.2} and $T^\top$ is as in \eqref{dkegfs.2b}.
This finishes the proofs of the claims in items {\it (i)}-{\it (ii)}. 

Consider next the claims made in item {\it (iii)}. Throughout, fix an exponent $p'\in(1,2]$, 
set $p:=p'/(p'-1)\in[2,\infty)$, take $r\in(p',\infty)$, and pick $q\in\big(\tfrac{n}{n+\gamma}\,,\,1\big]$ 
arbitrary. Since these choices entail $(n+\gamma)/n-1/p'>1/q-1/p'$, it is possible to select 
\begin{equation}\label{887hytg-ytr}
\theta\in(0,1)\,\,\text{ such that }\,\,(n+\gamma)/n-1/p'>\big(1/q-1/p'\big)/\theta.
\end{equation}
We first claim that 
\begin{equation}\label{887hytg}
\begin{array}{c}
\text{for each $(q,r)$-atom $a$ in ${\mathbb{R}}^n$ we have }\,\,T^\top a\in H^q({\mathbb{R}}^n)\,\,\text{ and}
\\[4pt]
\big\|T^\top a\big\|_{H^q({\mathbb{R}}^n)}
\leq C:=c\|T\|_{{\mathscr{B}}(L^{p}({\mathbb{R}}^n))}^{1-\theta}
\big(C_K''+\|T\|_{{\mathscr{B}}(L^{p}({\mathbb{R}}^n))}\big)^\theta,
\end{array}
\end{equation}
where $c\in(0,\infty)$ depends only on $n,\gamma,q,p$, and where $C''_K$ is as in \eqref{ytgfff-123456}.
To see that this is the case, fix some $(q,r)$-atom $a$ as in \eqref{defi-atom-NEW} 
and observe that, since $a\in L^{p'}({\mathbb{R}}^n)$, the function $m:=T^\top a$ 
is meaningfully defined (cf. \eqref{dkegfs.2b}) and satisfies (thanks to \eqref{dkegfs.2b})
\begin{equation}\label{defi-atom-NEW.Y1}
\|m\|_{L^{p'}({\mathbb{R}}^n)}\leq\|T^\top\|_{{\mathscr{B}}(L^{p'}({\mathbb{R}}^n))}
\|a\|_{L^{p'}({\mathbb{R}}^n)}\leq\|T\|_{{\mathscr{B}}(L^{p}({\mathbb{R}}^n))}|Q|^{(1/p')-(1/q)}.
\end{equation}
In addition, the vanishing moment condition of the atom in concert with the second estimate 
for the kernel $K^\top$ of $T^\top$ in \eqref{ytgfff-123456} and the size estimate for the 
atom yield the decay property
\begin{equation}\label{defi-atom-NEW.Y2}
|m(x)|\leq\frac{c_nC''_K\ell(Q)^\gamma}{|x-x_Q|^{n+\gamma}}|Q|^{1-(1/q)}\,\,\,\text{ 
for each }\,\,x\in{\mathbb{R}}^{n}\setminus(2Q),
\end{equation}
where $c_n\in(0,\infty)$ is a purely dimensional constant and $C''_K$ is as in \eqref{ytgfff-123456}.
Let us also observe that since any $(q,r)$-atom is a multiple of some 
$(1,r)$-atom, we have that $a\in H^1({\mathbb{R}}^n)$. Granted this, from \eqref{ytgfff.9ytrdf.455}
and the fact that $T(1)=0$ we conclude that (see \eqref{ytgfff.9ytrdf})
\begin{equation}\label{defi-atom-NEW.Y3}
m\in L^1({\mathbb{R}}^n)\,\,\,\text{ and }\,\,\,\int_{{\mathbb{R}}^n}m(x)\,dx=0.
\end{equation}
In turn, from the estimates recorded in \eqref{defi-atom-NEW.Y1}-\eqref{defi-atom-NEW.Y2} one may 
readily check that if we now introduce $b:=\big(1/q-1/p'\big)/\theta\in\big(1/q-1/p'\,,\,\infty\big)$
we have 
\begin{align}\label{defi-atom-NEW.Y4.a}
&\|m\|_{L^{p'}({\mathbb{R}}^n)}^{1-\theta}
\big\||\cdot-x_Q|^{nb}m\big\|_{L^{p'}({\mathbb{R}}^n\setminus 2Q)}^{\theta}
\leq c\|T\|_{{\mathscr{B}}(L^{p}({\mathbb{R}}^n))}^{1-\theta}\big(C_K''\big)^\theta
\\[6pt]
&\text{and }\,\,\|m\|_{L^{p'}({\mathbb{R}}^n)}^{1-\theta}
\big\||\cdot-x_Q|^{nb}m\big\|_{L^{p'}(2Q)}^{\theta}
\leq c\|T\|_{{\mathscr{B}}(L^{p}({\mathbb{R}}^n))},
\label{defi-atom-NEW.Y4.b}
\end{align}
where $c\in(0,\infty)$ depends only on $n,\gamma,q,p$, 
and where $C''_K$ is as in \eqref{ytgfff-123456}. In the language of \cite[Definition~7.13, p.\,328]{GCRF85}, 
\eqref{defi-atom-NEW.Y3}-\eqref{defi-atom-NEW.Y4.b} amount to saying that 
$m$ is a $(q,p',b)$-molecule centered at $x_Q$. Having established this, 
we may invoke \cite[Theorem~7.16, p.\,330]{GCRF85} to conclude that 
$m\in H^q({\mathbb{R}}^n)$ and $\|m\|_{H^q({\mathbb{R}}^n)}\leq 
c\|T\|_{{\mathscr{B}}(L^{p}({\mathbb{R}}^n))}^{1-\theta}
\big(C_K''+\|T\|_{{\mathscr{B}}(L^{p}({\mathbb{R}}^n))}\big)^\theta$. 
This proves \eqref{887hytg}.
 
We next claim that 
\begin{equation}\label{887hytg.Z1}
\parbox{11.10cm}{for each given $g\in L^{p'}({\mathbb{R}}^n)\cap H^q({\mathbb{R}}^n)$, 
the function $T^\top g$, originally regarded in $L^{p'}({\mathbb{R}}^n)$ 
by considering the operator $T^\top$ as in \eqref{dkegfs.2b}, actually belongs to 
$H^q({\mathbb{R}}^n)$ and satisfies the estimate 
$\big\|T^\top g\big\|_{H^q({\mathbb{R}}^n)}\leq C\|g\|_{H^q({\mathbb{R}}^n)}$
with $C$ of the same format as in \eqref{887hytg}.}
\end{equation}
With this goal in mind, from \eqref{7h7ggg.555.hhbb}-\eqref{7h7ggg.555.hhbb.2211.CCC} 
and items {\it (a)}-{\it (c)} in the proof of Lemma~\ref{LL-Hhna} we conclude that 
there exist a constant $c=c_{n,p,q,r}\in(0,\infty)$, along with $(q,r)$-atoms 
$\{a_j\}_{j\in{\mathbb{N}}}$ and numbers $\{\lambda_j\}_{j\in{\mathbb{N}}}$, such that 
\begin{equation}\label{7h7ggg.5tEE-yrrr}
\Big(\sum_{j\in{\mathbb{N}}}|\lambda_j|^q\Big)^{1/q}\leq c\|g\|_{H^q({\mathbb{R}}^n)},
\end{equation}
and if 
\begin{equation}\label{u76gggfWW-iii}
g_N:=\sum_{j=1}^N\lambda_j a_j\,\,\text{ for each }\,\,N\in{\mathbb{N}}
\end{equation}
then 
\begin{equation}\label{u76gggfWW-iii.b}
\lim_{N\to\infty}g_N=g\,\,\text{ both in }\,\,H^q({\mathbb{R}}^n)
\,\,\text{ and in }\,\,L^{p'}({\mathbb{R}}^n).
\end{equation}
Note that whenever $N,M\in{\mathbb{N}}$ are such that $N<M$, we may rely on 
\eqref{887hytg} to conclude that 
\begin{equation}\label{887hytg.iii}
T^\top g_N,\,T^\top g_M\in H^q({\mathbb{R}}^n)\,\,\text{ and }\,\,
\big\|T^\top g_N-T^\top g_M\big\|_{H^q({\mathbb{R}}^n)}\leq C
\Big(\sum_{j=N+1}^{M}|\lambda_j|^q\Big)^{1/q}.
\end{equation}
Given that $\{\lambda_j\}_{j\in{\mathbb{N}}}\in\ell^q$, this proves that 
the sequence $\big\{T^\top g_N\big\}_{N\in{\mathbb{N}}}$ is 
Cauchy in $H^q({\mathbb{R}}^n)$. Since the latter is a quasi-Banach space, 
it follows that there exists some $h\in H^q({\mathbb{R}}^n)$ such that 
$\lim_{N\to\infty}T^\top g_N=h$ in $H^q({\mathbb{R}}^n)$. On the other hand, 
from \eqref{u76gggfWW-iii.b} and \eqref{dkegfs.2b} we conclude that 
$\lim_{N\to\infty}T^\top g_N=T^\top g$ in $L^{p'}({\mathbb{R}}^n)$.
Hence, necessarily, $T^\top g=h$ as distributions in ${\mathbb{R}}^n$.
This goes to show that $T^\top g\in H^q({\mathbb{R}}^n)$, and we may also estimate 
\begin{align}\label{h5ffDEDX}
\big\|T^\top g\big\|_{H^q({\mathbb{R}}^n)}
&=\|h\|_{H^q({\mathbb{R}}^n)}
=\lim_{N\to\infty}\big\|T^\top g_N\big\|_{H^q({\mathbb{R}}^n)}
\nonumber\\[4pt]
&\leq C\limsup_{N\to\infty}\Big(\sum_{j=1}^N|\lambda_j|^q\Big)^{1/q}
=C\Big(\sum_{j=1}^\infty|\lambda_j|^q\Big)^{1/q}
\leq C\|g\|_{H^q({\mathbb{R}}^n)},
\end{align}
where the constant $C$ has the same format as in \eqref{887hytg}. Above, the second equality
uses the fact that $\|\cdot\|_{H^q({\mathbb{R}}^n)}$ is a $q$-norm which defines the topology 
on $H^q({\mathbb{R}}^n)$, the subsequent inequality is a 
consequence of \eqref{u76gggfWW-iii}, \eqref{887hytg}, and the sub-additivity of 
$\|\cdot\|^q_{H^q({\mathbb{R}}^n)}$, while the last inequality comes from 
\eqref{7h7ggg.5tEE-yrrr}. This finishes the proof of \eqref{887hytg.Z1}.

Moving on, consider now an arbitrary $g\in H^q({\mathbb{R}}^n)$. 
Since $L^{p'}({\mathbb{R}}^n)\cap H^q({\mathbb{R}}^n)$ is dense in 
$H^q({\mathbb{R}}^n)$, there exists a sequence 
$\{g_j\}_{j\in{\mathbb{N}}}\subset L^{p'}({\mathbb{R}}^n)\cap H^q({\mathbb{R}}^n)$
such that $\lim_{j\to\infty}g_j=g$ in $H^q({\mathbb{R}}^n)$. 
From \eqref{887hytg.Z1} it follows that $\{T^\top g_j\}_{j\in{\mathbb{N}}}$ 
is Cauchy in $H^q({\mathbb{R}}^n)$. 
Define $T^\top g$ to be the limit of $\{T^\top g_j\}_{j\in{\mathbb{N}}}$ 
in $H^q({\mathbb{R}}^n)$. By interlacing sequences, it may shown that 
the limit defining $T^\top g$ does not depend on the actual choice of the 
sequence $\{g_j\}_{j\in{\mathbb{N}}}$. In turn, this implies that
$T^\top:H^q({\mathbb{R}}^n)\to H^q({\mathbb{R}}^n)$ is well-defined, linear, 
and compatible with the action of $T^\top$ on $L^{p'}({\mathbb{R}}^n)$.  
To see that the operator just defined is also bounded, if $g$ and $\{g_j\}_{j\in{\mathbb{N}}}$
are as before write 
\begin{align}\label{h5ffDEDX-ytt}
\big\|T^\top g\big\|_{H^q({\mathbb{R}}^n)}
=\lim_{j\to\infty}\big\|T^\top g_j\big\|_{H^q({\mathbb{R}}^n)}
\leq C\limsup_{j\to\infty}\|g_j\|_{H^q({\mathbb{R}}^n)}
=C\|g\|_{H^q({\mathbb{R}}^n)},
\end{align}
where the constant $C$ has the same format as in \eqref{887hytg}.
In \eqref{h5ffDEDX-ytt}, we have used the definition of $T^\top$ on $H^q({\mathbb{R}}^n)$, 
the fact that $\lim_{j\to\infty}g_j=g$ in $H^q({\mathbb{R}}^n)$,
the estimate in \eqref{887hytg.Z1}, and the fact that
$\|\cdot\|_{H^q({\mathbb{R}}^n)}$ is a $q$-norm which defines the topology 
on $H^q({\mathbb{R}}^n)$
(in the first and last equalities in \eqref{h5ffDEDX-ytt}). 

In summary, for each $q\in\big(\tfrac{n}{n+\gamma}\,,\,1\big]$, 
we have succeeded in producing a linear and bounded operator 
$T^\top:H^q({\mathbb{R}}^n)\to H^q({\mathbb{R}}^n)$ which acts in a compatible fashion 
with $T^\top$ in \eqref{dkegfs.2b} and which satisfies the estimate in \eqref{887hytg-ff}.
There remains to show that these newly produced operators 
are also compatible with one another as $q$ varies through $\big(\tfrac{n}{n+\gamma}\,,\,1\big]$.
To this end, fix $q_1,q_2\in\big(\tfrac{n}{n+\gamma}\,,\,1\big]$ and consider 
some arbitrary $g\in H^{q_1}({\mathbb{R}}^n)\cap H^{q_2}({\mathbb{R}}^n)$. 
Also, fix $p'\in(1,2]$, choose $r\in(1,\infty)$ with $r\geq p'$, and 
set $s:=\min\{q_1,q_2\}$. Then Lemma~\ref{LL-Hhna} ensures that there exists some 
sequence $\{g_N\}_{N\in{\mathbb{N}}}\subset H^{s,r}_{\rm fin}({\mathbb{R}}^n)
\subset L^{p'}({\mathbb{R}}^n)$ which converges to $g$ both in 
$H^{q_1}({\mathbb{R}}^n)$ and in $H^{q_2}({\mathbb{R}}^n)$. Then, 
with $T^\top$ considered in the sense of \eqref{dkegfs.2b}, the sequence 
$\big\{T^\top g_N\big\}_{N\in{\mathbb{N}}}$ converges both in 
$H^{q_1}({\mathbb{R}}^n)$ and in $H^{q_2}({\mathbb{R}}^n)$. In light of the manner 
in which the extension to Hardy spaces has been defined earlier, this shows
that the operator $T^\top:H^{q_1}({\mathbb{R}}^n)\to H^{q_1}({\mathbb{R}}^n)$ 
acting on $g$, viewed in $H^{q_1}({\mathbb{R}}^n)$, agrees with the operator 
$T^\top:H^{q_2}({\mathbb{R}}^n)\to H^{q_2}({\mathbb{R}}^n)$ 
acting on $g$ now viewed as a distribution in $H^{q_2}({\mathbb{R}}^n)$. 
This concludes the justification of the claims made in item {\it (iii)}.

Going further, the well-definiteness, linearity, and boundedness 
of $T^\top$ in \eqref{dkegfs.3}, together with Fefferman's basic duality result 
$\big(H^1({\mathbb{R}}^n)\big)^\ast=\widetilde{\rm BMO}({\mathbb{R}}^n)$, ensure that 
$\widetilde{T}$ defined as in \eqref{dkegfs.5} is a well-defined, 
linear and bounded operator in the context of \eqref{dkegfs.4}. 
To prove the compatibility condition described in \eqref{dkegfs.6}, fix some 
$p\in[2,\infty)$ along with an arbitrary function
$f\in{\rm BMO}({\mathbb{R}}^n)\cap L^p({\mathbb{R}}^n)$. 
Then, if $p'\in(1,2]$ is such that $1/p+1/p'=1$, 
for each function $g\in H^1({\mathbb{R}}^n)\cap L^{p'}({\mathbb{R}}^n)$ we may compute
\begin{equation}\label{dkegfs.10}
\big\langle\widetilde{T}[f],g\big\rangle=\big\langle[f],T^\top g\big\rangle
=\int_{{\mathbb{R}}^n}f(T^\top g)\,d{\mathscr{L}}^n
=\int_{{\mathbb{R}}^n}(Tf)g\,d{\mathscr{L}}^n.
\end{equation}
Above, the first equality is simply \eqref{dkegfs.5}, the second equality 
is implied by the fact that $T^\top g\in H^1({\mathbb{R}}^n)\cap L^{p'}({\mathbb{R}}^n)$
(cf. \eqref{dkegfs.2b}, \eqref{dkegfs.3}) and Lemma~\ref{U-gGg}, while the last 
equality is seen from the fact that the adjoint of \eqref{dkegfs.2} is \eqref{dkegfs.2b}.
Let us now select a representative $h\in{\rm BMO}({\mathbb{R}}^n)$ of the 
equivalence class $\widetilde{T}[f]\in\widetilde{\rm BMO}({\mathbb{R}}^n)$, 
and specialize \eqref{dkegfs.10} to the case when $g$ is an $(1,r)$-atom for some $r\in(1,\infty)$.
On account of \eqref{dk-TTFF}, this yields
\begin{equation}\label{dkegfs.11}
\int_{{\mathbb{R}}^n}h\,a\,d{\mathscr{L}}^n
=\int_{{\mathbb{R}}^n}(Tf)a\,d{\mathscr{L}}^n\,\,\text{ for each $(1,r)$-atom $a$}.
\end{equation}
It is not difficult to see that, generally speaking,  
\begin{equation}\label{7h7ggg.222}
\parbox{11.50cm}{if $q\in\big(\tfrac{n}{n+1}\,,\,1\big]$ and $r,r'\in[1,\infty]$ 
are such that $1/r+1/r'=1$ and $q<r$, then a function  
$\phi\in L^{r'}_{\rm loc}({\mathbb{R}}^n)$ satisfying 
$\int_{{\mathbb{R}}^n}\phi\,a\,d{\mathscr{L}}^n=0$ for each $(q,r)$-atom $a$
is necessarily constant in ${\mathbb{R}}^n$.}
\end{equation}
This may be seen by considering scalar multiples of $(q,r)$-atoms of the form 
\begin{equation}\label{7h7ggg.222.2}
a={\mathbf{1}}_{B(x,R)}/{\mathscr{L}}^n(B(x,R))-{\mathbf{1}}_{B(0,1)}/{\mathscr{L}}^n(B(0,1))
\end{equation}
with $x\in{\mathbb{R}}^n$ and $R>0$ arbitrary, then letting $R\to 0^{+}$ and 
invoking Lebesgue's Differentiation Theorem. In concert, \eqref{dkegfs.11}
and \eqref{7h7ggg.222} then prove that $h$ and $Tf$ differ by a constant. 
Hence, $\widetilde{T}[f]=[h]=[Tf]$, finishing the proof of \eqref{dkegfs.6}.
Finally, the estimate recorded in \eqref{dkegfs.5-AAA} is obtain by noting that 
\eqref{dkegfs.5} and the quantitative aspect of the $\widetilde{\rm BMO}$-$H^1$ duality yield 
\begin{equation}\label{dkegfs.5-ABB}
\big\|\widetilde{T}\big\|_{{\mathscr{B}}(\,\widetilde{\rm BMO}({\mathbb{R}}^n))}\leq c_n
\big\|T^\top\big\|_{{\mathscr{B}}(H^1({\mathbb{R}}^n))},
\end{equation}
and then combining this with \eqref{887hytg-ff} (used here with $q=1$).

Moving on, from the well-definiteness, linearity, and boundedness of $T^\top$ 
in \eqref{dkegfs.3}, together with the duality result recorded in \eqref{WSWS-8htggg.1}  
we conclude that $\widehat{T}$ defined in \eqref{dkegfs.5NNN} is a well-defined, 
linear and bounded operator in the context of \eqref{dkegfs.4NNN}. Next, 
the compatibility condition \eqref{dkegfs.6-NNN.1} is proved much like \eqref{dkegfs.6}, 
this time making use of Lemma~\ref{U-gGg-HHH} instead of Lemma~\ref{U-gGg}.

Consider next the compatibility condition in \eqref{dkegfs.6-NNN.2}.
With this in mind, select an arbitrary function
$f\in\dot{\mathscr{C}}^\eta({\mathbb{R}}^n)\cap{\rm BMO}({\mathbb{R}}^n)$. 
Then for each $g\in H^1({\mathbb{R}}^n)\cap H^q({\mathbb{R}}^n)$ we have
\begin{equation}\label{dkegfs.10-ZZZ}
\big(\widehat{T}[f],g\big)=\big([f],T^\top g\big)
=\big\langle[f],T^\top g\big\rangle
=\big\langle\widetilde{T}[f],g\big\rangle.
\end{equation}
Here, the first equality is based on \eqref{dkegfs.5NNN}, the second equality 
takes into account the fact that $T^\top g\in H^1({\mathbb{R}}^n)\cap H^q({\mathbb{R}}^n)$
(cf. \eqref{dkegfs.3}) and uses Lemma~\ref{U-gGg-HHH.2}, whereas the last 
equality is implied by \eqref{dkegfs.3} and \eqref{dkegfs.5}. Pick a 
representative $\widetilde{h}$ of $\widetilde{T}[f]\in\widetilde{\rm BMO}({\mathbb{R}}^n)$
along with a representative $\widehat{h}$ of 
$\widehat{T}[f]\in\dot{\mathscr{C}}^\eta({\mathbb{R}}^n)\big/_{\sim}$. If we now 
fix $r\in(1,\infty)$ and specialize the equality of the most extreme sides of 
\eqref{dkegfs.10-ZZZ} to the case when $g$ is an arbitrary $(q,r)$-atom we arrive at the 
conclusion that 
\begin{equation}\label{dkegfs.10-ZZZ.2}
\int_{{\mathbb{R}}^n}\widehat{h}\,a\,d{\mathscr{L}}^n
=\int_{{\mathbb{R}}^n}\widetilde{h}\,a\,d{\mathscr{L}}^n
\,\,\,\text{ for each $(q,r)$-atom $a$.}
\end{equation}
On account of this and \eqref{7h7ggg.222} we may then conclude that the functions
$\widehat{h}$ and $\widetilde{h}$ differ by a constant, which ultimately goes
to show that \eqref{dkegfs.6-NNN.2} holds. 

At this stage, there remains to prove that the operators in \eqref{dkegfs.4NNN} 
considered for two arbitrary choices of the smoothness parameter are 
compatible with one another. To this end, pick arbitrary 
$f\in\dot{\mathscr{C}}^{\eta_1}({\mathbb{R}}^n)\cap
\dot{\mathscr{C}}^{\eta_2}({\mathbb{R}}^n)$ and 
$g\in H^{q_1}({\mathbb{R}}^n)\cap H^{q_2}({\mathbb{R}}^n)$, where 
$q_j\in\big(\tfrac{n}{n+1}\,,\,1\big)$ and $\eta_j\in(0,1)$ are related via
$\eta_j=n\big(\tfrac{1}{q_j}-1\big)$ for $j=1,2$. For each $j=1,2$, we agree to denote
the $\dot{\mathscr{C}}^{\eta_j}\big/_{\sim}$-$H^{q_j}$ duality bracket by $(\cdot,\cdot)_j$.
Then
\begin{equation}\label{4332-HHH.2-ttt.TTT}
\big(\widehat{T}[f],g\big)_1=\big([f],T^\top g\big)_1
=\big([f],T^\top g\big)_2=\big(\widehat{T}[f],g\big)_2,
\end{equation}
where the first and last equalities are based on \eqref{dkegfs.5NNN}
while the middle equality is a consequence of Lemma~\ref{U-gGg-HHH.2-grf}.
Specializing the coincidence of the most extreme terms in \eqref{4332-HHH.2-ttt.TTT} 
to the case when $g$ is a $(q,r)$-atom for some $r\in(1,\infty)$ and $q:=\min\{q_1,q_2\}$ 
then yields, on account of \eqref{dk-TTFF-HHH}, 
\begin{equation}\label{dkbdj-D}
\int_{{\mathbb{R}}^n}h_1\,a\,d{\mathscr{L}}^n
=\int_{{\mathbb{R}}^n}h_2a\,d{\mathscr{L}}^n\,\,\text{ for each $(q,r)$-atom $a$},
\end{equation}
where $h_j\in\dot{\mathscr{C}}^{\eta_j}({\mathbb{R}}^n)$ is a representative 
of $\widehat{T}[f]\in\dot{\mathscr{C}}^{\eta_j}({\mathbb{R}}^n)\big/_{\sim}$, for $j=1,2$.
At this point we invoke \eqref{7h7ggg.222} to conclude that 
$h_1-h_2$ is constant in ${\mathbb{R}}^n$ from which the very 
last claim in Proposition~\ref{jhsdgf} follows. 
The proof of Proposition~\ref{jhsdgf} is therefore complete.
\end{proof}

Having dealt with Proposition~\ref{jhsdgf} we are now ready to present 
the proof of Theorem~\ref{i87hbBV}.

\vskip 0.08in
\begin{proof}[Proof of Theorem~\ref{i87hbBV}]
Fix $n\in{\mathbb{N}}$ along with $\gamma\in(0,1]$ and suppose $T\in{\rm SCZ}(n,\gamma)$. 
Pick $\eta\in(0,\gamma)$ arbitrary. By Proposition~\ref{jhsdgf}, the operator $T$ extends to 
a bounded linear mapping $\widetilde{T}$ from $\widetilde{\mathrm{BMO}}({\mathbb{R}}^n)$ 
into itself and to a bounded linear mapping $\widehat{T}$ from 
$\dot{\mathscr{C}}^\eta({\mathbb{R}}^n)\big/_{\sim}$ into itself. In addition, 
these extensions are compatible in the sense of \eqref{dkegfs.6-NNN.2}. 
From these we deduce that $\widetilde{T}$ maps the linear subspace 
$X:=\Big(\dot{\mathscr{C}}^\eta({\mathbb{R}}^n)\big/_{\sim}\Big)\cap
\widetilde{\mathrm{BMO}}({\mathbb{R}}^n)$ of $\widetilde{\mathrm{BMO}}({\mathbb{R}}^n)$
into $X$. Since $\widetilde{T}$ is continuous on $\widetilde{\mathrm{BMO}}({\mathbb{R}}^n)$,
it follows that $\widetilde{T}$ maps the closure of $X$ in 
$\widetilde{\mathrm{BMO}}({\mathbb{R}}^n)$ linearly and boundedly into itself. 
Corollary~\ref{Cbna-j77h-TTT} tells us that the said closure is simply 
$\widetilde{\mathrm{VMO}}({\mathbb{R}}^n)$, so we ultimately conclude that 
$\widetilde{T}$ maps $\widetilde{\mathrm{VMO}}({\mathbb{R}}^n)$ 
linearly and boundedly into itself. Keeping in mind that the action of 
$\widetilde{T}$ in this setting is compatible with that of the original 
operator $T$ (cf. \eqref{dkegfs.6}), the desired conclusion follows. 
\end{proof}

Theorem~\ref{i87hbBV} is the main ingredient in the proof of Theorem~\ref{i87hbBV-MY}, discussed next.

\vskip 0.08in
\begin{proof}[Proof of Theorem~\ref{i87hbBV-MY}]
According to \cite[\S9]{Meyer} (cf. also \cite[Theorem~5, p.\,231]{Meyer85})
\begin{equation}\label{ijBBa-hyTVv}
{\mathscr{A}}^0_{{}_{\rm CZ}}:=\bigcup_{0<\gamma\leq 1}\big\{T\in{\rm CZ}(n,\gamma):\,T(1)=T^\top(1)=0\big\}
\end{equation}
is the largest sub-algebra of ${\mathscr{B}}\big(L^2({\mathbb{R}}^n)\big)$ consisting of Calder\'on-Zygmund 
operators in ${\mathbb{R}}^n$. Since ${\mathscr{A}}^0_{{}_{\rm CZ}}$ is invariant under transposition, we 
conclude from Proposition~\ref{jhsdgf} and Theorem~\ref{i87hbBV} that ${\mathscr{A}}^0_{{}_{\widetilde{\rm CZ}}}$ 
is indeed a sub-algebra of ${\mathscr{B}}\big(\,\widetilde{\mathrm{VMO}}({\mathbb{R}}^n)\big)$.
\end{proof}

Next, we present the proof of Theorem~\ref{i87hbBV-ALG} which, once again, makes essential use of Theorem~\ref{i87hbBV}.

\vskip 0.08in
\begin{proof}[Proof of Theorem~\ref{i87hbBV-ALG}]
Proposition~\ref{jhsdgf} ensures that each principal-value convolution type operator $T_\Theta$ associated 
as in \eqref{ytgfff.9ytrdf.ygfg} with a function $\Theta$ as in \eqref{ytgfff.9ytrdf.ygfg.222222}
induces a well-defined linear and bounded mapping $\widetilde{T}_{\Theta}$ on 
$\widetilde{\mathrm{BMO}}({\mathbb{R}}^n)$. From Theorem~\ref{i87hbBV} we also know that
$\widetilde{T}_{\Theta}\big|_{{}_{\rm VMO}}$, the restriction of $\widetilde{T}_{\Theta}$
to $\widetilde{\mathrm{VMO}}({\mathbb{R}}^n)$, is a well-defined linear and bounded operator from the space 
$\widetilde{\mathrm{VMO}}({\mathbb{R}}^n)$ into itself. Hence, ${\mathscr{A}}_{{}_{\widetilde{\rm SIO}}}$ 
defined in \eqref{gabbIHka} is a subset of ${\mathscr{B}}\big(\,\widetilde{\mathrm{VMO}}({\mathbb{R}}^n)\big)$. 
Proving that ${\mathscr{A}}_{{}_{\widetilde{\rm SIO}}}$ is actually a commutative sub-algebra of 
${\mathscr{B}}\big(\,\widetilde{\mathrm{VMO}}({\mathbb{R}}^n)\big)$ requires some preparations. 

Regarding the relationship between a kernel $\Theta$ as in \eqref{ytgfff.9ytrdf.ygfg.222222}
and its associated symbol $m_\Theta$ as in \eqref{PhBB-1}, two features are particularly 
significant for us here. First, from \eqref{PhBB-1gd} we know that 
\begin{equation}\label{ijBBa-hyf}
\parbox{9.10cm}{if $\Theta$ is as in \eqref{ytgfff.9ytrdf.ygfg.222222}, then 
$m_{\Theta}$ given by \eqref{PhBB-1} is positive homogeneous of degree zero
and of class ${\mathscr{C}}^\infty$ in ${\mathbb{R}}^n\setminus\{0\}$.}
\end{equation}
Second, from \cite[Theorem~6, p.\,75]{St70} 
(or \cite[Proposition~2.4.7 on p.\,128, and Proposition~4.2.3 on p.\,267]{Grafakos}) 
it follows that 
\begin{equation}\label{ijBBa-hyf.baba}
\parbox{9.50cm}{given any function $m\in{\mathscr{C}}^\infty({\mathbb{R}}^n\setminus\{0\})$ 
which is positive homogeneous of degree zero, there exist some unique function $\Theta$ as in 
\eqref{ytgfff.9ytrdf.ygfg.222222} and some unique number $c\in{\mathbb{C}}$ 
such that $m=c+m_{\Theta}$ (actually $c=\int_{S^{n-1}}m(\omega)\,d\omega\in{\mathbb{C}}$).}
\end{equation}

Consider next two functions $\Theta_1,\Theta_2$ as in \eqref{ytgfff.9ytrdf.ygfg.222222} and associate with 
them $m_{\Theta_1}$, $m_{\Theta_2}$ as in \eqref{PhBB-1}. Since then their product $m_{\Theta_1}m_{\Theta_2}$ 
belongs to ${\mathscr{C}}^{\infty}({\mathbb{R}}^n\setminus\{0\})$ (thanks to \eqref{ijBBa-hyf}) 
and is positive homogeneous of degree zero (given that both $m_{\Theta_1}$ and $m_{\Theta_2}$ are),  
we may invoke \eqref{ijBBa-hyf.baba} to conclude that  
\begin{equation}\label{ijBBa}
\begin{array}{c}
\text{there exists a function $\Theta$ as in \eqref{ytgfff.9ytrdf.ygfg.222222} with the property that}
\\[6pt]
m_{\Theta_1}m_{\Theta_2}=c+m_{\Theta}\,\,\text{ in }\,\,{\mathbb{R}}^n\setminus\{0\},
\,\,\text{ where }\,\,c:=\int_{S^{n-1}}m_{\Theta_1}(\omega)m_{\Theta_2}(\omega)\,d\omega.
\end{array}
\end{equation}
If ${\mathcal{F}}^{-1}_{\xi\to x}$ denotes the inverse Fourier transform (taking functions in 
the variable $\xi$ into functions in the variable $x$), then for each $f\in L^2({\mathbb{R}}^n)$ 
we may write 
\begin{align}\label{TFvav-1234.a}
(T_{\Theta_1}\circ T_{\Theta_2})f(x)
&={\mathcal{F}}^{-1}_{\xi\to x}\big[m_{\Theta_1}(\xi)m_{\Theta_2}(\xi)\widehat{f}(\xi)\big]
\nonumber\\[6pt]
&={\mathcal{F}}^{-1}_{\xi\to x}\big[\big(c+m_{\Theta}(\xi)\big)\widehat{f}(\xi)\big]
=cf(x)+(T_\Theta f)(x),\qquad x\in{\mathbb{R}}^n.
\end{align}
Hence, $T_{\Theta_1}\circ T_{\Theta_2}=cI+T_{\Theta}$ as operators from the space $L^2({\mathbb{R}}^n)$ into itself.
Also, 
\begin{align}\label{TFvav-1234.a-b2}
(T_{\Theta_1}\circ T_{\Theta_2})f(x)
&={\mathcal{F}}^{-1}_{\xi\to x}\big[m_{\Theta_1}(\xi)m_{\Theta_2}(\xi)\widehat{f}(\xi)\big]
\nonumber\\[6pt]
&={\mathcal{F}}^{-1}_{\xi\to x}\big[m_{\Theta_2}(\xi)m_{\Theta_1}(\xi)\widehat{f}(\xi)\big]
=(T_{\Theta_2}\circ T_{\Theta_1})f(x),\qquad x\in{\mathbb{R}}^n,
\end{align}
thus $T_{\Theta_1}\circ T_{\Theta_2}=T_{\Theta_2}\circ T_{\Theta_1}$ on $L^2({\mathbb{R}}^n)$.
In turn, given that $H^1({\mathbb{R}}^n)\cap L^2({\mathbb{R}}^n)$ is dense in $L^2({\mathbb{R}}^n)$ 
(see \eqref{dkegfs.7}) and since $T_{\Theta_1},T_{\Theta_2},T_{\Theta}$ map $H^1({\mathbb{R}}^n)$ into itself 
boundedly and in a compatible fashion with their action on $L^2({\mathbb{R}}^n)$ (cf. Proposition~\ref{jhsdgf}), 
we may conclude that
\begin{equation}\label{jhsfrwfr-H1-ttt-iub}
\begin{array}{c}
T_{\Theta_1}\circ T_{\Theta_2}=T_{\Theta_2}\circ T_{\Theta_1}\,\,\text{ and }\,\,
T_{\Theta_1}\circ T_{\Theta_2}=cI+T_{\Theta}\,\,\text{ on }\,\,H^1({\mathbb{R}}^n),
\\[4pt]
\text{whenever $c$, $\Theta$ are related to $\Theta_1$, $\Theta_2$ as in \eqref{ijBBa}.}
\end{array}
\end{equation}

Going further, fix $\Theta_1$, $\Theta_2$, $\Theta$ as in \eqref{ytgfff.9ytrdf.ygfg.222222}.
With $\langle\cdot,\cdot\rangle$ denoting the $\widetilde{\rm BMO}$-$H^1$ duality bracket, 
from Proposition~\ref{jhsdgf} and \eqref{ytgfff.9ytrdf.ygfg} it follows that  
$T_{\Theta_1},T_{\Theta_2},T_{\Theta}$ induce linear and bounded operators 
$\widetilde{T}_{\Theta_1},\widetilde{T}_{\Theta_2},\widetilde{T}_{\Theta}$ from 
$\widetilde{\mathrm{BMO}}({\mathbb{R}}^n)$ into itself according to 
\begin{equation}\label{jhsfrw-ttt-iub}
\begin{array}{c}
\big\langle\widetilde{T}_{\Theta_j}[f],g\big\rangle=\big\langle[f],T_{\widetilde{\Theta}_j}g\big\rangle,
\qquad\forall\,f\in{\mathrm{BMO}}({\mathbb{R}}^n),\,\,\forall\,g\in H^1({\mathbb{R}}^n),\,\,\forall\,j\in\{1,2\},
\\[8pt]
\text{and }\,\,\,\big\langle\widetilde{T}_{\Theta}[f],g\big\rangle=\big\langle[f],T_{\widetilde{\Theta}}g\big\rangle,
\qquad\forall\,f\in{\mathrm{BMO}}({\mathbb{R}}^n),\,\,\forall\,g\in H^1({\mathbb{R}}^n),
\end{array}
\end{equation}
where $\widetilde{\Theta}_j(x):=\Theta_j(-x)$ for $j\in\{1,2\}$, and 
$\widetilde{\Theta}(x):=\Theta(-x)$, for each $x\in{\mathbb{R}}^n\setminus\{0\}$.
Retaining the symbol $I$ for the identity operator on $\widetilde{\mathrm{BMO}}({\mathbb{R}}^n)$, 
we claim that these extensions satisfy
\begin{equation}\label{jhsfrwfr-BMO-ttt-iub}
\begin{array}{c}
\widetilde{T}_{\Theta_1}\circ\widetilde{T}_{\Theta_2}=\widetilde{T}_{\Theta_2}\circ\widetilde{T}_{\Theta_1}
\,\,\text{ and }\,\,\widetilde{T}_{\Theta_1}\circ\widetilde{T}_{\Theta_2}=cI+\widetilde{T}_{\Theta}
\,\,\text{ on }\,\,\widetilde{\mathrm{BMO}}({\mathbb{R}}^n)
\\[4pt]
\text{provided $m_{\widetilde{\Theta}_1}m_{\widetilde{\Theta}_2}=c+m_{\widetilde{\Theta}}$ in 
${\mathbb{R}}^n\setminus\{0\}$ for some $c\in{\mathbb{C}}$.}
\end{array}
\end{equation}
Indeed, for each $f\in{\mathrm{BMO}}({\mathbb{R}}^n)$ and $g\in H^1({\mathbb{R}}^n)$ 
based on \eqref{jhsfrw-ttt-iub} and \eqref{jhsfrwfr-H1-ttt-iub} 
(applied to $\widetilde{\Theta}_1$, $\widetilde{\Theta}_2$ in place of $\Theta_1$, $\Theta_2$)
we may write  
\begin{align}\label{jhsfrwfr-ttt-iub.qa}
\big\langle\widetilde{T}_{\Theta_1}\widetilde{T}_{\Theta_2}[f]\,,\,g\big\rangle
=\big\langle[f]\,,\,T_{\widetilde{\Theta}_2}T_{\widetilde{\Theta}_1}g\big\rangle
=\big\langle[f]\,,\,T_{\widetilde{\Theta}_1}T_{\widetilde{\Theta}_2}g\big\rangle
=\big\langle\widetilde{T}_{\Theta_2}\widetilde{T}_{\Theta_1}[f]\,,\,g\big\rangle
\end{align}
which, in view of the fact that $\widetilde{\mathrm{BMO}}({\mathbb{R}}^n)$ is the dual 
of $H^1({\mathbb{R}}^n)$, establishes the first formula in \eqref{jhsfrwfr-BMO-ttt-iub}.
As regards the second formula in \eqref{jhsfrwfr-BMO-ttt-iub}, for each 
$f\in{\mathrm{BMO}}({\mathbb{R}}^n)$ and $g\in H^1({\mathbb{R}}^n)$ 
using \eqref{jhsfrw-ttt-iub} and \eqref{jhsfrwfr-H1-ttt-iub} 
(applied to $\widetilde{\Theta}_1$, $\widetilde{\Theta}_2$ in place of $\Theta_1$, $\Theta_2$)
we may compute 
\begin{align}\label{jhsfrwfr-ttt-iub}
\big\langle\widetilde{T}_{\Theta_1}\widetilde{T}_{\Theta_2}[f]\,,\,g\big\rangle
&=\big\langle[f]\,,\,T_{\widetilde{\Theta}_2}T_{\widetilde{\Theta}_1}g\big\rangle
=\big\langle[f]\,,\,T_{\widetilde{\Theta}_1}T_{\widetilde{\Theta}_2}g\big\rangle
\nonumber\\[4pt]
&=\big\langle[f]\,,\,(cI+T_{\widetilde{\Theta}})g\big\rangle
=\big\langle(cI+\widetilde{T}_{\Theta})[f]\,,\,g\big\rangle.
\end{align}
The third equality above is provided by the second formula in \eqref{jhsfrwfr-H1-ttt-iub}, 
written for $\widetilde{\Theta}_1$, $\widetilde{\Theta}_2$, $\widetilde{\Theta}$ in place of 
$\Theta_1$, $\Theta_2$, $\Theta$ (whose validity is ensured by the assumptions we  
make on $c\in{\mathbb{C}}$ and $\Theta$ in \eqref{jhsfrwfr-BMO-ttt-iub}).
By once again relying on the fact that $\widetilde{\mathrm{BMO}}({\mathbb{R}}^n)$ 
is the dual of $H^1({\mathbb{R}}^n)$, the second formula in \eqref{jhsfrwfr-BMO-ttt-iub} follows
from \eqref{jhsfrwfr-ttt-iub}. Having established \eqref{jhsfrwfr-BMO-ttt-iub}, we may now conclude
(with the help of Theorem~\ref{i87hbBV}) that ${\mathscr{A}}_{{}_{\widetilde{\rm SIO}}}$ defined as 
in \eqref{gabbIHka} is a commutative unital sub-algebra of the algebra of all linear and bounded operators 
from the space $\widetilde{\mathrm{VMO}}({\mathbb{R}}^n)$ into itself. Also, the fact that if $c\in{\mathbb{C}}$ 
and the functions $\Theta_1,\dots,\Theta_N,{\Theta'}_{\!\!1},\dots,{\Theta'}_{\!\!N},\Theta$ are as in 
\eqref{ytgfff.9ytrdf.ygfg.222222} and satisfy \eqref{u7GBB.Za.5.XXX} then \eqref{u7GBB.Za.8.YYY}
holds is established in a similar fashion to the second formula in \eqref{jhsfrwfr-BMO-ttt-iub}. 

Consider next the claim made in item {\it (b)}. For starters, the right-to-left inclusion in 
\eqref{gab-ii-GGVa} is clear from definitions. As regards the opposite inclusion in \eqref{gab-ii-GGVa}, 
it suffices to show that ${\mathscr{A}}_{{}_{\widetilde{\rm SIO}}}\subseteq
\overline{\text{span}}\,\big\{\widetilde{R}_j\big|_{{}_{\rm VMO}}\big\}_{1\leq j\leq n}$.
Since \eqref{u7GBB.Za.5.XXX} holds with $c=-1$, $\Theta=0$, and ${\Theta'}_{\!\!j}=\Theta_j=K_j$, 
defined in \eqref{R-87ygbg}, for each $j\in\{1,\dots,n\}$, we conclude from \eqref{u7GBB.Za.5.XXX} that 
\begin{equation}\label{u7GBB.Za.5.Xa}
\sum_{j=1}^n\big(\widetilde{R}_j\big|_{{}_{\rm VMO}}\big)^2=-I
\,\,\text{ in }\,\,{\mathscr{B}}\big(\,\widetilde{\rm VMO}({\mathbb{R}}^n)\big).
\end{equation}
In particular, this proves that the identity operator $I$ belongs to the sub-algebra spanned by 
$\big\{\widetilde{R}_j\big|_{{}_{\rm VMO}}\big\}_{1\leq j\leq n}$ 
in ${\mathscr{B}}\big(\,\widetilde{\rm VMO}({\mathbb{R}}^n)\big)$. Keeping this in mind, 
formula \eqref{gab-ii-GGVa} is established as soon as we show that 
\begin{equation}\label{gab-ii-GGVa-FFF}
\widetilde{T}_\Theta\in\overline{\text{span}}\,\Big\{\widetilde{R}_j\big|_{{}_{\rm VMO}}\Big\}_{1\leq j\leq n}
\,\,\text{ for each $\Theta$ as in \eqref{ytgfff.9ytrdf.ygfg.222222}}.
\end{equation}
To this end, fix an arbitrary $\Theta$ as in \eqref{ytgfff.9ytrdf.ygfg.222222}. To perform a spherical 
decomposition of $\Theta\big|_{S^{n-1}}$, we bring in some notation and recall some basic results. 
Specifically, define the integers
\begin{equation}\label{D-HarmK} 
H_0:=1,\quad
H_1:=n,\,\,\mbox{ and }\,\,
H_{\ell}:=\Big(\!\!
\begin{array}{c}
n-1+\ell
\\[-2pt]
\ell
\end{array}
\!\!\Big)
-
\Big(\!\!
\begin{array}{c}
n+\ell-3
\\[-2pt]
\ell-2
\end{array}
\!\!\Big)
\,\,\mbox{ if }\,\,\ell\geq 2,
\end{equation} 
and, for each $\ell\in{\mathbb{N}}_0$, let $\bigl\{\Psi_{i\ell}\bigr\}_{1\leq i\leq H_\ell}$ be 
an orthonormal basis for the space of spherical harmonics of degree $\ell$ 
on the $(n-1)$-dimensional sphere $S^{n-1}$ in ${\mathbb{R}}^{n}$. In particular, 
\begin{equation}\label{D-Har-Nr} 
H_{\ell}\leq(\ell+1)\cdot(\ell+2)\cdots(n+\ell-2)\cdot(n+\ell-1)
\leq C_n\,\ell^{n-1}\quad\mbox{ for }\,\,\ell\geq 2
\end{equation} 
and, if $\Delta_{S^{n-1}}$ denotes the Laplace-Beltrami operator on $S^{n-1}$,
then for each $\ell\in{\mathbb{N}}_0$ and $1\leq i\leq H_\ell$, 
\begin{equation}\label{eihen-XS}
\begin{array}{c}
\Delta_{S^{n-1}}\Psi_{i\ell}=-\ell(n+\ell-2)\Psi_{i\ell}\,\,\mbox{ on }\,\,S^{n-1},\,\,\mbox{ and}
\\[6pt]
\displaystyle
\Psi_{i\ell}\Big(\frac{x}{|x|}\Big)=\frac{P_{i\ell}(x)}{|x|^{\ell}}
\,\,\text{ for every }\,\,x\in{\mathbb{R}}^n\setminus\{0\},
\end{array}
\end{equation} 
for some homogeneous harmonic polynomial $P_{i\ell}$ of degree $\ell$ in ${\mathbb{R}}^n$.
Also, 
\begin{equation}\label{eihen-amm-ONB}
\big\{\Psi_{i\ell}\big\}_{\ell\in{\mathbb{N}}_0,\,1\leq i\leq H_\ell}
\,\,\mbox{ is an orthonormal basis for }\,\,L^2(S^{n-1}),
\end{equation} 
hence, 
\begin{equation}\label{eihen-amm}
\|\Psi_{i\ell}\|_{L^2(S^{n-1})}=1
\,\,\mbox{ for each $\ell\in{\mathbb{N}}_0$ and $1\leq i\leq H_\ell$}.
\end{equation} 
More details on these matters may be found in, e.g., 
\cite[pp.\,137--152]{StWe71} and \cite[pp.\,68--75]{Stein93}.
For further reference let us note here that, having fixed
\begin{equation}\label{eihen-amBB}
\text{an even integer $d\in{\mathbb{N}}$ with $d>[(n+1)/2]$,}
\end{equation} 
Sobolev's embedding theorem then gives that for each $\ell\in{\mathbb{N}}_0$ and 
$1\leq i\leq H_\ell$ we have (with $I$ standing for the identity operator on $S^{n-1}$)
\begin{equation}\label{kl-dYU-11}
\|\Psi_{i\ell}\|_{{\mathscr{C}}^1(S^{n-1})}
\leq C_n\big\|(I-\Delta_{S^{n-1}})^{d/2}\Psi_{i\ell}\big\|_{L^2(S^{n-1})}\leq C_n\ell^{\,d},
\end{equation}
where the last inequality is a consequence of \eqref{eihen-XS}-\eqref{eihen-amm} and, generally speaking, 
\begin{equation}\label{kl-dYU-11-XXX}
\|\Psi\|_{{\mathscr{C}}^1(S^{n-1})}:=\|\Psi\|_{L^\infty(S^{n-1})}+\|\nabla_{\rm tan}\Psi\|_{L^\infty(S^{n-1})},
\qquad\forall\,\Psi\in{\mathscr{C}}^1(S^{n-1}),
\end{equation}
with $\nabla_{\rm tan}$ denoting the tangential gradient to $S^{n-1}$. 

At this stage, observe that $\Theta\big|_{S^{n-1}}\in L^2(S^{n-1})$ hence we may expand
\begin{equation}\label{Yvvca-h6gv-1}
\Theta\Big|_{S^{n-1}}=\sum_{\ell=0}^\infty\sum_{i=1}^{H_\ell}\lambda_{i\ell}\Psi_{i\ell}\,\,\text{ in }\,\,L^2(S^{n-1})
\end{equation}
where
\begin{equation}\label{Yvvca-h6gv-2}
\lambda_{i\ell}:=\int\limits_{S^{n-1}}\Theta(\omega)\Psi_{i\ell}(\omega)\,d\omega
\,\,\,\text{ for each $\ell\in{\mathbb{N}}_0$ and $1\leq i\leq H_\ell$.}
\end{equation}
In relation to \eqref{Yvvca-h6gv-2} we claim that $\lambda_{i\ell}$ decays faster than any power of $\ell$, i.e., 
\begin{equation}\label{Yvvca-h6gv-3}
\parbox{8.60cm}{for each $m\in{\mathbb{N}}$ there exists $C_m\in(0,\infty)$ such that 
$|\lambda_{i\ell}|\leq C_m(1+\ell)^{-m}$ for each $\ell\in{\mathbb{N}}_0$ and $1\leq i\leq H_\ell$.}
\end{equation}
Indeed, if $\ell=0$ this is immediate from \eqref{Yvvca-h6gv-2}. 
In the case when $\ell\in{\mathbb{N}}$, then for each $m\in{\mathbb{N}}$ and $i\in\{1,\dots,H_\ell\}$ 
we may estimate 
\begin{align}\label{Yvvca-h6gv-4}
\big|\lambda_{i\ell}[-\ell(n+\ell-2)]^m\big| &
=\Big|\int\limits_{S^{n-1}}\Theta(\omega)[-\ell(n+\ell-2)]^m\Psi_{i\ell}(\omega)\,d\omega\Big|
\nonumber\\[6pt]
&=\Big|\int\limits_{S^{n-1}}\Delta_{S^{n-1}}^m\big(\Theta\big|_{S^{n-1}}\big)(\omega)
\Psi_{i\ell}(\omega)\,d\omega\Big|
\nonumber\\[6pt]
&\leq\Big\|\Delta_{S^{n-1}}^m\big(\Theta\big|_{S^{n-1}}\big)\Big\|_{L^2(S^{n-1})}=:C_m<+\infty,
\end{align}
thanks to \eqref{Yvvca-h6gv-2}, the first formula in \eqref{eihen-XS}, repeated integrations by parts, 
Cauchy-Schwarz inequality, and \eqref{eihen-amm} (bearing in mind that the finiteness of $C_m$ above 
is implied by the smoothness of $\Theta$). Now \eqref{Yvvca-h6gv-3} readily follows from \eqref{Yvvca-h6gv-4}.

To proceed, we recall a basic formula and make some notational conventions. Concretely, it is well-known 
(cf., e.g., \cite[Theorem~5, p.\,73]{St70}) that, in general, 
\begin{equation}\label{hyBBv.ygg}
\begin{array}{c}
\text{if $P_k$ is a harmonic homogeneous polynomial of degree $k\in{\mathbb{N}}$ in ${\mathbb{R}}^n$ then}
\\[6pt]
\displaystyle
{\mathcal{F}}\left({\rm P.V.}\,\frac{P_k(x)}{|x|^{n+k}}\right)(\xi)=(-i)^k\pi^{n/2}
\frac{\Gamma\big(\frac{k}{2}\big)}{\Gamma\big(\frac{k+n}{2}\big)}\frac{P_k(\xi)}{|\xi|^{k}},\qquad
\xi\in{\mathbb{R}}^n\setminus\{0\}.
\end{array}
\end{equation}
Also, for each multi-index $\alpha=(\alpha_1,\dots,\alpha_n)\in{\mathbb{N}}_0$ we agree to abbreviate 
\begin{equation}\label{hyBBv.ygg-uhb.1}
\begin{array}{c}
R^\alpha:=R_1^{\alpha_1}\circ\cdots\circ R_n^{\alpha_n}
\,\text{ in }\,{\mathscr{B}}\big(\,L^2({\mathbb{R}}^n)\big),
\\[6pt]
\widetilde{R}^\alpha:=\widetilde{R}_1^{\alpha_1}\circ\cdots\circ\widetilde{R}_n^{\alpha_n}
\,\text{ in }\,{\mathscr{B}}\big(\,\widetilde{\rm BMO}({\mathbb{R}}^n)\big),\,\,\text{ and}
\\[6pt]
\big(\widetilde{R}\big|_{\rm VMO}\big)^\alpha:=\big(\widetilde{R}_1\big|_{\rm VMO}\big)^{\alpha_1}\circ
\cdots\circ\big(\widetilde{R}_n\big|_{\rm VMO}\big)^{\alpha_n}
\,\text{ in }\,{\mathscr{B}}\big(\,\widetilde{\rm VMO}({\mathbb{R}}^n)\big),
\end{array}
\end{equation}
then use these abbreviations to define, for each given polynomial 
$P(x)=\sum_{|\alpha|\leq M}c_\alpha x^\alpha$ in ${\mathbb{R}}^n$,
\begin{equation}\label{hyBBv.ygg-uhb.2}
\begin{array}{c}
P(R):=\sum_{|\alpha|\leq M}c_\alpha R^\alpha,\qquad
P\big(\widetilde{R}\big):=\sum_{|\alpha|\leq M}c_\alpha\widetilde{R}^\alpha,
\\[10pt]
\displaystyle
\text{and }\,\,
P\big(\widetilde{R}\big|_{\rm VMO}\big):=\sum_{|\alpha|\leq M}c_\alpha\big(\widetilde{R}\big|_{\rm VMO}\big)^\alpha.
\end{array}
\end{equation}
For further reference, let us also observe that if $A\in{\mathscr{B}}\big(\,\widetilde{\rm BMO}({\mathbb{R}}^n)\big)$
is an operator leaving the space $\widetilde{\rm VMO}({\mathbb{R}}^n)$ invariant then 
$A\big|_{\widetilde{\rm VMO}({\mathbb{R}}^n)}\in{\mathscr{B}}\big(\,\widetilde{\rm VMO}({\mathbb{R}}^n)\big)$ and
\begin{equation}\label{hyBBv.ygg-uhb.3}
\Big\|A\big|_{\widetilde{\rm VMO}({\mathbb{R}}^n)}\Big\|_{{\mathscr{B}}\big(\,\widetilde{\rm VMO}({\mathbb{R}}^n)\big)}
\leq\|A\|_{{\mathscr{B}}\big(\,\widetilde{\rm BMO}({\mathbb{R}}^n)\big)}.
\end{equation}

Returning to the mainstream discussion, we claim that, with the polynomials $P_{i\ell}$ as in \eqref{eihen-XS}
and the $\lambda_{i\ell}$'s as in \eqref{Yvvca-h6gv-2}, we have
\begin{equation}\label{hyBBv.ygg-uhb.4}
\pi^{n/2}\sum_{\ell=0}^N\sum_{i=1}^{H_\ell}\lambda_{i\ell}
\frac{\Gamma\big(\frac{\ell}{2}\big)}{\Gamma\big(\frac{\ell+n}{2}\big)}
P_{i\ell}\big(\widetilde{R}\big)\longrightarrow\widetilde{T}_\Theta\,\,\text{ in }\,\,
{\mathscr{B}}\big(\,\widetilde{\rm BMO}({\mathbb{R}}^n)\big)\,\,\text{ as }\,\,N\to\infty.
\end{equation}
Once this is established, we may conclude with the help of \eqref{hyBBv.ygg-uhb.1}-\eqref{hyBBv.ygg-uhb.3}
that the claim in \eqref{gab-ii-GGVa-FFF} holds. This finishes the proof of \eqref{gab-ii-GGVa}, modulo 
the justification of \eqref{hyBBv.ygg-uhb.4}. 

To facilitate the proof of \eqref{hyBBv.ygg-uhb.4}, for each $N\in{\mathbb{N}}$ introduce 
\begin{equation}\label{hyBBv.ygg-uhb.5}
\Theta_N(x):=\sum_{\ell=0}^N\sum_{i=1}^{H_\ell}\lambda_{i\ell}\frac{P_{i\ell}(x)}{|x|^{n+\ell}}
=\sum_{\ell=0}^N\sum_{i=1}^{H_\ell}\frac{\lambda_{i\ell}}{|x|^n}\Psi_{i\ell}\Big(\frac{x}{|x|}\Big),
\qquad\forall\,x\in{\mathbb{R}}^n\setminus\{0\}.
\end{equation}
Note that \eqref{Yvvca-h6gv-2} implies $\lambda_{10}=0$, given the vanishing moment of 
$\Theta$ and the fact that $\Psi_{10}\big|_{S^{n-1}}$ is a constant (as seen from the second 
line in \eqref{eihen-XS} bearing in mind that the polynomial $P_{10}$ has degree zero). 
Then for each $N\in{\mathbb{N}}$ the function $\Theta_N$ is as in \eqref{ytgfff.9ytrdf.ygfg.222222}.
Bearing this in mind, we may rely on \eqref{PhBB-1}, \eqref{hyBBv.ygg-uhb.5}, \eqref{hyBBv.ygg}, 
and the fact that each $P_{i\ell}$ is a homogeneous harmonic polynomial of degree $\ell$ in ${\mathbb{R}}^n$,
to write
\begin{align}\label{hyBBv.ygg-uhb.5A}
m_{\Theta_N}(\xi) &=\big(\widehat{{\rm P.V.}\Theta_N}\big)(\xi)
=\sum_{\ell=0}^N\sum_{i=1}^{H_\ell}\lambda_{i\ell}\,
{\mathcal{F}}\left({\rm P.V.}\,\frac{P_{i\ell}(x)}{|x|^{n+\ell}}\right)(\xi)
\nonumber\\[6pt]
&=\pi^{n/2}\sum_{\ell=0}^N\sum_{i=1}^{H_\ell}\lambda_{i\ell}\,
\frac{\Gamma\big(\frac{\ell}{2}\big)}{\Gamma\big(\frac{\ell+n}{2}\big)}P_{i\ell}\Big(-i\frac{\xi}{|\xi|}\Big),
\qquad\forall\,\xi\in{\mathbb{R}}^n\setminus\{0\},
\end{align}
for each $N\in{\mathbb{N}}$. In turn, from \eqref{PhBB-1nv-ugg-hgv} and \eqref{hyBBv.ygg-uhb.5A} we see that
for each $N\in{\mathbb{N}}$ and each $f\in L^2({\mathbb{R}}^n)$ we have
\begin{align}\label{ibgTGF6hb54fe3}
\widehat{T_{\Theta_N}f}=m_\Theta\widehat{f}=\pi^{n/2}\sum_{\ell=0}^N\sum_{i=1}^{H_\ell}\lambda_{i\ell}\,
\frac{\Gamma\big(\frac{\ell}{2}\big)}{\Gamma\big(\frac{\ell+n}{2}\big)}\widehat{P_{i\ell}(R)f}.
\end{align}
Thus, for each $N\in{\mathbb{N}}$, 
\begin{align}\label{ibgTGF6hb54fe3.B}
T_{\Theta_N}=\pi^{n/2}\sum_{\ell=0}^N\sum_{i=1}^{H_\ell}\lambda_{i\ell}\,
\frac{\Gamma\big(\frac{\ell}{2}\big)}{\Gamma\big(\frac{\ell+n}{2}\big)}P_{i\ell}(R)
\,\,\text{ in }\,\,{\mathscr{B}}\big(L^2({\mathbb{R}}^n)\big)
\end{align}
which, with the help of Proposition~\ref{jhsdgf}, eventually permits us to conclude that 
\begin{align}\label{ibgTGF6hb54fe3.C}
\widetilde{T}_{\Theta_N}=\pi^{n/2}\sum_{\ell=0}^N\sum_{i=1}^{H_\ell}\lambda_{i\ell}\,
\frac{\Gamma\big(\frac{\ell}{2}\big)}{\Gamma\big(\frac{\ell+n}{2}\big)}P_{i\ell}(\widetilde{R})
\,\,\text{ in }\,\,{\mathscr{B}}\big(\,\widetilde{\rm BMO}({\mathbb{R}}^n)\big)
\,\,\text{ for each }\,\,N\in{\mathbb{N}}.
\end{align}
In view of \eqref{ibgTGF6hb54fe3.C} and \eqref{hyBBv.ygg-uhb.4}, the ultimate goal then becomes proving 
\begin{equation}\label{hyBBv.ygg-uhb.5FFF}
\widetilde{T}_{\Theta_N}\longrightarrow\widetilde{T}_\Theta\,\,\text{ in }\,\,
{\mathscr{B}}\big(\,\widetilde{\rm BMO}({\mathbb{R}}^n)\big)\,\,\text{ as }\,\,N\to\infty.
\end{equation}

With this aim in mind, recall from \eqref{dkegfs.5-AAA} (used with $p=2$) that there exists $\theta\in(0,1)$ 
such that for each $N\in{\mathbb{N}}$ we have
\begin{align}\label{dkegfs.5-AAA-jhh}
\big\|\widetilde{T}_{\Theta}-\widetilde{T}_{\Theta_N}\big\|_{{\mathscr{B}}(\,\widetilde{\rm BMO}({\mathbb{R}}^n))}
&=\big\|\widetilde{T}_{\Theta-\Theta_N}\big\|_{{\mathscr{B}}(\,\widetilde{\rm BMO}({\mathbb{R}}^n))}
\nonumber\\[6pt]
&\leq C_{n}\big\|T_{\Theta-\Theta_N}\big\|_{{\mathscr{B}}(L^2({\mathbb{R}}^n))}^{1-\theta}
\|\nabla\Theta-\nabla\Theta_N\|_{L^\infty(S^{n-1})}^{\theta}
\nonumber\\[6pt]
&\quad+C_n\big\|T_{\Theta-\Theta_N}\big\|_{{\mathscr{B}}(L^2({\mathbb{R}}^n))},
\end{align} 
where the last inequality uses the current format of the constant $C''$ from \eqref{dkegfs.5-AAA}
given in \eqref{ytgfff.9ytrdf.ygfg}.
Next, from \eqref{PhBB-1nv-ugg-hgv} and \eqref{PhBB-1nv-ugg} (used with $p=2$) we deduce that,
for each $N\in{\mathbb{N}}$,
\begin{align}\label{hyBBv.ygg-uhb.7}
\big\|T_{\Theta-\Theta_N}\big\|_{{\mathscr{B}}(L^2({\mathbb{R}}^n))}
\leq C_n\|m_{\Theta-\Theta_N}\|_{L^\infty({\mathbb{R}}^n)}\leq C_{n}\|\Theta-\Theta_N\|_{L^2(S^{n-1})}.
\end{align}
Since \eqref{hyBBv.ygg-uhb.5} and \eqref{Yvvca-h6gv-1} imply
\begin{equation}\label{hyBBv.ygg-uhb.6}
\Theta_N\Big|_{S^{n-1}}=\sum_{\ell=0}^N\sum_{i=1}^{H_\ell}\lambda_{i\ell}\Psi_{i\ell}\longrightarrow
\Theta\Big|_{S^{n-1}}\,\,\text{ in }\,\,L^2(S^{n-1})\,\,\text{ as }\,\,N\to\infty,
\end{equation}
it follows that $\|\Theta-\Theta_N\|_{L^2(S^{n-1})}\to 0$ as $N\to\infty$. Granted this, 
\eqref{hyBBv.ygg-uhb.5FFF} becomes a consequence of \eqref{dkegfs.5-AAA-jhh} and \eqref{hyBBv.ygg-uhb.7}
as soon as we establish that 
\begin{align}\label{dkegfs.5-AAA-jhh-hgfF}
\sup_{N\in{\mathbb{N}}}\|\nabla\Theta_N\|_{L^\infty(S^{n-1})}<+\infty.
\end{align} 

To justify \eqref{dkegfs.5-AAA-jhh-hgfF}, fix $N\in{\mathbb{N}}$ arbitrary and observe that
since $\Theta_N$ is positive homogeneous of degree $-n$, Euler's formula implies
\begin{align}\label{dkegfs.5-AAA-jhh-hgfF.ZA1}
x\cdot(\nabla\Theta_N)(x)=-n\,\Theta_N(x),\qquad\forall\,x\in{\mathbb{R}}^n\setminus\{0\}.
\end{align}
Consequently, 
\begin{align}\label{dkegfs.5-AAA-jhh-hgfF.ZA2}
\nabla_{\rm tan}\big(\Theta_N\big|_{S^{n-1}}\big)(x) &=(\nabla\Theta_N)(x)-\big(x\cdot(\nabla\Theta_N)(x)\big)x
\nonumber\\[4pt]
&=(\nabla\Theta_N)(x)+n\,\Theta_N(x)\,x\,\,\text{ for each }\,\,x\in S^{n-1}
\end{align}
which, in light of \eqref{kl-dYU-11-XXX}, further implies 
\begin{equation}\label{dkegfs.5-AAA-jhh-hgfF.ZA3}
\|\nabla\Theta_N\|_{L^\infty(S^{n-1})}\leq n\|\Theta_N\|_{{\mathscr{C}}^1(S^{n-1})}.
\end{equation}
On the other hand, from \eqref{hyBBv.ygg-uhb.5} we know that 
$\Theta_N=\sum_{\ell=0}^N\sum_{i=1}^{H_\ell}\lambda_{i\ell}\Psi_{i\ell}$ on $S^{n-1}$, hence
for each $m\in{\mathbb{N}}$ there exists $C_m\in(0,\infty)$ such that
\begin{align}\label{dkegfs.5-AAA-jhh-hgfF.ZA4}
\|\nabla\Theta_N\|_{L^\infty(S^{n-1})} &\leq n\|\Theta_N\|_{{\mathscr{C}}^1(S^{n-1})}
\leq n\sum_{\ell=0}^N\sum_{i=1}^{H_\ell}|\lambda_{i\ell}|\|\Psi_{i\ell}\|_{{\mathscr{C}}^1(S^{n-1})}
\nonumber\\[0pt]
&\leq C_m C_n\sum_{\ell=0}^N(1+\ell)^{-m}\ell^d\ell^{n-1},
\end{align}
where the last inequality is based on \eqref{Yvvca-h6gv-3}, \eqref{kl-dYU-11}, and \eqref{D-Har-Nr}.
Choosing $m$ large enough (depending on $n$ and $d$) so that the partial sums above converge, we ultimately see that  
\begin{align}\label{dkegfs.5-AAA-jhh-hgfF.ZA5}
\sup_{N\in{\mathbb{N}}}\|\nabla\Theta_N\|_{L^\infty(S^{n-1})}
\leq C_n C_m\sum_{\ell=0}^\infty(1+\ell)^{-m}\ell^d\ell^{n-1}<+\infty,
\end{align}
which establishes \eqref{dkegfs.5-AAA-jhh-hgfF}. This finishes the proof \eqref{gab-ii-GGVa}.

To deal with item {\it (c)}, assume next that $\Theta$ is as in \eqref{ytgfff.9ytrdf.ygfg.222222} 
and $c$ is as in \eqref{gab-ii-GG-yt55}. Then  
\begin{equation}\label{gab-ii-GG-yt55.bvc.1}
m(\xi):=\big(c+m_{\widetilde{\Theta}}(\xi)\big)^{-1}\,\,\text{ for each }\,\,\xi\in{\mathbb{R}}^n\setminus\{0\}
\end{equation}
is a well-defined function, which belongs to ${\mathscr{C}}^\infty({\mathbb{R}}^n\setminus\{0\})$
and is positive homogeneous of degree zero. As such, \eqref{ijBBa-hyf.baba} guarantees the existence
of a function $\Theta_0$ as in \eqref{ytgfff.9ytrdf.ygfg.222222} with the property that 
$m=c_0+m_{\widetilde{\Theta}_0}$, where $c_0:=\int_{S^{n-1}}m(\omega)\,d\omega\in{\mathbb{C}}$. 
We claim that 
\begin{equation}\label{gab-ii-GG-yt55.bvc.2iii}
m_{\widetilde{\Theta}}\,m_{\widetilde{\Theta}_0}=(1-cc_0)+m_{-c\widetilde{\Theta}_0-c_0\widetilde{\Theta}}.
\end{equation}
This is seen by expanding $m_{-c\widetilde{\Theta}_0-c_0\widetilde{\Theta}}
=-cm_{\widetilde{\Theta}_0}-c_0m_{\widetilde{\Theta}}$ then replacing throughout
$m_{\widetilde{\Theta}_0}$ by $\big(c+m_{\widetilde{\Theta}}\big)^{-1}-c_0$. After some 
simple algebra \eqref{gab-ii-GG-yt55.bvc.2iii} follows. 
By virtue of the second formula in \eqref{jhsfrwfr-BMO-ttt-iub}, the identity in \eqref{gab-ii-GG-yt55.bvc.2iii}
implies
\begin{align}\label{jhsfrwfr-BMO-ttt-iub.LLL}
\widetilde{T}_{\Theta}\circ\widetilde{T}_{\Theta_0}
&=(1-cc_0)I+\widetilde{T}_{-c\widetilde{\Theta}_0-c_0\widetilde{\Theta}}
\nonumber\\[4pt]
&=(1-cc_0)I-c\widetilde{T}_{\widetilde{\Theta}_0}-c_0\widetilde{T}_{\widetilde{\Theta}}
\,\,\text{ on }\,\,\widetilde{\mathrm{BMO}}({\mathbb{R}}^n).
\end{align}
The above formula may be re-cast as 
\begin{align}\label{jhsfrwfr-BMO-ttt-iub.FFF}
(cI+\widetilde{T}_{\Theta})\circ(c_0I+\widetilde{T}_{\Theta_0})=I
\,\,\text{ on }\,\,\widetilde{\mathrm{BMO}}({\mathbb{R}}^n).
\end{align}
In a similar manner we also obtain 
\begin{align}\label{jhsfrwfr-BMO-ttt-iub.FFF.2}
(c_0I+\widetilde{T}_{\Theta_0})\circ(cI+\widetilde{T}_{\Theta})=I
\,\,\text{ on }\,\,\widetilde{\mathrm{BMO}}({\mathbb{R}}^n).
\end{align}
From \eqref{jhsfrwfr-BMO-ttt-iub.FFF}-\eqref{jhsfrwfr-BMO-ttt-iub.FFF.2}
we conclude that $cI+\widetilde{T}_{\Theta}$ is invertible as an operator 
on $\widetilde{\mathrm{BMO}}({\mathbb{R}}^n)$, whose inverse is 
$c_0I+\widetilde{T}_{\Theta_0}\in{\mathscr{B}}\big(\,\widetilde{\mathrm{BMO}}({\mathbb{R}}^n)\big)$.
Since both operators map $\widetilde{\mathrm{VMO}}({\mathbb{R}}^n)$ into itself 
(cf. Theorem~\ref{i87hbBV}), it follows that 
$c_0I+\widetilde{T}_{\Theta_0}\big|_{{}_{\rm VMO}}\in{\mathscr{A}}_{{}_{\widetilde{\rm SIO}}}$ 
is the inverse of $cI+\widetilde{T}_{\Theta}\big|_{{}_{\rm VMO}}$. This concludes the treatment of item {\it (c)}.

Moving on, the first claim made in item {\it (d)}, pertaining to the equivalence stated in \eqref{u7GBB}, 
is seen directly from item {\it (c)} (which yields the left-pointing implication), and 
Theorem~\ref{i87hbBV} (which gives the right-pointing implication).
Consider next the second claim made in item {\it (d)}. To set the stage, pick $N\in{\mathbb{N}}$ 
and assume $\Theta_1,\dots,\Theta_N$ are as in \eqref{ytgfff.9ytrdf.ygfg.222222} while 
$c_1,\dots,c_N$ are as in \eqref{gab-ii-GG-yt55.ZZZ}. If we set 
\begin{equation}\label{u7GBB.Za.1}
Q(\xi):=\sum_{j=1}^N\big|c_j+m_{\widetilde{\Theta}_j}(\xi)\big|^2\,\,\text{ for each }\,\,
\xi\in{\mathbb{R}}^n\setminus\{0\},
\end{equation}
then the present assumptions ensure that $Q$ is a real-valued function which is well-defined, 
of class ${\mathscr{C}}^\infty$, positive homogeneous of degree zero, and never zero in 
${\mathbb{R}}^n\setminus\{0\}$. As such, if for each $j\in\{1,\dots,N\}$ we now introduce 
\begin{equation}\label{u7GBB.Za.2}
m_j(\xi):=\frac{\overline{c_j}+m_{\overline{\Theta}_j}(\xi)}{Q(\xi)}
=\frac{\overline{c_j+m_{\widetilde{\Theta}_j}(\xi)}}{Q(\xi)}
\,\,\text{ for each }\,\,\xi\in{\mathbb{R}}^n\setminus\{0\}
\end{equation}
(where the second equality is a consequence of one of the formulas in \eqref{PhBB-1gd}), 
then each $m_j$ is a complex-valued function which is well-defined, of class ${\mathscr{C}}^\infty$, 
and positive homogeneous of degree zero in ${\mathbb{R}}^n\setminus\{0\}$. According to \eqref{ijBBa-hyf.baba}, 
these properties guarantee the existence of numbers $c'_j\in{\mathbb{C}}$ and functions $\Theta'_{\!j}$ 
as in \eqref{ytgfff.9ytrdf.ygfg.222222} such that 
\begin{equation}\label{u7GBB.Za.3}
m_j=c'_j+m_{\widetilde{\Theta'}_{\!\!j}}\,\,\text{ in }\,\,{\mathbb{R}}^n\setminus\{0\}
\,\,\text{ for each }\,\,j\in\{1,\dots,N\}.
\end{equation}
Since, by design, $\sum_{j=1}^N m_j(\xi)\big(c_j+m_{\widetilde{\Theta}_j}(\xi)\big)=1$ 
for each $\xi\in{\mathbb{R}}^n\setminus\{0\}$, we then conclude that 
\begin{equation}\label{u7GBB.Za.4}
\sum_{j=1}^N\big(c'_j+m_{\widetilde{\Theta'}_{\!\!j}}(\xi)\big)\big(c_j+m_{\widetilde{\Theta}_j}(\xi)\big)=1
\,\,\text{ for each }\,\,\xi\in{\mathbb{R}}^n\setminus\{0\}
\end{equation}
or, equivalently, 
\begin{equation}\label{u7GBB.Za.5}
\sum_{j=1}^N m_{\widetilde{\Theta'}_{\!\!j}}\,m_{\widetilde{\Theta}_j}=c+m_{\widetilde{\Theta}}
\,\,\text{ in }\,\,{\mathbb{R}}^n\setminus\{0\},
\end{equation}
where 
\begin{equation}\label{u7GBB.Za.6}
c:=\Big(1-\sum_{j=1}^N c'_jc_j\Big)\in{\mathbb{C}}\,\,\text{ and }\,\,
\Theta:=-\sum_{j=1}^N\Big\{c'_j{\Theta}_j+c_j{\Theta'}_{\!\!j}\Big\}
\,\,\text{ is as in }\,\,\eqref{ytgfff.9ytrdf.ygfg.222222}.
\end{equation}
Similarly to \eqref{jhsfrwfr-BMO-ttt-iub}, from \eqref{u7GBB.Za.5}-\eqref{u7GBB.Za.6} 
we conclude that 
\begin{equation}\label{u7GBB.Za.7}
\sum_{j=1}^N \widetilde{T}_{{\Theta'}_{\!\!j}}\,\widetilde{T}_{{\Theta}_j}=cI+\widetilde{T}_{\Theta}
=\Big(1-\sum_{j=1}^N c'_jc_j\Big)I-\sum_{j=1}^N\Big\{c'_j\widetilde{T}_{{\Theta}_j}
+c_j\widetilde{T}_{{\Theta'}_{\!\!j}}\Big\}
\end{equation}
which, in turn, implies 
\begin{equation}\label{u7GBB.Za.8}
\sum_{j=1}^N\big(c'_j I+\widetilde{T}_{{\Theta'}_{\!\!j}}\big)
\big(c_j I+\widetilde{T}_{{\Theta}_j}\big)=I\,\,\text{ in }\,\,
{\mathscr{B}}\big(\,\widetilde{\rm BMO}({\mathbb{R}}^n)\big).
\end{equation}

With this in hand, we may turn to the proof of the equivalence recorded in \eqref{u7GBB.ZZZ} in earnest. 
The right-pointing implication is clear from Theorem~\ref{i87hbBV}. As regards the left-pointing 
implication, assume $f\in{\mathrm{BMO}}({\mathbb{R}}^n)$ is such that there exist 
$g_1,\dots,g_N\in{\mathrm{VMO}}({\mathbb{R}}^n)$ with the property that 
\begin{equation}\label{u7GBB.ZZZ.VVV}
[g_j]=(c_jI+\widetilde{T}_{\Theta_j})[f]\,\,\text{ in }\,\,\widetilde{\mathrm{BMO}}({\mathbb{R}}^n)
\,\,\text{ for each }\,\,j\in\{1,\dots,N\}.
\end{equation}
Then \eqref{u7GBB.Za.8} permits to express $[f]\in\widetilde{\rm BMO}({\mathbb{R}}^n)$ as
\begin{equation}\label{u7GBB.Za.8.TT}
[f]=\sum_{j=1}^N\big(c'_j I+\widetilde{T}_{{\Theta'}_{\!\!j}}\big)\big(c_j I+\widetilde{T}_{{\Theta}_j}\big)[f]
=\sum_{j=1}^N\big(c'_j I+\widetilde{T}_{{\Theta'}_{\!\!j}}\big)[g_j]\in\widetilde{\rm VMO}({\mathbb{R}}^n)
\end{equation}
where the membership above is provided by Theorem~\ref{i87hbBV}. Ultimately, from \eqref{u7GBB.Za.8.TT} 
we conclude that $f\in{\mathrm{VMO}}({\mathbb{R}}^n)$, finishing the proof of \eqref{u7GBB.ZZZ}.

Finally, the proofs of the claims in item {\it (e)} closely parallel those in the scalar case, with minor 
natural adjustments, of a purely algebraic nature (designed to accommodate the present matrix-formalism).
\end{proof}

In turn, Theorem~\ref{i87hbBV-ALG} may be specialized as to yield 
Corollaries~\ref{i87hbBV-ALG-CCC}-\ref{nhxtrE-SSS} as indicated below. 

\vskip 0.08in
\begin{proof}[Proof of Corollary~\ref{i87hbBV-ALG-CCC}]
The strategy is to devise a suitable dictionary between the algebra formalism, currently used, 
and the matrix formalism described in item {\it (e)} of Theorem~\ref{i87hbBV-ALG}, which is going 
to yield \eqref{u7GBB-CCC} at once. To get started, fix a linear basis $\{e_1,\dots,e_N\}$ 
in $A$, regarded as a vector space. Then we have a linear isomorphism 
\begin{equation}\label{kj7AA-LLGG.1}
A\ni a=\sum_{j=1}^Na_je_j\longmapsto a^V:=(a_j)_{1\leq j\leq N}\in{\mathbb{C}}^N
\end{equation}
identifying algebra elements $a\in A$ with their vector realizations $a^V\in{\mathbb{C}}^N$.
We shall also need to identify each algebra element $a\in A$ with a certain matrix $a^M\in{\mathbb{C}}^{N\times N}$.
To define this matrix realization, consider the family of complex numbers $\lambda_{\ell k j}$, 
with $1\leq\ell,k,j\leq N$, such that 
\begin{equation}\label{kj7AA-LLGG.2}
e_j\odot e_k=\sum_{\ell=1}^N\lambda_{\ell k j}e_\ell\,\,\text{ for each }\,\,j,k\in\{1,\dots,N\},
\end{equation}
then set 
\begin{equation}\label{kj7AA-LLGG.4}
a^M:=\Big(\sum_{j=1}^N\lambda_{\ell k j}a_j\Big)_{1\leq\ell,k\leq N}\in{\mathbb{C}}^{N\times N},
\qquad\forall\,a=\sum_{j=1}^Na_je_j\in A.
\end{equation}
In relation to these realizations of algebra elements, the following identity holds
\begin{equation}\label{kj7AA-LLGG.5}
a\odot b=c\Longleftrightarrow a^Mb^V=c^V,\qquad\forall\,a,b,c\in A.
\end{equation}
We next claim that 
\begin{equation}\label{kj7AA-LLGG.6}
\parbox{7.00cm}{if $a\in A$ is invertible in $A$ from the right then 
the matrix $a^M$ is invertible in ${\mathbb{C}}^{N\times N}$.}
\end{equation}
To see this, fix $a\in A$ which has an inverse $a^{-1}_R\in A$ from the right, and pick some 
arbitrary $(z_1,\dots,z_N)\in{\mathbb{C}}^N$. Set $c:=\sum_{\ell=1}^Nz_\ell e_\ell\in A$
and consider $b:=a^{-1}_R\odot c\in A$. According to \eqref{kj7AA-LLGG.5}, the fact that $a\odot b=c$ 
then translates into $a^Mb^V=c^V=(z_1,\dots,z_N)$. Since the latter is an arbitrary vector in ${\mathbb{C}}^N$, 
this proves that, as a linear map from ${\mathbb{C}}^N$ into itself, the matrix $a^M$ is surjective, 
hence ultimately, invertible. 

Consider next an $A$-valued kernel $\Theta$ as in \eqref{ytgfff.9ytrdf.ygfg.222222-CCC}.
Then $\Theta=\sum_{j=1}^N\Theta_j e_j$ with each scalar component $\Theta_j$ as 
in \eqref{ytgfff.9ytrdf.ygfg.222222} and, by definition and \eqref{kj7AA-LLGG.2},  
\begin{align}\label{kj7AA-LLGG.7}
\begin{array}{c}
\widetilde{T}_\Theta[f]=\sum_{j,k=1}^N\widetilde{T}_{\Theta_j}[f_k]e_j\odot e_k
=\sum_{j,k,\ell=1}^N\lambda_{\ell kj}\widetilde{T}_{\Theta_j}[f_k]e_\ell,
\\[6pt]
\text{for every }\,\,f=\sum_{k=1}^Nf_ke_k\in{\rm BMO}({\mathbb{R}}^n)\otimes A.
\end{array}
\end{align}
If we also associated with the $A$-valued kernel $\Theta$ the matrix-valued kernel $\Theta^M$ as in 
\eqref{kj7AA-LLGG.4}, we may rewrite \eqref{kj7AA-LLGG.7} simply as
\begin{equation}\label{kj7AA-LLGG.8}
\big(\widetilde{T}_\Theta[f]\big)^V=\widetilde{T}_{\Theta^M}[f]^V,
\qquad\forall\,f\in{\rm BMO}({\mathbb{R}}^n)\otimes A.
\end{equation}
Since \eqref{kj7AA-LLGG.5} also gives
\begin{equation}\label{kj7AA-LLGG.9}
\big(c\odot[f]\big)^V=c^M[f]^V,\qquad\forall\,c\in A,\quad\forall\,f\in{\rm BMO}({\mathbb{R}}^n)\otimes A,
\end{equation}
from \eqref{kj7AA-LLGG.8}-\eqref{kj7AA-LLGG.9} we finally conclude that 
\begin{equation}\label{kj7AA-LLGG.10}
\Big(\big(cI+\widetilde{T}_\Theta\big)[f]\Big)^V=\big(c^M+\widetilde{T}_{\Theta^M}\big)[f]^V,
\qquad\forall\,c\in A,\quad\forall\,f\in{\rm BMO}({\mathbb{R}}^n)\otimes A.
\end{equation}
There remains to observe that, since 
$\big(c+m_{\widetilde{\Theta}}(\xi)\big)^M=c^M+m_{{\widetilde{\Theta}}^M}(\xi)$
for each $\xi\in{\mathbb{R}}^n\setminus\{0\}$, from \eqref{kj7AA-LLGG.6} we have that
\begin{equation}\label{kj7AA-LLGG.11}
\parbox{8.00cm}{if $c$ is as in \eqref{gab-ii-GG-yt55-CCC} then
for each $\xi\in{\mathbb{R}}^n\setminus\{0\}$ the matrix 
$c^M+m_{{\widetilde{\Theta}}^M}(\xi)$ is invertible in ${\mathbb{C}}^{N\times N}$.}
\end{equation}
Then \eqref{kj7AA-LLGG.10}-\eqref{kj7AA-LLGG.11} ensure that item {\it (e)} of Theorem~\ref{i87hbBV-ALG}
applies (with ${\mathscr{V}}:={\mathbb{C}}^N$) which proves \eqref{u7GBB-CCC}.
\end{proof}

\vskip 0.08in
\begin{proof}[Proof of Corollary~\ref{nhxtrE.2D}]
The complex Riesz transform defined in \eqref{R-87ygbg.RRR} as well as the Beurling transform \eqref{R-87ygbg.BEBE} 
are principal value convolution operators of the sort discussed in \eqref{ytgfff.9ytrdf.ygfg}. Specifically, 
\begin{equation}\label{ABC-rrr.1}
R_{{}_{\mathbb{C}}}=T_{\Theta_1}\,\,\text{ with }\,\,\Theta_1(z):=\frac{z}{2\pi|z|^3}
\,\,\text{ for }\,\,z\in{\mathbb{C}}\setminus\{0\},
\end{equation}
and 
\begin{equation}\label{ABC-rrr.2}
S=T_{\Theta_2}\,\,\text{ with }\,\,\Theta_2(z):=-\frac{1}{\pi z^2}=-\frac{(\overline{z})^2}{\pi |z|^4}
\,\,\text{ for }\,\,z\in{\mathbb{C}}\setminus\{0\}.
\end{equation}
Their associated symbols are given by (cf. \eqref{hyBBv.ygg}) 
\begin{equation}\label{ABC-rrr.rd}
\begin{array}{c}
m_{\Theta_1}(\xi)=(\widehat{{\rm P.V.}\Theta_1})(\xi)
=-i\xi/|\xi|\,\,\text{ for }\,\,\xi\in{\mathbb{C}}\setminus\{0\},\,\,\text{ and}
\\[8pt]
m_{\Theta_2}(\xi)=(\widehat{{\rm P.V.}\Theta_2})(\xi)
=(\overline{\xi})^2/|\xi|^2=\overline{\xi}/\xi\,\,\text{ for }\,\,\xi\in{\mathbb{C}}\setminus\{0\}.
\end{array}
\end{equation}
Upon observing that for $j\in\{1,2\}$ we have
\begin{align}\label{ABC-rrr.rd.222}
c\in{\mathbb{C}}\,\,\text{ with }\,\,|c|\not=1\,\Longrightarrow\,
c\in{\mathbb{C}}\setminus\big\{-m_{\widetilde{\Theta}_j}(\xi):\,\xi\in{\mathbb{C}}\setminus\{0\}\big\},
\end{align}
the first part of item {\it (d)} in Theorem~\ref{i87hbBV-ALG} applies and gives that 
{\it (i)}\,$\Leftrightarrow$\,{\it (ii)} as well as {\it (i)}\,$\Leftrightarrow$\,{\it (iii)}. 
This finishes the proof of Corollary~\ref{nhxtrE.2D}.
\end{proof}

\vskip 0.08in
\begin{proof}[Proof of Corollary~\ref{nhxtrE.ND}]
The Clifford-Riesz transform defined in \eqref{R-87ygbg.RRR.NNN} is a principal value convolution operator of form  
$R_{{}_{{\mathcal{C}}\!\ell}}=T_{\Theta}$, where the kernel is the Clifford algebra-valued function 
(see the convention in \eqref{im-e-yyy}) 
\begin{equation}\label{ABC-rrr.1XXX}
\Theta:{\mathbb{R}}^n\setminus\{0\}\to{\mathcal{C}}\!\ell_n
\,\,\text{ given by }\,\,\Theta(x):=\frac{\Gamma\big(\frac{n+1}{2}\big)}{\pi^{\frac{n+1}{2}}}\frac{x}{|x|^{n+1}}
\,\,\text{ for }\,\,x\in{\mathbb{R}}^n\setminus\{0\}.
\end{equation}
Thanks to \eqref{hyBBv.ygg}, its associated symbol may be explicitly identified as 
\begin{equation}\label{ABC-rrr.rdXXX}
m_{\Theta}(\xi)=(\widehat{{\rm P.V.}\Theta})(\xi)
=-i\xi/|\xi|\,\,\text{ for }\,\,\xi\in{\mathbb{R}}^n\setminus\{0\}.
\end{equation}
In particular, if $c\in{\mathcal{C}}\!\ell_{n}$ is such that 
$c+i\omega$ is invertible in ${\mathcal{C}}\!\ell_{n}$ from the right for each vector 
$\omega\in S^{n-1}\subseteq{\mathbb{R}}^n\hookrightarrow{\mathcal{C}}\!\ell_{n}$, then
\begin{align}\label{ABC-rrr.rd.222XXX}
c+m_{\widetilde{\Theta}}(\xi)\,\,\text{ is invertible in ${\mathcal{C}}\!\ell_{n}$ from the right for each }
\,\,\xi\in{\mathbb{R}}^n\setminus\{0\}.
\end{align}
Granted this, Corollary~\ref{i87hbBV-ALG-CCC} applies with $A:={\mathcal{C}}\!\ell_{n}$
and gives the equivalence in \eqref{ytGVVV}.
\end{proof}

\vskip 0.08in
\begin{proof}[Proof of Corollary~\ref{nhxtrE}]
The equivalence stated in \eqref{jhxfdw-R.jhghg} is an immediate consequence of \eqref{u7GBB.ZZZ} 
(used with $N=n$ and $\Theta_j:=K_j$, defined in \eqref{R-87ygbg}, for $1\leq j\leq n$) upon noting 
that condition \eqref{gab-ii-GG-yt55.ZZZ} presently reads $(c_1,\dots,c_n)\in{\mathbb{C}}^n\setminus iS^{n-1}$.
\end{proof}

\vskip 0.08in
\begin{proof}[Proof of Corollary~\ref{nhxtrE-SSS}]
To recast the operator $S_\theta$ in the manner described in \eqref{i7yggg.B4}, fix some arbitrary differential 
form $f\in L^2({\mathbb{R}}^n)\otimes\Lambda$ and, starting with \eqref{8hhBBBa.UUU}-\eqref{8hhBBBa.AAA.xxx}, 
write (bearing in mind that the $j$-th Riesz transform on $L^2({\mathbb{R}}^n)$ is the multiplier with 
symbol $-i\xi_j/|\xi|$)
\begin{align}\label{i7yggg.B1}
\widehat{S_\theta f}(\xi) &=-\theta\sum_{j,k=1}^n
dx_j\wedge\Big(dx_k\vee\Big(\frac{\xi_j\xi_k}{|\xi|^2}\widehat{f}(\xi)\Big)\Big)
\nonumber\\[6pt]
&\quad +\theta^{-1}\sum_{j,k=1}^n dx_j\vee\Big(dx_k\wedge\Big(\frac{\xi_j\xi_k}{|\xi|^2}\widehat{f}(\xi)\Big)\Big).
\end{align}
Granted \eqref{utggvVV.T1}-\eqref{utggvVV.T2}, we may consider the principal-value distribution 
${\rm P.V.}\,\Theta_{jk}$ associated with $\Theta_{jk}$ as in \eqref{iu7HBBam}. 
Upon recalling (cf. \cite[Proposition~4.70, p.\,141]{DM}) that for each pair of indices
$j,k\in\{1,\dots,n\}$ we have (with ${\mathcal{F}}$ used as an alternative notation for 
the Fourier transform `hat' in ${\mathbb{R}}^n$, and with $\delta$ denoting the standard 
Dirac distribution in ${\mathbb{R}}^n$) 
\begin{align}\label{i7yggg.B2}
\frac{\xi_j\xi_k}{|\xi|^2}={\mathcal{F}}\Big(\,\frac{1}{\omega_{n-1}}\big({\rm P.V.}\,\Theta_{jk}\big)
+\frac{1}{n}\delta_{jk}\,\delta\Big)(\xi),
\end{align}
for each $j,k\in\{1,\dots,n\}$ we may express
\begin{align}\label{i7yggg.B3}
\frac{\xi_j\xi_k}{|\xi|^2}\widehat{f}(\xi)
&={\mathcal{F}}\Big(\Big(\frac{1}{\omega_{n-1}}\big({\rm P.V.}\,\Theta_{jk}\big)
+\frac{1}{n}\delta_{jk}\,\delta\Big)\ast f\Big)(\xi)
\nonumber\\[6pt]
&={\mathcal{F}}\Big(\,\frac{1}{\omega_{n-1}}T_{\Theta_{jk}}f+\frac{1}{n}\delta_{jk}f\Big)(\xi).
\end{align}
In turn, from \eqref{i7yggg.B1} and \eqref{i7yggg.B3} we readily conclude that \eqref{i7yggg.B4} holds.

Next, Proposition~\ref{jhsdgf} ensures that $S_\theta$, originally considered as in \eqref{i7yggg.B4}, 
further extends to a well-defined linear and bounded operator from the space 
$H^1({\mathbb{R}}^n)\otimes\Lambda$ into itself. Keeping this in mind, for each 
$[f]\in\widetilde{\rm BMO}({\mathbb{R}}^n)\otimes\Lambda$ and each $g\in H^1({\mathbb{R}}^n)\otimes\Lambda$ 
we may write 
\begin{align}\label{i7yggg.B6666}
\big\langle[f],S_\theta g\big\rangle &=-\frac{\theta}{\omega_{n-1}}\sum_{j,k=1}^n
\Big\langle[f]\,,\,dx_k\wedge\big(dx_j\vee\big(T_{\widetilde{\Theta}_{jk}}g\big)\big)\Big\rangle 
\nonumber\\[6pt]
&\quad+\frac{\theta^{-1}}{\omega_{n-1}}\sum_{j,k=1}^n\Big\langle[f]\,,\,  
dx_k\vee\big(dx_j\wedge\big(T_{\widetilde{\Theta}_{jk}}g\big)\big)\Big\rangle 
\nonumber\\[6pt]
&\quad-\frac{\theta}{n}\sum_{j=1}^n\Big\langle[f]\,,\,dx_j\wedge(dx_j\vee g)\Big\rangle 
\nonumber\\[6pt]
&\quad+\frac{\theta^{-1}}{n}\sum_{j=1}^n\Big\langle[f]\,,\,dx_j\vee(dx_j\wedge g)\Big\rangle 
\nonumber\\[6pt]
&=-\frac{\theta}{\omega_{n-1}}\sum_{j,k=1}^n
\Big\langle dx_j\wedge\big(dx_k\vee\big(\widetilde{T}_{\Theta_{jk}}[f]\big)\big)\,,\,g\Big\rangle 
\nonumber\\[6pt]
&\quad+\frac{\theta^{-1}}{\omega_{n-1}}\sum_{j,k=1}^n\Big\langle  
dx_j\vee\big(dx_k\wedge\big(\widetilde{T}_{\Theta_{jk}}[f]\big)\big)\,,\,g\Big\rangle 
\nonumber\\[6pt]
&\quad-\frac{\theta}{n}\sum_{j=1}^n\Big\langle dx_j\wedge(dx_j\vee[f])\,,\,g\Big\rangle 
\nonumber\\[6pt]
&\quad+\frac{\theta^{-1}}{n}\sum_{j=1}^n\Big\langle dx_j\vee(dx_j\wedge[f])\,,\,g\Big\rangle.
\end{align}
The first equality above uses $\widetilde{\Theta}_{jk}={\Theta}_{jk}={\Theta}_{kj}$ (cf. \eqref{utggvVV.T2}), 
while the second equality is based on the transposition formula \eqref{dkegfs.gG-jgV} and  
the fact that the interior and exterior product of forms are dual to one another. 
On the other hand, since for each $[f]\in\widetilde{\rm BMO}({\mathbb{R}}^n)\otimes\Lambda$ and 
$g\in H^1({\mathbb{R}}^n)\otimes\Lambda$ we have 
\begin{align}\label{i7yggg.BihR.a}
\big\langle\widetilde{R}\wedge[f],g\big\rangle &=\Big\langle\sum_{j=1}^n
dx_j\wedge\widetilde{R}_j[f]\,,\,g\Big\rangle 
=\sum_{j=1}^n\big\langle\widetilde{R}_j[f]\,,\,dx_j\vee g\big\rangle
\nonumber\\[6pt]
&=-\sum_{j=1}^n\big\langle[f]\,,\,dx_j\vee R_jg\big\rangle
=-\big\langle[f],R\vee g\big\rangle,
\end{align}
and, similarly, 
\begin{align}\label{i7yggg.BihR.b}
\big\langle\widetilde{R}\vee[f],g\big\rangle=-\big\langle[f],R\wedge g\big\rangle,
\end{align}
from \eqref{8hhBBBa.UUU.xxxx} and \eqref{i7yggg.BihR.a}-\eqref{i7yggg.BihR.b} we conclude that 
\begin{equation}\label{8hhBBBa.RFcc}
\big\langle\widetilde{S}_\theta[f],g\big\rangle=\big\langle[f],S_\theta g\big\rangle,\quad
\forall\,[f]\in\widetilde{\mathrm{BMO}}({\mathbb{R}}^n)\otimes\Lambda,\,\,
\forall\,g\in H^1({\mathbb{R}}^n)\otimes\Lambda.
\end{equation}
At this stage, by comparing \eqref{i7yggg.B6666} with \eqref{8hhBBBa.RFcc} and keeping in mind the 
$\widetilde{\mathrm{BMO}}$-$H^1$ duality, we conclude that \eqref{i7yggg.B5555} holds.

Let us now turn our attention to the equivalences in the last part of the statement of the corollary. 
As a preamble, for each $\omega=(\omega_1,\dots,\omega_n)\in S^{n-1}$, identified with the differential form 
of degree one $\omega_1\,dx_1+\cdots+\omega_n\,dx_n$ in ${\mathbb{R}}^n$, introduce the 
operators $P_\omega,Q_\omega$ acting on an arbitrary differential form $u\in\Lambda$ according to 
\begin{equation}\label{i7yggg.A1}
P_\omega u:=\omega\wedge(\omega\vee u),\qquad Q_\omega u:=\omega\vee(\omega\wedge u).
\end{equation}
In the same vein, for each $\theta\in{\mathbb{C}}\setminus\{0\}$ and $\omega\in S^{n-1}$ let us also set 
\begin{equation}\label{i7yggg.A1bbbb}
\Omega_{\theta,\omega}\,u:=\theta\omega\wedge u+\theta^{-1}\omega\vee u,\qquad\forall\,u\in\Lambda.
\end{equation}
Then, with $I$ denoting the identity operator on $\Lambda$, for each $\omega\in S^{n-1}$ and 
$\theta\in{\mathbb{C}}\setminus\{0\}$ we have (see \cite[Lemma~2.2, p.\,54]{MMMT16})
\begin{equation}\label{i7yggg.A2}
\begin{array}{c}
P_\omega Q_\omega=Q_\omega P_\omega=0,\quad P_\omega+Q_\omega=I,
\\[4pt]
P_\omega^2=P_\omega,\quad Q_\omega^2=Q_\omega,
\,\,\text{ and }\,\,\Omega_{\theta,\omega}\Omega_{\theta,\omega}=I.
\end{array}
\end{equation}
In this notation, it follows from \eqref{8hhBBBa.AAA}-\eqref{8hhBBBa.1} that
\begin{equation}\label{i7yggg.A4}
\begin{array}{c}
S_\theta:L^2({\mathbb{R}}^n)\otimes\Lambda\longrightarrow L^2({\mathbb{R}}^n)\otimes\Lambda
\,\,\text{ acts on each }\,\,f\in L^2({\mathbb{R}}^n)\otimes\Lambda
\\[6pt]
\text{according to }\,\,
S_\theta f(x)={\mathcal{F}}^{-1}_{\xi\to x}
\Big(\big(-\theta P_{\xi/|\xi|}+\theta^{-1}Q_{\xi/|\xi|}\big)\widehat{f}(\xi)\Big)
\,\,\text{ for a.e. }\,\,x\in{\mathbb{R}}^n.
\end{array}
\end{equation}
Hence, $S_\theta$ is a multiplier operator with symbol given by 
\begin{equation}\label{i7yggg.A4-hfc}
m(\xi):=-\theta P_{\xi/|\xi|}+\theta^{-1}Q_{\xi/|\xi|}\in{\rm Hom}(\Lambda,\Lambda)
\,\,\text{ for }\,\,\xi\in{\mathbb{R}}^n\setminus\{0\}.
\end{equation}
We now claim that 
\begin{equation}\label{i7yggg.A4-hfc.22}
\parbox{8.30cm}{if $\theta\in{\mathbb{C}}\setminus\{0\}$ and 
$c\in{\mathbb{C}}\setminus\big\{\theta\,,\,-\theta^{-1}\big\}$ then $cI+m(\xi)$ is 
invertible in ${\rm Hom}(\Lambda,\Lambda)$ for each $\xi\in{\mathbb{R}}^n\setminus\{0\}$.}
\end{equation}
To see this, assume $\theta$ and $c$ are as above and fix some $\xi\in{\mathbb{R}}^n\setminus\{0\}$ 
arbitrary. Then, based on \eqref{i7yggg.A2} it is easy to see that $cI+m(\xi)\in {\rm Hom}(\Lambda,\Lambda)$ 
and $cI+\theta^{-1}P_{\xi/|\xi|}-\theta Q_{\xi/|\xi|}\in {\rm Hom}(\Lambda,\Lambda)$
commute and their composition is $(c-\theta)(c+\theta^{-1})I$. Hence, \eqref{i7yggg.A4-hfc.22} follows. 
Granted this, we may then conclude from item {\it (e)} of Theorem~\ref{i87hbBV-ALG} 
(applied with ${\mathscr{V}}:=\Lambda$) that the equivalence {\it (i)}\,$\Leftrightarrow$\,{\it (ii)} 
in the last part of Corollary~\ref{nhxtrE-SSS} holds. 

Likewise, it is visible from \eqref{8hhBBBa.VVV} that
\begin{equation}\label{i7yggg.A4+RRR}
\begin{array}{c}
R_\theta:L^2({\mathbb{R}}^n)\otimes\Lambda\longrightarrow L^2({\mathbb{R}}^n)\otimes\Lambda
\,\,\text{ acts on each }\,\,f\in L^2({\mathbb{R}}^n)\otimes\Lambda
\\[6pt]
\text{according to }\,\,
R_\theta f(x)=-{\mathcal{F}}^{-1}_{\xi\to x}\Big(\Omega_{\theta,\,\xi/|\xi|}\widehat{f}(\xi)\Big)
\,\,\text{ for a.e. }\,\,x\in{\mathbb{R}}^n,
\end{array}
\end{equation}
hence $R_\theta$ is a multiplier operator with symbol given by 
\begin{equation}\label{i7yggg.A4-hfc+RRR}
m(\xi):=-\Omega_{\theta,\,\xi/|\xi|}\in{\rm Hom}(\Lambda,\Lambda)
\,\,\text{ for }\,\,\xi\in{\mathbb{R}}^n\setminus\{0\}.
\end{equation}
Since, thanks to the last formula in \eqref{i7yggg.A2}, for each vector $\xi\in{\mathbb{R}}^n\setminus\{0\}$ 
we may write $\big(cI-\Omega_{\theta,\,\xi/|\xi|}\big)\big(cI+\Omega_{\theta,\,\xi/|\xi|}\big)=(c^2-1)I$, 
we conclude that  
\begin{equation}\label{i7yggg.A4-hfc.22-XX}
\parbox{8.30cm}{if $\theta\in{\mathbb{C}}\setminus\{0\}$ and 
$c\in{\mathbb{C}}\setminus\{\pm 1\}$ then $cI+m(\xi)$ is 
invertible in ${\rm Hom}(\Lambda,\Lambda)$ for each $\xi\in{\mathbb{R}}^n\setminus\{0\}$.}
\end{equation}
As such, item {\it (e)} of Theorem~\ref{i87hbBV-ALG} (once again applied with ${\mathscr{V}}:=\Lambda$) proves
the equivalence {\it (i)}\,$\Leftrightarrow$\,{\it (iii)} in the last part of Corollary~\ref{nhxtrE-SSS}.
\end{proof}


\begin{thebibliography}{99999}
\parskip=0.15cm

\bibitem{ADNI} S.\,Agmon, A.\,Douglis, and L.\,Nirenberg, {\it Estimates near the
boundary for solutions of elliptic partial differential equations satisfying general
boundary conditions, I}, Comm. Pure Appl. Math., 12 (1959), 623--727.
%

\bibitem{ADNII} S.\,Agmon, A.\,Douglis, and L.\,Nirenberg, {\it Estimates near the
boundary for solutions of elliptic partial differential equations satisfying general
boundary conditions, II}, Comm. Pure Appl. Math., 17 (1964), 35--92.

%
\bibitem{AM} R.\,Alvarado and M.\,Mitrea, {\it Hardy Spaces in Ahlfors-Regular Quasi-Metric 
Spaces. A Sharp Theory}, Lecture Notes in Mathematics, Vol.\,2142, Springer, 2015.

%
\bibitem{ABR} S.\,Axler, P.\,Bourdon, and W.\,Ramey, {\it Harmonic Function Theory}, 
2nd edition, Graduate Texts in Mathematics, Vol.\,137, Springer-Verlag, New York, 2001.

%


\bibitem{Bour} G.\,Bourdaud, {\it Remarks on some subspaces of ${\it BMO}({\mathbb{R}}^n)$ and of 
${\it bmo}({\mathbb{R}}^n)$}, Annales de l'Institut Fourier, 52 (2002), no.\,4, 1187--1218.

%

%
\bibitem{Ch90} M.\,Christ, {\it Lectures on Singular Integral Operators}, 
CBMS Regional Conference Series in Mathematics, Vol.\,77, American Mathematical 
Society, Providence, RI, 1990. 

%
\bibitem{ChJ} M.\,Christ and J.-L.\,Journ\'e, {\it Polynomial growth estimates for 
multilinear singular integral operators}, Acta Math., 159 (1987), no. 1-2, 51--80.

%
\bibitem{CMS} R.R.\,Coifman, Y.\,Meyer, and E.M.\,Stein, {\it Some new function 
spaces and their applications to harmonic analysis}, J. Funct. Anal., 62 (1985), 
no. 2, 304--335.

%
\bibitem{DaKe87} B.\,Dahlberg and C.\,Kenig, {\it Hardy spaces and the Neumann problem 
in $L^p$ for Laplace's equation in Lipschitz domains}, Ann. of Math., 125 (1987), 437--465.

%
\bibitem{DiKePi} M.\,Dindos, C.\,Kenig and J.\,Pipher, {\it BMO solvability 
and the $A_\infty$ condition for elliptic operators}, J. Geom. Anal., 21 (2011), 78--95.

%
\bibitem{Duong-Yan-Zhang} X.T.\,Duong, L.\,Yan, and C.\,Zhang, {\it On characterization 
of Poisson integrals of Schr\"odinger operators with BMO traces}, J. Funct. Anal., 
266 (2014), no. 4, 2053--2085.

%
\bibitem{FJN} E.B.\,Fabes, R.L.\,Johnson, and U.\,Neri, {\it Spaces of harmonic 
functions representable by Poisson integrals of functions in BMO and 
$\mathcal{L}_{p,\lambda }$}, Indiana Univ. Math. J., 25 (1976), no. 2, 159--170.

%
\bibitem{FaNe80} E.B.\,Fabes and U.\,Neri, {\it Dirichlet problem in Lipschitz 
domains with BMO data}, Proc. Amer. Math. Soc., 78 (1980), no. 1, 33--39. 

%
\bibitem{Fe} C.\,Fefferman, {\it Characterizations of bounded mean oscillation},
Bull. of AMS, 77 (1971), 587--588.

%
\bibitem{FS} C.\,Fefferman and E.M.\,Stein, {\it $H^p$ spaces of several variables}, 
Acta Math., 129 (1972), no. 3--4, 137--193.

%
\bibitem{GCRF85} J.\,Garcia-Cuerva and J.\,Rubio de Francia, Weighted Norm
Inequalities and Related Topics, North Holland, Amsterdam, 1985.

%
\bibitem{Grafakos} L.\,Grafakos, {\it Classical and Modern Fourier Analysis}, 
Pearson Education Inc., New Jersey, 2004.

%
\bibitem{He} L.L.\,Helms, {\it Introduction to Potential Theory}, Wiley-Interscience, 1969.

%
\bibitem{HMMM} S.\,Hofmann, D.\,Mitrea, M.\,Mitrea, and A.\,Morris, 
{\it $L^p$-Square Function Estimates on Spaces of Homogeneous Type and on 
Uniformly Rectifiable Sets}, Memoirs of Amer. Math. Soc., Vol.\,245, No.\,1159, 2017.

%
\bibitem{HMT15} S.\,Hofmann, M.\,Mitrea, and M.\,Taylor, {\it Symbol calculus for 
operators of layer potential type on Lipschitz surfaces with VMO normals, and related 
pseudodifferential operator calculus}, Analysis and PDE, 8 (2015), no. 1, 115--181.

%
\bibitem{IM01} T.\,Iwaniec and G.\,Martin, {\it Geometric Function Theory and 
Non-linear Analysis}, Oxford Mathematical Monographs, Oxford University Press, 2001. 

%

%
\bibitem{LionsMagenes} J.L.\,Lions and E.\,Magenes, {\it Non-Homogeneous Boundary 
Value Problems and Applications}, Springer, Berlin, Heidelberg, 1972.

%
\bibitem{K-MMMM} J.M.\,Martell, D.\,Mitrea, I.\,Mitrea, and M.\,Mitrea, {\it The Dirichlet
problem for elliptic systems with data in K\"othe function spaces}, Rev. Mat. Iberoam., 
32 (2016), no. 3, 913--970.

%
\bibitem{Esc-MMMM} J.M.\,Martell, D.\,Mitrea, I.\,Mitrea, and M.\,Mitrea, 
{\it The higher order regularity Dirichlet problem for elliptic systems in 
the upper-half space}, pp.\,123--141 in Contemporary Mathematics, Vol. 612, 2014.

%
\bibitem{Sem-MMMM} J.M.\,Martell, D.\,Mitrea, I.\,Mitrea, and M.\,Mitrea, 
{\it On the $L^p$-Poisson semigroup associated with elliptic systems}, 
Potential Anal., to appear (2017). DOI: 10.1007/s11118-017-9620-3.

%

%
\bibitem{MazShap} V.G.\,Maz'ya and T.O.\,Shaposhnikova, {\it Theory of Multipliers 
in Spaces of Differentiable Functions}, Monographs and Studies in Mathematics, Vol.\,23, 
Pitman Advanced Publishing Program, Boston, MA, 1985.

%
\bibitem{MaMiSh} V.\,Maz'ya, M.\,Mitrea, and T.\,Shaposhnikova, {\it The Dirichlet
problem in Lipschitz domains with boundary data in Besov spaces for higher order
elliptic systems with rough coefficients}, Journal d'Analyse Math\'ematique,
110 (2010), no.\,1, 167--239.

%
\bibitem{Meyer} Y.\,Meyer, {\it Ondelettes et op\'erateurs}, Vol. II, Hermann, Paris, 1990.

%
\bibitem{Meyer85} Y.\,Meyer, {\it Real analysis and operator theory}, 
pp.\,219--236 in ``Pseudodifferential Operators and Applications", Proc.
Symp. Pure Math., Vol. 43, Amer. Math. Soc., Providence, Rhode Island, 1985.

%
\bibitem{Mey} N.G.\,Meyers, {\it Mean oscillation over cubes and H\"older continuity},  
Proc. Amer. Math. Soc., 15 (1964), no. 5, 717--721.

%
\bibitem{DM} D.\,Mitrea, {\it Distributions, Partial Differential 
Equations, and Harmonic Analysis}, Universitext, Springer, 2013.

%
\bibitem{MMM17} D.\,Mitrea, I.\,Mitrea and M.\,Mitrea, {\it A Sharp Divergence 
Theorem with Non-Tangential Pointwise Traces and Applications}, book manuscript, 2017.

%

%
\bibitem{MMMT16} D.\,Mitrea, I.\,Mitrea, M.\,Mitrea, and M. Taylor, {\it The Hodge-Laplacian: 
Boundary Value Problems on Riemannian Manifolds}, Studies in Mathematics, Vol.\,64, De Gruyter, 
2016.

%
\bibitem{MMV} D.\,Mitrea, M.\,Mitrea, and J.\,Verdera, {\it Characterizing Lyapunov Domains 
via Riesz Transforms on H\"older Spaces}, Analysis and PDE, Vol. 9, (2016), 955--1018.

%
\bibitem{Mi} M.\,Mitrea, {\it Clifford Wavelets, Singular Integrals, and Hardy Spaces}, 
Lecture Notes in Mathematics, Vol.\,1575, Springer-Verlag, Berlin, 1994.

%
\bibitem{PiVe92} J.\,Pipher and G.\,Verchota, {\it The Dirichlet problem in 
$L^p$ for the biharmonic equation on Lipschitz domains}, 
Amer. J. of Math., 114 (1992), 923--972.

%
\bibitem{Rudin} W.\,Rudin, {\it Real and Complex Analysis}, McGraw-Hill 
Series in Higher Mathematics, 3-rd edition, Boston, 1987.

%
\bibitem{Sa75} D.\,Sarason, {\it Functions of vanishing mean oscillation}, 
Trans. Amer. Math. Soc., 207 (1975), 391--405.

%
\bibitem{Si88} D.\,Siegel, {\it The Dirichlet problem in a half-space and a new 
Phragmen-Lindel\"of principle}, pp.\,208-217 in ``Maximum Principles and Eigenvalue 
Problems in Partial Differential Equations", P.W.\,Schaefer, ed., Longman, Harlow, 1988.

%
\bibitem{ST96} D.\,Siegel and E.O.\,Talvila, {\it Uniqueness for the $n$-dimensional half 
space Dirichlet problem}, Pacific J. Math., 175 (1996), no. 2, 571--587.

%

%
\bibitem{Sol1} V.A.\,Solonnikov, {\it General boundary value problems for systems 
elliptic in the sense of A. Douglis and L. Nirenberg. I}, (Russian) Izv. Akad. Nauk 
SSSR, Ser. Mat., 28 (1964), 665--706.

%
\bibitem{Sol2} V.A.\,Solonnikov, {\it General boundary value problems for
systems elliptic in the sense of A. Douglis and L. Nirenberg. II}, (Russian)
Trudy Mat. Inst. Steklov, Vol.\,92 (1966), 233--297.

%
\bibitem{St70} E.M.\,Stein, {\it Singular Integrals and Differentiability Properties 
of Functions}, Princeton Mathematical Series, No.\,30, Princeton University Press, 
Princeton, NJ, 1970.

%
\bibitem{Stein93} E.M.\,Stein, {\it Harmonic Analysis: Real-Variable Methods,
Orthogonality, and Oscillatory Integrals}, Princeton Mathematical Series,
Vol.\,43, Monographs in Harmonic Analysis, III, Princeton University Press,
Princeton, NJ, 1993.

%
\bibitem{StWe71} E.M.\,Stein and G.\,Weiss, {\it Introduction to Fourier Analysis
on Euclidean Spaces}, Princeton Mathematical Series, Princeton University Press, 
Princeton, NJ, 1971.

%
\bibitem{Taylor} M.E.\,Taylor, {\it Partial Differential Equations}, second edition, 
Applied Mathematical Sciences, Springer, New York, 2011.

%
\bibitem{Torchinsky} A.\,Torchinsky, {\it Real-Variable Methods in Harmonic Analysis},
Dover Publications, 2004.

%
\bibitem{Yo96} H.\,Yoshida, {\it A type of uniqueness for the Dirichlet problem
on a half-space with continuous data}, Pacific J. Math., 172 (1996), no. 2, 591--909.

%
\bibitem{Wolf41} F.\,Wolf, {\it The Poisson integral. A study in the 
uniqueness of harmonic functions}, Acta Math., 74 (1941), 65--100. 

\end{thebibliography}
\end{document}